\begin{document}

\newtheorem{Thm}{Theorem}[section] \newtheorem{TitleThm}[Thm]{} 
\newtheorem*{TitleThm1}{Theorem 1} 
\newtheorem*{TitleThm2}{Corollary 2} 
\newtheorem*{TitleThm3}{Theorem 3} 
\newtheorem*{TitleThm4}{Theorem 4} 
\newtheorem*{TitleThm5}{Theorem 5} 
\newtheorem*{TitleThm6}{Theorem 6} 
\newtheorem*{TitleThm7}{Corollary 7} 
\newtheorem*{TitleThm8}{Theorem 8} 
\newtheorem*{TitleThm9}{Theorem 9} 
\newtheorem*{TitleThm10}{Theorem 10}

\newtheorem{Corollary}[Thm]{Corollary} 
\newtheorem{Proposition}[Thm]{Proposition} 
\newtheorem{Lemma}[Thm]{Lemma} 
\newtheorem{Conjecture}[Thm]{Conjecture} 
\theoremstyle{definition} 
\newtheorem{Definition}[Thm]{Definition} 
\theoremstyle{definition} 
\newtheorem{Example}[Thm]{Example} 
\newtheorem{TitleExample}[Thm]{} 
\newtheorem{Remark}[Thm]{Remark} 
\newtheorem{SimpRemark}{Remark} 
\renewcommand{\theSimpRemark}{} 
\numberwithin{equation}{section}
\newcommand{\C}{{\mathbb C}}
\newcommand{\R}{{\mathbb R}} 
\newcommand{\Z}{{\mathbb Z}}
\newcommand{\bbP}{{\mathbb P}}

\newcommand{\flushpar}{\par \noindent}

\newcommand{\codim}{{\rm codim}\,} 
\newcommand{\proj}{{\rm proj}} 
\newcommand{\coker}{{\rm coker}\,}
\newcommand{\vol}{{\rm vol}\,}
\newcommand{\emb}{{\rm Emb}\,} 
\newcommand{\half}{\frac12} 
\newcommand{\wt}{{\rm wt}\,} 

\newcommand{\supp}{{\rm supp}\,}
\newcommand{\grad}{{\rm grad}\,}
\newcommand{\graph}{{\rm graph}\,}
\newcommand{\sing}{{\rm sing}\,}
\newcommand{\bdyM}{\partial M}
\newcommand{\Diff}{{\rm Diff}\,}
\newcommand{\intr}{{\rm int}\,}
\newcommand{\ptgG}{\partial \Gamma}
\newcommand{\Cinf}{\rm C^{\infty}} 
\newcommand{\ol}{\overline}
\newcommand{\ul}{\underline}
\newcommand{\barbdyM}{\overline{\partial M}}
\newcommand{\cirsp}[1]{\stackrel{\circ}{#1}}
\newcommand{\cirX}{\stackrel{\circ}{X}}
\newcommand{\ind}{{\rm ind}} 
\newcommand{\pd}[2]{\dfrac{\partial#1}{\partial#2}} 
\newcommand{\vnG}[1]{\mbox{vol}_{n+1}(\Gamma_t( #1 ))} 
\newcommand{\vnpG}[1]{\mbox{vol}_n(\partial \Gamma_t( #1 ))}

\def \ba {\mathbf {a}} 
\def \bb {\mathbf {b}} 
\def \bc {\mathbf {c}} 
\def \bD {\mathbf {D}}
\def \bA {\mathbf {A}}
\def \be {\mathbf {e}}
\def \bh {\mathbf {h}} 
\def \bk {\mathbf {k}}
\def \bl {\mathbf {\ell}} 
\def \bm {\mathbf {m}} 
\def \bn {\mathbf {n}} 
\def \bs {\mathbf {s}}
\def \bt {\mathbf {t}} 
\def \bu {\mathbf {u}} 
\def \bv {\mathbf {v}} 
\def \bx {\mathbf {x}} 
\def \by {\mathbf {y}}
\def \bw {\mathbf {w}} 
\def \b1 {\mathbf {1}}

\def \bzero {\boldsymbol {0}}
\def \bga {\boldsymbol \alpha} 
\def \bgb {\boldsymbol \beta} 
\def \bgg {\boldsymbol \gamma}
\def \bgW {\boldsymbol \Omega}
\def \bgth {\boldsymbol \theta}
\def \bgD {\boldsymbol \Delta}

\def \itc {\text{\it c}} 
\def \iti {\text{\it i}} 
\def \itj {\text{\it j}} 
\def \itm {\text{\it m}} 
\def \itM {\text{\it M}}  
\def \ithn {\text{\it hn}} 
\def \itt {\text{\it t}}

\def \cA {\mathcal{A}} 
\def \cB {\mathcal{B}} 
\def \cC {\mathcal{C}} 
\def \cD {\mathcal{D}} 
\def \cE {\mathcal{E}} 
\def \cF {\mathcal{F}} 
\def \cG {\mathcal{G}} 
\def \cH {\mathcal{H}} 
\def \cI {\mathcal{I}}
\def \cJ {\mathcal{J}}
\def \cK {\mathcal{K}} 
\def \cL {\mathcal{L}} 
\def \cM {\mathcal{M}} 
\def \cN {\mathcal{N}} 
\def \cO {\mathcal{O}} 
\def \cP {\mathcal{P}} 
\def \cQ {\mathcal{Q}} 
\def \cR {\mathcal{R}} 
\def \cS {\mathcal{S}} 
\def \cT {\mathcal{T}} 
\def \cU {\mathcal{U}} 
\def \cV {\mathcal{V}} 
\def \cW {\mathcal{W}} 
\def \cX {\mathcal{X}} 
\def \cY {\mathcal{Y}} 
\def \cZ {\mathcal{Z}}

\def \ga {\alpha} 
\def \gb {\beta} 
\def \gg {\gamma} 
\def \gd {\delta} 
\def \ge {\epsilon} 
\def \gevar {\varepsilon} 
\def \gk {\kappa} 
\def \gl {\lambda} 
\def \gs {\sigma}
\def \gth {\theta} 
\def \gt {\tau} 
\def \gw {\omega} 
\def \gz {\zeta} 
\def \gG {\Gamma} 
\def \gD {\Delta} 
\def \gL {\Lambda} 
\def \gS {\Sigma} 
\def \gW {\Omega}
\def \frm {\mathfrak{m}}

\def \dim {{\rm dim}\,} 
\def \mod {{\rm mod}\;}


\title[Medial/Skeletal Linking Structures] {Medial/Skeletal Linking 
Structures for Multi-Region Configurations} 
\author[James Damon and Ellen Gasparovic ]{James Damon$^1$ and Ellen 
Gasparovic$^2$} 
\thanks{(1) Partially supported by the Simons Foundation grant 230298, 
the National Science Foundation grant DMS-1105470 and DARPA grant 
HR0011-09-1-0055 (2) This paper contains work from this author's Ph. D. 
dissertation at Univ. of North Carolina} 
\address{Department of Mathematics \\ 
University of North Carolina \\ 
Chapel Hill, NC 27599-3250  \\ 
USA}
\curraddr{E. Gasparovic: Dept. of Mathematics \\
 Duke University \\
Durham, NC, 27708
}

\begin{abstract} 
We consider a generic configuration of regions, consisting of a collection 
of distinct compact regions $\{ \gW_i\}$ in $\R^{n+1}$ which may be 
either regions with smooth boundaries disjoint from the others or regions 
which meet on their piecewise smooth boundaries $\cB_i$ in a generic 
way.  We introduce a skeletal linking structure for the collection of 
regions which 
simultaneously captures the regions\rq\, individual shapes and geometric 
properties as well as the \lq\lq positional geometry\rq\rq\, of the 
collection.  The linking structure extends in a minimal way the individual 
\lq\lq skeletal structures\rq\rq\, on each of the regions.  This allows us 
to significantly extend the mathematical methods introduced for single 
regions to the configuration of regions.  \par
	We prove for a generic configuration of regions the existence of a 
special type of Blum linking structure which builds upon the Blum medial 
axes of the individual regions.  As part of this, we introduce the \lq\lq 
spherical axis\rq\rq , which is the analogue of the medial axis but for 
directions.  These results require proving several transversality theorems 
for certain associated \lq\lq multi-distance\rq\rq\, and 
\lq\lq height-distance\rq\rq\, functions for such configurations.  We 
show that by relaxing the conditions on the Blum linking structures we 
obtain the more general class of skeletal linking structures which still 
capture the geometric properties. \par
The skeletal linking structure is used to analyze the \lq\lq positional 
geometry\rq\rq\, of the configuration.  This involves using the 
\lq\lq linking flow\rq\rq\, to identify neighborhoods of the configuration 
regions which capture their positional relations.  As well as yielding 
geometric invariants which capture the shapes and geometry of individual 
regions, these structures are used to define invariants which measure 
positional properties of the configuration such as: measures of relative 
closeness of neighboring regions and relative significance of the 
individual regions for the configuration.   \par 
All of these invariants are computed by formulas involving \lq\lq skeletal 
linking integrals\rq\rq\, on the internal skeletal structures 
of the regions.  These invariants are then used to construct a \lq\lq tiered 
linking graph\rq\rq,  which for given thresholds of closeness and/or 
significance, identifies subconfigurations and provides a hierarchical 
ordering in terms of order of significance.  
\end{abstract}

\keywords{Blum medial axis, skeletal structures, spherical axis, Whitney 
stratified sets, medial and skeletal linking structures, generic linking 
properties, model configurations, radial flow, linking flow, 
multi-distance functions, height-distance functions, partial multijet 
spaces, transversality theorems, measures of closeness, measures of 
significance, tiered linking graph} 
\subjclass{Primary: 53A07, 58A35, Secondary: 68U05}

\maketitle

\section{Introduction}  
\label{S:sec0} \par
We consider a collection of distinct compact regions $\{ \gW_i\}$ in 
$\R^{n+1}$ with piecewise smooth generic boundaries $\cB_i$, where we 
allow the boundaries of the regions to meet in generic ways (see Figure 
\ref{fig.1a}).  For example, in 2D and 3D medical images, we encounter 
collections of objects which might be organs, glands, arteries, bones, etc.  
Researchers have already begun to recognize the importance of using the 
relative positions of objects in medical images to aid in analyzing 
physical features for diagnosis and treatment (see especially the work of 
Stephen Pizer and coworkers in MIDAG at UNC for both time series of a 
single patient and for populations of patients \cite{CP}, \cite{LPJ}, 
\cite{GSJ}, \cite{JSM}, \cite{JPR}, and \cite{Jg}) and other approaches 
such as by e.g.  Pohl et al \cite{PFL}.
\par
These physical configurations in images can be modeled by such a 
configuration of regions (see Figure \ref{fig.1b}).  Now, the geometric 
properties of the configuration are determined by both the shapes of the 
individual regions and the positions of the regions in the overall 
configuration.  The \lq\lq shapes\rq\rq of the regions capture both the 
local and global geometry as well as the topology of the regions.  The 
overall \lq\lq positional geometry\rq\rq of the configuration involves 
such information as: the measure of relative closeness of portions of 
regions, characterization of \lq\lq neighboring regions\rq\rq, and the 
\lq\lq relative significance\rq\rq of an individual region within the 
configuration.  Such properties are not captured by single numerical 
values such as the Gromov-Hausdorff distance between such 
configurations nor by invariants that would be appropriate for a collection 
of points.  
\par
\begin{figure}[ht]
\begin{center} 
\includegraphics[width=4cm]{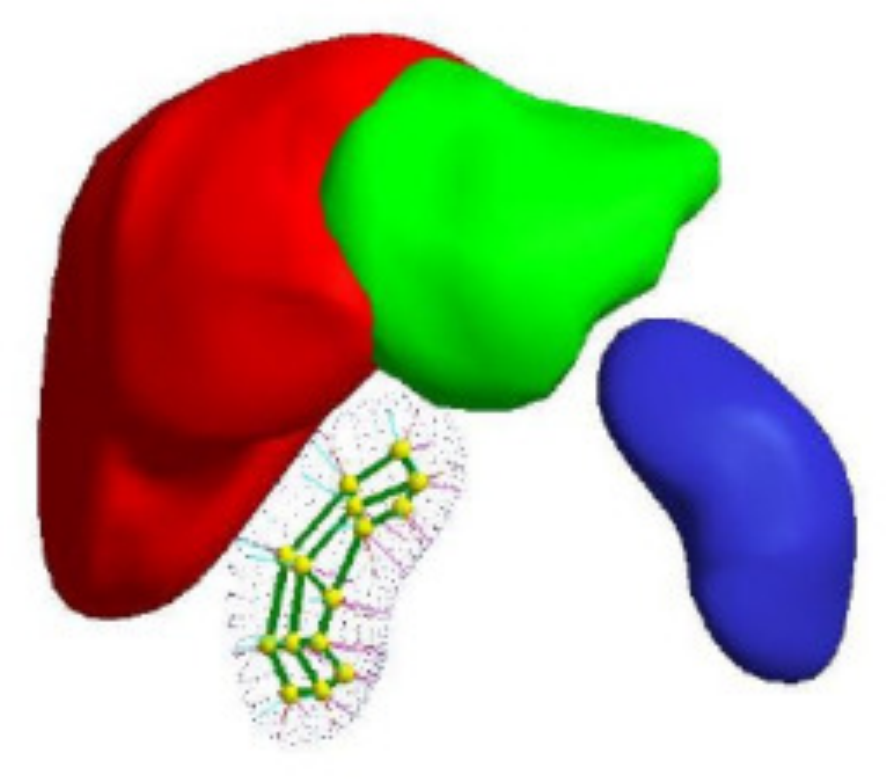}  \hspace{.25in} 
\includegraphics[width=5cm]{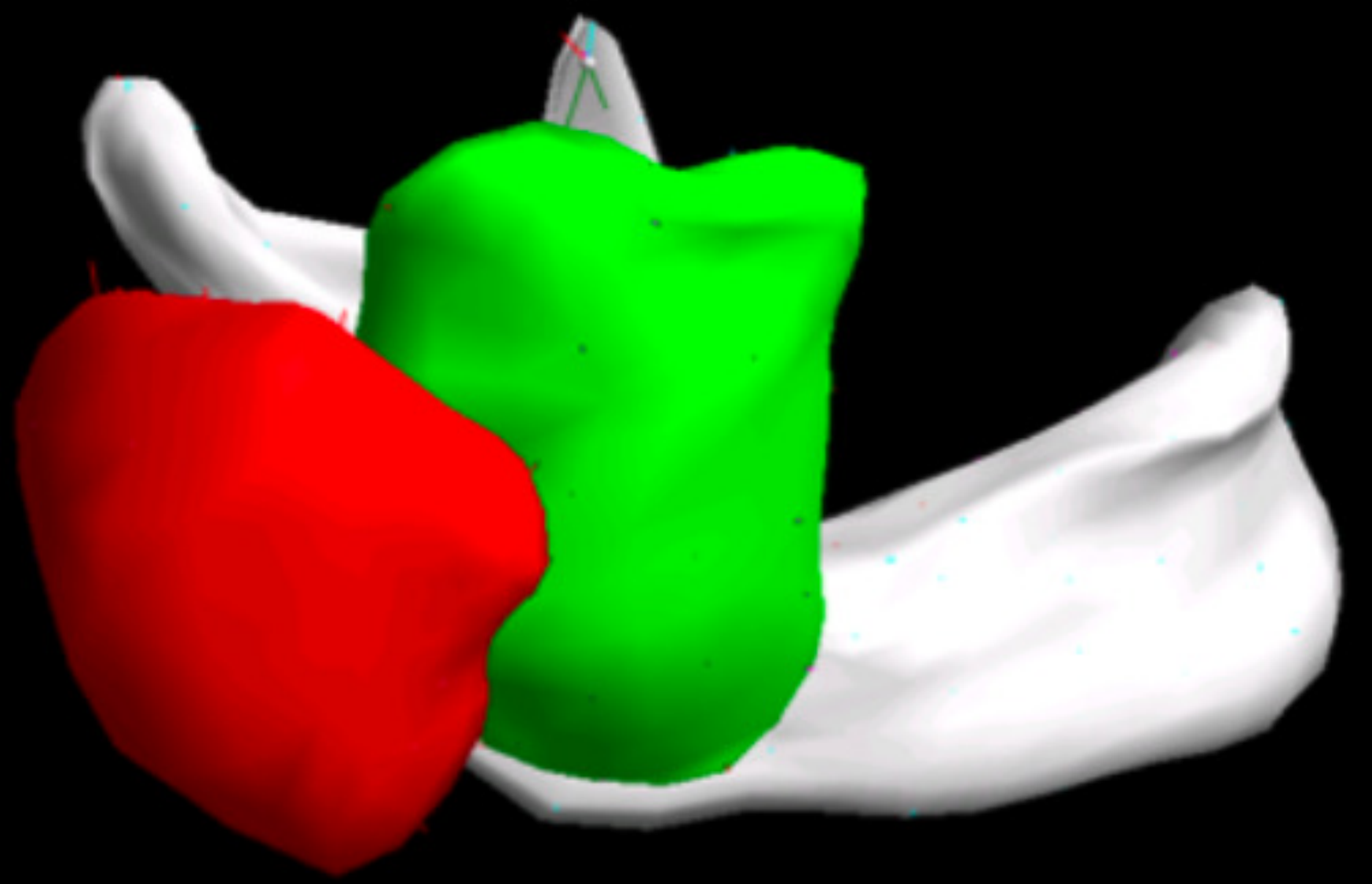}
\end{center}
\hspace{0.4in} (a) \hspace{1.8in} (b) 
 
\caption{\label{fig.1b} Examples of 3-dimensional medical images 
(obtained by MIDAG at UNC Chapel Hill) of a collection of physiological 
objects which can be modeled by a multi-region configuration.  
a)  Prostate, bladder and rectum in pelvic region and b) mandible, 
masseter muscle, and parotid gland in throat region.}  
\end{figure} 
\par

The goal of this paper is to introduce for such configurations \lq\lq 
medial and skeletal linking structures\rq\rq, which allow us to 
simultaneously capture shape properties of the individual objects and 
their \lq\lq positional geometry\rq\rq.  
\par
\begin{figure}[ht] 
\includegraphics[width=3.5cm]{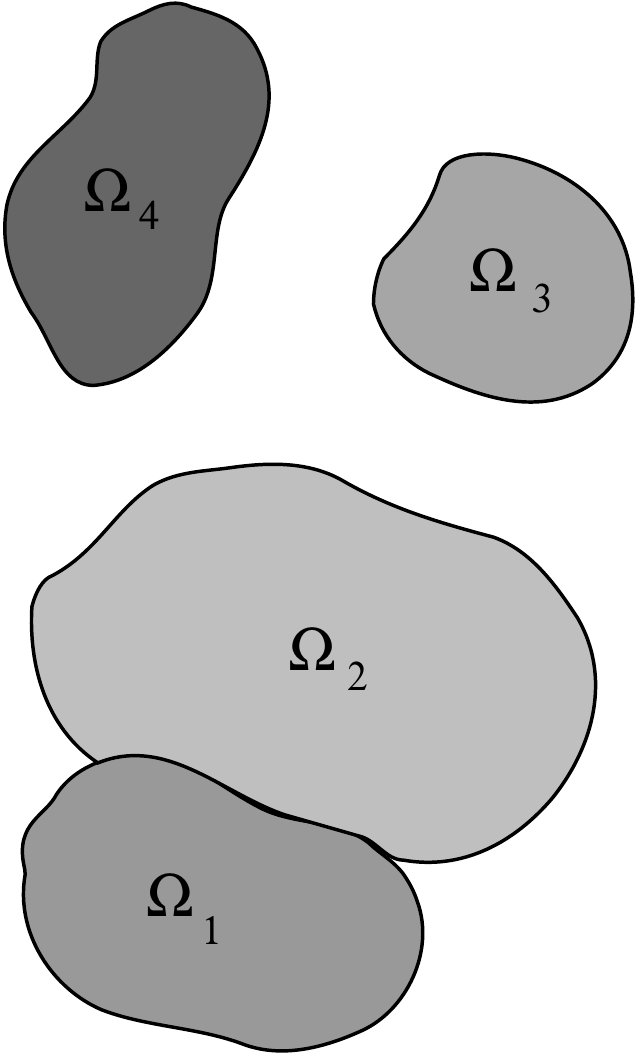} 
\caption{\label{fig.1a} Multi-region configuration in $\R^2$.} 
\end{figure} 
\par

Such structures build on earlier work in which the first author developed 
the notion of a \lq\lq skeletal structure\rq\rq for a single compact region 
$\gW$ with smooth boundary $\cB$ \cite{D1}.  It consists of a pair $(M, 
U)$, where the \lq\lq skeletal set\rq\rq $M$ is a Whitney stratified set in 
the region and $U$ is a multivalued \lq\lq radial vector field\rq\rq 
defined on $M$.  Skeletal structures generalize the notion of the Blum 
medial axis of a region with smooth boundary \cite{BN} (also called the 
\lq\lq central set\rq\rq, see \cite{Y}), which is the locus of centers of 
spheres in $\gW$ tangent to $\cB$ at two or more points (or having a 
single degenerate tangency).  The Blum medial axis is a special type of 
skeletal structure (with $U$ consisting of the vectors from points of $M$ 
to the points of tangency).  \par 
The Blum medial axis $M$ captures the shape of a region.  It has several 
alternate descriptions as the shock set of the \lq\lq grassfire/eikonal 
flow\rq\rq from the boundary as in Kimia-Tannenbaum-Zucker \cite{KTZ} 
and as the Maxwell set of the family 
of \lq\lq distance to the boundary functions\rq\rq, see Mather \cite{M2}.  
These multiple descriptions have allowed for the classification of the 
local structure of $M$ for regions with generic boundaries $\cB$ \cite{Y}, 
\cite{M2}, \cite{Gb}, \cite{GK}.   In addition, these have led to several 
different methods for computing the medial axis using properties of the 
grassfire/eikonal flow \cite{BSTZ}, \cite{DDS} and Voronoi methods (see 
\cite{PS} for a survey of these methods) and a recent method using 
B-spline representations to directly evolve the medial axis \cite{MCD}. 
\par
The skeletal structure relaxes several of the conditions in the Blum case, 
and allows more flexibility in applying skeletal structures to model 
objects including: using skeletal structures as deformable templates for 
modeling objects [P], overcoming the lack of $C^1$-stability of the Blum 
medial axis, allowing alternate models based on a region being swept out 
by a family of hyperplanes, \cite{D7} and the related \cite{GK2}, and 
providing discrete models to which statistical analysis can be applied 
\cite{P2}, \cite{PJG}.  This has enhanced their usefulness for modeling and 
computer imaging questions for medicine and biology (see e.g. \cite{PS} 
for a survey of results).  \par 
Furthermore, the structure enables both the local, relative, and global 
geometric properties of the region and its boundary to be computed from 
the \lq\lq medial geometry\rq\rq of the radial vector field on the skeletal 
structure \cite{D2}, \cite{D3}, \cite{D4}.  This includes conditions 
ensuring the nonsingularity of a \lq\lq radial flow\rq\rq from the skeletal 
set to the boundary.  This allows the region to be fibered with the level 
sets of the flow and implies the smoothness of the boundary \cite[Thm 
2.5]{D1}.  As the Blum medial axis is a special type of skeletal structure 
these results also apply to it, and to the related 
\lq\lq symmetry-set\rq\rq \cite{BGG}.  \par
\par 
In this paper, we introduce the \lq\lq medial and skeletal linking 
structures\rq\rq\, which build upon individual skeletal structures of the 
regions in a minimal way, but still enable us to analyze the positional 
geometry of the configuration along with the shapes of the individual 
regions.  The added structure consists of a multivalued \lq\lq linking 
function\rq\rq $\ell_i$ defined on the skeletal set $M_i$ for each region 
$\gW_i$ and a refinement of the Whitney stratification of $M_i$ on which 
it is stratawise smooth.  The linking functions $\ell_i$ together with the 
radial vector fields $U_i$ yield multivalued \lq\lq linking vector 
fields\rq\rq $L_i$, which satisfy certain linking conditions.  Even though 
the structures are defined on the skeletal sets within the regions, the 
linking vector fields allow us to capture geometric properties of the 
external region as well.  In addition, we identify the regions which are 
unlinked and classify their local generic structure by introducing the {\it 
spherical axis} which is the analog of the Blum medial axis but for the 
family of height functions on the region boundaries.  \par
The paper is divided into four parts.  In Part I, we define and give the 
basic properties of medial/skeletal linking structures and state two 
theorems assuring the existence of a skeletal linking structure for 
generic configurations.  First, for a general generic multi-region 
configuration with singular shared boundaries, we establish the existence 
of a generic \lq\lq full Blum linking structure\rq\rq for the configuration 
(Theorem \ref{Thm3.6}).  In the special case where all of the regions are 
disjoint with smooth generic boundaries, this yields a \lq\lq Blum medial 
linking structure\rq\rq (Theorem \ref{Thm3.5}).  This special case for 
disjoint regions was obtained in the thesis of the second author \cite{Ga}.  
Then, in the case of a generic configuration with singular boundaries, the 
Blum structure now contains the singular points of the boundaries in its 
closure and we give an \lq\lq edge-corner normal form\rq\rq near such 
singular points.  This requires providing an addendum (Theorem 
\ref{Thm3.2}) to the result of Mather \cite{M2} for a single region, where 
we now allow a singular boundary.  In Section \ref{S:sec8} we explain how 
to modify the resulting Blum structure to obtain a skeletal linking 
structure.  \par 
In Part II, we use the linking structure to determine properties of the  
\lq\lq positional geometry\rq\rq of the configuration.  We introduce and 
compute several invariants of the positional geometry, and deduce 
properties of these invariants.  These use the linking flow (which extends 
the radial flow into the external region) and allows the determination of 
the linking neighborhoods between regions.  The nonsingularity of the 
linking flow will follow from {\it linking curvature conditions} on the 
linking functions, having a form analogous to those given in \cite[Thm 
2.5]{D1} for the radial flow.  \par 
This allows us to introduce invariants which include measures of relative 
closeness and positional significance of the individual regions for the 
configuration.  These are given in terms of volumetric measurements on 
the regions themselves and on associated regions defined by the linking 
flow.  The measure of significance allows us to identify the central 
regions as well as outliers from among the regions.  We prove that using 
the linking structure, we can compute these volumetric invariants (which 
involve regions outside the configuration) as \lq\lq skeletal linking 
integrals\rq\rq on the internal skeletal sets.  These invariants are then 
used to construct a \lq\lq tiered linking graph\rq\rq.  When given 
thresholds of closeness and significance are applied to this graph, they 
yield subgraph(s) identifying subconfigurations and provide a hierarchical 
ordering of the regions.  The skeletal linking structure also allows for the 
comparison and statistical analysis of collections of objects in 
$\R^{n+1}$, extending the analyses given in earlier work for single regions. 
\par
In Parts III and IV, we prove the existence and derive the generic 
properties of the (full) Blum linking structure.  The properties of the Blum 
structure result from generic properties of several associated 
\lq\lq multi-functions\rq\rq which include \lq\lq multi-distance 
functions\rq\rq and \lq\lq height-distance functions\rq\rq.  Because 
these latter functions are examples of divergent diagrams of functions, 
the usual theorems of singularity theory do not apply (see e.g. \cite{DF}).  
Nonetheless, in Part III we prove a multi-transversality theorem (Theorem 
\ref{Thm5.1}) which applies to the multi-functions relative to a \lq\lq 
partial multijet space\rq\rq.  This yields the generic properties for the 
Blum linking structures for an open dense set of the space of embeddings 
of configurations of each given type.  This transversality theorem extends 
earlier transversality theorems for families of functions due to Looijenga 
\cite{L} and Wall \cite{Wa} and is based on a \lq\lq hybrid transversality 
theorem\rq\rq (Theorem \ref{Thm9.2}) using results from \cite{D5}.  \par 
In Part IV we construct the families of perturbations needed for applying 
the transversality theorems.  We then carry out the necessary derivative 
computations needed to prove the applicability of the transversality 
theorems to the space of embeddings of a configuration.  This allows us to 
deduce for an open dense set of mappings the existence of the Blum linking 
structures with their generic properties.  \par
The authors would like to thank Stephen Pizer for sharing with us his 
early work with his coworkers on medical imaging involving multiple 
objects in medical images.  This led us to seek a completely mathematical 
approach to these problems.  We also are very grateful to the several 
referees for their careful reading and multiple suggestions for improving 
the exposition in the paper, and to the editor Alejandro Adem for his 
considerable help in moving the process forward.  Hopefully the final 
version reflects all of this.
\par
\subsection*{Overview of the Genericity and Transversality Results} 
\hfill 
\par
To establish the results of this paper for the generic properties of the 
geometric structures, we will carry out extensions of earlier work of 
Mather \cite{M2}, Looijenga \cite{L}, and Wall \cite{Wa}.  We indicate 
exactly the form that these extensions will ultimately take.  \par
First, in the Blum case, rather than consider the disjoint Blum medial axes 
for different regions, we consider the \lq\lq generic linking 
properties\rq\rq\, for the distinct regions.  This forces us to consider the 
interplay between the stratifications on the boundaries that arise from 
the individual Blum medial axes and the stratification resulting from the 
family of height functions.  These interactions result from having two 
distance functions or a distance function and a height function at the 
same point.  This problem already arises in the case of distinct regions 
with smooth boundaries as the linking occurs via the complementary 
region.  To handle this situation we introduce transversality theorems for 
multi-functions, which will yield the generic interplay between the 
stratifications.  These \lq\lq hybrid transversality theorems\rq\rq\, 
allow us to prove transversality for the multijets of such 
multi-functions relative to partial jet spaces, which are subbundles of jet 
bundles.  We apply these theorems in the context of continuous mappings 
from Baire spaces of embeddings of configurations to the spaces of 
parametrized families of functions.  These theorems extend earlier 
relative and absolute transversality theorems in \cite{D5}. \par
Second, Mather\rq s results for the Blum medial axis concentrated on the 
local structure of the Blum medial axis by using a multi-germ versality 
theorem.  This by itself does not imply anything about the corresponding 
properties of points on the boundaries corresponding to the medial axis 
points.  Several partial results were obtained by Porteous \cite{Po} and 
Bruce-Giblin-Tari \cite{BGT} from the point of view of the geometry of 
the boundary as a surface.  We address this by establishing a general 
result for the resulting stratifications of the boundary by the \lq\lq 
generic linking type\rq\rq\, of the points.  We also apply the 
transversality theorem of Wall for the family of height functions and our 
extension for \lq\lq height-distance\rq\rq\, functions to give the generic 
properties of the \lq\lq spherical axis\rq\rq, the resulting properties of 
the stratification of the unlinked region, and its relation with the Blum 
stratifications on the boundaries.  \par
Third, one of the principal extensions is to collections of regions allowing 
boundaries and corners where the regions may share portions of their 
boundaries allowing specific generic local forms.  The methods we develop 
allow us to include these nonsmooth features in our analysis.  
For the global theory we prove special versions of the transversality 
theorem to overcome the problem on stratified sets in several ways.  This 
depends upon replacing the Seeley extension theorem \cite{Se} used by 
Mather with a more general extension theorem due to Bierstone \cite{Bi}.  
This then extends traditional transversality theorems so they can apply to 
this situation.  One consequence is to provide an addendum to Mather\rq s 
theorem on the local generic form of the Blum medial axis for a region 
with generic smooth boundary to the case where the region has a generic 
boundary with corners.  \par
Fourth, genericity proved from the multi-transversality theorems only 
yields the results for a residual set of mappings of configurations.  By 
contrast, Mather \cite{M2} asserts that the generic set of embeddings for 
the Blum medial structure form an open set, although he does not prove it 
in his paper.  We give a treatment in our general case to prove that the set 
of smooth embeddings of configurations which exhibit the generic linking 
properties forms an open set in the space of smooth embeddings (and 
hence smooth mappings).  We do so by relating the versality of the 
distance and height functions with the infinitesimal stability of 
associated mappings, and then applying Mather\rq s general theorem 
\lq\lq infinitesimal stability implies stability\rq\rq \cite{M5}.

 \vspace{2ex} 
\centerline{CONTENTS}
 \vspace{2ex}
\begin{itemize}

\item[Part I] Medial/Skeletal Linking Structures
\vspace{2ex}

\item[(2)]	Multi-Region Configurations in $\R^{n+1}$
 \par \vspace{1ex}
\hspace{2em}Local Models for Regions at Singular Points on Boundary
\par
\vspace{1ex}
\hspace{2em}The Space of Equivalent Configurations via Mappings of a 
Model
\par
\vspace{1ex}
\hspace{2em}Configurations allowing Containment of Regions
\par
\vspace{2ex}

\item[(3)]	Skeletal Linking Structures for Multi-Region Configurations in 
$\R^{n+1}$ 
\par 
\vspace{1ex}
\hspace{2em} Skeletal Linking Structures for Multi-Region Configurations
\par 
\vspace{1ex}
\hspace{2em} Linking between Regions and between Skeletal Sets
\par
\vspace{2ex}
\item[(4)]	Blum Medial Linking Structure for a Generic Multi-Region 
Configuration
\par \vspace{1ex}
\hspace{2em}Blum Medial Axis for a Single Region with Smooth Generic  
\par 
\hspace{2em}Boundary \par
\vspace{1ex}
\hspace{2em}Addendum to Generic Blum Structure for a Region with 
\par 
\hspace{2em}Boundaries and Corners
\par
\vspace{1ex}
\hspace{2em}Blum Medial Linking Structure \par
\vspace{1ex}
\hspace{2em}Generic Linking Properties 
\par
\vspace{1ex}
\hspace{2em}Existence of Blum Medial Linking Structure 
\vspace{2ex}
 \par
\hspace{2em}Addendum: Classification of Linking Types for Blum Medial 
\par
 \hspace{2em}Linking Structures in $\R^3$
\par
\vspace{2ex}
\item[(5)]	Retracting the Full Blum Medial Structure to a Skeletal Linking 
Structure \par
\vspace{2ex}

\item[Part II] Positional Geometry of Linking Structures 
 \par
\vspace{2ex}
\item[(6)] Questions Involving Positional Geometry of a Multi-region 
Configuration 
\par
\vspace{1ex}
\hspace{2em}Introduction
\par
\vspace{2ex}
\item[(7)] Shape Operators and Radial Flow for a Skeletal Structure
\par
\vspace{1ex}
\hspace{2em}The Radial Flow
\par
\vspace{1ex}
\hspace{2em}Radial and Edge Shape Operators
\par
\vspace{1ex}
\hspace{2em}Curvature Conditions and Nonsingularity of the Radial Flow 
\par
\vspace{1ex}
\hspace{2em}Evolution of the Shape Operators under the Radial Flow
\vspace{2ex}
\item[(8)] Linking Flow and Curvature Conditions  
\par
\vspace{1ex}
\hspace{2em}Nonsingularity of the Linking Flow 
\par
\vspace{1ex}
\hspace{2em}Special M\"{o}bius Transformations of Matrices and 
Operators
\par
\vspace{1ex}
\hspace{2em}Evolution of the Shape Operators under the Linking Flow
 \par
\vspace{2ex}
\item[(9)] Properties of Regions Defined Using the Linking Flow \par
\vspace{1ex}
\hspace{2em}Medial/Skeletal Linking Structures in the Unbounded Case
\par
\vspace{1ex}
\hspace{2em}Medial/Skeletal Linking Structures for the Bounded Case 
\par
\vspace{2ex}
\item[(10)] Global Geometry via Medial and Skeletal Linking Integrals
\par
\vspace{1ex}
\hspace{2em}Defining Medial and  Skeletal Linking Integrals
\par
\vspace{1ex}
\hspace{2em}Computing Boundary Integrals via Medial Linking Integrals
\par
\vspace{1ex}
\hspace{2em}Computing Integrals as Skeletal Integrals via the Linking 
Flow 
\par
\vspace{1ex}
\hspace{2em}Skeletal Linking Integral Formulas for Global Invariants
\par
\vspace{2ex}
\item[(11)] Positional Geometric Properties of Multi-Region 
Configurations \par
\par
\vspace{1ex}
\hspace{2em}Neighboring Regions and Measures of Closeness
\par
\vspace{1ex}
\hspace{2em}Measuring Significance of Objects Via Linking Structures
\par
\vspace{1ex}
\hspace{2em}Properties of Invariants for Closeness and Significance
\par
\vspace{1ex}
\hspace{2em}Tiered Linking Graph
\par
\vspace{1ex}
\hspace{2em}Higher Order Positional Geometric Relations via Indirect 
Linking
\par
\vspace{2ex}

\item[Part III] Generic Properties of Linking Structures via Transversality 
Theorems \par
\vspace{2ex}

\item[(12)]	Multi-Distance and Height-Distance Functions and Partial 
Multijet Spaces 
\par 
\vspace{1ex}
\hspace{2em}Multi-Distance and Height-Distance Functions
\par 
\vspace{1ex}
\hspace{2em}Partial Jet Spaces for Multi-Distance and Height-Distance 
\par 
\hspace{2em}Functions
\par 
\vspace{2ex}
\item[(13)]	Generic Blum Linking Properties via Transversality Theorems
\par  
\vspace{1ex}  
\hspace{2em}Transversality Theorem for Multi-Distance and 
Height-Distance   \par 
\hspace{2em}Functions \par
\vspace{1ex}
\hspace{2em}Strata for Generic Properties of Blum Linking Structure 
\par 
\vspace{2ex}
\item[(14)]	Generic Properties of  Blum Linking Structures
\par  
\vspace{1ex}  
\hspace{2em} Properties of Transversality and Whitney Stratified Sets
\par  
\vspace{1ex}  
\hspace{2em} Consequences of Transversality for Multi-Distance 
Functions
\par  
\vspace{1ex}  
\hspace{2em} Consequences of Transversality for Height-Distance 
Functions
\par  
\vspace{1ex}  
\hspace{2em} Proof of Theorem \ref{Thm3.5} for a Residual Set of 
Embeddings
\par
\vspace{2ex}
\item[(15)]	Concluding Generic Properties of Blum Linking Structures 
\par 
\vspace{1ex}
\hspace{2em} Blum Medial Structure near Corner Points
\par 
\vspace{1ex}
\hspace{2em} Openness of the Genericity Conditions \par
\par 
\vspace{2ex}

\item[Part IV] Proofs and Calculations for the Transversality Theorems
\vspace{2ex}

\item[(16)]	Reductions of the Proofs of the Transversality Theorems
\par 
\vspace{1ex}
\hspace{2em}Hybrid Transversality Theorem
\par 
\vspace{1ex}
\hspace{2em}Multi-Jet Properties Implying Stratification Properties 
\par
\hspace{2em}on the Boundaries
\par
\vspace{2ex}
\item[(17)]  Families of Perturbations and their Infinitesimal Properties
\par 
\vspace{1ex}
\hspace{2em} Construction of the Families of Perturbations
\par 
\vspace{1ex}
\hspace{2em}Computation of Derivatives for Families of Perturbations
\par 
\vspace{1ex}
\hspace{2em}Jet Properties of Parameter Deformations
\par
\vspace{2ex}
\item[(18)]  Completing the Proofs of the Transversality Theorems
\par 
\vspace{1ex}
\hspace{2em}Perturbation Transversality Conditions via  Fiber Jet 
Extension Maps 
\par 
\vspace{1ex}
\hspace{2em}Computing Derivatives of the Multi-functions at Critical 
Points
\par 
\vspace{1ex}
\hspace{2em}Concluding the Proofs
\par
\vspace{1ex}
\item[(19)] Appendix:  List of Frequently Used Notation
\end{itemize}

\newpage

\section{Multi-Region Configurations in $\R^{n+1}$}  
\label{S:sec1}
\par
\subsection*{Local Models for Regions at Singular Points on Boundary}
We begin by defining what exactly we mean by a \lq\lq multi-region 
configuration\rq\rq.  First, we consider compact connected $n+1$-
dimensional regions $\gW \subset \R^{n+1}$ which are {\it smooth 
manifolds with boundaries and corners}, with boundaries denoted by $\cB$.  
We recall that $\gW$ is a manifold with boundaries and corners if each 
point $x \in \cB$ has a neighborhood $U \subset \gW$ diffeomorphic, with 
$x \mapsto 0$, to an open subset of $C_k = \R^k_{+} \times \R^{n+1-k}$, 
for some $0 \leq k \leq n$.  We refer to such $x$ as a {\it  
$k$-edge-corner point}.  Here $\R^k_{+} = \{ (x_1, \dots , x_k) \in \R : x_i 
\geq 0\}$. Then $\cB$ is stratified by the strata consisting of 
$k$-edge-corner points $x \in \cB$.  Those strata of dimension $n$ will be 
referred to as the {\it open (or regular) strata of the boundary}.  \par
Second, we require that the regions satisfy the {\it boundary intersection 
condition:} if two such regions intersect, they do so only on their 
boundaries; and their common intersection is a union of strata (defined 
above).  Third, the regions satisfy the {\it boundary edge condition:} if a 
point $x$ is contained in more than one region then the union of the 
boundaries of the regions containing $x$ is locally diffeomorphic in a 
neighborhood of $x$ to one of the following regions $P_k$ or $Q_k$ in 
$\R^{n+1}$ for $k = 1, \dots n+1$.  \par
For $y = (y_1, \dots , y_{k+1}) \in \R^{k+1}$, we let $g_{k+1}(y) = \sum_{i 
= 1}^{k+1} y_i$.  Then, we may identify $\R^{n+1}$ with $L_k \times 
\R^{n+1-k}$ where $L_k = \{ y \in \R^{k+1} : g_{k+1}(y) = 0\}$.  We then 
define
\begin{itemize}
\item[i)] 
$P_k = Y_k \times \R^{n+1-k}$, where   
$$Y_k = \{ y \in L_{k} : \text{for some } i \neq j,\,  y_i = y_j \leq y_{\ell} 
\text{ for } \ell \neq i, j\}  $$
and for $1 \leq k \leq n$, 
\item[ii)] $Q_k = Z_{n+1} \cup (H_{n+1} \cap P_k)$, 
\end{itemize}
where for $(y, x) \in L_k \times \R^{n+1-k}$ with $x = (x_{k+1}, \dots , 
x_{n+1}) \in \R^{n-k+1}$, $Z_{n+1}$ is the hyperplane defined  by $x_{n+1} 
= 0$, and $H_{n+1}$ is the half-space defined by $x_{n+1} \geq 0$. 
\par
\par
The local forms for $\R^3$ consist of a smooth surface together with 
those shown in Figure \ref{fig.2b}.
\begin{figure}[ht]
\begin{center} 
\includegraphics[width=10cm]{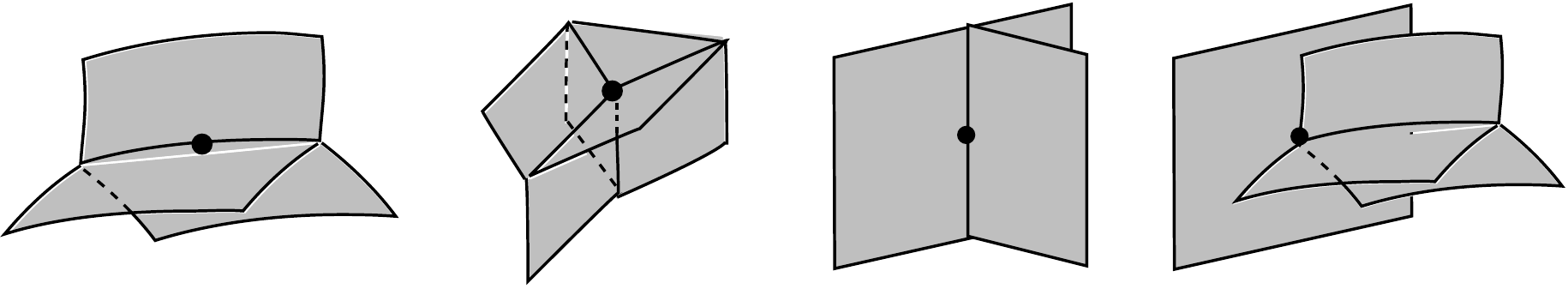} 
\end{center} 
\hspace{0.4in}(a) \hspace{0.8in} (b) \hspace {0.6in} (c) \hspace{0.8in} (d)
\caption{\label{fig.2b} Generic local forms for intersecting regions in 
$\R^3$:  a) $P_2$; b) $P_3$; c) $Q_1$; and d) $Q_2$.}  
\end{figure} 
\par
We note that $P_1$ denotes a smooth boundary region of each of two 
regions, so a point of the common boundary region will be in the closure of 
the open strata unless it is a point where more than two regions 
(including the exterior) meet.  For $k > 1$, a $P_k$-point of a region 
$\gW_i$ will be a singular point of the boundary, and hence lies in the 
singular set of $\cB_i$.  \par 
By contrast, there are two possibilities for $Q_k$-points.  First, a region 
may have for its boundary the hypersurface $Z_{n+1}$ in the local model at 
a $Q_k$-point.  We refer to the point as a {\it smooth 
$Q_k$-point}.  Second, the boundary in the local model at a $Q_k$-point 
may be formed from both part of $Z_{n+1}$ and a face from $P_k$ in the 
local model.  The $Q_k$-point will be a singular point of its boundary, and 
we refer to it as a {\it singular $Q_k$-point}.  For a region $\gW_i$, the 
set of smooth  $Q_k$-points for all $k$ defines a compact Whitney 
stratified set $\gS_{Q\, i}$.  This locally consists of the intersection of 
$Z_{n+1}$  with the other faces.  It is contained in the set of smooth 
points of the boundary $\cB_i$.  We let $\gS_{Q} = \cup_{i} \gS_{Q\, i}$.  

\flushpar
\begin{Remark}
\label{Rem.2.1}
For the local configurations in a) and b) in Figure \ref{fig.2b}, one of the 
complementary regions may denote the external complement of the union 
of regions.  For c) and d), only one of the regions to the right of the plane 
may be a complementary external region.  Physically such local 
configurations of type a) or b) arise when regions with flexible boundaries 
have physical contact.  This includes the singularities in soap bubbles.  For 
c) and d), the region to the left of the plane would represent a rigid region 
with which one or more flexible regions have contact.  
\end{Remark}
\par
\begin{Definition}
\label{Def.1.1}
A {\em multi-region configuration} consists of a collection of compact 
$(n + 1)$-dimensional regions $\gW_i \subset \R^{n+1}$, $i=1, \dots , m$, 
which have smooth boundaries with corners, with boundaries denoted by 
$\cB_i$, and satisfying the boundary intersection condition and the 
boundary edge condition. 
\end{Definition}
\par
  For a given configuration of regions $\bgW = \{ \gW_i\}_{i = 1}^{m}$, the 
union $\cup_{i =1}^{m} \gW_i$ has a Whitney stratification consisting of 
the interiors of the $\gW_i$ together with the strata of the boundaries of 
the $\gW_i$ formed from the $k$-edge-corner points for each given $k$, 
and their complement consisting of smooth points of each boundary (for 
detailed treatments of Whitney stratifications and their properties see 
e.g. \cite{Wh}, \cite{M1}, \cite{M3}, or \cite{GLDW}).  Also, we let 
$\gW_0$ denote the closure of the complement $\R^{n+1} \backslash 
\cup_{i =1}^{m} \gW_i$.

\subsection*{The Space of Equivalent Configurations via Mappings of a 
Model}
\par
In order to consider generic configurations, we describe how we will 
deform such a configuration of regions preserving the form of the 
intersections via mappings of the configuration.   Let $\bgD$ be a 
configuration  of multi-regions $\{\gD_i\}$ (in $\R^{n+1}$) satisfying 
Definition \ref{Def.1.1}.  
\begin{Definition}
\label{Def1.1b}
A {\em multi-region configuration} $\bgW$ {\em based on model 
configuration} $\bgD$ is given by a smooth embedding $\Phi : \bgD \to 
\R^{n+1}$ which restricts to diffeomorphisms $\gD_i \simeq \gW_i$ for 
each $i$.  Multi-region configurations with model configuration $\bgD$ 
will be said to be {\em configurations of the same type} as $\bgW$.  \par
The {\it space of configurations of type $\bgD$} is given by the space of 
embeddings $\emb (\bgD, \R^{n+1})$ with the $C^{\infty}$-topology.  
\end{Definition}
For such a configuration, we let the boundary of $\gD_i$ be denoted by 
$X_i$ and then $\cB_i = \Phi(X_i)$ is the boundary of $\gW_i$.  
Even though the configuration varies with $\Phi$ we still use the notation 
$\bgW$ for the resulting (varying) image configuration (with a specific 
$\Phi$ understood).  We have the stratification of $\bgD$ as described 
above and by applying $\Phi$ we obtain a corresponding stratification of 
$\bgW$.  \par  
We note that the union of the regions can be viewed as a manifold with 
boundaries and corners except edges and corners are inward pointing.  
Nonetheless, we shall see that the basic properties that are used for 
mappings on manifolds with boundaries and corners will still be valid. 
\par 
\begin{figure}[ht]
\includegraphics[width=6cm]{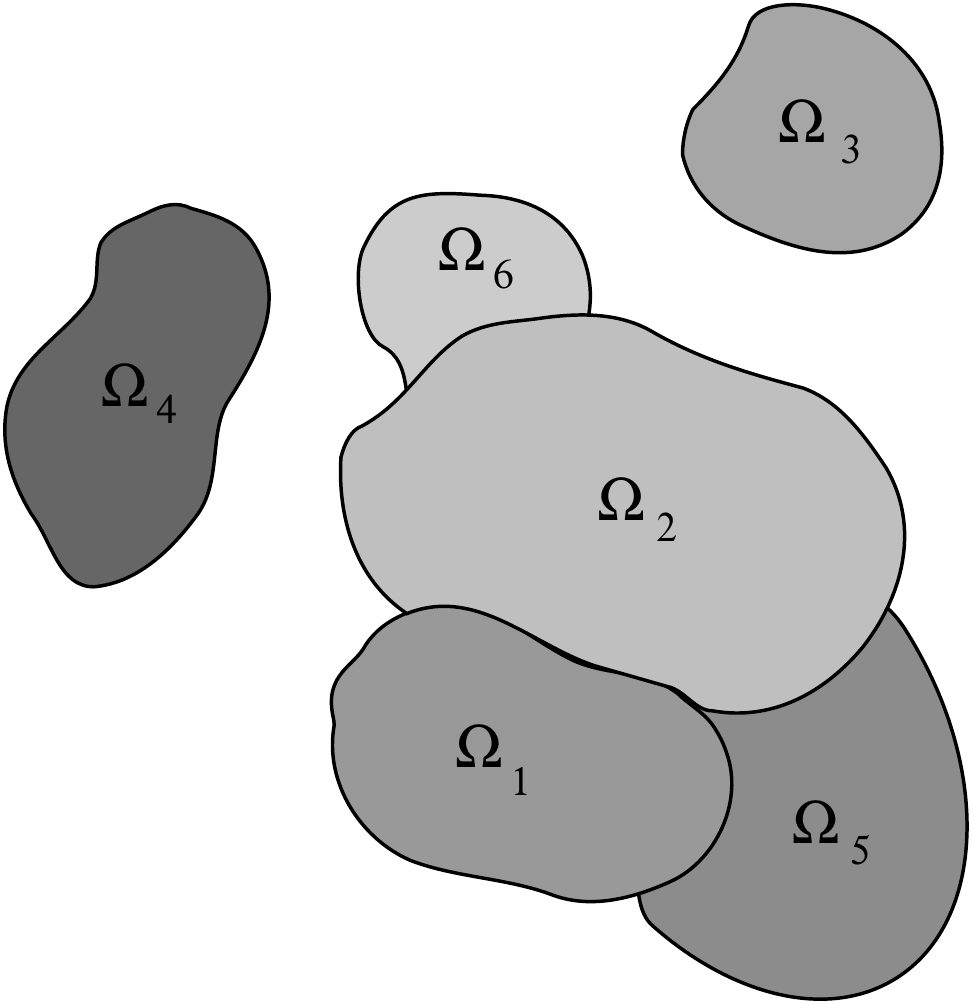}
\caption{\label{fig.1.4} A multi-region configuration in $\R^2$.  The 
stratification consists of the interiors of the regions, the open boundary 
curves, excluding branch points which are either of type $P_2$ or $Q_1$.}
\end{figure} 
\par
\begin{Definition}
\label{Def1.5}  The regions $\gD_i$ and  $\gD_j$ are  {\it adjoining 
regions} if $X_{i} \cap X_{j} \neq \emptyset$.  If $X_{i} \backslash \cup_{j 
\neq i} X_j \neq \emptyset$, we say $\gD_i$ {\it adjoins the complement}.  
These then have the corresponding meanings for the image regions 
$\gW_i$, $\gW_j$ and $\gW_0$.
\end{Definition} 
\par
\begin{Remark}
\label{Rem1.6}
In the special case of a configuration consisting of disjoint regions with 
smooth boundaries, 
each region is thus only adjoined to the complement.  Then, we shall see 
that the geometric relations between the regions will be captured only via 
linking behavior in the external region.  
\end{Remark}
\par
As all of the $\gD_i$ are compact, $\Phi(\bgD)$ is compact and hence 
bounded.  However, we introduce a stronger notion of being bounded.  If we 
are given an ambient region $\tilde \gW$ so that $\gW_i \subset \intr 
(\tilde \gW)$ for each $i$, then we say that $\bgW$ is a {\it configuration 
bounded by} $\tilde \gW$.  Then we may either consider bounded 
configurations given by an embedding $\tilde \Phi : \tilde \gD \to 
\R^{n+1}$, with $\tilde \Phi(\tilde \gD)$ denoting $\tilde \gW$, and  $\Phi 
= \tilde \Phi | \bgD$; or fix $\tilde \gW$ and consider embeddings $\Phi : 
\bgD \to \intr (\tilde \gW)$.  Such a $\tilde \gW$ might be a bounding box 
or disk or an intrinsic region containing the configuration, for examples 
see \S \ref{SII:sec.int}.  \par
\subsection*{Configurations allowing Containment of Regions} 
\par
In our definition of a multi-region configuration, we have explicitly 
excluded one region being contained in another.  However, given a 
configuration which allows this, we can easily identify such a 
configuration with the type we have already given.  To do so, we require 
that the boundaries of two regions still intersect in a union of strata 
which form the closure of strata of dimension $n$.  Then, if one region is 
contained in another  $\gW_i \subset \gW_j$, then we may represent 
$\gW_j$ as a union of two regions $\gW_i$ and the closure of $\gW_j 
\backslash (\intr(\gW_i) \cup (\cB_i \cap \cB_j))$, which we refer to as 
the {\it region complement} to $\gW_i$ in $\gW_j$.  By repeating this 
process a number of times we arrive at a representation of the 
configuration as a multi-region configuration in the sense of Definition 
\ref{Def.1.1}.  See Figure \ref{fig.1.5}.
\par
\begin{figure}[ht]
\includegraphics[width=7cm]{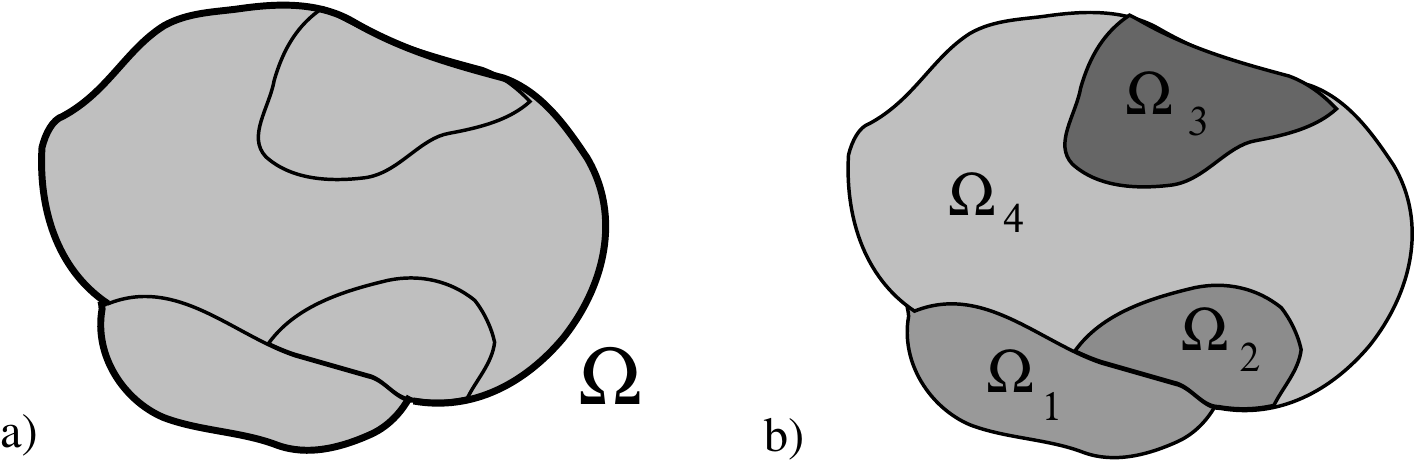}
\caption{\label{fig.1.5} a) is a configuration of regions contained in a 
region $\gW$.  It is equivalent to a multi-region configuration b) which is 
without inclusion.  Note that in this representation, the boundary of 
contained regions will meet in a transverse fashion, any boundary region 
of the containing region.}
\end{figure} 
\par

\section{Skeletal Linking Structures for Multi-Region Configurations in 
$\R^{n+1}$}
\label{S:sec2}
\par
The skeletal linking structures for multi-region configurations will build 
upon the skeletal structures for individual regions.  We begin by recalling 
their basic definitions and simplest properties.  
\subsection*{Skeletal Structures for Single Regions}
\par
We begin by recalling \cite{D1} that a \lq\lq skeletal structure\rq\rq $(M, 
U)$ in $\R^{n+1}$, \cite[Def. 1.13]{D1}, consists of: a Whitney stratified 
set $M$ and a multivalued \lq\lq radial vector field\rq\rq\, $U$ on $M$.  
We will not give every condition in detail, but instead point out the key 
features and just mention certain technical conditions that are needed so 
the structure has the correct global properties.  \par 
$M$ satisfies the conditions for being a \lq\lq skeletal set\rq\rq\, 
\cite[Def. 1.2]{D1}, which we briefly recall.  The skeletal set $M$ consists 
of smooth strata of dimension $n$, denoted by $M_{reg}$, and the set of 
singular strata $M_{sing}$, with $\partial M$ denoting the singular strata 
where $M$ is locally a manifold with boundary (for which we must use 
special \lq\lq boundary coordinates\rq\rq, see \cite[Def. 1.3]{D1}).  It 
satisfies the following properties. 
\begin{itemize}
\item[i)] For each point $x_0 \in M_{sing}$, and each connected component 
$M_{\ga}$ of $M_{reg}$ in a sufficiently small neighborhood of $x_0$, 
there is a unique limiting tangent space $\lim T_{x_i} M_{\ga}$ for any 
sequence $\{x_i\} \subset M_{\ga}$ converging to $x_0$. 
\item[ii)] Locally in a neighborhood of a singular point $x_0$, $M$ may be 
expressed as a union of (smooth) $n$--manifolds with boundaries and 
corners $M_{\ga_j}$, where two such intersect only on boundary facets 
(faces, edges etc.).  We will refer to this as a \lq\lq paved 
neighborhood\rq\rq\, of $x_0$ (see Figure \ref{fig.10.4a}).  
\item[iii)]  If $x_0 \in \barbdyM$ then those $M_{\ga_j}$ in (2) meeting 
$\bdyM$ meet it in an $n-1$ dimensional facet.  
\end{itemize}
Condition ii) is a simplified form of a local triangulation of a stratified 
set, see \cite{Go}, \cite{V}.  \par 
Second, the multivalued vector field $U$ has the following properties.
\begin{itemize}
\item[i)]  For each smooth point $x_0 \in M_{reg}$, there are two values of 
$U$ which point toward opposite sides of $T_{x_0}M$.  Moreover, on a 
neighborhood of a point of $M_{reg}$, the values of $U$ corresponding to 
one side form a smooth vector field. 
\item[ii)]  For a singular point $x_0 \notin \bdyM$, with $M_{\ga_j}$ a 
connected component containing $x_0$ in its closure, both smooth values 
of $U$ on $M_{\ga}$ extend smoothly to values $U(x_0)$ on the stratum of 
$x_0$.  If $M_{\ga_j}$ does not intersect $\bdyM$ in a neighborhood of 
$x_0$, then $U(x_0) \notin T_{x_0}M_{\ga_j}$.  In addition, for each local 
connected component $C_i$ of $B_{\gevar}(x_0) \cap (\R^{n+1} \backslash 
M)$ in a small ball $B_{\gevar}(x_0)$, there is a unique value of $U$ 
pointing into $C_i$ and the values at points in a neighborhood 
$B_{\gevar}(x_0) \cap M$ of $x_0$ which point into $C_i$ define a 
continuous vector field which is smooth on each stratum of $M$ (see a) in 
Figure \ref{fig.10.4a} or Figure \ref{fig.5c}). \par
\item[iii)]   At points $x_0 \in \bdyM$, there is a unique value for $U$ 
which is tangent to the stratum of $M_{reg}$ containing $x_0$ in the 
closure and points away from $M$.
\end{itemize}
When we refer to a {\em smooth value of $U$} at $x_0$ we mean either a 
smoothly varying choice of $U$ on one side of $M$ if $x_0 \in M_{reg}$ or 
one on $M_{\ga_j}$ extending smoothly to $x_0$ if $x_0 \in M_{sing}$.  
This allows various mathematical constructions on the smooth strata to 
be extended to the singular strata $M_{sing}$, see \cite[\S 2]{D1}.  \par 
Using the multivalued radial vector field $U$, we can define a stratified 
set (with smooth strata) $\tilde M$, called the \lq\lq double of $M$\rq\rq.  
Points of $\tilde M$ consist of all pairs $\tilde x = (U, x)$ where $U$ is a 
value of the radial vector field at $x$, and neighborhood of points $\tilde 
x$ are defined in $\tilde M$ using continuous extensions of $U$ near $x$.  
For example, the neighborhood in a) of Figure \ref{fig.10.4a} gives the 
neighborhoods in $\tilde M$  corresponding to b) and c).  There is also 
defined a finite-to-one stratified mapping $\pi : \tilde M \to M$ defined 
by $\pi (U, x) = x$, see \cite[\S 3]{D1}.  \par 
On $\tilde M$ is defined a \lq\lq normal line bundle\rq\rq $N$, such that 
over $\tilde x = (U, x)$, $N_{\tilde x} = \{ a\cdot U: a \in \R\}$.  It is a 
trivial line bundle, with a \lq\lq half-line bundle\rq\rq\, $N_{+}= \{ 
c\cdot U: c \geq 0 \}$.  We also define \lq\lq one-sided 
neighborhoods\rq\rq of the zero section $N_{a}= \{ c\cdot U: 0 \leq c \leq 
a\}$.   \par
\begin{figure}[ht]
\includegraphics[width=12cm]{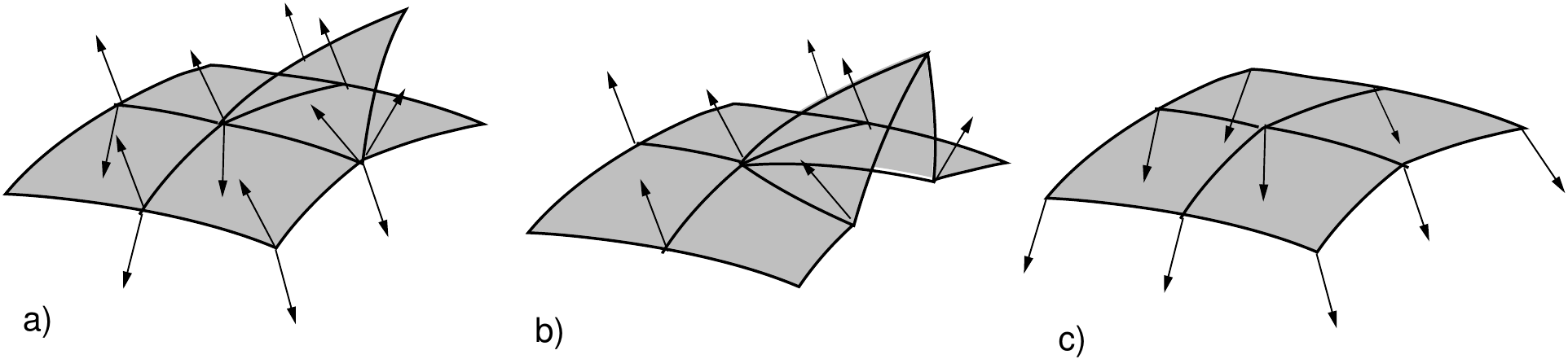}
\caption{\label{fig.10.4a} a) illustrates a \lq\lq paved neighborhood\rq\rq 
of a point in $M$ showing the paving by manifolds with boundaries and 
corners and the multivalued vector fields pointing to the local 
complementary components.  b) and c) illustrate the two corresponding 
paved neighborhoods in $\tilde M$.}
\end{figure}
\par

Then, using $\tilde M$, we can define the \lq\lq radial flow\rq\rq.  In a 
neighborhood $W$ of a point $x_0 \in M$ with a smooth single-valued 
choice for $U$, we define a local representation of the radial flow by 
$\psi_t (x) = x + t\cdot U(x)$.  It yields a {\it global radial flow} as a 
mapping $\Psi: N_{+} \to \R^{n+1}$.  Lastly, there are two technical 
conditions for a skeletal structure, the \lq\lq local initial 
conditions\rq\rq \cite[Def. 1.7]{D1} which ensure that the radial flow for 
small time remains one-to one (and stratawise nonsingular). \par 
We also recall from \cite{D1} that beginning with a skeletal structure 
$(M, U)$ in $\R^{n+1}$, we associate a \lq\lq region\rq\rq $\gW = 
\Psi(N_{1})$ and its \lq\lq boundary\rq\rq $\cB = \{ x + U(x) : x \in M\, 
\mbox{ all values of } U\}$.  Then, provided certain curvature and 
compatibility conditions are satisfied, which we will recall in \S 
\ref{S:secII.Rad.Flow}, it follows by \cite[Thm 2.5]{D1} that the radial 
flow defines a stratawise smooth diffeomorphism between $N_{1} 
\backslash \tilde M$ and $\gW \backslash M$, with the boundary.  From the 
radial flow we define the {\em radial map} $\psi_1 (x) = x + U(x)$ from 
$\tilde M$ to $\cB$.  We may then relate the boundary and skeletal set via 
the radial flow and the radial map.  \par
A standard example we consider will be the Blum medial axis $M$ of a 
region $\Omega$ with generic smooth boundary $\cB$ and its associated 
(multivalued) radial vector field $U$.  Then, the associated boundary $\cB$ 
we consider here will be the initial boundary of the object.   \par
\vspace{1ex}
\subsection*{Skeletal Linking Structures for Multi-Region Configurations} 
\hfill \par
We are ready to introduce skeletal linking structures for multi-region 
configurations.  These structures will accomplish multiple goals.  The 
most significant are the following.
\begin{itemize}
\item[i)]  Extend the skeletal structures for the individual regions in a 
minimal way to obtain a unified structure which also incorporates the 
positional information of the objects.  
\item[ii)] For generic configurations of disjoint regions with smooth 
boundaries, provide a Blum medial linking structure which incorporates 
the Blum medial axes of the individual objects. 
\item[iii)] For general multi-region configurations with common 
boundaries, provide for a modification of the resulting Blum structure to 
give a skeletal linking structure.
\item[iv)] Handle both unbounded and bounded multi-region configurations. 
\end{itemize} 
\par
Later we shall also see that the skeletal linking structure has a second 
important function allowing us to answer various questions involving the 
\lq\lq positional geometry\rq\rq of the regions in the configuration.  \par
We begin by giving versions of the definition for both the bounded and 
unbounded cases.  
\begin{Definition}
\label{Defmultlkgstr} A {\em skeletal linking structure for a multi-region 
configuration} $\{\gW_i\}$ in $\R^{n+1}$ consists of a triple $(M_i, U_i , 
\ell_i)$ for each region $\gW_i$ with the following two sets of 
properties.
\begin{itemize}
\item[S1)]
$(M_i, U_i)$ is a skeletal structure for $\gW_i$ for each $i$ 
with $U_i =  r_i\cdot \bu_i$ for $\bu_i$ a (multivalued) unit vector field  
and $r_i$ the multivalued radial function on $M_i$.
\item[S2)] $\ell_i$ is a (multivalued) {\em linking function} defined on 
$M_i$ (excluding the strata $M_{i\, \infty}$, see L4 below), with one value 
for each value of $U_i$, for which the corresponding values satisfy $\ell_i 
\geq r_i$, and it yields a (multivalued) {\em linking vector field} $L_i = 
\ell_i\cdot \bu_i$ .
\item[S3)] The canonical stratification of $\tilde M_i$ has a stratified 
refinement $\cS_i$, which we refer to as the {\em labeled refinement}.
\end{itemize}
By $\cS_i$ being a \lq\lq labeled refinement\rq\rq of the stratification  
$\tilde M_i$ we mean it is a refinement in the usual sense of 
stratifications in that each stratum of $\tilde M_i$ is a union of strata of 
$\cS_i$; and they are labeled by the linking types which occur on the 
strata. \par 
In addition, they satisfy the following four {\em linking conditions}. 
\flushpar
{\it Conditions for a skeletal linking structure} \par
\begin{itemize}
\item[L1)]   $\ell_i$ and $L_i$ are continuous where defined on $M_i$ and 
are smooth on strata of $\cS_i$.

\item[L2)]  The \lq\lq linking flow\rq\rq (see (\ref{Eqn2.1}) below) 
obtained by extending the radial flow is nonsingular and for the strata 
$S_{i\, j}$ of $\cS_i$, the images of the linking flow are disjoint and each 
$W_{i\, j} = \{x + L_i (x): x \in S_{i j}\}$ is smooth.  

\item[L3)]  The strata $\{W_{i\, j}\}$ from the distinct regions either 
agree or are disjoint and together they form a stratified set $M_0$, which 
we shall refer to as the {\em (external) linking axis}.
\item[L4)]  There are strata $M_{i\, \infty} \subset \tilde M_i$ on which 
there is no linking so the linking function $\ell_i$ is undefined.  On the 
union of these strata $M_{\infty} = \cup_{i} M_{i\, \infty}$, the global 
radial flow restricted to $N_{+} | M_{\infty}$ is a diffeomorphism with 
image the complement of the image of the linking flow.  In the bounded 
case, with $\tilde \gW$ the enclosing region of the configuration, it is 
required that the boundary of $\tilde \gW$ is transverse to the 
stratification of $M_0$ and where the linking vector field extends beyond 
$\tilde \gW$, it is truncated at the boundary of $\tilde \gW$ (this 
includes $M_{\infty}$).  
\end{itemize}
\end{Definition}

\par
We denote the region on the boundary corresponding to $M_{\infty}$ by 
$\cB_{\infty}$ and that corresponding to $M_{i\, \infty}$ by $\cB_{i\, 
\infty}$. 
\begin{Remark}
\label{Rem.2.1a}
By property L4), $\cB_{i\, \infty}$ does not exhibit any linking with any 
other region.  We will view it as either the {\em unlinked region} or 
alternately as being {\it linked to $\infty$}, where we may view the 
linking function as being $\infty$ on $M_{i\, \infty}$.  In the bounded case 
with $\tilde \gW$ the enclosing region of the configuration, we modify 
the linking vector field so it is truncated at the boundary of $\tilde \gW$.  
We can also introduce a \lq\lq linking vector field on $M_{\infty}$\rq\rq\,  
by extending the radial vector field until it meets the boundary of $\tilde 
\gW$.  Then, in the bounded case, we let $M_b$ denote the set of points in 
$\tilde M$ whose linking vector fields end at the boundary of $\tilde \gW$. 
Then, $M_{\infty} \subseteq M_b \subset \tilde M$, and in the generic 
bounded case, $M_b$ has a natural stratification, and it provides the 
linking to the boundary of $\tilde \gW$. 
\end{Remark}
\par
For this definition, we must define the \lq\lq linking flow\rq\rq\,  which 
is an extension of the radial flow.  We define the {\it linking flow} from 
$M_i$ by 

\begin{align}
\label{Eqn2.1}
\gl_{i}(x,t) \, \, &= \,\, x \,+ \, \chi_i(x,t)\bu_i(x)  \qquad \text{where}  
\\
\chi_i(x,t) &= \left\{      
\begin{array}{lr}       
 2tr_i(x) &  \displaystyle 0 \leq t \leq \frac{1}{2}\\        2(1-t)r_i(x) + 
(2t-1)\ell_i(x) &  \displaystyle \frac{1}{2} \leq t \leq 1      
\end{array}    \right.  \, .   \notag
\end{align}
As with the radial flow it is actually defined from $\tilde M_i$ (or $\tilde 
M_i \backslash M_{\infty}$). The combined linking flows $\gl_i$ from all 
of the $M_i$ will be jointly referred to as the linking flow $\gl$.  For 
fixed $t$ we will frequently denote $\gl(\cdot, t)$ by $\gl_t$. \par
\vspace{1ex}
\flushpar
{\bf Convention: }  Because we will often view the collection of objects 
for the linking structure as together forming a single object, we will 
adopt notation for the entire collection.  This includes $M$ for the union of 
the $M_i$ for $i > 0$, and similarly for $\tilde M$.  On $M$ (or $\tilde M$) 
we have the radial vector field $U$ and radial function $r$ formed from 
the individual $U_i$ and $r_i$, the linking function $\ell$ and linking 
vector field $L$ formed from the individual $\ell_i$ and $L_i$; as well as 
the linking flow $\gl$ and $M_{\infty}$ already defined.
\par
\vspace{1ex}
\par
We see that for $0 \leq t \leq \frac{1}{2}$ the flow is the radial flow at 
twice its speed; hence, the level sets of the linking flow, $\cB_{i\, t}$, 
for time $0 \leq t \leq \frac{1}{2}$ will be those of the radial flow.  For 
$\frac{1}{2} \leq t \leq 1$ the linking flow is from the boundary $\cB_i$ 
to the linking strata of the external medial linking axis.   \par 
By the {\it linking flow being nonsingular} we mean it is a piecewise 
smooth homeomorphism, which for each stratum $S_{i\, j} \subset \tilde 
M_i$, is smooth and nonsingular on $S_{i\, j} \times \left[ 0, 
\frac{1}{2}\right]$.  Also, either: $S_{i\, j} \times \left[ \frac{1}{2}, 
1\right]$ is smooth and nonsingular if $S_{i\, j}$ is a stratum associated 
to strata in $\cB_{i\, 0}$; or $S_{i\, j}$ is not associated to strata in 
$\cB_{i\, 0}$, $\ell_i = r_i$ on $S_{i\, j}$, and so the linking flow on 
$S_{i\, j} \times \left[ \frac{1}{2}, 1\right]$ is constant as a function of 
$t$.  That the linking flow is nonsingular will follow from the analogue of 
the conditions given in \cite[\S 3]{D1} for the nonsingularity of the radial 
flow.  These will be given later, when we use the linking flow to establish 
geometric properties of the configuration.  
\par 
\begin{figure}[ht] 
\centerline{\includegraphics[width=3.5cm]{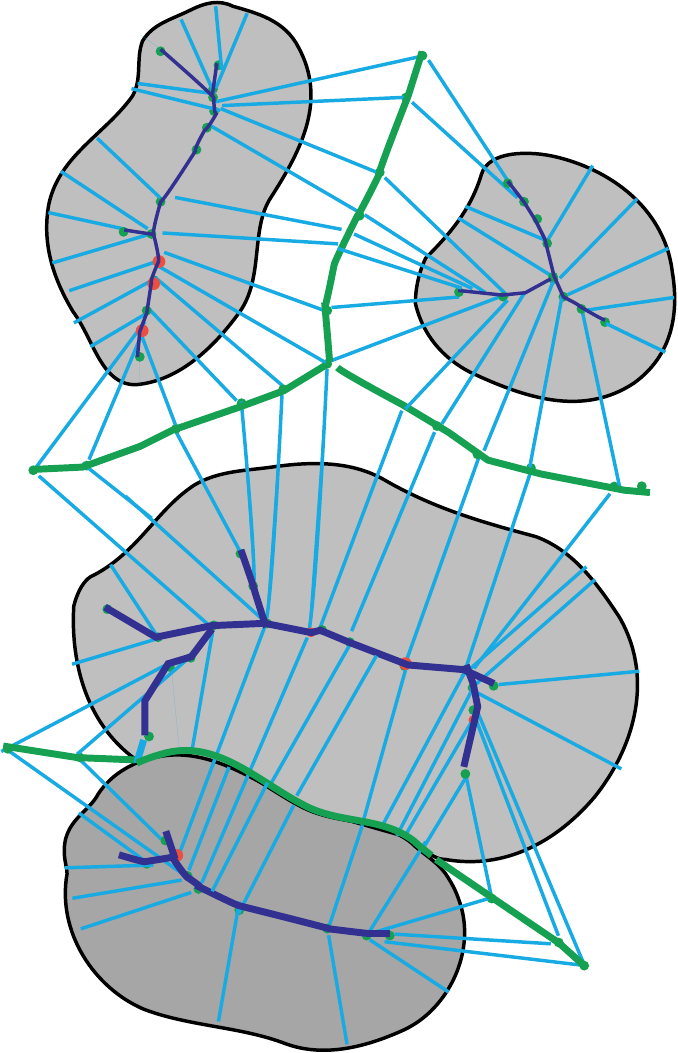}} 
\caption{\label{fig.5a} Skeletal linking structure for multi-region
configuration in $\R^2$.  Note that the linking axis includes the shared 
boundary curves of adjoined regions.} 
\end{figure} 
\par
In Figure \ref{fig.5a} we illustrate (a portion of) the skeletal linking 
structure for a configuration of four regions.  Shown are the linking vector 
fields meeting on the (external) linking axis.  The linking flow moves 
along the lines from the medial axes to meet at the linking axis.  \par

\subsection*{Linking between Regions and between Skeletal Sets}
\par
A skeletal linking structure allows us to introduce the notion of linking of 
points in different (or the same) regions and of regions themselves being 
linked.    We say that two points $x \in \tilde M_i$ and $x^{\prime} \in 
\tilde M_j$ are {\em linked} if the linking flows satisfy $\gl_i(x, 1) = 
\gl_j(x^{\prime}, 1)$. This is equivalent to saying that for the values of 
the linking vector fields $L_i (x)$ and $L_j(x^{\prime})$, $x + L_i(x) = 
x^{\prime} + L_j (x^{\prime})$.  Then, by linking property $L3)$, the set of 
points in $\tilde M_i$ and $\tilde M_j$ which are linked consist of a union 
of strata of the stratifications $\cS_i$ and $\cS_j$.  Furthermore, if the 
linking flows on strata from $S_{i\, k} \subset \tilde M_i$ and $S_{j\, 
k^{\prime}} \subset \tilde M_j$ yield the same stratum $W \subset M_0$, 
then we refer to the strata as being linked via the linking stratum $W$.  
Then, $\mu_{i\, j} = \gl_j(\cdot, 1)^{-1}\circ \gl_i(\cdot, 1)$ defines a 
diffeomorphic {\it linking correspondence} between $S_{i\, k}$ and $S_{j\, 
k^{\prime}}$.  
\par 
In Part II we will introduce a collection of regions which capture 
geometrically the linking relations between the different regions.  For 
now we concentrate on understanding the types of linking that can occur.  
There are several possible different kinds of linking.  More than two 
points may be linked at a given point in $M_0$.  Of these more than one 
may be from the same region.  If all of the points are from a single region, 
then we call the linking {\it self-linking}, which occurs at indentations of 
regions.  If there is a mixture of self-linking and linking involving other 
regions then we refer to the linking as {\it partial linking}, see 
Figure~\ref{fig.5d}.  \par 
\begin{figure}[ht] 
\centerline{\includegraphics[width=7cm]{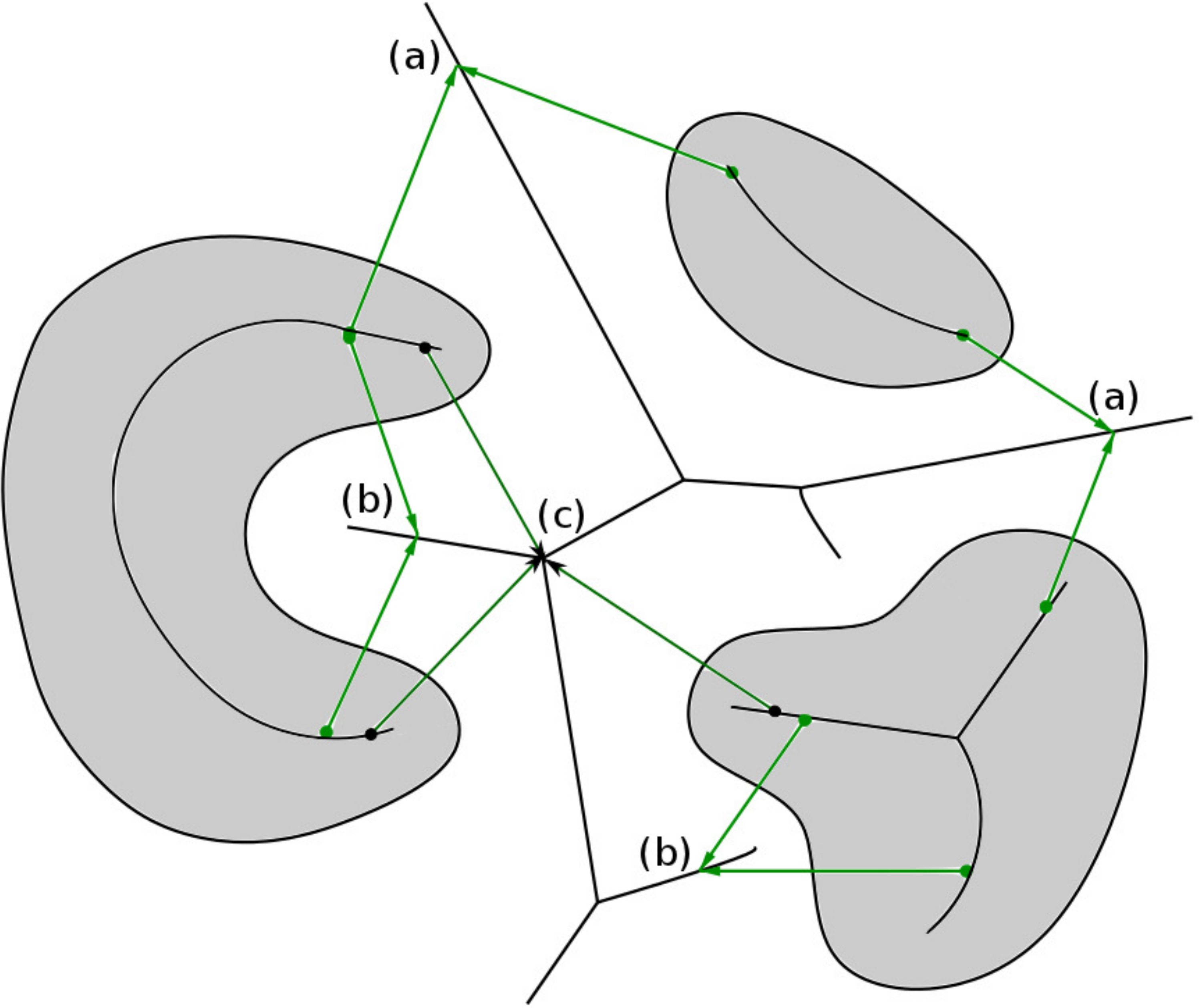}} 
\caption{\label{fig.5d} Types of linking for multi-region
configuration in $\R^2$: a) linking between two objects; b) self-linking; 
and c) partial linking.} 
\end{figure} 
\par
\begin{Remark}
\label{Rem2.2}  
If $\gW_i$ and $\gW_j$ share a common boundary region, then certain 
strata in $M_i$ and $M_j$ are linked via points in this boundary region, 
and for those $x \in M_i$, $\ell_i(x) = r_i(x)$; see Figure~\ref{fig.5a}.  
\end{Remark}

\par
\section{Blum Medial Linking Structure for a Generic Multi-Region 
Configuration}
\label{S:sec3}
\par
\subsection*{Blum Medial Axis for a Single Region with Smooth Generic 
Boundary} 
\par
In the case of a single region $\gW$ with smooth generic boundary $\cB$, 
the Blum medial axis $M$ is a special example of a skeletal structure 
which has special properties for both the medial axis $M$ and the radial 
vector field $U$.  The generic local structure of $M$ is given by normal 
forms defined in terms of properties of the family of distance-squared 
functions.  This family is the restriction of the \lq\lq distance-squared 
function\rq\rq\, $\gs(x, u) = \| x - u\|^2$ on $\R^{n+1} \times \R^{n+1}$.
We let $\rho = \gs | (\cB \times \intr(\gW))$.  Then, the Blum medial axis 
$M$ is the Maxwell set of $\rho$, which is the set of $u \in \intr(\gW))$ 
at which the absolute minimum of $\rho(\cdot, u)$ occurs at multiple 
points or is a degenerate minimum.  \par
To describe the generic structure of $M$, we begin by recalling a result of 
Mather.  In \cite{M2}, Mather is concerned with determining the generic 
structure of the distance function $\gd(x) = \min_{y \in \cB} \| x - y\|$  
from points $x \in \R^{n+1} \backslash \cB$ to $\cB$.  He gives such a 
classification theorem which for generic $\cB$ with $n+1 \leq 7$  gives 
the structure of $\gd$ off of a finite set of points.  He excludes a finite 
set of points for two reasons, which he does not really explain in the 
paper.  One is that his classification excludes the classification at 
critical points of $\gd$ on strata of $M$.  The second is because for $n+1 = 
7$ there can be isolated points where the structure theorem for generic 
germs in terms of versal unfoldings as explained below does not directly 
apply.   \par
We begin our discussion of his results stated in a form where they yield 
the generic structure of $M$, but do not consider the structure of the 
global distance function $\gd$ to $\cB$. 
\subsection*{Structure of Maxwell Set Described by $\cR^+$-Versal 
Unfoldings} \par
At a point $u_0 \in M$ of the Maxwell set, we let $S = \{x_1, \dots , x_k\} 
\subset \cB$ be the points with $r_0 = \rho(x_i) =  \rho(x_j)$ for all $1 
\leq i, j \leq k$, the minimum value for $\rho(\cdot, u_0)$ and consider 
the multigerm $\rho : \cB \times \R^{n+1}, S \times \{u_0\} \to \R, r_0$, 
We view the coordinates of $u$ as a set of parameters for $u \in 
\intr(\gW) \subset \R^{n+1}$.  The mapping $\rho$ is an \lq\lq 
unfolding\rq\rq  of the multigerm $\rho_0 = \rho(\cdot, u_0) : \cB, S \to 
\R, r_0$ on the parameters $u$.  Such multigerms and their unfoldings are 
studied using $\cR^+$-equivalence of multigerms $\rho_0$ and $\rho_1$  
via the action of the group of pairs $(\varphi, c)$ which consist of a 
multigerm of a diffeomorphism  $\varphi : \cB, S \to \cB, S$ and constant 
$c \in \R$, so $\rho_0$ is $\cR^+$-equivalent to $ \rho_1 = \rho_0\circ 
\varphi + c$ (or for unfoldings $\rho$, pairs $\Psi(x, u) = (\psi(x, u), 
\gl(u))  : \cB \times \R^{n+1}, S \times \{u_0\} \to \cB \times \R^{n+1}, S 
\times \{u_0\}$ and a smooth function germ $c(u)$ so that $\rho$ is 
$\cR^+$-equivalent to $\rho\circ \Psi + c(u)$).  Singularity theory applies 
to the classification of such multigerms and their unfoldings.  Provided 
$\rho_0$ has \lq\lq finite $\cR^+$-codimension\rq\rq in an appropriate 
sense, then there is an $\cR^+$-versal unfolding which is a finite 
parameter unfolding which captures all possible unfoldings of $\rho_0$ up 
to $\cR^+$-equivalence (this extends to multigerms Thom\rq s versal 
unfolding theorem for germs which he used in Catastrophe Theory 
\cite{Th}).  The minimum number of parameters needed for 
the versal unfolding is the $\cR^+_e$-codimension of $\rho_0$.  These 
results are discussed by Mather in \cite{M2}, and some details relating 
versality and transversality that were left out are treated in a more 
general context in e.g. \cite{D6}.  
\par
These are applied by Mather to classify the local structure of the Blum 
medial axis.  We view $\gW$ as the region enclosed by the boundary $\cB = 
\varphi(X)$, for $\varphi : X \to \R^{n+1}$ a smooth embedding and $X$ a 
smooth compact $n$-manifold.  Then, there is the following theorem of 
Mather \cite{M2} which describes the {\em generic properties} of the Blum 
medial axis of $\gW$.  The notion of \lq\lq genericity\rq\rq will be 
explained in more detail in \S \ref{S:sec5}. 
\begin{Thm}[Generic Properties of the Blum Medial Axis]
\label{Thm3.1}
If $n + 1 \leq 6$, there is an open dense set of embeddings 
$\varphi \in \emb(X, \R^{n+1})$ such that for any finite subset 
$S \subset \cB = \varphi(X)$ and $u \in M$, for which 
$x, x^{\prime} \in S$ satisfy $\rho(x, u) = \rho(x^{\prime}, u) (= r)$, then 
the multigerm $\rho : \cB \times \intr(\gW), S \times \{u\} \to \R, r$ is 
a versal unfolding for  $\cR^{+}$-equivalence of multigerms.  \par
If $n+1 = 7$, then there is a finite set of points $E_M \subset M$ which 
form strata of dimension $0$ such that for $u \in M \backslash E_M$ the 
same conclusions hold for multigerms of $\rho$; while for each point $u 
\in E_M$, there is a unique point $S = \{x\}$ in $\cB$ and the unfolding 
$\rho$ at $(x, u)$ defines a transverse section to the 
$\tilde E_7$-stratum (see below).
\end{Thm}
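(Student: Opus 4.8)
\emph{Proof strategy.} The plan is to rephrase the versality assertion as a transversality statement for a multijet map and then to prove that this transversality holds for an open dense set of embeddings. First, recall the versality--transversality principle (in the form of \cite{D6}, as used by Mather in \cite{M2}): for $k$ larger than the relevant determinacy degree, the unfolding $\rho : \cB \times \intr(\gW), S \times \{u\} \to \R, r$ is $\cR^{+}$-versal if and only if the multijet extension
\[
j^{k}_{(k)}\rho_{\varphi} : (x_{1}, \dots , x_{k}, u) \longmapsto \bigl( j^{k}\rho_{\varphi}(\cdot , u)(x_{1}), \dots , j^{k}\rho_{\varphi}(\cdot , u)(x_{k}) \bigr),
\]
defined on the configuration space $\cB^{(k)} \times \intr(\gW)$, is transverse at $S \times \{u\}$ to the $\cR^{+}$-orbit through $j^{k}\rho_{0}$. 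The $\cR^{+}$-action (pairs $(\varphi , c)$ of a diffeomorphism germ of $\cB$ at $S$ and an additive constant) has locally finite orbits on the finite-$\cR^{+}_{e}$-codimension locus, so it suffices to produce an open dense set of $\varphi$ for which $j^{k}_{(k)}\rho_{\varphi}$ is transverse to each stratum of this orbit stratification $\Sigma_{k}$; only finitely many pairs $(k, \text{stratum})$ matter, because strata whose orbit has codimension exceeding $\dim(\text{parameter space}) = n+1$ need not be met, which bounds both the jet order and the number $k$ of branches carrying a critical point.

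Second, I would prove density by a parametrized transversality theorem. Consider the evaluation map
\[
F : \emb(X, \R^{n+1}) \times \bigl( X^{(k)} \times \R^{n+1}\bigr) \longrightarrow J^{k}(X, \R)_{(k)}, \qquad F(\varphi , y_{1}, \dots , y_{k}, u) = \bigl( j^{k}_{y_{i}}\bigl(\| \varphi(\cdot) - u \|^{2}\bigr)\bigr)_{i} .
\]
The key infinitesimal point is that $F$ is a submersion: since the points $\varphi(y_{i})$ are distinct, a variation of $\varphi$ can be prescribed independently near each of them, and a direct computation with the Taylor expansion of $\|\varphi(y) + t\,v(y) - u\|^{2}$ in $y$ shows that such variations, together with moving $u$, surject onto $\bigoplus_{i} \frm_{y_{i}}/\frm_{y_{i}}^{k+1}$ modulo the $\cR^{+}$-tangent space. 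Granting this, $F$ is transverse to $\Sigma_{k}$, and Thom's parametric transversality theorem (in the multijet / partial-multijet form of \cite{D5}) yields that $\{ \varphi : j^{k}_{(k)}\rho_{\varphi} \text{ is transverse to } \Sigma_{k}\}$ is residual; since $\emb(X, \R^{n+1})$ with its $C^{\infty}$-topology is a Baire space, the intersection over the finitely many relevant $(k, \text{stratum})$ is dense.

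Third comes the dimension count that produces the dichotomy between $n+1 \le 6$ and $n+1 = 7$. Because the space of parameters $u$ has dimension $n+1$, transversality forces the only $\cR^{+}$-classes realized along $M$ to be those whose orbit has codimension at most $n+1$ in the multijet space. For $n+1 \le 6$ Arnold's classification shows every such multigerm is $\cR^{+}$-simple --- the branches are of types $A_{\mu}, D_{\mu}, E_{6}, E_{7}$ and their orbits are open in the finite-codimension locus --- so transversality of $j^{k}_{(k)}\rho_{\varphi}$ to the orbit is precisely the assertion that $\rho$ is an $\cR^{+}$-versal unfolding. For $n+1 = 7$ the modular class $\tilde E_{7}$ intervenes: it forms a one-parameter family of $\cR^{+}$-orbits whose union is a stratum of codimension $7$ in the multijet space, so transversality forces $j^{k}_{(k)}\rho_{\varphi}$ to meet it in dimension $0$; this produces the finite set of $0$-strata $E_{M} \subset M$, at each point of which there is a single branch $S = \{x\}$ and the conclusion weakens to \lq\lq $\rho$ at $(x,u)$ is a transverse section to the $\tilde E_{7}$-stratum\rq\rq, while on $M \setminus E_{M}$ all realized classes are still simple and the versality conclusion stands.

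Finally, to upgrade \lq\lq residual\rq\rq\ to \lq\lq open dense\rq\rq\ I would relate $\cR^{+}$-versality of the family $\rho$ to infinitesimal stability of an associated unfolding (catastrophe) map germ, invoke Mather's theorem that infinitesimal stability implies stability together with the openness of the infinitesimally stable locus in the mapping space, and pull this back along $\varphi \mapsto \rho_{\varphi}$. The main obstacle is the infinitesimal surjectivity in the second step: one must check that variations of the embedding genuinely generate all multijet directions transverse to the $\cR^{+}$-orbits, controlled uniformly over the non-compact configuration space and simultaneously at all $k$ branches, and must verify carefully that finite determinacy together with a bound on the number of critical branches of a multigerm whose orbit has codimension $\le n+1$ reduces everything to finitely many transversality conditions, so that density is preserved under intersection.
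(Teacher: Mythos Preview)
Your proposal is correct and follows essentially the same route as the paper: reduce versality to multijet transversality (the paper invokes \cite{M2} and \cite[\S 5]{D5}), establish transversality by constructing localized polynomial perturbation families that make the fiber multijet extension a submersion (Propositions~\ref{Prop11.1}--\ref{Prop11.2}), carry out the Arnold classification/codimension count to isolate the $\tilde E_7$ stratum at $n+1=7$, and obtain openness via Lemma~\ref{Lem7.6} relating $\cR^+$-versality to infinitesimal $\cA$-stability together with Mather's stability theorem. One small refinement the paper makes explicit is that on the medial axis only local-minimum germs occur, so among simple germs only $A_k$ with $k$ odd are relevant (not $D_\mu$, $E_6$, $E_7$), and the $\tilde E_6$ stratum, though of codimension $6$, causes no trouble at $n+1=6$ because its closure contains no jets defining local minima.
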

\begin{Remark}
\label{Rem3.1}
In the case $n+1 = 7$, there may be isolated points $u \in M$ with each 
having a unique corresponding point $x \in \cB$ such that the germ of 
$\rho(\cdot, u) | \cB$ at $x$ is $\cR^+$--equivalent in local coordinates 
$(y_1, \dots , y_6)$  to an $\tilde{E}_7$ singularity $y_1^4 + y_2^4 + 
ay_1^2y_2^2 + y_3^2 + \cdots + y_6^2$, where $| a | < 2$.  However the 
corresponding unfolding is not $\cR^+$--versal as $a$ is a modulus and the 
entire stratum in jet space formed by the $\cR^+$--orbits allowing $a$ to 
vary has $\cR^+$--codimension $7$, while the individual orbits have 
codimension $8$.  Generically a germ in the stratum may occur at an 
isolated point; and the resulting unfolding cannot be versal but it does 
define a transverse section to the 
$\tilde{E}_7$-stratum.  \par
	Now such unfoldings of $\tilde{E}_7$ germs are understood by a 
result of Looijenga \cite{L2}, from which it follows that the topological 
classification of $M$ near such $\tilde{E}_7$ points is independent of $a$ 
for $| a | < 2$.  Moreover, it follows that the associated mappings used in 
Lemma \ref{Lem7.6} are topologically stable by Looijenga\rq s theorem 
using the argument in \cite[Thm. 4] {D8}.  Thus, the arguments in \S 
\ref{S:sec7} still can be applied when appropriately modified in 
neighborhoods of the $\tilde{E}_7$ points using topological stability.  We 
refer to such points as {\em generic $\tilde E_7$ points}.\par 
\end{Remark}
\par
For $n +1 \leq 7$, Mather\rq s theorem then gives the list of possible 
multigerms and hence the corresponding local structure of $M$ resulting 
from the normal forms for the versal unfolding, except at the finite set of 
points in $E_M$ when $n + 1 = 7$.  \par
Excluding the case of $E_M$ when $n + 1 = 7$, since each germ represents 
a local minimum for points on the Blum medial 
axis, the only germs which occur are the $A_k$ singularities, which are 
$\cR^{+}$-equivalent in local coordinates $(y_1, \dots , y_n)$ to 
$y_1^{k+1} + \sum_{j = 2}^{n} y_j^2$ for $k$ odd.  If $\rho(\cdot, u)$ is a 
germ of type $A_{\ga_j}$ at the point $x_j$, then the multigerm is said 
to be of {\it generic type $A_{\bga}$} where $\bga = (\ga_1, \dots , 
\ga_k)$, and is denoted $A_{\ga_1}A_{\ga_2}\cdots A_{\ga_k}$ (where it 
is customary to denote $A_{\ga_i}$ repeated $r$ times using exponent 
notation $A_{\ga_i}^r$).   \par  
In the generic case, the set of points $u \in M$ of type $A_{\bga}$ forms a 
submanifold $\gS_{M}^{(\bga)}$ whose codimension in $\R^{n+1}$ equals 
the $\cR^{+}_e$ codimension of a multigerm of type $A_{\bga}$, which 
equals $|\bga | - 1$, with $|\bga | = \sum_{i = 1}^{k} \ga_j$.  In addition, 
the $\{\gS_M^{(\bga)}: |\bga | \leq n + 2\}$ form a Whitney stratification 
of $M$ (because by the versality theorem, the structure of $M$ is 
analytically trivial along the strata, while for different simple types the 
topological structure of the stratification differs).  The smooth points of 
$M$ are the points $u$ where $\rho(\cdot, u)$ has an $A_1^2$ singularity.  
The singular strata are of two types.  The stratum consisting of points $u$ 
with $\rho(\cdot, u)$ having a single $A_3$ singularity forms the edge of 
$M$, denoted $\bdyM$.  This is  part of the boundary of the regular stratum 
viewed as an $n$-manifold.  The closure $\barbdyM$ consists of strata 
$\gS_M^{(\bga)}$ with some $\ga_i \geq 3$ (and in the case $n+1 = 7$ the 
$\tilde E_7$ germs).  The second type of singular 
strata are the $A_1^k$ strata with $k > 2$ which are interior strata.   For 
$\R^3$ see Figure \ref{fig.5c} and the accompanying Example \ref{Ex3.1}.
\par
\subsubsection*{Stratification of the Boundary $\cB$} \hfill
\par
Mather does not explicitly refer to the stratification of the boundary.  
However, it plays an important role when we consider configurations of 
regions.  Corresponding to each stratum $\gS_M^{(\bga)}$ are strata of 
$\cB$ consisting of the individual $x_i$ which belong to the subset $S$ 
defining points in $\gS_M^{(\bga)}$, see Figures~\ref{fig.5c} and 
\ref{fig.5e}.  We denote the corresponding strata of $\cB$ by 
$\gS_{\cB}^{(\bga)}$.  These strata are the images under projection onto 
$\cB$ of  strata in the space of the versal unfolding.  In turn, those strata 
consist of subsets of points where the unfolding defines multigerms of a 
given type.  However, the unfolding theorem does not assert that the 
strata in $\cB$ need be smooth of the same dimension as corresponding 
strata in $M$.  We shall later prove that generically the 
$\gS_{\cB}^{(\bga)}$ are indeed smooth submanifolds of the same 
dimension as $\gS_{M}^{(\bga)}$.  As $S$ and $\bga$ have no intrinsic 
ordering, when we refer to one of these strata in $\cB$ which contains 
$x_j$ we shall place $\ga_j$ first in the ordering.  If $n + 1 = 7$, there 
are unique generic $\tilde E_7$ points on $\cB$ corresponding to each of 
(the finite number of) points of $E_M$; these form a set of 
$0$-dimensional strata $E_{\cB}$.  \flushpar 
{\it Stratification of $\cB$ by $\gS_{\cB}^{(\bga)}$: } \hfill
\par
The $\gS_{\cB}^{(\bga)}$ define a stratification of $\cB$ which we may 
view as consisting of three parts: \flushpar
\begin{enumerate}
\item  The first part consists of edge closure points and has a Whitney 
stratification consisting of the strata $\gS_{\cB}^{(\bga)}$ containing the 
point $x_1$ with $\ga_1 \geq 3$.  This is the subset of $\cB$ where 
$\rho(\cdot, u)$ has a local minimum of Thom-Boardman type $\gS_{n, 1}$ 
for some $u \in  \gW$; or if $n + 1 = 7$, it also contains the 
$0$-dimensional strata $E_{\cB}$.  
\item The second part has a Whitney stratification consisting of the 
strata $\gS_{\cB}^{(\bga)}$ with all $\ga_i = 1$.  
\item The third part consists of the $A_1$ points which belong to a tuple 
in $\gS_{\cB}^{(\bga)}$ with some $\ga_j \geq 3$ for 
$j > 1$ (thus, they are points associated via the medial axis  to 
edge-closure points in the Blum medial axis).  
\end{enumerate}
Although we will only show that the first two of these subsets of strata 
each separately form Whitney stratifications, it should be possible with 
considerably more work to show that together they form a Whitney 
stratification of $\cB$.  \par 
\begin{Example}
\label{Ex3.1}
For a generic region in $\R^2$, the refinement of $\tilde M$ only involves 
adding a finite number of isolated points to smooth curve segments, so the 
stratification is Whitney.  For a generic region in $\R^3$, the standard 
local Blum types are $A_1^2$, $A_3$, $A_1A_3$, $A_1^3$, and $A_1^4$ 
(the first consists of smooth points of $M$ and the others are shown in 
Figure~\ref{fig.5c}).  In Figure~\ref{fig.5e}, we show the corresponding 
local stratification types for the boundary.  The $A_3$ and $A_3A_1$ 
points correspond to the edge-closure points; the $A_1^3$ and $A_1^4$ 
points form the $A_1^k$-types, and the last $A_1$ point of type 
$A_1A_3$ is the third type.  In this case, a direct calculation shows that 
the closure of the $A_1^2$ points at an $A_3A_1$ point in the boundary is 
a smooth curve, so that the $\gS_{\cB}^{(\bga)}$ form a Whitney 
stratification of $\cB$.  
\end{Example}
\par
\begin{figure}[ht] 
\centerline{\includegraphics[width=10cm]{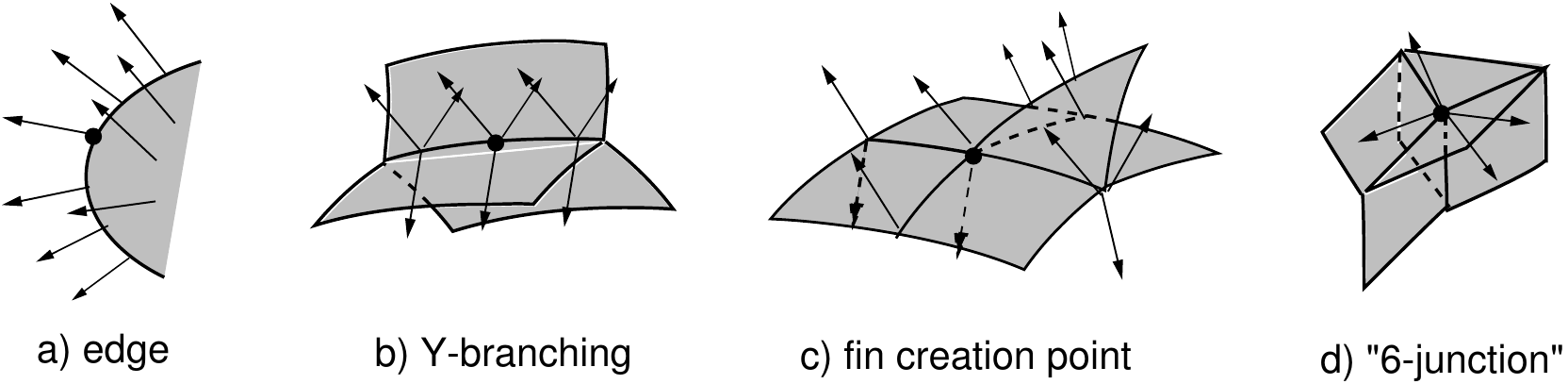}} 
\caption{\label{fig.5c} 
Generic local medial axis structures in $\R^3$: in a) the $A_3$ points form 
the edge curve and in b) the $A_1^3$ points form the \lq\lq Y-branch 
curve\rq\rq; for c) there is an isolated $A_1A_3$ point which is the fin 
point and for d), an isolated $A_1^4$ point which is the 
\lq\lq $6$-junction\rq\rq point.} 
\end{figure} 
\par
\begin{figure}[ht] 
\centerline{\includegraphics[width=10cm]{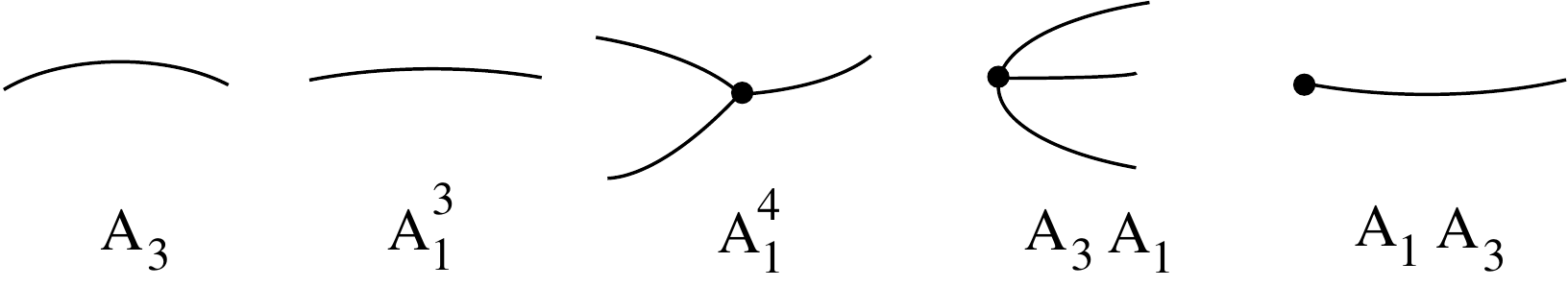}} 
\caption{\label{fig.5e} Generic local stratification of the boundary $\cB$ 
for a region in $\R^3$ in terms of the corresponding types on the medial 
axis listed in Figure \ref{fig.5c} in terms of their $A_{\bga}$-type:  $A_3$ 
and $A_1^3$ are the dimension $1$ strata in $\cB$ corresponding to edge 
and $Y$-branching curves (with $3$ $A_1^3$ strata in the boundary for 
each $Y$-branching curve), while the other three are dimension $0$ strata 
of $\cB$ corresponding to the $0$-dimensional strata of the medial axis 
in Figure \ref{fig.5c}; and the complement consists of open strata in $\cB$ 
corresponding to the $2$-dimensional strata in the medial axis consisting 
of $A_1^2$ points.  
Note there are $4$ $A_1^4$ strata in the boundary for each $6$-junction 
point, and the last two figures represent strata on the two opposite 
boundary regions correponding to the \lq\lq fin creation point\rq\rq 
$A_1A_3$. } 
\end{figure} 
\par

\subsection*{Addendum to Generic Blum Structure for a Region with 
Boundaries and Corners}
\par
There is an addendum to Mather\rq s theorem in the case of regions $\gW$ 
which are manifolds with boundaries and corners.  It concerns the local 
form of the Blum medial axis in a neighborhood of a $k$-edge-corner point 
for regions with boundaries and corners, which we introduced at the 
beginning of \S \ref{S:sec1}.  This is described using the following normal 
form .  

\begin{Definition}
\label{Def3.3}
The {\it edge-corner normal form} for the Blum medial axis of a 
$k$-edge-corner point $x$ consists of a smooth diffeomorphism $\psi$ 
from the neighborhood of $0$ in $C_k = \R^k_{+} \times \R^{n+1-k}$, $k 
\geq 2$, 
to a neighborhood $W$ of $x$ with $0 \mapsto x$ such that the medial axis 
in $W$ is the image $\psi(E)$, where 
$$E_k = \{ u = (u_1, \dots , u_{n+1}) \in C_k : \makebox{ there are $1 \leq 
i, j \leq k$ such that $i \neq j$ and  $u_i = u_j$}\}  $$
with the canonical Whitney stratification of $E_k \cup \partial C_k$. 
\end{Definition}
For example in $\R^3$, a) and b) of Figure~\ref{fig.3.3b} illustrate the 
edge-corner normal forms for the Blum medial axis at $P_2$ and 
$P_3$-edge-corner points.  
\par
\begin{figure}[ht]
\includegraphics[width=8cm]{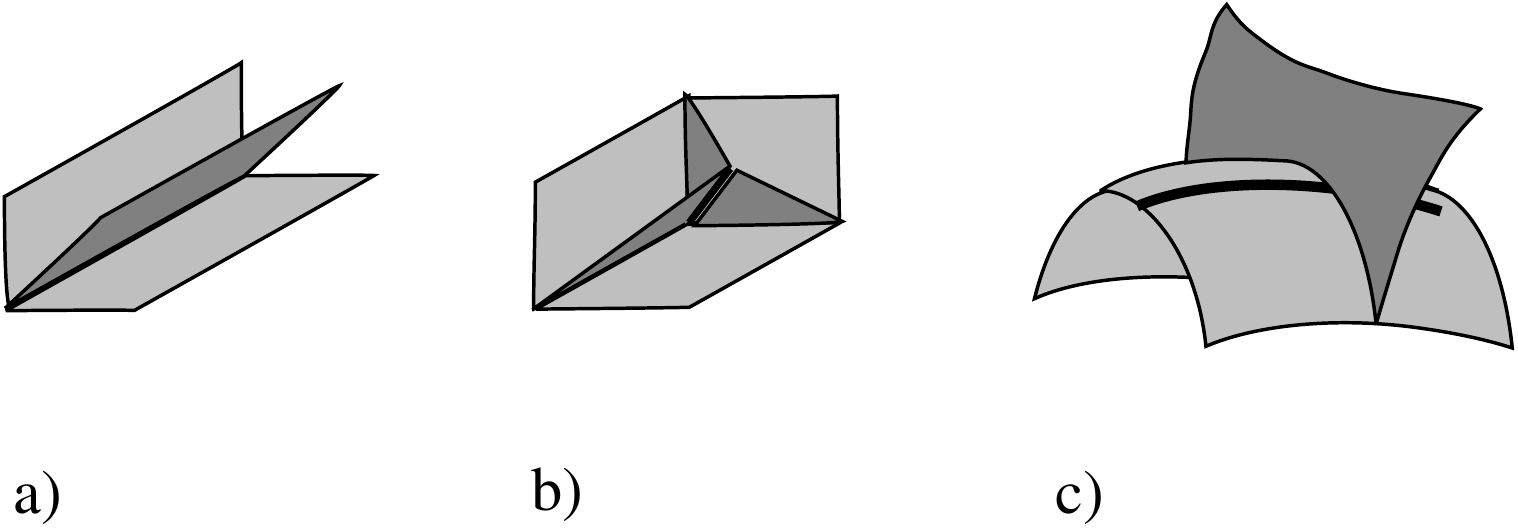}
\caption{\label{fig.3.3b} 
a) and b) edge corner normal forms for $P_2$ and $P_3$ points.  The 
darker shaded regions are the Blum medial axis.  c) Example at a $Q_1$ 
point of the transverse intersection of $\gS_Q$ with the $A_3$ curve in 
the boundary.}
\end{figure} 
 \par
We use similar notation for Mather\rq s Theorem, where now $X$ is the 
boundary of the compact region $\gD \subset \R^{n+1}$ which has 
boundaries and corners.  In addition, for an embedding $\varphi : \gD \to 
\R^{n+1}$, $\gW = \varphi(\gD)$ and $\cB = \varphi(X)$.  Then, the 
addendum is the following.
\begin{Thm}[Addendum to Generic Properties of the Blum Medial Axis]
\label{Thm3.2}
If $n + 1 \leq 7$, then there is an open dense set of embeddings 
$\varphi \in \emb(\gD, \R^{n+1})$, such that: \flushpar 
\begin{itemize}
\item[i)] the Blum medial axis has the same local properties in the 
interior of the region $\intr(\gW)$ as given in Theorem \ref{Thm3.1}. 
\item[ii)] if $n + 1 = 7$, then at a point $u_0 \in E_M$ with corresponding 
$x_0 \in E_{\cB}$, the unfolding germ $(x, u) \mapsto (\rho(x, u), u) : \cB 
\times \intr(\gW), (x_0, u_0) \to \R \times \intr(\gW)$ is a topologically 
stable unfolding of an $\tilde E_7$ germ; 
\item[iii)] the set of closure points in the boundary $\cB$ of the Blum 
medial axis consists of the edge-corner point strata of $\cB$;
\item[iv)] at these edge-corner points, the Blum medial axis satisfies the 
edge-corner normal form.  
\end{itemize}
\par 
For noncompact $\gW$, the same result holds on a given compact region of 
$\R^{n+1}$ for an open dense set of embeddings. \par
If there is a compact Whitney stratified set $\cS$ contained in the smooth 
strata of the boundary, then for an open dense set of embeddings, the 
strata of $\cS$ are transverse to the strata $\gS_{\cB}^{(\bga)}$.  
\end{Thm}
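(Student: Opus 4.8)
The plan is to split the statement into three separate genericity claims --- for the interior of $\gW$, for the edge-corner strata of $\cB$, and for the auxiliary stratified set $\cS$ --- prove each by a transversality argument, and intersect the resulting open dense sets; in every case density comes from a multijet-transversality argument of Thom--Mather type, and openness from relating versality of $\rho$ to infinitesimal stability of an associated map and then invoking Mather's ``infinitesimal stability implies stability'' theorem. For part~(i), away from the edge-corner strata of $\cB$ the Maxwell-set analysis of $\rho$ (the restriction of the distance-squared function $\gs$ to $\cB\times\intr(\gW)$) is exactly Mather's situation of a region bounded by a smooth hypersurface, so for any $u_0\in\intr(\gW)$ whose nearest boundary points all lie in the regular strata of $\cB$ the conclusions of Theorem~\ref{Thm3.1} --- $\cR^{+}$-versality of the relevant multigerms of $\rho$, and when $n+1=7$ transversality to the $\tilde{E}_7$-stratum, which is part~(ii) --- follow verbatim. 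The only genuinely new point here is that $X=\partial\gD$ is a manifold with boundaries and corners rather than a closed manifold, so Mather's perturbation step, which manufactures variations of $\varphi$ by the Seeley extension theorem, must be rerun with an extension theorem valid on manifolds with corners; I would use Bierstone's extension theorem, after which the usual multijet-transversality argument for $\rho$ yields an open dense set of $\varphi$ for which (i) and (ii) hold.

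For parts~(iii) and~(iv), fix a $k$-edge-corner point $x_0$ with $2\leq k\leq n$, so that near $x_0$ the boundary is a union of $k$ smooth faces $F_1,\dots,F_k$ carrying germs of smooth hypersurfaces $\hat F_1,\dots,\hat F_k$ through $x_0$. The first step is a general-position claim: for an open dense set of $\varphi$, the unit normals of $\hat F_1,\dots,\hat F_k$ at $x_0$ are linearly independent; this is a Thom-transversality statement for the restriction of $j^{1}\varphi$ to the finitely many edge-corner strata of $X$, again realising the perturbations via Bierstone extension. Granting it, the germ $u\mapsto(\delta_1(u),\dots,\delta_k(u))$ of signed distances to the $\hat F_i$ (each smooth near $x_0$ since $x_0\in\hat F_i$) is a submersion at $x_0$ onto $\R^k$, so composing with the projection $C_k=\R^k_{+}\times\R^{n+1-k}\to\R^k$ and completing to a coordinate system produces a diffeomorphism germ $\psi:(C_k,0)\to(\gW,x_0)$ with $\delta_i\circ\psi=u_i$ and $\psi^{-1}(\cB)=\partial C_k$. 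A compactness argument --- recursive in $k$, using that for $u$ near $x_0$ the orthogonal projection onto $\hat F_i$ still lands in the face $F_i$, which holds in the flat model and hence, by $C^{1}$-closeness of the perturbation, in general --- shows that $\rho(\cdot,u)$ attains its absolute minimum over $\cB$ at the expected face for $u$ near $x_0$, whence the Maxwell set of $\rho$ near $x_0$ is precisely $\psi(E_k)$, carrying the canonical Whitney stratification transported along $\psi$ from the diffeomorphism-invariant stratification of $E_k\cup\partial C_k$; this is (iv). A codimension count then shows that along the $(n+1-k)$-dimensional corner strata no further medial phenomena ($A_3$-tangencies, extra distance coincidences, or $\tilde{E}_7$-points) occur for generic $\varphi$, each being an extra condition of codimension at least one; together with the fact --- from Mather, plus compactness and the radius function being bounded below on $M$ near smooth boundary points --- that the medial axis stays a positive distance from the regular part of $\cB$, this gives (iii). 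The noncompact case follows by localising everything above to a fixed compact $K\subset\R^{n+1}$ and perturbing $\varphi$ with support near $\varphi^{-1}(K)$.

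For the final assertion, the same boundary-stratification argument (this paper's result that generically the $\gS_{\cB}^{(\bga)}$ are smooth submanifolds of $\cB$ cut out by the versal-unfolding strata of $\rho$) is augmented with one more condition: I enlarge the perturbation family of $\varphi$ so that, keeping $\rho$ versal, it additionally moves the strata $\gS_{\cB}^{(\bga)}$ in directions transverse inside $\cB$ to the strata of $\cS$. This is possible because the higher jets of $\varphi$ that determine the $\gS_{\cB}^{(\bga)}$ can be varied independently of the low-order data fixing $\cS$ as a submanifold of $\cB$, and then the Thom transversality theorem with parameters gives that the transverse locus is open dense. The open dense set of embeddings claimed in the theorem is the intersection of the locally finitely many open dense sets produced in the three steps.

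The step I expect to be the main obstacle is the corner analysis, and within it the fact that Euclidean distance is not a diffeomorphism invariant: one cannot simply straighten the corner to $C_k$ and quote the flat computation, but must show that the genuine Euclidean Maxwell set of $\rho$ around the curved corner coincides with $\psi(E_k)$. The signed-distance-submersion device reduces this to the two technical points flagged above --- that the nearest point of $\cB$ to a nearby interior point lies on a face at distance $\delta_i$, and that general position and the absence of extra medial phenomena are open dense conditions on the lower-dimensional corner strata --- and carrying these out uniformly in $k$, compatibly with the recursive corner structure of the faces $F_i$, is where the real work lies; by contrast, the Bierstone-for-Seeley substitution in Mather's argument and the extra transversality to $\cS$ are routine once set up.
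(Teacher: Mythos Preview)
Your outline matches the paper's approach, but you are adding unnecessary genericity to the corner analysis: parts (iii) and (iv) hold for \emph{every} embedding. The linear independence of the normals to $\hat F_1,\dots,\hat F_k$ at a $k$-edge-corner point is automatic from the definition, since the local model diffeomorphism carries the general-position coordinate hyperplanes $H_j\subset C_k$ to the $\hat F_j$, and hence their tangent spaces at $x_0$ are already in general position. Likewise, the absence of $A_3$ or higher phenomena near the corner needs no codimension count: each eikonal flow $\psi_j:\hat F_j\times[-\varepsilon,\varepsilon]\to\R^{n+1}$ is a diffeomorphism for small $\varepsilon$, so the nearest point on each $\hat F_j$ is unique and the only medial behavior comes from the $A_1^\ell$ coincidences $\delta_i=\delta_j$.

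Your flagged ``main obstacle'' dissolves under a direct diameter argument, avoiding the recursion you anticipate. The paper chooses nested compact neighborhoods $W''\subset W'\subset W$ of $x_0$ with $\mathrm{diam}(W'')<\tfrac12\,\mathrm{dist}(W',\gW\setminus W)$; then for $u\in W''\cap\intr(\gW)$ the distance to each face is at most $\varepsilon$ while the distance to $\cB\setminus W$ exceeds $2\varepsilon$, so the absolute minimum of $\rho(\cdot,u)$ is realized on one of the faces. With your signed distances $\delta_j=t^{(j)}$, the map $\tilde\mu=(\delta_1,\dots,\delta_k,\pi)$, where $\pi$ is orthogonal projection onto $T_{x_0}(\cap_j\hat F_j)$, is a local diffeomorphism at $x_0$ whose inverse is exactly the normal form of Definition~\ref{Def3.3}. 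The genericity in the theorem is needed only for (i), (ii), and the transversality to $\cS$; for these your Bierstone-for-Seeley substitution and the auxiliary family of domain diffeomorphisms are precisely what the paper uses.
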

\par
This addendum will be proved in the process of establishing the generic 
Blum structure for general multi-region configurations (Theorem 
\ref{Thm3.6}) in \S  \ref{S:sec7}.  The specific information about $E_M$ 
follows using a result of Looijenga \cite{L2} and is explained in \S 
\ref{S:sec7}. 
\begin{Remark}
\label{Rem3.2}
The Blum medial structure $M$ for a region with boundary and corners 
contains the singular points of the boundary in its closure, and at such 
points the radial vector field $U = 0$.  Hence, $(M, U)$ does not define a 
skeletal structure in the strict sense.  However, it can still be used in 
exactly the same way to compute the local, relative, and global geometry 
and topology of both the region and its boundary, just as for skeletal 
structures.  Hence, we can view it as a \lq\lq relaxed skeletal 
structure\rq\rq, where \lq\lq relaxed\rq\rq means that $M$ includes the 
singular boundary points and $U = 0$ on these points.
\end{Remark}
\subsection*{Spherical Axis of a Configuration} 
\par
Along with the medial axis, we will also find use for its analog for the 
{\it family of height functions}.  This family is the restriction of the 
\lq\lq height function\rq\rq\, $\nu : \R^{n+1} \times S^n \to \R$, defined 
by $\nu(x, v) = x\cdot v$, to $\tau : \cB \times S^n \to \R$ ($S^n$ is the 
unit sphere in $\R^{n+1}$).  Here $\cB$ may denote the smooth generic 
boundary of a single region $\gW$, or more generally the boundary for a 
configuration defined by $\Phi : \bgD \to \R^{n+1}$.  We define the {\it 
spherical axis} $\cZ \subset S^n$ of $\gW$ or the configuration $\bgW$ 
defined by $\Phi$ to be the {\it Maxwell set of $-\tau$}, which is the set 
of $v \in S^n$ at which the absolute maximum of $\tau(\cdot, v)$ occurs 
at multiple points or is a degenerate maximum.  This consists of  
directions $v \in S^n$ for which the supporting hyperplanes $x\cdot v = c$ 
for the convex hull of $\gW$ or $\bgW$ have more than two tangencies 
with $\cB$ or a degenerate tangency, (and $v$ is normal to $\cB$ at these 
points) see e.g. Figure~\ref{fig.3.3d}.  Recall that $x\cdot v = c$ defines a 
supporting hyperplane for $\bgW$ if it meets the boundary $\cB$ of 
$\bgW$, which is contained in the half-space defined by $x\cdot v \leq c$.  
\par
\begin{Remark}
\label{Rem3.1b}
We note that while the height function depends on the choice of the origin, 
the spherical axis doesn\rq t.  Choosing a different point for the origin 
only changes the height function by adding a constant.  This will not 
change which points on $\cB$ are critical points, nor which type of 
singularity occurs at these critical points.  If there are multiple critical 
points at the same \lq\lq height\rq\rq, they will remain at the same 
height when we shift the point (but the height will change by the same 
amount for all).  
\end{Remark}
\par
Then, there is the following analog of the addendum to Mather\rq s 
Theorem.  

\begin{Thm}[Generic Properties of the Spherical Axis]
\label{Thm3.1b}
If $n + 1 \leq 7$, there is an open dense set of embeddings of 
configurations $\Phi \in \emb(\bgD, \R^{n+1})$ such that for any finite 
subset $S \subset \cB$ and $v \in M$, for which 
$x, x^{\prime} \in S$ satisfy $\tau(x, v) = \tau(x^{\prime}, v) (= r)$, then 
the multigerm $\tau : \cB \times S^n, S \times \{v\} \to \R, r$ is a 
versal unfolding for  $\cR^{+}$-equivalence of multigerms.  \par
If $n + 1 = 8$, then there is a finite set $E_{\cZ} \subset \cZ$ such that 
for any $v \in \cZ \backslash E_{\cZ}$, the same conclusions hold for the
multigerms of $\tau$; while at points $v \in E_{\cZ}$, $\tau$ has a 
generic $\tilde E_7$ singularity. 
\end{Thm}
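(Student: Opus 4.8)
The plan is to mimic the argument for the Blum medial axis (Theorem \ref{Thm3.1} and its addendum, Theorem \ref{Thm3.2}), replacing the family of distance-squared functions $\rho$ on $\cB \times \intr(\gW)$ with the family of height functions $\tau : \cB \times S^n \to \R$, $\tau(x,v) = x\cdot v$. The key observation (Remark \ref{Rem3.1b}) is that $\cZ$ is intrinsic, so we may freely use $\cR^{+}$-equivalence, which absorbs additive constants. First I would set up the multijet transversality machinery: for each combinatorial type $\bga = (\ga_1,\dots,\ga_k)$ of a multigerm arising from a height function at a tuple $S = \{x_1,\dots,x_k\}\subset\cB$ lying over a common level, there is a corresponding $\cR^{+}$-invariant stratum in the appropriate multijet bundle $J^{r}_{k}(\cB,\R)$, of codimension equal to the $\cR^{+}_{e}$-codimension of the multigerm. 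The goal is to show that for an open dense set of embeddings $\Phi$, the $k$-fold multijet extension of $\tau(\cdot,v)$ (as $v$ ranges over $S^{n}$) is transverse to all these strata. This is exactly the content of Wall's transversality theorem for families of functions \cite{Wa} (the height function case), which the excerpt explicitly says we will invoke; so the main task is to verify its hypotheses for the height family on a generic embedded boundary.

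The verification has two parts. First, the \emph{infinitesimal surjectivity / local genericity}: one must show that the map sending an embedding $\Phi$ to the induced family of height-function germs is ``full'' in the sense required by Wall's theorem — i.e. that by perturbing $\Phi$ one can realize all first-order variations of the multijet of $\tau$ transverse to each stratum. For a single germ this is classical (rigid motions and affine reparametrizations of $\cB$ suffice to move the jet of $x\cdot v$ around), and for multigerms at a finite tuple $S$ one uses that the points of $S$ can be perturbed independently by bump-function-supported perturbations of $\Phi$ near each $x_i$; this is the height-function analogue of the derivative computations the excerpt defers to Part IV. Second, one counts: the total available dimension is $n$ (the parameter $v\in S^{n}$) plus the dimension of $\cB$ absorbed into the source of the multigerm, and transversality forces the stratum $\gS^{(\bga)}_{\cZ}$ to have codimension $|\bga|-1$ in $S^{n}$, with $\{\gS^{(\bga)}_{\cZ} : |\bga|\le n+1\}$ giving a Whitney stratification of $\cZ$ (the analytic triviality along strata coming from $\cR^{+}$-versality, as in the Blum case). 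Since $\dim S^{n} = n$, the threshold at which a modulus first appears is one higher than for the medial axis: here the $\tilde E_{7}$ germ (which has $\cR^{+}_{e}$-codimension $7$ as a stratum but whose individual orbits have codimension $8$) first occurs at isolated points when $n+1 = 8$, exactly as $\tilde E_{7}$ occurred for the medial axis when $n+1 = 7$. At those isolated points one appeals to Looijenga's result \cite{L2} on unfoldings of $\tilde E_{7}$, precisely as in Remark \ref{Rem3.1} and Theorem \ref{Thm3.2}(ii), to get a topologically stable (transverse-section) unfolding.

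The step I expect to be the main obstacle is establishing that the height family on a generic embedded boundary $\cB = \Phi(X)$ is a \emph{versal} unfolding multigermwise — that is, transferring Wall's abstract transversality statement about arbitrary families into a statement about the \emph{specific} family parametrized by $v\in S^{n}$ coming from height functions. The subtlety is that $v$ lives on the compact sphere $S^{n}$, not on $\R^{n}$, so the unfolding parameter space is curved; one must check that at each critical direction $v_{0}$ the partial derivatives $\partial \tau/\partial v_{i}$ (restricted to a chart on $S^{n}$ near $v_{0}$) span, modulo the tangent space to the $\cR^{+}$-orbit, the same space they would in the linear model, i.e. that $S^{n}$ is ``large enough'' in each coordinate chart to supply a versal unfolding. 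Concretely, near a point $x_{0}\in\cB$ with normal direction $v_{0}$, writing $\cB$ locally as a graph $x_{n+1} = f(x_{1},\dots,x_{n})$ with $f$ having a critical point at the origin, the height function $x\cdot v$ in the chart $v = v(w)$, $w\in\R^{n}$, becomes $f(x') + \langle\text{linear in }w\rangle + O(|w|^{2})$, and one checks the linear-in-$w$ terms together with the unperturbed germ generate the needed unfolding space — this is where the generic embedding hypothesis on $\Phi$ is used, and it is the height-function counterpart of the ``height-distance'' derivative computations the paper carries out in \S\ref{S:sec17} and \S\ref{S:sec18}. Once versality is in hand, the normal-form list and the Whitney stratification of $\cZ$ follow formally from the classification of $\cR^{+}$-simple germs and $\cR^{+}$-versal unfolding theory, exactly as in the medial case; and the boundary stratification statement (that the $\gS^{(\bga)}_{\cB}$ associated to $\cZ$ are smooth submanifolds transverse to the Blum strata) is obtained by the same multijet argument applied jointly to $\tau$ and $\rho$, which is what the later ``height-distance'' transversality theorems in Part III supply.
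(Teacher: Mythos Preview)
Your approach is essentially the paper's: Theorem \ref{Thm3.1b} is derived from item (1) of Proposition \ref{Prop6.3a}, which asserts that ${_m}j_1^k\nu(\cdot,v)$ is transverse to each $W^{(\bga)}$, and this is precisely what a Wall-type transversality theorem for the height family supplies (the paper's Theorem \ref{Thm5.1} is the relevant extension). The perturbation argument you sketch---localized polynomial deformations of $\Phi$ near each point of $S$---is exactly what the paper carries out in \S\ref{S:sec10}--\S\ref{S:sec11} (Propositions \ref{Prop11.1} and \ref{Prop11.2}).

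Two minor points. First, your ``main obstacle'' paragraph overstates the difficulty: once multijet transversality of the height family to the $\cR^+$-orbits holds for generic $\Phi$, $\cR^+$-versality of $\tau$ as an unfolding is immediate from the standard equivalence between the two (see \cite{D6}); there is no separate step of checking that the $\partial\tau/\partial v_i$ span the normal space to the orbit, since that \emph{is} the transversality condition, and the curvature of $S^n$ is irrelevant to first order in local coordinates. Second, you do not address openness (the theorem claims an \emph{open} dense set, not merely a residual one). The paper handles this in \S\ref{S:sec7}: Lemma \ref{Lem7.6} shows that $\cR^+$-versality of the height multigerm is equivalent to infinitesimal $\cA$-stability of the associated proper map $\bar\nu_i:(x,v)\mapsto(\nu(x,v),v)$, and Mather's theorem ``infinitesimal stability implies stability'' (Theorem \ref{Thm.Mather}) then yields openness.
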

This result will follow from the transversality results in Proposition 
\ref{Prop6.3a} as explained in \S  \ref{S:sec6}.  \par
Note: this holds for $n + 1\leq 8$, because the dimension of the parameter 
space $S^n$ is one 
less than that of $\R^{n+1}$.  This again gives the list of possible 
multigerms and 
hence the corresponding local structure of $M$ resulting from the normal 
forms for the versal unfolding.  
Again since each germ for points on the spherical axis represents a local 
minimum of $-\tau$ (or local maximum for $\tau$), the only germs which 
occur are the $A_k$ singularities, for $k$ odd, which are local minima (or 
local maxima for $\tau$).  \par  
In the generic case, it follows that the set of points $v \in \cZ$ of type 
$A_{\bga}$ forms a submanifold $\gS_{\cZ}^{(\bga)}$ whose codimension 
in $S^n$ equals the $\cR^{+}_e$ codimension of a multigerm of type 
$A_{\bga}$.  In addition, the $\{\gS_{\cZ}^{(\bga)}: |\bga | \leq n + 1\}$ 
form a Whitney stratification of $\cZ$.  This stratification has the same 
generic form as for the Blum medial axis except for one lower dimension.  
\par
\subsubsection*{Spherical Structure for a Configuration}
Just as for the Blum medial axis, we may associate to the spherical axis 
$\cZ$, both a height function $h$ and a multivalued vector field $V$.  To do 
this we need to initialize the origin.  Then, the height for a point $\bu$ on 
the spherical axis, which is a unit vector, defined for a point $x \in \cB$ 
is just the dot product $x \cdot \bu$.  Furthermore, there may be multiple 
points associated to $\bu$, all of which lie in the supporting hyperplane 
$H$ defined by $x \cdot \bu = h(\bu)$, for the maximum value $h = h(\bu)$ 
for $\cB$.  For each point $x_i \in H \cap \cB$, there is a vector 
$V = x_i - h\bu$ orthogonal to the line spanned by $\bu$.  This defines a 
multivalued vector field $V$.  \par

\begin{Definition}
\label{Def3.2b} The full {\em spherical structure} for the spherical axis of 
the multi-region configuration is the triple $(\cZ, h, V)$, consisting of the 
spherical axis $\cZ$, the height function $h$, and the multivalued vector 
field $V$.  This depends upon the choice of an origin on which all height 
functions are $0$. 
\end{Definition}
\par 
\begin{figure}[hb] 
\centerline{\includegraphics[width=7cm]{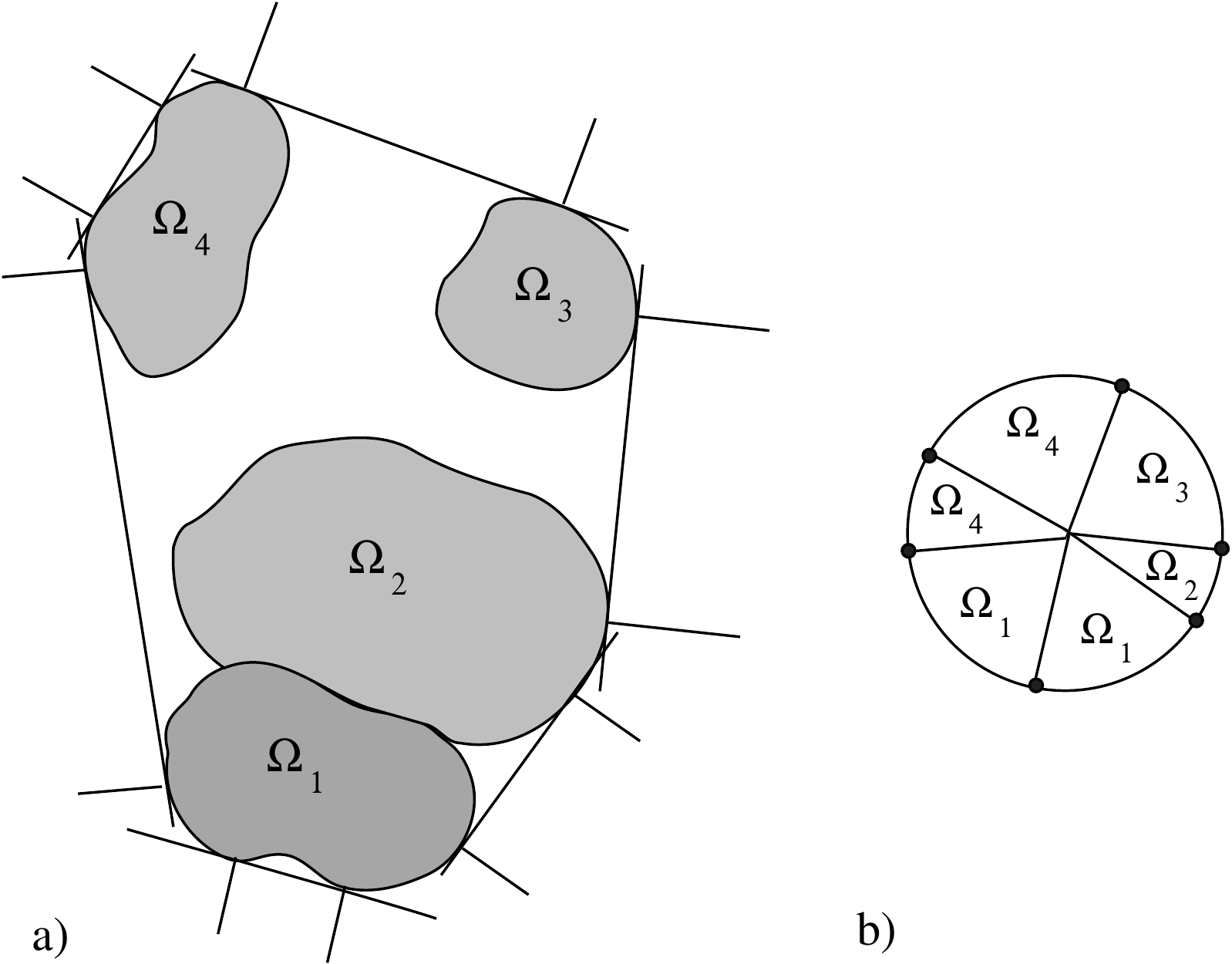}} 
\caption{\label{fig.5b} a) Configuration of four regions with the bitangent  
supporting lines, and b) the corresponding spherical axis, which consists 
of the points on $S^1$.  The regions between points represent subregions 
of $\cB$ in $\cB_{\infty}$.  If the same region $\gW_i$ is indicated on 
both sides of a radial line, then in $\cB_i$ is a region involving 
self-linking.} 
\end{figure} 
\par
From the spherical structure, we can reconstruct the boundary of 
$\cB_{\infty}$ by $x = V(\bu) + h(\bu) \bu$ for $\bu \in \cZ$ and the 
multiple values of $V$ at $\bu$.  Here $x$ denotes a collection of points 
corresponding to the values of $V$.  \par 
Then, the regions in $\intr(\cB_{\infty})$ are the regions in the 
complement of the boundary of $\cB_{\infty}$ which have supporting 
hyperplanes for at least one point in one of the corresponding 
complementary regions to the spherical axis.  If we have in addition the 
height function for the configuration defined on all of $S^n$, then we can 
construct the supporting hyperplanes for all $\bu \in S^n$, and the 
envelope of these hyperplanes yields $\cB_{\infty}$.  
\par 
\begin{Example}
\label{Exam3.2b}
For generic configurations in $\R^2$, the spherical axis consists of a 
finite number of points in $S^1$ representing bitangent supporting lines.  
This is illustrated in Figure~\ref{fig.5b}. \par
In the case of generic configurations in $\R^3$, the spherical axis consists 
of curve segments which may either end or three may join at a \lq\lq 
$Y$-branch point\rq\rq.  These curves divide the complement in $S^2$ into 
connected components which correspond to regions of the $\cB_i$ which 
will not be linked in the Blum case to follow.  In Figure~\ref{fig.3.3d} are 
illustrated the three configurations in $\R^3$ which exhibit the basic 
properties for the spherical axis: a) smooth curve segment, b) 
$Y$-branch point, and c) end point at an $A_3$ singularity.
\end{Example}
\par
\begin{figure}[ht]
\begin{center}
\includegraphics[width=10cm]{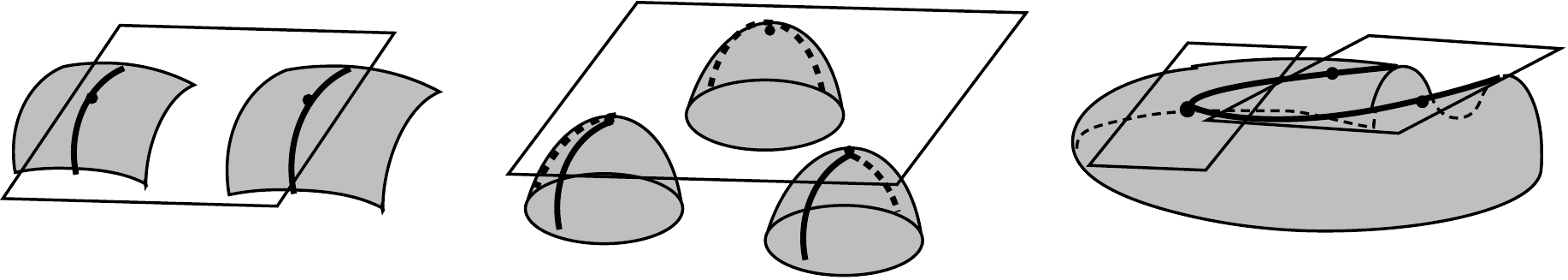} 
\end{center}
\hspace{0.5in} a) \hspace{1.0in} b) \hspace{1.5in} c) \hspace{0.5in}
\caption{\label{fig.3.3d} The generic multigerms for the height function.  
These determine the local structure for the boundary strata of 
$M_{\infty}$ for a multi-region configuration in $\R^3$ : a) of type  
$A_1^2$, b) of type $A_1^3$, and c) of type $A_3$.  The tangent planes 
shown are to points of multiple tangencies (except for the single $A_3$ 
point on the left in c).  The darker curves (including the darker dashed 
curves) denote the boundary strata bounding regions of $M_{\infty}$ 
(consisting of points whose outward pointing normals point away from the 
other regions).}
\end{figure} 
\par

\subsection*{Blum Medial Linking Structure} 
\par
We now consider the analogous Blum medial linking structure for a generic 
multi-region configuration.  First, we note that if the configuration has 
regions with boundaries and corners, then the Blum medial axes of the 
individual regions will not define skeletal structures. This is because the 
Blum medial axis will actually meet the boundary at the edges and 
corners.  However, in the case of disjoint regions $\{\gW_i\}$ in 
$\R^{n+1}$ with smooth generic boundaries (which do not intersect on 
their boundaries) there is a natural Blum version of a linking structure, 
which we introduce.  \par

\begin{Definition} 
\label{Def3.3a}
Given a multi-region configuration of disjoint regions $\bgW = \{\gW_i\}$ 
in $\R^{n+1}$, for $i = 1, \dots , m$, with smooth generic boundaries 
(which do not intersect on their boundaries), a {\em Blum medial linking 
structure} is a skeletal linking structure for which: 
\begin{itemize} 
\item[B1)] the $M_i$ are the Blum medial axes of the regions $\gW_i$ 
with $U_i$ the corresponding radial vector fields;
\item[B2)] the linking axis $M_0$ is the Blum medial axis of the exterior 
region $\gW_0$; and
\item[B3)] $M_{i\, \infty}$, which consists of the points in $\tilde M_i$ 
which are unlinked to any other points, corresponds to the points in 
$\cB_{i\, \infty} \subset \cB_i$ for which a height function has an 
absolute maximum on $\cB$ (or minimum for the height function for the 
opposite direction).  
\end{itemize} 
\end{Definition}
Because of B2), we will frequently in the Blum case refer to $M_0$ as the 
{\em linking medial axis}.  
\par
\begin{Remark}
It follows from B2) that if $x \in M_i$ and $x^{\prime} \in M_j$ are linked, 
the corresponding values of the radial and linking functions satisfy 
$\ell_i(x) - r_i(x) = \ell_j(x^{\prime}) - r_j(x^{\prime})$. 
\end{Remark}
\vspace{1ex}
\subsection*{\it Generic Linking Properties} \hfill
\par
Consider a configuration $\{ \gW_i\}$ with $\cB_i$ the boundary of 
$\gW_i$, and $M_i$ the Blum medial axis of $\gW_i$.  We let $\cB = \cup 
\cB_i$ and let $\gs(x, u) = \| x - u\|^2$ denote the distance squared 
function on $\R^{n+1} \times \R^{n+1}$ as earlier.  We consider a 
collection of smooth boundary points $S = \{x_1, \dots, x_r\}$ with $x_i 
\in \cB_{j_i}$.  
\begin{Definition}
\label{Def3.4}
The set of points $S = \{x_1, \dots, x_r\}$ exhibits {\em generic Blum 
linking} of type $(A_{\bga} : A_{\bgb_1}, \dots, A_{\bgb_r})$ if:
\begin{itemize}
\item[i)]  there is a $u_0 \in \gW_0$ so that $\gs(\cdot, u_0) | \cB$ has a 
common minimum on $S$ with value $\gs(x_i, u_0) = y_0$, and $\gs: \cB 
\times \R^{n+1},(S,w) \rightarrow \R, y_0$ is of generic type $A_{\bga}$;
\item[ii)]  for each $i$, there is a subset $S_{i} \subset \cB_{j_i}$ and a 
point $u_i \in M_i$, so that $\gs(\cdot, u_i) | \cB_{j_i}$ has a common 
minimum on $S_{i}$ with value $r_i$ and  $\gs : \cB \times \R^{n+1}, S_{i} 
\times \{u_i\} \to \R, r_i$ is of generic type $A_{\bgb_i}$;  
\item[iii)]  the singular sets $\gS_{\cB}^{(\bga)} \subset \cB$ and 
$\gS_{\cB_{j_i}}^{(\bgb_i)} \subset \cB_{j_i}$ intersect transversely in 
$\cB_{j_i}$; and 
\item[iv)] the images of the strata $\gS_{\cB}^{(\bga)} \cap 
\gS_{\cB_{j_i}}^{(\bgb_i)}$ in $\gS_{M_0}^{(\bga)}$ intersect in general 
position.
\end{itemize}
If $n + 1 = 7$, then we may also have generic Blum linking involving 
generic $\tilde E_7$ points for either self-linking $(\tilde E_7 : A^2_1)$ 
or simple linking $(A^2_1 : \tilde E_7, A^2_1)$.
\end{Definition}
 \par
We will often abbreviate the above notation with $\bgb =\{\bgb_1, \dots, 
\bgb_r\}$, by $(A_{\bga}:A_{\bgb})$.  For a given $(\bga: \bgb)$, we let 
$\gS_{M_0}^{(\bga: \bgb)}$ denote the 
set of points $u_0 \in \gW_0$ exhibiting the properties in i) - iv) in 
Definition \ref{Def3.4}.  We shall prove that it is a smooth submanifold of 
the stratum $\gS_{M_0}^{(\bga)}$ of the Blum medial axis $M_0$ of 
$\gW_0$.  Along with the stratum $\gS_{M_0}^{(\bga: \bgb)}$, we also 
have the corresponding stratum $\gS_{\cB_{j_i}}^{(\bga: \bgb)} \subset 
\cB_{j_i}$ consisting of those points $x_{j_i} \in \gS_{\cB_j}^{(\bgb_i)}$ 
which correspond to points in $\gS_{M_0}^{(\bga: \bgb)}$.  We shall prove 
for a generic configuration, that the $\{\gS_{\cB_{j}}^{(\bga: \bgb)}\}$ 
give a refinement of the stratification of $\cB_{j}$ by the 
$\{\gS_{\cB_{j}}^{(\bgb_i)}\}$, and  $\{\gS_{M_0}^{(\bga: \bgb)}\}$ gives a 
refinement of the Whitney stratification of $M_0$.  
\par 
For a given $\cB_j$ we may divide the strata into groups based on whether  
for $x_{j\, 1} \in \cB_j$, $\ga_j \geq 3$ or $\ga_i = 1$ for all $i$ and 
whether $\gb_{j\, 1} \geq 3$ or $\gb_{j\, i} = 1$ for all $i$.  The 
intersection of the strata for each of the pairs of groups will satisfy the 
Whitney conditions.  
In the low dimensional cases of multi-region configurations in $\R^2$ or 
$\R^3$, the $\gS_{\cB_{j}}^{(\bga: \bgb)}$ give a Whitney stratification of 
$\cB_{j}$.  \par  
For a configuration in $\R^3$ the stratifications of a boundary 
$\cB_j$ given by either $\gS_{\cB_j}^{(\bga)}$ or $\gS_{\cB_j}^{(\gb_j)}$ 
locally have the forms in Figure~\ref{fig.5e}.  Their transverse 
intersection implies that $0$ dimensional strata of one will lie in a 
smooth strata of the other.  Also, the $1$ dimensional strata will 
intersect transversally giving four possibilities.  For a point in one of 
these intersections, any other point in another $\cB_i$ associated to the 
corresponding point in $\gS_{M_0}^{(\bga)}$ must be of type $A_1^2$ by 
property iv) of Definition \ref{Def3.4}.  If instead the point in 
$\gS_{M_0}^{(\bga)}$ is in a singular stratum of dimension $0$, then all 
associated points in some $\cB_i$ are of type $A_1^2$.  A third 
possibility for a smooth point of $M_0$, which is an $A_1^2$ point for say 
$\cB_i$ and $\cB_j$, is that the images of the corresponding 
$1$-dimensional strata $\gS_{\cB_i}^{(\gb_i)}$ and 
$\gS_{\cB_j}^{(\gb_j)}$ intersect transversally in a smooth point of 
$M_0$.  This gives a corresponding analysis.  Together these yield all of 
the generic linking types listed in Table \ref{Table1} in the Addendum.
\par
Examples of some linking types and the strata are illustrated in 
Figure~\ref{fig.3.3c}.
\par
\begin{figure}[ht]
\begin{center}
\includegraphics[width=6.0cm]{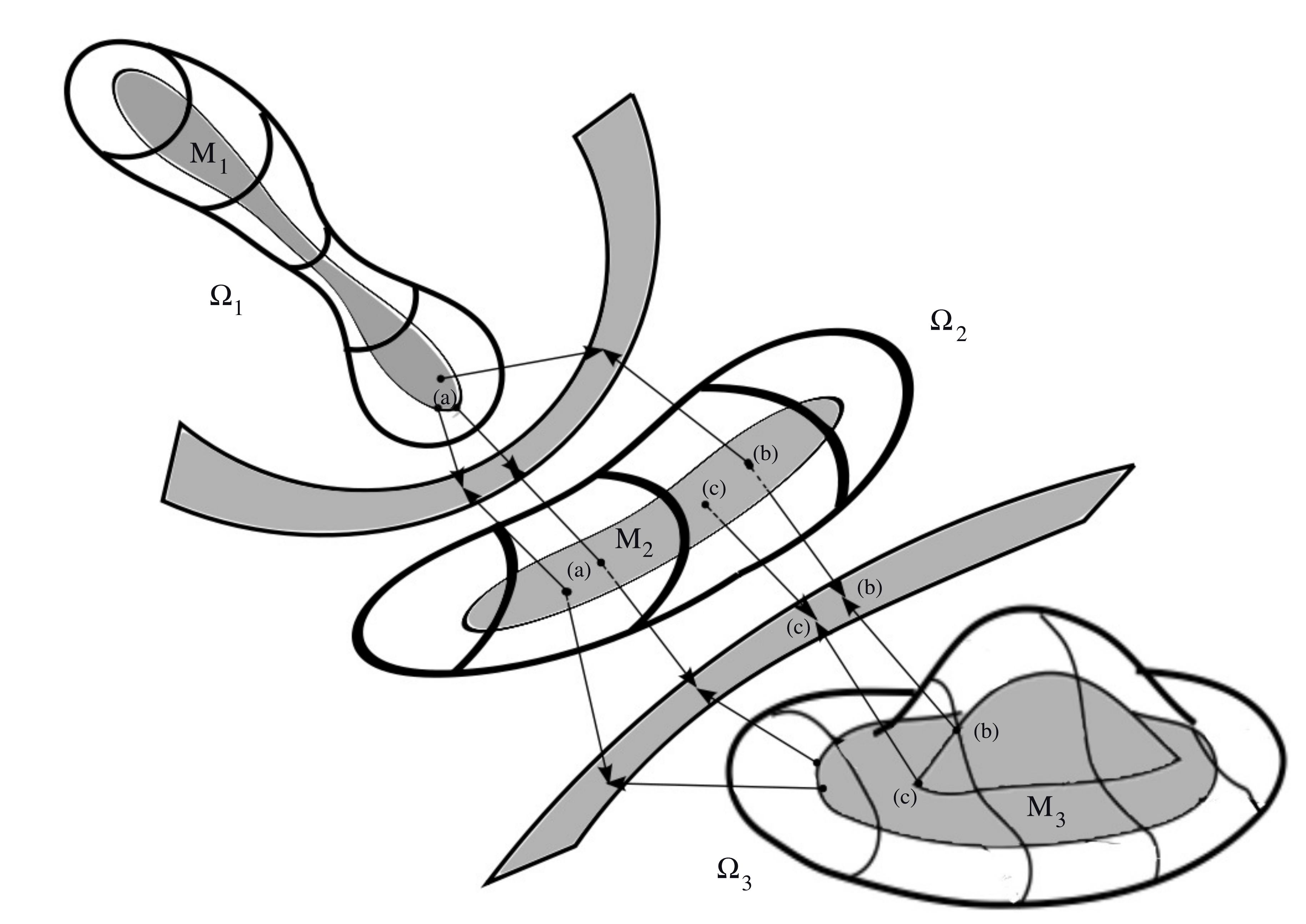}\hspace{.2in}
\includegraphics[width=6.0cm]{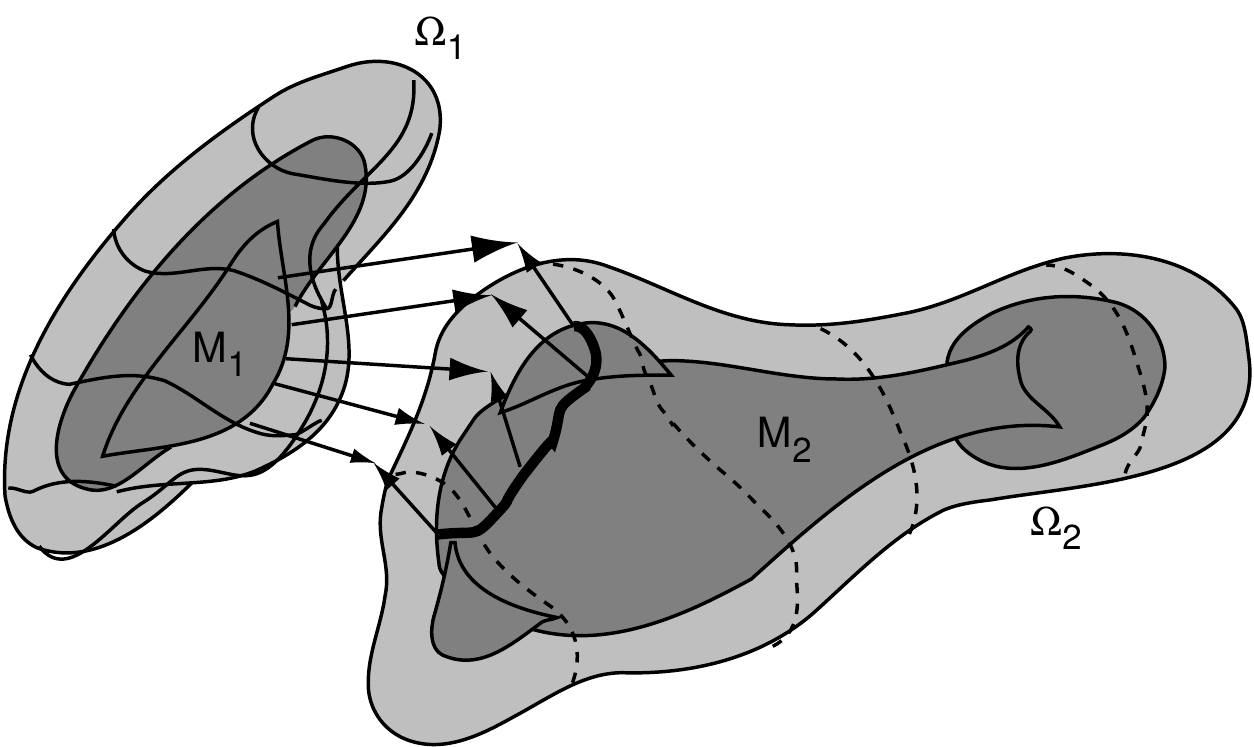}
\end{center} 
\hspace{1in} a) \hspace{2.5in} b) \hspace{1.0in}
\caption{\label{fig.3.3c} Examples of linking types.  In a) points (a) and (b) 
illustate linking of type $(A_1^2: A_3, A_3)$, and (c) of type $(A_1^2: 
A_3A_1, A_1^2)$.  In  b) is illustrated via the dark curve consisting of 
$1$-dimensional strata of type $(A_1^2: A_3, A_1^2)$ and isolated points 
of types $(A_1^2: A_3, A_1^3)$ and $(A_1^2: A_3, A_3)$ where the curve 
crosses the $Y$-branch curve of the medial axis $M_2$ of $\gW_2$, resp. 
where it meets the edge of $M_2$.}
\end{figure} 
\par
\begin{Remark}
\label{Rem3.5}
For a general multi-region configuration, we can substitute in place of 
$\gW_0$ a region $\gW_i$ which has multiple adjoining regions (including 
possibly the complement $\gW_0$) and the definition of \lq\lq generic 
linking\rq\rq of the adjoining regions relative to $\gW_i$ has the same 
form as in Definition \ref{Def3.4}.  We shall see that generically they have 
the same properties as for $\gW_0$. \par
If we consider the double $\tilde M_i$, only part of the stratification of 
$M_i$ coming from one side of $\gW_i$ appears and this simplifies the 
resulting stratification of $\tilde M_i$.  Only for the complement 
$\gW_0$ and $M_0$ does the stratification of $M_0$ itself play an 
important role. 
\end{Remark}

\subsection*{\it Generic Structure for $\cB_{\infty}$ and $M_{\infty}$}  
\hfill
\par
We recall that $\cB_{\infty}$ and $M_{\infty}$ denote the unions of the 
$\cB_{i\, \infty}$, respectively $M_{i\, \infty}$.  We will show in the 
generic case for $n + 1 \leq 7$ that the stratification of $\cB_{\infty}$ 
has the following properties.  The interior points of 
$\cB_{i\, \infty}$ are those points where a height function has a unique 
absolute minimum on $\cB$.  In addition, the boundary of $\cB_{i\, 
\infty}$ consists of strata $\gS^{(\bga)}_{\infty}$ defined by the $\cR^+$-
versal unfolding of a multigerm of the height function of type 
$\bA_{\bga}$ of 
$\cR^+_e$--codimension $\leq n$.  These strata lie in the smooth strata of 
the $\cB_i$ and correspond to the strata $\gS^{(\bga)}_{\cZ}$ of the 
spherical axis $\cZ$ and are of the same dimensions.  The strata 
$\gS^{(\bga)}_{\infty}$ again forms a collection of three stratifications 
with the same properties listed for the distance function for types 1) - 3).  
For $\R^2$ and $\R^3$, these together form Whitney stratifications for 
$\cB_{\infty}$ for elementary reasons.  
\par 
Furthermore, we will show that generically this stratification intersects 
tranversally the stratification $\gS^{(\bgb)}_{\cB_i}$ for the Blum medial 
axis of $\gW_i$.  
Then, the strata of $M_{\infty}$ are the images in $\tilde M_i$ of the 
transverse intersections 
$\gS^{(\bga)}_{\infty} \cap \gS^{(\bgb)}_{\cB_i}$.  
 \par
\begin{Definition}
\label{Def3.4b}
By $M_{\infty}$ and $\cB_{\infty}$ having {\em generic structure} we 
mean that each $\cB_{i\, \infty}$ has the above local structure with the 
resulting generic structure for each $M_{i\, \infty}$.
\end{Definition}
\par
\begin{Remark}
\label{Rem3.5b}
In the Blum case for disjoint regions with smooth boundaries, if we 
remove the self-linking set from the external linking axis, then we obtain 
what is classically called the \lq\lq conflict set\rq\rq, see e.g. 
Siersma\cite{Si}, Sotomayor-Siersma-Garcia\cite{SSG}, and Van Manen 
\cite{VaM}.  
\end{Remark}
\subsection*{Existence of a Blum Medial Linking Structure} 
\par
We use the notation of \S \ref{S:sec1} and consider a model configuration 
$\bgD$ but with the $\gD_i$ disjoint regions with smooth boundaries 
$X_i$.  Then, we let $\bgW$ be a configuration based on $\bgD$ via the 
embedding $\varphi : \bgD \to \R^{n+1}$ so that $\gW_i = \varphi(\gD_i)$ 
for each $i$ (in this case each $\gG_i = \gW_i$).  As earlier, the space of 
configurations of type $\bgD$ is given by the space of embeddings $\emb 
(\bgD, \R^{n+1})$. \par

Then, the existence of Blum medial linking structures is guaranteed by the 
following, which in addition ensures generic linking (see also \cite{Ga}).

\begin{Thm}[Existence of Blum Medial Linking Structure]
\label{Thm3.5}
For $n +1 \leq 7$, we consider multi-region configurations $\bgW$ 
modeled 
by $\bgD = \{\gD_i\}$, consisting of disjoint regions with smooth 
boundaries (which do not intersect on their boundaries).  Then, for any 
compact region $\tilde \gW \subset \R^{n+1}$ with nonempty interior, 
there is an open dense set of embeddings $\Phi \in \emb (\bgD, 
\intr(\tilde \gW))$ such that : 
\begin{enumerate}
\item  the resulting configuration $\{\gW_i = \Phi(\gD_i)\}$ has a Blum 
medial linking structure such that each $M_i$ (including $M_0$) has 
generic local properties given by Theorem \ref{Thm3.1};
\item  the linking structure exhibits generic linking as in Definition 
\ref{Def3.4}; and 
\item  $M_{\infty}$ and $\cB_{\infty}$ have generic structure as given in 
Definition \ref{Def3.4b}.
\item In the case that $\tilde \gW$ is convex, the properties for a linking 
structure in the bounded case hold.  
\end{enumerate} 
\end{Thm}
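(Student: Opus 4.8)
The plan is to derive Theorem~\ref{Thm3.5} from the multi-transversality theorem for multi-distance and height-distance functions (Theorem~\ref{Thm5.1}, which rests on the hybrid transversality theorem Theorem~\ref{Thm9.2} and the perturbation computations of Part~IV), combined with Mather's local classification (Theorem~\ref{Thm3.1}) and Wall's classification for height functions. This yields the required properties on a \emph{residual} set of embeddings $\Phi \in \emb(\bgD, \intr(\tilde\gW))$; a separate argument relating $\cR^+$-versality to infinitesimal stability and invoking Mather's ``infinitesimal stability implies stability'' theorem then upgrades residual to open dense. Throughout, since $\gW_0$ is noncompact but the configuration lies in $\intr(\tilde\gW)$, all linking occurs in a bounded neighbourhood of $\cup_i\gW_i$, so the compact-region versions of the classification results apply there while the behaviour of $M_0$ at infinity is recorded by the spherical axis.

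First I would attach to a configuration $\bgW = \Phi(\bgD)$ the relevant multi-functions: the interior distance-squared functions $\rho_i = \gs\,|\,(\cB_i \times \intr(\gW_i))$, whose Maxwell sets are the individual Blum axes $M_i$; the linking distance-squared function $\rho_0 = \gs\,|\,(\cB \times \intr(\gW_0))$ with $\cB = \cup_i\cB_i$, whose Maxwell set is the linking medial axis $M_0$; the height function $\tau = \nu\,|\,(\cB \times S^n)$, governing $\cZ$, $\cB_\infty$ and $M_\infty$; and the mixed ``height-distance'' functions needed to control the interaction, at a single point of some $\cB_j$, of the distance function to $\gW_j$ with the distance function to $\gW_0$ or with the height function. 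Because several of these functions see the same boundary component $\cB_j$ from opposite sides (a divergent diagram of functions, to which ordinary singularity theory does not apply), I would assemble their jets at tuples of boundary points into a \emph{partial multijet space} --- a subbundle of a product of jet bundles --- and identify inside it the strata cut out by Definitions~\ref{Def3.4} and~\ref{Def3.4b}: (a) the $\cR^+$-versality strata for each multigerm of $\rho_i$, of $\rho_0$, and of $\tau$ separately (these exist with the stated codimensions because for $n+1\le 7$ the only relevant singularities are the $A_{\bga}$-type local minima of bounded $\cR^+_e$-codimension, together with the generic $\tilde E_7$ stratum in the borderline dimension); (b) the strata expressing transverse intersection of $\gS_{\cB_j}^{(\bga)}$ with $\gS_{\cB_j}^{(\bgb_i)}$ in $\cB_j$, and of $\gS^{(\bga)}_\infty$ with $\gS^{(\bgb)}_{\cB_j}$; and (c) the general-position strata for the images in $\gS^{(\bga)}_{M_0}$. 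Applying Theorem~\ref{Thm5.1} then produces a residual set of $\Phi$ whose multijet extension is transverse to all of these strata simultaneously.

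Given such a $\Phi$, I would read off the four assertions. Transversality of $\rho_i$ (resp.\ $\rho_0$) to the versality strata is exactly Mather's hypothesis, so each $M_i$ and $M_0$ has the local structure of Theorem~\ref{Thm3.1} and the boundary stratifications $\gS_{\cB_j}^{(\bga)}$, $\gS_{\cB_j}^{(\bgb_i)}$, $\gS^{(\bga)}_{M_0}$ are Whitney-stratified submanifolds of the predicted dimensions; this is (1). Transversality to the strata of types (b) and (c) is precisely conditions iii) and iv) of Definition~\ref{Def3.4}; and since generically every point of $M_0$ is the centre of a maximal $\gW_0$-sphere meeting $\cB$ at two or more points (or with a degenerate tangency), extending the unit radial vector $\bu_i$ of $\gW_i$ along each line of tangency until it reaches that centre defines the multivalued linking function $\ell_i$, with $\ell_i \ge r_i$ automatic (the excess being the $\gW_0$-side radius) and $\ell_i - r_i = \ell_j - r_j$ on linked strata because both equal that common $\gW_0$-radius --- this is (2), with L1 coming from the stratawise smoothness of the versal unfoldings, L3 because the $W_{ij}$ are by construction the strata of the Blum axis $M_0$ of $\gW_0$, and L2 because the linking flow $(\ref{Eqn2.1})$ is for $0\le t\le\frac12$ the radial flow of $(M_i,U_i)$ and for $\frac12\le t\le 1$ the radial flow of the honest region $\gW_0$ run inward from $\cB$, both automatically nonsingular in the Blum case. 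Applying Wall's transversality theorem and its height-distance extension to $\tau$ gives the generic structure of $\cZ$ (Theorem~\ref{Thm3.1b}) and, via the transverse intersections $\gS^{(\bga)}_\infty\cap\gS^{(\bgb)}_{\cB_j}$, the generic structure of $\cB_\infty$ and $M_\infty$ of Definition~\ref{Def3.4b}, with the radial flow on $N_+|M_\infty$ a stratawise diffeomorphism onto the complement of the linking-flow image; this gives (3) together with L4. For (4), when $\tilde\gW$ is convex I would fold into the genericity one further condition on $\Phi$: that the fixed hypersurface $\partial\tilde\gW$ is transverse to the finitely many strata of $M_0$ (equivalently to those of $M_b$), a codimension count on the compact Whitney stratified set $M_0\cap\tilde\gW$ achievable on an open dense set; convexity then guarantees that each truncated linking or radial line meets $\partial\tilde\gW$ exactly once, so the truncated linking flow remains a piecewise-smooth homeomorphism and the bounded-case requirements of L4 and Remark~\ref{Rem.2.1a} hold.

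The main obstacle is the transversality step itself: one must show that the perturbations produced by varying the embedding $\Phi$ are rich enough to realize transversality to the strata of types (b) and (c), where the jets of $\rho_0$ and of $\rho_i$ (or of $\tau$ and $\rho_i$) \emph{at a common boundary point} must be controlled independently. This decoupling is the heart of the divergent-diagram difficulty, and it is exactly what the derivative computations of Part~IV are designed to handle --- they verify the hypotheses of Theorem~\ref{Thm5.1} for the space of embeddings of a configuration. A secondary, more technical hurdle is the passage from a residual set to an open dense set, handled as indicated above by relating versality of the distance and height functions to the infinitesimal stability of associated mappings.
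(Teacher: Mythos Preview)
Your proposal is correct and follows essentially the same approach as the paper: set up the multi-distance and height-distance functions and their partial multijet spaces, apply the multi-transversality theorem (Theorem~\ref{Thm5.1}) to the distinguished strata encoding Definitions~\ref{Def3.4} and~\ref{Def3.4b} to obtain the properties on a residual set, and then upgrade residual to open dense via the equivalence of $\cR^+$-versality with infinitesimal $\cA$-stability of the associated unfolding maps together with Mather's ``infinitesimal stability implies stability''. Your identification of the divergent-diagram decoupling as the central technical obstacle, resolved by the perturbation computations of Part~IV, and your treatment of the bounded convex case via transversality of $\partial\tilde\gW$ to the strata of $M_0$, both match the paper's arguments (the latter appearing as Remark~\ref{Rem6.10}).
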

\par
We shall prove Theorem \ref{Thm3.5} as a special case of the following 
more general genericity result for the {\it full Blum linking structure} for 
a multi-region configuration.  \par
\begin{Thm}[Full Blum Linking Structures]
\label{Thm3.6}
For $n + 1 \leq 7$, let $\bgD = \{\gD_i\}$ be a model multi-region 
configuration.  For a compact region $\tilde \gW \subset \R^{n+1}$ with 
nonempty interior, there is an open dense set of embeddings $\Phi \in 
\emb (\bgD, \intr(\tilde \gW))$ such that the resulting configuration 
$\bgW$ modeled by $\bgD$, with $\{\gW_i (= \varphi(\gD_i))\}$, has the 
following properties: 
\begin{itemize}
\item[i)] each $\gW_i$ has a Blum medial axis $M_i$ exhibiting the 
generic local properties in $\intr(\gW_i)$ given by Theorem \ref{Thm3.5}; 
\item[ii)] the complement $\gW_0$ exhibits the generic local properties 
on $M_0$ in $\intr(\gW_0) \cap \tilde \gW$;  
\item[iii)] the local structure of $M_i$ (including $i = 0$) near a boundary 
point of type $P_k$ or singular $Q_k$ point has the local generic 
edge-corner normal form given by Definition \ref{Def3.3};
\item[iv)]  at a smooth $Q_k$ boundary point of a region $\gW_i$, 
$\gS_{Q\, i}$ intersects the strata $\gS_{\cB_i}^{(\bga : \bgb)}$ 
transversally (and if $n + 1 = 7$ it does not contain $\tilde E_7$ points);
\item[v)] generic linking occurs between the smooth points of the regions 
and no linking occurs at edge-corner points, and this holds as well for 
generic linking between adjoining regions of a given region $\gW_i$ 
relative to the region $\gW_i$; and
\item[vi)]  $\cB_{i\, \infty}$ is contained in the smooth strata of the 
$\cB_i$ (as a piecewise smooth manifold), and $\cB_{i\, \infty}$ and the 
corresponding strata of $M_{i\, \infty}$ exhibit the generic properties 
given in Definition \ref{Def3.4b}.
\end{itemize}  
\end{Thm}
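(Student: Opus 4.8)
The plan is to deduce Theorem~\ref{Thm3.6} from the multi-transversality machinery of Part~III, applied to the two families of multi-functions attached to a configuration: the \emph{multi-distance function} built from $\gs(x,u)=\|x-u\|^2$ restricted to the various products $\cB_{j_i}\times\intr(\gW_k)$, and the \emph{height-distance function} combining $\gs$ with the height function $\nu(x,v)=x\cdot v$. First I would fix the model $\bgD$, work in the Baire space $\emb(\bgD,\intr(\tilde\gW))$ with its $C^\infty$ topology, and for each embedding $\Phi$ form the associated divergent diagram of functions. Because these are divergent diagrams, the classical Thom--Mather jet-transversality theorems do not apply directly (cf.\ the remarks in the introduction on \cite{L}, \cite{Wa}); instead I would invoke the hybrid transversality theorem (Theorem~\ref{Thm9.2}), which yields the multi-transversality theorem (Theorem~\ref{Thm5.1}) for multijets of the multi-functions relative to the partial multijet spaces, giving residual-set statements over the space of embeddings.

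Next I would enumerate the ``bad'' loci to be avoided and exhibit each as a finite union of submanifolds (in fact semialgebraic strata) of the partial multijet bundles: the stratum where a multigerm of $\gs$ on $\cB$ fails to be an $\cR^{+}$-versal unfolding (which would destroy the Mather normal forms of $M_i$ and $M_0$, hence properties i)--ii)); the strata where the boundary stratifications $\gS_{\cB}^{(\bga)}$ and $\gS_{\cB_{j_i}}^{(\bgb_i)}$ fail to meet transversally, or their images fail to be in general position in $\gS_{M_0}^{(\bga)}$, i.e.\ conditions iii)--iv) of Definition~\ref{Def3.4} (hence property v)); the stratum where a multigerm of $\nu$ fails versality, or where $\gS_{\cZ}$ and $\gS_{\cB_i}^{(\bgb)}$ fail transversality (property vi)); and the stratum where $\gS_{Q\,i}$ fails to meet the $\gS_{\cB_i}^{(\bga:\bgb)}$ transversally (property iv)). Using $n+1\le 7$ (and $n+1\le 8$ for the spherical axis), each such bad stratum has codimension strictly larger than the dimension of the relevant source, so multi-transversality forces the corresponding multijet extension maps to miss it altogether; this yields i), ii), v), vi) on a residual set, and specializing to disjoint regions with smooth boundaries (so that $\gW_0$ is the only multiply-adjoined region and there are no $P_k$ or $Q_k$ points) gives Theorem~\ref{Thm3.5} as a particular case. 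The perturbation families constructed in Part~IV enter here: one must verify, via the derivative computations of Part~IV, that varying the embedding surjects onto the jet directions needed to establish transversality in the partial multijet space.

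The edge-corner statements iii) and iv) require the addendum to Mather's theorem (Theorem~\ref{Thm3.2}), and this is where I expect the main obstacle. One must analyze the Blum medial axis in a neighborhood of a $k$-edge-corner point, where the boundary is only piecewise smooth and $\gW_i$ is a manifold with boundaries and corners, and show that generically it takes the edge-corner normal form of Definition~\ref{Def3.3}. The difficulty is that Mather's original argument \cite{M2} rests on the Seeley extension theorem \cite{Se} for extending jets off a smooth boundary, which is inadequate in the corner setting; the resolution is to replace it by Bierstone's extension theorem \cite{Bi} for stratified sets, which permits controlled perturbations of the embedding respecting the corner structure while still achieving transversality to the appropriate corner-adapted strata in the partial multijet bundle. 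Carrying this through — stratifying $\cB$ near corners, identifying the corner-adapted jet constraints, and pushing the multi-transversality argument through with the Bierstone extension in place of Seeley's — together with the special case of the $\tilde E_7$ stratum when $n+1=7$ (handled via topological stability using \cite{L2}, as in Remark~\ref{Rem3.1}), is the most delicate part.

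Finally I would upgrade the residual set to an open dense set and dispose of the bounded case. For openness, following the fourth point of the introduction's overview, I would relate the versality of the distance and height functions to the infinitesimal stability of the naturally associated mappings whose local structure encodes that of these functions; the good set is then characterized, away from the controlled modification near $\tilde E_7$ points, as the set of embeddings for which these associated maps are infinitesimally stable together with finitely many open transversality conditions. Mather's theorem ``infinitesimal stability implies stability'' \cite{M5} then makes this set open, and since it contains the residual set produced above it is dense, hence open and dense. For the bounded case with $\tilde\gW$ convex, convexity forces the radial and linking vectors reaching $\partial\tilde\gW$ to meet it transversally, so truncation at $\partial\tilde\gW$ produces a well-defined stratification of $M_0$ and of $M_b\supseteq M_{\infty}$, yielding the bounded-case conclusions and completing the proof; Theorem~\ref{Thm3.5} follows by restriction.
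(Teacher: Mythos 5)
Your proposal broadly tracks the paper's architecture for properties i), ii), v), vi): the residual set comes from the multi-transversality theorem for the multi-distance and height-distance functions (Theorem~\ref{Thm5.1}, via the hybrid theorem \ref{Thm9.2} and the Part~IV perturbation families), the bad loci are the failures of $\cR^+$-versality and of transversality of the stratifications $\gS_{\cB}^{(\bga)}$, $\gS_{\cB_{j_i}}^{(\bgb_i)}$, $\gS_{Q\,i}$, and openness is upgraded via Lemma~\ref{Lem7.6} (versality $\Leftrightarrow$ infinitesimal $\cA$-stability of $\bar\gs_i$, $\bar\nu_i$) and Mather's "infinitesimal stability implies stability." The bounded case via convexity and the $\tilde E_7$ treatment via Looijenga's topological stability are also in line with the paper.

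The gap is in how you propose to establish the edge-corner normal form (property iii, via Theorem~\ref{Thm3.2}). You want to stratify the boundary near corners, define "corner-adapted jet constraints" in a partial multijet bundle, and push transversality through with Bierstone's theorem replacing Seeley's. The paper does not do this, and the approach would not easily yield the claim: the normal form $E_k$ of Definition~\ref{Def3.3} is an explicit identification of the Maxwell set near a $k$-edge-corner point, and it holds for \emph{every} configuration satisfying the local $P_k$/$Q_k$ model — no genericity is needed, because the local model is preserved by any $\Phi \in \emb(\bgD,\R^{n+1})$. The paper proves it directly: Lemma~\ref{Lem7.1} shows the eikonal flows from the $k$ smooth boundary sheets are, near the corner point, transverse diffeomorphisms onto a common neighborhood, and the Blum medial axis in that neighborhood is the pullback by the map $\mu:W\to\R^k$ (recording the eikonal times) of the locus $\{t_i = t_j \le t_\ell\}$, which is exactly $E_k$ after the local diffeomorphism $\tilde\mu$. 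No jet-space transversality is involved. Bierstone's extension theorem \cite{Bi} is used in the paper in a completely different place: Lemma~\ref{Lem7.4}, in the openness argument, to build a continuous extension $\eta: \emb(\bgD,\R^{n+1}) \to C^\infty(\tilde\bgD,\R^{n+1})$ from the configuration (which has concave corners, where Seeley's theorem no longer suffices) to a neighborhood, so that Mather's openness argument can be run. If you retain your jet-transversality formulation for property iii), you would need to exhibit specific corner-adapted strata whose transversality forces the precise structure $E_k$, and as written that step is not there; the eikonal-flow argument of the paper bypasses the issue entirely.
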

The last two parts of this paper will be devoted to developing the 
necessary transversality theorems, associated computations, and 
auxiliary results for proving this theorem.
\begin{Remark}
\label{Rem3.6}
For a bounded region $\tilde \gW$ whose boundary $\partial \tilde \gW$ is 
transverse to the linking vectors of $M_0$ (i.e. the extension of the radial 
lines from $M_{\infty}$ are tranverse to the limiting tangent spaces at 
points of $\partial \tilde \gW$), we can modify the linking vectors that 
extend beyond $\tilde \gW$, or the extension of the radial vectors from 
$M_{\infty}$, by truncating them at $\partial \tilde \gW$.  They will be 
stratawise smooth when we add the strata corresponding to the 
intersection  $M_0 \cap \partial \tilde \gW$.  For example if $\tilde \gW$ 
is convex, then, for almost all small translations of the configuration, the 
resulting $M_0$ is transverse to $\partial \tilde \gW$.  \par
An alternate approach for a more general region $\tilde \gW$ is to let the 
exterior region be $\gW_0 \cap \tilde \gW$ and allow linking from 
$\partial \tilde \gW$ with the other internal regions.  Then, including 
$\tilde \gW$ as part of the configuration, we may apply the existence 
theorem to give the result. 
\end{Remark}
\par
As an example, c) of Figure~\ref{fig.3.3b} illustrates iv) of the Theorem.  
By property iii) we see that the intersection of the closure of $M_i$ with 
the boundary consists of the singular points on the boundary.  This holds 
equally well for the complement with the closure of $M_0$ containing the 
singular strata of the $\gG_i$ in $\intr(\gW_0) \cap \tilde \gW$.  
\par 
One consequence of the characterization of generic linking in terms of the 
versality of the distance functions from Theorem \ref{Thm3.1} and the 
transversality of the stratifications is the determination of the 
codimensions of the strata.  
\begin{Corollary}
\label{Cor3.5}
For a generic embedding $\Phi : \bgD \to \R^{n+1}$ as in Theorems 
\ref{Thm3.5} or \ref{Thm3.6}, for $(\bga : \bgb_1, \dots , \bgb_r)$, the 
codimensions of the strata 
$\gS_{\cB_i}^{(\bga : \bgb)}$ and $\gS_{M_i}^{(\bga : \bgb)}$ satisfy 
\begin{equation}
\label{Egn3.4a}  
\codim_{\cB_i}(\gS_{\cB_i}^{(\bga : \bgb)}) \,\, = \,\, 
\codim_{\R^{n+1}}(\gS_{M_i}^{(\bga : \bgb)}) \, - \, 1 , 
\end{equation}
and
\begin{equation}
\label{Egn3.4b}
\codim_{\R^{n+1}}(\gS_{M_i}^{(\bga : \bgb)}) \, \, = \, \, \cR_e^+\text{-
}\codim(\bA_{\bga}) \, + \, \sum_{p=1}^{r} 
\cR_e^+\text{-}\codim(\bA_{\bgb_{j_p}}) \, - \, r. 
\end{equation}
\end{Corollary}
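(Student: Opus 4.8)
The corollary is a codimension count assembled from three facts, all of which are part of (or immediate from) the genericity asserted in Theorems~\ref{Thm3.5} and~\ref{Thm3.6}: (a) by the versality of the distance-squared unfoldings in Theorem~\ref{Thm3.1}, the pure medial stratum $\gS_{M}^{(\bga)}$ has codimension $\cR_e^+\text{-}\codim(\bA_{\bga})$ in $\R^{n+1}$; (b) for a generic embedding each boundary stratum ($\gS_{\cB}^{(\bga)}$, $\gS_{\cB_j}^{(\bgb)}$, and the refined $\gS_{\cB_i}^{(\bga:\bgb)}$) is a smooth submanifold of the same dimension as the corresponding medial stratum, since the radial map carries the appropriate sheet of the double diffeomorphically onto it; and (c) the transversality conditions iii) and iv) of Definition~\ref{Def3.4}. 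I will argue with $\gW_0$ in the role of the linking region; a general $\gW_i$ with several adjoining regions is identical by Remark~\ref{Rem3.5}. The case $n + 1 = 7$ needs no separate treatment: a generic $\tilde E_7$ point defines a transverse section to the codimension-$7$ $\tilde E_7$-stratum, so assigning it the value $7$ for $\cR_e^+\text{-}\codim$ leaves every count below unchanged.

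First I would prove~(\ref{Egn3.4a}). By construction $\gS_{\cB_i}^{(\bga:\bgb)}$ is the locus of points $x_{j_i}$ associated to $u_0$ as $u_0$ ranges over $\gS_{M_i}^{(\bga:\bgb)}$; lifting to the double $\tilde M_i$ in order to separate the several sheets over $\gS_{M_i}^{(\bga:\bgb)}$, fact (b) gives $\dim\gS_{\cB_i}^{(\bga:\bgb)} = \dim\gS_{M_i}^{(\bga:\bgb)}$. Since $\cB_i$ is a smooth hypersurface in $\R^{n+1}$, this yields $\codim_{\cB_i}(\gS_{\cB_i}^{(\bga:\bgb)}) = n - \dim\gS_{M_i}^{(\bga:\bgb)} = \codim_{\R^{n+1}}(\gS_{M_i}^{(\bga:\bgb)}) - 1$, which is~(\ref{Egn3.4a}). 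Specialized to a single region, the same reasoning records that $\codim_{\cB_j}(\gS_{\cB_j}^{(\bgb)}) = \cR_e^+\text{-}\codim(\bA_{\bgb}) - 1$, which I use below.

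Next I would build $\gS_{M_0}^{(\bga:\bgb)}$ in stages inside $\R^{n+1}$ to obtain~(\ref{Egn3.4b}). By fact (a), the stratum $\gS_{M_0}^{(\bga)}$ of $M_0$ has codimension $\cR_e^+\text{-}\codim(\bA_{\bga})$. For each $p$, over $\gS_{M_0}^{(\bga)}$ the assignment $u_0 \mapsto x_{j_p}$ identifies $\gS_{M_0}^{(\bga)}$, through the appropriate sheet of the double, diffeomorphically with the part of $\gS_{\cB}^{(\bga)}$ lying in $\cB_{j_p}$. Requiring in addition that $x_{j_p} \in \gS_{\cB_{j_p}}^{(\bgb_p)}$ then cuts out a subset whose codimension inside $\gS_{M_0}^{(\bga)}$ equals $\codim_{\cB_{j_p}}(\gS_{\cB_{j_p}}^{(\bgb_p)}) = \cR_e^+\text{-}\codim(\bA_{\bgb_p}) - 1$, precisely because condition iii) of Definition~\ref{Def3.4} makes the two boundary stratifications meet transversely in $\cB_{j_p}$. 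Condition iv), that the images of the strata $\gS_{\cB}^{(\bga)}\cap\gS_{\cB_{j_p}}^{(\bgb_p)}$ in $\gS_{M_0}^{(\bga)}$ meet in general position, is exactly what makes the resulting $r$ successive conditions independent, so that their codimensions add. Hence $\codim_{\R^{n+1}}(\gS_{M_0}^{(\bga:\bgb)}) = \cR_e^+\text{-}\codim(\bA_{\bga}) + \sum_{p=1}^{r}\bigl(\cR_e^+\text{-}\codim(\bA_{\bgb_{j_p}}) - 1\bigr)$, which is~(\ref{Egn3.4b}).

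The real content lies entirely in facts (a)--(c), which are delivered by Mather's theorem together with the transversality machinery of Parts~III and~IV; granted those, the corollary is formal. The one step I expect to require genuine care is the self-linking case, where several of the $x_i$ lie on the same boundary $\cB_{j_i}$: there one must work on $\tilde M_0$ and $\tilde M_{j_i}$ so that the sheets over a common stratum are genuinely separated before invoking (b), and then verify that condition iv), in the stated form (general position of the images in $\gS_{M_0}^{(\bga)}$), is indeed the hypothesis that gives additivity of codimensions in the iterated intersection. I expect this to be the only point needing more than a sentence, and even there it amounts to unwinding the definitions of the double and of condition iv).
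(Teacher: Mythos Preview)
Your argument is correct and arrives at the same formula, but the route differs from the paper's. You work \emph{downstairs}, directly in $\R^{n+1}$ and on the boundaries: you start from $\gS_{M_0}^{(\bga)}$ with its known codimension, transport each linking condition to the boundary via the radial correspondence, use condition iii) of Definition~\ref{Def3.4} to read off the codimension drop contributed by each $\gS_{\cB_{j_p}}^{(\bgb_p)}$, and then invoke condition iv) to make the $r$ contributions add. The paper instead works \emph{upstairs} in the partial multijet space ${_{\bl}}E^{(k)}(X_{\cJ_i},\R^2)$: it identifies $\gS_{M_i}^{(\bga:\bgb)}$ as the diffeomorphic projection of ${_{\bl}}j_1^k\rho_i^{-1}(W^{(\bga:\bgb)})$, uses the transversality of Theorem~\ref{Thm5.1} to equate codimensions, and then computes $\codim(W^{(\bga:\bgb)})$ directly from its product structure together with the formula $\codim_{{_m}J^k(X,\R)}(W^{\eta}) = \cR_e^+\text{-}\codim(\bA_{\eta}) + nm$.

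Your geometric route has the virtue of making transparent exactly which genericity hypotheses (namely iii) and iv)) account for which terms in~(\ref{Egn3.4b}), and it avoids any bookkeeping in jet space. The paper's jet-space route is more uniform: the self-linking case that you flag as needing care simply does not arise as a special case there, since the product structure of $W^{(\bga:\bgb)}$ and the single transversality statement of Theorem~\ref{Thm5.1} handle all linking types at once, without having to pass to doubles or separate sheets. In your approach that step is indeed fine---conditions iii) and iv) are already formulated so as to cover it---but the paper's version sidesteps the issue entirely.
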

This will be derived in \S \ref{S:sec6}.  \par
As a consequence of the corollary, we can immediately list the generic 
linking types which can occur for a given  $\R^{n+1}$ as 
$\codim(\gS_{M_i}^{(\bga : \bgb)}) \leq n+1$.  
\par
\subsection*{Addendum: Classification of Linking Types for Blum Medial 
Linking Structures in $\R^3$} 
\par 
For a generic configuration $\bgW$ in $\R^3$, the Blum medial linking 
structure exhibits generic linking properties given by the Table 
\ref{Table1}.  We first briefly explain the features of the table.  
Dimension refers to the dimension in $\R^3$ of the strata where the given 
linking type occurs.  There are three 
types of linking: 1) linking between points on distinct medial axes; \lq\lq 
partial linking\rq\rq\, involving more than one point from one medial axis 
and point(s) from another; and 3) \lq\lq self-linking\rq\rq\, where linking 
is between points from a single medial axis.  \lq\lq Pure linking 
type\rq\rq\, refers to cases only occurring for self linking.  The $A_1^2$ 
and $A_1A_3$ linking can occur for either linking or self-linking; while 
$A_1^3$ and $A_1^4$ linking can occur for any of the three linking types.
\par


\begin{longtable}{lccl}
\caption{Classification of Linking Types for Blum Medial Linking 
Structures in $\R^3$} \\
  &  Linking Type  &   Dimension   &   Description of Linking  \\  
\hline
\endhead
   &  {\bf $A_1^2$ Linking}  &  &   \\ 
 [1ex] 
i) & $(A_1^2 : A_1^2, A_1^2)$   &   2   &   between $2$ smooth points \\  
[1ex]
ii) & $(A_1^2 : A_1^3,  A_1^2)$  &   1   &   between a smooth point \\
  &  &     & and a Y-junction point \\  [1ex]

iii) &  $(A_1^2 : A_3,  A_1^2)$  &  1  &   between a smooth point  \\
  &   &    &   and an edge point   \\  [1ex]

iv) &  $(A_1^2 : A_3A_1,  A_1^2)$   &  0   & between a fin point \\
  &   &     &   and a smooth point  \\   [1ex]

v) &  $(A_1^2 : A_1A_3,  A_1^2)$   &  0   & between a smooth point  \\
  &   &     &  associated to a fin point and \\
  &    &    &    another smooth point  \\   [1ex]

vi) &  $(A_1^2 : A_1^4,  A_1^2)$   &  0   &  between a smooth point \\
  &   &    &  and a $6$-junction point \\   [1ex]

vii) & $(A_1^2 : A_1^3,  A_1^3)$ &  0  & between $2$ Y-junction points \\
   &    &     &  \\  [1ex]

viii) & $(A_1^2 : A_3,  A_3)$ &  0  & between $2$ edge points \\
   &    &     &   \\  [1ex] 
ix) &  $(A_1^2 : A_1^3,  A_3)$ &  0  & between a Y-junction point  \\
   &   &     &   and an edge point \\  [2ex]

 $\text{ }$  & {\bf $A_1^3$, $A_1^4$ and $A_1A_3$ Linking}  &   &  \\  
[1ex]

x)  &  $(A_1^3 : A_1^2,  A_1^2, A_1^2)$   &  1   &  between $3$ smooth 
points \\   [1ex]
 xi)  &  $(A_1^3 : A_1^3,  A_1^2,  A_1^2)$  &   0   &  between $2$ smooth 
points \\
   &  &     & and a Y-junction point \\ [1ex]

xii)  &  $(A_1^3 : A_3,  A_1^2,  A_1^2)$  &  0  &  between $2$ smooth 
points  \\
   &  &    &   and an edge point   \\  [1ex]

xiii)  &  $(A_1^4 : A_1^2,  A_1^2, A_1^2, A_1^2)$   &  0   &  between $4$ 
smooth points \\   [1ex]

xiv) &  $(A_1A_3 : A_1^2, A_1^2)$   &  0  &  $A_1A_3 $ linking between 
$2$\\ [1ex]     &  &    &   smooth points  \\ [2ex]

   & {\bf Pure Self-Linking}  &  &  \\ [1ex]

xv)  &  $(A_3 : A_1^2)$   &  1   & edge-type self-linking with \\
   &   &     &   a smooth point \\  [1ex]

xvi)  &  $(A_3 : A_1^3)$   &  0   & edge-type self-linking with \\
   &   &     &   a Y-junction point \\  [1ex]

xvii)  &  $(A_3 : A_3)$   &  0   & edge-type self-linking with \\
  &   &     &   an edge point \\  [1ex]
\label{Table1} 
\end{longtable}


\section{Retracting the Full Blum Medial Structure to a Skeletal Linking 
Structure}
\label{S:sec8}
We know by Theorem \ref{Thm3.6} that a generic multi-region 
configuration has a Blum medial structure.  If the regions are disjoint 
with smooth boundaries, then the Blum linking structure is a skeletal 
linking structure.  However, if the configuration contains regions which 
adjoin, then the Blum linking structure does not satisfy all of the 
conditions for being a skeletal linking structure.  Specifically the 
individual Blum medial axes of both the regions and the complement will 
extend to the singular points of the boundaries. 
\par
There are two perspectives on this.  On the one hand, as mentioned in 
\ref{Rem3.2}, we may view this as a \lq\lq relaxed form of a skeletal 
linking structure\rq\rq.  We shall see that from this structure we still 
obtain all of the local, relative, and global geometry of the individual 
regions and the positional geometry of the configuration.  However, if we 
consider the stability and deformation properties, such a structure does 
not behave well.  \par 
There are two approaches to modifying the full Blum linking structure to a 
skeletal linking structure.  One approach is when the configuration with 
adjoined regions can be viewed as a deformation of a configuration with 
disjoint regions.  The second is to modify the full Blum linking structure 
by a process of \lq\lq smoothing the corners\rq\rq of the regions.  We 
consider each of these.  \par

\subsection*{Example of Evolving Skeletal Linking Structure for Simple 
Generic Transition} \par
We will not attempt to handle the most general case but illustrate the 
method for an adjoining of two regions.  We assume that initially we have 
a configuration of two disjoint compact regions $\gW_i$ in $\R^{n+1}$ 
with smooth boundaries $\cB_i$ defined by the model $\Phi : \bgD \to 
\R^{n+1}$.  We consider a {\it simple generic transition} in the 
configuration defined by a smooth map $\Psi : \bgD \times [0, 1] \to 
\R^{n+1}$, with $\Psi_0 = \Phi$, where disjoint regions becoming adjoined 
causes a transition in the full Blum medial linking structure as in Figure~ 
\ref{fig.8.1}.  
\par
\begin{figure}[ht]
\begin{center} 
\includegraphics[width=10cm]{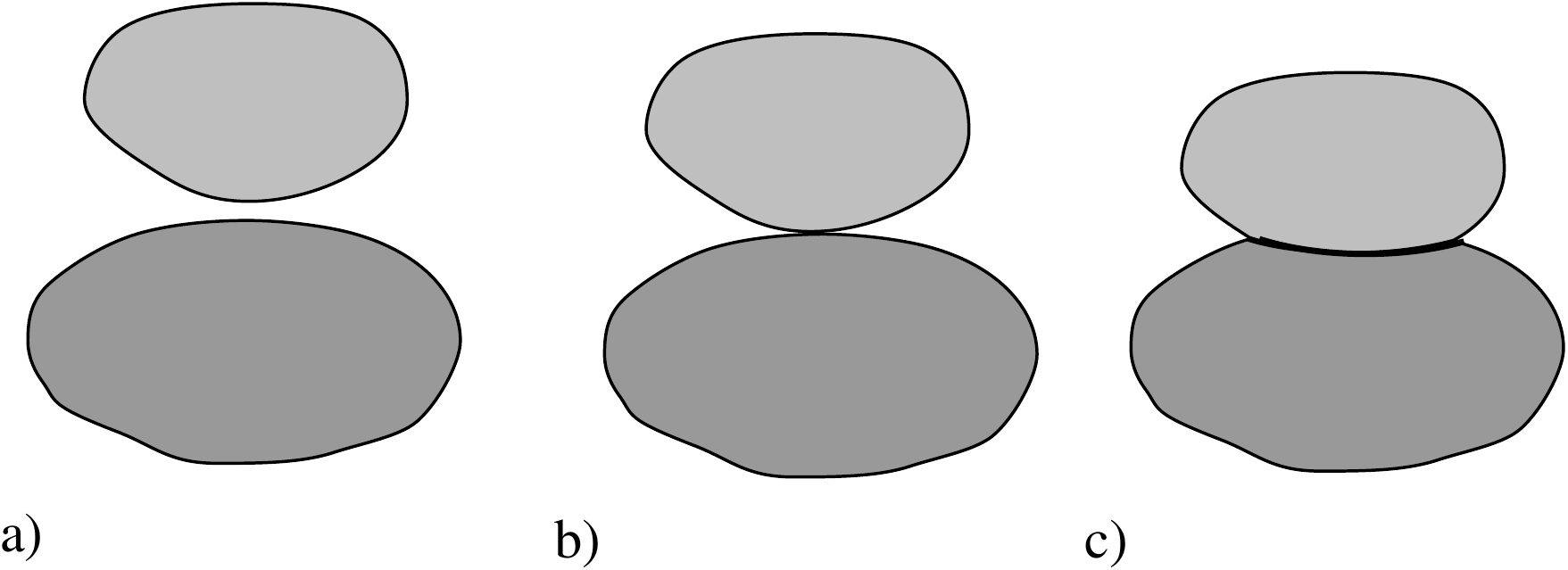} 
\end{center} 
\caption{\label{fig.8.1} The stages for a simple generic transition of two 
evolving regions $\gW^{\prime}_{i\, t}$ becoming adjoined: a) disjoint 
regions, b) simple tangency at $t = t_0$, and c) regions adjoined along 
$Z_t$ at $t > t_0$.}  
\end{figure} 
\par
We denote $\Psi_t = \Psi(\cdot, t)$ and suppose the restriction $\Psi_t | 
X_i$, $i = 1, 2$, is an embedding for each $t$.  Then, let $\gD_{i\, t} = 
\Psi_t(\gD_i)$ be the individual regions bounded by $\cB_{i\, t} = 
\Psi_t(X_i)$.  Suppose 
\begin{itemize}
\item[i)]  The individual $\gD_{i\, t}$ have generic Blum medial axes for 
all $t$.  
\item[ii)]  There is a $0 \leq t_0 \leq 1$ such that $\gD_{1\, t}$ and 
$\gD_{2\, t}$ are disjoint for $t < t_0$, and at $t_0$ there is a generic 
transition of tangency occurring at a single point corresponding to smooth 
points on the medial axes, so that for $t > t_0$, $\cB_{1\, t}$ and 
$\cB_{2\, t}$ intersect transversally and each is transverse to the radial 
lines of the other. 
\item[iii)] The external Blum medial axis of $\bgD_t = \gD_{1\, t} \cup 
\gD_{2\, t}$ is generic for all $t > t_0$.  
\item[iv)]  For $t > t_0$, there is a smooth submanifold $Z_t \subset 
\gD_{1\, t} \cap \gD_{2\, t}$ so that $\partial Z_t = \cB_{1\, t} \cap 
\cB_{2\, t}$ and $Z_t$ is transverse to the radial lines from smooth 
points of the Blum medial axes for each $\gD_{i\, t}$ (see Figure 
\ref{fig.8.1} and \ref{fig.8.2}).  
\end{itemize}
We then can form for $t >t_0$ new configurations consisting of $\gW_{i\, 
t}^{\prime}$ bounded by $(\cB_{i\, t} \backslash \gD_{i^{\prime}\, t}) \cup 
Z_t$ for $(i, i^\prime) = (1, 2), (2, 1)$.  The $\gW_{i\, t}^{\prime}$ are 
adjoined along $Z_t$.  
 The Blum medial structure for each $\gW_{i\, t}^{\prime}$ would now 
extend to $\partial Z_t$.  However, we can modify the Blum medial 
structure as shown in c) of Figure \ref{fig.8.2} by: 
\begin{itemize} 
\item[a)] retaining the Blum medial axes $M_{i\, t}$ of each $\gD_{i\, t}$; 
\item[b)] shortening the radial vectors which extend into $\gD_{1\, t} 
\cap \gD_{2\, t}$ so they end at $Z_t$; 
\item[c)] refine the stratification of each $\tilde M_{i\, t}$ by adding as a 
stratum $S_{i\, t} \subset \tilde M_{i\, t}$ the submanifold of each 
$\tilde M_{i\, t}$ which extends radially to $\partial Z_t$; 
\item[d)] extend those radial vectors which still meet the original 
$\cB_{i\, t}$ until they meet the external Blum medial axis. Those 
together with those shortened radial vectors give the linking vector field; 
and
\item[e)] retain the evolving external Blum medial axis for $\bgD_t$ as 
the linking medial axis. 
\end{itemize}
 \par 
These now define a family of skeletal linking structures for the varying 
configurations $\bgW_{t}^{\prime}$, which evolve continuously (and 
stratawise differentiably on the added strata $S_{i\, t} $), see 
Figure~\ref{fig.8.2}.

\begin{figure}[ht]
\begin{center} 
\includegraphics[width=10cm]{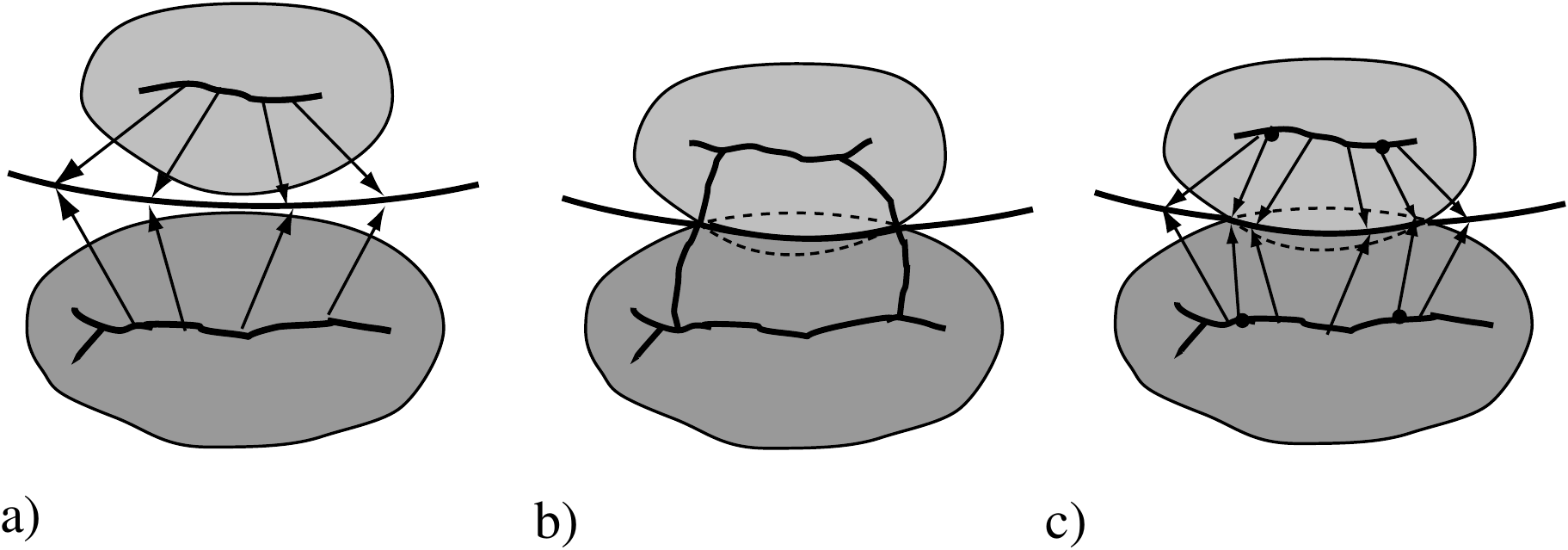} 
\end{center} 
\caption{\label{fig.8.2} Comparison of generic bifurcation of the full Blum 
linking structure versus evolution of the retracted skeletal linking 
structure for two evolving regions $\gW^{\prime}_{i\, t}$ becoming 
generically adjoined as in Figure~\ref{fig.8.1}.  Full Blum linking structure 
bifurcates by adding branches from a) unjoined regions to b) after 
becoming adjoined.  By contrast, the retracted skeletal linking structure 
evolves while retaining the structure of the skeletal sets, from a) 
unjoined regions to c) after being adjoined.}  
\end{figure} 
\par
\subsection*{Retracting Full Blum Linking Structure via Smoothing} 
\par
We consider a second situation where we modify the full Blum linking 
structure of a configuration with adjoining regions to a skeletal linking 
structure.  We do this using a \lq\lq smoothing of the corners of the 
regions\rq\rq, in a small neighborhood of the singular set. To precisely 
describe this we suppose we have a multi-region configuration $\bgW$, 
which includes regions which adjoin, modeled by $\Phi : \bgD \to 
\R^{n+1}$, so that it exhibits a generic full Blum medial structure, with 
$(M_i, U_i, \ell_i)$ the Blum medial structure for each $\gW_i$, and 
$M_0$ the external medial linking axis.  Given a neighborhood $W$ of the 
singular set of $\bgW$, i.e. the union of the singular strata of the 
boundaries, the goal is to modify the regions $\gW_i$ in the neighborhood 
$W$ so that the region boundaries are smooth and agree with the original 
boundaries outside of $W$, and such that the resulting Blum medial axes 
extend to a skeletal structure for $\bgW$.
\begin{Definition}
\label{Def8.1}
A {\em smoothing of the configuration} $\bgW$ defined by $\Phi$, with a 
neighborhood $W$ of the singular set of $\bgW$, consists of a disjoint 
configuration $\bgD^{\prime}$, with a region $\gD^{\prime}_i$ for each 
$\gD_i$, and an embedding $\Phi^{\prime} : \bgD^{\prime} \to \R^{n+1}$ 
satisfying the following conditions.  We let $\gW_i^{\prime} = 
\Phi^{\prime}(\gD^{\prime}_i)$ and $\cB_i^{\prime} = 
\Phi^{\prime}(X^{\prime}_i)$.  There is a neighborhood $W^{\prime} \subset 
W$ such that:
\begin{itemize}
\item[i)]  Each $\gW_i^{\prime} \subset \gW_i$, and $\gW_i^{\prime}$ has 
a generic Blum medial axis $M_i^{\prime}$ such that $\cB_i^{\prime} 
\backslash W^{\prime} = \cB_i \backslash W^{\prime}$. Moreover, the 
portion of the medial axis of $\gW_i$ defined from $W^{\prime} \cap 
\cB_i$ is contained in $W$ so that $M_i^{\prime} = M_i$ off $W$.  
\item[ii)]  The radial flow of $\gW_i^{\prime}$ from $M_i^{\prime} \cap 
W$ is nonsingular and remains transverse to the radial lines out to and 
including its first intersection with $M_0,$ and the radial lines intersect 
both $\cB_i$ and $M_0$  transversally (including the limiting tangent 
spaces of it at singular points).  This agrees with the radial flow for the 
full Blum structure off $W^{\prime}$.  
\item[iii)] The pull-back of the singular set of $M_0$ by the radial flow in 
ii) refines the stratification of $\tilde M_i^{\prime}$ on $W^{\prime}$. 
\end{itemize}
\end{Definition}
\begin{Remark}
\label{Rem8.3}
The verification that the radial flow is nonsingular and transverse to the 
radial lines is done using the radial and edge curvature conditions in 
\cite[Thm. 2.5]{D1}
\end{Remark}
\flushpar
An example of such a smoothing is given in Figure~\ref{fig.8.3}.
\par
\begin{figure}[ht]
\begin{center} 
\includegraphics[width=8cm]{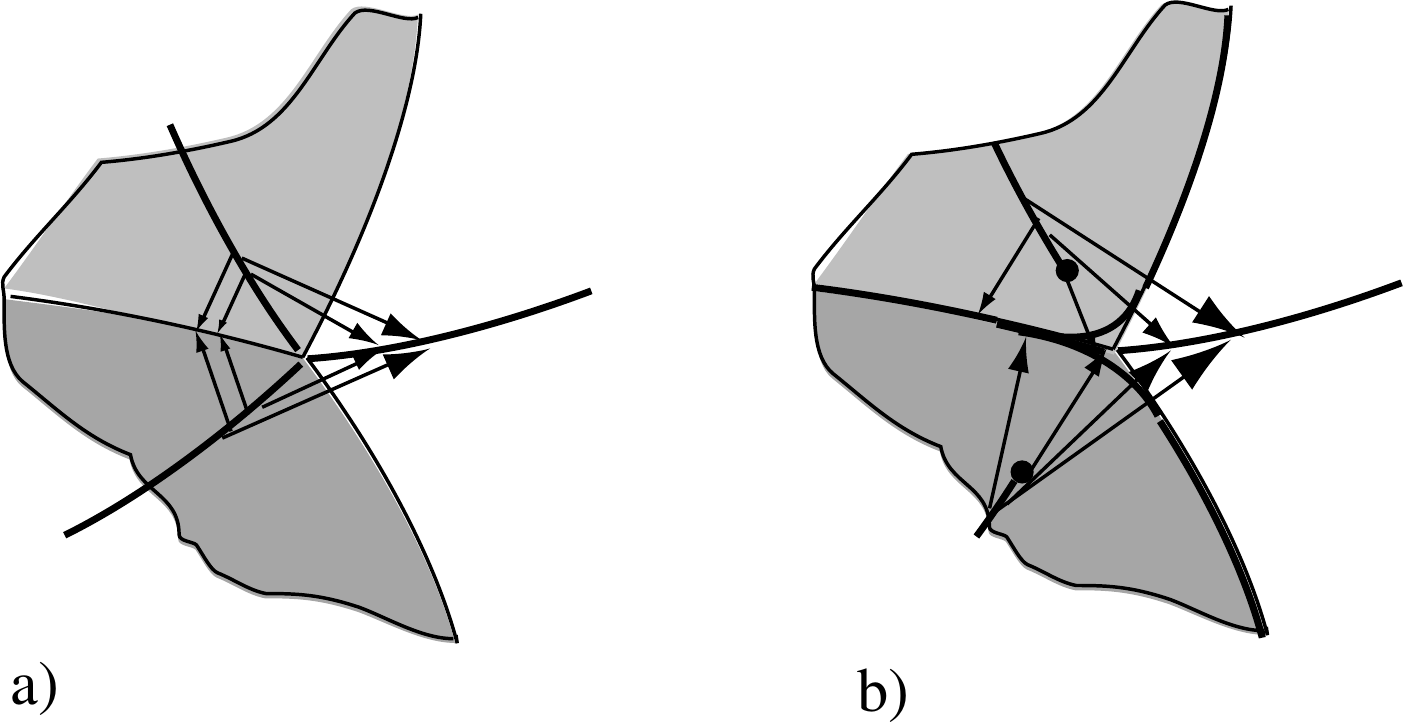} 
\end{center} 
\caption{\label{fig.8.3} An example of a smoothing of a configuration with 
adjoining regions $\gW_1$ and $\gW_2$ in the neighborhood of a corner 
point with linking vector fields shown:  a)  Blum linking structure and b) 
Smoothing and resulting retracted skeletal linking structure.}  
\end{figure} 
\par
The smoothing gives rise to a skeletal linking structure as follows.
\par
\begin{Proposition}
\label{Prop8.4}
For a configuration of regions with generic full Blum linking structure, a 
smoothing of the configuration allows the Blum linking structure to be 
replaced by a skeletal linking structure. 
\end{Proposition}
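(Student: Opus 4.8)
The plan is to verify directly the defining conditions of a skeletal linking structure (Definition~\ref{Defmultlkgstr}) for the triples built out of the smoothing, and to show that essentially everything is inherited from the full Blum structure off the neighborhood $W$ and from the hypotheses of Definition~\ref{Def8.1} over $W^{\prime}$. Given the smoothing $\Phi^{\prime} : \bgD^{\prime} \to \R^{n+1}$ with nested neighborhoods $W^{\prime} \subset W$ of the singular set as in Definition~\ref{Def8.1}, let $M_i^{\prime}$ be the Blum medial axis of $\gW_i^{\prime} = \Phi^{\prime}(\gD_i^{\prime})$ and $U_i^{\prime} = r_i^{\prime}\bu_i^{\prime}$ its radial vector field. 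Since $\bgD^{\prime}$ is a disjoint configuration with smooth boundaries and each $\gW_i^{\prime}$ has a generic Blum medial axis by Definition~\ref{Def8.1}(i), the pair $(M_i^{\prime}, U_i^{\prime})$ is a genuine skeletal structure by the generic properties of the Blum medial axis (Theorem~\ref{Thm3.1} together with the construction of \cite{D1}); this gives condition S1. Moreover $M_i^{\prime} = M_i$ and the radial flow agrees with that of the full Blum structure off $W$, so the only work is over $W^{\prime}$.

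Next I would install the linking data. For $\tilde x = (U_i^{\prime}, x) \in \tilde M_i^{\prime}$ whose radial line $x + t\,\bu_i^{\prime}(x)$ meets $\cB_i^{\prime}$ outside $W^{\prime}$, set $\ell_i^{\prime}(\tilde x) = \ell_i(\tilde x)$, the original Blum linking function extending that line out to $M_0$; for $\tilde x$ lying over $W^{\prime}$, follow the radial line to its first intersection with $M_0$ and let $\ell_i^{\prime}(\tilde x)$ be that parameter value, which is well defined and $\geq r_i^{\prime}(\tilde x)$ because $M_0 \subset \gW_0 \cap \tilde\gW$ lies beyond $\cB_i^{\prime}$ along each such line; when no such intersection occurs inside $\tilde\gW$, assign $\tilde x$ to $M_{i\,\infty}^{\prime}$. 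Put $L_i^{\prime} = \ell_i^{\prime}\bu_i^{\prime}$; this is S2. For S3, take $\cS_i^{\prime}$ to be the refinement of $\tilde M_i^{\prime}$ that restricts off $W^{\prime}$ to the labeled refinement of the full Blum structure and over $W^{\prime}$ to the pull-back by the radial flow of the Whitney stratification of $M_0$ (Definition~\ref{Def8.1}(iii)), labeled by the linking types realized on each stratum; the two pieces agree on the overlap because the radial flows do.

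Then I would check the four linking conditions. Condition L1 holds off $W^{\prime}$ by the corresponding property of the full Blum structure (Theorem~\ref{Thm3.6}), and over $W^{\prime}$ by the implicit function theorem: transversality of the radial flow to $M_0$ in Definition~\ref{Def8.1}(ii) makes the ``time to hit $M_0$'' function smooth on each stratum of the pulled-back stratification, with continuity across $\partial W^{\prime}$ from agreement with the full Blum radial flow. For L2, the linking flow $\gl_i^{\prime}$ of (\ref{Eqn2.1}) is the radial flow at double speed on $\left[0,\tfrac12\right]$, hence nonsingular by S1 and \cite[Thm. 2.5]{D1}; on $\left[\tfrac12,1\right]$ it is the extension of the radial flow from $\cB_i^{\prime}$ to $M_0$, nonsingular off $W^{\prime}$ by the full Blum structure and over $W^{\prime}$ precisely the nonsingular transverse extension guaranteed by Definition~\ref{Def8.1}(ii); the images $W_{i\,j}^{\prime} = \{x + L_i^{\prime}(x) : x \in S_{i\,j}^{\prime}\}$ are smooth because over $W^{\prime}$ they are pieces of strata of $M_0$ by construction, and they are pairwise disjoint or equal because the strata of $\cS_i^{\prime}$ over $W^{\prime}$ are preimages of strata of $M_0$. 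Conditions L3 and L4 then follow by assembling these images: every $W_{i\,j}^{\prime}$ is a union of strata of a common refinement of the Whitney stratification of $M_0$, so strata from distinct regions agree or are disjoint and together recover $M_0$ as the linking axis; and the complement inside $\tilde\gW$ of the image of $\gl^{\prime}$ is swept diffeomorphically by the radial flow on $N_{+} | M_{\infty}^{\prime}$, since off $W$ this is the full Blum situation and over $W$ the re-routed radial flow on $W^{\prime}$ is still a nonsingular homeomorphism filling the same tube. The bounded-case requirements are inherited unchanged because neither $M_0$ nor $\tilde\gW$ is altered. The resulting triples $(M_i^{\prime}, U_i^{\prime}, \ell_i^{\prime})$ thus form a skeletal linking structure for the smoothed configuration, which coincides with $\bgW$ off $W$.

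The main obstacle is the global compatibility packaged into L3 and the complementarity clause of L4: one must show that after re-routing the radial flow over $W^{\prime}$ the sets of boundary and skeletal points linking to a given point of $M_0$ still fit into a single consistent stratified linking pattern, with no overlaps or gaps in the tube swept out by the combined radial and linking flows. This is exactly the behavior that conditions (i)--(iii) of Definition~\ref{Def8.1} were engineered to force --- in particular the nonsingularity and transversality to the radial lines out to the first intersection with $M_0$, verified via the radial and edge curvature conditions of \cite[Thm. 2.5]{D1} --- so the argument is largely careful bookkeeping of stratifications. The one genuinely delicate point is checking that the stratification pulled back from $M_0$ over $W^{\prime}$ patches to a \emph{Whitney}, labeled refinement of all of $\tilde M_i^{\prime}$; this one handles using that the Whitney conditions are local and that the two stratifications literally coincide on a neighborhood of $\partial W^{\prime}$.
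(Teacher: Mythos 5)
Your construction installs the wrong radial vector field, and as a result it produces a skeletal structure for the \emph{smoothed} regions $\gW_i^{\prime}$ rather than for the original $\gW_i$, which is what the proposition requires. You take $U_i^{\prime}$ to be the Blum radial field of $\gW_i^{\prime}$, so its radial map lands on $\cB_i^{\prime}$ and the associated region is $\gW_i^{\prime}$. But the stated goal of \S\ref{S:sec8} --- and the content of the proposition --- is to produce a skeletal linking structure for the original configuration $\bgW$, whose regions have singular boundaries; a skeletal linking structure for the disjoint smooth configuration $\bgW^{\prime}$ would already be given by Theorem~\ref{Thm3.5} and there would be nothing to prove. The correct construction keeps $M_i^{\prime}$ as the skeletal set but takes $U_i^{\prime}$ to be the vectors along the radial lines from $M_i^{\prime}$ continued out until they meet the \emph{original} boundary $\cB_i$; condition ii) of Definition~\ref{Def8.1} was put there precisely to guarantee that this extension exists and that the resulting radial flow stays nonsingular and transverse to the radial lines out to $\cB_i$. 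With your choice of $U_i^{\prime}$, the strip $\gW_i \setminus \gW_i^{\prime}$ is neither covered by the radial flow nor properly accounted for in the linking flow.

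This mismatch propagates into your linking data: you send the linking vectors to $M_0$, but $M_0$ is the exterior Blum axis of $\bgW$ (the medial axis of $\gW_0$), not of $\bgW^{\prime}$. If the underlying skeletal structures really were for $\gW_i^{\prime}$, the correct linking axis would be the medial axis of $\R^{n+1}\backslash\cup_i\gW_i^{\prime}$, which strictly contains $\gW_0$ and has a different medial axis. The coherence you argue for in L2--L4 therefore cannot hold with the data you defined: the linking flow starting from $\cB_i^{\prime}$ passes through $\gW_i\backslash\gW_i^{\prime}$ (interior to the configuration) before reaching the exterior, which is not permitted for a skeletal linking structure of $\bgW$, and also does not match the structure of $\bgW^{\prime}$. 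Once $U_i^{\prime}$ is corrected to reach $\cB_i$, the rest of your outline --- using the pull-back of the stratification of $M_0$ via the radial flow to refine $\tilde M_i^{\prime}$, then invoking the radial/edge/linking curvature conditions (Propositions~\ref{PropII.3.1} and \ref{PropII.3.1b}) for nonsingularity of the linking flow, and checking L3/L4 by assembling the strata of $M_0$ --- is in the right spirit and essentially follows the paper's argument.
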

\begin{proof}
To define the skeletal structure, given the smoothing, we begin by letting 
the skeletal set in each $\gW_i$ be the Blum medial axis $M_i^{\prime}$ 
for $\gW_i^{\prime}$.  By ii) the radial lines from $M_i^{\prime}$ extend 
to the boundary $\cB_i$ and the vectors from points $x \in M_i^{\prime}$ 
to the intersection points of the radial lines from $x$ with $\cB_i$ are 
the radial vectors $U_i^{\prime}$ for the skeletal structure.  The 
stratification of $\tilde M_i^{\prime}$ is given by the refinement $\tilde 
M_i^{\prime\prime}$ of $\tilde M_i^{\prime}$ given by condition iii).  
Since the singular set of each $\cB_i$ is a portion of the singular set of 
$M_0$, condition iii) guarantees that the radial flow is nonsingular and 
smooth on the strata out to the boundary $\cB_i$.  \par
To extend the individual skeletal structures on the $\gW_i$ to a skeletal 
linking structure, we use $M_0$ for the external linking medial axis.  
We use the extensions of the radial lines of $\gW_i^{\prime}$ until they 
meet $M_0$.  These extensions define the linking vector fields $L_i$, 
which agree with the Blum linking vector fields off $W^{\prime}$.  The 
linking flow will agree with that for the Blum structure off $W^{\prime}$.   
On $M_i^{\prime} \cap W^{\prime}$ the linking flow will agree with the 
radial flow in $\cB_i$.  From $\cB_i$ to $M_0$, the radial flow will be 
nonsingular and transverse to the radial lines.  It follows by the radial, 
edge, and linking curvature conditions (see Propositions \ref{PropII.3.1} 
and \ref{PropII.3.1b} in \S\S \ref{S:secII.Rad.Flow} and 
\ref{S:secII.Link.Flow}), that the linking flow is also nonsingular and 
transverse to the radial lines.  Lastly, since both the radial flow and 
linking flow have the radial lines as flow lines, the inverse image of the 
singular set of $M_0$ under the linking flow will be the same as for the 
extended radial flow.  By the definition of the refinement of $\tilde 
M_i^{\prime}$ in iii), the $L_i^{\prime}$ (and hence $\ell_i^{\prime} = \| 
L_i^{\prime}\|$) will be smooth on the strata of the refinement.   \par
Thus, the collection of $(M_i^{\prime}, U_i^{\prime}, \ell_i^{\prime})$ 
define the resulting skeletal linking structure.
\end{proof}
 \par
We indicate an approach to constructing a smoothing of the corners of a 
configuration.  However, we will not give the details here to verify that 
the conditions are satisfied.  First, we construct for each region $\gW_i$ 
a smooth function $f_i$ defined on a neighborhood of $\gW_i$ such that:  
$f_i \geq 0$ on $\gW_i$, $f_i \equiv 0$ on $\cB_i$, with $\grad(f_i)$ non 
zero on the smooth points of $\cB_i$; and second, in a neighborhood of a 
$k$-edge-corner point the normal lines to the level sets of $f_i$ meet the 
limiting tangent planes of both the boundary and the external medial axis 
transversally.  Next, we use a bump function $\gd_i$ which does not 
vanish on $\sing(\cB_i)$ and has its support in a neighborhood $W$ of 
$\sing(\cB_i)$, and so that it and its first derivatives (in the corner 
coordinates) are bounded by $\gevar > 0$.  Then, the hypersurface defined 
by $f_i(x) = \gd_i(x)$ will satisfy the conditions for a sufficiently small 
$W$ and appropriate $\gd_i$.  To construct $f_i$ we use the local model 
coordinates for a $k$-edge-corner point.  We then use a partition of unity 
to piece together functions of the form $h = g\cdot \prod_{j = 1}^{k} x_j$, 
with $\grad(g)$ pointing into $\gW_i$ and $g$ nonvanishing on the 
boundary.  This gives the desired smoothing.  \par

\newpage
\section{Questions Involving Positional Geometry of a Multi-region 
Configuration}
\label{S:secII.1}
\par  
\subsection*{Introduction}  
 
\par
Having introduced medial/skeletal linking structures for a multiregion 
configuration $\bgW = \{ \gW_i\}$ in $\R^{n+1}$ in Part I, we now develop 
an approach to the \lq\lq positional geometry\rq\rq\, of the configuration 
using mathematical tools defined in terms of the linking structure.   We do 
so by building upon the methods already developed in the case of a single 
region with smooth boundary \cite{D1}, \cite{D2}.  Moreover, we will see 
that certain constructions and operators used for determining the 
geometry of single regions can be combined to give geometric 
properties of the configuration.  \par
There are several possible aspects to this.  One approach is to measure the 
differences between two configurations.  More generally given a collection 
of configurations, we may ask what are the statistically meaningful 
shared geometric properties of the collection of configurations, and how 
do the geometric properties of a particular configuration differ from 
those for the collection.  
To provide quantitative measures for these properties, we will directly 
associate geometric invariants to a configuration.  Such invariants may be 
globally defined depending on the entire configuration or locally defined 
invariants depending on local subconfigurations associated to each region.
\par
For example, if we view the union of the regions as a topological space, 
then we can measure the Gromov-Hausdorff distance between two such 
configurations.  We may also use the geodesic distance  between the two 
configurations measured in a group of global diffeomorphisms mapping one 
configuration to another.  Such invariants give a single numerical global 
measure of differences between two configurations.  Instead, we will use 
skeletal linking structures associated to the configurations to directly 
associate both global and local geometric invariants which can be used to 
measure the differences between a number of different features of 
configurations.  
\par
\begin{figure}[b] 
\begin{center} 
\includegraphics[width=3.5cm]{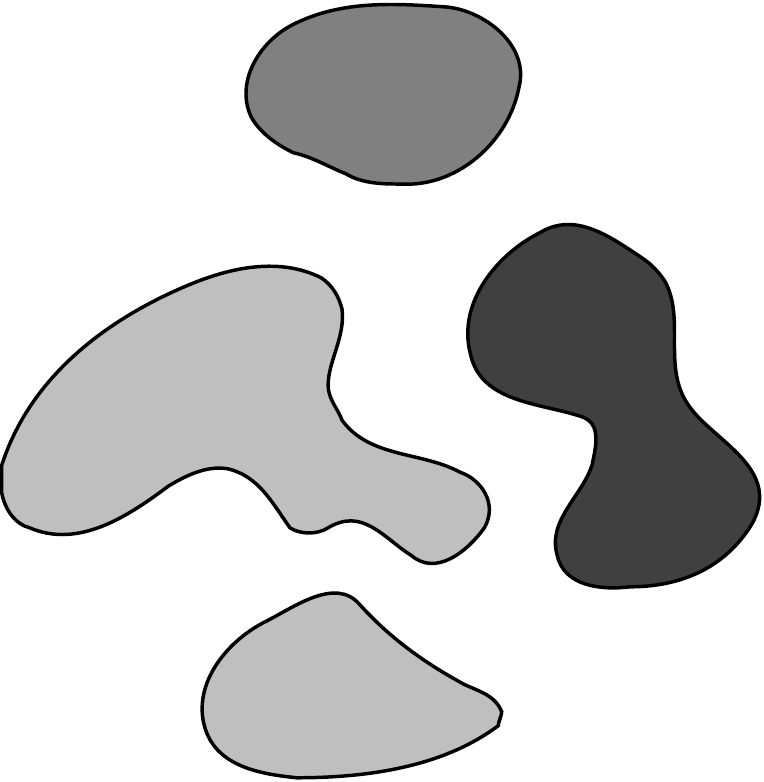}
\hspace*{0.05cm}
\includegraphics[width=3.5cm]{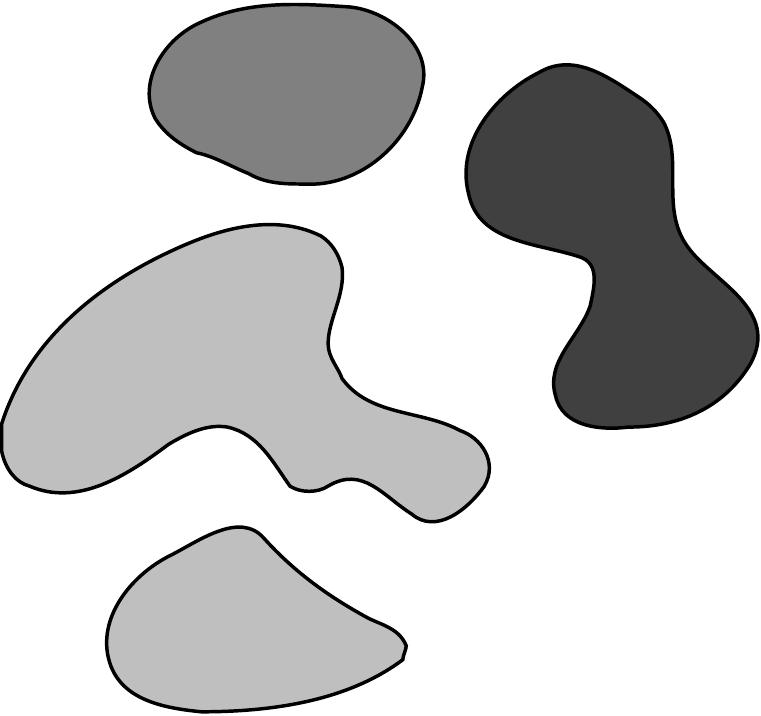} 
\hspace*{0.05cm}
\includegraphics[width=3.5cm]{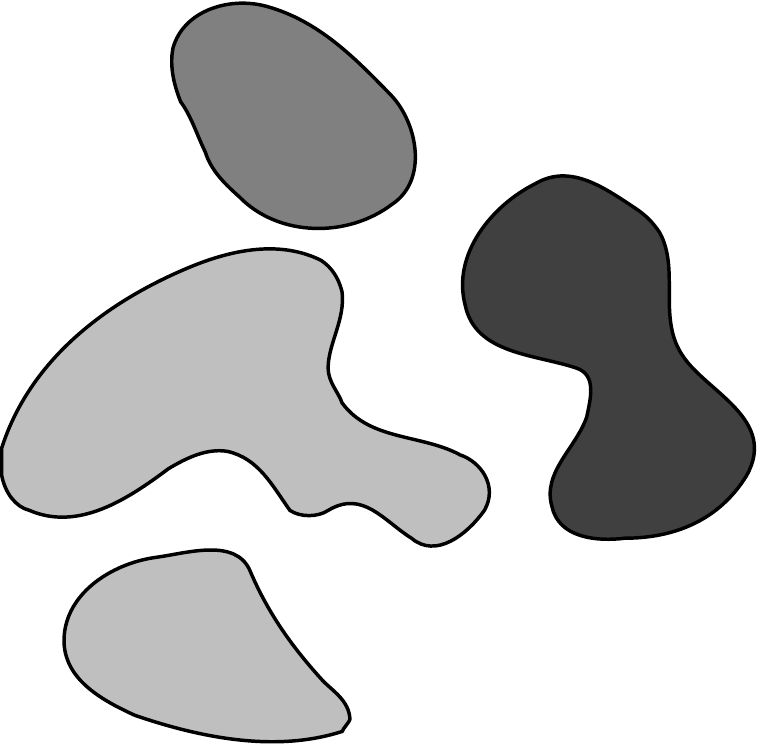} 
\end{center}  
(a) \hspace{1.5in} (b) \hspace {1.5in} (c) \hspace{1in} 

\caption{\label{fig.II1} Images exhibiting configurations of four regions.  
A basic problem is to determine the differences between the 
configurations that are due to changes in shapes of the regions versus 
those due to changes in positions.  Furthermore, one would want to find 
invariants which capture these differences.}  
\end{figure}
\par
In introducing these invariants, we will be guided by several key 
considerations.  The first involves distinguishing between the differences 
in the shapes of individual regions versus their positional differences and 
how each of these contributes to the differences in the configurations.  As 
illustrated in Figure~\ref{fig.II1}, the three configurations differ; but it 
is not clear to what extent this is due to shape differences of the regions 
versus changes in position, nor how these different contributions should 
be measured.  
\par
In measuring the relative positions of the regions, more than just the 
minimum distance between region boundaries is required.  As illustated in 
Figure~\ref{fig.II2}, while region $\gW_3$ touches $\gW_1$, it does so 
only on a small portion of $\gW_3$.  By comparison, $\gW_2$ does not 
touch $\gW_1$, but it remains close over a larger region.  A goal is to 
define a numerical measure of {\it closeness of regions} which takes into 
account both aspects.  Related to this is the issue of which regions that do 
not touch should be considered neighbors and what should be the 
criterion?  \par
\begin{figure}[ht]
\includegraphics[width=4cm]{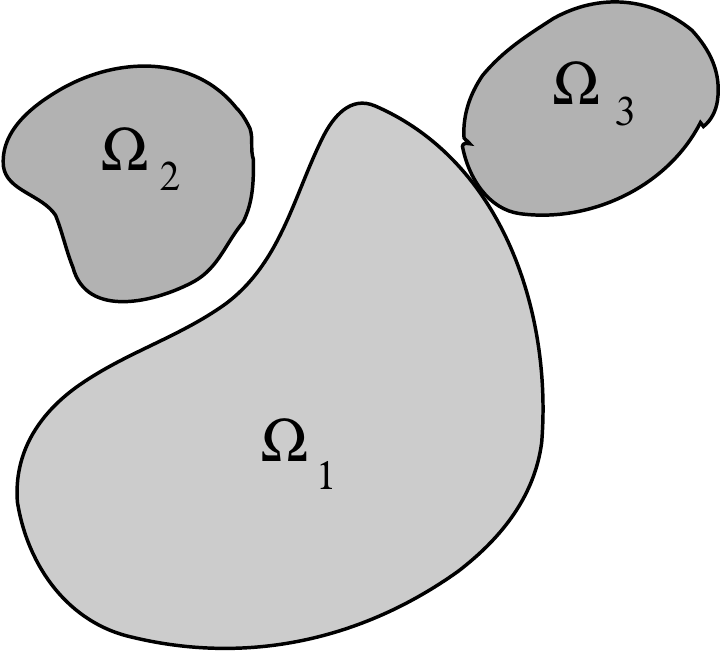}
\caption{\label{fig.II2} Measuring closeness of regions in a configuration.  
Although $\gW_3$ touches $\gW_1$, only a small portion of $\gW_3$ is 
close to $\gW_1$.  In contrast, $\gW_2$ does not touch $\gW_1$, but it 
remains close over a larger region.}
\end{figure} 
\par
A third issue for a configuration is viewing it as a {\it hierarchy of the 
regions}, indicating which regions are most central to the configuration 
and which are less {\it geometrically significant}.  For example in 
Figure~\ref{fig.II3}, the position of region $\gW_1$ makes it more 
important for the overall configuration in b) than in a), where it is more 
of an \lq\lq outlier\rq\rq.  For example, a small movement of $\gW_1$ in 
a) would be less noticeable and have a smaller effect to the overall 
configuration than in b).  By having a smaller effect we mean that the 
deformed configuration could be mapped to the original by a 
diffeomorphism which has smaller local distortions near the 
configuration in the case of a) versus b).  We will also provide a numerical 
measure of {\it geometric significance} of a region for the configuration.  

\begin{figure} 
\begin{center} 
\includegraphics[width=4.5cm]{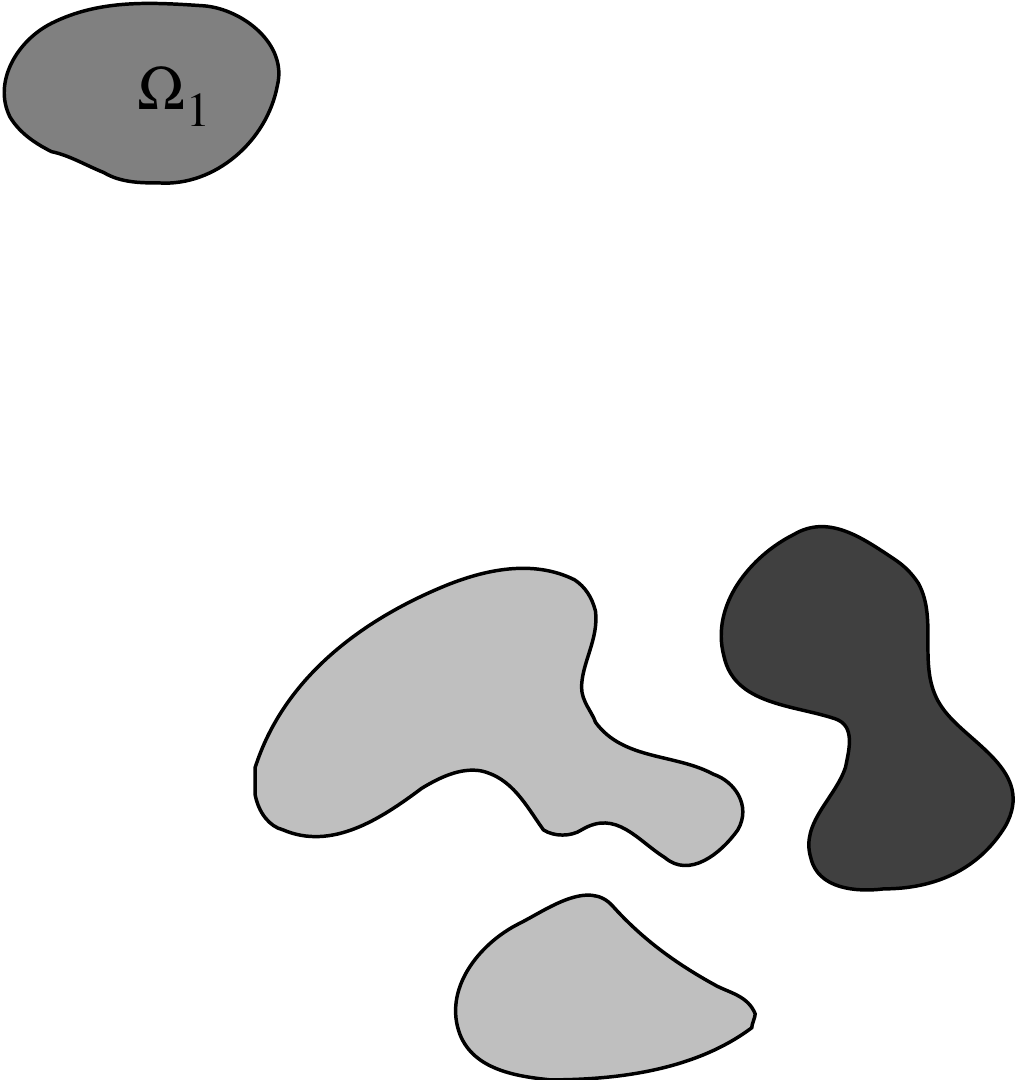}
\hspace*{0.5cm}
\includegraphics[width=3.5cm]{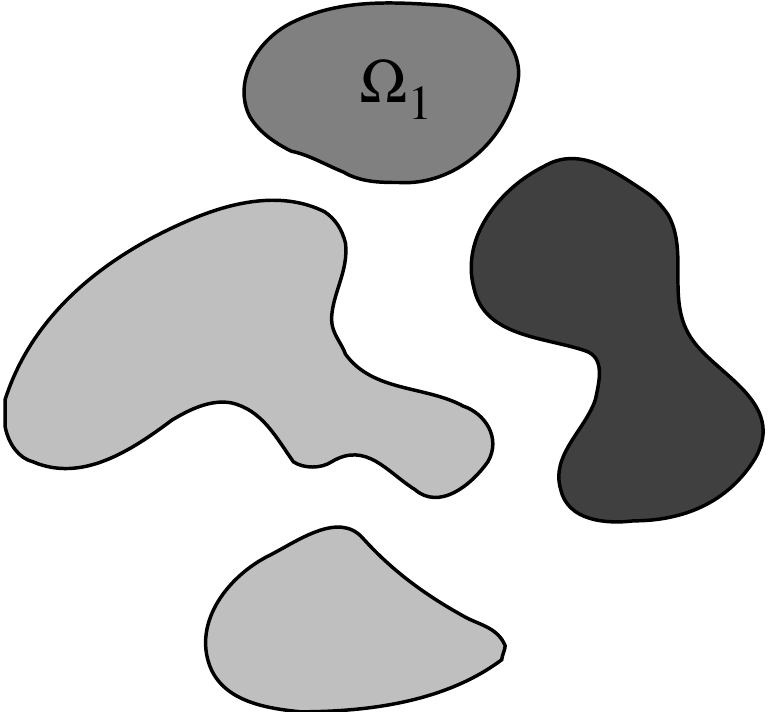}  
\end{center}  
\hspace {1.0in}  (a) \hspace{1.5in} (b)   

\caption{ \label{fig.II3} Images exhibiting configurations of four regions.  
In a), region $\gW_1$ is a greater distance from the remaining regions, 
and hence is less significant when modeling the configuration.  In b), the 
closeness of region $\gW_1$ to the other regions makes it more 
significant for the configuration.}  
\end{figure}
\par 
Along with issues of closeness, significance and hierarchy, there is also 
the question of when there are natural subconfigurations.  An example of 
this is seen in Figure~\ref{fig.II4}.  In a) is shown a configuration with 
distinct subconfigurations.  As these subconfigurations are moved 
together, a point is reached when they are no longer distinguished by 
geometric features.  This raises the question of whether there are 
numerical invariants which can be used to determine when there are 
identifiable subconfigurations. 
\par
We will combine the invariants which measure these geometric features  
to provide a \lq\lq tiered graph structure\rq\rq, which is a graph with 
vertices representing the regions, edges between vertices of neighboring 
regions, and values of significance assigned to the vertices, and closeness 
assigned to the edges.  Then, as thresholds for closeness and significance 
vary, the resulting subgraph satisfying the conditions will exhibit the 
central regions of the configuration, various subgroupings of regions and 
the hierarchy of relations between the regions.  
\par
Our goal in the subsequent sections is to first use the skeletal linking 
structure to identify neighboring regions and introduce linking 
neighborhoods between the regions.  Second, we will introduce numerical  
\lq\lq volumetric invariants\rq\rq\, for the indicated measures.  In turn, 
we will use the skeletal linking structure to compute the volumetric 
invariants via \lq\lq skeletal linking integrals\rq\rq computed on the 
skeletal sets.  
\begin{figure} 
\begin{center} 
\includegraphics[width=3.5cm]{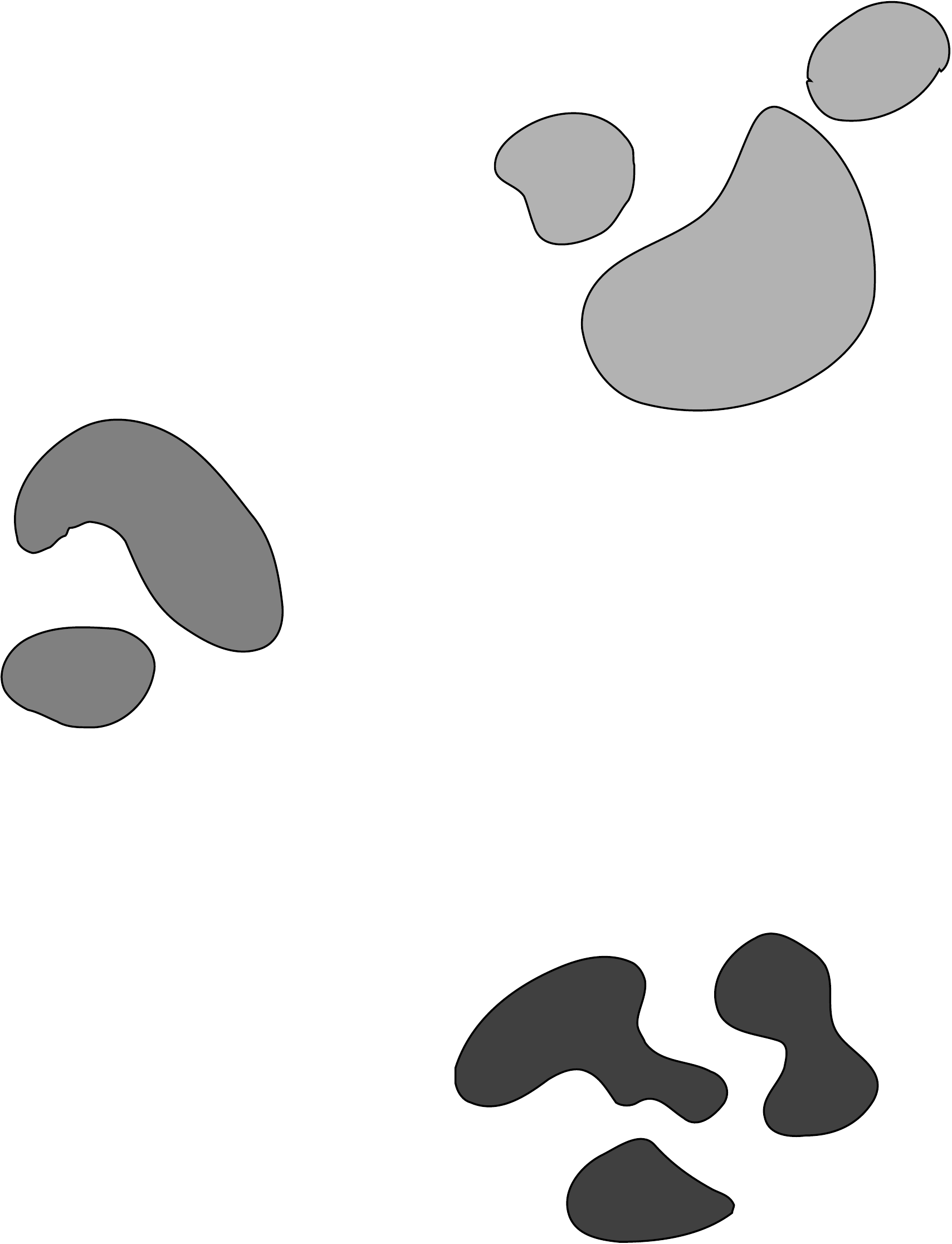}
\hspace*{1.0cm}
\includegraphics[width=3.5cm]{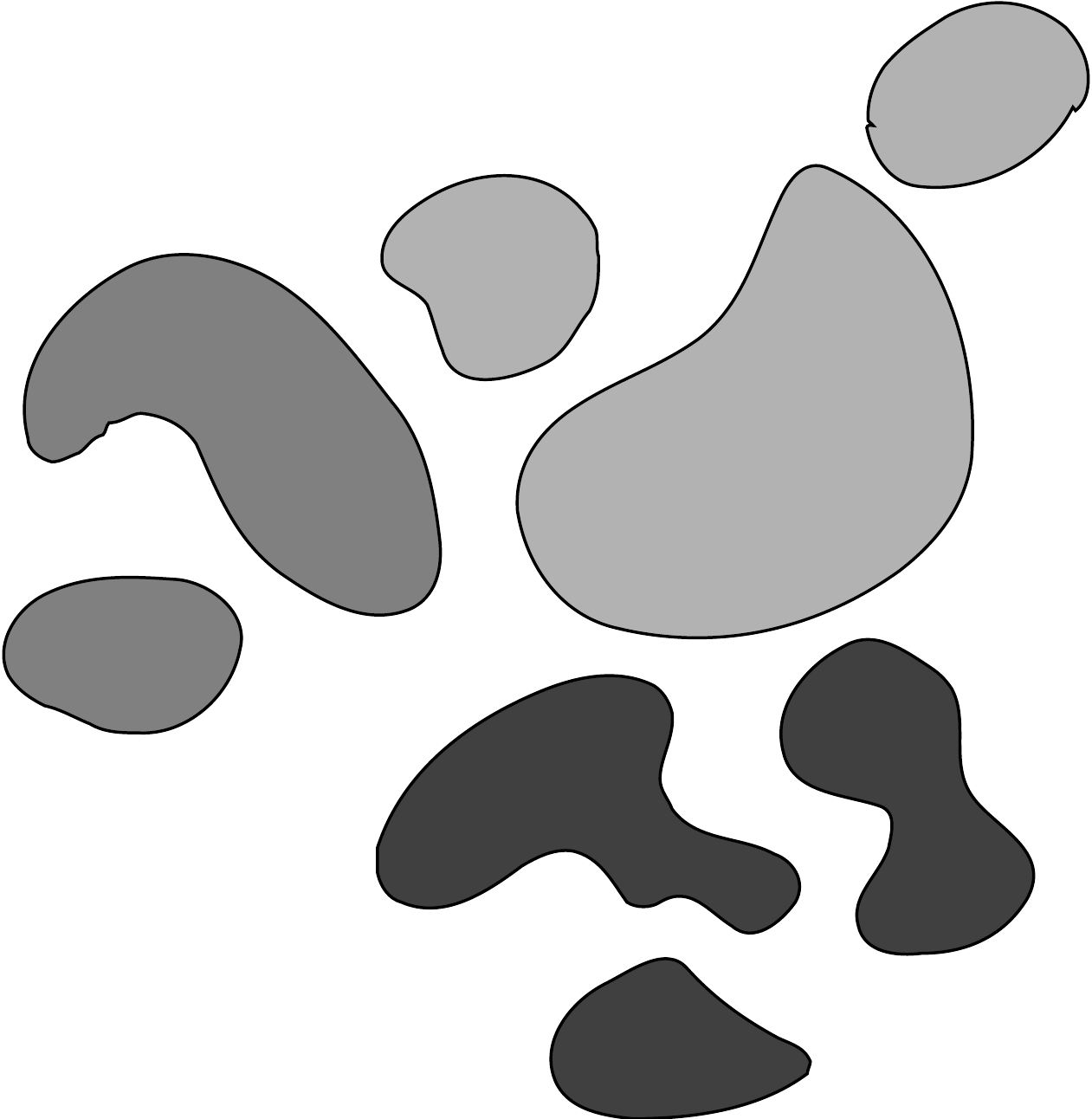}  
\end{center}  
\hspace {1.0in}  (a) \hspace{1.5in} (b)   

\caption{\label{fig.II4} Subconfigurations of regions.  In a) a configuration 
is formed from three groups of regions.  In b), the groups of regions have 
been moved closer; based on geometric position, the groups are no longer 
clearly distinguished.}  
\end{figure}
\par

\section{Shape Operators and Radial Flow for a Skeletal Structure}
\label{S:secII.Rad.Flow}
\par
 In preparation for obtaining the desired properties of the linking flow, we 
recall how for the skeletal structure $(M, U)$ for a single region $\gW$ we 
may introduce radial and edge shape operators. Using them, we have  
sufficient conditions for the nonsingularity of the radial flow and 
smoothness of the associated boundary (see \cite{D1}).  Furthermore, 
these operators can be used to compute both the intrinsic differential 
geometry of the boundary and the global geometry of the region (using 
\lq\lq skeletal integrals\rq\rq\, on $\tilde M$, see e.g. \cite{D2} and 
\cite{D4}).  Our eventual goal is to show using the skeletal linking 
structure that these results can be extended to the entire multi-region 
configuration.  \par

\subsection*{The Radial Flow} 
\par
In \S \ref{S:sec2}, we briefly recalled how from a skeletal structure $(M, 
U)$ in $\R^{n+1}$, we can define the \lq\lq associated boundary\rq\rq\, 
$$\cB \,\, = \,\, \{ x + U(x) : x \in M, \, \mbox{ all values of } U\}\, .$$  
However, conditions are required on the skeletal structure to ensure that 
the boundary is smooth.  This is achieved using the radial flow and the 
radial map which is the time one map of the radial flow. This allows us to 
establish additional properties of the region $\Omega$ bounded by $\cB$.  
The radial flow is defined as a map from the \lq\lq normal line 
bundle\rq\rq\, $N$ on $\tilde M$, the \lq\lq double of $M$\rq\rq\, which 
has a finite-to-one stratified mapping $\pi : \tilde M \to M$.  In the 
neighborhood $W$ of a point $x_0 \in M$ with a smooth single-valued 
choice for $U$, we define a local representation of the {\it radial flow} by 
$\psi_t (x) = x + t\cdot U(x)$, $1 \leq t \leq 1$, and the radial map 
$\psi_1 (x) = x + U(x)$.  Together the local $\psi_t$ define a global map 
$\psi: N \to \R^{n+1}$.  We will next recall the role of the radial and edge 
shape operators in guaranteeing the nonsingularity of the radial flow and 
the smoothness of the level sets $\cB_t = \{x + t\cdot U(x): x \in M, \, 
\mbox{ all values of } U\}$.  \par

\subsection*{Radial and Edge Shape Operators}  
\par
We define \lq\lq shape operators\rq\rq for the skeletal structure $(M, U)$.  
At a smooth point $x_0$ of $M$ we choose a \lq\lq smooth value\rq\rq of 
$U$.  We recall from \S \ref{S:sec2} that in a neighborhood of any smooth 
point of $M$ (i.e. a point in $M_{reg}$), values of $U$ on one side form a 
smooth vector field.  By a {\em smooth value} of $U$ we mean such a 
smooth choice of $U$ values.  Then, $U = r\cdot \bu$ for a unit radial 
vector field $\bu$.  \flushpar
{\it Radial Shape Operator:  }  We define for $v \in T_{x_0}M$
\begin{equation*}
S_{rad}(v) \quad = \quad - \proj_U(\pd{\bu}{v})
\end{equation*}
where $\proj_U$ denotes projection onto $T_{x_0}M$ along $U$ (in 
general, this is not orthogonal projection, see a) in Figure \ref{fig.6.3c}).   
The {\it principal radial curvatures} $\gk_{r\, i}$ are the  eigenvalues of 
$S_{rad}$.  For a basis $\bv = \{v_1, \dots , v_n\}$ for $T_{x_0} M$, we let 
$S_{\bv}$ denote the matrix representation of $S_{rad}$ with respect to 
$\bv$. \par
For a non--edge point $x_0 \in M$, a value of $U$ extends to be smooth on 
some local neighborhood of $x_0$ on any regular stratum containing $x_0$ 
in its closure.  For this {\em smooth value} of $U$, we may likewise define 
the radial shape operator at $x_0$.  Thus, the radial shape operator is also 
multivalued in that at a non--edge point $x_0$ there will be a radial shape 
operator for each value of $U$ at $x_0$, which at smooth points of $M$ 
means a value for each side of $M$.  
\par
\begin{figure}[ht]
\begin{center}
\includegraphics[width=6.0cm]{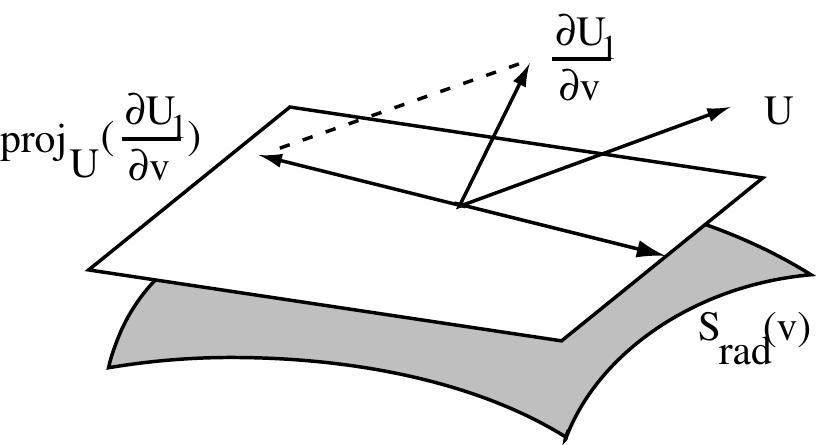}\hspace{.2in}
\includegraphics[width=6.0cm]{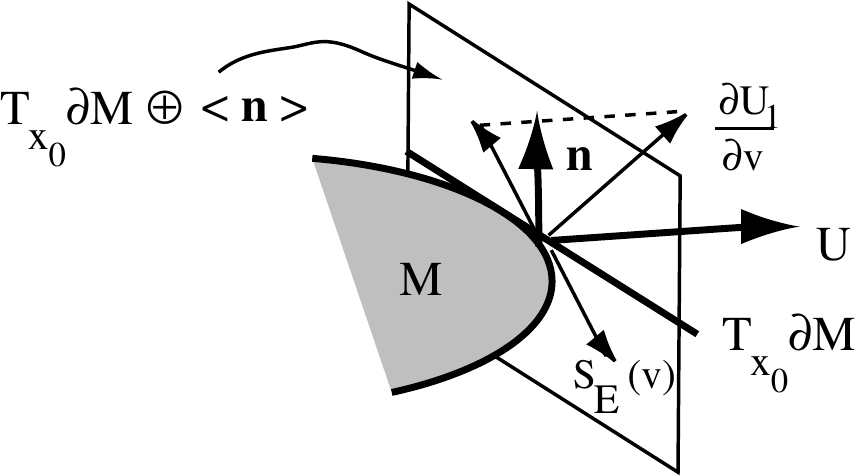}
\end{center} 
\hspace{1in} a) \hspace{2.5in} b) \hspace{1.0in}
\caption{\label{fig.6.3c} Illustrated in a) is the radial shape operator  and 
in (b) the edge shape operator.}
\end{figure} 
\par

\flushpar
{\it Edge Shape Operator:  } For an edge (closure) point $x_0$, a {\em 
smooth value} of $U$ is a smoothly varying choice of values for $U$, 
defined on a neighborhood of $x_0$ one side of $M$ for edge coordinates. 
Then for a normal vector field $\bn$ to $M$ on the same side of $M$ as 
$U$, we define \par
\begin{equation*}
S_{E}(v) \quad = \quad - \proj^{\prime} (\pd{\bu}{v}) \, .
\end{equation*}
Here $\proj^{\prime}$ denotes projection onto $T_{x_0}\partial M \oplus 
<\bn >$ along $U$ (again this is not orthogonal, see b) in Figure 
\ref{fig.6.3c}).  Let $\bv = \{v_1, \dots , v_n\}$ be a basis of $T_{x_0}M$ 
so that $\{v_1, \dots , v_{n-1}\}$ is a basis $T_{x_0}\bdyM$, and $v_n$ 
maps under the edge parametrization map to $c\cdot U$ for $c \geq 0$.  
We refer to $\bv$ as a {\it special basis} for $T_{x_0}M$.  Then, $S_{E\, 
\bv}$ is a matrix representation of $S_{E}$ with respect to the basis 
$\bv$ in the source and $\{v_1, \dots , v_{n-1}, \bn\}$ in the target. We 
let $I_{n-1, 1}$ be the 
$n \times n$--diagonal matrix with $1$\rq s in the first $n-1$ diagonal 
positions and $0$ in the last.  The {\it principal edge curvatures} 
$\gk_{E\, i}$ are the generalized eigenvalues of  $(S_{E\, \bv}, I_{n-1, 1})$ 
(i.e. $\gl$ such that $S_{E\, \bv} - \gl \cdot I_{n-1, 1}$ is singular).  \par

\subsection*{Curvature Conditions and Nonsingularity of the Radial Flow} 
\par
We now recall conditions guaranteeing the smoothness of the radial flow 
on smooth strata and the resulting smoothness of the level sets.  We 
emphasize that this is a local result as neither the flow defined on a 
stratified set nor its level sets are smooth, but rather they are stratified.   
Along with the radial and edge shape operators, an important role is also 
played by the multi-valued 
{\it compatibility $1$-form} $\eta_U = dr + \gw_U$, where $\| U\| = r$ is 
the {\it radial function} and $\gw_U(v) = v\cdot \bu$ for $U = r\cdot \bu$, 
with $\bu$ the multivalued unit vector field along $U$.  Then, $\eta_U$ is 
also multivalued with one value for each value of $U$.  
\par 
Then, we consider the following three conditions for a skeletal structure 
$(M, U)$:
\begin{enumerate}
\item ({\it Radial Curvature Condition} )  for a point $x_0 \in M\backslash 
\partial M$ and all values $U(x_0)$
\begin{equation*}
r < \min \{ \frac{1}{\gk_{r\, i}}\} \quad \mbox{ for all positive principal 
radial curvatures } \gk_{r\, i} \, ;
\end{equation*}
\item ({\it Edge Condition} )  for a point $x_0 \in \overline{\bdyM}$ 
(closure of $\bdyM$) ;
\begin{equation*}
r < \min \{ \frac{1}{\gk_{E\, i}}\} \quad \mbox{ for all positive principal 
edge curvatures } \gk_{E\, i}
\end{equation*}
\item ({\it Compatibility Condition} )  for a point $x_0 \in M$ (which 
includes edge points), $\eta_U \equiv 0$. 
\end{enumerate}
\begin{Remark}
\label{RemII.2.1} The radial curvature, edge, and compatibility conditions 
involve choices of values for $U$ and hence are multi-valued conditions at 
each point.  In the radial curvature condition it is to be understood that 
the $r$ value associated to a given value of $U$ satisfies the inequality 
for the shape operator associated to that value.  Thus, at smooth points of 
$M$, we have inequalities corresponding to each side of $M$.  
\end{Remark}  
\par  
The first two conditions allow us to control the local behavior of the 
radial flow, ensuring that singularities do not develop from smooth 
points, nor further singularities from singular or edge points.  \par
While the first two conditions are open conditions and hence robust, the 
third compatibility condition is not and reveals an essential feature about 
the level sets of the flow.  For any time $t <1$ the level sets are singular 
at points coming from the singular points of $M$ (including edge points).  
The compatibility condition at singular points of $M$ ensures that only at 
$t = 1$ when the flow reaches the boundary do the singularities 
simultaneously disappear so the boundary becomes weakly $C^1$ at the 
points corresponding to singular points of $M$ (which means there are 
unique limiting tangent spaces at these points, and as a consequence the 
boundary is $C^1$ at points coming from the strata of codimension $1$, 
see 
\cite[Lemma 5.4]{D1}).  However, if the compatibility condition does not 
hold on a stratum, then the boundary may have edges and corners at these 
image points.  Hence, skeletal structures can also be used for regions with 
boundaries and corners.  
\par

\subsubsection*{\it Local Nonsingularity of the Flow from Smooth and 
Edge Points} \par
We begin by stating how the radial curvature and edge conditions imply 
the local nonsingularity of $\psi$ and $\psi_t$ (these are given in 
\cite[Props. 4.1 and 4.4]{D1}).  
\begin{Proposition}
\label{PropII.2.1}
Let $U$ be a smooth value of the radial vector field defined in a 
neighborhood $W$ of $x_0 \in M$.  Suppose either that $x_0 \in M_{reg}$ 
and satisfies the radial curvature condition in the neighborhood $W$; or 
$x_0 \in \bdyM$ (or is an edge closure point) and satisfies the edge 
condition on the neighborhood $W$.  Then, 
\begin{enumerate}
\item $\psi : W \times \R \to \R^{n+1}$ is a local diffeomorphism at 
$(x_0, t)$ for $0 < t \leq 1$ (and also $t = 0$ for smooth points);
\item $\psi_t : W \to \R^{n+1}$ is a local embedding at $x_0$ for any $0 
\leq t \leq 1$; and
\item $\psi_t(W)$ is transverse to the line spanned by $U$ for each $0 
\leq t \leq 1$. \par
Conversely, suppose $\psi$ is nonsingular at $(x_0, t)$ for $0 < t \leq 1$, 
which is equivalent to $\psi_t(W)$ being nonsingular at $\psi_t(x_0)$ and 
transverse to the radial line for $0 < t \leq 1$.  Then the radial, resp. edge, 
curvature condition is satisfied for $x_0$ and the value $U$, depending on 
whether $x_0$ is a nonedge closure point, resp. an edge closure point.  
\end{enumerate}
\end{Proposition}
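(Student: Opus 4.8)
The plan is to verify the three stated consequences by a direct local computation of the differential of the radial flow map $\psi(x,t) = x + t\cdot U(x)$ in suitable local coordinates, and then to extract the converse by reading the computation backwards. First I would set up coordinates: near a smooth point $x_0 \in M_{reg}$, choose local coordinates $(y_1,\dots,y_n)$ on $M$ with $\partial/\partial y_i$ a basis of $T_{x_0}M$, and write $U = r\cdot\bu$ for the chosen smooth value. Then $\psi_t$ has differential $d\psi_t(v) = v + t\cdot\partial U/\partial v$ for $v \in T_{x_0}M$, together with $\partial\psi/\partial t = U$ in the $t$-direction. The key algebraic point is that, after projecting along $U$ onto $T_{x_0}M$, the tangential part of $d\psi_t$ is represented by the matrix $I - t\,r\,S_{\bv}$ where $S_{\bv}$ is the matrix of the radial shape operator $S_{rad}(v) = -\proj_U(\partial\bu/\partial v)$ — this uses $\eta_U \equiv 0$ (the compatibility condition, which is part of being a skeletal structure) to control the $dr$ contribution, exactly as in \cite[\S 4]{D1}. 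Since $\psi$ also moves in the $U$ direction (the $t$-variable), $\psi$ is a local diffeomorphism at $(x_0,t)$ precisely when $I - t\,r\,S_{\bv}$ is invertible, i.e. when $t\,r$ is not the reciprocal of a positive eigenvalue $\gk_{r\,i}$ of $S_{rad}$; the radial curvature condition $r < \min\{1/\gk_{r\,i}\}$ guarantees this for all $0 < t \leq 1$ (and $t=0$ trivially). Transversality of $\psi_t(W)$ to the $U$-line is immediate since the tangential image is a full-rank $n$-plane complementary to $U$ by the same nondegeneracy, and $\psi_t$ being a local embedding follows because its differential on $T_{x_0}M$ is then injective.

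For an edge closure point $x_0 \in \bdyM$, I would instead use the special boundary coordinates and the edge parametrization from \cite[Def. 1.3]{D1}: a special basis $\bv = \{v_1,\dots,v_{n-1},v_n\}$ with $v_1,\dots,v_{n-1}$ spanning $T_{x_0}\bdyM$ and $v_n \mapsto c\cdot U$ under the edge parametrization. The differential of $\psi_t$ in these coordinates is then represented, relative to the basis $\{v_1,\dots,v_{n-1},\bn\}$ in the target, by a matrix of the form $I_{n-1,1} - t\,r\,S_{E\,\bv}$ in the directions along $\bdyM$ plus a transverse column coming from $v_n$; here the $I_{n-1,1}$ (rather than $I_n$) reflects that the flow at $t$ does not separate points in the $v_n$-direction until the boundary is reached. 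Nonsingularity is then governed by the generalized eigenvalue problem for $(S_{E\,\bv}, I_{n-1,1})$, and the edge condition $r < \min\{1/\gk_{E\,i}\}$ over positive principal edge curvatures ensures invertibility for all $0 \leq t \leq 1$. The statements (1)–(3) for edge points follow just as in the smooth case once this matrix form is in hand. For both cases, the converse in the last sentence is obtained by noting that the reasoning is an equivalence: $\psi$ nonsingular at $(x_0,t)$ for $0 < t \leq 1$ forces $I - t\,r\,S_{\bv}$ (resp. $I_{n-1,1} - t\,r\,S_{E\,\bv}$) to be invertible for all such $t$, which — by taking $t \to 1$ and using that the relevant determinant is a polynomial in $t$ with roots at the reciprocals $1/(r\gk_{r\,i})$ (resp. $1/(r\gk_{E\,i})$) — is exactly the statement that no positive $\gk_{r\,i}$ (resp. $\gk_{E\,i}$) satisfies $r\gk \geq 1$, i.e. the curvature condition holds.

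The main obstacle I anticipate is bookkeeping rather than conceptual: getting the projections right. The shape operators are defined via $\proj_U$ (and $\proj^{\prime}$), which are \emph{not} orthogonal projections, so the identification of $d\psi_t$ with $I - t\,r\,S_{\bv}$ requires care about which complement one is projecting along and why the non-tangential ($dr$) part of $\partial U/\partial v$ is absorbed — this is precisely where the compatibility condition $\eta_U = dr + \gw_U \equiv 0$ enters, and one must check it is being invoked correctly on each stratum including edge strata. A second, smaller subtlety is the behavior at $t=0$: for smooth points $\psi_0 = \mathrm{id}$ is trivially a local diffeomorphism, but at edge points $\psi_0$ is only a local embedding of $W$ (not a diffeomorphism of $W\times\R$), which is why the statement carefully restricts the $t=0$ diffeomorphism claim to smooth points. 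Since all of this is carried out in detail in \cite[Props. 4.1 and 4.4]{D1}, the proof here amounts to recalling that setup and recording the equivalence; I would therefore keep the exposition brief and refer to \cite{D1} for the explicit matrix computations.
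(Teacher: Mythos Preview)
Your overall approach is correct and matches the paper's: both reduce to the matrix computation in \cite[Props.~4.1, 4.4]{D1}, and the paper's proof is literally just a reference to those propositions together with the remark that the converse follows by reading the same computation backwards.

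However, you have one genuine misconception that should be fixed. You write that the identification of the tangential part of $d\psi_t$ with $I - tr\,S_{\bv}$ ``uses $\eta_U \equiv 0$ (the compatibility condition, which is part of being a skeletal structure) to control the $dr$ contribution.'' Both halves of this are wrong. First, the compatibility condition is \emph{not} part of the definition of a skeletal structure; it is listed in the paper alongside the radial curvature and edge conditions as one of three separate hypotheses, and the present proposition assumes only the curvature (or edge) condition, not compatibility. Second, and more to the point, the $dr$ contribution does not need to be controlled at all for nonsingularity: writing $d\psi_t(v) = (I - tr\,S_{rad})(v) + (\text{scalar})\cdot\bu$ and $\partial\psi/\partial t = r\bu$, the map $d\psi$ on $T_{x_0}M \oplus \R$ is block upper-triangular with respect to the decomposition $T_{x_0}M \oplus \langle\bu\rangle$, with diagonal blocks $I - tr\,S_{rad}$ and multiplication by $r \neq 0$. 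Hence $d\psi$ is invertible iff $I - tr\,S_{rad}$ is, regardless of what the $\bu$-component of $d\psi_t(v)$ looks like. The compatibility condition enters only later, in the global result (Theorem~\ref{ThmII.1}) where one needs the boundary to become smooth at $t=1$; it plays no role here. So delete the appeal to $\eta_U \equiv 0$ and replace it with the triangularity observation.
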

Although only one direction for this proposition was proven in the 
propositions just cited  in \cite{D1}, an examination of the proof given 
there also establishes the converse. $\Box$ \par

\subsubsection*{\it Global Nonsingularity of the Radial Flow}
The preceding local results at all points of $M$ can be combined with the 
compatibility condition at singular points to yield the following global 
result (given in \cite[Thm. 2.5]{D1}).
\begin{Thm}
\label{ThmII.1} 
Let $(M, U)$ be a skeletal structure which satisfies: the radial curvature 
condition at all nonedge points, the edge condition at edge points and edge 
closure points, and the compatibility condition at all singular points.  
Then (see Figure \ref{fig.II2c}):
\begin{enumerate}
\item The associated boundary $\cB$ is an immersed topological manifold 
which is smooth at all points except those corresponding to points of 
$M_{sing}$.  
\item  At points corresponding to points of $M_{sing}$, it is weakly $C^1$ 
(this implies that it is $C^1$ on the points which are in the images of 
strata of codimension $1$).  
\item   At smooth points, the projection along the lines of $U$ will locally 
map $\cB$ diffeomorphically onto the smooth part of $M$.  
\item   Also, if there are no nonlocal intersections, then $\cB$ will be an 
embedded manifold and $\gW \backslash M$ is fibered by the level sets 
$\cB_t$, $0 < t \leq 1$.  
\end{enumerate}
\end{Thm}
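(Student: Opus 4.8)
The statement is \cite[Thm.\ 2.5]{D1}; the plan is to reconstruct $\cB$ locally as the image of the radial map $\psi_1:\tilde M\to\R^{n+1}$, $\psi_1(\tilde x)=x+U(x)$, handling the regular strata first and the singular strata afterward. Over $M_{reg}$ a smooth value of $U$ satisfies the radial curvature condition, and near $\overline{\bdyM}$ the edge condition holds, so Proposition \ref{PropII.2.1} applies at each such point: $\psi$ is a local diffeomorphism at $(\tilde x,t)$ for $0<t\le 1$, each $\psi_t$ is a local embedding with image transverse to the $U$-line, and in particular $\psi_1$ is a local embedding over every point of $\tilde M$ lying above $M_{reg}$. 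This already shows $\cB$ is a smoothly immersed manifold away from the image of $M_{sing}$; inverting the local diffeomorphism $\psi_1$ gives that projection along the $U$-lines carries $\cB$ back onto $M_{reg}$ (conclusion (3)), and it gives smoothness of the level sets $\cB_t$ there.

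The essential input for the singular strata is the identity: for $x_0\in M$, $v\in T_{x_0}M$ on a stratum, and a smooth value $U=r\bu$,
\[
\langle d\psi_1(v),\bu\rangle \;=\; \langle v,\bu\rangle + dr(v)\langle\bu,\bu\rangle + r\langle d\bu(v),\bu\rangle \;=\; \gw_U(v)+dr(v)\;=\;\eta_U(v),
\]
using $|\bu|\equiv 1\Rightarrow\langle d\bu(v),\bu\rangle=0$. So wherever the compatibility condition $\eta_U\equiv 0$ holds, $d\psi_1(T_{x_0}M)\subseteq\bu(x_0)^{\perp}$, and where $\psi_1$ is an immersion on that stratum this inclusion is an equality by dimension count — i.e.\ along each smooth stratum $\cB$ is orthogonal to $U$. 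Now fix $\tilde x_0=(U_l,x_0)$ above $x_0\in M_{sing}$ and use a paved neighborhood: $\tilde M$ near $\tilde x_0$ is a union of $n$-manifolds-with-corners $M_{\ga_j}$ meeting along facets in $M_{sing}$, on each of which $U_l$ restricts to a smooth single-valued field. By the first paragraph applied to each closed piece (radial curvature condition at non-edge points, edge condition at edge points) $\psi_1|_{M_{\ga_j}}$ is a smooth embedding near $x_0$, so $\cB$ near $\psi_1(\tilde x_0)$ is the union of the smooth pieces $\psi_1(M_{\ga_j})$ glued along images of the shared facets. By the displayed identity and the compatibility hypothesis at $x_0$, $T_{\psi_1(x_0)}\psi_1(M_{\ga_j})=U_l(x_0)^{\perp}$ for \emph{every} $j$, so all the pieces share one limiting tangent hyperplane at $\psi_1(x_0)$: that is conclusion (2) (weakly $C^1$, and $C^1$ along images of codimension-one strata, cf.\ \cite[Lemma 5.4]{D1}). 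Local injectivity of $\psi_1$ near $\tilde x_0$, which makes this union a topological-manifold chart, follows from the local initial conditions together with the transversality of the $\psi_t(W)$ to the radial lines, and with the first paragraph this gives that $\cB$ is an immersed topological manifold (conclusion (1)).

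Finally, if there are no nonlocal intersections then $\psi_1:\tilde M\to\cB$ is a continuous bijection that is a local homeomorphism by the preceding steps, hence a homeomorphism, so $\cB$ is embedded. The same curvature conditions make $\psi:N_1\backslash\tilde M\to\gW\backslash M$ a continuous injective local homeomorphism (Proposition \ref{PropII.2.1} at each $(\tilde x,t)$, $0<t\le 1$); using compactness of $M$ this is a homeomorphism, and the $t$-coordinate exhibits $\gW\backslash M$ as fibered by the level sets $\cB_t$, $0<t\le 1$, proving (4).

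The main obstacle is the passage to the singular strata: one must verify that the curvature conditions force each $\psi_1(M_{\ga_j})$ to be a genuine smooth embedded piece up to and including the singular stratum — so that the limiting tangent spaces above are honest tangent spaces rather than mere limits — and recognize that the compatibility condition is exactly what makes the finitely many sheets agree to first order while differing at higher order, which is the mechanism by which each $\cB_t$ remains singular over $M_{sing}$ for $t<1$ and only becomes weakly $C^1$ at $t=1$. Tracking injectivity across the several sheets meeting at a singular point, via the local initial conditions, is the other place that requires care.
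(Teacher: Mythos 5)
The paper does not reproduce a proof of this theorem, deferring to \cite[Thm.\ 2.5]{D1} after observing that the local results of Proposition \ref{PropII.2.1} combine with the compatibility condition at singular points to yield the global statement; your reconstruction follows exactly that route, and the identity $\langle d\psi_1(v),\bu\rangle=\eta_U(v)$ (with the correct observation that $\langle d\bu(v),\bu\rangle=0$) is indeed the mechanism by which compatibility forces a common limiting tangent hyperplane $\bu(x_0)^{\perp}$ across the sheets $\psi_1(M_{\ga_j})$ of a paved neighborhood. The two points you flag at the end — that the curvature conditions continue to give a smooth embedding of each paving piece $M_{\ga_j}$ up to and including its boundary facets in $M_{sing}$, and that injectivity across the several sheets requires the local initial conditions together with transversality of the level sets to the radial lines — are precisely the remaining technical work that \cite{D1} carries out, so your proposal is a faithful reconstruction rather than an alternative argument.
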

\par
\begin{figure}[ht]
\includegraphics[width=7cm]{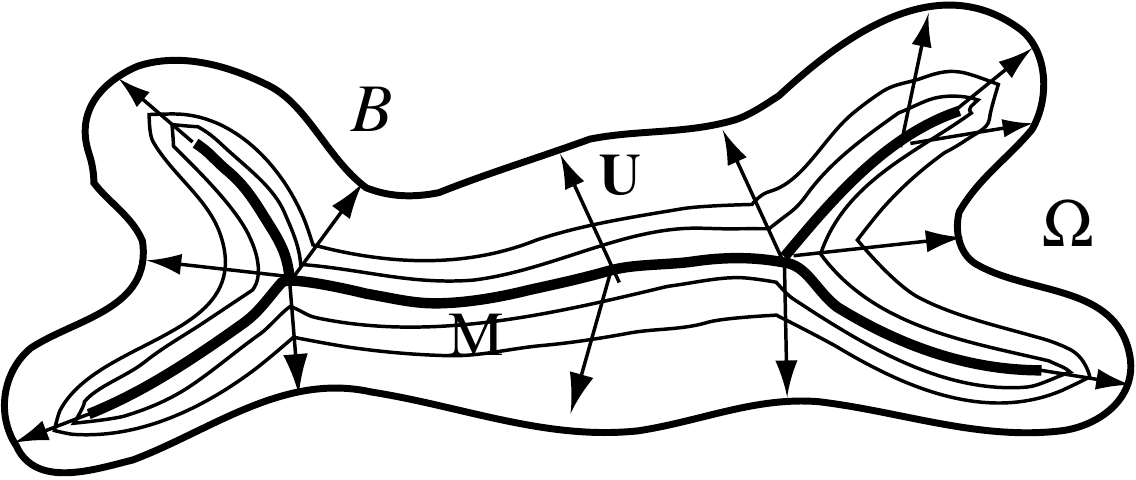}
\caption{\label{fig.II2c} Illustrating the radial flow from the skeletal set 
$M$, with the radial vector field $U$, flowing in the region $\gW$ to the 
boundary $\cB$, where the level sets are stratified sets forming a 
fibration of $\gW\backslash M$.}
\end{figure} 
\par

This allows us to completely describe the structure of the interior of the 
regions in a multi-region configuration. This extends to the geometry via 
the next results.
\par
\subsection*{Evolution of the Shape Operators under the Radial Flow}  
 \par
The last feature of the radial flow which we recall is the evolution of the 
radial and edge shape operators under the radial flow, with the eventual 
goal of extending these results to the linking flow.  This allows us to 
relate in the Blum case the radial geometry on the medial axis with the  
differential geometry of the boundary.  
\par
We first consider the evolution of the radial shape operator under the 
radial flow.  Let $x_0 \in M_{reg}$, and let $\{ v_1, \dots , v_n\}$ be a 
basis for $T_{x_0}M$.  We suppose we have chosen a smooth value of $U$ 
in a neighborhood of $x_0$.  If $x_0$ is a non--edge singular point, then 
we can carry out an analogous argument on a local component for $x_0$.  
\par  
For a given $t$, let 
$$ v_i^{\prime} = d\psi_t(v_i) = \pd{\psi_t}{v_i} \quad \mbox{for } i = 1, 
\dots , n. $$
With $r$ denoting the radial function as in \S \ref{S:sec2}, we suppose 
that  $\frac{1}{t r}$ is not an eigenvalue of $S_{rad}$ (at $x_0$).  Then, by 
the proof of Proposition 4.1 of \cite{D1}, $\psi_t$ maps a neighborhood 
$W$ of $x_0$ diffeomorphically to a smooth submanifold transverse to 
$U(x_0)$.  Thus, the image of $U$ along $\psi_t$ remains transverse in 
some neighborhood of $x_0^{\prime} = \psi_t(x_0)$ to $W^\prime = 
\psi_t(W) \subset \cB_t$.  Hence, it has a well-defined radial shape 
operator, which we denote by $S_{rad\, t}$.  We will compute 
$S_{\bv^{\prime}\, t}$, the matrix representation of $S_{rad\, t}$ with 
respect to the basis $\{ v_i^{\prime}\}$.  
\par  Then, the following two results are consequences of Proposition 2.1 
and Corollary 2.2 of \cite{D2}.

\par 
\subsubsection*{\it Evolution of the Radial Shape Operator from Smooth 
Points}

\begin{Proposition}
\label{PropII.3.1}
Suppose that at a smooth point $x_0 \in M_{reg}$, we have a smooth value 
of $U$ and a basis $\{ v_i\}$ for $T_{x_0}M$.  Let $\{ v_i^{\prime}\}$ 
denote the image of $\{ v_i\}$ under $d\psi_t(x_0)$.  If $\frac{1}{t r}$ is 
not an eigenvalue of the radial shape operator $S_{\bv}$ at $x_0$, then the 
radial shape operator $S_{\bv^{\prime}\, t}$ for $\cB_t$ at $x_0^{\prime} 
= \psi_t(x_0)$ for the corresponding smooth value of $U$ is given by
\begin{equation}
\label{EqnII.3.5}
S_{\bv^{\prime}\, t}  \quad = \quad  (I - t r\cdot S_{\bv})^{-1}S_{\bv} \, .
\end{equation}
\end{Proposition}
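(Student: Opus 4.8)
The plan is to compute the differential $d\psi_t(x_0)$ explicitly in the basis $\{v_i\}$ for $T_{x_0}M$, then identify how the unit radial vector field $\bu$ varies along the flowed surface $\cB_t$, and finally assemble the shape operator $S_{rad\,t}$ from these two ingredients. Since $\psi_t(x) = x + t r(x)\bu(x)$ (using the smooth single-valued choice of $U = r\cdot\bu$ near $x_0$), differentiating in the direction $v_i$ gives
\begin{equation*}
v_i^{\prime} \;=\; d\psi_t(v_i) \;=\; v_i \,+\, t\!\left(\pd{r}{v_i}\bu + r\,\pd{\bu}{v_i}\right).
\end{equation*}
The key simplification is that along the flow the direction of $\bu$ is unchanged: $\bu$ at the point $x_0^{\prime}=\psi_t(x_0)$ equals $\bu(x_0)$, because the radial lines are the flow lines. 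So to compute $S_{rad\,t}(v_i^{\prime}) = -\proj_{U}(\partial\bu/\partial v_i^{\prime})$ at $x_0^{\prime}$, I only need to re-express $\partial\bu/\partial v_i$ (which is intrinsic to the radial line structure, not to which level set we sit on) in terms of the new basis $\{v_j^{\prime}\}$, and then project along $U$ onto $T_{x_0^{\prime}}\cB_t$.

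First I would decompose $\partial\bu/\partial v_i$ into its component along $U$ and its component in $T_{x_0}M$: by definition of $S_{\bv}$, the $T_{x_0}M$-component (projected along $U$) is $-\sum_j (S_{\bv})_{ji} v_j$. The $U$-component contributes nothing after $\proj_U$. Hence, modulo $U$,
\begin{equation*}
\pd{\bu}{v_i} \;\equiv\; -\sum_j (S_{\bv})_{ji}\, v_j \pmod{U},
\end{equation*}
so that, again modulo $U$,
\begin{equation*}
v_i^{\prime} \;\equiv\; v_i - t r \sum_j (S_{\bv})_{ji} v_j \;=\; \sum_j \bigl(I - tr\,S_{\bv}\bigr)_{ji}\, v_j \pmod{U}.
\end{equation*}
The hypothesis that $\tfrac1{tr}$ is not an eigenvalue of $S_{\bv}$ is exactly what makes $I - tr\,S_{\bv}$ invertible, so the $\{v_i^{\prime}\}$ project along $U$ onto a basis of $T_{x_0^{\prime}}\cB_t$ — this is the content already extracted from Proposition~\ref{PropII.2.1}/the proof of Prop.~4.1 of \cite{D1}, and it also justifies that $\cB_t$ is transverse to $U$ there so $S_{rad\,t}$ is well defined.

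Now I would run the shape operator computation forward. Writing $P$ for projection along $U$ onto $T_{x_0^{\prime}}\cB_t$, we have $P(v_i^{\prime}) = \sum_j (I - tr S_{\bv})_{ji}\, P(v_j)$, while on the other hand, because $\bu$ is constant along radial lines,
\begin{equation*}
S_{rad\,t}(v_i^{\prime}) \;=\; -P\!\left(\pd{\bu}{v_i^{\prime}}\right) \;=\; -P\!\left(\pd{\bu}{v_i}\right) \;=\; \sum_j (S_{\bv})_{ji}\, P(v_j),
\end{equation*}
the middle equality holding because $v_i^{\prime}$ and $v_i$ differ by a multiple of $U$ together with an element of $T_{x_0}M$, and $\bu$'s derivative in a $U$-direction lies in the span of $U$ (unit vector field), which dies under $P$ — this needs a small check that $P$ composed with the ambient derivative is insensitive to the $U$-shift, which follows since $\proj_U$ kills $U$ and $\partial\bu/\partial U$ is a multiple of... actually $\partial\bu/\partial(\text{direction }U)$ need not be obviously a multiple of $U$, so the honest step is: $v_i^{\prime}-v_i \in \mathrm{span}(U) + T_{x_0}M$ lies in $T_{x_0}M$ after we note it equals $t\,d(\psi_t - \mathrm{id})(v_i)$... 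I would instead just carry the full decomposition $v_i^{\prime} = \sum_j (I-trS_{\bv})_{ji} v_j + (\text{mult. of }U)$ and use linearity of $\bu\mapsto -P(\partial\bu/\partial\,\cdot\,)$ as a map on $T_{x_0^{\prime}}\cB_t$, evaluating on the $T_{x_0}M$-part directly. Combining the two displays: in the basis $\{v_i^{\prime}\}$ of $T_{x_0^{\prime}}\cB_t$ (identified via $P$ with $\{P(v_i)\}$), the matrix of $S_{rad\,t}$ sends the coordinate vector of $v_i^{\prime}$ to that of $\sum_j (S_{\bv})_{ji} P(v_j)$, i.e.
\begin{equation*}
S_{\bv^{\prime}\,t} \;=\; (I - tr\,S_{\bv})^{-1} S_{\bv},
\end{equation*}
which is \eqref{EqnII.3.5}.

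The main obstacle I anticipate is the bookkeeping of the two different projections: $\proj_U$ onto $T_{x_0}M$ versus $P$ onto $T_{x_0^{\prime}}\cB_t$, both along the same line $\mathbb R\,U$. One must be careful that "derivative of $\bu$ in the direction $v_i^{\prime}$" computed on $\cB_t$ agrees with the ambient derivative along any curve through $x_0^{\prime}$ with velocity $v_i^{\prime}$, and that because the radial map sends the radial line through $x_0$ to itself with $\bu$ constant, this ambient derivative coincides with $\partial\bu/\partial v_i$ at $x_0$ up to terms in $\mathrm{span}(U)$ — the clean way to see all of this is exactly Proposition~2.1 of \cite{D2}, which is the cited source, so for the non-edge singular case one simply repeats the argument on a local regular component containing $x_0$ in its closure and uses the smooth extension of $U$ guaranteed by the skeletal structure axioms. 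Everything else is the linear-algebra identity $d\psi_t = I - trS_{\bv}$ modulo $U$, which is forced by the curvature-condition hypothesis being precisely the non-eigenvalue condition.
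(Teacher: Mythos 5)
The paper defers entirely to Proposition 2.1 of \cite{D2} for this result, so there is no in-paper proof to compare against; your argument is a reconstruction of the cited one. The overall plan is right and the final bookkeeping is correct: computing $d\psi_t(v_i) = v_i + t(\partial r/\partial v_i)\bu + tr\,\partial\bu/\partial v_i$, reducing modulo $U$ via the defining relation $\proj_U(\partial\bu/\partial v_i) = -S_{rad}(v_i)$, and inverting the resulting change-of-basis matrix $I - tr\,S_{\bv}$ (which the non-eigenvalue hypothesis makes invertible) does yield $(I - tr\,S_{\bv})^{-1}S_{\bv}$.

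The genuine gap, which you flag yourself, is the ``middle equality'' $P(\partial\bu/\partial v_i'\,|_{x_0'}) = P(\partial\bu/\partial v_i\,|_{x_0})$. These derivatives live at different base points, so the fallback you offer --- decompose $v_i'$ as a combination of the $v_j$ plus a multiple of $U$ and appeal to linearity of $w \mapsto -P(\partial\bu/\partial w)$ --- does not by itself relate a derivative at $x_0'$ to one at $x_0$: it only reduces you to $\partial\bu/\partial v_j\,|_{x_0'}$, still evaluated at the wrong point, and leaves you worrying (as your parenthetical shows) about what $\partial\bu$ does in the $U$-direction. The clean way to close this is to note that the extension of $\bu$ into the region is constant along radial lines, so $\bu\circ\psi_t = \bu$ as maps from a neighborhood of $x_0$ in $M$; differentiating this identity and applying the chain rule gives $d\bu|_{x_0'}\circ d\psi_t|_{x_0} = d\bu|_{x_0}$, hence $\partial\bu/\partial v_i'\,|_{x_0'} = \partial\bu/\partial v_i\,|_{x_0}$ for each $i$, and you never need to analyze $\partial\bu/\partial U$ at all. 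With that substitution in place, the rest of your argument is pure linear algebra and the formula (\ref{EqnII.3.5}) follows exactly as you compute.
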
 
\par
With the identification of the tangent spaces $d\psi_t(x) : T_xM \simeq 
T_{\psi_t(x)}\cB$, we may write the preceding in the basis independent 
form
\begin{equation}
\label{Eqn.II3.5b}
  S_{rad, t} \,\, = \,\,  (I - t r\cdot S_{rad})^{-1}S_{rad} \, .  
\end{equation}
\begin{Remark}
\label{Rem7.3}
If the compatibility $1$-form $\eta_U$ vanishes on an open subset of 
$\tilde M_{reg}$, then $U$ will be orthogonal to $\cB$ at each point of its 
the image under the radial flow (see \S \ref{S:secII.Rad.Flow}) as happens 
for the Blum case.  It follows that the formula (\ref{EqnII.3.5}) gives the 
differential geometric shape operator for $\cB$; thus, capturing the 
geometry of the boundary, see \cite{D2}.  \par 
\end{Remark}
\subsubsection*{\it Principal Radial Curvatures for $\cB_t$} \par
Then, we can deduce information about the principal radial curvatures at 
$x_0^{\prime}$ in terms of those at $x_0$.  
\begin{Corollary}
\label{CorII.3.2} Under the assumptions of Proposition \ref{PropII.3.1}, 
there is a correspondence (counting multiplicities) between the principal 
radial curvatures $\gk_{r\, i}$ of $M$ at $x_0$ and $\gk_{r\, t\, i}$ of 
$\cB_t$ at $x_0^{\prime}$ given by 
\begin{equation*}
\gk_{r\, t\, i} = \frac{\gk_{r\, i}}{(1-t r \gk_{r\, i})}
\quad \mbox{or equivalently} \quad
\gk_{r\, i} = \frac{\gk_{r\, t\, i}}{(1+t r \gk_{r\, t\, i})} \, .
\end{equation*}
Furthermore, if $e_i$ is an eigenvector for the eigenvalue $\gk_{r\, i}$ of 
$S_{rad}$, then $e_i^{\prime}$, which has the same coordinates with 
respect to $\bv^{\prime}$ as $e_i$ has with respect to $\bv$, is an 
eigenvector with eigenvalue $\gk_{r\, t\, i}$ of $S_{rad, t}$.
\end{Corollary}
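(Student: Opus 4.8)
The plan is to read off everything from the matrix identity in Proposition \ref{PropII.3.1}, namely $S_{\bv^{\prime}\, t} = (I - t r\cdot S_{\bv})^{-1}S_{\bv}$, by elementary spectral algebra: the point is simply that $S_{rad, t}$ is obtained from $S_{rad}$ by applying the scalar M\"{o}bius map $\phi_t(\mu) = \mu/(1 - t r\mu)$ in the functional-calculus sense. First I would observe that, under the running hypothesis of Proposition \ref{PropII.3.1} that $\frac{1}{t r}$ is not an eigenvalue of $S_{\bv}$, the matrix $I - t r\cdot S_{\bv}$ is invertible, and since it is a polynomial in $S_{\bv}$ it commutes with $S_{\bv}$; hence $S_{\bv^{\prime}\, t}$ depends on $S_{\bv}$ only through the rational function $\phi_t$ evaluated on the spectrum of $S_{\bv}$.

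Next, for an individual eigenvector: if $e_i$ is an eigenvector of $S_{rad}$ with $S_{rad}e_i = \gk_{r\, i}\,e_i$ — equivalently, the coordinate vector of $e_i$ with respect to $\bv$ is an eigenvector of $S_{\bv}$ with eigenvalue $\gk_{r\, i}$ — then $(I - t r\cdot S_{\bv})e_i = (1 - t r\,\gk_{r\, i})e_i$, so $(I - t r\cdot S_{\bv})^{-1}e_i = (1 - t r\,\gk_{r\, i})^{-1}e_i$ (the scalar being nonzero by hypothesis), and therefore $S_{\bv^{\prime}\, t}\,e_i = \frac{\gk_{r\, i}}{1 - t r\,\gk_{r\, i}}\,e_i$. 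Since $S_{\bv^{\prime}\, t}$ is, by Proposition \ref{PropII.3.1}, the matrix of $S_{rad, t}$ with respect to $\bv^{\prime}$, and $e_i^{\prime}$ is defined to have the same coordinates relative to $\bv^{\prime}$ as $e_i$ has relative to $\bv$, this says exactly that $e_i^{\prime}$ is an eigenvector of $S_{rad, t}$ with eigenvalue $\gk_{r\, t\, i} = \phi_t(\gk_{r\, i})$. Solving $\gk_{r\, t\, i}(1 - t r\,\gk_{r\, i}) = \gk_{r\, i}$ for $\gk_{r\, i}$ yields the claimed inverse relation $\gk_{r\, i} = \gk_{r\, t\, i}/(1 + t r\,\gk_{r\, t\, i})$, which in particular shows that $\phi_t$ restricted to the relevant spectrum is a bijection.

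For the counting-with-multiplicities assertion I would put $S_{\bv}$ in upper-triangular (Schur or Jordan) form; then $S_{\bv^{\prime}\, t} = (I - t r S_{\bv})^{-1}S_{\bv}$ is again upper triangular, with diagonal entries obtained by applying $\phi_t$ to the diagonal entries of $S_{\bv}$, so the algebraic multiplicity of each eigenvalue $\gk_{r\, t\, i}$ of $S_{rad, t}$ equals the total algebraic multiplicity of the $\gk_{r\, i}$ with $\phi_t(\gk_{r\, i}) = \gk_{r\, t\, i}$; by injectivity of $\phi_t$ this is a one-to-one correspondence. The only delicacy — not really an obstacle — is that the radial shape operator need not be self-adjoint, because $\proj_U$ is oblique rather than orthogonal, so $S_{rad}$ may fail to be diagonalizable; this is why I would phrase the eigenvector statement for each $e_i$ individually (exactly as in the statement) and treat multiplicities via characteristic polynomials and generalized eigenspaces rather than by choosing a basis of eigenvectors. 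One should also note in passing that $\phi_t$ and its inverse are both well defined precisely on the set where the non-eigenvalue hypothesis of Proposition \ref{PropII.3.1} holds, so all of the displayed formulas are meaningful under exactly that assumption.
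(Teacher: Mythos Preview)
Your argument is correct and is essentially the intended one. The paper does not give a separate proof of this corollary, simply recording that both Proposition~\ref{PropII.3.1} and Corollary~\ref{CorII.3.2} are consequences of Proposition~2.1 and Corollary~2.2 of \cite{D2}; your derivation directly from the matrix identity of Proposition~\ref{PropII.3.1} via $\phi_t(\mu)=\mu/(1-tr\mu)$ is exactly the elementary spectral computation underlying that citation, and in fact the same eigenvector observation appears verbatim in the paper later in the proof of Lemma~\ref{LemII.3.6}. Your care about non-self-adjointness of $S_{rad}$ (handling multiplicities via the characteristic polynomial rather than assuming diagonalizability) is appropriate and matches the level of generality needed.
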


\subsubsection*{\it Evolution of the Radial Shape Operator from Edge 
Points} \par
We can carry out an analogous line of reasoning for the evolution of the 
radial shape operator for points corresponding to an edge point $x_0$.  A 
smooth value of $U$ in a neighborhood of $x_0$ corresponds to one side of 
$M$.  Although $\cB_t$ is not smooth at $\psi_t(x_0)$ if $t < 1$,  we note 
that by Proposition 4.4 of \cite{D1}, provided $\frac{1}{tr}$ is not a 
generalized eigenvalue for $(S_{E\, \bv}, I_{n-1, 1})$, the one side of 
$\cB_t$ corresponding to $U$ is smooth and is transverse to $U$ at $x_0$ 
when $t > 0$.  Thus, the radial shape operator is defined for $\cB_t$ at 
points corresponding to edge points when $t > 0$.  Hence, we may compute 
the radial shape operator $S_{\bv^{\prime} t}$ for this one side using 
\cite[Proposition 2.3]{D2} as follows.  
\begin{Proposition}
\label{PropII.E3.1}
Suppose that at an edge point $x_0 \in \overline{\bdyM}$, we have a 
smooth value of $U$ (corresponding to one side of $M$) and a special basis 
$\{ v_i\}$ for $T_{x_0}M$.  Let $\{ v_i^{\prime}\}$ denote the image of $\{ 
v_i\}$ under $d\psi_t(x_0)$.  If $\frac{1}{t r}$ is not a generalized 
eigenvalue of $(S_{E\, \bv}, I_{n-1, 1})$, then the radial shape operator 
$S_{\bv^{\prime}\, t}$ for $\cB_t$ at $x_0^{\prime} = \psi_t(x_0)$ is 
given by
\begin{equation}
\label{EqnII.E3.5}
S_{\bv^{\prime}\, t}  \quad = \quad  (I_{n-1, 1} - t r\cdot S_{E\, \bv})^{-
1}S_{E\, \bv} \, .
\end{equation}
\end{Proposition}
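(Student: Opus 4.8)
The plan is to run the argument used for the smooth case (Proposition~\ref{PropII.3.1}) with the edge shape operator $S_E$ in place of $S_{rad}$ and with $I_{n-1,1}$ in place of the identity; the appearance of $I_{n-1,1}$ rather than $I$ is forced by the single feature distinguishing the edge situation, namely that the last vector $v_n$ of a special basis is a nonnegative multiple of $U$. First I would pin down the two facts that make the computation legitimate. By Proposition~4.4 of \cite{D1} (quoted above), the hypothesis that $\frac{1}{tr}$ is not a generalized eigenvalue of $(S_{E\,\bv},I_{n-1,1})$ guarantees that near $x_0^{\prime}=\psi_t(x_0)$ the side of $\cB_t$ corresponding to $U$ is a smooth manifold with boundary, with tangent space at $x_0^{\prime}$ spanned by $v_1^{\prime},\dots,v_n^{\prime}$ and transverse to the line spanned by $U$; hence $\psi_t$ is, locally near $x_0$, a diffeomorphism onto this smooth side, and the projection $q$ onto $T_{x_0^{\prime}}\cB_t$ along $U$ is defined. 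Since $\ker q=\langle U\rangle$ and $U\notin T_{x_0^{\prime}}\cB_t$, the restriction of $q$ to $T_{x_0^{\prime}}\cB_t$ is the identity, while the restriction of $\proj^{\prime}$ to $T_{x_0^{\prime}}\cB_t$ is a linear isomorphism onto $T_{x_0}\bdyM\oplus\langle\bn\rangle$, the target of $\proj^{\prime}$. Second, because the radial flow carries each point along its own radial line, the unit radial field of $\cB_t$ is $\bu\circ\psi_t^{-1}$, so by the chain rule $\partial\bu/\partial v_i^{\prime}=\partial\bu/\partial v_i$ under the identification $d\psi_t(x_0)$.

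Then I would differentiate: $d\psi_t(v_i)=v_i+t\,\partial(r\bu)/\partial v_i=v_i+t(dr(v_i)\,\bu+r\,\partial\bu/\partial v_i)$. From the definition of $S_E$ one has $\partial\bu/\partial v_i=-S_E(v_i)+(\text{a multiple of }U)$, with $S_E(v_i)$ lying in the target of $\proj^{\prime}$, while $dr(v_i)\,\bu$ is itself a multiple of $U$; hence, applying $\proj^{\prime}$ (which annihilates multiples of $U$ and fixes its target pointwise), $\proj^{\prime}(v_i^{\prime})=\proj^{\prime}(v_i)-tr\,S_E(v_i)$. Now $\proj^{\prime}(v_i)=v_i$ for $i<n$ because $v_i\in T_{x_0}\bdyM$, whereas $\proj^{\prime}(v_n)=0$ because $v_n$ is a nonnegative multiple of $U$. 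Expressing these vectors in the source basis $\bv$ and the target basis $\{v_1,\dots,v_{n-1},\bn\}$ shows that the matrix of $\proj^{\prime}\circ d\psi_t(x_0)$ is exactly $A:=I_{n-1,1}-tr\cdot S_{E\,\bv}$, which is nonsingular precisely by the eigenvalue hypothesis.

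Finally I would assemble the shape operator. Combining $\partial\bu/\partial v_i^{\prime}=\partial\bu/\partial v_i=-S_E(v_i)+(\text{a multiple of }U)$ with the definition $S_{rad\,t}(w)=-q(\partial\bu/\partial w)$ and with $q(U)=0$ gives $S_{rad\,t}(v_i^{\prime})=q(S_E(v_i))$. Write $S_{rad\,t}(v_i^{\prime})=\sum_j (S_{\bv^{\prime}\,t})_{ji}\,v_j^{\prime}$ and apply $\proj^{\prime}$: on the left, since $q(y)-y\in\langle U\rangle$ for all $y$ and $S_E(v_i)$ lies in the target of $\proj^{\prime}$, one gets $\proj^{\prime}(q(S_E(v_i)))=S_E(v_i)$; on the right, $\proj^{\prime}(v_j^{\prime})$ has coordinates given by the $j$-th column of $A$. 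Comparing coordinates in the basis $\{v_1,\dots,v_{n-1},\bn\}$ yields the matrix identity $S_{E\,\bv}=A\cdot S_{\bv^{\prime}\,t}$, so $S_{\bv^{\prime}\,t}=(I_{n-1,1}-tr\cdot S_{E\,\bv})^{-1}S_{E\,\bv}$, which is (\ref{EqnII.E3.5}). The step requiring genuine care --- the only real departure from Proposition~\ref{PropII.3.1} --- is the bookkeeping with the two non-orthogonal projections and the mismatched source and target bases: one must verify that the ``identity part'' of $\proj^{\prime}\circ d\psi_t(x_0)$ is $I_{n-1,1}$ and not $I$, which is exactly the point at which the special-basis condition ($v_n\parallel U$) enters. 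The remaining formal inversion is the one already carried out for the smooth analogue in \cite[Prop.~2.3]{D2}.
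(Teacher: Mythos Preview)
The paper does not actually prove this proposition: it is stated with the preface ``we may compute the radial shape operator $S_{\bv^{\prime} t}$ for this one side using \cite[Proposition~2.3]{D2},'' and no argument is given beyond that citation. Your direct computation---differentiating $\psi_t$, using the defining relation $\partial\bu/\partial v_i=-S_E(v_i)\bmod\langle U\rangle$, and then playing the two projections $q=\proj_U$ and $\proj^{\prime}$ against one another to extract the matrix identity $S_{E\,\bv}=(I_{n-1,1}-tr\,S_{E\,\bv})\,S_{\bv^{\prime}\,t}$---is correct and is exactly the kind of argument that underlies the cited result. In particular your identification of the one genuinely new point, that $\proj^{\prime}(v_n)=0$ because $v_n$ is a nonnegative multiple of $U$ at an edge point and hence the ``identity part'' of $\proj^{\prime}\circ d\psi_t$ is $I_{n-1,1}$ rather than $I$, is the right diagnosis of why the edge formula differs from the smooth one. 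So your proposal is sound; it simply supplies the details the paper delegates to \cite{D2}.
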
  
We note that unlike the situation for the radial shape operator, $S_{E\, 
\bv}$ does not necessarily commute with $I_{n-1, 1}$ so the order of the 
factors is important.  \par
Unlike the case of non--edge points, we cannot in general deduce a simple 
formula for the principal radial curvatures for $S_{\bv^{\prime}\, t}$ in 
terms of the principal edge curvatures.  However, in certain special cases, 
we can carry out the calculations (see e.g. \cite{D2} or \cite{D7}).  
\par

\section{Linking Flow and Curvature Conditions}
\label{S:secII.Link.Flow}
\par
We consider a skeletal linking  structure $\{(M_i, U_i, \ell_i)\}$.  If each 
$(M_i, U_i,)$ satisfies the conditions of Theorem \ref{ThmII.1}, then we 
obtain regions $\gW_i$ which, provided the regions are disjoint or 
intersect appropriately, give a multi-region configuration $\bgW = \{ 
\gW_i\}$.  There are additional conditions to be  satisfied for $\{(M_i, 
U_i, \ell_i)\}$ to be the skeletal linking structure for $\bgW$.  One part of 
this requires properties of the linking flow.  Using analogs of the 
preceding results for the radial flow, we will give sufficient conditions 
for nonsingularity of the linking flow.  We also will derive analogous 
formulas for the evolution of the radial and edge shape operators under 
the linking flow.   \par

\subsection*{Nonsingularity of the Linking Flow} 
\par
We first establish the conditions for the nonsingularity of the linking 
flow $\gl$ (or $\gl_t$) introduced in the definition of the skeletal linking 
structure (see (\ref{Eqn2.1}) in \S \ref{S:sec2}).  For the skeletal linking  
structure $\{(M_i, U_i, \ell_i)\}$, we have the following two conditions:
\begin{enumerate}
\item ({\it Linking Curvature Condition} )  For all points $x_0 \in M_i 
\backslash \partial M_i$ and all values $U_i(x_0)$,
\begin{equation*}
\ell_i < \min \{ \frac{1}{\gk_{r\, j}}\} \quad \mbox{ for all positive 
principal radial curvatures } \gk_{r\, j}\, ;
\end{equation*}
\item ({\it Linking Edge Condition} )  For all points  $x_0 \in 
\overline{\bdyM_i}$ (the closure of $\bdyM_i$)
\begin{equation*}
\ell_i < \min \{ \frac{1}{\gk_{E\, j}}\} \quad \mbox{ for all positive 
principal edge curvatures } \gk_{E\, j}\, .
\end{equation*}
\end{enumerate}
Then, given a skeletal structure $\{(M_i, U_i, \ell_i)\}$ the smoothness of 
the linking flow is guaranteed by the following analog of Proposition 
\ref{PropII.2.1}. \par

\begin{Proposition}
\label{PropII.3.1b}
Let $U$ be a smooth value of the radial vector field defined in a 
neighborhood $W$ of $x_0 \in M_i$.  Suppose either that $x_0 \in M_i 
\backslash \bdyM_i$ and satisfies the linking curvature condition in the 
neighborhood $W$; or $x_0 \in \overline{\bdyM_i}$  and satisfies the 
linking edge condition on the neighborhood $W$.  Then, 
\begin{enumerate}
\item $\gl : W \times \R \to \R^{n+1}$ is a local diffeomorphism at $(x_0, 
t)$ for $0 < t \leq 1$ (and also $t = 0$ for nonedge closure points);
\item $\gl_t : W \to \R^{n+1}$ is a local embedding at $x_0$ for any $0 
\leq t \leq 1$; and
\item $\gl_t(W)$ is transverse to the line spanned by $U$ for each $0 \leq 
t \leq 1$. \par
Conversely, suppose $\gl$ is nonsingular at $(x_0, t)$ for $0 < t \leq 1$, 
which is equivalent to $\gl_t(W)$ being nonsingular at $\gl_t(x_0)$ and 
transverse to the line spanned by $U$ for $0 < t \leq 1$.  Then the linking 
curvature, resp. linking edge condition is satisfied at $x_0$, depending on 
whether it is a nonedge closure point, resp. an edge closure point.  
\end{enumerate}
\end{Proposition}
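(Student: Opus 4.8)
The plan is to reduce the statement to the nonsingularity result for the radial flow, Proposition \ref{PropII.2.1}, by observing that for each fixed $t$ the time-$t$ linking map $\gl_t$ restricted to $W$ is itself a radial-type map whose relevant shape operators are literally those of $(M_i,U_i)$. Indeed, $\gl_i(x,t) = x + \chi_i(x,t)\,\bu_i(x)$ displaces $x$ along the line through $\bu_i(x)$ --- the same unit vector that appears in $U_i = r_i\cdot\bu_i$ and in $L_i = \ell_i\cdot\bu_i$ --- so the projections $\proj_U$ and $\proj^{\prime}$ used to define $S_{rad}$ and $S_E$ are unaffected by replacing $U$ by $L$, and hence $S_{rad}$, $S_E$ are the same operators whether built from $U_i$ or from $L_i$. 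For a smooth value $U = r\cdot\bu$ on a neighborhood $W$ of $x_0$ and a basis $\{v_1,\dots,v_n\}$ of $T_{x_0}M_i$, the computation in the proofs of \cite[Props.\ 4.1, 4.4]{D1} applies word for word with $r$ replaced by $\chi_i(x_0,t)$: after projecting along $U$, the differential of $\gl_t|_W$ is $I - \chi_i(x_0,t)\,S_{rad}$ at a smooth point and $I_{n-1,1} - \chi_i(x_0,t)\,S_{E\,\bv}$ (in a special basis) at an edge closure point, while $\pd{\gl_i}{t} = \pd{\chi_i}{t}\,\bu(x_0)$ lies along $U$. So, in the splitting of $\R^{n+1}$ along $U$, the full differential $d\gl$ at $(x_0,t)$ is block lower triangular, with $n\times n$ block the operator just named and scalar bottom-right entry $\pd{\chi_i}{t}$.

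The decisive elementary fact is that $\chi_i(x,t)$ is a convex combination of $0$, $r_i(x)$, and $\ell_i(x)$, so $0\le\chi_i(x,t)\le\ell_i(x)$ for every $t\in[0,1]$ (using $\ell_i\ge r_i$), with $\chi_i(x_0,t)>0$ for $t>0$. Thus the linking curvature condition $\ell_i < \min\{1/\gk_{r\,j}\}$ forces $\chi_i(x_0,t) < 1/\gk_{r\,j}$ for every positive principal radial curvature and every $t$, so $1/\chi_i(x_0,t)$ is never an eigenvalue of $S_{rad}$ for $0<t\le1$ (being positive, it is also distinct from any non-positive eigenvalue). Hence $I - \chi_i(x_0,t)\,S_{rad}$ is invertible, and by the argument of \cite[Prop.\ 4.1]{D1} this gives that $\gl_t|_W$ is a local embedding with $\gl_t(W)$ transverse to the line spanned by $U$; at an edge closure point one argues identically with the linking edge condition and the generalized eigenvalues of $(S_{E\,\bv},I_{n-1,1})$, following \cite[Prop.\ 4.4]{D1}. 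Since $\pd{\chi_i}{t}$ equals $2r_i>0$ on $[0,\frac{1}{2})$ and $2(\ell_i-r_i)$ on $(\frac{1}{2},1]$, the block-triangular differential is invertible there as well, which gives (1)--(3); at the junction $t = \frac{1}{2}$ and on strata where $\ell_i = r_i$, statement (1) is read in the piecewise-homeomorphism sense recorded after (\ref{Eqn2.1}).

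For the converse, the block-triangular structure shows that $\gl$ being nonsingular at $(x_0,t)$ is equivalent to $\gl_t(W)$ being nonsingular at $\gl_t(x_0)$ and transverse to the line spanned by $U$, i.e.\ to invertibility of $I-\chi_i(x_0,t)\,S_{rad}$, resp.\ $I_{n-1,1}-\chi_i(x_0,t)\,S_{E\,\bv}$. As $t$ runs over $(0,1]$ the scalar $\chi_i(x_0,t)$ takes every value in $(0,\ell_i(x_0)]$, so invertibility for all such $t$ means no positive eigenvalue of $S_{rad}$ (resp.\ generalized eigenvalue of the edge pair) lies in $[1/\ell_i(x_0),\infty)$; equivalently every such curvature is $<1/\ell_i(x_0)$, which is precisely the linking curvature condition, resp.\ the linking edge condition, at $x_0$.

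I expect the only genuinely delicate part to be the edge-point bookkeeping: tracking the non-orthogonal projection $\proj^{\prime}$ onto $T_{x_0}\bdyM_i\oplus\langle\bn\rangle$ and preserving the order of the factors in $(I_{n-1,1}-\chi_i S_{E\,\bv})^{-1}S_{E\,\bv}$, since $S_{E\,\bv}$ need not commute with $I_{n-1,1}$. But this goes through as in \cite[Prop.\ 4.4]{D1} once $r$ is replaced by $\chi_i(x_0,t)$, with the bound $\chi_i\le\ell_i$ ensuring that the single inequality $\ell_i<\min\{1/\gk_{E\,j}\}$ suffices uniformly in $t$; and the case $\ell_i = r_i$ needs no separate treatment, since there the linking (edge) condition coincides with the radial, resp.\ edge, condition, which is already available from the values of $\chi_i(x_0,t)$ attained for $t\in(0,\frac{1}{2}]$.
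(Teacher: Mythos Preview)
Your proof is correct and takes a more direct route than the paper's. You observe that $\gl_t(x) = x + \chi_i(x,t)\bu_i(x)$ has precisely the form of a radial-type displacement along $\bu_i$, so that the differential computation of \cite[Props.\ 4.1, 4.4]{D1} applies verbatim with $tr$ replaced by the scalar $\chi_i(x_0,t)$; since $\chi_i$ sweeps out $(0,\ell_i]$ as $t$ runs over $(0,1]$, the linking curvature (resp.\ linking edge) condition is exactly the invertibility of $I-\chi_i S_{\bv}$ (resp.\ $I_{n-1,1}-\chi_i S_{E\,\bv}$) for all relevant $t$, in both directions. The paper instead decomposes the linking flow into the radial flow on $[0,\tfrac{1}{2}]$ reaching $\cB_i$, followed by a second radial flow on $[\tfrac{1}{2},1]$ from $\cB_i$ with radial vector $(\ell_i-r_i)\bu_i$; it applies Proposition \ref{PropII.2.1} to each piece separately and then invokes the M\"obius semigroup Lemmas \ref{LemII.3.6} and \ref{LemII.3.7} to compose the two shape-operator evolutions into the single formula $(I-\chi(t)S_{\bv})^{-1}S_{\bv}$ (resp.\ the edge version), from which the equivalence with the linking condition is read off. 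Your argument bypasses the M\"obius machinery entirely for the nonsingularity claim, which is cleaner; on the other hand, the paper's two-stage decomposition and the M\"obius lemmas are what yield the evolution formulas of Corollaries \ref{Cor.II3.5}--\ref{Cor.II3.7} used later, so that machinery would still have to be developed separately under your approach.
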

\par
Then, we can combine these conditions to conclude the following.
\begin{Corollary}
\label{CorII.3.2b}
For a skeletal structure $\{(M_i, U_i, \ell_i)\}$, if the linking curvature 
condition or linking edge condition is satisfied at all points of a stratum 
$S_j$ of $\tilde M_i$, then the linking flow from $S_j$ is nonsingular and 
remains transverse to the radial lines.  
\end{Corollary}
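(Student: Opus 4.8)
The plan is to obtain Corollary \ref{CorII.3.2b} as the stratawise globalization of Proposition \ref{PropII.3.1b}, after unwinding the two-piece structure of the linking flow (\ref{Eqn2.1}) at its breakpoint $t=\frac12$. First I would fix a point $x_0\in S_j$ together with a smooth value $U_i=r_i\bu_i$ of the radial vector field near $x_0$; since $S_j$ lies in the double $\tilde M_i$ there is a single value of $U_i$ over each of its points, and by L1) the functions $r_i$ and $\ell_i$ are smooth on $S_j$. According as $S_j$ consists of edge-closure points (i.e. lies in $\overline{\bdyM_i}$) or not, the hypothesis supplies the linking edge condition, resp. the linking curvature condition, on a full neighborhood $W$ of $x_0$ — a neighborhood rather than merely $W\cap S_j$ because these are open (strict) inequalities and, on lower-dimensional strata, the relevant shape operators are limits of those on the adjacent regular strata where the same bounds persist. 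Proposition \ref{PropII.3.1b} then gives, for each such smooth value of $U_i$, that $\gl\colon W\times\R\to\R^{n+1}$ is a local diffeomorphism at $(x_0,t)$ for $0<t\le 1$, that $\gl_t$ is a local embedding at $x_0$ for $0\le t\le 1$, and that $\gl_t(W)$ is transverse to the line spanned by $U_i(x_0)$.

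Next I would check that the linking curvature (resp. edge) condition is the correct hypothesis by treating the two halves of the flow separately, which is also how Proposition \ref{PropII.3.1b} itself is established. For $0\le t\le\frac12$ one has $\gl_i(\cdot,t)=\psi_{2t}$, the radial flow at double speed; since $\ell_i\ge r_i$ pointwise, the linking curvature (resp. edge) condition already implies the radial curvature (resp. edge) condition, so this half is nonsingular and transverse to the radial lines by Proposition \ref{PropII.2.1}. For $\frac12\le t\le 1$, writing $y=\gl_i(x,\frac12)=x+r_i(x)\bu_i(x)\in\cB_i$ and $s=2t-1$, the flow reads $\gl_i(x,t)=y+s\,(\ell_i(x)-r_i(x))\,\bu_i(x)$: a radial-type flow off the hypersurface $\cB_i$ (which is smooth, or smooth on the relevant side at an edge point, and transverse to $\bu_i$ because $\psi_1(W)\subset\cB_i$ is transverse to the $U_i$-line) with ``radial function'' $\ell_i-r_i$ and shape operator $S_{rad,1}=(I-r_i\,S_{rad})^{-1}S_{rad}$ from Proposition \ref{PropII.3.1}. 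The algebraic point to record is that, via Corollary \ref{CorII.3.2}, the bound ``$\ell_i-r_i$ less than the reciprocal of each positive principal curvature of $S_{rad,1}$'' is equivalent to ``$\ell_i<1/\gk_{r\,j}$ for all positive $\gk_{r\,j}$'', i.e. exactly the linking curvature condition; and if instead $S_j$ is not associated to strata in $\cB_{i\,0}$, then $\ell_i=r_i$ on $S_j$, so $\chi_i(x,\cdot)$ is constant on $[\frac12,1]$ and the linking flow on $S_j\times[\frac12,1]$ is constant, exactly as the definition of nonsingularity requires in that case.

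Finally I would assemble: the local embedding, local-diffeomorphism, and transversality conclusions hold at every $x_0\in S_j$ and $\gl$ is globally defined on $S_j\times[0,1]$, so its restrictions to $S_j\times[0,\frac12]$ and to $S_j\times[\frac12,1]$ are stratawise smooth and nonsingular and every time slice $\gl_t|_{S_j}$ is transverse to the radial lines, which is the assertion. I expect the main obstacle to be the edge-point case: since $S_{E\,\bv}$ need not commute with $I_{n-1,1}$, the evolved operator $(I_{n-1,1}-r\,S_{E\,\bv})^{-1}S_{E\,\bv}$ has no clean eigenvalue description (cf. the remark following Proposition \ref{PropII.E3.1}), so the equivalence between the linking edge condition on $M_i$ and nonsingularity of the second half of the flow off the edge of $\cB_i$ cannot be verified by a one-line eigenvalue computation — this is precisely where Proposition \ref{PropII.3.1b} does the substantive work, and the corollary only has to repackage its conclusions stratum by stratum, handling the breakpoint $t=\frac12$ and the degenerate case $\ell_i=r_i$.
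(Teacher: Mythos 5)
Your proposal is correct and follows the same route the paper intends: apply Proposition \ref{PropII.3.1b} pointwise along the stratum $S_j$, using the continuity of the shape operators and of $\ell_i$ (linking condition L1) together with the strictness of the curvature inequalities to ensure the hypotheses hold in a full neighborhood of each $x_0$, then observe that the local-embedding and transversality conclusions restrict to $T_{x_0}S_j$ and assemble over $S_j$; the $\ell_i = r_i$ case where the second half of the flow is constant is correctly identified as the remaining branch of the definition of nonsingularity. The only thing to flag is that your second paragraph — decomposing the flow at $t=\frac12$, viewing the outer half as a radial flow from $\cB_i$ with radial function $\ell_i - r_i$, and converting the bound on the evolved operator $S_{rad,1}$ via Corollary \ref{CorII.3.2} back into $\ell_i < 1/\gk_{r\,j}$ — is a re-derivation of the substance of Proposition \ref{PropII.3.1b} itself rather than part of the corollary; you correctly note this, and it does no harm, but the corollary only needs you to invoke that proposition, not reprove it, and (as you also observe) the edge-operator case cannot in fact be settled by the eigenvalue shortcut you sketch, which is exactly why one must cite the proposition rather than try to reconstruct it inline.
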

\par
Thus, the images of strata of the labelled refinement of each $\tilde M_i$ 
under the linking flow are immersed submanifolds for all $0 < t \leq 1$, 
and provided distant points from the same strata are not linked, then the 
images of strata are submanifolds.  
\begin{proof}[Proof of Proposition \ref{PropII.3.1b}]
First, we may view the linking flow from $M_i$ for $t > \frac{1}{2}$ as 
the composition of the flow for $t = \frac{1}{2}$, with level set the 
boundary $\cB_i$, followed by a flow from $\cB_i$ for time $t^{\prime} = 
2(t -  \frac{1}{2})$.  Thus, for $\frac{1}{2} \leq t \leq 1$, if we denote the 
radial flow by $\psi_t$, then $\gl_t(x) = \psi_1(x) + (\chi(t) - 
r_i(x))\bu_i(x)$ (with $\chi(t)$ as defined in (\ref{Eqn2.1})).  Hence, the 
linking flow can itself be viewed as a radial flow for $(\cB_i, 
U_i^{\prime})$ at the point $x_0^{\prime} = \gl_{\frac{1}{2}}(x_0)$ 
reparametrized at twice the speed.   Here the vector field $U_i^{\prime}$ 
is the translate along the radial lines to $x_0^{\prime}$ of the vector 
field $(\ell_i - r_i)\bu_i$, where $\bu_i$ is the unit vector field in the 
direction of $U_i$. \par
As $r_i \leq \ell_i$, the linking radial curvature and linking edge 
conditions imply the corresponding radial curvature and edge conditions 
for the radial flow.  Hence, by Proposition \ref{PropII.2.1} we have 
nonsingularity of the linking flow and the level surfaces for $0 < t \leq 
\frac{1}{2}$, with the level surfaces transverse to the radial lines.  Also, 
at $t = \frac{1}{2}$, the level surface is the boundary $\cB_i$ of the 
associated region $\gW_i$.  \par  
Then, we can further verify the nonsingularity of the linking flow for 
$\frac{1}{2} < t \leq 1$, and the corresponding level sets by again applying 
the radial curvature conditions to $(\cB_i, U_i^{\prime})$ . \par 
Now, $t^{\prime} = 2(t - \frac{1}{2})$, and 
$$\chi(t)-r_i \,\, =\,\,  2(t - \textstyle{\frac{1}{2}}\displaystyle)(\ell_i 
- r_i) \,\, = \,\, t^{\prime}(\ell_i - r_i). $$
  Thus, the linking flow for $\frac{1}{2} \leq t \leq 1$ gives the radial 
flow for $(\cB_i, U_i^{\prime})$ for $0 \leq t^{\prime} \leq 1$.
Hence, by Proposition \ref{PropII.E3.1} we obtain 
\begin{equation}
\label{EqnII3.1}
S_{\bv^{\prime\prime}, t} \,\,  = \,\,   (I - (\chi(t)-r_i) S_{\bv^{\prime}, 
\frac{1}{2} })^{-1}S_{\bv^{\prime}, \frac{1}{2}} \, ,
\end{equation}
where $S_{\bv^{\prime}, \frac{1}{2}}$ is the matrix representation for the 
radial shape operator for $(\cB_i, U_i^{\prime})$ at $x_0^{\prime}$ with 
respect to the basis $\bv^{\prime}$ corresponding to the basis $\bv$ for 
$T_{x_0}M$ under the radial flow.  \par
However, by Proposition \ref{PropII.2.1}, 
the radial curvature condition is equivalent to the nonsingularity of 
$I - (\chi(t)-r_i) S_{\bv^{\prime}, \frac{1}{2}}$ for $\frac{1}{2} < t \leq 
1$; or equivalently the existence of $S_{\bv^{\prime\prime}, t}$ given by 
(\ref{EqnII3.1}) for $\frac{1}{2} < t \leq 1$.  We now claim there is an 
alternate formula for $S_{\bv^{\prime\prime}, t}$ given for $\frac{1}{2} < 
t \leq 1$ in the case of non-edge closure points $x_0$ by
\begin{equation}
\label{EqnII3.2} 
S_{\bv^{\prime\prime}, t} \,\, = \,\, (I - \chi(t) S_{\bv})^{-1}S_{\bv}\, ,
\end{equation}
 where $S_{\bv}$ is the matrix repesentation for the shape operator with 
respect to the basis $\bv$.  For edge closure points $x_0$, it is given by
\begin{equation}
\label{EqnII3.3} S_{\bv^{\prime\prime}, t} \,\, = \,\, (I_{n-1, 1} - \chi(t) 
S_{E, \bv})^{-1}S_{E, \bv}\, ,
\end{equation}
with $S_{E, \bv}$ denoting the matrix representation for the edge shape 
operator.  \par
It then follows in the first case, that $S_{\bv^{\prime\prime}, t}$ will be 
defined for $0 \leq t \leq 1$ provided $\frac{1}{\chi(t)}$ is not an 
eigenvalue for $S_{\bv}$ for $0 \leq t \leq 1$.  This says that the 
eigenvalues of $S_{\bv}$ do not lie in the interval $[\frac{1}{\ell_i}, 
\infty)$, or that the positive eigenvalues of $S_{\bv} < \frac{1}{\ell_i}$, 
which is equivalent to the linking curvature condition.  Likewise, in the 
second case we require $\frac{1}{\chi(t)}$ is not a generalized eigenvalue 
for $(S_{E,\bv}, I_{n-1, 1})$, for $0 \leq t \leq 1$.  Again, this is 
equivalent to the linking edge condition.  
This establishes the result.  
\end{proof}
It remains to verify the assertions regarding (\ref{EqnII3.2}) and 
(\ref{EqnII3.3}).  We do this by expressing the linking flow in either case 
via a semigroup of special M\"{o}bius tranformations on matrices and 
operators.
\par
\subsection*{Special M\"{o}bius Transformations of Matrices and 
Operators} 
\par
We will represent the evolved radial and edge shape operators under the 
linking flow as resulting from applying M\"{o}bius transformations to 
matrices and operators.  
We begin by considering for either $n \times n$ matrices $A$ and $B$ or 
operators $A, B : V \to V$, for $V$ a vector space of dimension $n$, a 
family of {\it special M\"{o}bius transformations} defined by
$$  \Xi_{B, t}(A) \,\, = \,\, (B - tA)^{-1} A .   $$
Here we view $B$ as fixed and either view $\Xi_{B, t}(A)$ as a function of 
$t \in \R$ for fixed $A$, or as a function of $A$ for fixed $t$.  We observe 
that the evolution equations for both the radial and edge shape operators 
under the radial flow have this form for either $(A, B) = (S_{\bv}, I)$ or 
$(S_{E, \bv}, I_{n-1, 1})$.  
We shall concentrate on the case of matrices, but the corresponding 
statements for operators are similar. \par
In general, if $\ker(A) \cap \ker(B) = 0$, then $\Xi_{B, t}(A)$ is defined 
for all but the finite set of $t$ such that $\frac{1}{t}$ is not a generalized 
eigenvalue of $(A, B)$ (i.e. $\gl$ so that $A- \gl B$ is singular).  Viewed as 
a function of $t$, we see
\begin{align} 
\pd{(\Xi_{B, t}(A))}{t} \,\,  &= \,\, - ((B - tA)^{-1} (-A) (B - tA)^{-1}) A 
\notag   \\
&= \,\,  ((B - tA)^{-1} A)^2  = \,\,  (\Xi_{B, t}(A))^2 \, .
\end{align}
Thus, $\Xi_{B, t}(A)$ is a solution of the simplest matrix Riccati equation
\begin{equation}
\label{EqnII.3.4} 
\pd{\Xi_{B, t}(A)}{t} \,\, = \,\, (\Xi_{B, t}(A))^2 .  
\end{equation}
If $B$ is nonsingular, $\Xi_{B, t}(A)$ is the solution to the basic matrix 
Riccati equation (\ref{EqnII.3.4}) with initial condition $\Xi_{B, 0}(A) = 
B^{-1} A$.  \par 
We consider the special cases of $\Xi_{B, t}(A)$ relevant for the evolution 
equations
\begin{align}
\mu_t(A) \,\, &= \,\, (I - tA)^{-1} A  \qquad \text{ and } \notag  \\
\nu_t(A) \,\, &= \,\, (I_{n-1, 1} - tA)^{-1} A \, .
\end{align}
For these to be well-defined we require for $\mu_t$ that $\frac{1}{t}$ is 
not an eigenvalue of $A$; while for $\nu_t$, we require that $\frac{1}{t}$ 
is not a generalized eigenvalue of $(A, I_{n-1, 1})$.  \par 
If $B$ is nonsingular, then there is the relation
\begin{equation}
\label{EqnII.3.4b}  \Xi_{B, t}(A) \,\, = \,\, ((I - tB^{-1}A)^{-1} (B^{-1}A) \, ,  
\end{equation}
so that  
$$  \Xi_{B, t}(A) \,\, = \,\, \mu_t(B^{-1} A)  \, .   $$
In particular, $\mu_t(A)$ is the basic solution of the matrix Riccati 
equation with initial condition $\mu_0(A) = A$.  
We next observe two further properties of $\mu_t(A)$: i) it is a \lq\lq 
one-parameter\rq\rq group of transformations (where we understand that 
it is defined except for a finite set of values $t$), and ii) $\mu_t$ acts on 
the curves defined by $\Xi_{B, t}(A)$.  These basic properties of these 
transformations are given by the following two lemmas.
\begin{Lemma}
\label{LemII.3.6}
If $\frac{1}{t}$ and $\frac{1}{(t+s)}$ are not eigenvalues of $A$, then
$$  \mu_s (\mu_t (A)) \,\,  =  \,\,\mu_{t + s} (A)\, .   $$
\end{Lemma}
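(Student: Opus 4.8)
The plan is to verify the group law $\mu_s(\mu_t(A)) = \mu_{t+s}(A)$ by a direct algebraic computation, exploiting the fact that all the matrices involved are rational functions of the single matrix $A$ and hence commute with one another. First I would record the key commutation observation: since $\mu_t(A) = (I - tA)^{-1}A$ is a polynomial expression in $A$ (after expanding the inverse as a power series, or more cleanly because $(I-tA)^{-1}$ and $A$ are both polynomials in $A$ when $\frac{1}{t}$ avoids the spectrum of $A$), the operators $A$, $(I - tA)^{-1}$, $\mu_t(A)$, and $(I - s\mu_t(A))^{-1}$ all lie in the commutative subalgebra $\R[A]$ of matrices. This means I may manipulate them as if they were scalars, which reduces the identity to the familiar scalar Möbius composition $\frac{a}{1-sa}\big|_{a = x/(1-tx)} = \frac{x}{1-(t+s)x}$.

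The main step is then the substitution. Setting $C = \mu_t(A) = (I - tA)^{-1}A$, I would compute
\begin{equation*}
I - sC \;=\; I - s(I - tA)^{-1}A \;=\; (I - tA)^{-1}\bigl((I - tA) - sA\bigr) \;=\; (I - tA)^{-1}\bigl(I - (t+s)A\bigr),
\end{equation*}
using commutativity of $(I-tA)^{-1}$ with everything in sight. Taking inverses gives $(I - sC)^{-1} = (I - (t+s)A)^{-1}(I - tA)$, and therefore
\begin{equation*}
\mu_s(\mu_t(A)) \;=\; (I - sC)^{-1}C \;=\; (I - (t+s)A)^{-1}(I - tA)(I - tA)^{-1}A \;=\; (I - (t+s)A)^{-1}A \;=\; \mu_{t+s}(A).
\end{equation*}

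The only genuine obstacle is bookkeeping about domains of definition: the composition $\mu_s(\mu_t(A))$ only makes sense when $\frac{1}{t}$ is not an eigenvalue of $A$ (so $\mu_t(A)$ exists) and $\frac{1}{s}$ is not an eigenvalue of $\mu_t(A)$ (so the outer $\mu_s$ exists), and the claimed equality asserts this is governed by $\frac{1}{(t+s)}$ not being an eigenvalue of $A$. I would handle this by noting that the computation above shows $I - s\mu_t(A) = (I - tA)^{-1}(I - (t+s)A)$ as an honest matrix identity whenever $I - tA$ is invertible; hence $I - s\mu_t(A)$ is invertible precisely when $I - (t+s)A$ is, which under the hypothesis that $\frac{1}{t}$ and $\frac{1}{(t+s)}$ are not eigenvalues of $A$ is automatic. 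Thus both sides are defined and the algebraic identity established above applies verbatim. (Alternatively one could invoke uniqueness of solutions to the Riccati equation (\ref{EqnII.3.4}): both $t \mapsto \mu_s(\mu_t(A))$ — reindexed — and $t \mapsto \mu_{t+s}(A)$ solve $\partial_t X = X^2$ with the same value at $t = 0$, namely $\mu_s(A)$, so they coincide on their common interval of definition; but the direct computation is shorter and gives the domain statement for free.)
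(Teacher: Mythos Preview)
Your proof is correct and follows essentially the same algebraic route as the paper: both compute $I - s\mu_t(A) = (I-tA)^{-1}(I-(t+s)A)$ and then cancel to obtain $(I-(t+s)A)^{-1}A$. The only real difference is in checking that $\mu_s(\mu_t(A))$ is defined: the paper argues via the eigenvector correspondence $\alpha \leftrightarrow \alpha(1-t\alpha)^{-1}$ between $A$ and $\mu_t(A)$ to show $\tfrac{1}{s}$ cannot be an eigenvalue of $\mu_t(A)$, whereas you read this off directly from the factorization $I - s\mu_t(A) = (I-tA)^{-1}(I-(t+s)A)$, which is a bit more streamlined.
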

\par
\begin{Lemma}
\label{LemII.3.7}
If $\frac{1}{t}$ and $\frac{1}{(t+s)}$ are not generalized  eigenvalues of 
$(A, I_{n-1, 1})$, then
$$  \mu_s (\Xi_{B, t}(A)) \,\,  =  \,\,\Xi_{B, t + s} (A)\, .   $$
\end{Lemma}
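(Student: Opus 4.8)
The plan is to establish the identity by a direct algebraic manipulation, pushing the pencil $B - tA$ through the formulas, rather than by reducing to Lemma~\ref{LemII.3.6}.  That reduction would succeed whenever $B$ is invertible: by \eqref{EqnII.3.4b} one has $\Xi_{B,t}(A) = \mu_t(B^{-1}A)$, so $\mu_s(\Xi_{B,t}(A)) = \mu_s(\mu_t(B^{-1}A)) = \mu_{t+s}(B^{-1}A) = \Xi_{B,t+s}(A)$ by Lemma~\ref{LemII.3.6}.  But in the situation relevant to the linking flow we have $B = I_{n-1, 1}$, which is singular, so this route is blocked and the computation has to be carried out by hand.

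First I would set $C = \Xi_{B,t}(A) = (B - tA)^{-1}A$; with $B = I_{n-1, 1}$ this is well defined precisely because the hypothesis that $\frac{1}{t}$ is not a generalized eigenvalue of $(A, I_{n-1, 1})$ says exactly that $I_{n-1, 1} - tA$ is nonsingular.  The heart of the argument is the factorization
\[
I - sC \;=\; (B - tA)^{-1}\bigl[(B - tA) - sA\bigr] \;=\; (B - tA)^{-1}\bigl[B - (t+s)A\bigr].
\]
Since the second hypothesis makes $B - (t+s)A = I_{n-1, 1} - (t+s)A$ nonsingular, this presents $I - sC$ as a product of invertible matrices; hence $I - sC$ is invertible (so $\mu_s(C)$ is defined) and $(I - sC)^{-1} = \bigl[B - (t+s)A\bigr]^{-1}(B - tA)$.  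Multiplying on the right by $C$ then gives
\[
\mu_s(C) \;=\; (I - sC)^{-1}C \;=\; \bigl[B - (t+s)A\bigr]^{-1}(B - tA)(B - tA)^{-1}A \;=\; \bigl[B - (t+s)A\bigr]^{-1}A \;=\; \Xi_{B,t+s}(A),
\]
which is the claim.  The same computation in fact proves the identity for an arbitrary fixed matrix $B$, singular or not, at any $t, s$ for which $B - tA$ and $B - (t+s)A$ are invertible; the stated eigenvalue hypotheses are exactly what guarantees this when $B = I_{n-1, 1}$, the case needed for the edge-point part of the evolution of the shape operator under the linking flow.

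The one place I would be careful --- the only real obstacle, such as it is --- is the bookkeeping of invertibility: one must not assume $A$ or $B$ invertible separately, and one must notice that no extra restriction on $s$ is required, since the factorization already forces $I - sC$ to be invertible once $B - tA$ and $B - (t+s)A$ are.  As a consistency check, and as an alternative argument, the curve $t \mapsto \Xi_{B,t}(A)$ solves the matrix Riccati equation \eqref{EqnII.3.4}, so by uniqueness of solutions $\Xi_{B, t+s}(A) = \mu_s(\Xi_{B,t}(A))$ for small $s$, with the general range of $s$ obtained by continuation; but the purely algebraic derivation above is shorter and uses no analytic input.
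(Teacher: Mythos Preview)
Your proof is correct and is essentially the same direct algebraic computation the paper has in mind: the paper only says the proof ``is similar'' to that of Lemma~\ref{LemII.3.6}, and your factorization $I - sC = (B - tA)^{-1}[B - (t+s)A]$ is exactly the analogue of the chain of equalities (\ref{EqnII.3.8}) with $I$ replaced by $B$. One small improvement you make over the paper's template is that you read off the invertibility of $I - sC$ directly from the factorization, whereas the paper's proof of Lemma~\ref{LemII.3.6} first argues separately, via the eigenvector correspondence, that $\tfrac{1}{s}$ is not an eigenvalue of $\mu_t(A)$; your route sidesteps this and works uniformly for singular $B$ such as $I_{n-1,1}$.
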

Although Lemma \ref{LemII.3.6} gives $\mu_t(A)$ as a one-parameter 
group, we will principally be interested in the semi-group for $t \geq 0$.  
Also, Lemma \ref{LemII.3.7} has as a consequence, 
$$  \mu_s (\nu_t(A)) \,\,  =  \,\,\nu_{t + s} (A) \, .  $$
We give the proof for Lemma \ref{LemII.3.6}.  That for Lemma 
\ref{LemII.3.7} is similar. 
\begin{proof}[Proof of Lemma \ref{LemII.3.6}]
Let $B = \mu_t (A)$.  First we observe that if $\frac{1}{t}$ is not an 
eigenvalue of $A$, then $v$ is an eigenvector of $A$ with eigenvalue $\ga$ 
if and only if $v$ is an eigenvector for $B$ with  eigenvalue $\ga (1 -t 
\ga)^{-1}$.  This follows because we may solve the equation $B = \mu_t 
(A)$ to obtain $A = B (I + t B)^{-1}$.  Then, we directly see that $v$ is an 
eigenvector of $A$ if and only if it is an eigenvector of $B$, with the 
eigenvalues related as claimed.  \par
Next, we claim that $\frac{1}{s}$ is not an eigenvalue for $B$.  Otherwise, 
there is an eigenvalue $\ga$ of $A$ such that $\frac{1}{s} = \gl (1 -t 
\ga)^{-1}$.  However, solving for $\ga$ we obtain $\ga = \frac{1}{(t+s)}$, a 
contradiction.  As  $\frac{1}{s}$ is not an eigenvalue for $B$, $\mu_s (B)$ 
is defined, and 
\begin{align}
\label{EqnII.3.8}
 \mu_s (B) \,\, &= \,\, (I - s B)^{-1} B  \notag  \\
&= \,\, (I - s ((I - tA)^{-1}) A))^{-1} ((I - tA)^{-1}) A)   \notag  \\
&= \,\, ((I - tA) (I - s ((I - tA)^{-1}) A))^{-1}  A   \notag  \\
&= \,\, ((I - tA) I - s  A))^{-1}  A  \notag  \\
&= \,\, ((I - ( t + s) A )^{-1}  A \,\, = \,\, \mu_{s + t} (A) \, .
\end{align}
\end{proof}
\par
\subsubsection*{Proof of (\ref{EqnII3.2}) and (\ref{EqnII3.3})} \hfill
\par
In terms of these M\"{o}bius transformations, we express the evolution of 
the 
radial and edge shape operators under the radial flow and rewrite 
(\ref{EqnII3.2})  and (\ref{EqnII3.3}) as
\begin{equation}
\label{EqnII.3.9}  
 S_{\bv^{\prime}, t} \,\, = \,\, \mu_t (S_{\bv})   
\end{equation} 
and 
\begin{equation}
\label{EqnII.3.10}  
 S_{E, \bv^{\prime}, t} \,\, = \,\, \nu_t (S_{E, \bv})\,  .  
\end{equation} 
We next use the Lemmas to represent the evolution of the radial and edge 
shape operators under the linking flow.  \par
We use the same argument explained at the beginning of the proof of 
Proposition \ref{PropII.E3.1}.  To be specific, we first consider the radial 
shape operator in the neighborhood of a non-edge point $x_0 \in M_i$ for 
some $i$.  We consider the linking flow $\gl_t$ on each half interval.  
Hence, by Proposition \ref{PropII.3.1}, provided $\frac{1}{2t}$ is not an 
eigenvalue of $S_{rad}$ for $0 \leq t \leq \frac{1}{2}$, then the evolved 
radial shape operator at time $t = \frac{1}{2}$ has matrix representation 
\begin{equation}
\label{EqnII.3.11}  
S_{\bv^{\prime},  \frac{1}{2}} \,\,  = \,\, \mu_{r_i}(S_{\bv}) \,\, = \,\, (I - 
\chi(\textstyle{\frac{1}{2}}\displaystyle) S_{\bv})^{-1}S_{\bv}\, .
\end{equation}   
Now, as explained earlier, the linking flow beginning at time $t = 
\frac{1}{2}$ can itself be viewed as a radial flow from $\cB_i$ with 
radial vector field $U_i^{\prime} = (\ell_i - r_i) \bu_i$, except again 
traveled at twice the speed $t^{\prime} = 2(t - \frac{1}{2})$.  Hence, we 
can apply Proposition \ref{PropII.3.1b} to conclude for $\frac{1}{2} \leq t 
\leq 1$, that if $\frac{1}{t^{\prime}(\ell_i - r_i)}$ is not an eigenvalue of 
$\mu_{r_i}(S_{\bv})$, which is equivalent to $\frac{1}{\chi(t)} = 
\frac{1}{r_i + 2(t - \frac{1}{2})(\ell_i - r_i)}$ not being an eigenvalue of 
$S_{\bv}$, then
\begin{align}
\label{Eqn.II3.3}
  S_{\bv^{\prime\prime}, t} \,\,  &= \,\,  \mu_{2(t - \frac{1}{2})(\ell_i - 
r_i)}( S_{\bv^{\prime}, \frac{1}{2}})  \notag  \\
\intertext{which by Lemma \ref{LemII.3.6} equals}
\,\,  &= \,\, \mu_{r_i + 2(t - \frac{1}{2})(\ell_i - r_i)}( S_{\bv})  \notag  
\\
\,\,  &= \,\, (I - \chi(t) S_{\bv})^{-1}S_{\bv} \, ,
\end{align} 
 yielding (\ref{EqnII3.2}).  \par
A similar argument, but replacing (\ref{EqnII.3.11})
by 
\begin{equation}
\label{EqnII.3.11E}  
S_{\bv^{\prime},  \frac{1}{2}} \,\,  = \,\, \nu_{r_i}(S_{E, \bv}) \,\, = \,\, 
(I_{n-1, 1} - \chi(\textstyle{\frac{1}{2}}\displaystyle) S_{E, \bv})^{-
1}S_{E, \bv}\, ,
\end{equation}
and then applying Lemma \ref{LemII.3.7} yields instead (\ref{EqnII3.3}).
  \par
\vspace{1ex}
\subsection*{Evolution of the Shape Operators under the Linking Flow} 
\par
We obtain the following corollaries describing the evolution of the radial 
and edge shape operators under the linking flow from $M$ (which we recall 
is the union of the $M_i$).

\begin{Corollary}  
\label{Cor.II3.5}
Let $x_0 \in M \backslash \overline{\bdyM}$ be a point on the closure of a 
manifold component of $M$ with $U(x_0)$ a smooth value in a 
neighborhood of $x_0$ on this component.  If  $\frac{1}{\chi(t)}$ is not an 
eigenvalue of $S_{\bv}$ for $0 \leq t \leq 1$, then the linking flow is 
nonsingular and the evolved radial shape operator on the level surface 
$\cB_t = \gl_t(M_i)$  in a neighborhood of $\gl_t(x_0)$ is given by
$$  S_{\bv^{\prime\prime}, t} \,\,  = \,\,   (I - \chi(t) S_{\bv})^{-1}S_{\bv} 
\, .  $$
Here $\bv^{\prime\prime}$ is the image of the basis $\bv$ under 
$d\psi_t(x_0)$. 
\end{Corollary}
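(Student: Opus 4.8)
The plan is to reduce the statement entirely to the material already assembled: the decomposition of the linking flow into two radial-flow segments (as in the proof of Proposition \ref{PropII.3.1b}), the evolution formula (\ref{EqnII3.2}) for the radial shape operator under the linking flow at a non-edge point, and the semigroup property of the M\"{o}bius transformations $\mu_t$ from Lemma \ref{LemII.3.6}. First I would recall that, since $x_0 \in M \backslash \overline{\bdyM}$ lies on the closure of a manifold component with a chosen smooth value of $U(x_0)$, we may work on that single component and treat $\bu = \bu_i$, $r = r_i$, $\ell = \ell_i$ as single-valued smooth functions near $x_0$; the radial shape operator $S_{rad}$ at $x_0$ then has a well-defined matrix representation $S_{\bv}$ with respect to the basis $\bv$ for $T_{x_0}M$.

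Next I would observe that the hypothesis ``$\frac{1}{\chi(t)}$ is not an eigenvalue of $S_{\bv}$ for $0 \leq t \leq 1$'' is exactly the condition needed to run the argument of Proposition \ref{PropII.3.1b}: since $\chi(t) = 2tr$ for $0 \leq t \leq \frac12$ and $\chi(t) = r + 2(t-\frac12)(\ell - r)$ for $\frac12 \leq t \leq 1$, and $\chi$ increases monotonically from $0$ to $\ell$, this says precisely that no eigenvalue of $S_{\bv}$ lies in $[\frac{1}{\ell}, \infty)$, i.e.\ the linking curvature condition holds at $x_0$ for the value $U(x_0)$. By Corollary \ref{CorII.3.2b} (or directly Proposition \ref{PropII.3.1b}), the linking flow $\gl_t$ is therefore nonsingular and stays transverse to the radial line through $x_0$ for all $0 < t \leq 1$, so the image $\gl_t(M_i)$ is a smooth hypersurface near $\gl_t(x_0)$ which is transverse to $U$; hence its radial shape operator $S_{rad, t}$ is defined, with matrix representation $S_{\bv^{\prime\prime}, t}$ relative to $\bv^{\prime\prime} = d\psi_t(x_0)(\bv)$.

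Finally, to identify $S_{\bv^{\prime\prime}, t}$ I would invoke (\ref{EqnII3.2}) from the proof of Proposition \ref{PropII.3.1b}, which already establishes $S_{\bv^{\prime\prime}, t} = (I - \chi(t) S_{\bv})^{-1} S_{\bv}$ for non-edge closure points $x_0$; in the M\"{o}bius-transformation language this is $S_{\bv^{\prime\prime}, t} = \mu_{\chi(t)}(S_{\bv})$, obtained by composing $\mu_{r}$ (the first half-interval, from (\ref{EqnII.3.11})) with $\mu_{2(t-\frac12)(\ell - r)}$ (the reparametrized second half-interval) and applying the semigroup identity of Lemma \ref{LemII.3.6}. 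This yields exactly the claimed formula. The only point requiring a little care — and the one I expect to be the main (though modest) obstacle — is checking the bookkeeping of the reparametrization: that the ``twice speed'' radial flow from $\cB_i$ with vector field $U_i^{\prime} = (\ell_i - r_i)\bu_i$, traversed over $t^{\prime} = 2(t-\frac12) \in [0,1]$, contributes precisely the increment $\mu_{2(t-\frac12)(\ell_i - r_i)}$ so that the composed parameter is $r_i + 2(t-\frac12)(\ell_i - r_i) = \chi(t)$; but this is already spelled out in the cited proof, so here it amounts only to quoting it correctly.
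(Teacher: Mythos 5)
Your proof is correct and takes essentially the same route as the paper: the paper records this corollary with a $\Box$ precisely because it is a direct reading of Proposition~\ref{PropII.3.1b} for nonsingularity together with the formula (\ref{EqnII3.2}), whose derivation via the two-stage radial-flow decomposition, equation (\ref{EqnII.3.11}), and the semigroup identity of Lemma~\ref{LemII.3.6} is exactly the chain you lay out. Your observation that the eigenvalue hypothesis is equivalent to the linking curvature condition at $x_0$ is also the paper's own remark at the end of the proof of (\ref{EqnII3.2}).
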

\hspace{11.5cm} $\Box$. \par
\begin{Corollary}  
\label{Cor.II3.6}
Let  $x_0 \in \overline{\bdyM}$ be a point on the closure of an edge 
manifold component of $\bdyM$ with $U(x_0)$ a smooth value in a 
neighborhood of $x_0$ on this component. If $\frac{1}{\chi(t)}$ is not a 
generalized eigenvalue of $(S_{E, \bv}, I_{n-1,1})$ for $0 < t \leq 1$, then 
the linking flow is nonsingular and the radial shape operator on the level 
surface $\cB_t = \gl_t(M_i)$  in a neighborhood of $\gl_t(x_0)$ is given 
by
$$  S_{\bv^{\prime\prime}, t} \,\,  = \,\,   (I_{n-1,1} - \chi(t) S_{E, 
\bv})^{-1}S_{E, \bv} \, .  $$
Here $\bv^{\prime\prime}$ is the  image of the basis $\bv$ of $T_{x_0} 
\partial M$ under $d\psi_t(x_0)$.   
\end{Corollary}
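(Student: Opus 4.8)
The plan is to obtain Corollary~\ref{Cor.II3.6} in precisely the way Corollary~\ref{Cor.II3.5} is obtained from~(\ref{EqnII3.2}): the displayed formula is nothing but equation~(\ref{EqnII3.3}), already derived inside the proof of Proposition~\ref{PropII.3.1b}, while the nonsingularity assertion and the recognition that the hypothesis is exactly the linking edge condition are themselves the content of that proposition. So the argument is essentially a matter of assembling those pieces, with the edge-point book-keeping replacing the (easier) non-edge book-keeping used for Corollary~\ref{Cor.II3.5}; there is no new inequality to establish.

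First I would split the linking flow $\gl_t$ issuing from the edge closure point $x_0\in\overline{\bdyM}$ at $t=\frac{1}{2}$. On $\left[0,\frac{1}{2}\right]$ the definition~(\ref{Eqn2.1}) shows $\gl_t$ is the radial flow $\psi_{2t}$ for $(M,U)$ run at double speed; applying Proposition~\ref{PropII.E3.1} on the smooth side of the level surface determined by the chosen value of $U$, and noting $\chi(t)=2tr$ on this interval, gives that the transported special basis represents the radial shape operator of $\cB_t=\gl_t(M_i)$ at $\gl_t(x_0)$ as $\nu_{\chi(t)}(S_{E,\bv})$, so long as $\frac{1}{\chi(t)}$ avoids the generalized eigenvalues of $(S_{E,\bv},I_{n-1,1})$ (which the linking edge condition guarantees, since it implies the edge condition). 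In particular at $t=\frac{1}{2}$ this operator is $\nu_{r}(S_{E,\bv})$, which is~(\ref{EqnII.3.11E}), and $\cB_{1/2}=\cB_i$ is smooth at $\gl_{1/2}(x_0)$ by Theorem~\ref{ThmII.1}. On $\left[\frac{1}{2},1\right]$, as in the proof of Proposition~\ref{PropII.3.1b}, the linking flow is the radial flow of this now smooth hypersurface $\cB_i$ carrying the radial vector field $(\ell-r)\bu$ transported along the radial lines, run at the double speed $t'=2\!\left(t-\frac{1}{2}\right)$; feeding it into the smooth-point evolution Proposition~\ref{PropII.3.1} yields, for $\frac{1}{2}\le t\le 1$,
\[
S_{\bv'',\,t}\;=\;\mu_{2(t-\frac{1}{2})(\ell-r)}\!\left(\nu_{r}(S_{E,\bv})\right).
\]

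Next I would collapse this composition with the identity $\mu_s\circ\nu_t=\nu_{t+s}$ recorded right after Lemma~\ref{LemII.3.7} (that lemma with $B=I_{n-1,1}$, since $\nu_t=\Xi_{I_{n-1,1},\,t}$). Because $r+2\!\left(t-\frac{1}{2}\right)(\ell-r)=\chi(t)$ by the definition of $\chi$ in~(\ref{Eqn2.1}), the composition becomes
\[
S_{\bv'',\,t}\;=\;\nu_{\chi(t)}(S_{E,\bv})\;=\;\left(I_{n-1,1}-\chi(t)\,S_{E,\bv}\right)^{-1}S_{E,\bv},
\]
which is~(\ref{EqnII3.3}) and the asserted formula. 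Since $\chi$ is monotone from $\chi(0)=0$ to $\chi(1)=\ell$, the condition ``$\frac{1}{\chi(t)}$ is not a generalized eigenvalue of $(S_{E,\bv},I_{n-1,1})$ for $0<t\le 1$'' is equivalent to ``no generalized eigenvalue lies in $\left[\frac{1}{\ell},\infty\right)$'', i.e.\ $\ell<\frac{1}{\gk_{E,j}}$ for every positive principal edge curvature $\gk_{E,j}$ — the linking edge condition. Under that condition both stages above are defined, and by the converse half of Proposition~\ref{PropII.3.1b} the linking flow is nonsingular with $\gl_t(W)$ transverse to the line through $U$, so $\gl_t$ maps a neighborhood of $x_0$ diffeomorphically onto the smooth side of $\cB_t$ and the radial shape operator there is well defined and given by the displayed matrix in the transported basis $\bv''$.

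The one point that genuinely needs care, rather than being a real obstacle since all the machinery is in place, is the transition across $t=\frac{1}{2}$: one must verify that $\cB_{1/2}=\cB_i$ is smooth at the image of the edge point — so that the second stage is an honest non-edge radial flow, which is exactly what makes the passage from a $\nu$-factor to a $\mu$-factor legitimate in the semigroup — and that the double-speed reparametrizations on the two half-intervals accumulate precisely the radial distance $\chi(t)$, so that Lemma~\ref{LemII.3.7} lands on $\nu_{\chi(t)}$ and not on some rescaled version. Once these are checked, the reasoning is the edge-point mirror of the non-edge computation behind Corollary~\ref{Cor.II3.5}.
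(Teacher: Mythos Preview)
Your proposal is correct and follows essentially the same route as the paper: the corollary is just the edge-point restatement of equation~(\ref{EqnII3.3}), whose derivation (splitting the linking flow at $t=\tfrac12$, applying Proposition~\ref{PropII.E3.1} on the first half to get $\nu_{r}(S_{E,\bv})$, then Proposition~\ref{PropII.3.1} on the second half, and collapsing via $\mu_s\circ\nu_t=\nu_{t+s}$ from Lemma~\ref{LemII.3.7}) is exactly what the paper does in the paragraph containing~(\ref{EqnII.3.11E}). One small wording slip: you invoke the ``converse half'' of Proposition~\ref{PropII.3.1b} for nonsingularity, but you actually want the forward direction (linking edge condition $\Rightarrow$ nonsingular flow); also, for smoothness of the one side of $\cB_{1/2}$ at the image of the edge point it is cleaner to cite Proposition~\ref{PropII.E3.1} directly rather than Theorem~\ref{ThmII.1}, since the latter requires the compatibility condition which is not among your hypotheses.
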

\hspace{11.5cm} $\Box$. \par
\subsubsection*{Shape Operator on the Linking Axis} 
\par
As a consequence of the corollaries, we can deduce the shape operator for 
the linking axis $M_0$.  Let $x \in M_0$, and suppose $x^{\prime} \in M_i$ 
is a point for which the linking flow ends at $x$.  Then, $x = 
\gl_1(x^{\prime}) = x^{\prime} + L_i(x^{\prime})$, for some choice of the 
linking vector field at $x^{\prime}$.  Then, to such a point there is the 
corresponding point $x^{\prime\prime} = \psi_1(x^{\prime}) \in \cB_i$.   
We then have a value of a radial vector field $U_0 = -(\ell_i - r_i)\bu_i$ 
at $x \in M_0$, with $\ell_i$, $r_i$, and $\bu_i$ associated to the value 
$L_i(x^{\prime})$.  This vector ends at $x^{\prime\prime}$.  We obtain a 
value of $U_0$ at $x$ for each such point in some $M_j$ linked at $x$.  
This defines a multi-valued vector field on $M_0$.  \par
Then, the associated unit vector field at $x$ is $\bu_0 = -\bu_i$.  Thus, 
the radial shape operator for $(M_0, U_0)$ at $x \in \tilde M_0$, the 
\lq\lq double of the linking axis\rq\rq\, is the negative of that for the 
stratum of $M_0$, viewed as a level set of the linking flow from 
$x^{\prime} \in M_i$.  Hence, by the corollaries we obtain the following 
calculation of the  corresponding radial shape operator.
\begin{Corollary}  
\label{Cor.II3.7}
If $x \in M_0$ is as in the above discussion, then the radial shape operator 
for the skeletal structure $(M_0, U_0)$ at $x$ is given by either: if 
$x^{\prime}$ is a non-edge closure point, then with the notation of 
Corollary \ref{Cor.II3.5}, 
$$  S_{\bv^{\prime\prime}, t} \,\,  = \,\,  - (I - \ell_i S_{\bv})^{-1}S_{\bv} 
\, ;  $$
or if $x^{\prime}$ is an edge closure point, then with the notation of 
Corollary \ref{Cor.II3.6},
$$  S_{\bv^{\prime\prime}, t} \,\,  = \,\,  - (I_{n-1,1} - \ell_i 
 S_{E, \bv})^{-1}S_{E, \bv} \, .  $$
\end{Corollary}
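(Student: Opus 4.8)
The plan is to deduce this corollary directly from Corollaries \ref{Cor.II3.5} and \ref{Cor.II3.6} by specializing the linking flow to time $t = 1$ and then accounting for the reversal of the radial direction on $M_0$. First I would evaluate the reparametrization function $\chi_i$ of (\ref{Eqn2.1}) at $t = 1$: from its second branch, $\chi_i(x,1) = 2(1-1)r_i(x) + (2\cdot 1 - 1)\ell_i(x) = \ell_i(x)$, so the level surface $\cB_1 = \gl_1(M_i)$ is exactly the stratum of $M_0$ swept from the manifold component of $M_i$ containing $x^{\prime}$. Applying Corollary \ref{Cor.II3.5} (if $x^{\prime} \notin \overline{\bdyM_i}$) or Corollary \ref{Cor.II3.6} (if $x^{\prime} \in \overline{\bdyM_i}$) at $t = 1$ then gives the evolved radial shape operator of $\cB_1$ at $x = \gl_1(x^{\prime})$, with respect to the basis $\bv^{\prime\prime}$ obtained by transporting $\bv$ (resp.\ a special basis of $T_{x^{\prime}}\bdyM_i$) along the linking flow, as $(I - \ell_i S_{\bv})^{-1}S_{\bv}$ in the non-edge case and $(I_{n-1,1} - \ell_i S_{E, \bv})^{-1}S_{E, \bv}$ in the edge case. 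Here one uses that the linking curvature condition (resp.\ linking edge condition) is precisely the requirement that $1/\chi_i(t)$ be neither an eigenvalue of $S_{\bv}$ nor a generalized eigenvalue of $(S_{E,\bv}, I_{n-1,1})$ throughout $0 \le t \le 1$, so the inverses above are defined and, by Proposition \ref{PropII.3.1b}, the linking flow is nonsingular up to and including $t = 1$.

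Second I would run the sign comparison indicated in the paragraph preceding the statement. By construction the value of the radial vector field of $(M_0, U_0)$ at $x$ attached to the linked point $x^{\prime} \in M_i$ is $U_0 = -(\ell_i - r_i)\bu_i$, whose unit direction is $\bu_0 = -\bu_i$. In the defining formula $S_{rad}(v) = -\proj_{U_0}(\partial \bu_0/\partial v)$ the projection onto $T_x M_0$ is taken along the line $\R\cdot U_0 = \R\cdot \bu_i$, which is the same line used when $\cB_1$ is viewed as a level set of the linking flow; hence $\proj_{U_0}$ and $\proj_{U_i}$ agree as projections onto $T_x M_0$, while $\partial \bu_0/\partial v = -\partial \bu_i/\partial v$. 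Consequently the radial shape operator of $(M_0, U_0)$ at $x$ is the negative of the level-set shape operator of $\cB_1$ computed in the first step, which yields the two displayed formulas. I would also record that the identification $T_{x^{\prime}}M_i \simeq T_x \cB_1 = T_x M_0$ furnished by the nonsingular linking flow is precisely what makes $\bv^{\prime\prime}$ the correct basis in which these matrix identities hold (so the subscript \lq\lq$t$\rq\rq\ appearing in the statement merely records $t = 1$).

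I do not anticipate a genuine obstacle: the corollary is a formal consequence of the evolution formulas of Corollaries \ref{Cor.II3.5} and \ref{Cor.II3.6} together with the nonsingularity of the linking flow already established in Proposition \ref{PropII.3.1b}. The only points needing care are (i) confirming $\chi_i(x,1) = \ell_i(x)$, so that the coefficient multiplying the shape operator is $\ell_i$ rather than some intermediate value $\chi_i(t)$, and (ii) keeping the orientation bookkeeping tidy so that exactly one overall minus sign — rather than a sign buried inside an inverse — distinguishes the $(M_0, U_0)$ shape operator from the level-set shape operator of $\cB_1$. Both are routine once the setup of the preceding subsection is in hand.
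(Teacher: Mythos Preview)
Your proposal is correct and matches the paper's approach exactly: the paper likewise deduces the result immediately from Corollaries \ref{Cor.II3.5} and \ref{Cor.II3.6} at $t=1$ (so $\chi_i(1)=\ell_i$) together with the observation $\bu_0=-\bu_i$, which makes the $(M_0,U_0)$ radial shape operator the negative of the level-set shape operator of $\cB_1$. The paper presents no proof beyond this, simply marking the corollary with a $\Box$.
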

\hspace{11.5cm} $\Box$. \par
Hence, any calculation which could be performed using the radial shape 
operators on $M_0$ could be performed using the radial or edge shape 
operators on the appropriate $M_i$.  

\section{Properties of Regions Defined Using the Linking Flow} 
\label{SII:sec.int}
\par
We will next consider how the medial/skeletal linking structure $\{(M_i, 
U_i, \ell_i)\}$ for a multi-region configuration $\bgW = \{ \gW_i\}$ in 
$\R^{n+1}$ allows us to decompose the region external to the 
configuration into sub-regions reflecting the positional relations between 
the individual regions.  We referred to this in \S \ref{S:sec2}.  We now 
provide more details and terminology which we will employ in the next 
sections.  We will separately consider the unbounded and bounded cases.
\subsection*{Medial/Skeletal Linking Structures in the Unbounded Case} 
\par
The principal difference for the \lq\lq unbounded case\rq\rq, for which 
the configuration is considered in $\R^{n+1}$, is that the associated 
external regions which naturally reflect the positional information are 
also usually unbounded.  We begin by defining regions $\gW_i$ and 
$\gW_j$ in the configuration that are linked via the linking structure, and 
introduce basic notation using the linking flow $\gl_i$ on $\gW_i$.  We 
recall that a point $x \in \tilde M_i$ is linked to a point $x^{\prime} \in 
\tilde M_j$ if for the corresponding values of the linking vector fields $x 
+ L_i(x) = x^{\prime} + L_j(x^{\prime})$, or equivalently $\gl_i(x) = 
\gl_j(x^{\prime})$.  Then, by property L2) in Definition 
\ref{Defmultlkgstr}, entire strata of $\tilde M_i$ are linked to entire 
strata of $\tilde M_j$, or the images of the strata remain disjoint under 
the linking flow.  \par
Then, we introduce regions defined using the linking flow
\begin{Definition}  
\label{DefII4.1} 
For a skeletal linking structure $\{(M_i, U_i, \ell_i)\}$ for the 
configuration $\bgW = \{ \gW_i\}$, we define regions associated to the 
structure as follows: 
\flushpar

\begin{itemize}
\item[i)] $M_{i \to j}$ will denote the union of the strata of $\tilde M_i$ 
which are linked to strata of $\tilde M_j$, and we refer to it as the {\it 
strata where $M_i$ is linked to $M_j$} (the strata being in $\tilde M_i$ 
indicate on which \lq\lq side\rq\rq of $M_i$ the linking occurs).  
\item[ii)] $\gW_{i\to j} = \gl_{i}(M_{i\to j} \times \left[0, 
\frac{1}{2}\right])$ denotes the {\it region of $\gW_i$ linked to $\gW_j$}.   
\item[iii)]  $\cN_{i\to j} = \gl_{i}(M_{i\to j} \times \left[\frac{1}{2}, 
1\right])$ denotes the {\it linking neighborhood} of $\gW_i$ linked to 
$\gW_j$.  
\item[iv)]  $\cB_{i \to j} = \gW_{i \to j} \cap \cN_{i \to j}$ is the {\it 
boundary region of $\cB_i$ linked to $\cB_j$}. 
\item[v)]  $\cR_{i \to j} = \gW_{i \to j} \cup \cN_{i \to j}$, is the {\it 
total region for $\gW_i$ linked to $\gW_j$}.
\end{itemize}
In the case that the configuration in bounded within $\tilde \gW$, the 
regions will be those for the corresponding bounded linking structure. 
\end{Definition} 
We illustrate these regions in Figure~\ref{fig.II4.1}.  
\par 
\begin{figure}[ht] 
\begin{center}
\includegraphics[width=5.5cm]{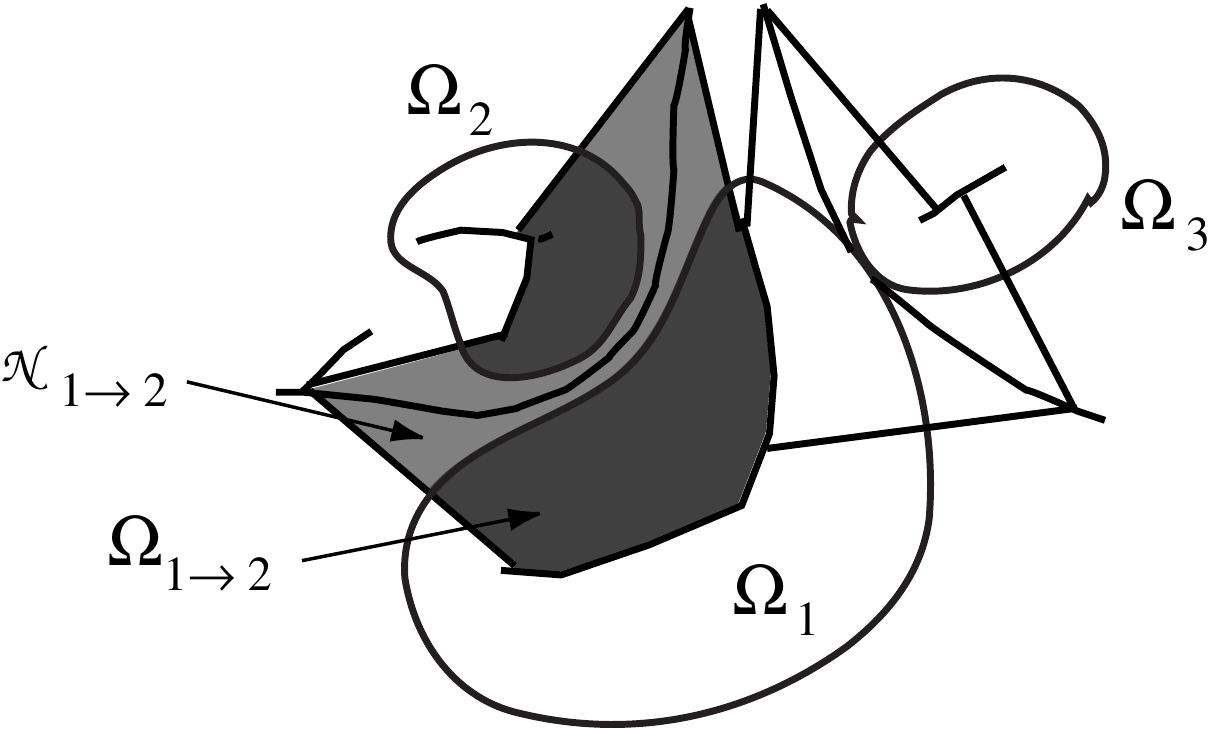} 
\hspace*{0.10cm}
\includegraphics[width=5.5cm]{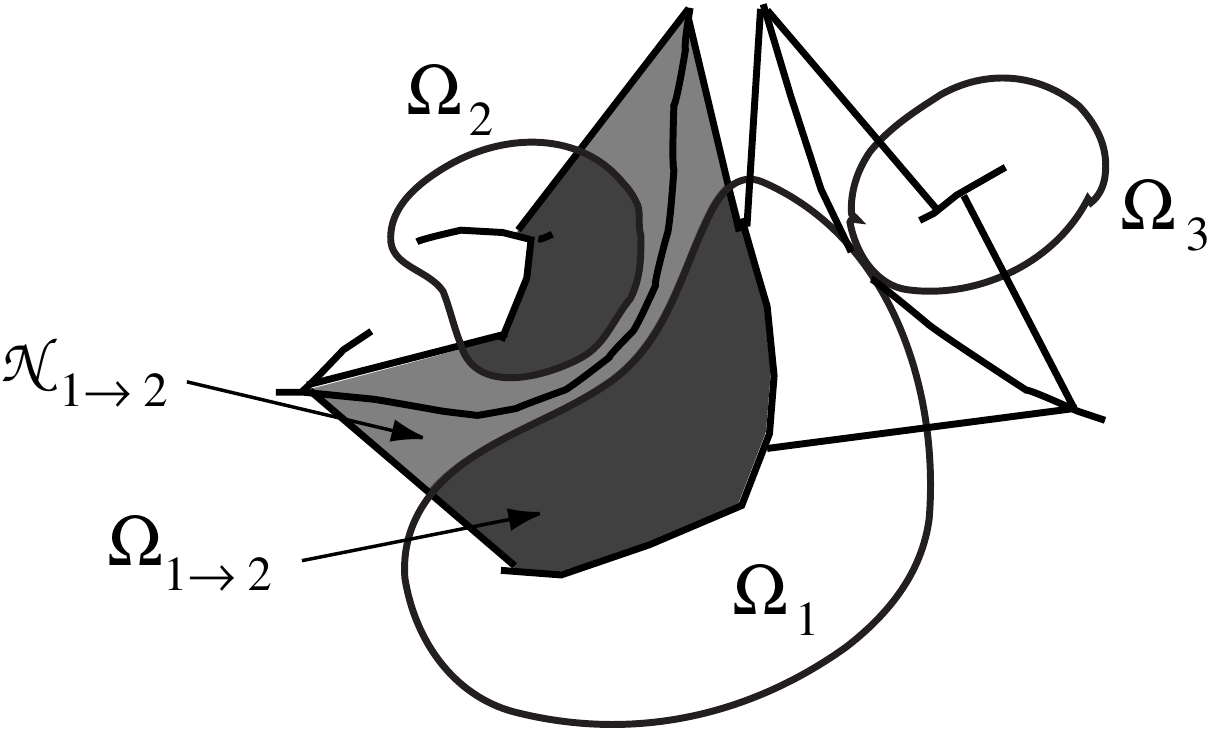}
\end{center}
(a) \hspace{2.5in} (b) \hspace {2.0in} 

\caption{\label{fig.II4.1} Configuration of three regions with portions of 
the regions $\gW_1$ and  $\gW_2$ which are linked to each other (darkly 
shaded regions are parts of $\gW_{1 \to 2}$ and $\gW_{2 \to 1}$), and 
their linking neighborhoods (grey shaded regions are parts of $\cN_{1 \to 
2}$ and $\cN_{2 \to 1}$).  Then, $\cB_{1 \to 2}$ is the portion of the 
boundary $\cB_{1}$ where $\cN_{1 \to 2}$ meets $\gW_{1 \to 2}$, while 
$\cR_{1 \to 2}$ is the union of the two regions $\cN_{1 \to 2}$ and 
$\gW_{1 \to 2}$.  Note that in the unbounded case in a), much of the 
linking in the infinite region occurs between small parts of $\gW_1$ and 
$\gW_2$, and this would not occur for a bounded linking structure in a 
bounded region as in b) where a threshold is imposed. 
} 
\end{figure} 
\par

Then, we make a few simple observations: first, by property L2) in 
Definition \ref{Defmultlkgstr} of the skeletal linking structure, $\cN_{i 
\to j} \cap \cN_{j \to i}$ will consist of a union of strata of the linking 
axis $M_0$ where the linking between $\gW_i$ and $\gW_j$ occurs; and 
second, the regions for a fixed $i$ but different $j$ may intersect on the 
images under the linking flow of strata where there is linking between 
$\gW_i$ and two or more other regions.  \par
Next, strata of $M_i$ may involve self-linking.  We will still use the 
notation $M_{i \to i}$, $\gW_{i \to i}$, etc for the strata, regions etc. 
involving self-linking.  Then, $\cN_{i \to i}$ will intersect $\cN_{i \to j}$ 
on strata where partial linking occurs. 
Finally the remaining strata in $\tilde M_i$ lie in $M_{i\, \infty}$, which 
consists of the union of strata which are unlinked.  Then, property $L4)$ 
of Definition \ref{Defmultlkgstr} of skeletal linking structure requires 
that the global radial flow from the union of the $N_{+} | M_{i\, \infty}$ 
defines a diffeomorphism to the complement of the linking flows (see e.g. 
Figure~\ref{fig.5b}).  We then also let $\gW_{i\, \infty}$, $\cN_{i\, 
\infty}$, $\cB_{i\, \infty}$ and $\cR_{i\, \infty}$ denote the 
corresponding regions for $M_{i\, \infty}$.  \par
Then, we have the decompositions 
\begin{equation}
\label{EqnII4.2}
\gW_i \,\, =\,\,  (\cup_{j \neq i} \gW_{i \to j}) \cup \gW_{i \to i} \cup 
\gW_{i\, \infty} \quad \text{ with } \quad (\cup_{j \neq i} \gW_{i \to j} 
\cup \gW_{i \to i}) \cap \gW_{i\, \infty} \,\, =\,\,  \emptyset \, ,
\end{equation} 
but the various $\gW_{i \to j}$ and/or $\gW_{i \to i}$ may have 
non-empty intersections, as explained above.  There are analogous 
decompositions for $\tilde M_i$ and $\cB_i$.  Also, we denote the {\it 
total linking neighborhood} by $\cN_i = \cup_{j \neq i} \cN_{i \to j}$.
Then, $\cN_i \cup \cN_{i \to i} \cup \cN_{i\, \infty}$ is the {\it total 
neighborhood of $\gW_i$} (in the complement of the configuration), whose 
interior consists of points external to the configuration which are closest 
to $\gW_i$.  \par 
Using the notation of \S \ref{S:sec2}, $\cB_{i\, 0}$ denotes the portion of 
the boundary $\cB_i$ not shared with any other region.  We recall that the 
strata in $\cB_i\backslash \cB_{i\, 0}$, are of the form $\cB{i\, j}$, 
which are boundary strata shared with the boundary of another $\gW_j$.  
Then, on the strata of $\tilde M_i$ corresponding to these strata, the 
linking flow is constant in $t$ for $\frac{1}{2} \leq t \leq 1$, remaining 
at the shared boundary region.  Hence, $\cN_i$ at these points only 
consists of boundary points of $\cB_i\backslash \cB_{i\, 0}$.  Off of these 
points we can describe the structure of $\cN_i$ using the linking flow. 
We summarize the consequences of the properties of the linking flow for 
the various regions.  \par
\begin{Corollary}
\label{CorII4.1}
For a configuration of regions $\bgW = \{ \gW_i\}$ with skeletal linking 
structure $\{(M_i, U_i, \ell_i)\}$, there are the following properties for 
each region associated to $\gW_i$:
\begin{itemize}
\item[1)]  $\gW_i \backslash M_i$ is fibered by the level sets of the 
linking flow for $0 < t \leq \frac{1}{2}$;
\item[2)] $\cN_i \backslash (\cB_i \backslash \cB_{i\, 0})$ is fibered by 
the level sets of the linking flow for $\frac{1}{2} \leq t \leq 1$;
\item[3)] $\cN_{i \to i}\backslash M_0$ is fibered by the level sets of the 
linking flow for $\frac{1}{2} \leq t < 1$; and 
\item[4)] $\cN_{i\, \infty}$ is fibered by the level sets of the radial flow 
for $1 \leq t < \infty$\, .
\end{itemize}
For each of these fibrations, the fibers are stratified manifolds.  
\par
Furthermore, the evolving radial shape operator for $0 < t \leq 1$ under 
the linking flow is given by Corollaries \ref{Cor.II3.5} and \ref{Cor.II3.6}.
\end{Corollary}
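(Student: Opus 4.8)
The plan is to reduce each assertion to the already established behaviour of the radial flow, applied separately on the two halves $[0,\frac12]$ and $[\frac12,1]$ of the linking parameter, and then to sort the strata of $\tilde M_i$ according to which region of Definition \ref{DefII4.1} they generate. For (1), recall that by (\ref{Eqn2.1}) the linking flow on $\tilde M_i\times[0,\frac12]$ is $\gl_i(x,t)=\psi_{2t}(x)$, the radial flow of the skeletal structure $(M_i,U_i)$ at twice speed. By S1) this is a skeletal structure for $\gW_i$, so it satisfies the hypotheses of Theorem \ref{ThmII.1} (the stratawise nonsingularity on $S_{i\,j}\times[0,\frac12]$ being exactly part of condition L2), and the compatibility condition holding at the singular strata); hence by part (4) of that theorem $\gW_i\setminus M_i$ is fibered, with stratified fibers, by the level sets $\cB_{i\,t}$ of $\psi_t$, $0<t\le 1$ (there being no nonlocal intersections, as $\cB_i$ is embedded), which after reparametrization is the asserted fibering of $\gW_i\setminus M_i$ by the level sets of $\gl_i$ for $0<t\le\frac12$.

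Next I would handle (2) and (3) together. Following the proof of Proposition \ref{PropII.3.1b}, on $[\frac12,1]$ put $t^{\prime}=2(t-\frac12)$ and rewrite $\gl_i(x,t)=\psi_1(x)+t^{\prime}(\ell_i(x)-r_i(x))\bu_i(x)$, so that $\gl_i(\cdot,t)$ on this interval is the radial flow of $(\cB_i,U_i^{\prime})$ with $U_i^{\prime}=(\ell_i-r_i)\bu_i$, run at twice speed. Three kinds of strata $S_{i\,j}\subset\tilde M_i$ occur. On strata corresponding to a shared boundary stratum $\cB_{i\,k}\subset\cB_i\setminus\cB_{i\,0}$ one has $\ell_i=r_i$ by Remark \ref{Rem2.2}, so $U_i^{\prime}\equiv 0$ and the flow is constant in $t$; these contribute only the boundary points that are deleted in forming $\cN_i\setminus(\cB_i\setminus\cB_{i\,0})$. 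On a stratum $S_{i\,j}\subset M_{i\to j}$ with $j\neq i$, condition L2) (equivalently the linking curvature and linking edge conditions, via Proposition \ref{PropII.3.1b} and Corollary \ref{CorII.3.2b}, using $\ell_i\ge r_i$) makes $\gl_i$ stratawise nonsingular and a stratawise embedding for $\frac12\le t\le1$, and the images $W_{i\,j}=\gl_i(S_{i\,j}\times[\frac12,1])$ are disjoint by L2)--L3), so the level sets of $\gl_i$ fiber $\cN_i\setminus(\cB_i\setminus\cB_{i\,0})$ with stratified fibers, proving (2). On a self-linked stratum $S_{i\,j}\subset M_{i\to i}$ the same argument gives nonsingularity and injectivity for $\frac12\le t<1$; at $t=1$ two self-linked strata of $\tilde M_i$ land on the same stratum of $M_0$ and injectivity fails there, so removing $M_0$ restores the fibering of $\cN_{i\to i}\setminus M_0$ for $\frac12\le t<1$, proving (3).

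For (4), property L4) of Definition \ref{Defmultlkgstr} states that the global radial flow restricted to $N_{+}\,|\,M_{i\,\infty}$ is a diffeomorphism onto the complement of the image of the linking flow; its part with $t\ge1$ is precisely $\cN_{i\,\infty}$, so $\cN_{i\,\infty}$ is fibered by the level sets of the radial flow for $1\le t<\infty$, with stratified fibers. Finally, the stated formula for the evolving radial shape operator along the linking flow for $0<t\le1$ is exactly the content of Corollaries \ref{Cor.II3.5} and \ref{Cor.II3.6}, applied on the regular, resp.\ edge, strata, so nothing further is needed there.

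The step requiring the most care — bookkeeping rather than a genuine obstacle — is the stratum analysis in (2) and (3): one must check that the decomposition of $\tilde M_i$ into $\{M_{i\to j}\}_{j\neq i}$, $M_{i\to i}$, the shared-boundary strata, and $M_{i\,\infty}$ matches the definitions of the regions $\gW_{i\to j}$, $\cN_{i\to j}$, $\cN_{i\,\infty}$, and that on each piece the nonsingularity input actually used is the one guaranteed by L1)--L4) (the compatibility condition at singular strata, $\ell_i=r_i$ on shared-boundary strata, or $\ell_i$ against the radial/edge curvatures elsewhere). Continuity of the assembled flow across $t=\frac12$ is immediate from the form of $\chi_i$ in (\ref{Eqn2.1}), and the possible overlaps among the $\gW_{i\to j}$ for different $j$ are harmless, since the fiberings are asserted region by region.
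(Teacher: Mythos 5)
Your proof is correct and follows essentially the same strategy as the paper's: reduce to the radial flow on each half-interval $[0,\tfrac12]$, $[\tfrac12,1]$, invoke L2) and Proposition \ref{PropII.3.1b} for nonsingularity, and sort the strata of $\tilde M_i$ into shared-boundary strata, $M_{i\to j}$, $M_{i\to i}$, and $M_{i\,\infty}$. Two small remarks. For part (4) you cite L4) of Definition \ref{Defmultlkgstr} directly, whereas the paper cites Proposition \ref{Prop6.8}; since Proposition \ref{Prop6.8} establishes the diffeomorphism property for generic Blum structures while L4) postulates it for any skeletal linking structure, your reference is at least as apt in the generality of the statement. For the final claim about shape operators you should make explicit why the eigenvalue hypotheses of Corollaries \ref{Cor.II3.5} and \ref{Cor.II3.6} are met: nonsingularity of the linking flow (L2) gives, via the converse direction of Proposition \ref{PropII.3.1b}, the linking curvature and linking edge conditions, which are equivalent to $\tfrac{1}{\chi(t)}$ not being a (generalized) eigenvalue for $0\le t\le 1$ — this is the step the paper spells out and your ``nothing further is needed'' glosses over.
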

\begin{proof}
First, the nonsingularity of the linking flow (linking condition (L2)) gives 
two cases: i) for each stratum $S_{i\, k}$ of $\tilde M_i$, which 
corresponds to a stratum of $\cB_{i\, 0}$ the linking flow defines a 
piecewise smooth diffeomorphism $\gl_{i} : S_{i\, k} \times (0, 1] \to 
\cR_{i\, k}$, where $\cR_{i\, k}$ denotes the image; and ii) if $S_{i\, k}$ 
corresponds to a stratum of $\cB_i \backslash \cB_{i\, 0}$, then only 
$\gl_{i} : S_{i\, k} \times (0, \frac{1}{2}] \to \gW_{i}$ is a smooth 
diffeomorphism onto its image; and $\gl_{i} : S_{i\, k} \times [\frac{1}{2}, 
1] \to \gW_{i}$ is constant in $t$ and has as image a stratum in the 
boundary $\cB_i$.  \par
Thus, in either of the two cases, the restriction of the flow to the subset 
$S_{i\, j} \times (0, \frac{1}{2}]$ is a smooth diffeomorphism, and we 
obtain the radial flow (at double speed), so the image is the union of the 
level sets for each stratum giving a level set of the radial flow $\cB_{i\, 
t}$.  This is a global stratawise diffeomorphism and gives a fibration of 
$\gW_i \backslash M_i$ by Theorem 2.5 on \cite{D1}.  \par 
Again in the first case, for $S_{i\, k}$ a stratum of $M_{i \to j}$ which is 
associated to $\cB_{i\, 0}$, the restriction of the linking flow to $S_{i\, 
k} \times  [\frac{1}{2}, 1]$ is a smooth diffeomorphism.  The union of 
these two parts of the linking flows then defines a global stratawise 
piecewise smooth diffeomorphism with image $\cN_i \backslash (\cB_i 
\backslash \cB_{i\, 0})$.   However, if we consider $M_{i \to i}$, then 
more than one stratum will have as image at $t = 1$ the same stratum of 
$M_0$, so it will only be a diffeomorphism for $\frac{1}{2} \leq t < 1$.    
\par 
Lastly, by Proposition \ref{Prop6.8}, on strata of $M_{i\, \infty}$ the 
radial flow will satisfy the radial or edge curvatures conditions for all $t 
> 0$ and defines a global diffeomorphism $\psi_{i} : S_{i\, k} \times [1, 
\infty)$ onto its image.  By the same proposition, these fit together to 
form a global stratawise diffeomorphism $M_{i\, \infty} \times [1, 
\infty) \simeq \cN_{i\, \infty}$.  
\par
Furthermore, by Proposition \ref{PropII.3.1b}, the linking curvature or 
linking edge condition is satisfied at each point as appropriate.  Then, this 
also implies that the radial curvature or edge curvature conditions, as 
appropriate, are also satisfied.  Hence, Corollaries \ref{Cor.II3.5} or 
\ref{Cor.II3.6} apply, yielding the stated form.  
\end{proof}
\subsubsection*{Regions for the Full Blum Linking Structure}
\par 
If we use instead the full Blum linking structure for a general 
configuration, we point out the modification of Corollary \ref{CorII4.1}.  
By Theorem \ref{Thm3.2}, the Blum medial axis $M_i$ in the interior of 
each $\gW_i$ has closure containing edge-corner points of $\gW_i$ 
($P_k$ or the singular $Q_k$ points for all $k$).  At these points, the Blum 
medial axis has the local edge-corner normal form given in Definition 
\ref{Def3.3}.  Because of this normal form, the radial flow still gives a 
local fibration structure in a neighborhood of the edge-corner points, 
provided we exclude those points from the boundary.  Hence, the radial 
flow from the medial axis in the interior defines a fibration structure for 
$\gW_i \backslash Cl(M_i)$.  The remaining properties of Corollary 
\ref{CorII4.1} then remain true because for 2), $\cB_i \backslash \cB_{i\, 
0}$ contains the edge corner points and is removed.  For both 3) and 4), the 
edge-corner points do not play a role. \par
For the special case of a configuration of disjoint regions with smooth 
boundaries, the Blum medial linking structure is a skeletal linking 
structure, so no modifications of the conclusions of Corollary 
\ref{CorII4.1} are required.  
\par
We next consider the bounded case.
\subsection*{Medial/Skeletal Linking Structures for the Bounded Case}
 \par
For the bounded case we suppose that the configuration lies in the interior 
of a region $\tilde \gW$ whose boundary $\partial \tilde \gW$ is 
transverse to the stratification of $M_0$ and to the linking vectors on $M$ 
(i.e. in the linking region, and including the extension of the radial lines 
from $M_{\infty}$, the linking line segments or extended radial lines are 
tranverse to the (limiting) tangent spaces at points of $\partial \tilde 
\gW$).  As explained in Remark \ref{Rem3.6}, we can alter the linking 
vector field either by truncating it or defining it on $M_{\infty}$ and then 
refining the stratification so that on appropriate strata the linking vector 
field ends at $\partial \tilde \gW$.  It is now defined on all of $M_i$ for 
all $i > 0$.  Because we are either reducing $\ell_i$, or defining $L_i$ on 
$M_{i\, \infty}$, the linking flow is still nonsingular, so we have 
corresponding properties from Corollary \ref{CorII4.1}, except that for 
properties 2), 3), and 4) the linking flow and corresponding regions 
$\cN_{i \to j}$, $\cN_{i \, \infty}$, and $\cR_{i \to j}$ may only extend to 
$\partial \tilde \gW$.  Also, we still obtain the same formulas for the 
evolution of the radial shape operators for those level sets of the linking 
flow while they remain within $\intr(\tilde \gW)$.  \par
Thus, we have compact versions of the regions defined for the unbounded 
case.  To construct these regions, there are a number of different 
possibilities.  \par
\subsubsection*{\it Possibilities for a Bounded Region $\tilde \gW$}:  
\hfill \par
\vspace{1ex}
\flushpar
{\it Bounding Box or Bounding Convex Region: } \hfill 
\par
A bounding box requires a center and directions and sizes for the edges of 
the box.  For this, we would need to first normalize the center and 
directions for the sides of the box and then normalize the sizes of the 
edges either using a fixed size or one based on features sizes of the 
configuration.  An example of a bounding box and the resulting linking 
structure is shown in Figure~\ref{fig.II4.2}~(a).  For another convex 
bounding region such as a bounding sphere, because of the symmetry, it is 
only necessary to normalize the center, and then either fix the radius or 
base it on the feature sizes.  For any  convex region $\tilde \gW$ with 
piecewise smooth boundary, the limiting tangent planes of $\partial \tilde 
\gW$ are supporting hyperplanes for $\tilde \gW$.  Hence any line in the 
tangent plane lies outside $\intr(\tilde \gW)$.  Thus, the radial lines from 
regions in the configuration will meet the boundary transversely 
(including the limiting tangent planes at singular points). 
\par
\begin{figure}[ht] 
\begin{center}
\includegraphics[width=5.0cm]{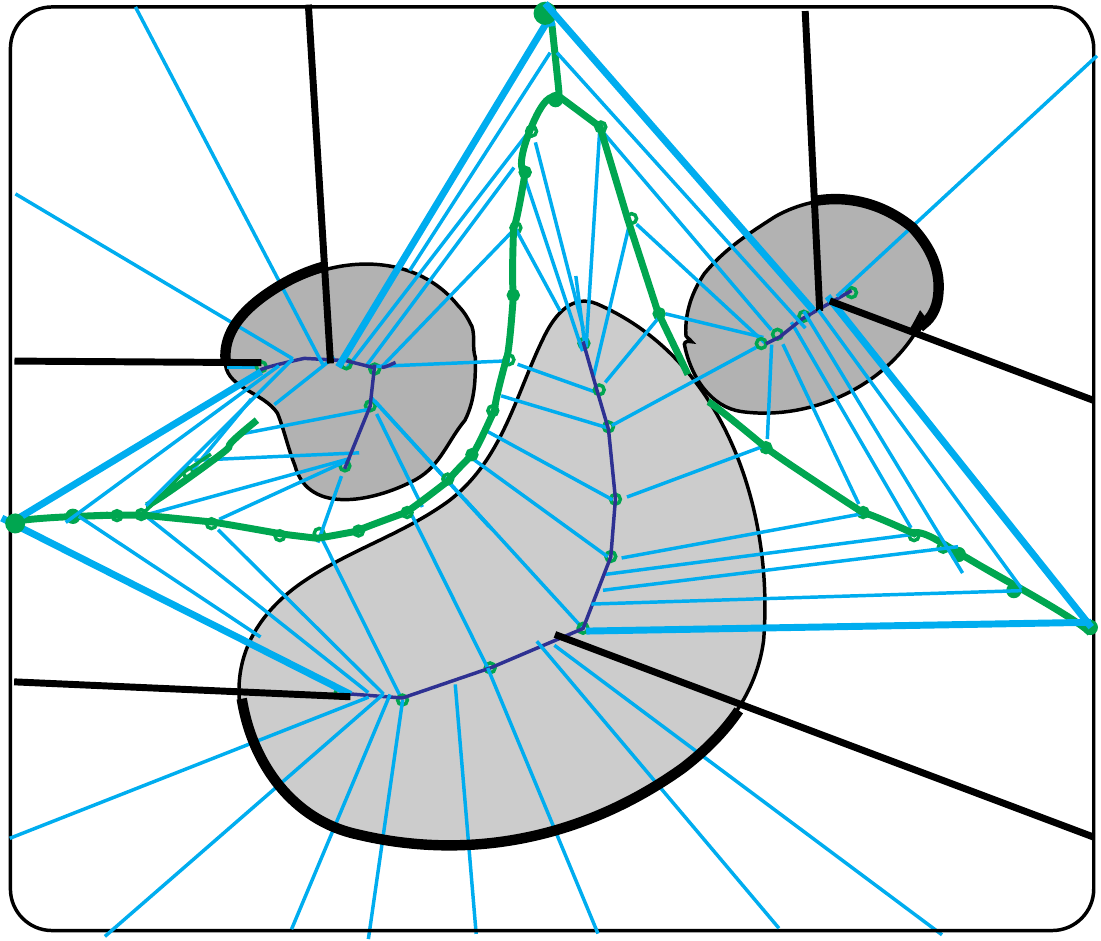} 
\hspace*{0.10cm}
\includegraphics[width=5.0cm]{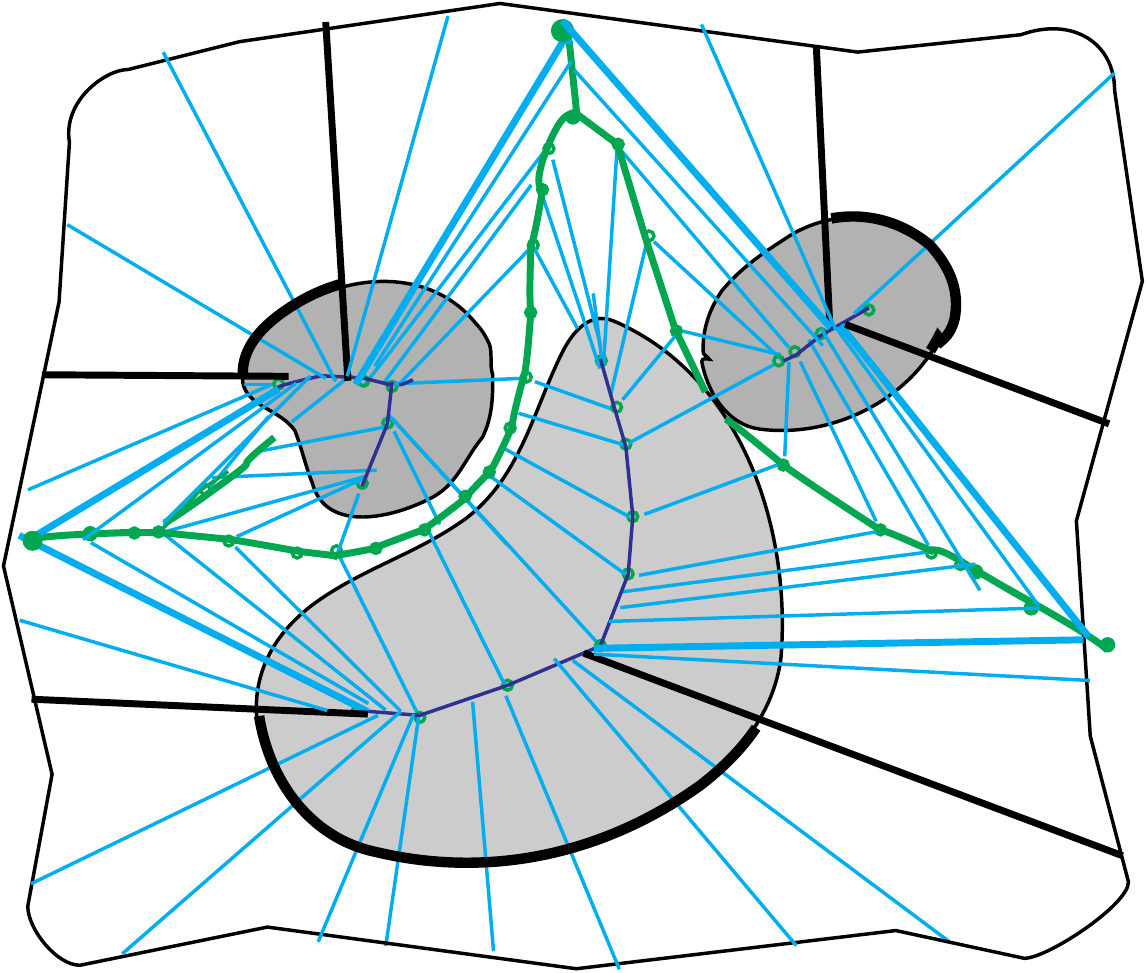}
\end{center}
(a) \hspace{2.5in} (b) \hspace {2.0in}  
\caption{\label{fig.II4.2} a) {\em Bounding box} with curved corners 
containing a configuration of three regions and b) (a priori given) {\em 
intrinsic bounding region} for the same configuration.  The linking 
structure is either extended to the boundary (in the region bounded by the 
darker lines on the boundary for $M_{i\, \infty}$) or truncated at the 
boundary.} 
\end{figure} 
\par
\vspace{2ex}
\flushpar
{\it Convex Hull: } \hfill 
\par  
The smallest convex region which contains a configuration is the convex 
hull of the configuration.  For a generic configuration, the convex hull 
consists of the regions $\cB_{i\, \infty}$ together with the truncated 
envelope of the family of degenerate supporting hyperplanes which meet 
the configuration with a degenerate tangency or at multiple points.  \par 
For a) of Figure~\ref{fig.5b} in \S~\ref{S:sec3}, which is a configuration 
in $\R^2$, the envelope consists of line segments joining the doubly 
tangent points corresponding to the points in the spherical axis (b) of 
Figure~\ref{fig.5b}.  \par
In $\R^3$, the envelope consists of a triangular portion of a triply tangent 
plane, with line segments joining pairs of points with a bitangent 
supporting plane, and a decreasing family of segments ending at a 
degenerate point (see Example~\ref{Exam3.2b} in \S~\ref{S:sec3}).  
\par 
\vspace{2ex}
\flushpar
{\it Intrinsic Bounding Region: } \hfill 
\par
If the configuration is naturally contained in an (a priori given) intrinsic 
region, which is modeled by $\tilde \gW$, then provided that the 
extensions of the radial lines intersect $\partial \tilde \gW$ 
transversely, then we can modify the linking structure as in the convex 
case to have it defined on all of the $M_i$, and terminating at $\partial 
\tilde \gW$, if linking has not already occurred (see e.g. 
Figure~\ref{fig.II4.2}~(b)).  \par
\vspace{2ex}
\flushpar
{\it Threshold for Linking: } \hfill 
\par
The external region can be bounded by placing a threshold $\tau$ on the 
$\ell_i$ so that the $L_i$ remain in a bounded region.  This can be done in 
two different ways.  An {\it absolute threshold} restricts the external 
regions to those arising from linking vector fields with $\ell_i \leq \tau$; 
and a {\it truncated threshold} restricts to a bounded region formed by 
replacing $\ell_i$ by $\ell_i^{\prime} = \min\{\ell_i, \tau\}$.  For the 
first type, only part of the region would have an external linking 
neighborhood; while in the second, the entire region would.  As for the 
convex case, since we are replacing $\ell_i$ by smaller values, the linking 
flow will remain nonsingular.  We obtain modified versions of the regions 
lying in a bounded region (see e.g. Figures~\ref{fig.II4.1} or \ref{figII5.9}).  
\par

\begin{Remark}
\label{RemII4.3}
If the configuration is subjected to a transformation formed from a 
translation, rotation, and scaling, then we would like the associated 
bounded region $\tilde \gW$ to be sent to the corresponding bounded 
region for the resulting image configuration.  Provided we also scale the 
length of vectors, a skeletal linking structure will be sent by such 
mappings to a skeletal linking structure for the image configuration.  This 
is also true for a Blum linking structure.  Thus, it is only necessary that 
the associated bounding region is constructed from geometric data from 
the configuration.  For the convex hull this is true.  For an intrinsic 
bounding region, we would require that the transformations send one 
intrinsic bounding region to another.  This will also happen for bounding 
boxes, etc or thresholds provided any reference points, directions of 
edges, and lengths are determined by the configuration.  
\end{Remark}
\par

\section{Global Geometry via Medial and Skeletal Linking Integrals} 
\label{SII:SkelLnkInt}
\par
	We are now ready to use the regions we have defined for a 
configuration to introduce quantitative invariants measuring positional 
geometry for configurations.  We do so in terms of integrals which are 
defined on the skeletal sets for the individual regions using the shape 
operators defined from the linking structure.  \par 
\subsection*{Defining Medial and  Skeletal Linking Integrals}
\par
We begin by considering a medial or skeletal linking structure $\{(M_i, 
U_i, \ell_i)\}$ for a multi-region configuration $\bgW = \{ \gW_i\}$ in 
$\R^{n+1}$.  We recall from \S \ref{S:sec2} that we introduced the 
notation that $M = \coprod_{i > 0} M_i$ denotes the disjoint union of the 
$M_i$ for each region $\gW_i$ for $i > 0$.  Each $M_i$ has its double 
$\tilde M_i$, and we also introduced the double $\tilde M$ for the 
configuration by $\tilde M = \coprod_{i > 0} \tilde M_i$.  For each $i > 0$, 
there is a canonical finite-to-one projection $\pi_i : \tilde M_i \to M_i$, 
mapping $(x, U(x)) \mapsto x$.  The union of these defines a canonical 
projection $\pi : \tilde M \to M$, such that $\pi | \tilde M_i = \pi_i$ for 
each $i > 0$. 
 \par

Even though $\bgW$ is defined via an embedding $\Phi : \bgD \to \R^{n+1}$, 
we will define the integrals directly using the skeletal linking structure 
for $\bgW$.  We will define the skeletal integral on $\tilde M$ for a 
multi-valued function $g : M \to \R$, by which we mean for any $x \in 
M_i$, $g$ may have a different value for each different value of $U_i$ at 
$x$.  Such a $g$ pulls-back via $\pi$ to a well-defined map $\tilde g : 
\tilde M \to \R$ so that $g \circ \pi = \tilde g$.  \par 
To define the integrals we use an especially suited positive Borel measure 
on $\tilde M$.  The Borel measures are defined separately for each $\tilde 
M_i$ and then together they give the Borel measure on $\tilde M$.  The 
existence of the corresponding positive Borel measures $dM_i$ on each 
$\tilde M_i$  for each $i > 0$ follows from Proposition 3.2 of \cite{D4}.  
We briefly recall how they are defined.  We cover $M_i$ by a finite union 
of paved neighborhoods $\{ W_k\}$.  We recall from \S \ref{S:sec2} 
(see  also \cite[\S 3]{D4}) that for a \lq\lq paved neighborhood\rq\rq\, 
we mean an 
open $W_k$ whose closure may be decomposed into a finite number of 
connected smooth manifolds with boundaries and corners $M_{\ga_j}$ 
which only meet along boundary facets, see e.g. Figure~\ref{fig.10.4a}, and 
each value of $U$ is defined smoothly on $M_{\ga_j}$. 
\par
\par
The measure is defined by defining the integrals of continuous functions 
on $\tilde M$ and using the Riesz representation theorem.  By a partition 
of unity argument as in e.g. \cite[Chap. 10]{LS} or 
\cite[Vol. 1, Chap. 8]{Sp}, it is enough to define the integral for one such 
region 
$M_{\ga_j}$.  We let $M_{\ga_j}^{(k)}$, $k = 1, 2$, denote the inverse 
images of $M_{\ga_j}$ under the canonical projection map $\pi_i : \tilde 
M_i \to M_i$.  Each $M_{\ga_j}^{(k)}$ is a copy of $M_{\ga_j}$, with the 
smooth value of $U_{i}$ associated to the copy.  For each copy we let 
$dM_{j\, i} = \rho_{j\, i} dV_i$, where $dV_i$ is the Riemannian volume on 
$M_{\ga_j}$ and $\rho_{j\, i} = \bu_{k\, j}\cdot \bn_{k\,j}$ where 
$\bu_{k\, j}$ is a value of the unit vector field corresponding to the 
smooth value of $U_{k\, j}$ for $M_{\ga_j}^{(k)}$ and $\bn_{k\, j}$ is the 
normal unit vector pointing on the same side as $U_{k\, j}$. \par  
By a partition of unity argument, we obtain integrals of continuous 
functions $\tilde h$ on $\tilde M_i$.  Then, by the Riesz representation 
theorem, there is a regular positive Borel measure $dM_i$ on $\tilde M_i$ 
so that integration extends to Borel measurable functions $\tilde h$ on 
$\tilde M_i$, given by the integral with respect to this measure, see 
\cite[\S 3]{D4}.  \par
Then, for a multivalued function $g$ on $M_{\ga_j}$, the integral of $g$ is, 
by definition, the sum of the integrals of the corresponding values of $g$ 
over each copy $M_{\ga_j}^{(k)}$ with respect to the medial measure 
$dM_{k\, j}$.  For Borel measurable functions $\tilde g$ on $\tilde M_i$ 
obtained (as above) by composing $g$ with $\pi_i$, this gives a 
well-defined integral with respect to the Borel measure $dM_i$ on 
$\tilde M_i$.  
\par
Then, the distinct Borel measures on $dM_i$ on $\tilde M_i$ together 
define a regular positive Borel measure $dM$ on $\tilde M$; and the 
integral of a Borel measurable multi-valued function $g$ on $M$ is defined 
to be
\begin{equation}
\label{EqnII5.1}
 \int_{\tilde M} g \, dM \,\, = \,\, \sum_{i> 0} \int_{\tilde M_i} \tilde g\, 
dM_i , 
\end{equation}
where each integral on the RHS is the integral of $\tilde g$ over $\tilde 
M_i$ with respect to the Borel measure $dM_i$, and it can be viewed as an 
integral of $g$ over \lq\lq both sides of $M_i$\rq\rq. \par
A Blum medial linking structure for a configuration of disjoint regions is 
itself a skeletal structure, so the integral makes sense for such 
configurations.  We refer to the integrals in (\ref{EqnII5.1}) as {\it medial 
or skeletal linking integrals}, depending on whether the linking structure 
is a Blum medial linking structure or a skeletal linking structure.  \par
In the results that follow, just as in the preceding definition, we will be 
adapting the arguments given in \cite{D4} for a single region to the 
individual regions of the configuration.  Consequently, we will frequently 
outline the arguments and refer to the specific proofs in \cite{D4} for the 
details.  \par 
\subsubsection*{\it Blum Medial Linking Integrals for General 
Configurations} \hfill
\par
For a general configuration with a full Blum linking structure, the Blum 
medial axis extends to the boundary; and its closure contains the 
edge-corner points of each $\gW_i$.  We briefly explain how the preceding 
definition of the integral can be extended to this case.  We now include the 
edge-corner points as points of the medial axis $M_i$, so that it is still 
compact.  The double $\tilde M_i$ is defined as before, except that there 
is only a single point lying over each edge-corner point, just as for the 
edge points of the medial axis in the smooth case.  Again $\tilde M_i$ is a 
compact (and locally compact) Hausdorff space.  Also, because there is the 
edge-corner normal form for the medial axis in the neighborhood of any 
edge-corner point, we can again give a paved neighborhood of any such 
point, in the sense of \cite[\S 3]{D4}.  
\par 
We can then define an integral of a multi-valued function $g$ on $M_i$ 
just as for skeletal structures using a partition of unity and paved 
neighborhoods.  As in the preceding, there is a unique regular positive 
Borel measure $dM_i$ on $\tilde M_i$ so that the integral of $g$ on $M_i$ 
is the integral of $\tilde g$ with respect to the measure $dM_i$ on $\tilde 
M_i$.  We shall refer to this measure as the {\em skeletal (or medial) 
measure} on $\tilde M_i$.  Locally the measure still has the form $dM_{i} = 
\rho_{i} dV_i$, except now $\rho_{i}$ vanishes on the edge-corner points.  
In this case, we still refer to the integrals in (\ref{EqnII5.1}) as {\it 
medial linking integrals}.  \par

\subsection*{Computing Boundary Integrals via Medial Linking Integrals} 
\par
So far the integrals which we have just defined only depend on the 
skeletal structures for the individual regions.  We now show how, by using 
the linking flow, we may express integrals of functions on $\cB$ or  
functions on $\R^{n+1}$ as medial or skeletal integrals over the skeletal 
sets.  \par
\subsubsection*{Representing Integrals of Functions on $\cB$ as Medial 
Integrals}  First, we consider a Borel measurable function $g : \cB \to \R$, 
integrable for volume measure on $\cB$, which is allowed to be 
multi-valued in the sense that for any $k$-edge-corner point $x \in \cB$, 
$g$ may take distinct values for each region $\gW_i$, $i > 0$, containing 
$x$ on its boundary.  Thus, for a shared boundary region $\cB_{i\, j} = 
\cB_i \cap\cB_j$, $g$ may take different values for each region $\gW_i$ 
or $\gW_j$ (so the values on $\cB_{i\, j}$ for each $\gW_i$ define a Borel 
measurable and integrable function on $\cB_{i\, j}$ and on $\cB_{i\, 0} = 
\cB_i \backslash \cup_{j \neq i}\cB_j$).  For example, such a function 
might represent a physical quantity such as a density of the boundary 
region which may differ for each region $\gW_i$ and hence give two 
possible values on shared regions such as $\cB_{i\, j}$.  By the integral of 
such a 
multi-valued function $g$ over $\cB$ we mean 
$$  \int_{\cB} g\, dV \,\, = \,\,  \sum_{i \neq j, \, i, j \geq 0} 
\int_{\cB_{i\, j}} g_{i\, j}\, dV $$
where $g_{i\, j}$ denotes the values of $g$ on $\cB_{i\, j}$ for $\gW_i$ 
and $dV$ denotes the $n$-dimensional Riemannian volume on each $\cB_i$.  
This includes both $\cB_{i\, 0}$ and $\cB_{i\, \infty}$ so the integral is 
over the complete boundaries of all regions.  \par

Then, for the time-one radial flow map $\psi_{i\, 1} : \tilde M_i \to 
\cB_i$, we define $\tilde g : M_i \to \R$ by $\tilde g = g \circ \psi_{i\, 
1}$, where the value on $\cB_{i\, j}$ is the value associated to $\gW_i$.  
Then, $\tilde g$ is a multi-valued Borel measurable function on $M_i$.  
\par
We may compute the integral of $g$ over $\cB$ by the following result.
\begin{Thm}
\label{ThmII5.1}
Let $\bgW$ be a multi-region configuration with (full) Blum linking 
structure.  If $g : \cB \to \R$ is a multi-valued Borel measurable and 
integrable function, then
\begin{equation}
\label{EqnII5.2}
\int_{\cB} g\, dV \,\, = \,\, \int_{\tilde M} \tilde g \det(I - r_i S_{rad})\, 
dM\, 
\end{equation}
where $r_i$ is the radius function of each $\tilde M_i$. 
\end{Thm}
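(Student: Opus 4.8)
The plan is to derive (\ref{EqnII5.2}) as a change-of-variables (area) formula for the time-one radial map $\psi_{i\, 1} : \tilde M_i \to \cB_i$ of each region, adapting to the individual $\gW_i$ the single-region skeletal integral computation of \cite{D4}: the Jacobian of $\psi_{i\, 1}$ is exactly the weight $\det(I - r_i S_{rad})$ once the density $\rho_i$ built into the medial measure is accounted for.

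First I would localize. For each $i > 0$, cover $M_i$ by finitely many paved neighborhoods $\{W_k\}$ — in the full Blum case also including paved neighborhoods of the edge-corner points, which exist by the edge-corner normal form of Theorem \ref{Thm3.2} and Definition \ref{Def3.3} — and choose a partition of unity on $\cB_i$ subordinate to the open cover $\{\psi_{i\, 1}(W_k)\}$. By linearity of both sides, by the definition of $dM$ as $\sum_{i>0} dM_i$, and by the definition of $dM_i$ through the two copies $M_{\ga_j}^{(k)}$ of each $M_{\ga_j}$, it then suffices to prove the identity when $g$ is supported in a single $\psi_{i\, 1}(W_k)$, working on one smooth sheet of one $M_{\ga_j}$ carrying a smooth value $U_i = r_i\cdot\bu_i$. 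For the Blum medial axis, $\psi_{i\, 1}$ is a homeomorphism of $\tilde M_i$ onto $\cB_i$ (the inverse of the maximal-ball correspondence) which is smooth and a local diffeomorphism on the regular part — the curvature hypotheses of Theorem \ref{ThmII.1} and Proposition \ref{PropII.2.1} holding there, with $\eta_{U_i} \equiv 0$ by Remark \ref{Rem7.3}, so that $\bu_i$ is the unit normal to $\cB_i$ along the flow — while the singular strata, the edge strata $\overline{\bdyM_i}$, and, in the Blum case with corners, the edge-corner strata map into a $dV$-null subset of $\cB_i$ and are themselves $dV_i$-null, so they contribute to neither side and may be discarded.

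The core of the argument is the Jacobian identity. Writing $\psi_{i\, 1}(x) = x + r_i(x)\bu_i(x)$ and differentiating along an orthonormal basis $\bv = \{v_1, \dots, v_n\}$ of $T_{x_0}M_i$, one uses the defining relation $S_{rad}(v) = -\proj_U(\partial\bu_i/\partial v)$ together with the decomposition of $\partial\bu_i/\partial v$ relative to $T_{x_0}M_i \oplus \langle U_i \rangle$ to write $d\psi_{i\, 1}(v) = (I - r_i S_{\bv})(v) + (\text{a multiple of } \bu_i)$; eliminating the $\bu_i$-components column by column in the volume determinant and using that $\bu_i$ is the unit normal to $\cB_i$, one finds that the $n$-dimensional volume of the image parallelepiped in $T_{\psi_{i\, 1}(x_0)}\cB_i$ equals $\rho_i(x)\det(I - r_i(x) S_{rad})$, where $\rho_i = \bu_i\cdot\bn_i$ and the determinant is positive by the radial curvature condition. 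Hence the Jacobian of $\psi_{i\, 1}$ with respect to the Riemannian volume elements on $M_i$ and on $\cB_i$ is $\rho_i\det(I - r_i S_{rad})$; this is exactly the local computation of \cite[\S 3]{D4}, which carries over verbatim to $M_{\ga_j}$. The area formula then gives $\int_{\psi_{i\, 1}(W_k)} g\, dV = \int_{W_k} \tilde g\, \rho_i\det(I - r_i S_{rad})\, dV_i$, and since the medial measure is $dM_i = \rho_i\, dV_i$ this equals $\int \tilde g\det(I - r_i S_{rad})\, dM_i$ over the corresponding piece of $\tilde M_i$.

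Finally I would reassemble: summing over the partition of unity on $\tilde M_i$, then over its two sheets — the map $\psi_{i\, 1}:\tilde M_i\to\cB_i$ being a bijection off a null set, as every smooth point of $\cB_i$ is the tangency point of a unique maximal inscribed ball of $\gW_i$ — and then over $i>0$, the accumulated total is $\sum_{i>0}\int_{\cB_i} g\, dV$ with $g$ taken with its $\gW_i$-value on each $\cB_i$. This is precisely the definition $\int_{\cB} g\, dV=\sum_{i\neq j,\, i, j\geq 0}\int_{\cB_{i\, j}} g_{i\, j}\, dV$ regrouped by the first index (which necessarily runs over $i>0$, the regions on whose boundaries $g$ is defined), so (\ref{EqnII5.2}) follows. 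I expect the main obstacle to lie at the edge strata $\overline{\bdyM_i}$ and, in the full Blum case, the edge-corner strata: there $\rho_i\to 0$, $\psi_{i\, 1}$ is only weakly $C^1$, and $(M_i,U_i)$ is merely a relaxed skeletal structure, so one must check — as in \cite{D4} — that the paved-neighborhood construction still defines a regular positive Borel measure on $\tilde M_i$ and that these degenerate strata genuinely carry no $\cB$-volume; away from them the change of variables is routine.
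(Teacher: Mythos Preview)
Your proposal is correct and follows essentially the same route as the paper: both localize to paved neighborhoods via a partition of unity, apply the change-of-variables formula for the time-one radial map with Jacobian $\rho_i\det(I - r_iS_{rad})$ (the paper citing \cite[Thm.~4.1]{D4} where you unpack its proof), handle the edge-corner strata by noting $\psi_{i\,1}$ is a homeomorphism there and nonsingular on interiors, and then sum over pavings and over $i>0$. The only difference is presentational: the paper first dispatches the disjoint smooth-boundary case by direct citation of \cite{D4} before treating the general case, whereas you give a uniform treatment; both arrive at the same local identity on each $M_{i\,\ga}^{(j)}$.
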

\begin{proof}
In the case of a configuration of disjoint regions with smooth boundaries, 
for each region $\gW_i$,  $g$ is a well-defined function on $\cB_i$ and 
we may apply Theorem 4.1 of \cite{D4} to conclude
\begin{equation}
\label{EqnII5.3}
\int_{\cB_i} g\, dV \,\, = \,\, \int_{\tilde M_i} \tilde g \det(I - r_i 
S_{rad})\, dM_i\, .
\end{equation}
Summing (\ref{EqnII5.3}) over $i > 0$ gives the result in this case.  \par
Next for a general configuration, we first consider a single region 
$\gW_i$ in it.  We may apply the proof of \cite[Thm 4.1]{D4} to one of the 
regions $M_{i\, \ga}$ with a given smooth value of $U_i$ on it.  If $M_{i\, 
\ga}$ meets the edge-corner strata, it does so on its boundary.  Let $B_{i\, 
\ga}^{(j)} = \psi_{i\, 1}(M_{i\, \ga}^{(j)})$ for the given values for $U_i$.  
Then, $\psi_{i\, 1} | M_{i\, \ga}^{(j)}$ is a homeomorphism to $B_{i\, 
\ga}^{(j)}$, and $\psi_{i\, 1}$ is nonsingular on the interior of $M_{i\, 
\ga}$.  Thus, we can still apply the change of variables formula for 
multiple integrals.  
Hence, applying the proof of \cite[Thm 4.1]{D4} we obtain
\begin{equation}
\label{EqnII5.4}
 \int_{B_{i\, \ga}} g\, dV \,\, = \,\, \int_{\tilde M_{i\, \ga}} \tilde g \det(I 
- r_i S_{rad})\, dM_i\, .  
\end{equation}
Then, we can first sum (\ref{EqnII5.4}) over the $M_{i\, \ga}^{(j)}$ which 
form a paved neighborhood, to obtain the result on paved neighborhoods, 
and then use the partition of unity to obtain (\ref{EqnII5.3}).
\end{proof}
\par 
In the case of a skeletal structure, there is a form of Theorem 
\ref{ThmII5.1} which still applies.  For each region $\gW_i$, with $i > 0$, 
let $\tilde R_i$ denote a Borel measurable region of $\tilde M_i$ which 
under the radial flow maps to a Borel measurable region $ R_i$ of $\cB_i$.  
Let $R = \cup_{i} R_i$ and $\tilde R = \cup_{i} \tilde R_i$.  We suppose 
that the skeletal structure satisfies the \lq\lq partial Blum 
condition\rq\rq\, on $\tilde R$, by which we mean: for each $i$, the 
compatibility $1$-form $\eta_{U_i}$ vanishes on $\tilde R_i$ (recall this 
means that the radial vector $U_i$ at points of $x \in \tilde R_i$ is 
orthogonal to $\cB_i$ at the point where it meets the boundary).  Note 
that for a skeletal structure this forces $R$ to be contained in the 
complement of $\cB_{sing}$.  \par 
Then, there is the following analogue of Theorem \ref{ThmII5.1}.
\begin{Corollary}
\label{CorII5.1}
Let $\bgW$ be a multi-region configuration with skeletal linking 
structure which satisfies the partial Blum condition on the region $\tilde 
R \subset \tilde M$, with image $R$ under $\psi_1$.  If $g : R \to \R$ is a 
multi-valued Borel measurable and integrable function, then
\begin{equation}
\label{EqnII5.3b}
\int_{ R} g\, dV \,\, = \,\, \int_{\widetilde{R}} \tilde g \det(I - r_i 
S_{rad})\, 
dM\, .
\end{equation} 
\end{Corollary}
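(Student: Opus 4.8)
The plan is to reduce (\ref{EqnII5.3b}) to the single-region change-of-variables computation that underlies Theorem~4.1 of \cite{D4}, exactly in the manner used for Theorem~\ref{ThmII5.1}, and to observe that the only place where the (full) Blum hypothesis entered that earlier argument was through the vanishing of the compatibility $1$-form on the part of $\tilde M$ over which one integrates --- which is precisely what the partial Blum condition supplies on $\tilde R$.

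First I would localize. Since $\tilde R = \cup_{i>0}\tilde R_i$ and the Borel measure $dM$ is the disjoint sum of the $dM_i$, by (\ref{EqnII5.1}) it suffices to prove, for each $i>0$ separately,
\[
 \int_{R_i} g\,dV \;=\; \int_{\tilde R_i} \tilde g\,\det(I - r_i S_{rad})\,dM_i .
\]
Fix such an $i$. Cover $M_i$ by finitely many paved neighborhoods $\{W_k\}$, choose a subordinate partition of unity, and use additivity of both sides in $g$ to reduce to a single paved neighborhood; there it is enough to treat one sheet $M_{i\, \ga}^{(j)}$ carrying a single smooth value $\bu_i$ of the unit radial field. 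Because $R \subset \cB \setminus \cB_{sing}$, each such sheet lying over $\tilde R_i$ is a smooth manifold with boundary and corners on whose interior $\psi_{i\, 1}$ is nonsingular: this is Proposition~\ref{PropII.2.1}, applicable since the radial curvature and edge conditions for a skeletal linking structure hold at every point, in particular on $\tilde R_i$. Hence $\psi_{i\, 1}$ restricts to a homeomorphism from $M_{i\, \ga}^{(j)} \cap \tilde R_i$ onto its image $B_{i\, \ga}^{(j)}$, smooth with smooth nonsingular inverse on the interior, so the classical change-of-variables formula for multiple integrals applies on this sheet.

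The key step is the identification of the Jacobian. In the proof of \cite[Thm.~4.1]{D4} the Jacobian of $\psi_1$ relative to the medial measure is computed, and the factor that collapses it to $\det(I - r_i S_{rad})$ is exactly the orthogonality of the radial vector to the boundary at the image point, i.e.\ the vanishing of $\eta_{U_i}$. Since $\eta_{U_i} \equiv 0$ on $\tilde R_i$ by hypothesis, that computation goes through verbatim on each sheet, giving
\[
 \int_{B_{i\, \ga}^{(j)}} g\,dV \;=\; \int_{M_{i\, \ga}^{(j)} \cap \tilde R_i} \tilde g\,\det(I - r_i S_{rad})\,dM_i .
\]
Summing over the two sheets of each $M_{i\, \ga}$, then over the sheets forming a paved neighborhood, then recombining via the partition of unity over the $W_k$, yields the identity for $\gW_i$; summing over $i>0$ gives (\ref{EqnII5.3b}). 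Boundaries of sheets, the boundary of $\tilde R_i$, and the locus on which $g$ is genuinely multivalued are handled exactly as in Theorem~\ref{ThmII5.1}: they carry no mass for the Borel measures involved, and on a shared boundary piece the value of $g$ attached to $\gW_i$ is the one propagated by $\tilde g = g \circ \psi_{i\, 1}$.

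The main --- indeed essentially the only --- obstacle is to confirm that the proof of \cite[Thm.~4.1]{D4} uses no global Blum hypothesis beyond the pointwise vanishing of $\eta_{U_i}$ on the integration region, i.e.\ that the determinantal factor depends only on $S_{rad}$, $r_i$, and the boundary orthogonality. Once that inspection is made, everything else is the same paving-and-partition-of-unity bookkeeping already carried out for Theorem~\ref{ThmII5.1}.
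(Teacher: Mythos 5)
Your proof is correct and follows essentially the same route as the paper's: both reduce to the change-of-variables argument of \cite[Thm.~4.1]{D4} on paved neighborhoods, and both identify the partial Blum condition (vanishing of $\eta_{U_i}$ on $\tilde R_i$, i.e.\ orthogonality of $U_i$ to the boundary at the image points) as the precise hypothesis that produces the Jacobian factor $\det(I - r_i S_{rad})$. The paper packages the localization slightly differently --- it replaces $g$ by $\chi_R\cdot g$ so that the pull-back $g'\, d\psi_{i\,1}^*(dV)$ vanishes identically off $\tilde R$ on any paved neighborhood, whereas you split each sheet as $M_{i\,\ga}^{(j)}\cap\tilde R_i$ --- but these are the same bookkeeping in different notation.
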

\begin{proof}
We again follow the proof of Theorem 4.1 in \cite{D4}.  We let $\chi_{ R}$ 
denote the characteristic function of the region $R$ on $\cB$ (so it equals 
$1$ on $R$ and $0$ off $R$), and we replace $g$ by $g^{\prime} = \chi_{ 
R}\cdot g$.  Then the integral of $g$ over $R$ equals that of $g^{\prime}$ 
over $\cB$.  \par 
We may cover $R$ by paved neighborhoods whose images lie in the smooth 
strata of $\cB$, and paved neighborhoods which miss $R$.  For each 
$M_{i\, \ga}^{(j)}$ which appears in one of the paved neighborhoods 
meeting $R$, we apply the proof of Theorem 4.1 in \cite{D4}.  By the 
partial Blum condition $U_i^{(j)}$ is normal to $\cB_{i\, \ga}^{(j)}$ at 
$\psi_{i\, 1}(x)$ for $x \in \tilde R \cap M_{i\, \ga}^{(j)}$; thus 
$$ g^{\prime}\, d\psi_{i\, 1}^*(dV) \,\, = \,\, 
\begin{cases} 0& x \notin \tilde R, \\ 
\tilde g \det(I - r_i S_{rad})\, dM & x \in \tilde R . 
\end{cases}
$$
Thus, 
\begin{equation}
\label{EqnII5.3c}
\int_{R \cap \cB_{i\, \ga}^{(j)}} g\, dV \,\, = \,\, \int_{\widetilde{R} \cap 
M_{i\, 
\ga}^{(j)}} \tilde g \det(I - r_i S_{rad})\, dM\, ,
\end{equation} 
yielding the result on $M_{i\, \ga}^{(j)}$.  Then, we may follow the 
reasoning in the proof of \cite[Thm. 4.1]{D4} to obtain the conclusion.
\end{proof}
\par
\subsubsection*{Volumes of Regions in $\cB$} \par
We give one application of Theorem \ref{ThmII5.1} to computing the 
$n$-dimensional volume of $\cB$.  There are two possible meanings for 
this.  The \lq\lq complete volume\rq\rq\, includes the volume of each 
$\cB_i$, counting that of each shared boundary $\cB_{i\, j}$ for both 
$\cB_i$ and $\cB_j$.  The other \lq\lq partial volume\rq\rq counts the 
shared boundaries only once.  We may compute the complete volume by 
applying Theorem \ref{ThmII5.1} to $g \equiv 1$.
\begin{Corollary}
\label{CorII5.2}
Let $\bgW$ be a multi-region configuration with (full) Blum medial 
linking structure.  Then the complete volume of $\cB$ is given by
\begin{equation}
\label{EqnII5.4b}
\text{complete $n$-dim $\vol(\cB)$} \,\, = \,\, \int_{\tilde M} \det(I - r_i 
S_{rad})\, dM\, .
\end{equation} 
\end{Corollary}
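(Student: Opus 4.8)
The plan is to derive the formula as the special case $g \equiv 1$ of Theorem \ref{ThmII5.1}, as the sentence preceding the statement already suggests. First I would check that the constant function $g \equiv 1 : \cB \to \R$ meets the hypotheses of that theorem: it is trivially Borel measurable, and since each $\cB_i = \Phi(X_i)$ is compact its $n$-dimensional Riemannian volume is finite, so (there being only finitely many regions) $g$ is integrable over $\cB$ in the sense defined just before Theorem \ref{ThmII5.1}. Its pullback under the time-one radial maps, $\tilde g = g \circ \psi_{i\, 1}$, is again the constant $1$ on each $\tilde M_i$, hence on $\tilde M$, so the integrand on the right-hand side of (\ref{EqnII5.2}) collapses to $\det(I - r_i S_{rad})$.

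Next I would spell out the two definitions that make the two sides of the claimed identity match. On the boundary side, the multi-valued boundary integral was defined by $\int_{\cB} g\, dV = \sum_{i \neq j,\ i, j \geq 0} \int_{\cB_{i\, j}} g_{i\, j}\, dV$; with $g \equiv 1$ each summand is $\vol(\cB_{i\, j})$, so the sum records every shared boundary piece $\cB_{i\, j}$ twice, once as a face of $\cB_i$ and once as a face of $\cB_j$, together with each unshared piece $\cB_{i\, 0}$ and each unlinked piece $\cB_{i\, \infty}$ once. By the discussion of the two notions of volume preceding the statement, this is by definition the \emph{complete} $n$-dimensional volume of $\cB$ rather than the partial one. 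Combining this with (\ref{EqnII5.2}) applied to $g \equiv 1$ gives
\[
 \text{complete $n$-dim $\vol(\cB)$} = \int_{\cB} 1 \, dV = \int_{\tilde M} \det(I - r_i S_{rad}) \, dM,
\]
which is the asserted formula. In the special case of disjoint regions with smooth boundaries the same computation goes through verbatim, with Theorem \ref{ThmII5.1} reducing to a finite sum of the single-region identity \cite[Thm 4.1]{D4}.

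I do not anticipate any real obstacle. All of the substantive work — the change of variables along the radial flow, the treatment of edge and edge-corner strata using the edge-corner normal form, and the partition-of-unity patching over paved neighborhoods — is already internal to Theorem \ref{ThmII5.1} (itself resting on \cite[Thm 4.1]{D4}), so nothing of that sort needs to be repeated here. The only step requiring a moment's care, and the one I would be most careful to state correctly, is the bookkeeping: one must confirm that integrating the constant $1$ over $\cB$ in the multi-valued sense of Theorem \ref{ThmII5.1} returns the \emph{complete} volume, with shared boundaries double-counted, and not the partial volume — which is immediate from the definition of $\int_{\cB} g\, dV$ recalled above.
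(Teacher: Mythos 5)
Your proposal is correct and follows exactly the route the paper intends: the corollary is stated in the text as the immediate specialization of Theorem \ref{ThmII5.1} to $g \equiv 1$, which is why the paper gives no separate proof beyond the $\Box$. Your extra paragraph confirming that the multi-valued definition of $\int_\cB g\,dV$ double-counts shared faces and hence yields the \emph{complete} rather than partial volume is exactly the bookkeeping the paper leaves implicit.
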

\hspace{11.5cm} $\Box$ 
\par
For the partial volume, we linearly order the boundaries $\cB_i$, letting 
\lq\lq $\succ$\rq\rq\, denote the ordering, and define the multi-valued 
function $\check{1}$ on $\cB$ so that for $\cB_i$, $\check{1}$ equals $1$ 
except on the strata $\cB_{i\, j}$ with $\cB_j \succ \cB_i$, where 
instead it equals $0$.  This is the characteristic function for a region in 
$\cB$ that corresponds under the radial flow to a region $\check{M} 
\subset \tilde M$, which is the union of $\check{M}_i \subset \tilde M_i$.  
Thus, by applying Theorem \ref{ThmII5.1} to $\check{1}$ we obtain the 
following.
\begin{Corollary}
\label{CorII5.3}
Let $\bgW$ be a multi-region configuration with (full) Blum medial 
linking structure.  Then, the partial volume of $\cB$ is given by
\begin{equation}
\label{EqnII5.5}
\text{partial $n$-dim $\vol(\cB)$} \,\, = \,\, \int_{\check{M}} \det(I - r_i 
S_{rad})\, dM\, .
\end{equation} 
\end{Corollary}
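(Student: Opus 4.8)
The plan is to derive Corollary \ref{CorII5.3} by applying Theorem \ref{ThmII5.1} to the single multi-valued function $g = \check{1}$ constructed just above, so the argument is essentially a bookkeeping verification and uses no new analysis. First I would confirm that $\check{1}$ meets the hypotheses of Theorem \ref{ThmII5.1}: it is multi-valued only in the permitted way (on a shared boundary stratum it may carry one value for each region $\gW_i$, $i>0$, whose boundary contains that stratum), it is Borel measurable because it is constant on each stratum of the finite stratification of each $\cB_i$, and it is integrable for the $n$-dimensional volume because it is bounded by $1$ and each $\cB_i$ is compact. Theorem \ref{ThmII5.1} then gives $\int_{\cB}\check{1}\,dV = \int_{\tilde M}\widetilde{\check{1}}\,\det(I-r_iS_{rad})\,dM$, where $\widetilde{\check{1}} = \check{1}\circ\psi_{i\,1}$ on each $\tilde M_i$.

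The next step is to identify the two sides. For the left-hand side I would unwind the definition of the integral of a multi-valued function over $\cB$: it is a sum over the pairs $(\gW_i,W)$ with $i>0$ and $W$ an $n$-dimensional stratum of $\cB_i$ of $\int_W (\check{1})_i\,dV$. The point to check is that, for each fixed $n$-dimensional stratum $W$ of $\cB$, exactly one of the regions $\gW_i$ ($i>0$) whose boundary contains $W$ contributes the value $1$ on $W$, the others contributing $0$. This follows from the linear order $\succ$: among the (finitely many) such $\cB_i$ there is a unique $\succ$-maximal one, say $\cB_{i_0}$; by the definition of $\check{1}$ it carries the value $1$ for $\gW_{i_0}$ on $W$, while for every other such $\gW_i$ one has $\cB_{i_0}\succ\cB_i$, so $W$ lies in a stratum of $\cB_i$ on which $\check{1}$ vanishes for $\gW_i$. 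Strata of dimension $\leq n-1$ (for instance the $P_2$ curves of a configuration in $\R^3$, or shared edge-corner strata) contribute nothing to the $n$-dimensional volume and may be discarded. Hence the left-hand side equals the partial $n$-dimensional volume of $\cB$, in which shared boundaries are counted once.

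For the right-hand side I would note that on each $\tilde M_i$ the function $\widetilde{\check{1}}$ is, by construction, the characteristic function of the Borel set $\check{M}_i = \psi_{i\,1}^{-1}(\{x\in\cB_i : (\check{1})_i(x)=1\})$, and that these assemble into the characteristic function of $\check{M} = \coprod_{i>0}\check{M}_i$; that $\psi_{i\,1}$ is only a stratawise diffeomorphism onto $\cB_i$ (possibly with identifications) is immaterial here because $\check{M}_i$ is taken to be the full preimage and Theorem \ref{ThmII5.1} already incorporates the radial map at this level of generality. Consequently $\int_{\tilde M}\widetilde{\check{1}}\,\det(I-r_iS_{rad})\,dM = \int_{\check{M}}\det(I-r_iS_{rad})\,dM$, which is (\ref{EqnII5.5}). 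The one place demanding care is the combinatorial claim of the preceding paragraph --- that the chosen linear ordering makes $\check{1}$ select exactly one region on every $n$-dimensional shared stratum, including strata shared by more than two regions at $P_k$ or singular $Q_k$ configurations; every other step is a direct appeal to Theorem \ref{ThmII5.1}.
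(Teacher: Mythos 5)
Your proposal is correct and follows exactly the route the paper takes: the paper's entire justification is the sentence just above the corollary, "Thus, by applying Theorem \ref{ThmII5.1} to $\check{1}$ we obtain the following," and your write-up simply spells out that the ordering $\succ$ makes $\check{1}$ pick out one region per shared $n$-dimensional stratum so that $\int_{\cB}\check{1}\,dV$ is the partial volume, and that $\widetilde{\check{1}}$ is the characteristic function of $\check{M}$. The only small remark is that your precaution about strata shared by more than two regions is unnecessary: each $n$-dimensional stratum $\cB_{i\,j}$ is by construction shared between exactly two regions, so the "unique $\succ$-maximal" step is just "the $\succ$-larger of the two."
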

\hspace{11.5cm} $\Box$ 
\par
\begin{Remark}
\label{RemII5.5}
There is an analogous result for a region $R \subset \cB$ which is the 
image of a region $\tilde R \subset \tilde M$ under the radial flow.  If the 
configuration is modeled by a skeletal linking structure which satisfies 
the partial Blum condition on $\tilde R$, then the (complete) volume of 
$R$ is given by $\int_{\tilde R} \det(I - r_i S_{rad})\, dM$. 
\end{Remark}
We may expand $\det(I -t S_{rad}) = \sum_{j = 0}^n (-1)^{j}\gs_j t^j$, 
where $\gs_j$ is the $j$-th elementary symmetric function in the 
principal radial curvatures $\gk_i$.  Applying the formulas, we may write 
the integrals as a sum of integrals $(-1)^{j}\int \tilde g \gs_j\, r_i^j \, 
dM$, which are \lq\lq moment integrals\rq\rq\, of the functions $\tilde g 
\gs_j$ with respect to the radial functions.  Examples of these explicit 
integrals for a single region can be found in \cite[\S 6.1]{D4}; for 
configurations of regions the integrands have the same forms.  \par

\subsection*{Computing Integrals as Skeletal Linking Integrals via the 
Linking Flow} 
\par
Next we turn to the problem of computing integrals over regions which 
may be partially or completely in the external region of the configuration.  
Quite generally we consider a Borel measurable and Lebesgue integrable 
function $g : \R^{n+1} \to \R$ with compact support.  We shall see that we 
can compute the integral of $g$ as a skeletal linking integral of an 
appropriate related function.  \par 
Since we are in the unbounded case, we first modify the skeletal linking 
structure by defining $\ell_i$ on $M_{i\, \infty}$ to be $\ell_i = \infty$.  
By Proposition \ref{Prop6.8}, the linking flow on $M_{i\, \infty}$ is a 
diffeomorphism for $0 \leq t < \infty (= \ell_i)$.  \par 
Next, we first replace the linking flow by a simpler {\it elementary 
linking flow} defined by $\gl^{\prime}_t(x) = x + t \bu_i$, for $0 \leq t 
\leq \ell_i$ (or $< \infty$ if $\ell_i= \infty$).  The elementary linking 
flow is again along the lines determined by the linking vector field $L_i$; 
however, the rate of flow differs from that for the usual linking flow.  
This means that the level surfaces will differ, although the image of 
strata under the elementary linking flow agrees with that for the linking 
flow.  In addition, as the linking flow is nonsingular, Proposition 
\ref{PropII.3.1b} implies that the linking curvature and edge conditions 
are satisfied.  Then, for the linking vector field, viewed as a radial vector 
field, the radial curvature and edge curvature conditions are satisfied, and 
hence imply the nonsingularity of the elementary linking flow.  
\par 
Then, using the elementary linking flow, we can compute the integral of 
$g$ as a skeletal linking integral.  We define a multi-valued function 
$\tilde g$ on $M$ as follows: for $x \in M_i$ with associated smooth value 
$U_i$ and linking vector $L_i$ in the same direction as $U_i$ (so $(x, U_i) 
\in \tilde M_i$),  
\begin{equation}
\label{EqnII5.6}
 \tilde g(x) \,\, \overset{def}{=} \,\, \int_{0}^{\ell_i} g (\gl^{\prime}_t(x)) 
\det(I - t S_{rad})\, dt  
\end{equation}
provided the integral is defined.  
Then, we have the following formula for the integral of $g$ as a skeletal 
linking integral. 
\begin{Thm}
\label{ThmII5.5}
Let $\bgW$ be a multi-region configuration in $\R^{n+1}$ with a skeletal 
linking structure.  If $g : \R^{n+1} \to \R$ is a Borel measurable and 
Lebesgue integrable function with compact support, then $\tilde g(x)$ is 
defined for almost all $x \in \tilde M$, it is integrable on $\tilde M$, and 
\begin{equation}
\label{EqnII5.7}
\int_{\R^{n+1}} g\, dV \,\, = \,\, \int_{\tilde M} \tilde g \, dM\, .
\end{equation} 
\end{Thm}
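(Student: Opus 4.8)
The plan is to realize the (re-scaled) linking flow $\gl'_t(x)=x+t\bu_i(x)$ as a stratawise change of coordinates on $\R^{n+1}$ and then apply Fubini's theorem. Since $g$ has compact support, only the portion of $\tilde M$ whose elementary linking rays meet $\supp(g)$ is relevant, and — exactly as in the construction of the skeletal linking measure $dM$ and in the proof of \cite[Thm.~4.1]{D4} — it suffices, via paved neighborhoods and a partition of unity on each $\tilde M_i$, to prove the identity on a single manifold-with-corners piece $M_{i\,\ga}^{(k)}$ carrying one smooth value of $U_i$. On such a piece consider $\Gamma(x,t)=\gl'_t(x)$ on $\{(x,t): x\in M_{i\,\ga}^{(k)},\ 0\le t<\ell_i(x)\}$, where $\ell_i=\infty$ on $M_{i\,\infty}$ (so, by Proposition~\ref{Prop6.8}, $\Gamma$ is a diffeomorphism for all $t\ge 0$ there). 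By Proposition~\ref{PropII.3.1b} and Corollary~\ref{CorII.3.2b}, the linking curvature and linking edge conditions make $\Gamma$ nonsingular on each stratum; and by property L2) of Definition~\ref{Defmultlkgstr} together with Corollary~\ref{CorII4.1}, the image regions $\cR_{i\,k}$ of distinct strata, taken over all regions $\gW_i$ with $i>0$, are pairwise disjoint off a $dV$-null set and their union is all of $\R^{n+1}$ off a $dV$-null set, the leftover being the image under $\Gamma$ of the union of the lower-dimensional strata (a finite union of lower-dimensional submanifolds). Self-linking and partial linking create no genuine overlap, because the linking vector fields on distinct sheets meeting a common stratum of $M_0$ point into distinct local components of the complement of $M_0$.

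The key computation is the Jacobian of $\Gamma$. With $U_i=r_i\bu_i$ and a basis $\bv=\{v_1,\dots,v_n\}$ of $T_xM_i$, differentiating gives $d\Gamma(v_k)=(I-t\,S_{rad})(v_k)+(\text{a multiple of }\bu_i)$ and $d\Gamma(\partial_t)=\bu_i$ — precisely the computation carried out for the radial flow in \cite[\S 2]{D2} and \cite[Thm.~4.1]{D4}, now with the radial function replaced by the free parameter $t$ running up to $\ell_i$. Expressing $d\Gamma$ in the target basis $\{v_1,\dots,v_n,\bu_i\}$ yields algebraic determinant $\det(I-t\,S_{rad})$, and passing to Euclidean volume on $\R^{n+1}$ contributes exactly the factor $\bu_i\cdot\bn_i$ that relates the intrinsic volume $dV_i$ on $M_i$ to the skeletal measure $dM_i$; since $0\le t<\ell_i$, the linking curvature condition forces $t\,\gk_{r\,j}<1$ for every positive principal radial curvature, so $\det(I-t\,S_{rad})>0$. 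Hence, on the smooth part of each piece,
\begin{equation*}
\Gamma^{*}(dV)\;=\;\det(I-t\,S_{rad})\;dt\wedge dM_i .
\end{equation*}

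It remains to assemble the pieces. Applying the change-of-variables formula for the diffeomorphism $\Gamma$ on each $M_{i\,\ga}^{(k)}$ and Tonelli's theorem to the nonnegative integrand $|g|\circ\Gamma\cdot\det(I-t\,S_{rad})$ gives $\int_{\cR_{i\,k}}|g|\,dV=\int\!\!\int_{0\le t<\ell_i}|g(\Gamma(x,t))|\det(I-t\,S_{rad})\,dt\,dM_i(x)$, which is finite since $g$ is Lebesgue integrable with compact support; consequently the inner integral defining $\tilde g$ is finite for a.e.\ $x$, so $\tilde g$ is defined a.e.\ on $\tilde M$ and is $dM$-integrable (its $dM$-measurability following from that of $\Gamma$ and $\Gamma^{-1}$ on each stratum). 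Replacing $|g|$ by $g$, summing over the pieces of a paved neighborhood, then over paved neighborhoods via the partition of unity as in \cite[Thm.~4.1]{D4}, then over the regions $\gW_i$, $i>0$, and using that the images partition $\R^{n+1}$ up to a $dV$-null set, we obtain $\int_{\R^{n+1}}g\,dV=\sum_{i>0}\int_{\tilde M_i}\tilde g\,dM_i=\int_{\tilde M}\tilde g\,dM$. The principal obstacle is not any single computation but this last, measure-theoretic, step: one must verify that the union of the medial axes, the linking axis $M_0$, the stratum boundaries, and the $\Gamma$-images of all lower-dimensional strata really is $dV$-null in $\R^{n+1}$, so that the disjoint stratawise changes of variables sum to a genuine integral over all of $\R^{n+1}$, and that — because $g$ is merely Borel measurable, not continuous — every invocation of Fubini--Tonelli and of the change-of-variables theorem is justified.
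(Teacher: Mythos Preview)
Your proof is correct and follows essentially the same approach as the paper's: both reduce via paved neighborhoods and a partition of unity to a single piece $M_{i\,\ga}^{(j)}$ with one smooth value of $U_i$, use the elementary linking flow $\gl'_t(x)=x+t\bu_i$ as a change of coordinates with Jacobian $\det(I-tS_{rad})$ against $dM$, and then invoke the argument of \cite[Thm.~6.1]{D4}. The only organizational difference is that the paper first passes to a bounded convex $\tilde\gW\supset\supp(g)\cup\bgW$ and truncates the linking functions to finite $\ell_i'$ (so that every stratum, including $M_{i\,\infty}$, carries a finite linking length), whereas you work directly with $\ell_i=\infty$ on $M_{i\,\infty}$ and appeal to Proposition~\ref{Prop6.8}; since $g$ has compact support these are equivalent, and your more explicit handling of the Fubini--Tonelli and null-set issues is a welcome expansion of what the paper leaves to \cite{D4}.
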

\par
\begin{Remark}
\label{RemII5.8}
If we compare this formula with that given for a single region in Theorem 
6.1 of \cite{D4}, we notice they have a slightly different form.  However, 
as noted in Remark 6.2 of that paper, it is possible to use a change of 
coordinates $t^{\prime} = \ell_i t$ to rewrite 
\begin{equation}
\label{EqnII5.8a}
  \tilde g(x) \,\, = \,\, \ell_i \int_{0}^{1} g (x + t^{\prime} L_i) \det(I - 
t^{\prime} \ell_i S_{rad})\, dt^{\prime}\, , 
\end{equation}
which agrees with the form given there.  We will use the same change of 
coordinates to rewrite the formula differently below.  In proving the 
theorem we will find it useful to first reduce it to the integral for a 
modified linking structure over a bounded region.
\end{Remark}
\par
\subsubsection*{Reducing to Integrals for Bounded Skeletal Linking 
Structures} \par
We relate the unbounded skeletal structure to a bounded one.  We let $Q = 
\supp(g)$, which is compact.  We may find a compact convex region $\tilde 
\gW$ with smooth boundary containing both the configuration $\bgW$ and 
$Q$.  Then, we can modify the linking structure by reducing the $L_i$ so it 
is truncated by $\partial \tilde \gW$ and defining $L_i$ on $M_{i\, 
\infty}$ as the extensions of the radial vectors to where they meet 
$\partial \tilde \gW$.  Because the extended radial lines are transverse to 
$\partial \tilde \gW$, the new values of $\ell_i$, which we denote by 
$\ell_i^{\prime}$, remain smooth on the strata of each $M_i$.  We also let 
$L_i^{\prime} = \ell_i^{\prime}\bu_i$, with $\bu_i$ denoting the unit 
vector in the same direction as $L_i$, be the corresponding linking vector.  
As $\ell_i^{\prime} \leq \ell_i$, the linking flow remains nonsingular on 
all strata of $\tilde M$.  \par 
First, we may express 
\begin{equation}
\label{EqnII5.9}
  \int_{\R^{n+1}} \, g \,dV  \,\, = \,\, \int_{\tilde \gW} \, g \,dV 
\end{equation}
This allows us to reduce the proof of Theorem \ref{ThmII5.5} to the case 
of the bounded integral on the RHS of (\ref{EqnII5.9}) using the bounded 
linking structure on $\tilde \gW$.
\begin{proof}[Proof of Theorem \ref{ThmII5.5}]
For the proof, we first also reduce the integral on the RHS of 
(\ref{EqnII5.7}) as an integral over the bounded region.  Since the 
intersection of $\supp(g)$ with the linking line segment $\{ t\bu_i : 0 \leq 
t < \ell_i\}$ lies in $\{ t\bu_i : 0 \leq t < \ell_i^{\prime}\}$, 
\begin{equation}
\label{EqnII5.10}
 \tilde g(x) \,\, = \,\, \int_{0}^{\ell_i^{\prime}} g (\gl^{\prime}_t(x)) \det(I 
- t S_{rad})\, dt  \, .
\end{equation}
By the change of coordinates $t = \ell_i^{\prime} t^{\prime}$, we obtain 
\begin{equation}
\label{EqnII5.11}
  \tilde g(x) \,\, = \,\, \ell_i^{\prime} \int_{0}^{1} g (x + t^{\prime} 
L_i^{\prime}) \det(I - t^{\prime} \ell_i^{\prime} S_{rad})\, dt^{\prime}\, . 
\end{equation}
Then, (\ref{EqnII5.7}) reduces to computing the integral in the RHS of 
(\ref{EqnII5.7}) using the bounded structure as an integral of 
(\ref{EqnII5.11}).  
Also, as the elementary linking flow is still one-one on each linking line, 
the elementary linking flow $\gl_i^{\prime}$ is still a homeomorphism on 
any of the regions $M_{i\, \ga}^{(j)}$.  Also, by the preceding discussion 
the elementary linking flow is also nonsingular on the interior of $M_{i\, 
\ga}^{(j)} \times [0, 1]$ with image in $\R^{n+1}$, denoted $R_{i\, 
\ga}^{(j)}$.  Thus, using the change of variables formula for multiple 
integrals and the argument proving Theorem 6.1 in \cite{D4}, we obtain
\begin{equation}
\label{EqnII5.12}
  \int_{R_{i\, \ga}^{(j)}} \, g \,dV  \,\, = \,\, \int_{M_{i\, \ga}^{(j)}} \tilde 
g(x) \, dt^{\prime}\, . 
\end{equation}
where $\tilde g(x)$ is given by (\ref{EqnII5.11}), which is the same as 
(\ref{EqnII5.6}).  \par
Thus, we may follow the reasoning from the proof of Theorem 6.1 in 
\cite{D4} to complete the proof.
\end{proof}
\par
As an immediate consequence of Theorem \ref{ThmII5.5} we can deduce   
the following.  
Let
$$  g_Q(x) =  \int_{0}^{\ell_i^{\prime}} g(\gl_t^{\prime}(x)) 
\chi_Q(\gl_t^{\prime}(x)) \det(I - t S_{rad, i}) \,dt \, .$$
\begin{Corollary}
\label{CorII5.4}
Let $\bgW$ be a multi-region configuration with skeletal linking 
structure.  Suppose that $Q \subset \R^{n+1}$ is a compact subset and $g : 
\R^{n+1} \to \R$ is a Borel measurable and Lebesque integrable function on 
$Q$.  Then,
\begin{equation}
\label{EqnII5.13}
\int_{Q} \, g \,dV  \,\, = \,\,  \int_{\tilde {M}} g_Q \, dM\, .
\end{equation} 
\end{Corollary}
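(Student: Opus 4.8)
The plan is to obtain Corollary~\ref{CorII5.4} directly from Theorem~\ref{ThmII5.5} by applying the latter to the truncated integrand $h = \chi_Q\cdot g$. First I would verify that $h$ satisfies the hypotheses of Theorem~\ref{ThmII5.5}: since $g$ is Borel measurable and $Q$ is compact (hence Borel), the product $h$ is Borel measurable; since $g$ is Lebesgue integrable on $Q$, we have $\int_{\R^{n+1}}|h|\,dV = \int_Q |g|\,dV < \infty$, so $h$ is Lebesgue integrable; and since $h$ vanishes off the compact set $Q$, its support is contained in $Q$ and is therefore compact. Moreover $\int_{\R^{n+1}} h\,dV = \int_Q g\,dV$, which is the left-hand side of (\ref{EqnII5.13}).

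Next I would apply Theorem~\ref{ThmII5.5} to $h$, which gives that $\tilde h(x)$ is defined for almost all $x\in \tilde M$, is integrable there, and $\int_{\R^{n+1}} h\,dV = \int_{\tilde M}\tilde h\,dM$. It then remains only to identify $\tilde h$ with the function $g_Q$ in the statement. By the defining formula (\ref{EqnII5.6}), for $x\in M_i$ with associated smooth value $U_i$ and linking line in the direction $\bu_i$,
\[
 \tilde h(x) \;=\; \int_0^{\ell_i} h(\gl^{\prime}_t(x))\det(I - t\,S_{rad})\,dt \;=\; \int_0^{\ell_i} g(\gl^{\prime}_t(x))\,\chi_Q(\gl^{\prime}_t(x))\det(I - t\,S_{rad,\,i})\,dt .
\]
This already has the shape of $g_Q$, the only discrepancy being the upper limit $\ell_i$ in place of $\ell_i^{\prime}$. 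To reconcile them I would invoke the bounded reduction used in the proof of Theorem~\ref{ThmII5.5}: pick a compact convex region $\tilde\gW$ with smooth boundary containing both $\bgW$ and $Q$, and let $\ell_i^{\prime}$ be the truncated linking function of the resulting bounded structure. Since $\supp(h)\subseteq Q\subseteq \tilde\gW$, the intersection of $\supp(h)$ with the linking segment $\{t\bu_i : 0\le t<\ell_i\}$ lies inside $\{t\bu_i : 0\le t<\ell_i^{\prime}\}$; hence the integrand vanishes for $\ell_i^{\prime}\le t<\ell_i$, and the upper limit may be lowered to $\ell_i^{\prime}$. Thus $\tilde h = g_Q$, and substituting into the conclusion of Theorem~\ref{ThmII5.5} yields (\ref{EqnII5.13}).

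The argument is essentially bookkeeping, so there is no serious obstacle: the nonsingularity of the elementary linking flow, the change-of-variables computation on each $M_{i\,\ga}^{(j)}$, and the almost-everywhere definedness and integrability of the transformed integrand are all inherited from Theorem~\ref{ThmII5.5} applied to $h$. The one point that must be stated with a little care is the matching of the two forms of the linking function, and this is precisely what the truncation by $\partial\tilde\gW$ in the proof of that theorem already arranges.
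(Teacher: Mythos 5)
Your proposal is correct and is precisely the argument the paper intends: the paper dispatches Corollary \ref{CorII5.4} in one line as "an immediate consequence of Theorem \ref{ThmII5.5} applied to $\chi_Q\cdot g$," and your elaboration — verifying the hypotheses for $h=\chi_Q\cdot g$, applying (\ref{EqnII5.6}), and lowering the upper limit from $\ell_i$ to $\ell_i^{\prime}$ via the same truncation by $\partial\tilde\gW$ used in the proof of Theorem \ref{ThmII5.5} — is exactly the bookkeeping that statement suppresses.
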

The proof is an immediate consequence of Theorem \ref{ThmII5.5} applied 
to $\chi_Q \cdot g$.  \quad $\Box$
\par
In the special case where $g \equiv 1$, we obtain an analogue of the 
Crofton formula.  For a compact subset $Q \subset \R^{n+1}$ and each $x 
\in \tilde M_i$, we define the multi-valued function 
$$ m_Q(x) \,\, =\,\, \int_{0}^{\ell_i} \chi_Q(x + t L_i(x)) \det(I - 
t S_{rad})\, dt  \, .$$
We can view $m_Q(x)$ as a weighted $1$-dimensional measure of the 
intersection of $Q$ with the linking line from $x$ determined by $L_i(x)$.
\begin{Corollary}[Crofton Type Formula]
\label{CorII5.5}
Let $\bgW$ be a multi-region configuration with skeletal linking 
structure.  Suppose $Q \subset \R^{n+1}$ is a compact subset. Then,
\begin{equation}
\label{EqnII5.13b}
\vol(Q) \,\, = \,\,  \int_{\tilde M} m_Q(x)\, dM\, .
\end{equation} 
\end{Corollary}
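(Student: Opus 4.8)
The plan is to obtain (\ref{EqnII5.13b}) as the special case $g \equiv 1$ of the skeletal linking integral formula already established, localized to $Q$. First I would apply Corollary \ref{CorII5.4} (equivalently Theorem \ref{ThmII5.5} applied to $\chi_Q$) with the constant function $g \equiv 1$ on $\R^{n+1}$. Since $Q$ is compact, $\chi_Q$ is a Borel measurable function with compact support and $\int_{\R^{n+1}} \chi_Q\, dV = \vol(Q) < \infty$, so the hypotheses of Theorem \ref{ThmII5.5} hold; in particular the associated multi-valued integrand on $\tilde M$ is defined for almost all $x$ and is $dM$-integrable, and the left-hand side of (\ref{EqnII5.13}) is exactly $\vol(Q)$.

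The next step is to identify the resulting integrand with $m_Q$. Unwinding the definition in Corollary \ref{CorII5.4}, the function produced on each $M_i$ by taking $g \equiv 1$ is $x \mapsto \int_0^{\ell_i^{\prime}} \chi_Q(\gl_t^{\prime}(x)) \det(I - t S_{rad})\, dt$, where $\gl_t^{\prime}(x) = x + t\bu_i$ is the elementary linking flow and $\ell_i^{\prime}$ is the threshold obtained by truncating $L_i$ --- and the extended radial vectors on $M_{i\, \infty}$ --- at the boundary of a convex region $\tilde \gW$ chosen to contain both $\bgW$ and $Q$. For $\ell_i^{\prime} < t \leq \ell_i$ the point $x + t\bu_i$ lies outside $\tilde \gW$, hence outside $Q$, so the integrand vanishes there; thus the upper limit may be raised to $\ell_i$ (or to $\infty$ on $M_{i\, \infty}$) without changing the value. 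After the change of variables relating the two parametrizations of the linking line as in Remark \ref{RemII5.8}, this integrand is precisely $m_Q(x)$, and substituting into (\ref{EqnII5.13}) gives the claim.

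I do not expect a genuine analytic obstacle here: all the hard work has been done in Theorem \ref{ThmII5.5}. The points requiring care are the same bookkeeping issues as there --- namely, (i) the nonsingularity of the elementary linking flow, which follows from Proposition \ref{PropII.3.1b} together with the radial and edge curvature conditions, so that the multiple-integral change of variables applies on each paved region $M_{i\, \ga}^{(j)}$; (ii) the measure-zero set of $x \in \tilde M$ where $m_Q(x)$ fails to be defined, and the self- and partial-linking strata on which the flow is a diffeomorphism only for $t < 1$, neither of which affects the value of the integral; and (iii) the unbounded strata $M_{i\, \infty}$, which are handled by the compact support of $\chi_Q$ as above. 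Assembling the local identities over a partition of unity subordinate to paved neighborhoods, exactly as in the proof of \cite[Thm. 6.1]{D4} and of Theorem \ref{ThmII5.5}, then yields the global formula.
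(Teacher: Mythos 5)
Your proof is correct and follows the same route the paper intends: the paper leaves this corollary unproved (just a $\Box$), as it is the immediate specialization $g \equiv 1$ of Corollary \ref{CorII5.4}, equivalently Theorem \ref{ThmII5.5} applied to $\chi_Q$. You correctly note the bookkeeping needed to identify the integrand with $m_Q$ — raising the truncated upper limit $\ell_i^{\prime}$ back to $\ell_i$ is harmless since $\chi_Q$ vanishes there, and the change of parametrization along each linking line is the one in Remark \ref{RemII5.8} — details the paper suppresses but which are exactly what make the identification legitimate.
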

\hspace{11.5cm} $\Box$
\par 
\subsubsection*{Decomposition of a Global Integral using the Linking 
Flow} 
\par  
We next decompose the integral on the RHS of (\ref{EqnII5.13}) using the 
alternative integral representation of $\tilde g$ using the linking flow.  
We do so by applying the change of variables formula to relate the 
elementary linking flow $\gl^{\prime}$ with the linking flow $\gl$, both 
of which flow along the linking lines but at different linear rates.  \par
We define 
\begin{align}
\label{EqnII5.14}
  \tilde g_{int}(x) \,\, &= \,\, \int_{0}^{r_i } g(x + t\bu_i) 
\det(I - t S_{rad})\, dt\, \quad \text{ and } \quad \notag \\
\tilde g_{ext}(x) \,\, &= \,\, \int_{r_i }^{\ell_i} g(x + t\bu_i) \det(I - t 
S_{rad})\, dt. 
\end{align}
These may be alternately written using a change of coordinates as
\begin{equation}
\label{EqnII5.15a}
  \tilde g_{int}(x) \,\, = \,\, r_i \int_{0}^{1} g(x + t r_i\bu_i) 
\det(I - t r_i S_{rad})\, dt\, . 
\end{equation}
and 
\begin{equation}
\label{EqnII5.15b}
  \tilde g_{ext}(x) \,\, = \,\, (\ell_i - r_i) \int_{0}^{1} g(x + (r_i + t(\ell_i 
- r_i))\bu_i) \det(I - (r_i +t (\ell_i - r_i)) S_{rad})\, dt\, . 
\end{equation}
Then, we may decompose $\int g$ as follows.
\begin{Corollary}
\label{CorII5.6}
Let $\bgW$ be a multi-region configuration in $\R^{n+1}$ with a skeletal 
linking structure.  If $g : \R^{n+1} \to \R$ is a Borel measurable and 
Lebesgue integrable function with compact support, then, $\tilde 
g_{int}(x)$ and $\tilde g_{ext}(x)$ are defined for almost all $x \in \tilde 
M$, they are integrable on $\tilde M$, and 
\begin{equation}
\label{EqnII5.7b}
\int_{\R^{n+1}} g\, dV \,\, = \,\, \int_{\tilde M} \tilde g_{int} \, dM\, \, + 
\,\, \int_{\tilde M} \tilde g_{ext} \, dM \, ,
\end{equation} 
where 
\begin{equation}
\label{EqnII5.7c}
\int_{\tilde M} \tilde g_{int} \, dM \,\, = \,\, \sum_{i, j >0} \int_{ M_{i \to 
j}} \tilde g_{int} \, dM\, \, + \,\, \sum_{i >0} \int_{M_{i \, \infty}} \tilde 
g_{int} \, dM \, ,
\end{equation} 
with an analogous formula with $g_{int}$ replaced by $g_{ext}$ 
everywhere in (\ref{EqnII5.7c}).  
\end{Corollary}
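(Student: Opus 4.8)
The plan is to read the decomposition off the single skeletal linking integral of Theorem~\ref{ThmII5.5}, by splitting separately the fibre integral that defines $\tilde g$ and the domain $\tilde M$. First I would pass, exactly as in the proof of Theorem~\ref{ThmII5.5}, to the bounded skeletal linking structure on a compact convex region $\tilde\gW$ containing $\bgW\cup\supp(g)$, so that the (truncated) linking function, still written $\ell_i$, is finite with $r_i\le\ell_i$ everywhere --- including on $M_{i\,\infty}$, where $\ell_i$ is the distance along the extended radial line to $\partial\tilde\gW$. Theorem~\ref{ThmII5.5} then gives $\int_{\R^{n+1}}g\,dV=\int_{\tilde M}\tilde g\,dM$ with $\tilde g(x)=\int_0^{\ell_i}g(\gl'_t(x))\det(I-tS_{rad})\,dt$. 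Breaking this interval at $t=r_i$ writes $\tilde g=\tilde g_{int}+\tilde g_{ext}$, and the substitutions $t=r_it'$ on $[0,r_i]$ and $t=r_i+t'(\ell_i-r_i)$ on $[r_i,\ell_i]$ put the two pieces in the forms (\ref{EqnII5.14})--(\ref{EqnII5.15b}). Granting that $\tilde g_{int}$ and $\tilde g_{ext}$ are individually a.e.\ finite and integrable, equation~(\ref{EqnII5.7b}) is then immediate from additivity of $\int_{\tilde M}(\cdot)\,dM$.

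For integrability I would argue that on $M_{reg}$ --- a full-measure subset of each $M_i$ for $dM_i$, since $\overline{\bdyM_i}$ and the interior singular strata have codimension $\ge 1$ --- the linking curvature condition forces every eigenvalue of $tS_{rad}$ to be $<1$ for $0\le t\le\ell_i$, hence $\det(I-tS_{rad})>0$ on that range. Consequently $|\tilde g_{int}|$ and $|\tilde g_{ext}|$ are pointwise bounded by $\widetilde{|g|}$, the integrand obtained by applying Theorem~\ref{ThmII5.5} to $|g|$, which that theorem guarantees is a.e.\ finite and $dM$-integrable; the two pieces inherit these properties. It then remains to split the domain: write $\tilde M=\coprod_{i>0}\tilde M_i$ and, for fixed $i$, use the analogue for $\tilde M_i$ of the decomposition~(\ref{EqnII4.2}), namely $\tilde M_i=\bigl(\bigcup_{j\ne i}M_{i\to j}\bigr)\cup M_{i\to i}\cup M_{i\,\infty}$, with $M_{i\,\infty}$ disjoint from the linked part by property L4) of Definition~\ref{Defmultlkgstr}. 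The pairwise overlaps among $\{M_{i\to j}\}_{j\ne i}\cup\{M_{i\to i}\}$ are contained in the strata of the labelled refinement $\cS_i$ whose linking-flow image is a stratum of $M_0$ of generic linking type $(A_\bga:A_{\bgb_1},\dots,A_{\bgb_r})$ with $r\ge 3$, since such an overlap corresponds to a point of $M_0$ at which at least three points of $\cB$ are linked; by Corollary~\ref{Cor3.5} these strata have codimension $\ge r-2\ge 1$ in $\tilde M_i$, hence are $dM_i$-null (the skeletal measure $dM_i=\rho_i\,dV_i$ is absolutely continuous with respect to the $n$-dimensional Riemannian volume of each stratum). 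Therefore $\int_{\tilde M_i}\tilde g_{int}\,dM_i=\sum_{j>0}\int_{M_{i\to j}}\tilde g_{int}\,dM_i+\int_{M_{i\,\infty}}\tilde g_{int}\,dM_i$ (allowing $j=i$), and summing over $i>0$ yields~(\ref{EqnII5.7c}); the identical computation with integration over $[r_i,\ell_i]$ in place of $[0,r_i]$ gives the companion identity for $\tilde g_{ext}$.

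The hard part will be the null-overlap claim in the last step: one has to be certain that the linking regions $M_{i\to j}$ meet one another (and $M_{i\to i}$) only on a $dM$-negligible set, and this is precisely where the genericity of the configuration is genuinely used --- through the structure of the labelled refinement $\cS_i$ guaranteed by the existence theorems and the codimension bookkeeping of Corollary~\ref{Cor3.5}. Everything else is routine once Theorem~\ref{ThmII5.5} is in hand: the positivity of $\det(I-tS_{rad})$ comes from the linking curvature and linking edge conditions, and the passage to a bounded structure is the same reduction already used to prove Theorem~\ref{ThmII5.5}.
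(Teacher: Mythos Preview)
Your approach is essentially the paper's: split $\tilde g$ as $\tilde g_{int}+\tilde g_{ext}$ by breaking the fibre integral at $t=r_i$, then decompose $\tilde M$ into the linking pieces $M_{i\to j}$ and $M_{i\,\infty}$. Two remarks on where you diverge from the paper.

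First, for the a.e.\ finiteness and integrability of $\tilde g_{int}$ and $\tilde g_{ext}$ the paper simply appeals to Fubini's theorem as in the proof of \cite[Thm.~6.1]{D4}; your domination argument via positivity of $\det(I-tS_{rad})$ is more explicit and perfectly fine, though not needed.

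Second --- and this is the more important point --- you identify the null-overlap claim as ``the hard part'' and invoke genericity through Corollary~\ref{Cor3.5}. The paper does not: the statement is for an \emph{arbitrary} skeletal linking structure, not only the generic Blum case, so Corollary~\ref{Cor3.5} is not available in general. Instead the paper observes directly that each $M_{i\to j}$ is by definition a union of strata of the labelled refinement $\cS_i$, hence an $n$-dimensional stratified set, and any two of these sets intersect on a union of strata of dimension $<n$, which is $dM$-null. No appeal to the Blum classification or to generic linking types is required; the structural axioms L2)--L3) of Definition~\ref{Defmultlkgstr} already force the top-dimensional strata to carry a single linking label. So what you flagged as the hard part is in fact immediate from the definition of a skeletal linking structure.
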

The first integral on the RHS of (\ref{EqnII5.7b}) is the \lq\lq interior 
integral\rq\rq\, of $g$ within the configuration using the radial flow, and 
the second integral is the\lq\lq external integral\rq\rq\, computed using 
the linking flow outside of the configuration.  Then we may decompose 
each of these integrals using (\ref{EqnII5.7c}) into integrals over the 
distinct linking regions as illustrated in Figure~\ref{figII5.8}
\par
\begin{figure}[ht]
\includegraphics[width=8cm]{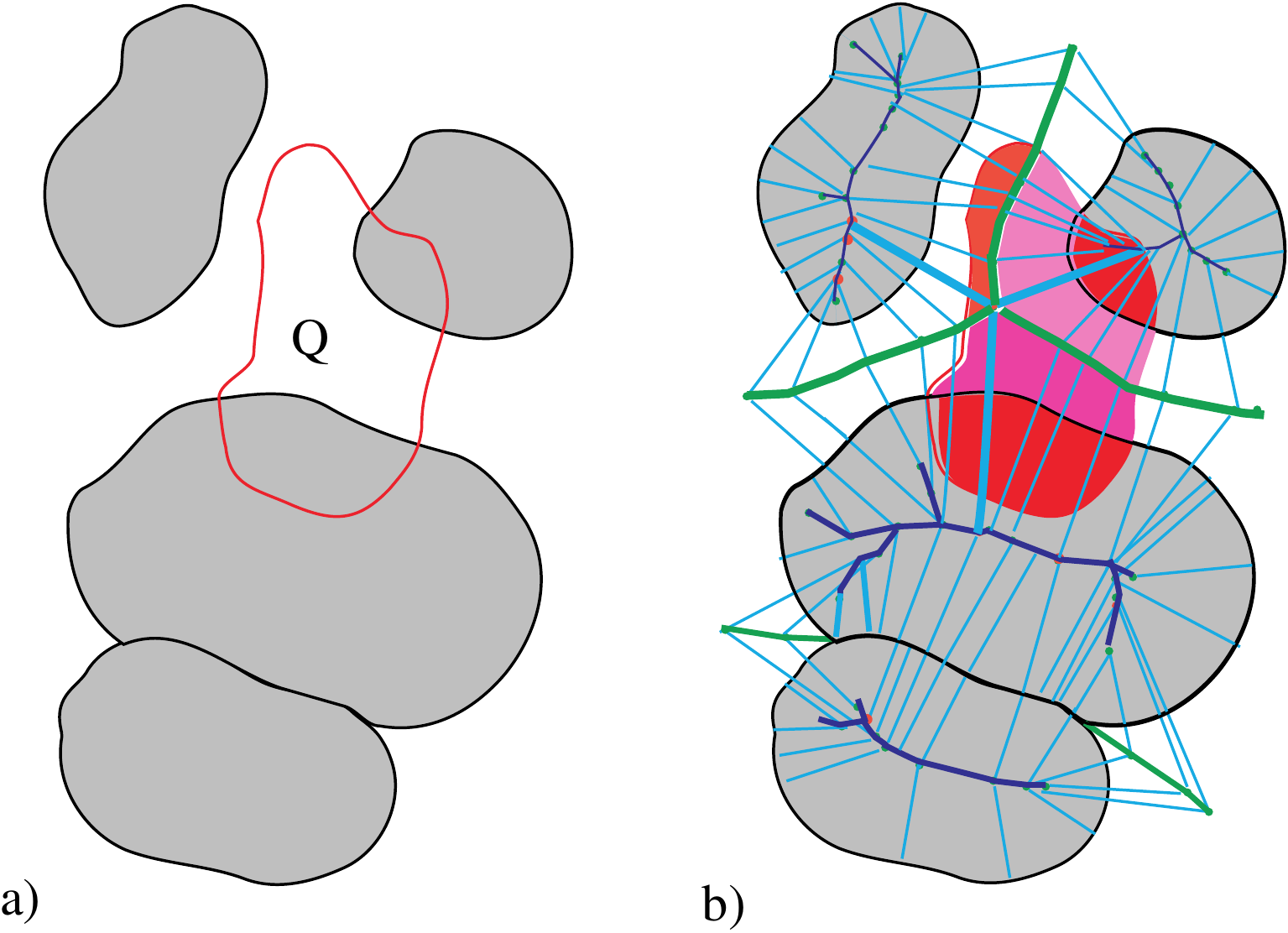}
\caption{ \label{figII5.8} The decomposition of the integral over a region 
$Q$ outlined in a) is given as the sum of integrals over regions in b)  
obtained by the subdivision of $Q$ by the linking axis and the three linking 
lines to the branch point of the linking axis.  Each $Q_{i\, j} \subset 
\cR_{i\to j}$ (or in general including $Q_{i\, \infty} \subset \cR_{i\, 
\infty}$) in the figure consists of the darker region inside the subregion 
$\gW_{i \to j}$ together with the portion of $Q$ in the linking 
neighborhood $\cN_{i\to j}$.  The integral can then be expressed by 
Corollary \ref{CorII5.6} as sums of internal and external integrals over 
the $M_{i\to j}$ and $M_{i\, \infty}$.}
\end{figure} 
\par
\vspace{2ex}
\begin{proof}
For (\ref{EqnII5.7b}) we may just apply Theorem \ref{ThmII5.5} and 
compute the RHS of (\ref{EqnII5.8a}) as a sum of two integrals by writing 
(\ref{EqnII5.6}) as a sum of two integrals to obtain
$\tilde g = \tilde g_{int} + \tilde g_{ext}$.  That they are defined for 
almost all $x \in \tilde M$ follows from an application of Fubini\rq s 
Theorem as in the proof of Theorem 6.1 in \cite{D4}.  
This decomposes the linking flow into the parts both before and after the 
flow reaches the boundary.  
 \par
For (\ref{EqnII5.7c}), we use that $\tilde M$ is a union of $M_{i \to j}$ for 
$i, j > 0$ and $M_{i \, \infty}$ for $i > 0$.  These are unions of strata 
which form $n$-dimensional stratified sets, and hence are Borel sets of 
dimension $n$.  Also, any two of these intersect on a union of strata of 
dimension $< n$ which have measure $0$ in $\tilde M$.  Thus, 
$$  \int_{\tilde M} g_{int}\, dV \,\, = \, \, \sum_{i, j >0} \int_{M_{i \to j}} 
g_{int} \, dV\, \, + \,\, \sum_{i >0} \int_{M_{i \, \infty}} g_{int} \, dM \, .  
$$
This yields (\ref{EqnII5.7c}) with an analogous formula for $g_{ext}$.
\end{proof}
\par
\subsection*{Skeletal Linking Integral Formulas for Global Invariants} 
\par
We now express the volumes of regions associated to the linking structure 
as skeletal linking integrals.  
We may apply the same reasoning as in Corollary \ref{CorII5.5}, using 
Corollary \ref{CorII5.6} to compute the volumes of compact measurable 
regions $Q \subset \R^{n+1}$ as a sum of internal and external integrals.  
We consider a specific application for linking regions. \par
For these calculations we will use the expression
\begin{equation}
\label{EqnII5.20a}
\cI(t) \,\, \overset{def}{=} \,\, \int_{0}^{t} \det(I - t S_{rad}) \, dt \,\, = 
\,\,  \sum_{j = 0}^{n} \frac{(-1)^j}{j+1} \gs_j t^{j+1} \, . 
\end{equation}

\subsubsection*{Volumes of Linking Regions as Skeletal Integrals} 
\par
We consider a multi-region configuration with a bounded skeletal linking 
structure.   
\begin{Corollary}
\label{CorII5.8}
Let $\bgW \subset \tilde \gW$ be a multi-region configuration with a 
bounded skeletal linking structure.  Then,
\begin{align}
\label{EqnII5.20}
\vol(\cN_{i \to j}) \,\, &= \,\,  \int_{\tilde M_{i \to j}} \cI(\ell_i) - 
\cI(r_i)\, dM\, \quad \text{ and } \quad \vol(\gW_{i \to j}) \,\, &= \,\,  
\int_{M_{i \to j}} \cI(r_i)\, dM\, \notag \\
\vol(\cN_{i\, \infty}) \,\, &= \,\,  \int_{M_{i\, \infty}} \cI(\ell_i) - 
\cI(r_i)\, dM\, \quad \text{ and } \quad \vol(\gW_{i\, \infty}) \,\, &= \,\,  
\int_{M_{i\, \infty}} \cI(r_i)\, dM\, .
\end{align} 
\end{Corollary}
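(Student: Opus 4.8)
The plan is to obtain all four identities as instances of Theorem \ref{ThmII5.5}, in the decomposed form of Corollary \ref{CorII5.6}, applied to the characteristic functions of the four regions. First I would record the routine preliminaries: each of $\cN_{i \to j}$, $\gW_{i \to j}$, $\cN_{i\, \infty}$, $\gW_{i\, \infty}$ is the image of a union of strata of $\tilde M_i$ (crossed with an interval) under the piecewise smooth linking flow, hence a Borel set, and it is bounded since $\bgW \subset \tilde \gW$; so its characteristic function is Borel measurable, Lebesgue integrable, and compactly supported, and Theorem \ref{ThmII5.5} applies. I would also rewrite the regions in arclength form along the linking lines: by (\ref{Eqn2.1}) and Definition \ref{DefII4.1}, $\gW_{i \to j} = \{ x + s\bu_i(x) : x \in M_{i \to j},\ 0 \leq s \leq r_i(x)\}$ and $\cN_{i \to j} = \{ x + s\bu_i(x) : x \in M_{i \to j},\ r_i(x) \leq s \leq \ell_i(x)\}$, so that the segment $\{x + t\bu_i : 0 \leq t \leq r_i(x)\}$ appearing in $\tilde g_{int}$ and the segment $\{x + t\bu_i : r_i(x) \leq t \leq \ell_i(x)\}$ appearing in $\tilde g_{ext}$ (see (\ref{EqnII5.14})) are precisely the ``interior'' and ``exterior'' parts of these regions over $x$.

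The core is then a short bookkeeping computation of $\tilde g_{int}$ and $\tilde g_{ext}$ for $g = \chi_{\gW_{i \to j}}$ and for $g = \chi_{\cN_{i \to j}}$. For $g = \chi_{\gW_{i \to j}}$: since $\gW_{i \to j} \subseteq \gW_i$ while $x + t\bu_i$ has left $\gW_i$ once $t > r_i(x)$, one gets $\tilde g_{ext} \equiv 0$; and for $x \in M_{i \to j}$ the entire segment $0 \leq t \leq r_i(x)$ lies in $\gW_{i \to j}$, so $\tilde g_{int}(x) = \int_0^{r_i}\det(I - t S_{rad})\, dt = \cI(r_i)$ by (\ref{EqnII5.20a}). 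For $g = \chi_{\cN_{i \to j}}$ the picture is mirrored: $\cN_{i \to j}$ is external to the configuration, so $\tilde g_{int} \equiv 0$, and for $x \in M_{i \to j}$ the exterior segment sweeps the part of $\cN_{i \to j}$ over $x$, giving $\tilde g_{ext}(x) = \int_{r_i}^{\ell_i}\det(I - t S_{rad})\, dt = \cI(\ell_i) - \cI(r_i)$. Feeding these into (\ref{EqnII5.7b}) and (\ref{EqnII5.7c}) gives the two $i \to j$ formulas. The two $\infty$ formulas follow by the identical argument with $M_{i\, \infty}$ in place of $M_{i \to j}$, where $\gW_{i\, \infty}$ is swept by the radial flow for $0 \leq t \leq 1$ and $\cN_{i\, \infty}$ by its truncation beyond $\cB_i$ at $\partial \tilde \gW$, so that the relevant $\ell_i$ is finite and smooth on the strata in the bounded case.

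The step I expect to be the main (really the only nontrivial) obstacle is showing that $\tilde g_{int}$, resp.\ $\tilde g_{ext}$, vanishes almost everywhere off the stratum $M_{i \to j}$ (resp.\ $M_{i\, \infty}$): that is, for $x$ in any other stratum of $\tilde M$ the radial or linking line issuing from $x$ meets $\gW_{i \to j}$ (resp.\ $\cN_{i \to j}$) in a set of one-dimensional measure zero. This is exactly where the structure axioms enter: by property L2) and Corollary \ref{CorII4.1}, the linking flow restricted to the distinct strata of $\tilde M$ is a stratawise diffeomorphism onto images that are pairwise disjoint except along the lower-dimensional linking axis $M_0$ (property L3)) and along shared boundary strata; since these exceptional overlaps have measure zero in $\R^{n+1}$, the contribution of the ``wrong'' strata drops out of all the integrals. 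I would also note the consistency check that when $\gW_i$ and $\gW_j$ share a boundary region one has $\ell_i = r_i$ on the corresponding strata (Remark \ref{Rem2.2}), so the integrand $\cI(\ell_i) - \cI(r_i)$ vanishes there, matching the fact that the linking flow is constant in $t$ for $\frac{1}{2} \leq t \leq 1$ on those strata.
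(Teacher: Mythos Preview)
Your proposal is correct and follows essentially the same approach as the paper: apply Corollary \ref{CorII5.6} to the characteristic function of each region, compute $\tilde g_{int}$ and $\tilde g_{ext}$ along the linking lines, and identify the results with $\cI(r_i)$ and $\cI(\ell_i) - \cI(r_i)$ via (\ref{EqnII5.20a}). Your treatment is in fact more careful than the paper's, which simply asserts ``$0$ otherwise'' without spelling out the measure-zero overlap argument you give using properties L2) and L3) and Corollary \ref{CorII4.1}.
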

It then follows that we can compute the volumetric invariants of the 
various linking regions such as $\cR_{i \to j}$, $\cN_{i}$, etc. using 
skeletal linking integrals of the polynomials $\cI(\ell_i)$ or $\cI(r_i)$.  
For example,
\begin{equation}
\label{EqnII5.20b}
 \vol(\cR_{i \to j}) = \int_{M_{i \to j}} \cI(\ell_i) \, dM \, , 
\end{equation}
 with an analogous formula for $\vol(\cR_{i \, \infty})$.  \par
As a consequence, we obtain generalizations of both Weyl\rq s formula for 
volumes of tubes \cite{W} and Steiner\rq s formula, see e.g. \cite{Gr}.  
\begin{Corollary}[Generalized Weyl\rq s Formula]
\label{CorII5.9}
Let $\bgW \subset \tilde \gW$ be a multi-region configuration with a 
bounded skeletal linking structure.  Then,
\begin{equation}
\label{EqnII5.21}
\vol(\gW_i) \,\, = \,\,  \int_{\tilde M_i} \cI(r_i)\, dM\, . 
\end{equation} 
\end{Corollary}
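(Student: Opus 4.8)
The plan is to read Corollary~\ref{CorII5.9} as the case $g\equiv 1$ of the interior part of the linking-integral machinery, so that the essential analytic content is already contained in Theorem~\ref{ThmII.1}, Corollary~\ref{CorII4.1}, and the change-of-variables computation behind \cite[Thm.\ 6.1]{D4} and Theorem~\ref{ThmII5.1}. First I would note that by (\ref{Eqn2.1}) the linking flow $\gl_i$ on the interval $0\le t\le\frac12$ is simply the radial flow $\psi_{i,t}(x)=x+t\,r_i(x)\bu_i(x)$ run at double speed, and that by Theorem~\ref{ThmII.1} (equivalently Corollary~\ref{CorII4.1}(1)) the radial map $\psi_i$ restricts to a stratawise diffeomorphism of $\coprod_k S_{i,k}\times(0,1]$ onto $\gW_i\setminus M_i$, where $\{S_{i,k}\}$ are the strata of $\tilde M_i$. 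Since $M_i$ has Lebesgue measure zero in $\R^{n+1}$, computing $\vol(\gW_i)$ reduces to integrating the constant function $1$ over this fibration and using additivity over the finitely many strata.

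The computation is then a one-variable change of variables. By the change-of-variables argument proving \cite[Thm.\ 6.1]{D4}, adapted to a single region $\gW_i$ of the configuration exactly as in the proof of Theorem~\ref{ThmII5.5} (or, equivalently, by reading off the integrand $\tilde g_{int}$ of (\ref{EqnII5.15a}) with $g\equiv1$), one has
\[
  \vol(\gW_i)\;=\;\int_{\tilde M_i}\Bigl(r_i\int_0^1\det\bigl(I-t\,r_i\,S_{rad}\bigr)\,dt\Bigr)\,dM\,.
\]
Substituting $s=t\,r_i$ in the inner integral turns the bracketed quantity into $\int_0^{r_i}\det(I-s\,S_{rad})\,ds$, which is precisely $\cI(r_i)$ by the definition (\ref{EqnII5.20a}); the non-regular strata contribute nothing since they have $dM$-measure zero. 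This yields (\ref{EqnII5.21}).

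As a cross-check, one can argue additively instead: the decomposition $\tilde M_i=\bigl(\bigcup_{j} M_{i\to j}\bigr)\cup M_{i\,\infty}$ of (\ref{EqnII4.2}) (with $j=i$ allowed, to account for self-linking) corresponds under the radial portion of the linking flow to $\gW_i=\bigl(\bigcup_j\gW_{i\to j}\bigr)\cup\gW_{i\,\infty}$, so summing the identities $\vol(\gW_{i\to j})=\int_{M_{i\to j}}\cI(r_i)\,dM$ and $\vol(\gW_{i\,\infty})=\int_{M_{i\,\infty}}\cI(r_i)\,dM$ from Corollary~\ref{CorII5.8} again gives $\int_{\tilde M_i}\cI(r_i)\,dM$. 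The name ``Generalized Weyl's Formula'' is then a matter of unwinding notation: expanding $\cI(r_i)=\sum_{j=0}^{n}\frac{(-1)^j}{j+1}\,\gs_j\,r_i^{\,j+1}$ displays $\vol(\gW_i)$ as a sum of moment integrals of the elementary symmetric functions $\gs_j$ of the principal radial curvatures against powers of the radial function, which reduces, when $r_i$ is constant, to a polynomial in the radius, i.e.\ to Steiner's formula and Weyl's tube formula.

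The one genuinely delicate point is the overlap bookkeeping in the additive route: the regions $\gW_{i\to j}$ need not be pairwise disjoint, overlapping exactly on $\gl_i\bigl((M_{i\to j}\cap M_{i\to k})\times[0,\tfrac12]\bigr)$, the locus where three or more strata are mutually linked. These overlap loci are unions of strata of $\tilde M_i$ of dimension at most $n-1$ --- in the Blum case by the codimension count of Corollary~\ref{Cor3.5}, and in the general skeletal case by the labeled-refinement conditions (L2)--(L3) --- so they have $(n+1)$-dimensional measure zero in $\R^{n+1}$ and $dM$-measure zero in $\tilde M_i$, and drop out of both sides. The direct route of the first two paragraphs sidesteps this entirely, since it never subdivides $\tilde M_i$. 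A secondary caveat, relevant only when $\gW_i$ is non-smooth and one works with the full Blum linking structure, is that one invokes the edge-corner normal form (Definition~\ref{Def3.3}) to know that the radial flow still fibers $\gW_i$ minus the closure of $M_i$ and that the medial measure $dM_i$, whose density vanishes at edge-corner points, is the correct one; but this is precisely what was already established for Corollary~\ref{CorII4.1} and Theorem~\ref{ThmII5.1}.
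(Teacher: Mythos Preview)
Your proposal is correct, and your additive ``cross-check'' route is exactly the paper's proof: the paper simply states that Corollary~\ref{CorII5.9} is an immediate consequence of Corollary~\ref{CorII5.8}, i.e.\ one sums the identities for $\vol(\gW_{i\to j})$ and $\vol(\gW_{i\,\infty})$ over the decomposition (\ref{EqnII4.2}), with the overlap bookkeeping handled just as you describe (measure-zero intersections on both sides, as noted in the proof of Corollary~\ref{CorII5.6}). Your direct route in the first two paragraphs is a valid and slightly more economical alternative that applies the change-of-variables of \cite[Thm.~6.1]{D4} to the whole of $\tilde M_i$ at once rather than stratum-by-stratum; this is essentially the computation underlying Corollary~\ref{CorII5.8} itself, so the two approaches differ only in whether one passes through the intermediate decomposition.
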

The sense in which this generalizes Weyl\rq s formula is explained for the 
case of a single region with smooth boundary in \cite[\S 6, 7]{D4}.  For 
Steiner\rq s formula, we note that as explained in \S \ref{SII:sec.int},  
$\cN_i \cup \cN_{i\to i} \cup \cN_{i\, \infty}$ represents the total 
neighborhood of $\gW_i$, which is the region about $\gW_i$ extending 
along the linking lines.  This is a generalization of a partial tubular 
neighborhood about a region which depends on the specific type of 
bounding region (see Figure~\ref{figII5.9}). 
\begin{Corollary}[Generalized Steiner\rq s Formula]
\label{CorII5.10}
Let $\bgW \subset \tilde \gW$ be a multi-region configuration with a 
bounded skeletal linking structure.  Then,
\begin{equation}
\label{EqnII5.22}
\vol(\cN_i \cup \cN_{i\to i} \cup \cN_{i\, \infty} ) \,\, = \,\,  \int_{\tilde 
M} \cI(\ell_i) - \cI(r_i)\, dM\, . 
\end{equation} 
\end{Corollary}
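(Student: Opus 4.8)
The plan is to identify the total neighborhood $\cN_i\cup\cN_{i\to i}\cup\cN_{i\,\infty}$ of $\gW_i$ with the region swept out by the ``external half'' ($\frac{1}{2}\le t\le 1$) of the linking flow $\gl_i$ applied to all of $\tilde M_i$, and then to evaluate its volume by the change-of-variables argument already developed for Theorem~\ref{ThmII5.5} and Corollary~\ref{CorII5.6}. First I would record the set identity
\begin{equation*}
\cN_i\cup\cN_{i\to i}\cup\cN_{i\,\infty}\;=\;\gl_i(\tilde M_i\times[\tfrac{1}{2},1]),
\end{equation*}
which follows from the decomposition \eqref{EqnII4.2} of $\tilde M_i$ into $M_{i\to j}$ ($j\neq i$), $M_{i\to i}$ and $M_{i\,\infty}$, from Definition~\ref{DefII4.1}, and, in the bounded case, from the extension of $\ell_i$ over $M_{i\,\infty}$ so that $\cN_{i\,\infty}=\gl_i(M_{i\,\infty}\times[\frac{1}{2},1])$ terminates on $\partial\tilde\gW$ (Remarks~\ref{Rem.2.1a} and~\ref{Rem3.6}); here the constituent pieces overlap only on images of strata of $\tilde M_i$ of dimension $<n$.

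For the main computation I would apply Corollary~\ref{CorII5.6} (or, equivalently, its proof) with $g=\chi_{\tilde\gW}$, a bounded Borel function with compact support $\tilde\gW$; since the configuration and all (truncated) linking lines lie inside $\tilde\gW$, the external segment $\{x+t\bu_i(x):r_i(x)\le t\le\ell_i(x)\}$ of each linking line is contained in $\tilde\gW$, so \eqref{EqnII5.14} and the definition \eqref{EqnII5.20a} of $\cI$ give
\begin{equation*}
\tilde g_{ext}(x)\;=\;\int_{r_i(x)}^{\ell_i(x)}\det(I-tS_{rad})\,dt\;=\;\cI(\ell_i(x))-\cI(r_i(x)).
\end{equation*}
By Corollary~\ref{CorII4.1} the external half of the linking flow is, stratum by stratum and for $t\in(\frac{1}{2},1)$, a stratawise diffeomorphism onto its image, and by the set identity above that image is exactly the total neighborhood of $\gW_i$; hence the external integral in Corollary~\ref{CorII5.6} evaluates $\vol(\cN_i\cup\cN_{i\to i}\cup\cN_{i\,\infty})$ and yields the asserted formula, the integral being taken over $\tilde M_i$, on whose strata $r_i$ and $\ell_i$ are defined. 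An equivalent, more pedestrian route is to sum Corollary~\ref{CorII5.8}: $\vol(\cN_{i\to j})=\int_{M_{i\to j}}(\cI(\ell_i)-\cI(r_i))\,dM$ for $j\neq i$, the analogous identity for $M_{i\to i}$, and $\vol(\cN_{i\,\infty})=\int_{M_{i\,\infty}}(\cI(\ell_i)-\cI(r_i))\,dM$, and then use that these Borel sets partition $\tilde M_i$ up to strata of dimension $<n$.

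The one point that needs genuine care --- the main obstacle --- is the multiplicity with which $\gl_i(\tilde M_i\times[\frac{1}{2},1])$ covers the total neighborhood: the neighborhoods $\cN_{i\to j}$ for distinct $j$, together with $\cN_{i\to i}$ and $\cN_{i\,\infty}$, can overlap, at partial-linking strata and, more generally, on images of strata of $\tilde M_i$ of codimension $\ge 1$, so a priori the volumes do not simply add. Linking property~$L2)$ of Definition~\ref{Defmultlkgstr}, together with the stratawise-diffeomorphism statements of Corollary~\ref{CorII4.1}, forces every such overlap to lie on the image of a stratum of dimension $<n$, hence to have $(n+1)$-dimensional measure zero; so the whole argument reduces to the Fubini and measure-zero bookkeeping already performed in the proofs of Theorem~\ref{ThmII5.5} and Corollary~\ref{CorII5.6}. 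Finally I would note, in parallel with Corollary~\ref{CorII5.9} for Weyl's formula, that this specializes to the classical Steiner formula when $\bgW$ reduces to a single (say convex) region and $\tilde\gW$ to a large ball, the region $\cN_i\cup\cN_{i\to i}\cup\cN_{i\,\infty}$ being the generalized partial tubular neighborhood of $\gW_i$ described in \S\ref{SII:sec.int}.
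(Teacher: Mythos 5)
Your proposal is correct and your ``pedestrian'' route (summing Corollary~\ref{CorII5.8} over $j\neq i$, $j=i$, and $\infty$, noting overlaps lie on strata of dimension $<n$ and so contribute measure zero) is exactly what the paper does: it simply declares the result an immediate consequence of Corollary~\ref{CorII5.8}. Your primary route via Corollary~\ref{CorII5.6} with $g=\chi_{\tilde\gW}$ is a valid alternative, and you are right that the domain of integration in \eqref{EqnII5.22} should be read as $\tilde M_i$ (since $r_i,\ell_i$ live only there), consistent with the adjacent Corollary~\ref{CorII5.9}.
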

 
\par
\begin{figure}[ht]
\includegraphics[width=12cm]{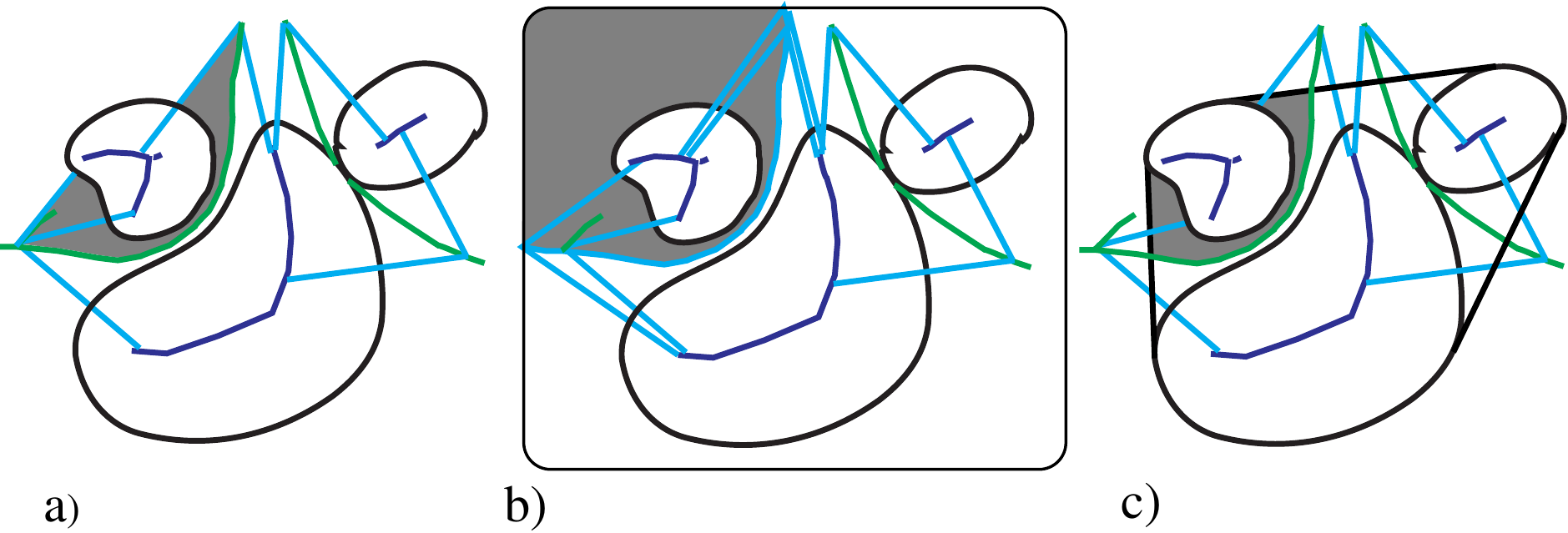}
\caption{\label{figII5.9}  Examples of a total neighborhood $\cN_i \cup 
\cN_{i\to i} \cup \cN_{i\, \infty}$ for a region $\gW_i$, to which the 
generalized Steiner\rq s formula applies: a) absolute threshold; b) 
bounding box; and c) convex hull.}
\end{figure} 
\par
Both of these results are immediate consequences of Corollary 
\ref{CorII5.8}. \hspace{1cm} $\Box$
\par
\begin{proof}[Proof of Corollary \ref{CorII5.8}]
In each case, it is sufficient to apply Corollary \ref{CorII5.6}.  For 
example, for $\cN_{i\to j}$, we apply the theorem to $g = \chi_{\cN_{i\to 
j}}$, whose integral equals $\vol(\cN_{i \to j})$.  The internal integral is 
zero and to compute the external integral, the characteristic function is 
$\equiv 1$ on the external part of the linking lines so we obtain using 
(\ref{EqnII5.20a}) that $\tilde g_{ext} = \cI(\ell_i) - \cI(r_i)$ at points of 
$M_{i \to j}$ and $0$ otherwise.  Thus, the integral is as stated.  
\end{proof}
\par 
We next turn to defining the positional geometric invariants in terms of  
volumes of regions.  
\par

\section{Positional Geometric Properties of Multi-Region Configurations}
\label{SecII:PosGeom}
\par 
We consider a configuration $\bgW = \{\gW_i\} \subset \tilde \gW$, in the 
bounding region $\tilde \gW$ with a bounded skeletal linking structure.  
We will now use the linking structure to investigate the positional 
geometry of the configuration.  First, we use it to determine which of the 
regions should be regarded as neighboring regions.  Then, we use the 
regions associated to the linking structure to define invariants which 
measure the closeness of such neighboring regions.  We further introduce 
invariants measuring \lq\lq positional significance\rq\rq of regions for 
the configuration.  These allow us to identify which regions are central to 
the configuration and which ones are peripheral.  Then, we construct a {\it 
tiered linking graph}, with vertices representing the regions, and edges 
between neighboring regions, with the closeness and significance values 
assigned to the edges, resp. vertices.  By applying threshold values to this 
structure we can exhibit the subconfigurations within the given 
thresholds.  \par
We show that these invariants are unchanged under the action of the 
Euclidean group and scaling acting on the configuration and bounding 
region. Furthermore, because these invariants are defined in terms of 
volumes of regions associated to the linking structure, the results of the 
previous section allow us to compute them as skeletal linking integrals. 
\par

\subsection*{Neighboring Regions and Measures of Closeness} 
\par
We begin by using previous results to identify neighboring regions and 
measuring their closeness.  We use linking between regions as a criterion 
for their being neighbors.  Then, a potential first measure of closeness 
would be to determine, for a fixed $\gW_i$, the minimum value of the 
radial linking function $\ell_{i}$. If $\| L_i(x)\|$ is the value of $\ell_i$ at 
this minimum, then we determine any vector $L_j(x^{\prime})$ so that 
$x^{\prime}$ is linked to $x$, and conclude that $\gW_j$ and $\gW_i$ are 
in some sense close. However, it is possible that two regions are only a 
small distance away from one another at some point, so that 
$\| L_i(x) \| = \| L_j(x^{\prime}) \|$ is small, but are otherwise at most 
points a much greater distance apart.  By contrast, there may be a third 
region $\gW_k$ whose linking vector of smallest norm linking 
$\gW_i$ to $\gW_k$ has larger norm than $\| L_j(x^{\prime}) \|$, 
but possibly other nearby vectors of $L_i$ and $L_k$ maintain 
approximately the same length along a much greater region.  Such a 
situation is illustrated in Figure~\ref{fig.II4.1} or \ref{fig.II6.8}, where 
$\gW_3$ is close to $\gW_1$ for a small region but $\gW_2$ is close to 
$\gW_1$ over a larger region.  \par
\par
\begin{figure}[ht]
\includegraphics[width=4cm]{{Multi.Obj.figs.8.v2.1}.pdf}
\caption{\label{fig.II6.8} Measure of Closeness. Deciding whether $\gW_2$ 
or $\gW_3$ is \lq\lq closer\rq\rq to $\gW_1$.}
\end{figure} 
\par
Thus, an appropriate measure of closeness should take into account how 
large is the external region where the regions are close, compared with 
the size of the regions themselves.  We will do so by using volumetric 
measures of appropriate regions.  
For a configuration $\bgW$ with a skeletal linking structure, we 
introduced in \S~\ref{SII:sec.int}, Definition \ref{DefII4.1}, the regions 
$\gW_{i \to j}$, $\cN_{i \to j}$, and $\cR_{i \to j}$.  Because we will use 
volumetric measures of these regions, we assume that we have a bounded 
configuration $\bgW = \{ \gW_i\}$ in a bounding region $\tilde \gW$ with 
corresponding bounded skeletal linking structure.  Thus, all of the regions 
will have finite volume.  \par
The regions $\gW_{i \to j}$ and $\gW_{j \to i}$ capture the neighbor 
relations between $\gW_i$ and $\gW_j$.  As we explained in \S 
\ref{SII:sec.int}, $\cN_{i \to j}$ and $\cN_{j \to i}$ share a common 
boundary region in $M_0$, so they are both empty if one is, and then both 
$\gW_{i \to j}$ and $\gW_{j \to i}$ are empty.  In that case $\gW_i$ and 
$\gW_j$ are not linked.  Otherwise, we may introduce a measure of 
closeness.  \par
 There are two different ways to do this, each having a probabilistic 
interpretation.  First, we let 
$$ c_{i \to j} = \frac{\vol(\gW_{i \to j})}{\vol(\cR_{i \to j})}\qquad \text{ 
and } \qquad c_{i\, j} \, = \, c_{i \to j}\cdot c_{j \to i}\, .$$  
Then, $c_{i \to j}$ is the probability that a point chosen at random in 
$\cR_{i \to j}$ will lie in $\gW_i$ (see Figure~\ref{fig.II6.9}); so $c_{i\, 
j}$ is the probability that a pair of points, one each in $\cR_{i \to j}$ and 
$\cR_{j \to i}$ both lie in the corresponding regions $\gW_i$ and $\gW_j$.  
\par 
\par
\begin{figure}[ht]
\includegraphics[width=10cm]{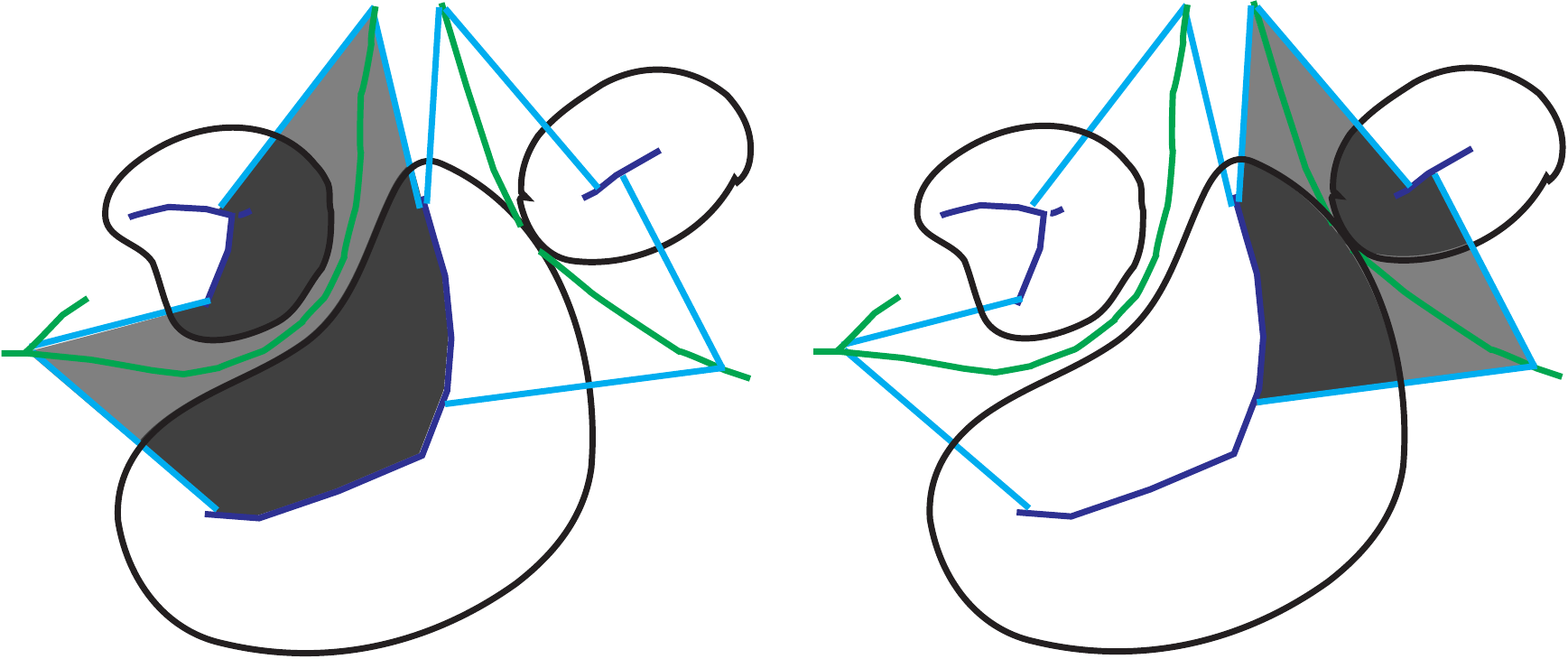}
\caption{\label{fig.II6.9} Measure of closeness for regions in the 
configuration in Figure \ref{fig.II6.8} bounded via a threshold with the 
bounded skeletal linking structure (see \S~\ref{SII:sec.int}).  For a pair of 
neighboring regions $\gW_i$ and $\gW_j$, $c_{i\to j}$ denotes the ratio 
of the volume of the darker region $vol(\gW_{i \to j})$ and the volume of 
the total shaded region $vol(\cR_{i \to j}) = vol(\gW_{i \to j}) + vol(\cN_{i 
\to j})$.}
\end{figure} 
\par

Note that $c_{i\, j}$ contains much more information than the closest 
distance between $\gW_i$ and $\gW_j$, and even the
\lq\lq $L^1$-measure\rq\rq of the region between $\gW_i$ and $\gW_j$.  
It compares this measure with how much of the regions $\gW_i$ and 
$\gW_j$ are closest as neighbors.  If both $\gW_{i \to j}$ and $\gW_{j \to 
i}$ are empty, we let $c_{i \to j}$, $c_{j \to i}$, and  $c_{i\, j} = 0$.  Also, 
we let $c_{i\, i} = 1$.  Thus, from the collection of values $\{c_{i\, j}\}$ 
we can compare the closeness of any pair of regions.  
\par 
Since these invariants depend on a bounded skeletal linking structure, one 
way to introduce a parametrized family $c_{i\, j}(\tau)$ is by considering 
the varying threshold values $\tau$.  For example, $\tau$ may represent 
the maximum allowable values for $\ell_i$ or the maximum value of 
$\ell_i$ relative to some intrinsic geometric linear invariant of $\gW_i$.  
As $\tau$ increases, the bounded region increases and how $c_{i\, 
j}(\tau)$ varies indicates how the closeness of the regions varies when 
larger linking values are taken into account. \par
A second way to introduce a measure of closeness is to use an \lq\lq 
additive\rq\rq contribution from each region and define
$$ c_{i\, j}^a \,\, = \,\, \frac{\vol(\gW_{i \to j}) + \vol(\gW_{j \to 
i})}{\vol(\cR_{i \to j}) + \vol(\cR_{j \to i})} \, . $$
Here $c_{i\, j}^a$ is the probability that a point chosen in the region 
$\cR_{i \to j} \cup \cR_{j \to i}$ lies in the configuration, i.e. in $\gW_i 
\cup\gW_j$.  We also let $c_{i\, j}^a = 0$ if $\gW_i$ and $\gW_j$ are not 
linked; and we let $c_{i\, i}^a = 1$.  Again, to obtain a more precise 
measure of closeness, we can vary a measure of threshold $\tau$ and 
obtain a varying family $c_{i\, j}^a(\tau)$.  
The invariants satisfy $0 \leq c_{i\, j}, c_{i\, j}^a \leq 1$.  The value $0$ 
indicates no linking, for values near $0$, the regions are neighbors but 
distant so they are \lq\lq weakly linked\rq\rq, and for values close to 
$1$, the regions are close over a large boundary region and are \lq\lq 
strongly linked\rq\rq.  \par
There is a simple but crude relation between $c_{i\, j}^a$ and the pair 
$c_{i \to j}$ and $c_{j \to i}$:
$$  c_{i\, j}^a \,\, \leq  \,\, c_{i \to j} \, + \, c_{j \to i} \, . $$
As $c_{i\, j}^a \leq 1$, this is only useful when the two regions are 
weakly linked.  This inequality is a special case of the following simple
lemma whose proof follows easily by induction.  
\begin{Lemma}
\label{LemII6.1}
If $a_i \geq 0$ and $b_i > 0$ for $i = 1, \dots, k$, then
$$  \frac{\sum_{i = 1}^{k} a_i}{\sum_{i = 1}^{k} b_i} \,\, \leq  \,\, \sum_{i 
= 1}^{k} \frac{a_i}{b_i} \, .$$
\end{Lemma}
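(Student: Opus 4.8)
The plan is to prove the inequality directly by a term-by-term comparison against the common denominator, which is cleaner than induction; I will also indicate the inductive route suggested in the statement. The key observation is that since every $b_j > 0$, the full denominator dominates each individual one: $\sum_{j=1}^{k} b_j \geq b_i$ for each $i$. Combined with $a_i \geq 0$ this yields, for each $i = 1, \dots, k$,
$$  \frac{a_i}{\sum_{j=1}^{k} b_j} \,\, \leq \,\, \frac{a_i}{b_i} \, .  $$
Summing these $k$ inequalities and using that $\sum_{j=1}^{k} b_j$ is a single positive number, so that $\frac{\sum_{i=1}^{k} a_i}{\sum_{j=1}^{k} b_j} = \sum_{i=1}^{k} \frac{a_i}{\sum_{j=1}^{k} b_j}$, gives
$$  \frac{\sum_{i=1}^{k} a_i}{\sum_{j=1}^{k} b_j} \,\, \leq \,\, \sum_{i=1}^{k} \frac{a_i}{b_i} \, ,  $$
which is the assertion.

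For the inductive variant: the base case $k = 1$ is an equality. For the inductive step it suffices to treat $k = 2$, i.e. to show $\frac{a + a'}{b + b'} \leq \frac{a}{b} + \frac{a'}{b'}$ for $a, a' \geq 0$ and $b, b' > 0$; this follows from
$$  \frac{a + a'}{b + b'} \,\, = \,\, \frac{a}{b + b'} + \frac{a'}{b + b'} \,\, \leq \,\, \frac{a}{b} + \frac{a'}{b'} \, ,  $$
using $b + b' \geq b$ and $b + b' \geq b'$. Applying this with $a = \sum_{i=1}^{k-1} a_i$, $b = \sum_{i=1}^{k-1} b_i$, $a' = a_k$, $b' = b_k$, and then invoking the induction hypothesis on the first $k-1$ terms, gives the general case.

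The only point requiring care is the hypothesis $b_i > 0$: it is needed both so that each quotient is well-defined and so that $\sum_{j} b_j \geq b_i$ (nonnegativity of the omitted $b_j$ alone would in fact suffice for the latter, but strict positivity of $b_i$ itself is essential). I do not expect any genuine obstacle here; the lemma is elementary, and the special case $c^a_{i\, j} \leq c_{i \to j} + c_{j \to i}$ used in the text is recovered immediately by taking $k = 2$, $a_1 = \vol(\gW_{i \to j})$, $a_2 = \vol(\gW_{j \to i})$, $b_1 = \vol(\cR_{i \to j})$, $b_2 = \vol(\cR_{j \to i})$.
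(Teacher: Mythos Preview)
Your proof is correct. The paper does not spell out a proof, merely stating that it ``follows easily by induction''; your inductive variant is exactly that argument, and your direct term-by-term comparison is an equally valid (and slightly cleaner) alternative.
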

\hspace{11.5cm} $\Box$ \par

\subsection*{Measuring Positional Significance of Objects Via Linking 
Structures} 
\par
Positional significance of an object among a collection of objects should 
be a measure of the object\rq s centrality in the configuration versus it 
being an outlier object.  We can measure positional significance in both 
absolute and relative terms.  In each case, we emphasize that we are 
considering geometric significance relative to the configuration, rather 
than some 
other notion such as statistical significance.  We begin with the relative 
version.  Given $\gW_i$, we define the {\it positional significance}
$$  s_i =  \frac{\sum_{j \neq i} \vol(\gW_{i \to j})}{\sum_{j \neq i} 
\vol(\cR_{i \to j})} \, . $$
It may take values $0 \leq s_i \leq 1$.  For values near $0$, the region of 
$\gW_i$ linked to some other region is a small fraction of the external 
region between $\gW_i$ and the other regions. Thus, it is a peripheral 
region of the configuration.  We would have the value $s = 0$ if $\gW_i$ is 
not linked to any other region in the bounding region $\tilde \gW$, which 
may occur if there is a threshold for which the region is not linked to 
another region with a linking vector of length less than the threshold.   By 
contrast, if $s_i$ is close to $1$, then there is very little external region 
between $\gW_i$ and the other regions.  Thus, $\gW_i$ is central for the 
configuration.  \par 
By lemma \ref{LemII6.1} it follows
$$  s_i \,\, \leq \,\, \sum_{j \neq i} c_{i\to j} \, , $$
so that $\gW_i$ being weakly linked to the other regions implies it has 
small positional significance for the configuration.  If we would like to 
further base the positional significance of the region $\gW_i$ on its 
absolute size, we can alternatively use an absolute measure of positional 
significance defined by $\tilde s_i = s_i \vol(\gW_i)$.  Then, the effect of 
the smallness of $s_i$ can be partially counterbalanced by the size of 
$\gW_i$.  
\begin{Example}
\label{ExamII.6.2} 
In Figure~\ref{fig.II6.10} is illustrated how the measure of positional 
significance of a region which is central for the configuration decreases 
as the region is moved away from the remaining configuration.  
\end{Example}
\par
\begin{figure}[ht]
\includegraphics[width=10cm]{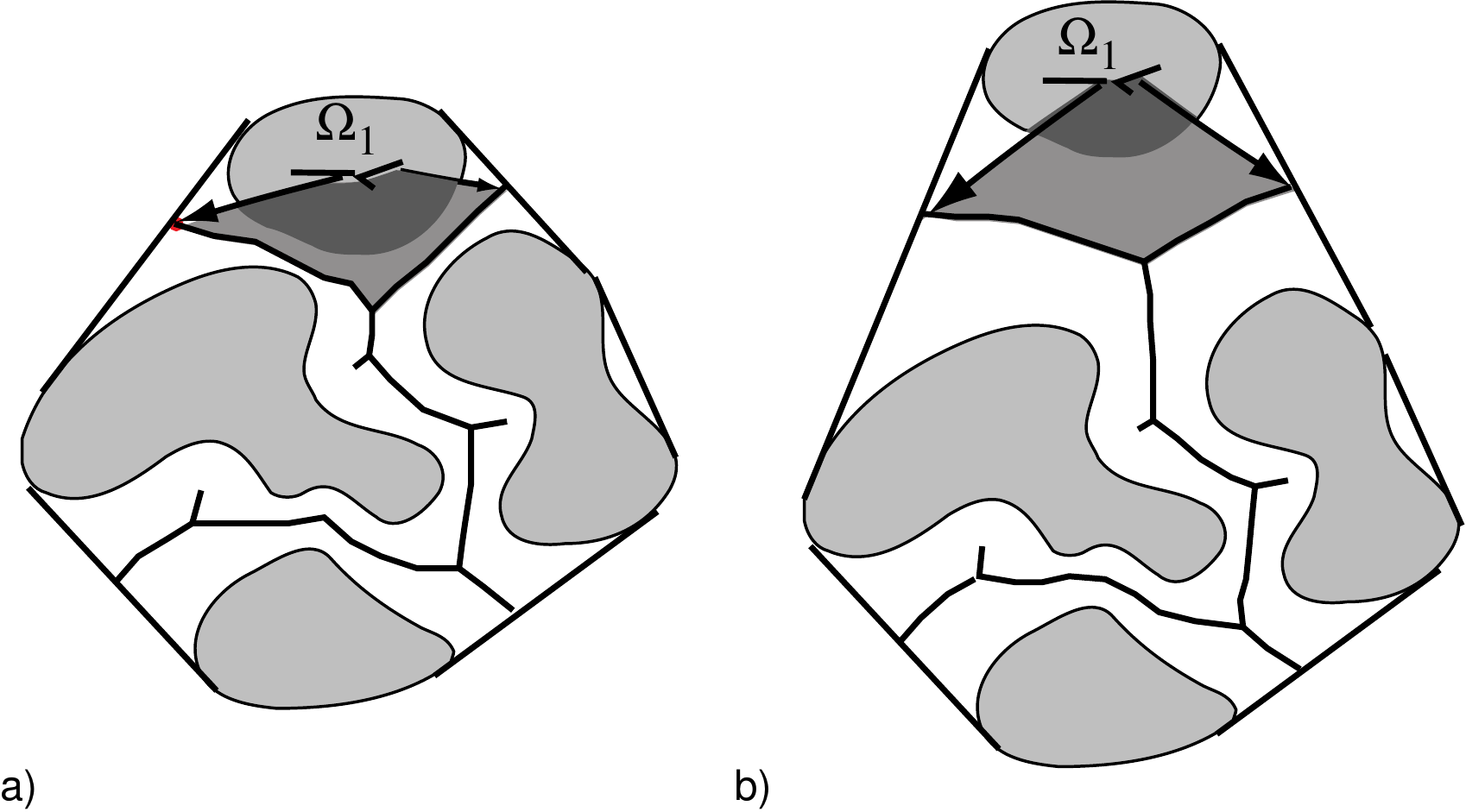}
\caption{\label{fig.II6.10} For $\gW_1$ in configurations using convex 
hull bounding regions, measure of positional significance is the ratio of 
the volume of the darkest region to the volume of the union of the two 
more darkly shaded regions.  In a) $\gW_1$ is central, while in b) when 
$\gW_1$ is moved away from the remaining regions, it becomes less 
significant for the configuration as indicated by the decreasing ratio.}
\end{figure} 
\par
\vspace{2ex}
\flushpar
\subsection*{Properties of Invariants for Closeness and Positional 
Significance} \par
We consider three properties of these invariants: \par
\begin{itemize} 
\item[1)] computation of all of the invariants as skeletal linking 
integrals;
\item[2)] invariance under the action of the Euclidean group and scaling; 
and 
\item[3)] continuity of the invariants under small perturbations of generic 
configurations.
\end{itemize}
\par
\subsubsection*{\it Computation of the Invariants as Skeletal Linking 
Integrals}  
We can use the results from the previous section to compute as skeletal 
linking integrals the above volumes of regions associated to $\bgW$.  This 
is summarized by the following.
\begin{Thm}
\label{ThmII6.3}
If $\bgW = \{\gW_i\} \subset \tilde \gW$ is a multi-region configuration, 
with a skeletal linking structure, then all global invariants of the 
configuration which can be expressed as integrals over regions in 
$\R^{n+1}$ can be computed as skeletal linking integrals using Theorem 
\ref{ThmII5.5}.  In particular, the invariants $c_{i\to j}$, $c_{i\, j}$, 
$c^a_{i\, j}$, and $s_i$ are given as the quotients of two skeletal linking 
integrals using (\ref{EqnII5.20}) and (\ref{EqnII5.20b}). 
\end{Thm}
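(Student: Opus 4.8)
The plan is to assemble the statement as a direct corollary of the integral‑representation machinery built up in \S\ref{SII:SkelLnkInt}, together with the explicit volume formulas of Corollary \ref{CorII5.8}. First I would observe that every "global invariant expressible as an integral over a region in $\R^{n+1}$" means an invariant of the form $\int_Q g\,dV$ for $Q\subset\R^{n+1}$ a compact measurable region (possibly $Q$ itself depending on the linking structure, e.g.\ one of the $\cR_{i\to j}$, $\gW_{i\to j}$, $\cN_{i\to j}$) and $g$ a Borel measurable, Lebesgue integrable function with compact support. By Corollary \ref{CorII5.4}, such an integral equals $\int_{\tilde M} g_Q\,dM$, where $g_Q(x)=\int_0^{\ell_i'} g(\gl_t'(x))\chi_Q(\gl_t'(x))\det(I-tS_{rad,i})\,dt$; this is a skeletal linking integral in the sense of (\ref{EqnII5.1}), since $g_Q$ is a Borel measurable multi‑valued function on $M$. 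This already gives the first assertion. The key point to spell out is only that the invariants we actually use ($c_{i\to j}$, $c_{i\,j}$, $c^a_{i\,j}$, $s_i$) are built from volumes of the specific regions $\gW_{i\to j}$ and $\cR_{i\to j}$, which are images of Borel sets $M_{i\to j}\subset\tilde M$ under the (elementary) linking flow — so Corollary \ref{CorII5.4} applies with $g\equiv1$ and $Q$ the relevant region, and Corollary \ref{CorII5.8} together with (\ref{EqnII5.20b}) then furnish the explicit formulas.

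Second, I would carry out the identification region by region. For $\vol(\gW_{i\to j})$ and $\vol(\cR_{i\to j})$ I would invoke Corollary \ref{CorII5.8} directly:
\begin{align}
\label{Eqn.prop.proof.1}
\vol(\gW_{i\to j}) &= \int_{M_{i\to j}} \cI(r_i)\,dM, &
\vol(\cR_{i\to j}) &= \int_{M_{i\to j}} \cI(\ell_i)\,dM, \notag \\
\vol(\cN_{i\to j}) &= \int_{\tilde M_{i\to j}} \cI(\ell_i)-\cI(r_i)\,dM,
\end{align}
with $\cI(t)=\sum_{j=0}^n \frac{(-1)^j}{j+1}\gs_j t^{j+1}$ as in (\ref{EqnII5.20a}). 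Then the closeness invariant $c_{i\to j}=\vol(\gW_{i\to j})/\vol(\cR_{i\to j})$ is literally the quotient of the first two integrals in (\ref{Eqn.prop.proof.1}); $c_{i\,j}=c_{i\to j}\cdot c_{j\to i}$ is a product of two such quotients; and the additive version $c^a_{i\,j}=(\vol(\gW_{i\to j})+\vol(\gW_{j\to i}))/(\vol(\cR_{i\to j})+\vol(\cR_{j\to i}))$ is a quotient of sums of integrals over the $M_{\cdot\to\cdot}$, which (since disjoint Borel unions of $n$‑dimensional strata have additive measure $dM$, modulo sets of measure zero, exactly as in the proof of Corollary \ref{CorII5.6}) is itself a single quotient of two skeletal linking integrals. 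The same reasoning gives $s_i=\big(\sum_{j\neq i}\int_{M_{i\to j}}\cI(r_i)\,dM\big)\big/\big(\sum_{j\neq i}\int_{M_{i\to j}}\cI(\ell_i)\,dM\big)$, and $\tilde s_i=s_i\,\vol(\gW_i)$ with $\vol(\gW_i)=\int_{\tilde M_i}\cI(r_i)\,dM$ by Corollary \ref{CorII5.9}.

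Third, I would make explicit the one genuinely substantive point hiding in the phrase "all global invariants... can be computed as skeletal linking integrals": namely that the recipe is uniform across both the Blum medial linking case and the general skeletal linking case. For a Blum medial linking structure of a configuration of disjoint regions, the structure is itself a skeletal linking structure, so Theorem \ref{ThmII5.5} and Corollary \ref{CorII5.4} apply without change; for a full Blum linking structure of a configuration with adjoining regions, one uses the version of the skeletal/medial measure $dM_i=\rho_i\,dV_i$ with $\rho_i$ vanishing on edge‑corner points, as discussed after Corollary \ref{CorII5.3}, and the edge‑corner normal form of Theorem \ref{Thm3.2} guarantees paved neighborhoods exist there so the Riesz‑representation construction of $dM_i$ goes through. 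The main obstacle — really the only place where care is needed rather than routine bookkeeping — is verifying that the regions $\gW_{i\to j}$, $\cN_{i\to j}$, $\cR_{i\to j}$ are honest images of Borel subsets of $\tilde M$ under the (elementary) linking flow with the flow nonsingular off a measure‑zero set, so that the change‑of‑variables formula underlying Corollary \ref{CorII5.4} legitimately applies; this is exactly where linking condition L2) and Corollary \ref{CorII.3.2b} (nonsingularity of the linking flow on strata of the labelled refinement) are used, and where one must note that the overlaps of these regions for varying $j$ lie in strata of dimension $<n$ and hence have $dM$‑measure zero, as in the proof of Corollary \ref{CorII5.6}. Once that is in hand, the theorem follows by assembling (\ref{Eqn.prop.proof.1}) and its analogues.
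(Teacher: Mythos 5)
Your proposal is correct and follows exactly the route the paper intends. The paper states this theorem as a summary of the preceding integral results and gives no written proof (just $\Box$), regarding it as an immediate consequence of Theorem \ref{ThmII5.5}, Corollary \ref{CorII5.4}, Corollary \ref{CorII5.8}, and (\ref{EqnII5.20b}); your careful assembly of those, together with the measure-zero-overlap observation borrowed from the proof of Corollary \ref{CorII5.6}, simply supplies the details that the paper leaves implicit.
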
  
\hspace{11.5cm} $\Box$ 
\par

\begin{Remark}
\label{RemII5.2} We could try to alternatively use boundary measures for 
the regions to define closeness and positional significance.  There are two 
problems with this approach.  From a computational point of view, the 
skeletal structures could only be used where the partial Blum condition is 
satisfied.  Moreover, boundary measures do not capture how much of the 
regions are close to each other (only where their boundaries are close).  
For these reasons we have concentrated on (ratios of) volumetric 
measures to capture positional geometry of the configuration.   
\end{Remark}
\flushpar
\subsubsection*{\it Invariance under the action of the Euclidean Group and 
Scaling}  \par
Second, we establish the invariance of the invariants defining closeness 
and positional significance under Euclidean motions and scaling.  Let 
$\bgW = \{\gW_i\} \subset \tilde \gW$ be a multi-region configuration, 
with a skeletal linking structure $\{(M_i, U_i, \ell_i)\}$.  If $f$ is a 
Euclidean motion and $a > 0$ is a scaling factor, then we may let 
$\bgW^{\prime} = \{\gW_i^{\prime}\} \subset \tilde \gW^{\prime}$, where 
$\gW_i^{\prime} = f(\gW_i)$ and $\tilde \gW^{\prime} = f(\tilde \gW)$.  
We also let $\{(M_i^{\prime}, U_i^{\prime}, \ell_i^{\prime})\}$ be a 
skeletal linking structure for $\bgW^{\prime}$ defined by $M_i^{\prime} = 
f(M_i)$, $U_i^{\prime} = f(U_i)$, and $\ell_i^{\prime} = \ell_i$.  As $f$ 
preserves distance and angles, we have $r_i^{\prime} = r_i$, and the image 
of the linking flow for $\bgW$ is the linking flow for $\bgW^{\prime}$. 
Then, $\{(M_i^{\prime}, U_i^{\prime}, \ell_i^{\prime})\}$ satisfies the 
conditions for being a skeletal linking structure for $\bgW^{\prime}$.  As 
$\tilde \gW^{\prime} = f(\tilde \gW)$, the corresponding bounded linking 
structure for $\bgW^{\prime}$ using $\tilde \gW^{\prime}$ is the image of 
that for $\bgW$ for $\tilde \gW$.   Then, the associated linking regions 
for $\bgW^{\prime}$ are the images of the corresponding associated 
linking regons for $\bgW$.  Since $f$ preserves volumes, the invariants 
for closeness and positional significance are preserved by $f$.  \par 
If instead we consider a scaling by the factor $a > 0$, then we let $g_a(x) 
= a\cdot x$.  Now the images of $\bgW$ and $\tilde \gW$ under $g_a$ 
define a configuration $\bgW^{\prime}$ in $\tilde \gW^{\prime}$.  We 
likewise let $\{(M_i^{\prime}, U_i^{\prime}, \ell_i^{\prime})\}$ be defined 
by $M_i^{\prime} = g_a(M_i)$, $U_i^{\prime} = a U_i$, and $\ell_i^{\prime} 
= a \ell_i$ (and $r_i^{\prime} =a r_i$).  As before this is a skeletal 
structure for $\bgW^{\prime}$.  Everything goes through except that $g_a$ 
multiplies volume by $a^{n+1}$.  However, as the invariants are ratios of 
volumes, they again do not change.  We summarize this with the following.
\begin{Proposition}
\label{PropII6.4}
If $\bgW = \{\gW_i\} \subset \tilde \gW$ is a multi-region configuration, 
with a skeletal linking structure, then $c_{i\to j}$, $c_{i\, 
j}$, $c^a_{i\, j}$, and $s_i$ are invariant under the action of a Euclidean 
motion and scaling applied to both $\bgW$ and $\tilde \gW$ for the image 
of the skeletal linking structure for the image configuration and bounding 
region. 
\end{Proposition}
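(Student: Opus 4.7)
The plan is to show that both Euclidean motions and scalings carry a skeletal linking structure on $(\bgW, \tilde\gW)$ to a skeletal linking structure on its image, and that the associated regions $\gW_{i\to j}$, $\cN_{i\to j}$, $\cR_{i\to j}$ transform to the corresponding regions for the image configuration. Since the invariants $c_{i\to j}$, $c_{i\,j}$, $c^a_{i\,j}$, and $s_i$ are all ratios of volumes of such regions, and since both types of transformations multiply all relevant volumes by a single common factor, the ratios will be preserved.

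For a Euclidean motion $f$, I would define the image structure by $M_i' = f(M_i)$, $U_i' = df\circ U_i$, and $\ell_i' = \ell_i$ (and correspondingly $r_i' = r_i$, $L_i' = df\circ L_i$). Since $f$ is an isometry, it preserves Whitney stratifications, norms, angles, and the combinatorial type of local paved neighborhoods, so every condition in Definition \ref{Defmultlkgstr} transfers: the radial curvature, edge, and compatibility conditions for each $(M_i,U_i)$ are intrinsic to the Euclidean metric, and the linking flow commutes with $f$, namely $f(\gl_i(x,t)) = \gl_i'(f(x),t)$, so nonsingularity and the disjointness properties of strata images for L2--L4 carry over. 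Because $f$ preserves Lebesgue measure, $\vol(f(\gW_{i\to j})) = \vol(\gW_{i\to j})$ and similarly for the $\cN$ and $\cR$ regions, so each invariant equals its transformed counterpart by cancellation of identical numerator and denominator.

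For the scaling $g_a(x) = ax$ with $a>0$, I would set $M_i' = g_a(M_i)$, $U_i' = aU_i$, $\ell_i' = a\ell_i$, and hence $r_i' = ar_i$. The radial and linking flow lines of the image structure are precisely the $g_a$-images of the original flow lines, traversed with appropriately rescaled parameter, so the level sets $\cB_{i\,t}'$ are $g_a(\cB_{i\,t})$. The principal radial and edge curvatures transform as $\kappa' = a^{-1}\kappa$, so that $\ell_i'\kappa_{r\,j}' = \ell_i\kappa_{r\,j}$ (and similarly for edge curvatures); hence the linking curvature condition and linking edge condition of \S\ref{S:secII.Link.Flow} are preserved, and the image is again a skeletal linking structure whose associated regions are the $g_a$-images of the original regions. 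Since $g_a$ multiplies $(n+1)$-dimensional volume by $a^{n+1}$, each volume appearing in $c_{i\to j}$, $c_{i\,j}$, $c^a_{i\,j}$, and $s_i$ is multiplied by the same factor $a^{n+1}$, and these ratios are therefore invariant.

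The only point that requires real verification — but is not a genuine obstacle — is that the pushed-forward data $(M_i',U_i',\ell_i')$ satisfies every clause of Definition \ref{Defmultlkgstr}, including the labeled refinement in S3 and the global compatibility of the external linking axis $M_0$ in L3. This reduces to noting that $f$ (respectively $g_a$) is a smooth diffeomorphism of $\R^{n+1}$ which sends the labeled stratification $\cS_i$ bijectively onto a labeled stratification of $\tilde M_i'$ and intertwines the two linking flows, so L1--L4 and the labeled-refinement condition transport automatically. Once this transport is recorded, invariance of the four positional invariants follows directly from the cancellation of the common volume factor.
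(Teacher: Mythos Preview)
Your proposal is correct and follows essentially the same approach as the paper: you define the image skeletal linking structure explicitly under $f$ and $g_a$, observe that the linking flow (and hence each region $\gW_{i\to j}$, $\cN_{i\to j}$, $\cR_{i\to j}$) is carried to the corresponding region for the image configuration, and then cancel the common volume factor ($1$ for isometries, $a^{n+1}$ for scalings) in each ratio. Your write-up is in fact slightly more detailed than the paper's, for instance in checking the linking curvature condition under scaling via $\kappa' = a^{-1}\kappa$, but the argument is the same.
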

\hspace{11.5cm} $\Box$
\par 
We note that if we consider the absolute positional significance $\tilde 
s_i$, then it is still invariant under Euclidean motions. However, under 
scaling by $a > 0$, it changes by the factor $a^{n+1}$; but this would not 
alter the hierarchy based on absolute positional significance, as all 
$\tilde s_i$ would be multiplied by the same factor.
\begin{Remark}
\label{RemII.6.5}
Importantly, the invariance in Proposition \ref{PropII6.4} crucially 
depends on also applying the Euclidean motion and/or scaling to the 
bounding region $\tilde \gW$.  If the region is either fixed, or depends 
upon an external condition which prevents it from transforming along with 
the configuration, then the invariance does not hold.  This has important 
consequences when properties of the configuration are measured, and is a 
problem for many methods in imaging where the imaged region first has to 
be normalized in some way.  In our case, the measurements should also be 
taking into account how their relation with the bounding region changes.
\end{Remark}
\par  
\subsubsection*{\it Continuity and Changes under Small Perturbations} 
\par
Lastly, suppose that $\bgW = \{\gW_i\} \subset \tilde \gW$ is a 
multi-region configuration, with a bounded skeletal linking structure.  We 
ask how the invariants will change under small perturbations.  This is a 
generally challenging question which we are not attempting to answer in 
this paper.  Thus, for what we do say, we will not give precise proofs, but 
indicate what we expect to happen.  \par
First, if the linking structure is a Blum linking structure, then only if the 
objects undergo a sufficiently small deformation for say the $C^{\infty}$ 
topology will there be the stability of the Blum linking structure, so the 
Blum medial axes will deform in a smooth fashion.  Then, the associated 
regions will also deform in a piecewise smooth fashion.  Hence, the 
volumes of these regions will vary continuously.  Thus, the quotients of 
the volumes will also vary continuously.  It then follows that the 
invariants, which are quotients of such volumes will also vary 
continuously.  \par 
If instead we only require that the deformations be $C^1$ small then the 
Blum linking structure will be unstable; however, it is possible to deform 
it to a skeletal linking structure which changes continuously under this 
small deformation.  Under these changes the skeletal sets will deform 
without changing their basic structure, and the associated regions will 
change in a continuous fashion so the resulting invariants, which are 
volumetric based, will also change in a continuous fashion.  
\par
How exactly they will change will depend on the particular deformation 
and this will demand a precise analysis that we are not prepared to carry 
out here.  However, as a simple example, suppose we enlarge one of the 
regions $\gW_i$ by increasing the radial vectors by a factor $a > 1$, so 
that $a r_i < \ell_i$, and without altering the remainder of the skeletal 
structure.  If the region remains in the bounding region and doesn\rq t 
intersect itself or other regions, then the ratio $\vol(\gW_{i \to j})$ to 
$\vol(\cR_{i \to j})$ will increase for each $j$ so the $s_i$ will increase, 
as will the $c_{i\to j}$.  If instead $0 < a < 1$, then $s_i$ and $c_{i\to j}$ 
will decrease.  \par
If instead we move the region $\gW_i$ in a direction away from all of the 
other regions without altering its size, then in general $s_i$ will 
decrease, and conversely if we move it toward the other regions, generally 
$s_i$ will increase. Thus, the invariants capture the type of geometric 
properties that we would hope.

\subsection*{Tiered Linking Graph}
\par
Now that we have obtained invariants appropriately measuring closeness 
and positional significance among objects, we can combine these into a 
graph theoretic structure.  We define exactly the type of graph structure 
we consider.  For us a graph $\gG$ is defined by a finite set of vertices $V 
= \{v_i: i = 1, \dots , m\}$, and a set of unordered edges $E = \{e_{i\, j}\}$ 
with at most one edge $e_{i\, j}$ between any pair of distinct vertices 
$v_i$ and $v_j$. 
\begin{Definition}
A {\em tiered graph} consists of a graph $\gG$ together with a discrete 
nonnegative function $f: V \cup E \to \R_{+}$ which we shall more simply 
denote by $f: \gG \to \R_{+}$.  The discrete function $f$ has values $f(v_i) 
= a_i \geq 0$ for each vertex $v_i$, and $f(e_{i\, j}) = b_{i\, j} \geq 0$ for 
each edge $e_{i\, j}$.  
\end{Definition}
\par
Given such a tiered graph, we can view its values on vertices and edges as 
height functions assigning weights to the vertices and edges; and then 
apply \lq\lq thresholds\rq\rq to $f$ to identify subgraphs, consisting of 
distinguished vertices and edges.  First, given a value $b > 0$, we can 
consider the subgraph $\gG_b$ consisting of all vertices, but only those 
edges where $f \geq b$.  $\gG_b$ decomposes into connected subgraphs 
consisting of vertices which have edges of weights $> b$.  As $b$ 
decreases from $B = \max \{b_{i\, j}\}$, then we see the smaller graphs 
begin to merge as edges are added, until we reach $\gG$ for $b = \min 
\{b_{i\, j}\}$.  In fact, as pointed out by the referee this approach is 
similar to the known method of \lq\lq clustering\rq\rq for collections of 
points.  \par 
If instead we consider the threshold $a$ for $f$ on vertices, then instead 
we define $\gG^a$ to consist of those vertices with $f \geq a$, and only 
those edges joining two vertices within this set.  This identifies a 
subgraph consisting of the most important vertices as measured by 
weights, along with the edges between these vertices.  Then as $a$ 
decreases from $A = \max \{a_i\}$, we again see the small graphs 
being supplemented by additional vertices with edges being added from 
these vertices until we reach the full graph when $a = \min \{a_i\}$.  This 
gives a hierarchical structure to the graph $\gG$.  Along with the 
subgraphs and the hierarchical structure, we can also identify vertices 
which are joined by strongly weighted edges, and important vertices with 
large weights $a_i$, and less significant ones with small weights $a_i$.  
It would also be possible to use this to turn $\gG$ into a directed graph 
where the direction of an edge is from lower positional significance to 
one of higher. 
\par
\begin{figure}[ht]
\includegraphics[width=8.0cm]{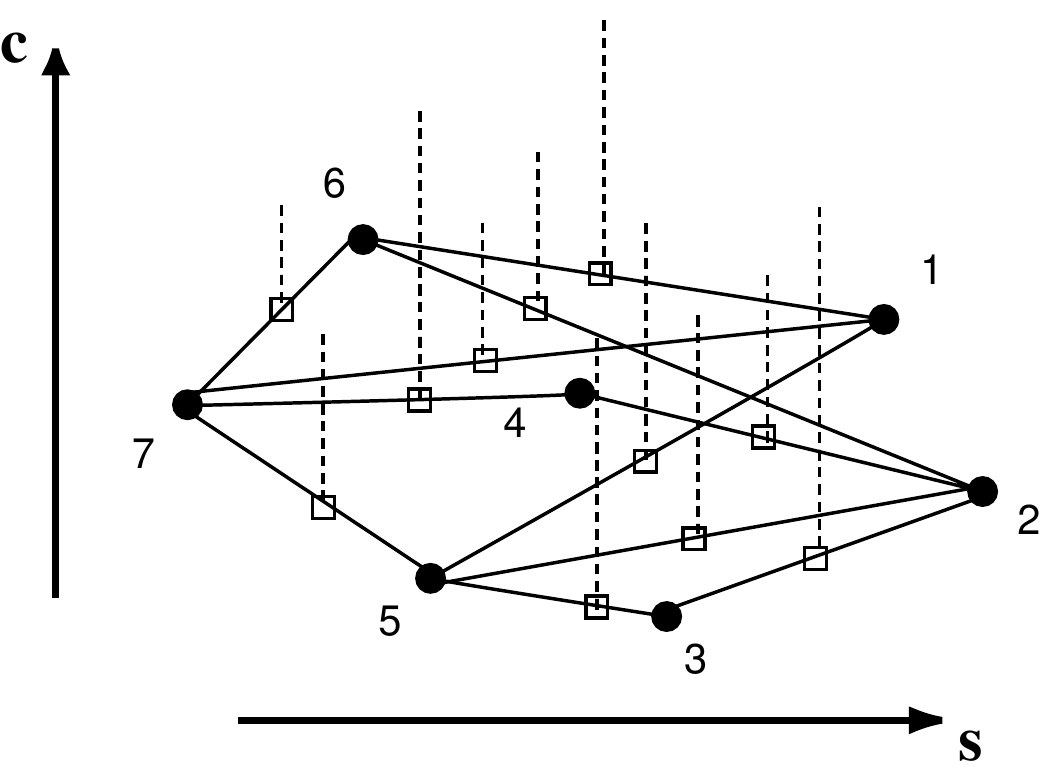}
\caption{Example of a tiered graph structure which we view as lying in a 
horizontal plane with one horizontal axis indicating positional 
significance, and the values of the height function giving values $c$ on the 
edges indicated by the heights of the dotted lines above the edges.}
\label{fig.II6.11}
\end{figure}
\par
This approach, using the tiered graph structure, applies to a configuration 
of multi-regions with a skeletal linking structure.  We define the 
associated {\it tiered linking graph} $\gL$ as follows.  For each region 
$\gW_i$, we assign a vertex $v_i$ in $\gL$, and to each pair of 
neighboring regions $\gW_i$ and $\gW_j$, we assign an edge $e_{i\, j}$ 
joining the corresponding vertices.  If the regions are not neighbors, there 
is no edge.  We define the height function $f$ by: $f(v_i) = s_i$ and 
$f(e_{i\, j}) = c_{i\, j}$ (or $c^a_{i\, j}$).  
\par
An example is shown in Figure~\ref{fig.II6.11}.  Then, when we apply the 
thresholds, we remove vertices to the left of some vertical line or edges 
whose heights are below some height.  We see how subconfigurations 
associated to the subgraphs merge into larger configurations as the 
vertical line indicating $s$ moves to the left, or the height moves 
downwards, with the resulting graphs based on closeness of the regions or 
their positional significance.  Also, the position along the $s$-axis 
identifies the hierarchy of regions in the configuration.  
\par
\begin{Remark}
\label{RemII6.6}
Although we have used the term \lq\lq threshold\rq\rq\, here to identify 
graph theoretic features, this differs considerably from the notion of 
threshold $\tau$ that we referred to in \S \ref{SII:sec.int} when we 
discussed the bounding region, and how the threshold $\tau$ would 
introduce a parametrized family of invariants that would offer more 
detailed information about the positional geometry of the configuration.  
The height function that we have defined here would become a 
parametrized family of height functions depending on $\tau$.  
\end{Remark}

\subsection*{Higher Order Positional Geometric Relations via Indirect 
Linking}
\par
We conclude this introduction to positional geometry by observing that the 
skeletal linking structures we have introduced only relate neighboring 
regions.  We have shown that the skeletal linking structures (and 
specifically the full Blum medial linking structures) capture both the 
shape properties of individual regions (and their geometry), and the 
geometry of the external region yielding positional information.  However, 
this approach allows regions to hide other regions from the geometric 
influence of non-neighbors.  One way to overcome this is to work with 
\lq\lq indirect linking\rq\rq\, where two regions are linked via a third 
region $\gW_3$.  For example, if we add a small elliptical region  
$\gW_4$ to the configuration in a) of Figure~\ref{fig.II6.12}, we obtain b) 
of the figure.  The small region alters the closeness of regions $\gW_1$ 
and $\gW_2$.  However, we see in b) that the linking neighborhood for the 
$\gW_4$ lies in the linking region between $\gW_1$ and $\gW_2$ in a).  
Hence, if we allowed indirect linking through $\gW_4$, then the closeness 
of $\gW_1$ and $\gW_2$ would not be altered.  This is just one example 
of various aspects of configurations that could be better understood by 
deriving indirect linking properties from the linking structure.  While the 
numerical measures could not be deduced just using the values from 
linking, the occurence of indirect linking could already be seen from the 
tiered graph structure.  However, we will not attempt to develop this 
further in this paper.
\par
\begin{figure}[ht]
\includegraphics[width=10.0cm]{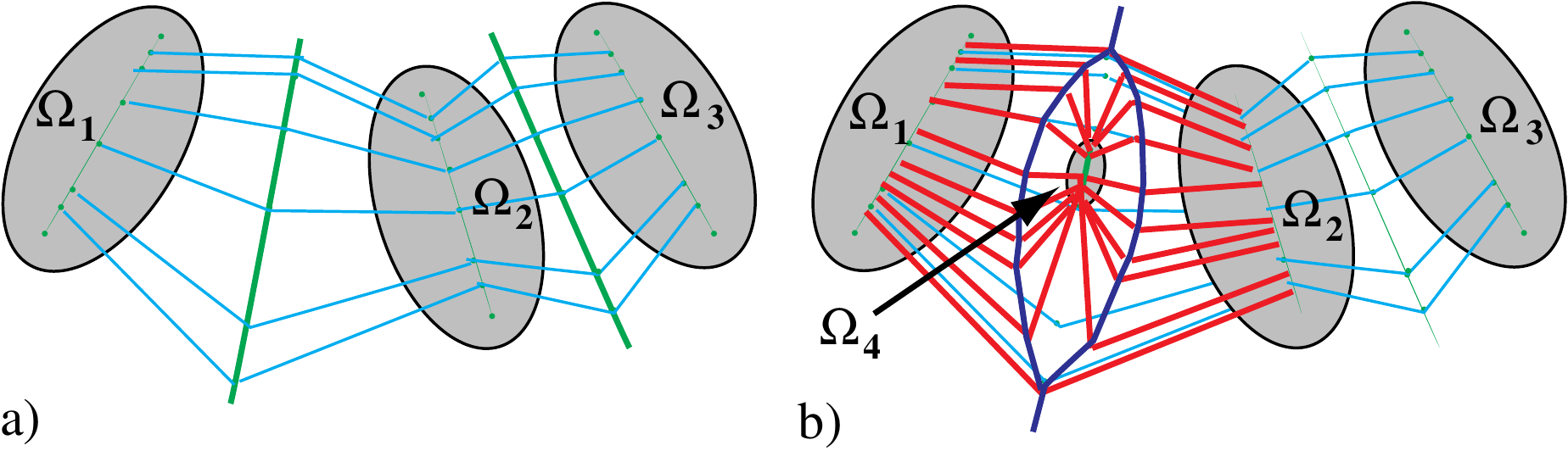}
\caption{Indirect linking when a region $\gW_4$ is added between regions 
in a) yielding b).  The Blum linking structure in b) has the neighborhood of 
$\gW_4$ contained in the neighboring regions of $\gW_1$ and $\gW_2$, so 
if indirect linking through $\gW_4$ is allowed, then there is no change in 
the closeness of $\gW_1$ and $\gW_2$.}
\label{fig.II6.12}
\end{figure}

\newpage
\section{Multi-Distance and Height-Distance Functions and Partial 
Multi-Jet Spaces} 
\label{S:sec4}
\par
	In order to prove the existence of a medial linking structure for 
a generic multi-region configuration it will be necessary to prove a 
generalization of the transversality theorem used by Mather \cite{M1} for 
the case of single regions.  This version will be applied in several 
different forms to a configuration $\bgW = \{ \gW_i\}$ in $\R^{n+1}$, 
via a model configuration $\bgD = \{ \gD_i\}$ and embedding $\Phi : \bgD 
\to \R^{n+1}$ as in \S \ref{S:sec1}.  

\par
For such a configuration, we will consider a collection of functions 
including: \lq\lq distance functions\rq\rq, \lq\lq height functions\rq\rq, 
\lq\lq multi-distance functions\rq\rq, and \lq\lq height-distance 
functions\rq\rq.    The transversality theorem applied to each one will 
yield for each compact set containing the configuration in its interior a 
residual set of embeddings $\Phi$ such that appropriate genericity 
properties hold.  Then, we use the special properties of the regions and the 
functions to see that the set of such embeddings is actually open.  Hence, 
the set of generic embeddings will be open and dense.  
\par

\subsection*{ Multi-Distance and Height-Distance Functions} 
\label{Multdist} 
\par  
For a multi-region configuration given by an embedding $\Phi : \bgD \to 
\R^{n+1}$, we define the associated multi-distance and height-distance 
functions.  However, we change the emphasis (and notation) from that of 
functions defined on subsets of the ambient space $\R^{n+1}$ to functions 
defined via the embedding $\Phi$ on the model configuration $\bgD$.  

\subsubsection*{\it  Stratification of the Configuration} \hfill 
\par  
For $\bgD$ we introduce notation to keep track of various strata of the 
boundaries of each $\gD_i$ and their corresponding images $\gW_i$ under 
$\Phi$.  As in \S \ref{S:sec1}, the boundary of $\gD_i$ is denoted by $X_i$; 
and we let $X = \cup_{i} X_i$ denote the stratified set which is the 
union of the boundaries.  We also let $X_{i j}$ denote the union of the 
smooth strata of $X_i \cap X_j$ (i.e. those of dimension $n$).  We also let 
for each $i > 0$, $X_{i 0} = X_i \backslash (\cup_{j >0} Cl(X_{i j}))$.  For 
regions satisfying the boundary edge condition, 
$\gD_i \cap \gD_j = Cl(X_{i j})$; 
thus, $X_{i 0}$ is an open stratum of $X_i$ and the points of $X_{i 0}$ do 
not lie in any region other than $X_i$.  For the configuration $\bgW$, with 
complement $\gW_0$, the smooth strata consist of the image of the union 
$\cup_i X_{i\, 0}$.  Just as for $X_{i\, j}$, we let $X_{0\, i} = X_{i\, 0}$ to 
emphasize that the image of $X_{0\, i}$ is a smooth stratum of $\gW_0$.   
Lastly, the remaining strata are formed from the $k$-edge-corner points 
for each given $k$.
\par  
For any $\gD_i$, we define an {\it index set} $\cJ_i = \{ j \neq i : X_{i j} 
\neq \emptyset\}$.  For $i = 0$, we let $\cJ_0 = \{ j > 0 : X_{0 j} \neq 
\emptyset\}$.  Also, for $i \geq 0$ we let $q_i = | \cJ_i |$.  
For each $i \geq 0$ we let $\cirX_i = \coprod_{j \in \cJ_i} X_{i j}$, which 
is the set of points in smooth strata of $X_i$.  Next, we let $X_{\cJ_i} = 
\coprod_{j \in \cJ_i} \cirX_j$, which is the disjoint union of the smooth 
strata of those $X_j$ for which  $X_{i\, j} \neq \emptyset$ (i.e. are 
adjoined to $X_i$).  We note that each point in $\cirX_i$ belongs to exactly 
one of the components of $X_{\cJ_i}$; however if there are $j, j^{\prime} 
\in \cJ_i$ with $X_{j\, j^{\prime}} \neq \emptyset$, then any point $x \in 
X_{j\, j^{\prime}}$ has representatives which belong to both $\cirX_j$ and  
$\cirX_{j^{\prime}}$.  \par
This is the space on which the models for \lq\lq simple linking\rq\rq will 
be defined.  However, to model both partial and self-linking, we must 
allow multiple copies of some of the $\cirX_j$.  We do so by extending the 
above definition using an {\em assignment function}. Given $m > 0$, we 
consider an {\em assignment}, which is a discrete function for integers 
$1 \leq p \leq m$, sending $p \mapsto j_p \in \cJ_i$.  Then, with this 
assignment, we define $X_{\cJ_i} = \coprod_{p = 1}^{m} \cirX_{j_p}$.  Note 
that because of the complexity of the notation which we will need to 
introduce, we do not include the assignment in the notation for 
$X_{\cJ_i}$ 
but will instead refer specifically to the assignment used for defining 
$X_{\cJ_i}$.  The basic model above for $X_{\cJ_i}$ uses the assignment 
for $m = q_i$ bijectively mapping $\{ p \in \Z : 1 \leq p \leq q_i\}$ to 
$\cJ_i$.  
\par
Example \ref{Exam4.7} 
illustrates $X_{\cJ_i}$ for a multi-region configuration given in Figure 
\ref{fig.4.4}.  We will let the images of these subspaces of the $X_i$ under 
$\Phi$ inherit the same notation, e.g. $\Phi(X_{i\, 0}) = \cB_{i\, 0}$, etc.
\par
We will also use the notation $\gS_{Q \, i}\subset X_i$ to denote the 
stratification by smooth $Q_k$-points in $X_i$.  Then, we note that while 
$\cirX_i$ consists of points in the smooth strata, $\cirX_i \cap \gS_{Q\, 
i} = \emptyset$, and the points in $\gS_{Q\, i}$ are also smooth points on 
the boundary $X_i$.  
\par
\begin{figure}[ht]
\includegraphics[width=6cm]{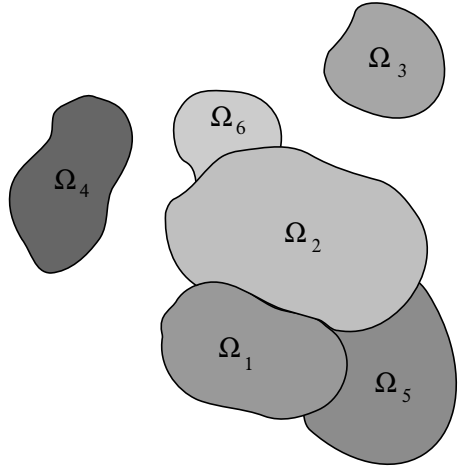}
\caption{\label{fig.4.4} A multi-region configuration in $\R^2$ given 
earlier with the decomposition explained in Example \ref{Exam4.7}.}
\end{figure} 
\begin{Example}
\label{Exam4.7}
In Figure \ref{fig.4.4} is a multi-region configuration $\bgW$ defined by 
the model $\Phi : \bgD \to \R^{n+1}$.  We describe the properties of the 
image regions $\{ \gW_i\} $ in terms of the model $\bgD$.  Both $X_3 = 
\partial \gD_3$ and $X_4 =\partial \gD_4$ are smooth so $\cJ_3 = \cJ_4 
= \{0\}$.  However, for $\gD_2$, $\cJ_2 = \{0, 1, 5, 6\}$ and for $\gD_1$, 
$\cJ_1 = \{0, 2, 5\}$.  For each $i = 1, 2, 5, 6$, $X_{\cJ_i}$ without 
repetitions consists of 
disjoint unions of smooth boundary curves of those regions $\gD_j$ which 
share a portion of their boundary with $\gD_i$.  The images of these 
curves give the corresponding curves of shared boundary regions between 
$\gW_j$ and $\gW_i$.  For example, the regions $\gW_1$, $\gW_5$, 
$\gW_6$ are adjoined to $\gW_2$ along the boundary curves which are 
images of $X_{2 1}$; $X_{2 5}$, and $X_{2 6}$.  All regions are adjoined to 
the complement.
\end{Example}
\par
Our future computations shall concern transversality conditions on 
mappings associated to $\Phi \in \emb(\bgD, \R^{n+1})$.  For this reason 
we shall reintroduce the distance and height functions so they are defined 
on $X$.  The {\it distance function} on $X$ via the embedding $\Phi$ is the 
function 
$\gs : X \times \R^{n+1} \to \R$ defined by $\gs (x, u) = \| \Phi(x) - u\|^2$.  
Likewise, the {\it height function} via the embedding $\Phi$ is the 
function  $\nu : X \times S^{n} \to \R$ defined by $\nu(x, v) = \langle 
\Phi(x), v\rangle$, where $S^n$ is the unit sphere in $\R^{n+1}$.  
Using these basic functions we define both multi-distance functions and 
height-distance functions.  \par
We also introduce a variant form of the distance function for regions with 
singular boundaries to allow it to be defined at the nonsingular $Q_k$ 
points.  We let $X_i^*$ denote the set of smooth points of $X_i$ obtained 
by removing the points of type $P_k$ and singular $Q_k$ points, for all 
$k$.  We note that $\gS_{Q \, i} \subset X_i^*$, and that $X_i^*$ differs 
from the set $\cirX_i$ by including the nonsingular $Q_k$ points of $X_i$.  
Then we define the distance function $ \gs_i = \gs | (X_i^* \times 
\R^{n+1})$.  In addition to obtaining the generic properties of the distance 
function,  we shall also see that it generically satisfies transversality 
conditions relative to the strata of $\gS_{Q \, i}$. \par
We will use the standard notation that $Y^r = Y \times \cdots \times Y$ 
($r$ factors), and let $\gD^{(r)} Y \subset Y^r$ be the {\it generalized 
diagonal} consisting of $(y_1,\cdots,y_r) \in Y^r$ such that $y_i = y_j$ for  
some $i \neq j$.  We also let $\gD^{r} Y \subset Y^r$ denote the {\it exact 
diagonal} consisting of $(y,\cdots,y) \in Y^r$.  Then, we let $Y^{(r)} 
\overset{def}{=} Y^r \backslash \gD^{(r)} Y$.  \par
For each $j = 0, 1,  \dots , m$, we let $\R^{n+1}_j$ denote a copy of 
$\R^{n+1}$ indexed by $j$, and we abbreviate $\prod_{j = 0}^{m}\R^{n+1}_j 
= (\R^{n+1})^{m+1}$.  Then $(\R^{n+1})^{(m+1)}$ denotes the complement of 
the generalized diagonal in $(\R^{n+1})^{m+1}$. Quite generally we let 
$\pi_j : \prod_{i = 0}^{k} \R^{n+1}_i \to  \R^{n+1}_j$ denote both the 
projection on the $j$--th factor, as well as its restriction to 
$(\R^{n+1})^{(k+1)} \subset \prod_{i = 0}^{k}\R^{n+1}_i$.
\par
Given an {\em assignment} for $1 \leq p \leq m$, $p \mapsto j_p \in 
\cJ_i$.   We denote points in $(\R^{n+1})^{(m)}$ by 
$(u^{(j_1)}, \dots , u^{j_{(m)}})$, where we let $u^{(j_p)}$ have coordinates 
$(u^{(j_p)}_1, \dots , u^{(j_p)}_{n+1})$.  This will allow us to consider 
multiple centers $u^{(j_p)}$ for distance functions on boundaries of given 
regions $\cirX_j$ with $j = j_p$.    

\par
\begin{Definition}
\label{Def4.4}
A {\it multi-distance function} associated to the $i$-th region $\gW_i$ 
for the configuration $\bgW$ defined by the model $\Phi : \bgD \to 
\R^{n+1}$, together with $m > 0$ and an assignment $p \mapsto j_p$, is 
given by 
\begin{align}
\label{Eqn4.2}
\rho_i: X_{\cJ_i} \times (\R^{n+1})^{(m + 1)} &\rightarrow \R^2, \\
(x, (u^{(j_1)}, \dots , u^{(j_{m})}, u^{(i)})) &\mapsto (\gs(x, u^{(i)}), \gs(x, 
u^{(j_p)}))  \quad \makebox{ for } x \in \cirX_{j_p}\, .  \notag
\end{align}
\end{Definition}

\par  
These functions are \lq\lq multi-distance\rq\rq\, functions in the sense 
that they incorporate multiple functions capturing the squared distance 
from distinct points $u^{(j)}$ in the ambient space, and ultimately we are 
interested in the case where the $u^{(q)} \in \intr(\gW_j)$ for $j \in 
\cJ_i$.  \par
Next we define height-distance functions associated to each region 
$\gD_i$.  
\begin{Definition}
\label{Def4.5}
The {\it height-distance function} associated to 
the configuration $\bgW$ defined by the model $\Phi : \bgD \to \R^{n+1}$  
together with $m > 0$ and an assignment $p \mapsto j_p$ and is given by
\begin{align}
\label{Eqn4.3}
\tau: X_{\cJ_0} \times (\R^{n+1})^{(m)}  \times S^n  &\rightarrow \R^2, 
\\
(x, (u^{(j_1)}, \dots , u^{(j_{m})}), v) &\mapsto (\nu(x, v), \gs(x, 
u^{(j_p)}))  \quad \makebox{ for } x \in \cirX_{j_p} \notag
\end{align}
where $\cJ_0 = \{ j : X_{0, j} \neq \emptyset\}$
\end{Definition}
\flushpar
{\bf Notation: }  In what follows, we shall frequently abbreviate $\gs(x, 
u^{(j)})$ as $\gs_{j}(x)$. 
\par
The height-distance function will be used to relate the distance squared 
functions for different points $u^{(j)}$ in the ambient space and properties 
of the height function on $X_{\cJ_0}$.  
We shall apply a transversality theorem to these two families of 
multi-functions for various $m$ and assignments $p \mapsto j_p$ to 
deduce the genericity of properties of linking (in the case of $\rho_0$) 
and those relating linking with the boundary of the unlinked region. 
Since the transversality theorem is a result on the level 
of jet bundles, our focus in the next section is on defining a special type 
of \lq\lq partial multijet space\rq\rq\, and a special kind of multijets of 
such functions which map into these spaces. 

\begin{figure}
\begin{center}
\includegraphics[width=3.5cm]{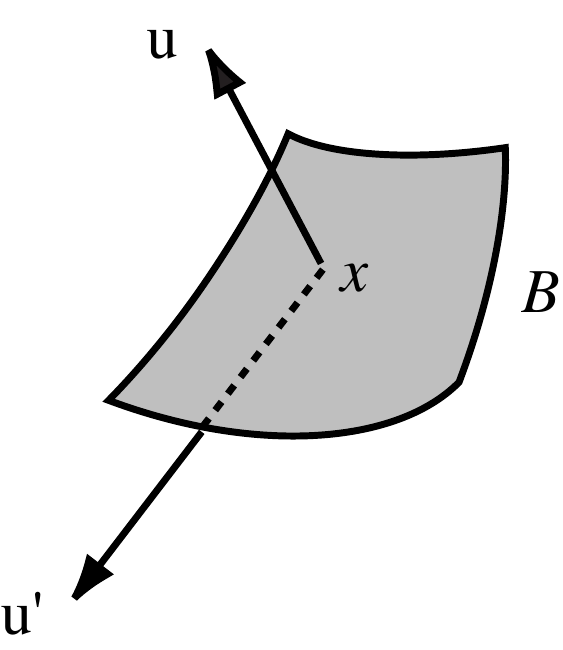}
\hspace*{0.30cm}
\includegraphics[width=5.0cm]{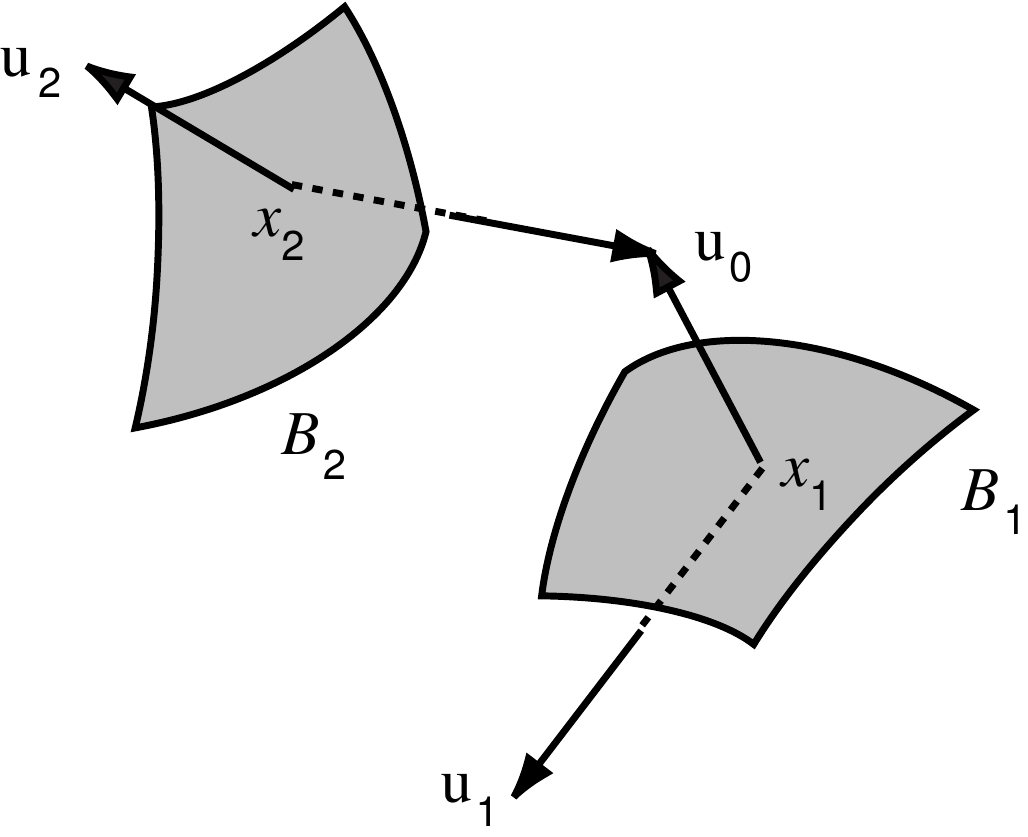}
\end{center}
\hspace {0.5in}(a) \hspace {2.0in} (b) \hspace{0.5in} 
\caption{a) Multi-distance function - pair of families of distance 
functions from a region boundary to distinct points $u$ and $u^{\prime}$ in 
the complement. b)  Linking via multi-distance functions - families of 
distance functions from region boundaries $\cB_i$ to a common point 
$u_0$ in the complement.} 
\label{fig7}
\end{figure}
\par

\subsection*{ Partial Jet Spaces for Multi-Distance and Height-Distance 
Functions} 
\label{Partjetspaces} 
\par
To examine the generic properties of linking between regions, we 
introduce the {\em partial multijet space} and the corresponding {\em 
partial multijet extension maps} for both multi-distance and 
height-distance functions.  
\par
Given an integer $s > 0$, $i$, an integer $m > 0$ with assignment $p 
\mapsto j_p$, we let 
$\bl=(\ell_1,\dots,\ell_{q_i})$ denote an ordered partition of the form 
$s=\ell_1 + \cdots + \ell_{m}$ with each integer $\ell_p > 0$.  For each $1 
\leq p \leq m$, we denote points $x^{(j_p)} = (x^{(j_p)}_1, \dots , 
x^{(j_p)}_{\ell_p}) \in \cirX_{j_p}^{(\ell_p)}$.  
Define
\begin{align}
\label{Eqn4.4}
X_{\cJ_i}^{(\bl)} \,\, &= \,\,  \{ (x^{(j_1)}, \dots , x^{(j_{m})}) \in 
\cirX_{j_1}^{(\ell_1)}\times \cdots \times \cirX_{j_{m}}^{(\ell_{m})} :  
x_1^{(j_p)} \in X_{i\, j_p} \text{ for all $p$ and } \\ 
& \qquad \qquad \text{ if } j_p = j_{p^{\prime}}, \text{then } x_q^{(j_p)} 
\neq x_{q^{\prime}}^{(j_{p^{\prime}})} \text{for any } q, q^{\prime}\}\, . 
\notag
\end{align}
Observe that $X_{\cJ_i}^{(\bl)}$ is an open subset of the product space in 
(\ref{Eqn4.4}).  
We denote a point in $X_{\cJ_i}^{(\bl)}$ by 
$(x^{(j_1)}, \dots , x^{(j_{m})})$, where each 
$x^{(j_p)} = (x^{(j_p)}_1, \dots , x^{(j_p)}_{\ell_p})$. 
By definition, for any $s$-tuple in $X_{\cJ_i}^{(\bl)}$, any two points in it 
belonging to the same $\cirX_{j}$ are distinct.  However, we still note 
that not every $s$-tuple need belong to $X^{(s)}$.  This is because if there 
are $j_p \neq  j_{p^{\prime}}$ with $X_{j_p\, j_{p^{\prime}}} \neq 
\emptyset$, then any point $x \in X_{j_p\, j_{p^{\prime}}}$ is represented 
in the disjoint copies of both $\cirX_{j_p}$ and $\cirX_{j_{p^{\prime}}}$. 
Nonetheless the multi-function $\rho_i$ will be defined for $x \in 
\cirX_j$ by $(\gs( x, u^{(i)}), \gs(x, u^{(j_p)}))$ versus for $x \in 
\cirX_{j_{p^{\prime}}}$, by $(\gs(x, u^{(i)}), \gs(x, u^{(j_{p^{\prime})}}))$, 
with $u^{(j_p)} \neq u^{(j_{p^{\prime}})}$.  A consequence is that the 
transversality conditions on $\cirX_{j_p}$ and $\cirX_{j_{p^{\prime}}}$ 
are different and will be verified independently.  This motivates our 
definition of the partial multijet spaces to follow. 
\par
We consider the usual $k$-multijet space ${_s}J^k(X,\R^2)$, which 
consists of $s$ $k$-jets of germs at distinct points of $X$ mapping to 
$\R^2$, and we define a subspace using the partition $\bl$.  
\begin{Definition}
\label{Def4.6}
For $X_{\cJ_i}$ (defined for $m > 0$ with an assignment $p \mapsto j_p$, 
let $\bl =(\ell_1,\dots,\ell_{m})$ be an ordered partition 
of $s > 0$.  Then, the {\em partial $\bl$-multi $k$-jet space} is the 
subspace of ${_s}J^k(X_{\cJ_i},\R^2)$ defined by the restriction
\begin{equation}
\label{Eqn4.4b}
 {_{\bl}}E^{(k)}(X_{\cJ_i},\R^2)  \,\, = \,\, \left( \prod_{p=1}^{m} \, 
{_{\ell_p}}J^k(\cirX_{j_p},\R^2)\right) |\, (X_{\cJ_i})^{(\ell)}.
\end{equation}
\end{Definition}
The two basic properties of multijet spaces which are shared by the 
partial multijet spaces are summarized in the following straightforward  
lemma.
\begin{Lemma}
\label{Lem4.4}
The partial multijet spaces have the following properties:
\begin{itemize}
\item[(a)] ${_{\bl}}E^{(k)}(X_{\cJ_i},\R^2)$ is a smooth submanifold of 
${_s}J^k(X_{\cJ_i},\R^2)$; and 
\item[(b)] ${_{\bl }}E^{(k)}(X_{\cJ_i},\R^2)$ is a locally trivial fiber bundle 
over $X_{\cJ_i}^{(\bl)}$ with fiber \par 
$\prod_{p=1}^{m} \, (J^k(n,2) \times \R^2)^{\ell_{p}}$.
\end{itemize}
\end{Lemma}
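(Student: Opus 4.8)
The plan is to unwind the definitions: the space ${_{\bl}}E^{(k)}(X_{\cJ_i},\R^{2})$ is assembled from ordinary jet bundles only by forming products, restricting to configuration spaces, and passing to open subsets of the base, and each of these operations preserves the structure of a smooth locally trivial fiber bundle. First I would record the building blocks. Each $\cirX_{j_p}$ is a smooth (possibly disconnected) $n$-manifold, being the disjoint union $\coprod_{j\in\cJ_{j_p}}X_{j_p\,j}$ of open $n$-strata of the boundary $X_{j_p}$; hence $X_{\cJ_i}=\coprod_{p=1}^{m}\cirX_{j_p}$ is a smooth $n$-manifold and $J^{k}(X_{\cJ_i},\R^{2})$ is a smooth locally trivial fiber bundle over $X_{\cJ_i}$ with fiber $J^{k}(n,2)\times\R^{2}$. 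Classically, for any smooth manifold $N$ and $r>0$, the $r$-fold multijet bundle ${_r}J^{k}(N,\R^{2})$ is the restriction of the $r$-fold fiber product of $J^{k}(N,\R^{2})$ to the open configuration subset $N^{(r)}\subset N^{r}$, and it is a smooth locally trivial fiber bundle over $N^{(r)}$ whose fiber over a configuration of $r$ distinct points is the $r$-fold product $(J^{k}(n,2)\times\R^{2})^{r}$; moreover, since $\cirX_{j_p}$ is a union of connected components of $X_{\cJ_i}$, one has the canonical identification $J^{k}(\cirX_{j_p},\R^{2})=J^{k}(X_{\cJ_i},\R^{2})\,|\,\cirX_{j_p}$.

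For (b), I would apply the quoted fact to each factor: ${_{\ell_p}}J^{k}(\cirX_{j_p},\R^{2})$ is a locally trivial fiber bundle over $(\cirX_{j_p})^{(\ell_p)}$ with fiber $(J^{k}(n,2)\times\R^{2})^{\ell_p}$, so the product over $p$ is a locally trivial fiber bundle over $\prod_{p}(\cirX_{j_p})^{(\ell_p)}$ with fiber $\prod_{p=1}^{m}(J^{k}(n,2)\times\R^{2})^{\ell_p}$, and since $X_{\cJ_i}^{(\bl)}$ is an open subset of $\prod_{p}(\cirX_{j_p})^{(\ell_p)}$ (as already observed in the text), the restriction $\bigl(\prod_{p}{_{\ell_p}}J^{k}(\cirX_{j_p},\R^{2})\bigr)|X_{\cJ_i}^{(\bl)}={_{\bl}}E^{(k)}(X_{\cJ_i},\R^{2})$ is again a locally trivial fiber bundle over $X_{\cJ_i}^{(\bl)}$ with the same fiber, giving (b). For (a), I would first note that $X_{\cJ_i}^{(\bl)}$ sits inside $(X_{\cJ_i})^{(s)}$: two blocks with $p\neq p'$ lie in distinct factors $\cirX_{j_p},\cirX_{j_{p'}}$ of $X_{\cJ_i}$, while within a block, and between blocks assigned to the same region, the defining conditions of $X_{\cJ_i}^{(\bl)}$ already force distinctness, so all $s$ source points occurring are pairwise distinct points of $X_{\cJ_i}$. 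Together with the component identification above, this exhibits ${_{\bl}}E^{(k)}(X_{\cJ_i},\R^{2})$ as the preimage of $X_{\cJ_i}^{(\bl)}$ under the source projection ${_s}J^{k}(X_{\cJ_i},\R^{2})\to(X_{\cJ_i})^{(s)}$; and $X_{\cJ_i}^{(\bl)}$ is open in $(X_{\cJ_i})^{(s)}$ --- the blockwise and same-assignment distinctness clauses are complements of closed diagonals, and the clause $x_{1}^{(j_p)}\in X_{i\,j_p}$ is open because $X_{i\,j_p}$ is one of the summands making up $\cirX_{j_p}$ --- so its preimage is open in ${_s}J^{k}(X_{\cJ_i},\R^{2})$, hence a smooth submanifold.

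There is no analytic or singularity-theoretic difficulty here; the whole content is the organization of the definitions. The one place that genuinely requires care is why ${_{\bl}}E^{(k)}(X_{\cJ_i},\R^{2})$ embeds into the honest multijet space at all: one must check that passing to the disjoint-union model $X_{\cJ_i}=\coprod_{p}\cirX_{j_p}$ --- rather than to the ambient union of boundary pieces, where overlapping shared-boundary strata would be identified --- together with the extra distinctness clause imposed on blocks with $j_p=j_{p'}$, really does keep the $s$ source points pairwise distinct, so that restriction of the source projection of ${_s}J^{k}(X_{\cJ_i},\R^{2})$ behaves as claimed.
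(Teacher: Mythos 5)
The paper offers no proof of Lemma~\ref{Lem4.4}; it is simply declared ``straightforward'' and closed with a $\Box$. Your proof correctly supplies what is omitted, and the approach is the natural one that the authors implicitly have in mind: $J^k(\cirX_{j_p},\R^2)$ is the restriction of $J^k(X_{\cJ_i},\R^2)$ to a clopen subset, multijet bundles over a manifold are locally trivial with the stated fiber, and ${_{\bl}}E^{(k)}(X_{\cJ_i},\R^2)$ is the preimage of the open set $X_{\cJ_i}^{(\bl)}$ under the source projection. You correctly identify the only delicate point, namely that distinctness of the $s$ source points is taken in the disjoint union $X_{\cJ_i}=\coprod_p \cirX_{j_p}$ rather than in the ambient boundary $X$; the paper itself flags this (``not every $s$-tuple need belong to $X^{(s)}$'') and it is precisely the disjoint-union convention that makes $X_{\cJ_i}^{(\bl)}\subset (X_{\cJ_i})^{(s)}$, hence part (a) a matter of openness. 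Both parts of your argument are correct.
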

\hspace{11.5cm} $\Box$
\par 
Both the multi-distance and height-distance functions defined for 
 $m > 0$ with an assignment $p \mapsto j_p \in \cJ_i$, are given as 
mappings of the form $\psi : X_{\cJ_i} \times U \to \R^2$, where $U$ is 
either $(\R^{n+1})^{(m+1)}$ or $(\R^{n+1})^{(m)} \times S^n$.  
For such mappings we define an associated {\it partial multijet map} 
\begin{align}
\label{Eqn4.5}
_{\bl}j^k(\psi): &\, X_{\cJ_i}^{(\bl)} \times U 
\longrightarrow  \, {_{\bl }}E^{(k)}(X_{\cJ_i},\R^2) \notag \\
((x^{(j_1)}, \cdots ,  x^{(j_{m})}),  & u) 
\mapsto  ({_{\ell_1}}j_1^k(\psi(x^{(j_1)}, u), \dots ,  
{_{\ell_{m}}}j_1^k(\psi(x^{(j_{m})}, u))) 
\end{align}
where $_{\ell_p}j_1^k(\psi(x^{(j_p)}, u))$ denotes the multijet 
$_{\ell_p}j^k(f)$ for the function $f = \psi(\cdot, u)$ (for a fixed $u$) on 
$\cirX_{j_p}$.  
In the special cases of the multi-distance and height-distance functions 
we obtain partial multijet maps
\begin{equation}
\label{Eqn4.6a}  
_{\bl}j^k(\rho_i) : \, X_{\cJ_i}^{(\bl)} \times (\R^{n+1})^{(m+1)} 
\longrightarrow  \, {_{\bl}}E^{(k)}(X_{\cJ_i},\R^2)  
\end{equation}
and 
\begin{equation}
\label{Eqn4.6b}
  _{\bl}j^k(\tau): \, X_{\cJ_0}^{(\bl)} \times (\R^{n+1})^{(m)} \times S^n 
\longrightarrow  \, {_{\bl}}E^{(k)}(X_{\cJ_0},\R^2)\, .  
\end{equation}

\section{Generic Blum Linking Properties via Transversality Theorems}
\label{S:sec5} 
\par
In this section, we state a transversality theorem for the multi-distance 
and height-distance functions associated to a multi-region configuration.   
We then list the submanifolds of partial multijet spaces which capture 
the generic linking properties.  These are the submanifolds to which it 
will be applied.  \par
\subsection*{Transversality Theorem for Multi-Distance and 
Height-Distance Functions} 
\par
We begin with a variant of the transversality theorem of Looijenga for the 
distance functions for multi-region configurations defined by $\Phi : \bgD 
\to \R^{n+1}$. The first transversality theorem concerns submanifolds of 
the $s$-multijet space that are $ _{s}\cR^+$-invariant.  By this we mean 
that there is the induced action of the right equivalence group $_{s}\cR$, 
via the action of $\cR$ on $s$ multigerms, and we extend this by the 
group $\R$ acting by diagonal translations on $\R^2$, $(y_1, y_2) \mapsto 
(y_1 + c, y_2 + c)$ for $c \in \R$. \par
For later reference to the transversality theorems in \cite{D5}, we refer 
to the compact--open (or weak) $C^{\infty}$--topology as the {\em regular 
$C^{\infty}$--topology}.  Since the underlying space of $\bgD$ is compact, 
the Whitney and regular $C^{\infty}$ topologies on 
$C^{\infty}(\bgD, \R^{n+1})$ will agree. 
\begin{Thm}
\label{Thm5.0}
For any $i = 1, \dots , m$, let $W$ be a closed Whitney stratified subset of 
${_s}J^{k}(X_i^*,\R)$ that is $ {_s}\cR^+$-invariant.  
\flushpar
{\rm ($a_0$)} Let $Z \subset (X_i^*)^{(s)} \times \R^{n+1}$ be compact. 
Then the set
\begin{align*}
\cW \,\, &= \,\,  \{\Phi \in C^{\infty}(\bgD, \R^{n+1}) : \text{ both } 
{_s}j_1^k \gs_i \text{ and } {_s}j_1^k \gs_i | ((\gS_Q)^{(s)} \times 
\R^{n+1}) \,\,  \\ 
& \hspace{1in} \ol{\pitchfork}\,\, W \subset {_s}J^{k}(X_i^*,\R) \text{ on 
Z}\}  
\end{align*}
is an open dense subset for the regular $C^{\infty}$--topology.  
\flushpar 
{\rm ($b_0$)} The set
\begin{align*}
  \cW \,\, &= \,\, \{\Phi \in C^{\infty}(\bgD, \R^{n+1}) : \text{ both } 
{_s}j_1^k \gs_i \text{ and } {_s}j_1^k \gs_i | ((\gS_Q)^{(s)} \times 
\R^{n+1}) \,\,  \\ 
& \hspace{1in} \ol{\pitchfork}\,\, W \subset {_s}J^{k}(X_i^*,\R)  \text{ on 
} (X_i^*)^{(s)} \times \R^{n+1}\}  
\end{align*}
is a residual subset for the regular $C^{\infty}$--topology.
\end{Thm}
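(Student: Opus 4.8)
The plan is to deduce both statements from the hybrid transversality theorem (Theorem~\ref{Thm9.2}), reducing its hypotheses to infinitesimal conditions on the family $\{\gs(\cdot,u)\}_{u\in\R^{n+1}}$; the overall argument follows the pattern of Looijenga~\cite{L} and Wall~\cite{Wa}, adapted to configurations and to the extra restriction along the fixed stratified set $\gS_Q$. First I would dispose of openness in $(a_0)$. Since $W$ is a \emph{closed} Whitney stratified set, transversality of a $C^{\infty}$ map to every stratum of $W$, imposed on a compact subset of the source, is an open condition: if a jet over a point of $Z$ lands in a stratum $W_{\alpha}$ and the map is transverse to $W_{\alpha}$ there, then Whitney regularity together with the frontier condition forces the map to remain transverse to $W_{\alpha}$, and to every stratum having $W_{\alpha}$ in its closure, at nearby points and for $C^{\infty}$-nearby maps. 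Applying this to ${}_{s}j_1^k\gs_i$ on $Z$ and, independently, to its restriction to the compact set $Z\cap((\gS_Q)^{(s)}\times\R^{n+1})$, and intersecting the two open sets, shows the set $\cW$ of $(a_0)$ is open; and since $\bgD$ is compact, the regular and Whitney topologies on $C^{\infty}(\bgD,\R^{n+1})$ coincide, so this is openness for the regular topology. Density of $\cW$ in $(a_0)$ will then follow from $(b_0)$, a residual set in a Baire space being dense.

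For the residuality in $(b_0)$ I would use the standard Baire packaging. Write $(X_i^*)^{(s)}\times\R^{n+1}$ as a countable union of compact sets $Z_l$, let $\cW_l$ be the set of $\Phi$ for which both ${}_{s}j_1^k\gs_i$ and its $\gS_Q$-restriction are transverse to $W$ on $Z_l$, and note the set in $(b_0)$ is $\bigcap_l\cW_l$ with each $\cW_l$ open by the previous step; it thus suffices to prove each $\cW_l$ dense. Fixing $\Phi_0$, I would then appeal to the constructions of Part~IV to build a finite-parameter family $\{\Phi_a:a\in B\}$, $B$ a small ball about $0$ in some $\R^N$, with $\Phi_0=\Phi_a|_{a=0}$, assembled from perturbations of the embedding localized near finitely many points of $X_i^*$ (and of $\gS_Q$). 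The key claim, to be supplied by Part~IV, is that the enlarged jet map
\[
\Theta\colon (X_i^*)^{(s)}\times\R^{n+1}\times B\ \longrightarrow\ {}_{s}J^{k}(X_i^*,\R),\qquad
((x_1,\dots,x_s),u,a)\ \longmapsto\ {}_{s}j_1^k\gs_i^{(\Phi_a)}((x_1,\dots,x_s),u),
\]
together with its restriction to $(\gS_Q)^{(s)}\times\R^{n+1}\times B$, is transverse to every stratum of $W$ over $Z_l\times B$. Granting this, the parametric transversality theorem produces a residual, hence dense, set of $a\in B$ for which ${}_{s}j_1^k\gs_i^{(\Phi_a)}$ and its $\gS_Q$-restriction are simultaneously transverse to $W$ on $Z_l$; since $B$ is arbitrarily small this approximates $\Phi_0$, giving density of $\cW_l$.

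It then remains to indicate why $\Theta$ is transverse to $W$. This is a pointwise statement at a parameter $((x_1,\dots,x_s),u,0)$ whose $\Theta$-image lies in a stratum $W_{\alpha}$: one wants $\operatorname{im} d\Theta + TW_{\alpha}$ to fill the tangent space of the multijet space. Using the ${}_{s}\cR^{+}$-invariance of $W_{\alpha}$ one need only hit the directions normal to the ${}_{s}\cR^{+}$-orbit of the jet; and because the $x_p$ are distinct the localized perturbations decouple this into a point-by-point computation, where at a single smooth point the relevant data are the $k$-jet of $\Phi$ at that point and the center $u$. The derivative computation of Part~IV should show that varying the $k$-jet of $\Phi$ (via $a$) together with varying $u$ surjects onto the needed quotient of $J^k(n,1)$: the $n+1$ parameters $u$ supply the unfolding directions (this is exactly where $\R^{n+1}$ having ``enough parameters'' enters), while the embedding perturbations supply the higher-order fiber directions that $u$ alone cannot reach. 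For the restriction to $\gS_Q$ I would simply observe that $\gS_Q\subset X_i^*$ is a \emph{fixed} Whitney stratified subset lying in the smooth strata of $X$, so perturbing $\Phi$ moves only the image, not the source; the same localized perturbations therefore still control the full $k$-jet of $\Phi$ at points of $\gS_Q$, and the identical derivative computation yields transversality of the restricted $\Theta$. Intersecting the two residual parameter sets preserves both conditions.

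The main obstacle is precisely this pointwise derivative computation. Distance-squared functions $\|\Phi(\cdot)-u\|^2$ form a proper, constrained subfamily of all $k$-jets of functions on $X_i^*$ (their $2$-jets, for instance, are far from arbitrary), so it is not automatic that $u$- and $\Phi$-variations together span all normal directions to every ${}_{s}\cR^{+}$-invariant Whitney stratum; verifying that this constraint is compatible with the ${}_{s}\cR^{+}$-action is the genuine Looijenga--Wall input, and it is the calculation I would defer to Part~IV. Everything else --- the Baire/parametric-transversality formalism, the reduction through Theorem~\ref{Thm9.2}, and the remark that freezing the source set $\gS_Q$ changes nothing at the infinitesimal level --- is routine once that input is in hand.
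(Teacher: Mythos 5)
Your overall strategy — reduce to the hybrid multi-transversality theorem (Theorem~\ref{Thm9.2}), package via Baire on a countable exhaustion by compacta, and defer the surjectivity of the perturbation family to the derivative computations of Part~IV — is the same as the paper's, and your openness/density bookkeeping is a valid rearrangement of the paper's $(a_0) \Rightarrow (b_0)$ ordering.

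Where you genuinely diverge from the paper is in the handling of the $\gS_Q$-restricted transversality, and here your argument is stated too glibly, even though it can be made to work. Transversality of the \emph{restricted} jet map ${_s}j_1^k\gs_i \mid (\gS_Q)^{(s)} \times \R^{n+1}$ to $W$ is a strictly stronger condition than transversality of the full jet map: restricting the source throws away the $x$-tangent directions normal to $\gS_Q$, so the unrestricted transversality alone does not imply the restricted one. Your claim that ``the identical derivative computation yields transversality of the restricted $\Theta$'' is true, but only because of a specific nontrivial feature of Part~IV's computation: the derivatives of the fiber jet extension map with respect to the polynomial perturbation parameters $\bt$ (supplemented, in the exceptional case $u^{(0)} = x^{(0)}$, by the $u$-derivatives) are already a \emph{submersion} onto the jet fiber, with no contribution needed from the $x$-source directions — this is the content of Propositions~\ref{Prop11.1} and \ref{Prop11.2}. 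You should state this explicitly; absent that observation the claim does not follow. The paper avoids relying on it by adding, alongside the polynomial perturbation space $T$, a finite-dimensional family $T_0$ of diffeomorphisms of $X_i$ supported near $\gS_Q$ (constructed via the isotopy theorem) whose infinitesimal variations at each $x_0 \in \gS_Q$ cover $T_{x_0}X_i$, and then invokes the parametrized transversality theorem; this converts the $\gS_Q$-restricted transversality into the unrestricted one in a way that is robust against whether the target perturbations alone happen to submerse. So: same strategy, genuinely different treatment of $\gS_Q$ — the paper's isotopy trick is more modular, yours is shorter but needs the submersion fact named.
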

\flushpar
{\bf Note:} \lq\lq $\ol{\pitchfork}$\rq\rq\, denotes transversality of the 
mapping to the strata of the Whitney stratified set.
\par
Next, we give a multi-transversality theorem for the multi-distance and 
height-distance functions.  Although we are principally interested in them 
for the multi-distance function $\rho_0$ to capture the generic linking 
properties, the proof is valid for any region $\gW_i$ and yields a 
corresponding relation between a region $\gW_i$ and its adjoining 
regions.  For any smooth mapping $\Phi : \bgD \to 
\R^{n+1}$ we obtain for any $i$, any integer $m > 0$ with an assignment 
$p \mapsto j_p$, and a partition $\bl = (\ell_1,\dots,\ell_{m})$, the 
partial multijet mappings (\ref{Eqn4.6a}) and (\ref{Eqn4.6b}).  
The transversality theorem concerns these two 
mappings for a class of $ _{\bl}\cR^+$-invariant {\em distinguished 
submanifolds of the partial multijet space} 
$ _{\bl}E^{(k)}(X_{\cJ_i},\R^2)$ (see Definition \ref{Def5.1a}).  The 
$ _{\bl}\cR^+$-action is the induced action of the right equivalence group 
$ _{\bl}\cR$ on the fibers, extended by the diagonal action of $\R$ by 
translations on $\R^2$.
\begin{Thm}
\label{Thm5.1}
Let $\bgD$ be a model for a multi-region configuration.  Then, for any $i$, 
an $m > 0$ with assignment $p \mapsto j_p$, and partition $\bl 
=(\ell_1,\dots,\ell_{q_i})$, let $W$ be a closed Whitney stratified subset 
of $ _{\bl }E^{(k)}(X_{\cJ_i},\R^2)$ whose strata are 
$ _{\bl}\cR^+$--invariant distinguished submanifolds (in the sense of 
Definition \ref{Def5.1a}).  
\flushpar
{\rm ($a_1$)} Let $Z \subset X_{\cJ_i}^{(\bl)} \times (\R^{n+1})^{(m +1)}$ 
be compact. Then, the set
$$ \cW \,\,  = \,\,  \{\Phi \in C^{\infty}(\bgD, \R^{n+1}) : {_{\bl}}j_1^k 
\rho_i \; \ol{\pitchfork} \text{ on Z to } W \subset { _{\bl 
}}E^{(k)}(X_{\cJ_i},\R^2) \}  $$ 
is an open dense subset for the regular $C^{\infty}$--topology.
\flushpar
{\rm ($a_2$)} Let $Z \subset X_{\cJ_0}^{(\bl)} \times (\R^{n+1})^{(m)} 
\times S^n$ be compact.  Then the set
$$ \cW^{\prime} \,\,  = \,\,  \{\Phi \in C^{\infty}(\bgD, \R^{n+1}) : 
{_{\bl}}j_1^k \tau \; \ol{\pitchfork} \text{ on Z to }  W \subset
{ _{\bl }}E^{(k)}(X_{\cJ_0},\R^2)\}  $$ 
is an open dense subset for the regular $C^{\infty}$--topology. \par
\flushpar
{\rm ($b$)} Both of the sets
$$  \cW \,\, = \,\, \{\Phi \in C^{\infty}(\bgD, \R^{n+1}) : { _{\bl }}j_1^k 
\rho_i \; \ol{\pitchfork}\, W \subset { _{\bl }}E^{(k)}(X_{\cJ_i},\R^2) 
\text{ 
on }X_{\cJ_i}^{(\bl)} \times (\R^{n+1})^{(m+1)}\}  $$
and 
$$  \cW \,\, = \,\, \{\Phi \in C^{\infty}(\bgD, \R^{n+1}) : { _{\bl }}j_1^k \tau 
\; \ol{\pitchfork}\, W \subset { _{\bl }}E^{(k)}(X_{\cJ_0},\R^2) \text{ on } 
X_{\cJ_0}^{(\bl)} \times (\R^{n+1})^{(m)} \times S^n\}  $$
are residual subsets for the regular $C^{\infty}$--topology. 
\end{Thm}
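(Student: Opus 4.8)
\textbf{Proof proposal for Theorem \ref{Thm5.1}.}

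The plan is to deduce all four assertions from a single \emph{hybrid transversality theorem} (Theorem \ref{Thm9.2}), which will be established in Part IV using the extension-theorem machinery of \cite{D5} and \cite{Bi}. The first step is to recognize that each partial multijet map $_{\bl}j_1^k\rho_i$ and $_{\bl}j_1^k\tau$ is not an ordinary $k$-jet extension map but a \emph{partial} one whose target, by Lemma \ref{Lem4.4}, is a smooth submanifold of the full multijet space that fibers locally trivially over $X_{\cJ_i}^{(\bl)}$. Because the distance-squared and height-distance functions are built, region by region, from the restriction $\gs(\cdot,u^{(j_p)})$ or $\nu(\cdot,v)$ to the smooth strata $\cirX_{j_p}$, the map factors as a product over the index $p$ of the ordinary jet extensions on each $\cirX_{j_p}$, restricted over the open set where the base points remain distinct within a single copy. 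The first task is therefore to verify that this product-over-$p$ structure, together with the $_{\bl}\cR^+$-invariance of $W$, places us exactly in the setting of the hybrid transversality theorem with the space of embeddings $\emb(\bgD,\R^{n+1})$ (a Baire space in the regular $C^{\infty}$-topology, which by compactness of $\bgD$ coincides with the Whitney topology) playing the role of the parameter space of perturbations.

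The second step, which is the heart of the matter and is carried out in Part IV (\S\S\ref{S:sec4}--\ref{S:sec8} of that part, i.e. the sections on families of perturbations), is to exhibit for each point of $X_{\cJ_i}^{(\bl)}\times(\R^{n+1})^{(m+1)}$ (respectively of $X_{\cJ_0}^{(\bl)}\times(\R^{n+1})^{(m)}\times S^n$) a finite-dimensional family of deformations of $\Phi$ through embeddings whose induced deformation of the partial multijet is \emph{infinitesimally surjective transverse} to the fibers of $_{\bl}E^{(k)}$ modulo the tangent space to the $_{\bl}\cR^+$-orbit. Concretely, one perturbs each boundary piece $X_{j_p}$ independently by a compactly supported normal deformation localized near the relevant base point, and separately varies each center $u^{(j_p)}$ and, in the height-distance case, the direction $v\in S^n$; the derivative computation then shows that varying the $u^{(j_p)}$ and $v$ already sweeps out the \lq\lq diagonal'' directions controlled by the $\R$- (and $S^n$-) actions, while the normal perturbations of the $X_{j_p}$ fill out the remaining fiber directions. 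Because the functions on distinct copies $\cirX_{j_p}$, $\cirX_{j_{p'}}$ (even when $j_p=j_{p'}$ for self-linking, or when $X_{j_p j_{p'}}\neq\emptyset$) use distinct centers $u^{(j_p)}\neq u^{(j_{p'})}$, these independent perturbations do not interfere, and the transversality conditions on the separate copies are verified independently — this is the point that forced the definition of the partial multijet space and it is exactly what makes the product structure of step one usable.

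The third step is bookkeeping: once infinitesimal surjectivity of the perturbation family is established, the hybrid transversality theorem of \cite{D5} yields, for each \emph{compact} $Z$ in the source, that the set $\cW$ of $\Phi$ for which $_{\bl}j_1^k\rho_i$ (resp. $_{\bl}j_1^k\tau$) is transverse to every stratum of $W$ over $Z$ is open and dense in the regular $C^{\infty}$-topology; this gives ($a_1$) and ($a_2$). For ($b$), one exhausts the (non-compact) full source $X_{\cJ_i}^{(\bl)}\times(\R^{n+1})^{(m+1)}$ by a countable union of such compact sets $Z_\nu$, applies ($a_1$) to each, and intersects: a countable intersection of open dense sets in the Baire space $C^{\infty}(\bgD,\R^{n+1})$ is residual, giving the residual sets in ($b$). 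The height-distance case is identical with $S^n$ carried along as an extra compact factor, which causes no trouble in the exhaustion argument.

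The step I expect to be the main obstacle is the derivative computation in step two, specifically checking infinitesimal surjectivity at degenerate configurations: points where several base points collide in the source (high $\ell_p$, i.e. deep $A_k$ contact at one point) or where a base point lies on a shared-boundary stratum $X_{j_p j_{p'}}$ so that a single ambient point is represented in two copies. There one must be careful that a normal perturbation of $\Phi$ supported near such a point induces \emph{independent} jet perturbations on the two copies while still coming from a single genuine embedding of $\bgD$ — this is where the Bierstone extension theorem \cite{Bi} replaces Seeley's theorem, allowing the construction of the required deformations on a stratified domain with inward-pointing corners. A secondary technical point is ensuring that all perturbations stay within the open set of embeddings (not merely immersions or arbitrary maps), which is automatic for sufficiently small compactly-supported deformations by openness of $\emb(\bgD,\R^{n+1})$, but must be stated. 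Finally, the passage from residual to \emph{open} dense (promised in the introduction but genuinely needed only later, in \S\ref{S:sec7}) is deliberately \emph{not} part of this theorem; Theorem \ref{Thm5.1} asserts only openness on compact sets plus residuality globally, and the global openness is deferred to the argument via \lq\lq infinitesimal stability implies stability'' of \cite{M5}.
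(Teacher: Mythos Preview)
Your overall architecture---reduce to the hybrid multi-transversality theorem (Theorem~\ref{Thm9.2}), construct localized finite-dimensional perturbation families, verify infinitesimal transversality of the fiber jet maps, then exhaust by compacts for~($b$)---is exactly the paper's route, and your final paragraph correctly separates the residual conclusion here from the global openness deferred to~\S\ref{S:sec7}. But there is one structural ingredient you are missing, and without it your step two stalls at precisely the configurations that matter.

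At a linking point $x^{(j_p)}_1 \in X_{i\,j_p}$ where both $\gs(\cdot,u^{(i)})$ and $\gs(\cdot,u^{(j_p)})$ are singular, a perturbation of the single embedding $\Phi$ near that point deforms \emph{both} jets at once; they are not independent as your proposal suggests. The paper's computation of $K_1$ in \S\ref{S:sec11} shows that the image of the polynomial perturbations has codimension one in the fiber $(\cC_x/\itm_x^{k+1})^2$, the missing direction being a constant in the first factor. For one value of $p$ this gap is filled by varying $u^{(i)}$; but across all $m$ linking points $x^{(j_1)}_1,\dots,x^{(j_m)}_1$ one needs the $n{+}1$ variations of $u^{(i)}$ together with the diagonal $(1,\dots,1)$ to span the $m$ constant directions, and that is precisely the statement that the multigerm $\gs_i$ at $S_i=\{x^{(j_1)}_1,\dots,x^{(j_m)}_1\}$ is already $\cR^+$-versal. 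The same issue arises for $\gs_{j_p}$ across the points of $S_{j_p}$.

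Consequently the paper runs a \emph{two-stage} argument you do not mention: first apply Theorem~\ref{Thm5.0} to obtain an open dense $\cU\subset\emb(\bgD,\R^{n+1})$ on which every single distance and height multigerm (on the relevant compact) is versal; then verify the perturbation transversality condition for $\rho_i$ and $\tau$ \emph{only for} $\Phi\in\cU$, using that versality to fill the constant-term gaps, and apply the hybrid theorem inside $\cU$ to get $\cU'\subset\cU$ open dense. Your proposal treats the verification as unconditional on $\Phi$, which it is not. Two minor corrections: the perturbations are localized polynomial deformations $\tilde\Phi=\Phi+\chi\cdot g\circ\Phi$ in all ambient directions (not specifically normal); and Bierstone's extension theorem enters in the openness argument of \S\ref{S:sec7} (Lemma~\ref{Lem7.4}), not in the perturbation construction itself.
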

\par
Note that in Theorems \ref{Thm5.0} and \ref{Thm5.1}, $(b_0)$ is a 
consequence of $(a_0)$ and $(b)$ is a consequence of $(a_1)$ and $(a_2)$, 
for we may cover $X_i^* \times \R^{n+1}$, $X_{\cJ_i}^{(\bl)} \times 
(\R^{n+1})^{(q_i)}$ and $X_{\cJ_0}^{(\bl)} \times (\R^{n+1})^{(q_0)} \times 
S^n$ with countably many compact sets $C_j$ so that the sets in $(b_0)$ 
and $(b)$ are countable intersections of open dense sets, hence residual.  
We will prove parts $(a_0)$, $(a_1)$ and $(a_2)$ in \S \ref{S:sec9} using a 
variant of the transversality theorems in \cite{D5}.  
\begin{Remark}
\label{Rem5.2}
These results give several types of extensions of the earlier 
transversality theorem due to Looijenga \cite{L} and its extension due to 
Wall \cite{Wa}.
\end{Remark}
\subsection*{Submanifolds Defining Generic Properties of Blum Linking 
Structures}
\label{submflds}
\par
We next introduce the $ _{\bl}\cR^+$-invariant {\em distinguished 
submanifolds of the partial multijet space} 
to which we will apply Theorem \ref{Thm5.1}.  This will yield the 
transversality results implying the generic linking conditions for 
configurations in the Blum case in $\R^{n+1}$ for $n \leq 6$.  
\par
We first introduce the general definition of the distinguished 
submanifolds which involves several classes of $\cR$--invariant 
submanifolds of jet spaces and $_{\ell}\cR^+$--invariant submanifolds of 
multijet spaces.  These will include: both (an explicit list of) orbits of the 
$\cR$ and $_{\ell}\cR^+$ actions; their closures, which form Whitney 
stratified sets, together with the list of $_{\ell}\cR^+$--invariant closed 
Whitney stratified subsets of higher codimension $> n+1$ which will be 
generically avoided.  
\par
\subsubsection*{General Form of the Class of Distinguished  Submanifolds 
in $ _{\bl}E^{(k)}(X_{\cJ_i},\R^2)$} \hfill \par
\label{singsub00}
\par
For a given $i$ and $m > 0$ with an assignment $p \mapsto j_p$ we let 
$\bl=(\ell_1,\ldots,\ell_{m})$ for integers $\ell_p > 0$.  We give the form 
of the distinguished submanifolds in $ _{\bl}E^{(k)}(X_{\cJ_i},\R^2)$. 
By Lemma \ref{Lem4.4}, ${_{\bl}}E^{(k)}(X_{\cJ_i},\R^2)$ is a locally 
trivial fiber bundle.  For $S = (S_1, \dots , S_m)$ in $X_{\cJ_i}^{(\bl)}$ 
with $S_p \subset \cirX_{j_p}$ and $| S_p | = \ell_{p}$, the fiber at $S$ 
equals 
\begin{equation}
\label{Eqn5.1}
\prod_{p=1}^{m} {_{ \ell_p}}J^k(\cirX_{j_p},\R^2)_{S_p} \, \cong \,  
\prod_{p=1}^{m} \, (J^k(n, 2) \times \R^2 )^{\ell_p}.
\end{equation}

\par
We let $W_{0\, p}$ denote $ _{\ell_p}\cR^+$ 
multi--orbits in the fibers $ _{\ell_p}J^k(\cirX_{j_p},\R)_{S_p}$ for $p = 
1, \dots , m$ (so in particular, in such a $ _{\ell_p}\cR^+$ multi--orbit  
the targets lie in $\bgD^{\ell_p}\R$).  As in the multijet case, the 
multi-orbits $W_{0\, p}$ in the fibers over points in 
$\cirX_{j_p}^{(\ell_p)}$ fit together to form subbundles $W_p$ in 
${_{ \ell_p}}J^k(\cirX_{j_p},\R)$ for $p=1,\dots , m$. Then 
$$ W_1\times \cdots \times W_m \,\, \subset \,\, 
\prod_{p=1}^{m}\, {_{ \ell_p}}J^k(\cirX_{j_p},\R)  $$ 
restricts to a subbundle over $X_{\cJ_i}^{(\bl)}$.  
\par
For $p = 1,\dots , m$, let $W^{\prime}_p$ denote a subbundle of 
$J^k(\cirX_{j_p},\R)$ over $\cirX_{j_p}$ with fiber a $\cR^+$--invariant 
submanifold.  Typically it will denote either a $\cR^+$--orbit for a 
$k$-jet of a germ, or a stratum of a Whitney stratification consisting of a 
union of orbits.  We slightly abuse notation and define
\begin{equation}
\label{Eqn5.0a}
W^{\prime} \, \, = \,\, \prod_{p = 1}^{m}( W^{\prime}_p \times 
(J^k(\cirX_{j_p},\R))^{\ell_p-1})\times \gD^{m} \R ,
\end{equation}
where for each $p$, 
$(W^{\prime}_p) \times (J^k(\cirX_{j_p},\R))^{\ell_{p}-1})$ is restricted 
to lie over $\cirX_{j_p}^{(\ell_p)}$ and the entries of 
$\gD^{m} \R$ are in the first factor of each 
$(J^k(\cirX_{j_p},\R))^{\ell_{p}})$.  
\par
Therefore, $W^{\prime} \subset \prod_{p=1}^{m}\, _{ 
\ell_p}J^k(\cirX_{j_p},\R)$ restricts to a fiber bundle over 
$X_{\cJ_i}^{(\bl)}$, and 
\begin{equation}
\label{Eqn5.0b}
  W^{\prime} \times W_1 \times \cdots \times W_m 
\,\, \subset \,\, \prod_{p=1}^{m} \, {_{ \ell_p}}J^k(\cirX_{j_p},\R) 
\times \prod_{p=1}^{m} \, {_{ \ell_p}}J^k(\cirX_{j_p},\R) 
\end{equation}
with the RHS restricting to the bundle ${_{ \bl}}J^k(X_{\cJ_i},\R^2)$ over 
$(X_{\cJ_i})^{(\ell)}$.
\par
Then we define
\begin{Definition}
\label{Def5.1a}
Given $i$, $m > 0$ with an assignment $p \mapsto j_p$, and the ordered 
partition $(\ell_1, \dots , \ell_m)$ with $s = \sum_{p = 1}^{m} \ell_p$.  
Consider a collection of $_{\ell_p}\cR^+$-invariant submanifolds $W_p 
\subset {_{ \ell_p}}J^k(\cirX_{j_p},\R)$ and $\cR^+$-invariant 
submanifolds $W^{\prime}_p \subset J^k(\cirX_{j_p},\R)$ for $p=1,\dots , 
m$, which each have the property that their closures are Whitney 
stratified sets for which they are open strata. Then, we form the 
subbundle of ${_{ \bl}}J^k(X_{\cJ_i},\R^2)$ over $(X_{\cJ_i})^{\ell)}$ by the 
restriction of the LHS of (\ref{Eqn5.0b}).  Any such subbundle will be 
called a {\em distinguished submanifold} in 
$ _{\bl}E^{(k)}(X_{\cJ_i},\R^2)$.
\end{Definition}
\par
Having defined such distinguished submanifolds, we next give the four 
classes of jet and multijet strata formed from $\cR^+$ orbits, 
$_{\ell}\cR^+$ multi-orbits, and strata of $_{\ell}\cR^+$-invariant closed 
Whitney stratifications.  The submanifolds will be one of four types: 
\flushpar
\begin{itemize}
\item[(i)] those formed from simple multigerms; 
\item[(ii)] those formed from the partial multijet orbits from \S 
\ref{S:sec3}; 
\item[(iii)] those which characterize geometric features of boundary 
points for self--linking (these are a special subset of those in (ii)); and 
\item[(iv)] those which arise as strata of closed Whitney stratified sets 
of higher codimension. 
\end{itemize}
\par
Taken together for a given $i$ and $\ell$ we will denote the combined 
collection and the resulting distinguished submanifolds of partial 
multijet space by $\cS(i, \bl)$.  \par

\subsubsection*{Submanifolds for a Single Distance Function} \hfill \par
\label{singsub0}
For (i) we recall the simple multigerms of real-valued functions 
under $\cR^+$-equivalence on $\R^n$ (see \cite{M2}), formed from the $A$, 
$D$, $E$ classification of Arnold  \cite{A}.  As these are finitely 
determined, they are classified by $\cR^+$-multi-orbits in sufficiently 
high multijet space.  The boundary of the region of simple multigerms is 
the closure of the strata of orbits of the simple elliptic singularities 
given by: 
\begin{itemize} 
\item[$\tilde E_6$ :] \,  $x_1^3 + x_2^3 + x_3^3 +a x_1x_2x_3 + Q(x_4, 
\dots , x_n)$,
\item[$\tilde E_7$ :] \, $x_1^4 \pm x_2^4 +a x_1^2x_2^2 + Q(x_3, \dots , 
x_n)$, and 
\item[$\tilde E_8$ :] \, $x_1^3 + x_2^6 +a x_1x_2^4 + Q(x_3, \dots , x_n)$,  
\end{itemize}
where the $Q$ are nonsingular quadratic forms in the indicated variables.  
These germs are respectively $3$-determined, $4$-determined, and 
$6$-determined.  In each case, the set of $\cR^+$-orbits in the appropriate 
jet space (of $3$, $4$ or $6$-jets) forms a semi-algebraic set whose 
closure, denoted $\tilde W_k$, has a canonical Whitney stratification and 
the stratum formed from the $\cR^+$-orbits of the $\tilde E_k$ 
singularities has codimension $k$.  The unions of these orbits in the fibers 
of multijet space form submanifolds whose closures, also denoted 
$\tilde W_k$, have Whitney stratifications in the multijet bundle. \par  
Although $\tilde W_6$ has codimension $6$, its closure consists of jets 
of germs with the same third order terms and hence does not contain 
germs with local minima.  Second, $\tilde W_8$ has codimension $8$; 
hence its closure will consist of strata of codimension $ \geq 8$.  Third,
the codimension of the $\tilde E_7$ stratum $\tilde W_7$ is $7$, and 
where $a \neq 2$ the $\tilde E_7$ germ listed above is $4$--determined 
and if it has positive definite quadratic part and $| a | < 2$ then it has a 
local minimum.  
\par
Thus, for the closures of the $\tilde E_k$ strata, if $n +1 \leq 6$, then 
either the strata have codimension $> n + 1$, or the strata consist of jets 
of germs which are not local minima.  If $n + 1 = 7$, then the same holds 
true with the exception of the $\tilde E_7$-stratum with $a < 2$ which 
has codimension $7$ and the jets are of germs with local minima.  \par
Then, the Whitney stratification for a single function consists of the 
strata of the $\cR^+$--orbits of the simple $A$, $D$, $E$, singularities 
together with the closed Whitney stratified sets obtained as the closures 
of the $\tilde E_k$ strata.  These are $\cR^+$-invariant.  Of these strata, 
the only ones of codimension $\leq 7$ which define germs with local 
minima are the $A_k$, with $k$ odd, and the $\tilde E_7$ with $|a| < 2$, 
both with positive definite quadratic parts.  \par 
Then, the stratification for the multijets is obtained from multi-orbits 
formed by the products of the strata for single germs which are still 
$\cR^+$-invariant. \par 
It is possible to also allow $W^{(0)}$ consisting of the jets of 
non-singular germs with no distinguished image point in $\R$, so that 
$TW^{(0)}_0 = J^{k}(n, 1) \times \R$; however  later we shall find it 
technically simpler when we prove genericity for multi-functions to only 
consider multigerms exhibiting singularities at each point so we shall 
always do so.  \par
\begin{Remark}
\label{Rem5.1}
If $\gS_{n, 1^k}$ is the Thom-Boardman statum $\gS_{n, 1, \dots 1}$, with 
$k$ factors, in the jet space $J^{k+2}(X_i^*, \R)$ or $J^{k+2}(\cirX_i, \R)$, 
then the union of $A_{j+1}$-strata for $j \geq k$ (allowing different 
signs) and the union of $A_{k+1}$-strata is open and dense in $\gS_{n, 
1^k}$.  Then, transversality to the $\gS_{n, 1^k}$ for all $k$ is equivalent 
to the transversality to all of the $A_k$-strata.  
\end{Remark}
\par
Given $\bga = (\ga_1, \dots , \ga_q)$, we let $\bA_{\bga}$ denote the 
multigerm of type $A_{\ga_1}A_{\ga_2}\cdots A_{\ga_q}$, and we let 
$W^{(\bga)}$ denote the $\cR^+$-orbit of multigerms of type 
$\bA_{\bga}$ in the appropriate jet space, either ${_q}J^k(X_i^*, \R)$ or  
${_q}J^k(\cirX_i, \R)$.  These are bundles over $X_i^{*\, (q)}$ or 
$\cirX_i^{(q)}$ with fiber 
\begin{equation}
\label{Eqn5.0}
 W^{(\bga)}_0 \,\, = \,\, W^{(\ga_1)}_0 \times \cdots \times W^{(\ga_q)}_0 
\times \gD^q\R 
\end{equation}
where $W^{(\ga_j)}_0$ denotes the $\cR$-orbit for $A_{\ga_j}$.  \par 
 Then, it will follow from Theorem \ref{Thm5.0} that if $n +1 \leq 7$ then 
generically for each distance function $\gs_i$, the jet map $j_1^k(\gs_i)$ 
will miss the strata of the closures of the $\tilde W_k$ and those of the 
multi-orbits of type $\bA_{\bga}$ of codimension $\geq n +1$.  While it 
will intersect transversally the strata of codimension $\leq n +1$.  Of 
these only the strata of type $\bA_{\bga}$, or if $n+1 = 7$ the $\tilde 
E_7$-stratum, for which the multigerms all involve local minima can be 
multigerms at points of the Blum medial axis. This will yield Mather\rq s 
Classification Theorem \ref{Thm3.1}.  The details will be given in later 
sections.  
\par

\subsubsection*{Distinguished Class of Submanifolds Corresponding to 
Linking Type} \hfill  
\par
Next, for (ii) we define the submanifold of the partial multijet space 
associated to the linking configuration $(\bA_{ \bga} : \bA_{\bgb_1}, 
\dots, \bA_{\bgb_{m}})$ for the multi--distance function (see 
Definition \ref{Def3.4} in  \S \ref{S:sec3}).  Recall it requires that the 
distance function has singularity type $\bA_{ \bga}$ and at each point the 
multigerm for the distance function for the individual $i$-th region 
has type $\bA_{\bgb_i}$.  This involves taking products of the 
multi--orbits in the previous section and Boardman strata.
\par 
In the construction for Definition \ref{Def5.1a}, for $\bga = (\ga_1, \dots 
, \ga_m)$, we let in (\ref{Eqn5.0a}), $W^{\prime}_p = W^{(\ga_p)}_0$, 
which denotes the $\cR$-orbit for $A_{\ga_p}$, so that $W^{\prime} = 
W^{(\bga)}$ given above.  Likewise, for each $\bA_{\bgb_p}$, in 
(\ref{Eqn5.0b}), we let $W_p = W^{(\bgb_p)}$ in ${_{ 
\ell_p}}J^k(\cirX_{j_p},\R)$ for $p=1,\dots , m$.  Together they yield the 
from (\ref{Eqn5.0b}), the distinguished submanifold, denoted $W^{(\ga : 
\bgb )}$, for linking type $(\bA_{ \bga} : \bA_{\bgb_1}, \dots, 
\bA_{\bgb_{m}})$.  
\par
\subsubsection*{Multi-Distance Functions Capturing Geometric Properties 
of the Boundaries} \hfill
\par
For (iii), in using the preceding submanifolds of partial multijet space, 
we will be concerned with the generic interaction of strata for each 
distance function occurring for linking (especially self-linking). These 
depend on the differential geometry of the smooth points of the 
hypersurface $\Phi_j(X_j)=\cB_j$.  We use a Monge representation in 
preparation for analyzing these functions.  This involves the differential 
geometry of the boundary as already studied by Porteous \cite{Po}.  \par 
We let $(x_1, \dots , x_n)$ denote local orthogonal coordinates on 
$T_{y_0} \cB_j$ centered at 
$y_0 = \Phi_j(x_0)$ so that we may locally write the boundary $\cB_i$ in 
Monge form as $(x_1, \dots , x_n, f(x_1, \dots , x_n))$, and use $(x_1, 
\dots , x_n)$ as local coordinates for $X_i$.  Note that at a corner point 
$x_0$, for each smooth stratum containing $x_0$ in the closure, we may 
still obtain a Monge representation for that stratum near $y_0$.  If 
$\gk_1, \dots , \gk_n$ denote the principal curvatures of $\cB_i$ at the 
origin, then we may furthermore choose orthogonal coordinates in the 
principal directions so that 
\begin{equation}
\label{Eqn5.2}
  f(x_1, \dots , x_n) \,\, = \,\, \sum_{i=1}^n \frac{1}{2} \gk_i x_i^2 
+\sum_{|\bga| \geq 3} a_{\bga} x^{\bga}\, .  
\end{equation}
\par
We consider the case where the distance-squared function $\gs_i$ to a 
point $u_0 = (u_{0, 1}, \dots , u_{0, n+1})$ has a critical point so that 
$u_0$ lies on the normal line to the surface at $y_0$.  Thus, $u_{0, i} = 0$ 
for $ i < n+1$.  
Then, $\gs_i$ is given in the local coordinates by
\begin{align}
\label{Eqn5.3}
\gs_i(\cdot, u_0)\,\, &= \,\, \sum_{i=1}^n (x_i)^2 + (\sum_{i=1}^n 
\frac{1}{2} \kappa_i x_i^2 +\sum_{|\bga| \geq 3} a_{\bga} x^{\bga} -u_{0, 
n+1})^2  \\
&= \,\, \sum_{i=1}^n (1-u_{0, n+1} \gk_i ) x_i^2 -2u_{0, n+1}\sum_{|\bga| 
\geq 3} a_{\bga} x^{\bga}+(u_{0, n+1})^2 \notag \\
&\qquad \qquad + \, (\sum_{i=1}^n \frac{1}{2} \gk_i x_i^2 \,  + \, 
\sum_{|\bga| \geq 3} a_{\bga} x^{\bga} )^2. \notag
\end{align} 
\par
For $\gs_i(\cdot, u_0)$ to have a degenerate critical point at $y_0$ 
requires $u_{0, n+1} = \frac{1}{\gk_j}$ for some $j$.  If $\gk_j = 
\gk_{j^{\prime}}$ for some $j \neq j^{\prime}$, then $\gs_j(\cdot, u_0)$ 
has corank $\geq 2$.  The only generic singularities of this form which are 
local minima are the $\tilde E_7$ singularities when $n + 1 = 7$.  
Otherwise we may assume that the coordinates are chosen so that 
$\gk_1 > \gk_2 > \cdots > \gk_n$.  If $1 < j < n$, then $\gs_j(\cdot, 
u_0)$ again does not have a local minimum nor maximum.  Thus, for $y_0$ 
to be a crest point in the positive direction, $u_{0, n+1} = \frac{1}{\gk_1} 
> 0$.  There is a nondegeneracy condition on the coefficient of $x_1^3$ so 
it is an $A_3$ point.  Likewise, for it to be a crest point in the negative 
direction requires $u_{0, n+1} = \frac{1}{\gk_n} < 0$ with a nondegeneracy 
condition on the coefficient of $x_n^3$.  \par
In particular, for self-linking of type $(A_{m_1}:A_{m_2})$ to occur (with 
appropriate positive direction for $u_{0, n+1}$), $\gs_1(\cdot, u_0)$ will 
have an $A_{m_1}$ singularity and $\gs_n(\cdot, u_0^{\prime})$ will have 
an $A_{m_2}$ singularity.  In particular for $(A_{m_1}:A_{m_2})$ to occur 
generically as a transverse intersection requires $n+1 \geq m_1 + m_2 -
3$.  If $n \leq 6$, we obtain for generic self-linking types: $(A_3: A_3)$ 
for $(n \geq 2$): $(A_3: A_5)$ and $(A_5: A_3)$ for ($n \geq 4$); and 
($A_3: A_7$), ($A_7: A_3$), and $(A_5: A_5)$ for ($n = 6$).  \par
In the special case of $n + 1 = 7$, there is also the possibility of linking 
involving an $\tilde E_7$ point of either self-linking of the form $(\tilde 
E_7: A^2_1)$ or simple linking of the form $(A^2_1:\tilde E_7, A_1^2)$.
\par
\subsubsection*{Closed Stratified Sets of Higher Codimension}
\label{singsub3}
\par
For (iv), the final class of submanifolds of $_{\bl}E^{(k)}(X_{\cJ_i},\R^2)$ 
arise as strata of a finite list of closed Whitney stratified sets which 
include all linking configurations that will be shown to be 
non--generic.  Within this list, there are three types of strata: (1) 
submanifolds for simple multigerms; (2) those in the closure of a 
submanifold $W^{(\bga :\bgb)}$; and (3) those representing the degeneracy 
of the geometric conditions on the surface. 
\par
For (1), we have already listed the strata of the closures of the jets of 
simple elliptic singularities $\tilde W_k$ for $k = 6, 7, 8$.  We also 
include $\tilde W_9$, which denotes the closure of the $A_9$ stratum (i.e. 
the $\cR^+$-orbit in $k$-jet space for $k \geq 10$, denoted by $W_8^E$ in 
\cite{M2}).  Also, $\tilde W_6$ contains all $3$-jets with Hessian of 
corank $\geq 3$ and those defining local minima are in the singular strata 
(of codimension $\geq 7$).  These are equivalent to the submanifolds used 
by Mather in \cite{M2}.  
\par
Similarly, for sufficiently large $k$, we 
identify the $k$--multijet orbits of families of multigerms having 
$_{s}\cR_e^+-$codimension $\geq n+2$, which have the property that their 
closures are Whitney stratified sets invariant under $\cR^+$.   We consider 
for each $i$ a multigerm $\bA_{\eta}$, where $\eta = (\eta_1, \dots , 
\eta_s)$, formed from $A_{\eta_j}$, each of codimension $\leq n +1$.  The 
closure of the multi-orbit $W^{\eta}$ in $ _{s}J^k(\cirX_{i},\R)$ contains 
strata of codimension $> n + 1$.  Also, for the case of $n + 1 = 7$,  we also 
must allow the $\tilde E_7$ stratum in place of an $A_{\eta_j}$-orbit. All 
of the closures of these strata of codimension $> n + 1$ are the strata of 
closed Whitney stratified sets of multigerms of higher codimension.  \par
For (2), we consider for distinguished submanifolds of partial multijet 
spaces the closures of $W^{(\bga : \bgb)}$ in 
$\, {_\bl}E^{(k)}(X_{\cJ_i},\R^2)$.  These are represented by 
\begin{equation} 
\label{Eqn5.5}
\ol{W^{(\bga : \bgb)}} \, :=\, \ol{W^{\bga}} \times \ol{W^{\bgb_1}}\times 
\cdots \times \ol{W^{\bgb_q}}
\end{equation}
where $\ol{W^{\bga}}$ and the $\ol{W^{\bgb_p}}$ denote the closures of 
the corresponding multi-orbits.  
where the product on the RHS is restricted to $X_{\cJ_i}$.  This yields a 
finite number of Whitney stratified sets invariant under ${_{\ell}}\cR^+$.  
By the classification of simple multigerms and the closure of their 
complement,  this yields a finite list of closed Whitney stratified sets 
with strata of codimension $> n +1$ in ${_\bl}E^{(k)}(X_{\cJ_i},\R^2)$ that 
will be generically avoided.  
\par
For (3), we consider the submanifolds describing $(A_{r_1}:A_{r_2})$ with 
$n+1 < r_1 + r_2 -3$.  In the case of $n + 1 = 7$, we also consider 
submanifolds involving $\tilde E_7$ points, where either a point is of 
multigerm type $\tilde E_7$ and $A_{r}$ for any $r \geq 1$, or linking 
type $(\tilde E_7:A_{\bgb})$ with $| \bgb| > 2$ or $(A_{\bga}: \tilde E_7, 
A_{\bgb})$ with $| \bga|$ or $| \bgb| > 2$.  Then, we again consider the 
closure of these strata in the partial multijet space and take the strata of 
codimension $> n + 1$.  
 \par

\section{Generic Properties of Blum Linking Structures}
\label{S:sec6} 
\par
In this section, we apply Theorems \ref{Thm5.0} and \ref{Thm5.1} to the 
four classes of submanifolds and closed stratified sets defined in \S 
\ref{S:sec5} to obtain the generic properties of Blum structures for 
generic multi-region configurations.  We recall that for a given an $i$, and 
multi-index $\bl$ with $s =  \sum_{p=1}^{m} \ell_p$, $\cS(i, \bl)$ denotes 
the collection of closed $_{s}\cR^+$-invariant Whitney stratified sets 
constructed in \S \ref{S:sec5} in ${_s}J^k(X_i^*,\R)$ or for an $m > 0$ 
with assignment $p \mapsto j_p$, the distinguished submanifolds of
${_{\bl}}E^{(k)}(X_{\cJ_i},\R^2)$ for various $k$.  We consider the 
consequences of the transversality of the distance functions and both the 
multi-distance functions and the height-distance functions.  Although we 
state the results in terms of the induced stratifications on the union of 
boundaries of $X_i^*$ or $X_{\cJ_i}$, the results then apply to the 
corresponding boundaries of the multi-region configuration $\bgW$ 
defined from a generic model mapping $\Phi : \bgD \to \R^{n+1}$.  \par
\subsection*{Properties of Transversality and Whitney Stratified Sets} 
\par
We will make use of several simple lemmas regarding transversality in 
various settings, several basic properties of Whitney stratified sets, and 
a push-forward property for Whitney stratified sets.  
\par
\subsubsection*{Simple Properties of Transversality}  
\par
We begin with several lemmas concerning transversality which we state 
together here, and whose proofs we leave to the reader.  \par
The first two lemmas concern mappings involving product spaces.
\begin{Lemma}
\label{Lem6.10}
\begin{enumerate}
\item The smooth mapping $f = (f_1, \dots , f_m) : X \to \prod_{i}^{m} Y_i$ 
between smooth manifolds is transverse to $W = \prod_{i}^{m} W_i$, 
with each $W_i \subset Y_i$ a Whitney stratification, if and only if each 
$f_i$ is transverse to each $W_i$ and the $f_i^{-1}(W_i)$ intersect 
transversally in $X$.  Then, $f^{-1}(W) = 
\cap_{i = 1}^{m} f_i^{-1}(W_i)$.  \par
\item The smooth product mapping $f = \prod_{i} f_i : \prod_{i} X_i \to 
\prod_{i} Y_i$ between smooth manifolds is transverse to $W = \prod_{i} 
W_i$, with each $W_i \subset Y_i$ a Whitney stratification, if and only if 
each $f_i$ is transverse to each $W_i$.  
\end{enumerate}
\end{Lemma}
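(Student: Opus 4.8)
The plan is to prove both parts by reducing to the pointwise linear-algebra description of transversality: a smooth map $g : X \to Y$ is transverse at $x$ to a stratum $S$ of a Whitney stratified set (with $g(x) \in S$) iff $dg_x(T_xX) + T_{g(x)}S = T_{g(x)}Y$. Since the strata of a product Whitney stratification $\prod_i W_i$ are exactly the products $\prod_i S_i$ of strata $S_i$ of the $W_i$ (this is standard, e.g. from \cite{GLDW}), and since $T_{(y_1,\dots,y_m)}\bigl(\prod_i Y_i\bigr) = \bigoplus_i T_{y_i}Y_i$, the whole statement becomes a decomposition question about subspaces of a direct sum. First I would fix a point $x \in X$ (resp. a point $(x_1,\dots,x_m)$) lying in the preimage of a stratum of $W$ and write out the defining equation of transversality there; everything else is bookkeeping on top of that single observation.

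For part (2), the product map case, this is immediate: $df_{(x_1,\dots,x_m)} = \bigoplus_i (df_i)_{x_i}$ as a map $\bigoplus_i T_{x_i}X_i \to \bigoplus_i T_{y_i}Y_i$, and its image is the direct sum of the images of the $(df_i)_{x_i}$. The stratum through the image point is $\prod_i S_i$ with $T = \bigoplus_i T_{y_i}S_i$. So the transversality equation $\operatorname{im}(df) + T = \bigoplus_i T_{y_i}Y_i$ splits as the $m$ independent equations $\operatorname{im}((df_i)_{x_i}) + T_{y_i}S_i = T_{y_i}Y_i$, one in each summand, and these say exactly that each $f_i$ is transverse to $S_i$ at $x_i$. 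Running over all choices of strata gives the claim. I would state this cleanly and then note the preimage formula $f^{-1}(W) = \prod_i f_i^{-1}(W_i)$ falls out set-theoretically.

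For part (1), the diagonal case $f = (f_1,\dots,f_m) : X \to \prod_i Y_i$, the extra content is the equivalence between ``$f \pitchfork \prod_i W_i$ at $x$'' and ``each $f_i \pitchfork W_i$ at $x$, and the submanifolds $f_i^{-1}(S_i) \subset X$ meet transversally at $x$''. Here $\operatorname{im}(df_x)$ is the graph-like subspace $\{((df_1)_xv,\dots,(df_m)_xv) : v \in T_xX\}$, not a direct sum, so the equation does not split automatically. The way I would organize it: the condition $\operatorname{im}(df_x) + \bigoplus_i T_{y_i}S_i = \bigoplus_i T_{y_i}Y_i$ is equivalent to saying the composite $T_xX \xrightarrow{df_x} \bigoplus_i T_{y_i}Y_i \to \bigoplus_i \bigl(T_{y_i}Y_i / T_{y_i}S_i\bigr)$ is surjective. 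Each factor map $T_xX \to T_{y_i}Y_i/T_{y_i}S_i$ being surjective is precisely $f_i \pitchfork S_i$ at $x$ (with kernel $T_x f_i^{-1}(S_i)$, once we know $f_i^{-1}(S_i)$ is a submanifold there), and surjectivity of the product-into-direct-sum map is, by the standard criterion for when several surjections combine to a surjection onto a direct sum, exactly the statement that the kernels $T_x f_i^{-1}(S_i)$ intersect transversally in $T_xX$. Then $f^{-1}(S) = \bigcap_i f_i^{-1}(S_i)$ as sets, and taking the union over all tuples of strata $S = \prod_i S_i$ yields the Lemma.

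The main (and really only) obstacle is the ``several surjections combine to a surjection onto the direct sum iff their kernels are mutually transverse'' step in part (1): one has to be a little careful to state the right notion of ``mutually transverse'' for $m > 2$ subspaces (it is \emph{not} pairwise transversality; the correct statement is that the natural map $T_xX \to \bigoplus_i (T_xX / T_x f_i^{-1}(S_i))$ is onto) and to match it with the definition of transverse intersection of the submanifolds $f_i^{-1}(W_i)$ that is used elsewhere in the paper. Once that linear-algebra lemma is isolated and proved (a short induction on $m$, or a direct rank count), both parts of Lemma \ref{Lem6.10} are formal, which is why the statement invites us to leave the proofs to the reader.
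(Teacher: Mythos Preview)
Your proposal is correct; the paper itself leaves the proof of this lemma to the reader (placing it among ``several lemmas concerning transversality which we state together here, and whose proofs we leave to the reader''), so there is no argument in the paper to compare against. Your reduction to the pointwise linear-algebra criterion, the splitting argument for part~(2), and the quotient-space formulation for part~(1)---including your careful identification of the correct multi-subspace transversality condition (surjectivity of $T_xX \to \bigoplus_i T_xX/T_x f_i^{-1}(S_i)$, i.e.\ $\codim(\cap_i K_i) = \sum_i \codim K_i$, which is precisely the paper's notion of ``general position'' used in property~(3) of Whitney stratified sets)---constitute exactly the kind of verification the authors intend the reader to supply.
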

\hspace{11.5cm} $\Box$
\par
\begin{Lemma}
\label{Lem6.6}
Suppose $f : N \times M \to P$ is a smooth mapping between smooth 
manifolds which is transverse to a smooth submanifold $W \subset P$.  
Then the projection $\pi : N \times M \to N$ restricted to $f^{-1}(W)$is a 
local diffeomorphism onto its image at $(x, u)$, with $f(x, u) = y \in W$, if 
and only if $df(x, u) (T_x M) \cap T_y W = 0$.
\end{Lemma}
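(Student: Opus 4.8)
The plan is to reduce the equivalence to elementary linear algebra at the single point $(x,u)$, once transversality has been used to give $f^{-1}(W)$ a submanifold structure there. First I would invoke the standard consequence of transversality: since $f\pitchfork W$, the set $Z:=f^{-1}(W)$ is a smooth submanifold of $N\times M$ in a neighbourhood of $(x,u)$, with tangent space $T_{(x,u)}Z=df(x,u)^{-1}(T_yW)$, where $y=f(x,u)$. (Write $W$ locally near $y$ as the zero set of a submersion $q$; transversality of $f$ to $W$ is exactly the assertion that $q\circ f$ is a submersion at $(x,u)$, so $Z=(q\circ f)^{-1}(0)$ is a submanifold with the stated tangent space by the implicit function theorem.)

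Next I would record the elementary fact that a smooth map between manifolds is a local diffeomorphism onto its image at a point precisely when its derivative there is injective --- an injective derivative gives an immersion, hence a map injective near the point and a diffeomorphism onto its (locally embedded) image on a small enough neighbourhood, and the converse is immediate. Applied to $\pi|_Z\colon Z\to N$ at $(x,u)$, this reduces the lemma to deciding when $d(\pi|_Z)(x,u)$ is injective. The derivative $d\pi(x,u)\colon T_xN\oplus T_uM\to T_xN$ is projection onto the first summand, so its kernel is the vertical subspace $\{0\}\oplus T_uM$ (the $T_xM$ of the statement), and hence, restricting to $T_{(x,u)}Z$ and using the first step,
\[
\ker d(\pi|_Z)(x,u)\;=\;T_{(x,u)}Z\cap(\{0\}\oplus T_uM)\;=\;\{\,v\in T_xM:df(x,u)\,v\in T_yW\,\}.
\]
This is $0$ exactly when the only vector of $T_xM$ that $df(x,u)$ carries into $T_yW$ is $0$, i.e. exactly when $df(x,u)(T_xM)\cap T_yW=0$ (the notation here being understood to include the injectivity of $df(x,u)$ on $T_xM$); chaining the equivalences yields the lemma.

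I do not expect any genuine obstacle: the only points deserving care --- since the lemma is used repeatedly in \S\ref{S:sec6} on the multijet maps of the multi-distance and height-distance functions --- are the equivalence ``local diffeomorphism onto its image $\Leftrightarrow$ injective derivative'' and the clean identification of $\ker d(\pi|_Z)(x,u)$ with the vertical vectors sent into $T_yW$ by $df(x,u)$. For $W$ a Whitney stratified set rather than a submanifold, the same argument applies to the stratum of $W$ containing $y$.
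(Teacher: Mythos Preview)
Your argument is correct and is the standard one; the paper itself leaves the proof of this lemma to the reader (see the sentence introducing Lemmas \ref{Lem6.10}--\ref{Lem6.13}), so there is no authors' proof to compare against. Your parenthetical about interpreting $df(x,u)(T_xM)\cap T_yW=0$ so as to include injectivity of $df(x,u)|_{T_xM}$ is exactly the right caveat, and your identification of the notational slip ($T_uM$ versus the paper's $T_xM$) is also apt.
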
  
\hspace{11.5cm} $\Box$
\par
Next we consider two situations when we can deduce transversality for 
push-forwards of submanifolds. 
\par

\begin{Lemma}
\label{Lem6.9}  Let $U$ and $Y_i$, $i = 1, 2$, be smooth manifolds.  Let  
$Z_1 \subset Y_1 \times U$ and $Z_2 \subset U \times Y_2$ be smooth 
submanifolds such that $Z_1 \times Y_2$ and $Y_1 \times Z_2$ are 
transverse in $X = Y_1 \times U \times Y_2$.  Let $\pi_1 : U \times Y_2 
\to U$ and $\pi_2 : Y_1 \times U \to U$ denote projection along $Y_1$ 
respectively $Y_2$, onto $U$.  Suppose each projection $\pi_2 | Z_1$ and 
$\pi_1 | Z_2$ is a submersion onto its image $W_1$, respectively $W_2$.  
Then, $W_1$ is transverse to $W_2$ in $U$. 
\end{Lemma}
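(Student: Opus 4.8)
The plan is to prove transversality pointwise, by pushing the hypothesis on $X=Y_1\times U\times Y_2$ forward along the projection onto its middle factor. If $W_1\cap W_2=\emptyset$ there is nothing to prove, so I would fix $w\in W_1\cap W_2$. Since $w\in W_1=(\pi_2|_{Z_1})(Z_1)$, choose $z_1=(y_1,w)\in Z_1$; because $\pi_2|_{Z_1}$ is a submersion onto $W_1$, $d(\pi_2|_{Z_1})(T_{z_1}Z_1)=T_wW_1$. Symmetrically choose $z_2=(w,y_2)\in Z_2$ with $d(\pi_1|_{Z_2})(T_{z_2}Z_2)=T_wW_2$. Put $x=(y_1,w,y_2)\in X$; then $x\in(Z_1\times Y_2)\cap(Y_1\times Z_2)$, since $z_1\in Z_1$ and $z_2\in Z_2$.

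Next I would record the tangent-space decompositions relative to $T_xX=T_{y_1}Y_1\oplus T_wU\oplus T_{y_2}Y_2$: we have $T_x(Z_1\times Y_2)=T_{z_1}Z_1\oplus T_{y_2}Y_2$ with $T_{z_1}Z_1\subset T_{y_1}Y_1\oplus T_wU$, and $T_x(Y_1\times Z_2)=T_{y_1}Y_1\oplus T_{z_2}Z_2$ with $T_{z_2}Z_2\subset T_wU\oplus T_{y_2}Y_2$. The transversality hypothesis at $x$ then reads
\[
\bigl(T_{z_1}Z_1\oplus T_{y_2}Y_2\bigr)\;+\;\bigl(T_{y_1}Y_1\oplus T_{z_2}Z_2\bigr)\;=\;T_{y_1}Y_1\oplus T_wU\oplus T_{y_2}Y_2 .
\]

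Finally I would apply the coordinate projection $p:T_xX\to T_wU$ onto the middle summand. Since $p$ is a surjective linear map, it carries the left-hand side of the displayed identity onto $p(T_xX)=T_wU$ while distributing over sums of subspaces, so $p(T_{z_1}Z_1)+p(T_{z_2}Z_2)=T_wU$ (the $T_{y_2}Y_2$ and $T_{y_1}Y_1$ summands lie in $\ker p$). Now $p$ restricted to $T_{y_1}Y_1\oplus T_wU$ is exactly $d\pi_2$, hence $p(T_{z_1}Z_1)=d(\pi_2|_{Z_1})(T_{z_1}Z_1)=T_wW_1$; likewise $p$ restricted to $T_wU\oplus T_{y_2}Y_2$ is $d\pi_1$, so $p(T_{z_2}Z_2)=T_wW_2$. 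Therefore $T_wW_1+T_wW_2=T_wU$, and since $w\in W_1\cap W_2$ was arbitrary, $W_1\pitchfork W_2$ in $U$. The whole argument is a finite-dimensional diagram chase; the only step requiring care is the bookkeeping of the three-factor decomposition of $T_xX$ and the verification that $p$ restricts to $\pi_2$ on $Y_1\times U$ and to $\pi_1$ on $U\times Y_2$, so there is no genuine analytic obstacle.
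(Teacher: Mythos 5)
Your proof is correct. The paper explicitly leaves this lemma to the reader (it is introduced as one of "several lemmas concerning transversality which we state together here, and whose proofs we leave to the reader" and is closed with a $\Box$ and no argument), so there is no written proof to compare against; your pointwise argument — lift $w\in W_1\cap W_2$ to a single point $x=(y_1,w,y_2)\in(Z_1\times Y_2)\cap(Y_1\times Z_2)$, write the transversality hypothesis at $x$ in the three-factor tangent decomposition, and push it through the linear projection $p:T_xX\to T_wU$, using that $T_{y_1}Y_1, T_{y_2}Y_2\subset\ker p$ and that the submersion hypotheses identify $p(T_{z_1}Z_1)=T_wW_1$ and $p(T_{z_2}Z_2)=T_wW_2$ — is exactly the expected diagram chase, and every step is justified. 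The only point worth flagging (and you handled it implicitly) is that the lifts $z_1$ and $z_2$ can be chosen independently and then assembled into a single $x$ lying in both $Z_1\times Y_2$ and $Y_1\times Z_2$, which is what makes the hypothesis at $x$ available; once that is observed the rest is linear algebra.
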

\hspace{11.5cm} $\Box$
\par
A second lemma is the following.
\begin{Lemma}
\label{Lem6.13}
Suppose $\pi : X \to Y$ is a smooth fibration and that $\pi | Z_1 : Z_1 \to 
W_1$ is the restriction of the fibration to the smooth submanifold $W_1 
\subset Y$.  Let $Z_2 \subset X$ be a smooth submanifold which is 
transverse to $Z_1$.  Let $W_2 = \pi (Z_2)$ and suppose $\pi | Z_2 \to 
W_2$ is a submersion.  Then $W_1$ is transverse to $W_2$. 
\end{Lemma}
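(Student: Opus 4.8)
The plan is to reduce the statement to a single pointwise tangent-space computation and then transport the transversality of $Z_1$ and $Z_2$ downstairs along the differential of $\pi$. First I would observe that transversality of $W_1$ and $W_2$ is a condition only at points of $W_1\cap W_2$, so we may fix $y\in W_1\cap W_2$ (if this intersection is empty there is nothing to prove) and aim to show $T_yW_1+T_yW_2=T_yY$. Recall that, by hypothesis, $Z_1$ is the preimage $\pi^{-1}(W_1)$, so that $\pi|_{Z_1}:Z_1\to W_1$ is the fibration restricted over the submanifold $W_1$; in particular it is a submersion, as is $\pi$ on all of $X$, while $\pi|_{Z_2}:Z_2\to W_2$ is a submersion by assumption (this also presupposes, as the statement implicitly requires, that $W_2$ is a submanifold of $Y$).

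The key step is to lift $y$ into $Z_2$: since $\pi|_{Z_2}$ is a submersion onto $W_2$ and $y\in W_2$, there is a point $x\in Z_2$ with $\pi(x)=y$. Because $\pi(x)=y\in W_1$ and $Z_1=\pi^{-1}(W_1)$, this same $x$ lies in $Z_1$, hence in $Z_1\cap Z_2$. The transversality hypothesis $Z_2\pitchfork Z_1$ at $x$ gives $T_xZ_1+T_xZ_2=T_xX$. Applying the surjective linear map $d\pi(x):T_xX\to T_yY$ then yields
$$ d\pi(x)(T_xZ_1)+d\pi(x)(T_xZ_2)\;=\;T_yY. $$
Now $d\pi(x)(T_xZ_1)=T_yW_1$ because $\pi|_{Z_1}:Z_1\to W_1$ is a submersion at $x$, and $d\pi(x)(T_xZ_2)=T_yW_2$ because $\pi|_{Z_2}:Z_2\to W_2$ is a submersion at $x$. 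Hence $T_yW_1+T_yW_2\supseteq T_yY$, so equality holds and $W_1$ is transverse to $W_2$ at $y$; since $y\in W_1\cap W_2$ was arbitrary, the lemma follows.

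As for the difficulty: there is essentially no genuine obstacle here — the argument is a routine diagram chase with differentials. The two points that deserve a moment's care are (i) the identification $Z_1=\pi^{-1}(W_1)$, which is precisely what ``the restriction of the fibration to $W_1$'' means and which is exactly what forces the lift $x$ of $y$ into $Z_2$ to lie automatically in $Z_1$; and (ii) the verification that each of the three maps $\pi$, $\pi|_{Z_1}$, $\pi|_{Z_2}$ is a submersion at the relevant point, so that the images of the tangent spaces under $d\pi(x)$ are the full tangent spaces of $Y$, $W_1$, $W_2$ respectively. It is precisely the submersion and fibration hypotheses in the statement that supply this, and without them the conclusion would fail.
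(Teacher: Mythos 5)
Your proof is correct, and since the paper explicitly leaves this lemma (and the others in that block) to the reader, there is no proof of record to compare it against. The argument is the standard one: the only point requiring thought is the observation that a lift $x\in Z_2$ of $y$ automatically lies in $Z_1=\pi^{-1}(W_1)$ — which is exactly what the fibration hypothesis supplies — after which pushing the transversality equation $T_xZ_1+T_xZ_2=T_xX$ forward by the surjective linear map $d\pi(x)$ gives $T_yW_1+T_yW_2=T_yY$.

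One small remark: the identity $d\pi(x)(T_xZ_1)=T_yW_1$ deserves the half-sentence of justification that you could add — since $T_xZ_1=d\pi(x)^{-1}(T_yW_1)$ (preimage of a submanifold under a submersion) and $d\pi(x)$ is surjective, one has $d\pi(x)(T_xZ_1)=T_yW_1\cap\operatorname{im}d\pi(x)=T_yW_1$. You asserted it directly as "$\pi|_{Z_1}$ is a submersion at $x$," which is true but slightly elides why the image is all of $T_yW_1$ rather than just contained in it; the preimage description makes this automatic.
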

\hspace{11.5cm} $\Box$  
\par
\vspace{1ex}
\subsubsection*{Basic Properties of Whitney Stratified Sets}  
\par
 For this we recall several simple properties of Whitney stratifications, 
and refer the reader to Thom \cite{Th2}, Mather \cite{M1}, \cite{M3}, or 
see also \cite{GLDW}.  \par
\vspace{2ex}
\begin{enumerate}
\item {\em Pull-back by a Transverse Mapping:} Suppose $f : X \to Y$ is a 
smooth mapping transverse to a Whitney stratified set $Z \subset Y$, 
which has strata $\{ S_i\}$.  Then, $f^{-1}(Z)$ is a Whitney stratified set 
with strata $\{ f^{-1}(S_i)\}$.  
\item {\em Products of Whitney Stratified Sets:} If $Z_j \subset X_j$ is a 
Whitney stratified set with strata $\{ S_j^{(i)}\}$, for $j = 1, \dots , m$, 
then $\prod_{j = 1}^{m} Z_j \subset \prod_{j = 1}^{m} X_j$ is a Whitney 
stratified set with strata $\{ \prod_{j = 1}^{m} S_j^{(i_j)}\}$ for all 
possible choices $i_j$.
\item {\em Transverse Intersection: } Suppose $Z_j \subset X$ are 
Whitney stratified sets with strata $\{ S_j^{(i)}\}$, for $j = 1, \dots , m$.  
If the $Z_j$ are in general position (i.e. the strata $S_1^{(i_1)}, 
S_2^{(i_2)}, \dots , S_m^{(i_m)}$ are in general position for all possible 
$i_j$), then $\cap_{j = 1}^{m} Z_j$ is a Whitney stratified set with strata 
$\cap_{j = 1}^{m} S_j^{(i_j)}$ for all possible $i_j$.  
\end{enumerate}
For (1) see Thom \cite{Th2}, or for (1) and (2) see \cite{M1}, and then (3) 
follows from (1) and (2) applied 
to the diagonal map being transverse to the product stratification $\prod 
Z_j$.  \par
We remark that by applying the Lemmas to the strata of Whitney 
stratifications, we obtain analogues of the Lemmas for Whitney 
stratifications.  

\subsection*{Consequences of Transversality for (Multi-) Distance 
Functions} 
\par
We now combine the transversality results with the generic 
transversality properties of multi-distance functions to be followed by 
those resulting for height-distance functions. To keep notation and special 
cases when $n + 1 = 7$ from excessively complicating the discussion, we 
will concentrate on the properties for linking of types $(A_{\bga} : 
A_{\bgb_1}, \dots , A_{\bgb_m})$.  We can then modify the argument 
replacing one of the $A_{\bga}$ or $A_{\bgb_p}$ orbits by the $\tilde 
E_7$-stratum. 
\par 
We consider for $m > 0$ the assignment $p \mapsto j_p$, and a partition 
$\bl = (\ell_1, \dots \ell_{m})$ with $\ell_p$-tuple $x^{(j_p)} = 
(x^{(j_p)}_1, \dots , x^{(j_p)}_{\ell_p}) \in (\cirX_{j_p})^{(\ell_p)}$, for 
each $1 \leq p \leq m$, so that $(x^{(j_1)}_1, \dots , x^{(j_m)}_{1}) \in 
(\cirX_{i})^{(m)}$, and $(u^{j_1}, \dots , u^{j_m}, u^{(i)}) \in (\R^{n+1})^{(m 
+1)}$.  We suppose that at $\{ x^{(j_p)}_1, \dots , x^{(j_p)}_{\ell_p}\}$ the 
distance function $\gs( \cdot , u^{(j_p)})$ has a multigerm of type 
$A_{\bgb_p}$, and at $\{x^{(j_p)}_1, \dots , x^{(j_m)}_{1}\}$ in $\cirX_{i}$ 
the distance function $\gs( \cdot , u^{(i)})$ has a multigerm of type 
$A_{\bga}$.  \par
We begin with the distance function $\gs_i$ and the $\gs_{j_p}$.
First, for  $n \leq 6$, and each $j$ we let $\cP_{j\, \gs} \subset 
\emb(\bgD,\R^{n+1})$ 
consist of all $\Phi$ such that $_{s}j_1^k \gs_j$ is transverse to every 
element of $\cS(j, \bl)$.  The set $\cP_{j\, \gs}$ is a residual subset of 
$\emb(\bgD,\R^{n+1})$ by Theorem \ref{Thm5.0}.  Then, the intersection  
$\cP_{\gs} = \cap_{j} \cP_{j\, \gs}$ is again residual.  We apply Theorem 
\ref{Thm5.0} for $\Phi \in \cP_{\gs}$ to the $W^{(\bga)} \subset 
{_s}J^k(X_j^*, \R)$.  Suppose $S = (x^{(1)}, \dots , x^{(s)}) \times \{u_0\} 
\in \, {_s}j_1^k\gs_j^{-1}(W^{(\bga)})$.  Then, if $k$ is large enough, the 
proof of Mather\rq s Theorem \ref{Thm3.1}, as given in \cite[Thm 9.1, 
Thm 9.2]{M2}, implies that the distance function defines $\cR^+$-versal 
unfoldings 
$$\gs_j : X_j^*  \times \R^{n+1}, S \times \{u_j\} \to \R \, .$$
This will be true for each multigerm $\gs( \cdot , u^{(j_p)})$ for $j = j_p$ 
and $u_j = u^{(j_p)}$ for a partition $\ell = (\ell_1, \dots 
\ell_{m})$ as in \S \ref{S:sec4}, this holds for each $s = \ell_p$ for $p = 
1, \dots , m$.  
Note that Mather does not actually give the details of the proof of these 
theorems and refers to adapting another proof which he does not give.  
However, this result also follows in our situation from a more general 
result given in \cite[\S 5]{D5}, valid for more general geometric subgroups 
of $\cA$ and $\cK$.  \par
Hence, for $u_i \in \intr(\gW_i)$  the Blum medial axis for $\gW_i$, 
which is the Maxwell set for the versal unfolding, exhibits the generic 
local forms given by Mather\rq s Theorem \ref{Thm3.1}.  \par
This establishes that the properties i), and ii) of Theorem \ref{Thm3.6} 
and (1) for Theorem \ref{Thm3.5} hold for a residual set of embeddings of 
the configuration.  
\par
 Second, for $n \leq 6$, we let $\cP_{j\, \rho} \subset 
\emb(\bgD,\R^{n+1})$ consist of all $\Phi$ such that $_{\bl}j_1^k \rho_j$ 
is transverse to every element of $\cS(j, \bl)$.  The set $\cP_{j\, \rho}$ 
is a residual subset of $\emb(\bgD,\R^{n+1})$ by Theorem \ref{Thm5.1}.  
Then, the intersections  $\cP_{\rho} = \cap_{j} \cP_{j\, \rho}$ and 
$\cP_{\rho\, \gs} = \cP_{\rho} \cap \cP_{\gs}$  are again residual sets.  
We deduce the consequences of the transversality conditions.  
\par
For $\Phi \in \cP_{i\, \rho}$, transversality of ${_{\bl}}j_1^k \rho_i$ to 
$W^{(\bga : \bgb)}$ yields transversality statements for certain 
submanifolds in the product space $X_{\cJ_i}^{(\bl)} \times  
(\R^{n+1})^{(m)}$.  We first consider the consequences of  transversality 
to these submanifolds in $X_{\cJ_i}^{(\bl)} \times (\R^{n+1})^{(m)}$ and 
show how this implies transversality results for certain projections of 
the submanifolds, which are needed for the classification of generic 
linking.  
\par
We consider points
\begin{align*}
  (x^{(j_1)}, \dots , x^{(j_{m})}, u^{(i)}, u^{(j_1)}, \dots , u^{(j_{m})}) &\in 
X_{\cJ_i}^{(\bl)} \times (\R^{n+1})^{(m + 1)}, \\  
&\text{with } \quad x^{(j_p)} = (x^{(j_p)}_1, \dots , x^{(j_p)}_{\ell_p}) \in 
\cirX_j^{(\ell_p)} .
\end{align*}  
For each $p$ we have $x^{(j_p)}_1 \in X_{i\, 
j_p}$.  For each $p =1, \dots , m$, we choose disjoint open neighborhoods 
$U_{j_p}^{(q)} \subset \cirX_{j_p}$ of the $x^{(j_p)}_q$ and let $U_{j_p} = 
\prod_{q=1}^{\ell_p}U_{j_p}^{(q)}$.  Likewise, we choose disjoint open 
neighborhoods $V_{j_p} \subset \R^{n+1}$ of $u^{(j_p)}$ and $V_i$ of 
$u^{(i)}$.   Then, we let 
$$  Y \, = \, \prod_{p=1}^{m} U_{j_p} \times V_i \times \prod_{p=1}^{m} 
V_{j_p} $$ 
which is an open subset of $ X_{\cJ_i}^{(\bl)}  \times (\R^{n+1})^{(m + 1)}$.  
We also let $\tilde U = \prod_{p=1}^{m} U_{j_p}^{(1)} \subset 
\cirX_i^{(m)}$; and for each $p$, we let $Y_p = U_{j_p} \times V_{j_p}$ 
so that $Y \simeq (\prod_{p = 1}^{m} Y_p) \times V_i$.  
\par  
Next, we denote the natural projections 
$$  \pi_0 : Y \rightarrow \tilde U \times V_i  $$
and, for $p=1, \dots , m$, 
$$   \pi_p : Y \rightarrow Y_p . $$
We have the jet extension mapping for the distance function $\gs : X 
\times \R^{n+1}$ (which restricts to the $\gs_i$ on $X_i^* \times 
\R^{n+1}$),
\begin{equation}
\label{Eqn6.6}
   {_m}j_1^k \gs(\cdot, u^{(i)}) : \tilde U \times V_i \rightarrow 
 {_{m}}J^k(\cirX_i,\R)   
\end{equation}
and for each $p$, we have 
\begin{equation}
\label{Eqn6.7}
  {_{\ell_p}}j_1^k \gs(\cdot,u^{(j_p)}) : U_{j_p} \times V_{j_p} 
\rightarrow  {_{\ell_p}}J^k(\cirX_{j_p},\R) .  
\end{equation}
We let 
\begin{equation}
\label{Eqn6.10}
  Z^{(\bga)} = ({_m}j_1^k \gs(\cdot , u^{(i)}))^{-1}(W^{(\bga)}) \quad  
\text{and} \quad  Z^{(\bgb_{j_p})} = ({_{\ell_p}}j_1^k \gs(\cdot , 
u^{(j_p)}))^{-1}(W^{(\bgb_{j_p})})  
\end{equation}
 for each $p$.  \par
Then, the consequences for the jet extension mappings are given by the 
following.
\begin{Proposition}
\label{Prop6.3}
For $\Phi \in \cP_{\rho}$, we have the following transversality properties 
for 
${_{\ell_p}}j_1^k \gs(\cdot,u^{(j_p)})$ and ${_{\bl}}j_1^k\rho_i$: \par
\begin{enumerate}
\item ${_m}j_1^k \gs(\cdot,u^{(i)})$ is transverse to $W^{(\bga)}$; 
\item  each ${_{\ell_j}}j_1^k \gs(\cdot , u^{(j_p)})$ is transverse to 
$W^{(\bgb_{j_p})}$; 
\item the $\pi_p^{-1}(Z^{(\bgb_{j_p})})$, for $p  = 1, \dots , m$, are in 
general position; 
\item their intersection is transverse to $\pi_0^{-1}(Z^{(\bga)})$; and 
\item on $Y$
\begin{equation}
\label{Eqn6.11}
 ({_{\bl}}j_1^k\rho_i)^{-1}(W^{(\bga : \bgb)}) \,\, = \,\, \pi_0^{-
1}(Z^{(\bga)}) \cap (\cap_{p = 1}^{m}\pi_p^{-1}(Z^{(\bgb_{j_p})})) . 
\end{equation}
\end{enumerate}
\end{Proposition}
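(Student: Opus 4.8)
The plan is to recognize both the distinguished submanifold $W^{(\bga:\bgb)}$ and the restriction of $_{\bl}j_1^k\rho_i$ to the product neighbourhood $Y$ as products compatible with one common splitting of all the jet factors, and then to read off (1)--(5) mechanically from Lemma \ref{Lem6.10}(1), using only that a projection is a surjective submersion, that composing with a surjective submersion neither creates nor destroys transversality to a submanifold, and that the intersection of a family of submanifolds in general position is again a submanifold transverse to each remaining member.

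First I would make the splitting explicit. Having shrunk the $U_{j_p}^{(q)}$ to be pairwise disjoint, the partial multijet bundle over $Y$ is the product $\prod_{p=1}^m\prod_{q=1}^{\ell_p}J^k(U_{j_p}^{(q)},\R^2)$, and I write the target $\R^2$ of $\rho_i$ on the sheet $\cirX_{j_p}$ as $\R_{(i)}\times\R_{(j_p)}$, the first copy recording the component $\gs(\cdot,u^{(i)})$ and the second the component $\gs(\cdot,u^{(j_p)})$; correspondingly each factor splits as $J^k(U_{j_p}^{(q)},\R)_{(i)}\times J^k(U_{j_p}^{(q)},\R)_{(j_p)}$. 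In this splitting $_{\bl}j_1^k\rho_i|Y$ is, up to permuting factors, the product of three pieces: on the first-point $(i)$-slots, the map ${_m}j_1^k\gs(\cdot,u^{(i)})\circ\pi_0$ valued in $\prod_p J^k(U_{j_p}^{(1)},\R)_{(i)}={_m}J^k(\cirX_i,\R)|\tilde U$ (legitimate because the points $x_1^{(j_p)}$ lie in the pairwise disjoint open strata $X_{i\,j_p}\subset\cirX_i$ and so form a point of $\cirX_i^{(m)}$); on the $p$-th block of $(j_p)$-slots, the map ${_{\ell_p}}j_1^k\gs(\cdot,u^{(j_p)})\circ\pi_p$ valued in ${_{\ell_p}}J^k(\cirX_{j_p},\R)$; and on the non-first-point $(i)$-slots ($q>1$), the remaining jets, which are unconstrained by $W^{(\bga:\bgb)}$. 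Reading (\ref{Eqn5.0a})--(\ref{Eqn5.0b}) (with $W'_p=W^{(\ga_p)}_0$ and $W_p=W^{(\bgb_{j_p})}$) in the same splitting shows $W^{(\bga:\bgb)}=W^{(\bga)}\times\bigl(\prod_{p}(J^k(\cirX_{j_p},\R))^{\ell_p-1}\bigr)\times\prod_{p=1}^m W^{(\bgb_{j_p})}$, placed in the first-point $(i)$-slots, the non-first $(i)$-slots, and the $(j_p)$-blocks respectively; the factor $\gD^m\R$ of (\ref{Eqn5.0a}) forces the common critical value of $\gs(\cdot,u^{(i)})$ at the first points, which is automatic here since $\rho_i$ records $\gs(x,u^{(i)})$ in its first coordinate on every sheet.

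Once this product picture is in place, transversality of $_{\bl}j_1^k\rho_i$ to $W^{(\bga:\bgb)}$ — which holds because $\Phi\in\cP_\rho\subset\cP_{i\,\rho}$ and $W^{(\bga:\bgb)}\in\cS(i,\bl)$ — is transversality of a product map to a product submanifold, so Lemma \ref{Lem6.10}(1) applies with source $Y$. Its first conclusion, applied factor by factor, gives ${_m}j_1^k\gs(\cdot,u^{(i)})\circ\pi_0\pitchfork W^{(\bga)}$ and ${_{\ell_p}}j_1^k\gs(\cdot,u^{(j_p)})\circ\pi_p\pitchfork W^{(\bgb_{j_p})}$ (the non-first $(i)$-factor being trivially transverse to the full space); since $\pi_0$ and $\pi_p$ are surjective submersions this is equivalent to (1) and to (2). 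Its second conclusion gives that $\pi_0^{-1}(Z^{(\bga)})$, $\pi_1^{-1}(Z^{(\bgb_{j_1})}),\dots,\pi_m^{-1}(Z^{(\bgb_{j_m})})$ (the preimage of the free factor being all of $Y$) are in general position in $Y$; restricting to the $\pi_p^{-1}(Z^{(\bgb_{j_p})})$ yields (3), and the remark about intersections of families in general position yields (4). Finally (5) is the identity $f^{-1}(W)=\bigcap_j f_j^{-1}(W_j)$ of Lemma \ref{Lem6.10}(1) for this product decomposition. The variants with one $A$-orbit replaced by the $\tilde E_7$ stratum of $\tilde W_7$ go through verbatim, as the product structure is unaffected.

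I expect the only genuine work to be the bookkeeping of the middle paragraph: checking that the $(i)$-jets at the first points really do glue to a point of ${_m}J^k(\cirX_i,\R)$ over $\tilde U\subset\cirX_i^{(m)}$ on which $W^{(\bga)}$ lives (this uses the disjointness of the strata $X_{i\,j_p}$ for $j_p\neq j_{p'}$, since a common point would be an edge-corner point, together with the last clause of (\ref{Eqn4.4}) when $j_p=j_{p'}$), and that (\ref{Eqn5.0a})--(\ref{Eqn5.0b}) are aligned slot for slot with the $\R^2=\R_{(i)}\times\R_{(j_p)}$ splitting. Everything beyond that is a formal application of Lemma \ref{Lem6.10} together with the standard stability of transversality under surjective submersions and under intersecting families in general position (the stratified analogues of the basic properties recorded after Lemma \ref{Lem6.13}).
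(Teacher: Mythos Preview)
Your proposal is correct and follows essentially the same approach as the paper's proof: recognize both ${_{\bl}}j_1^k\rho_i|Y$ and $W^{(\bga:\bgb)}$ as products aligned with a common splitting of the jet factors, then apply Lemma~\ref{Lem6.10}(1) and strip off the surjective submersions $\pi_0,\pi_p$. The paper introduces abbreviated notation $h_0={_s}j_1^k\gs(\cdot,u^{(i)})$, $h'_0={_m}j_1^k\gs(\cdot,u^{(i)})$, $h_p$ and passes through the intermediate $s$-multijet $h_0$ before reducing to $h'_0$ via the unconstrained $(\ell_p-1)$-fold factors in (\ref{Eqn6.9a}); you instead split off the non-first-point $(i)$-slots as a separate free factor from the outset, which is an equivalent and slightly more direct bookkeeping of the same product.
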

\par
\begin{proof}
To simplify the notation, we let $E_p$ denote the jet space 
${_{\ell_p}}J^k(\cirX_{j_p},\R)$, $h_0$ denote the jet-extension mapping 
${_s}j_1^k \gs(\cdot, u^{(i)})$, $h^{\prime}_0$ denote the jet-extension 
mapping $ {_m}j_1^k \gs(\cdot, u^{(i)})$ in (\ref{Eqn6.6}), and let $h_p: Y_p 
\to E_p$ denote each jet-extension mapping in (\ref{Eqn6.7}).  
Then, the jet extension mapping 
\begin{equation}
\label{Eqn6.8}
{_{\bl}}j_1^k\rho_i: X_{\cJ_i}^{(\bl)}  \longrightarrow \, 
{_{\bl}}E^{(k)}(X_{\cJ_i},\R^2)
\end{equation}
may be written 
\begin{equation}
\label{Eqn6.9} 
h = (h_0, \prod_{p = 1}^{m} h_p \circ \pi_p) : Y \to E^{(1)} 
\times E^{(2)}  
\end{equation}
where each $E^{(i)} = \prod_{p = 1}^{m} E_p$, for $i = 1, 2$. \par
By Theorem \ref{Thm5.1}, the mapping $ _{\bl}j_1^k\rho_i$ in 
(\ref{Eqn6.8}) is transverse to $W^{(\ga : \bgb)}$ in 
$ _{\bl}E^{(k)}(X_{\cJ_i},\R^2)$.  In the form of (\ref{Eqn6.9}), this states 
that $h$ is transverse to $W^{(\bga)} \times W^{(\bgb_{j_1})}\times 
\cdots \times W^{(\bgb_{j_{m}})}$, with $W^{(\bga)} \subset E^{(1)}$ and 
each $W^{(\bgb_{j_p})} \subset E_p$ in $E^{(2)}$.  By Lemma \ref{Lem6.10}, 
this is true if and only if $h_0$ is transverse to $W^{(\bga)}$ in $E^{(1)}$ 
and $h_p \circ \pi_p$ is transverse to $W^{(\bgb_{j_1})}\times 
\cdots \times W^{(\bgb_{j_{m}})}$ in $E^{(2)}$.  
Also, for $h_0$, we note that by (\ref{Eqn5.0a}) and (\ref{Eqn5.0}), 
$W^{(\bga)}$ has fibers over $X_{\cJ_i}^{(\bl)}$of the form 
\begin{equation}
\label{Eqn6.9a}
 W^{(\bga)}_0 \,\, = \,\,  \prod_{p = 1}^{m}( W^{(\ga_p)}_0 \times 
(J^k(\cirX_{j_p},\R))^{\ell_p-1}) \times  \gD^m\R
\end{equation}
Thus, by the form of (\ref{Eqn6.9a}), transversality of ${_s}j_1^k 
\gs(\cdot, u^{(i)})$ to $W^{(\bga)}$ is equivalent to the transversality of 
${_m}j_1^k \gs(\cdot, u^{(i)}) \circ \pi_0$ to $W^{(\bga)} \subset 
{_{m}}J^k(\cirX_i, \R)$ whose fibers are of the form
\begin{equation}
\label{Eqn6.9b}
 W^{(\bga)}_0 \,\, = \,\, W^{(\ga_1)}_0 \times \cdots \times W^{(\ga_q)}_0 
\times \gD^m\R 
\end{equation}
Moreover, 
$${_s}j_1^k \gs(\cdot, u^{(i)})^{-1}(W^{(\bga)}) \,\, = \,\, 
\pi_0^{-1}(Z^{(\bga)}) \,\, = \,\, 
\pi_0^{-1}({_m}j_1^k \gs(\cdot, u^{(i)})^{-1}(W^{(\bga)}))$$
(with the versions of $W^{(\bga)}$ on each side being in their appropriate 
spaces).
\par 
Then, we may apply
Lemma \ref{Lem6.10} and obtain : 
\begin{itemize} 
\item[i)] $h^{\prime}_0 \circ \pi_0$, and hence $h^{\prime}_0$, is 
transverse to 
$W^{(\bga)}$; 
\item[ii)] each $h_p \circ \pi_p$, and hence $h_p$, is transverse to 
$W^{(\bgb_{j_p})}$;  
\item[iii)] the $(h_p \circ \pi_p)^{-1}(W^{(\bgb_{j_p})})$ are in general 
position in $Y$; and 
\item[iv)] $(h^{\prime}_0 \circ \pi_0)^{-1}(W^{(\bga)})$ is transverse to 
$\cap_{p = 1}^{m} (h_p \circ \pi_p)^{-1}(W^{(\bgb_{j_p})})$.  
\end{itemize}
Then, the statements in the proposition are an immediate consequence of 
these properties resulting from the transversality conditions.  
\end{proof}
\par  
\vspace{1ex}
\subsubsection*{Stratification Properties on $\cirX_i$ and $\R^{n+1}$ and 
Generic Linking Properties } \hfill
\par
Now, we begin to apply the preceding properties to obtain the remaining 
conclusions in Theorems \ref{Thm3.5} and \ref{Thm3.6}.  \par
We next turn to the linking properties.  For these we have to establish: i) 
there is a stratification on $\cirX_i$ given by $\bA_{\bga}$-types of 
multigerms for $\rho_i$, two of which are Whitney stratifications; and ii) 
the stratifications on $\cirX_i$ by $\bA_{\bga}$-types and those given by 
the $\bA_{\bgb_{j_p}}$-types intersect transversely.  \par
Because the stratification by the $W^{(\bga)}$ or $W^{(\bgb_{j_p})}$ form 
Whitney stratifications, so does the product stratification by the 
$W^{(\ga : \bgb)}$.  Hence, by the transversality to Whitney 
stratifications, the 
$Z^{(\bga)}$ or $Z^{(\bgb_{j_p})}$ in (\ref{Eqn6.10}) form Whitney 
stratifications, with strata the inverse images of the strata of 
$W^{(\bga)}$ or $W^{(\bgb_{j_p})}$.  Moreover as the $\pi_j$ are 
submersions, by (3) above we deduce that 
$\cap_{p = 1}^{m}\pi_p^{-1}(Z^{(\bgb_{j_p})})$, which is the transverse 
intersection of Whitney 
stratified sets, is again Whitney stratified with strata the intersections 
of the strata of the $\pi_p^{-1}(Z^{(\bgb_{j_p})})$.  Then, by (4) and 
(\ref{Eqn6.11}) so is $(\, _{\bl}j_1^k\rho_i)^{-1}(W^{(\bga : \bgb)})$ 
Whitney stratified on $Y$ with strata the intersections of the strata of 
$\pi_0^{-1}(Z^{(\bga)})$ and 
$\cap_{p = 1}^{m}\pi_p^{-1}(Z^{(\bgb_{j_p})})$.  These stratifications are 
in $X_{\cJ_i}^{(\bl)} \times (\R^{n+1})^{(m)}$ rather than in $\cirX_i$ or 
in $\R^{n+1}$.  Next we relate these stratifications to those in $\cirX_i$ 
and $\R^{n+1}$.  
\par
First, by the openness of versality and the uniqueness of versal 
unfoldings, at 
any point of the Blum medial axes, $u_j \in \intr(\gW_j)$ for some $j$, 
the medial axis is diffeomorphic to a product along the stratum containing 
$u^{(j)}$.  Hence, the projections of any of the $Z^{(\bga)}$ or 
$Z^{(\bgb_{j_p})}$ map them diffeomorphically onto the corresponding 
strata in $\intr(\gW_i)$ or $\intr(\gW_{j_p})$ which form a Whitney 
stratification by multigerm type in the parameter space $\R^{n+1}$ of 
the versal unfolding.  \par 
Second, we have the following corresponding results for projection onto 
the factors $U_j$.
\begin{Proposition}
\label{Prop6.6}
For $\Phi \in \cP_{i\, \rho}$, the strata $Z^{(\bga)}$ and $Z^{(\bgb_{j_p})}$ 
project diffeomorphically onto their images $\gS^{(\bga)}$, resp. 
$\gS^{(\bgb_{j_p})}$, in each $U_{j_p}^{(i)}$.  Hence, they project 
diffeomorphically onto their images in each $\tilde U$ and $U_{j_p}$.  
These strata $\{ \gS^{(\bga)}\}$ or  $\{ \gS^{(\bgb_{j_p})}\}$ form  
stratifications.  Moreover, the subset of strata $\gS^{(\bga)}$ with 
$\ga_{j_p} \geq 3$, or the subset with all $\ga_{j_i} = 1$ form Whitney 
stratifications, and likewise for the subsets of strata 
$\{\gS^{(\bgb_{j_p})}\}$.  
\end{Proposition}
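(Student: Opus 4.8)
The plan is to derive everything from the versality/transversality package already established in Proposition \ref{Prop6.3} together with the openness and uniqueness of $\cR^+$-versal unfoldings. First I would observe that, for $\Phi \in \cP_{i\,\rho}$, part (1) of Proposition \ref{Prop6.3} gives that ${_m}j_1^k\gs(\cdot,u^{(i)})$ is transverse to $W^{(\bga)}$, so by property (1) of Whitney stratified sets (pull-back by a transverse map) the set $Z^{(\bga)}$ is a smooth submanifold of $\tilde U\times V_i$ (and similarly each $Z^{(\bgb_{j_p})}$ is a smooth submanifold of $U_{j_p}\times V_{j_p}$). The key point is then that the projection $\pi_0|Z^{(\bga)}$ onto the $V_i$-factor (and likewise each $\pi_p|Z^{(\bgb_{j_p})}$ onto $V_{j_p}$) is a local diffeomorphism onto its image. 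This is precisely the statement that the distance-squared function $\gs(\cdot,u^{(i)})$ restricted to $X_i^*$, viewed as an unfolding on the parameters $u^{(i)}\in\R^{n+1}$, is $\cR^+$-versal at the relevant multigerm — which was already deduced in the discussion preceding Proposition \ref{Prop6.3} from Mather's theorem (\cite[Thm 9.1, Thm 9.2]{M2}) and the more general versality-transversality result in \cite[\S 5]{D5}. Concretely, I would invoke Lemma \ref{Lem6.6}: $\pi_0|Z^{(\bga)}$ is a local diffeomorphism at a point iff $d(\,{_m}j_1^k\gs(\cdot,u^{(i)}))$ restricted to the $\tilde U$-directions spans a complement to $TW^{(\bga)}$ inside the fiber direction, and this fiberwise transversality is exactly the infinitesimal versality of the unfolding. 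Since $u^{(i)}$ ranges over $V_i \cong$ an open set in $\R^{n+1}$, whose dimension equals the $\cR_e^+$-codimension of $\bA_{\bga}$ plus the dimension of the stratum, the map is in fact a diffeomorphism onto its image.

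Next I would transfer this to the $\tilde U$ and $U_{j_p}$ factors. Having established that $\pi_0|Z^{(\bga)}$ is a diffeomorphism onto a submanifold $\gS^{(\bga)}\subset V_i$, and symmetrically $\pi_p|Z^{(\bgb_{j_p})}$ is a diffeomorphism onto $\gS^{(\bgb_{j_p})}\subset V_{j_p}$, the composite $Z^{(\bga)}\to\tilde U$ is obtained by composing the inverse of $\pi_0|Z^{(\bga)}$ with the natural identification; since along the stratum the Maxwell set is diffeomorphic to a product (by the openness of versality and uniqueness of versal unfoldings, as noted in the paragraph just before this proposition), the induced map onto each $U_{j_p}^{(i)}$ — i.e. onto the boundary-point factor — is again a diffeomorphism onto its image. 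One must be a little careful here: the boundary points $x^{(j_p)}_q$ corresponding to a single center $u^{(j_p)}$ may vary, so "diffeomorphically onto its image in each $U_{j_p}^{(i)}$" should be read as: the restriction of the projection $Y\to U_{j_p}^{(i)}$ to the $q$-th boundary-point component of $Z^{(\bgb_{j_p})}$ is an embedding onto a submanifold of $\cirX_{j_p}$. This follows from versality because each boundary point is a non-degenerate (finitely-determined) critical point, so its position depends smoothly and injectively on $u^{(j_p)}$, which in turn is pinned down by the stratum. That these images $\{\gS^{(\bga)}\}$ (resp. $\{\gS^{(\bgb_{j_p})}\}$) form stratifications is then inherited from the fact that the $W^{(\bga)}$ form a Whitney stratification of their union and the jet-extension map is transverse to all strata, so the pull-backs stratify $\tilde U\times V_i$ and project (diffeomorphically on each piece) to a stratification of $\tilde U$.

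Finally, for the Whitney property of the two distinguished sub-collections, I would argue exactly as sketched in the running text after Corollary \ref{Cor3.5} and in the "Stratification of $\cB$" subsection of \S\ref{S:sec3}: the strata $\gS^{(\bga)}$ with $\ga_{j_p}\geq 3$ (the "edge-closure" group) and the strata with all $\ga_{j_i}=1$ (the "interior $A_1^k$" group) each separately satisfy the Whitney conditions. The cleanest route is to use that $Z^{(\bga)}$ is the pull-back under a transverse map of the corresponding sub-Whitney-stratification of $\overline{W^{(\bga)}}$ (which is Whitney by the versality theorem making the local structure analytically trivial along strata, so the topological type only changes between the two groups), and then invoke that a diffeomorphism onto an image preserves the Whitney conditions; the projection $\pi_0|_{Z^{(\bga)}}$ being a diffeomorphism onto $\gS^{(\bga)}$ does the rest. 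The analogous statement for $\{\gS^{(\bgb_{j_p})}\}$ is identical with $\rho_i$ replaced by the single-center distance function $\gs_{j_p}$ and Theorem \ref{Thm5.0} in place of Theorem \ref{Thm5.1}. The main obstacle I anticipate is the bookkeeping in the case where two or more boundary points associated to the same center collide in the closure (i.e. passing between the two Whitney-stratified groups), and in the $n+1=7$ case where an $\tilde E_7$-stratum replaces one $A_{\ga}$-orbit — there the projection is still a diffeomorphism onto its image by the same infinitesimal-versality argument, but one has to note that the relevant unfolding is only a transverse section to the $\tilde E_7$-stratum rather than versal, so the diffeomorphism claim requires the Looijenga topological-triviality input (\cite{L2}) cited in Remark \ref{Rem3.1} rather than a bare versality argument. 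That is the only place where a genuinely new ingredient, beyond routine transversality bookkeeping, enters.
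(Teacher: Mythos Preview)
Your argument has a genuine gap at the key step. You correctly set up Lemma \ref{Lem6.6}, but you apply it in the wrong direction: you verify the kernel condition for the projection of $Z^{(\bga)}$ onto the \emph{parameter} factor $V_i$ (which is indeed equivalent to infinitesimal $\cR^+$-versality), and then try to ``transfer'' this to the projection onto the \emph{boundary} factor $U_{j_p}^{(i)}$ by arguing that the critical point depends smoothly and injectively on $u^{(i)}$ along the stratum. But that transfer is precisely the content at issue. The paper explicitly warns, in the ``Stratification of $\cB$'' discussion of \S\ref{S:sec3}, that ``the unfolding theorem does not assert that the strata in $\cB$ need be smooth of the same dimension as corresponding strata in $M$''. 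What Lemma \ref{Lem6.6} requires for the projection onto $U_{j_p}^{(1)}$ is that
\[
d({_m}j_1^k\gs)(x,u^{(i)})\bigl(\textstyle\bigoplus_{q\ge 2}T_{x^{(j_q)}_1}X \;\oplus\; T_{u^{(i)}}\R^{n+1}\bigr)\;\cap\;T\,W^{(\bga)}\;=\;0,
\]
i.e.\ that the \emph{other} boundary directions together with the parameter directions meet $TW^{(\bga)}$ trivially. This is not versality (which instead says the boundary directions span a complement to $TW^{(\bga)}$), and the paper supplies it as a separate computational lemma, Lemma \ref{Lem10.7}, proved in \S\ref{S:sec10} by explicit Monge-patch calculations with a case analysis on $s=1$ versus $s>1$ and on $\ga_1\ge 2$ versus $\ga_1=1$. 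Your proposal does not furnish any substitute for this computation.

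There is a second, related gap in your Whitney argument. Knowing that \emph{each} stratum $Z^{(\bga)}$ projects diffeomorphically onto its image $\gS^{(\bga)}$ does not by itself transport Whitney regularity between pairs of strata: for that you need a single local diffeomorphism of an ambient manifold carrying the smaller stratum and a neighborhood in the larger one simultaneously. The paper handles this by enlarging to a containing submanifold that itself projects diffeomorphically: for the $\ga_{j_p}\ge 3$ group it uses the Thom--Boardman stratum $\gS_{n,1}$ and Lemma \ref{Lem10.9}; for the $A_1^k$ group it embeds everything in an ambient $A_1^2$ stratum and reapplies Lemma \ref{Lem10.7}. In each case a single ambient diffeomorphism then pushes forward the Whitney structure. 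Your sketch (``the projection $\pi_0|_{Z^{(\bga)}}$ being a diffeomorphism onto $\gS^{(\bga)}$ does the rest'') misses this point.
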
  
\begin{proof}[Proof of Proposition \ref{Prop6.6}] 
We apply Lemma \ref{Lem6.6} to the jet extension mappings (\ref{Eqn6.6}) 
and (\ref{Eqn6.7}) with $W$ denoting either $W^{(\bga)}$ or 
$W^{(\bgb_{j_p})}$.  Then, for the first claim it is only necessary to show 
the conditions of Lemma \ref{Lem6.6} are satisfied.  This will follow 
directly from Lemma \ref{Lem10.7} which shows that the jet extension 
map (\ref{Eqn6.6}) viewed as defined on $U_{j_p}^{(i^{\prime})} \times 
(\prod_{p \neq i^{\prime}} U_{j_p}^{(1)} \times V_i)$, with the first factor 
representing $N$, satisfies the conditions of Lemma \ref{Lem6.6}.  
Likewise, as projection onto $\tilde U$ and $U_{j_p}$ composed with the 
further projection onto $U_{j_p}^{(i)}$ is a local diffeomorphism onto its 
image, hence it must be true for the projections onto $\tilde U$ and 
$U_{j_p}$.  
\par
To see that the subsets of strata form a Whitney stratification, we first 
apply Lemma \ref{Lem6.6} to the strata in the Thom-Boardman stratum 
$\gS_{n, 1}$. By Lemma \ref{Lem10.9}, for a germ of a generic distance 
function this stratum is mapped diffeomorphically onto its image in $X$.  
Hence, the strata in the image of $\gS_{n, 1}$ form a Whitney 
stratification.  Second, for the case where all $\ga_{j_p} = 1$, we let 
$x^{(j_p)}_1 \in \cB_{j_p}$ be a point in a stratum of type $A_1^k$.  If 
$x^{(j)}_2 \in \cB_{j_i}$ is another point in the same stratum associated 
to the same point in $M_j$, then they lie in a common $A_1^2$ stratum, 
where we do not consider absolute minimum values for distance.  By 
applying Lemma \ref{Lem6.6} with Lemma \ref{Lem10.7} again, we 
conclude that this stratum is the diffeomorphic image of a smooth 
stratum in the jet space.  In this stratum, the other higher strata $A_1^k$ 
form a Whitney stratification, and hence so does their image under the 
diffeomorphism.  A similar argument works for the case with all 
$\gb_{j_p\, i} = 1$.  
\end{proof}
\par
We now claim that these two stratifications are transverse.  
\begin{Proposition}
\label{Prop6.7}
For $\Phi \in \cP_{i\, \rho}$, the stratifications $\gS^{(\bga)}$ and 
$\gS^{(\bgb_{j_p})}$ for $p = 1, \dots , m$ intersect transversely in 
$\cirX_i$.  Hence,  the intersections of the subset of strata $\gS^{(\bga)}$ 
with $\ga_{j_p} \geq 3$ or the subset with all $\ga_{j_i} = 1$ with one of 
the subsets of strata $\{ \gS^{(\bgb_{j_p})}\}$ with $\gb_{j_p\, 1} \geq 
3$  or the subset with all $\gb_{j_p\, i} = 1$ form Whitney 
stratifications.  
\end{Proposition}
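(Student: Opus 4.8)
The plan is to transfer the transversality relations established on the product space $Y$ in Proposition~\ref{Prop6.3} down to $\cirX_i$ by means of the projections, exploiting that by Proposition~\ref{Prop6.6} these projections restrict to diffeomorphisms on the relevant submanifolds. Fix $\Phi \in \cP_{i\,\rho}$, a point $x_0 \in X_{i\,j_p} \subset \cirX_i$, and an associated linking configuration of type $(\bA_{\bga}:\bA_{\bgb})$ for which $x_0$ is one of the points of an $\bA_{\bga}$-multigerm of $\gs(\cdot,u^{(i)})$ and of an $\bA_{\bgb_{j_p}}$-multigerm of $\gs(\cdot,u^{(j_p)})$. Since $\cirX_i=\coprod_{j\in\cJ_i}X_{i\,j}$ and, by the definition of $X_{\cJ_i}^{(\bl)}$, the points $x^{(j_r)}_q$ are pairwise distinct in $\cirX_i$, only the single stratum $\gS^{(\bgb_{j_p})}$, together with $\gS^{(\bga)}$, is relevant near $x_0$; so it suffices to prove the pairwise statement $\gS^{(\bga)}\pitchfork\gS^{(\bgb_{j_p})}$ near $x_0$ for each such $p$, which also gives ``general position'' for the collection locally.

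First I would reorganize the factors of $Y$: writing $U=U_{j_p}^{(1)}$ for the neighborhood of $x_0$ in $\cirX_i$, collect the remaining factors of $\tilde U\times V_i$ that $Z^{(\bga)}$ constrains into $Y_1$ and the remaining factors of $U_{j_p}\times V_{j_p}$ that $Z^{(\bgb_{j_p})}$ constrains into $Y_2$; the leftover factors of $Y$ (appearing in neither $Z^{(\bga)}$ nor $Z^{(\bgb_{j_p})}$) split off trivially. Then $Z^{(\bga)}\subset Y_1\times U$, $Z^{(\bgb_{j_p})}\subset U\times Y_2$, and the general‑position statements of Proposition~\ref{Prop6.3}(3),(4), after discarding the trivial factors, are exactly the transversality of $Z^{(\bga)}\times Y_2$ and $Y_1\times Z^{(\bgb_{j_p})}$ in $Y_1\times U\times Y_2$. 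By Proposition~\ref{Prop6.6} the projections of $Z^{(\bga)}$ and $Z^{(\bgb_{j_p})}$ onto $U$ are diffeomorphisms onto $\gS^{(\bga)}$, resp.\ $\gS^{(\bgb_{j_p})}$, in particular submersions onto their images. Lemma~\ref{Lem6.9} (applied stratum by stratum, legitimate by the remark following the basic properties of Whitney stratified sets) then yields $\gS^{(\bga)}\pitchfork\gS^{(\bgb_{j_p})}$ in $\cirX_i$. Running this for every relevant $p$ gives transversality of the stratifications $\gS^{(\bga)}$ and $\gS^{(\bgb_{j_p})}$, $p=1,\dots,m$, in $\cirX_i$. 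The case $n+1=7$ is handled identically, allowing an $\tilde E_7$‑stratum in place of one of the $A_{\ga_r}$‑ or $A_{\gb_r}$‑orbits, since it is just one more $_{\ell}\cR^+$‑invariant Whitney stratum covered by Theorems~\ref{Thm5.0} and \ref{Thm5.1}.

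For the final assertion, recall from Proposition~\ref{Prop6.6} that the subcollection of strata $\gS^{(\bga)}$ with $\ga_{j_p}\geq 3$, and separately the subcollection with all $\ga_{j_i}=1$, each form a Whitney stratification, and likewise for each family $\{\gS^{(\bgb_{j_p})}\}$. Having shown the individual strata intersect transversely, these pairs of Whitney stratifications are in general position, so the transverse‑intersection property of Whitney stratified sets recalled above — the intersection is Whitney stratified with strata the pairwise intersections of strata — gives that the indicated intersections of subcollections form Whitney stratifications. The main obstacle I expect is precisely the bookkeeping in the second step: $\gS^{(\bga)}$ lives a priori over the $m$‑fold product $\cirX_i^{(m)}$ while each $\gS^{(\bgb_{j_p})}$ lives over a single copy of $\cirX_i$, so one must single out the correct copy of $\cirX_i$ to serve as $U$ in Lemma~\ref{Lem6.9}, isolate it cleanly from the other factors, and verify that the transversality produced by Proposition~\ref{Prop6.3} genuinely persists under the restriction and re‑grouping of factors; the rest is a routine application of the cited lemmas.
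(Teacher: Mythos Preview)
Your proposal is correct and follows essentially the same route as the paper's proof: both establish the pairwise transversality of $\pi_0^{-1}(Z^{(\bga)})$ and $\pi_p^{-1}(Z^{(\bgb_{j_p})})$ in the product $Y$, reorganize the factors so that $U_{j_p}^{(1)}$ plays the role of the common factor $U$ in Lemma~\ref{Lem6.9}, and then invoke Proposition~\ref{Prop6.6} together with Lemma~\ref{Lem6.9} to push the transversality down to $\cirX_i$; the Whitney assertion is likewise deduced from Proposition~\ref{Prop6.6} and the transverse-intersection property. The only substantive differences are that the paper (i) obtains the pairwise transversality in $Y$ by applying Theorem~\ref{Thm5.1} directly to the simpler distinguished submanifold $W^{(\bga,\bgb_{j_p})}$ and Lemma~\ref{Lem6.10}(1), rather than extracting it from Proposition~\ref{Prop6.3}(3),(4), and (ii) handles your ``leftover factors split off trivially'' step by an explicit invocation of Lemma~\ref{Lem6.13}, projecting along the common fiber $\cU$ before applying Lemma~\ref{Lem6.9}. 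Your anticipated ``main obstacle'' is exactly this bookkeeping, and the paper's explicit identification of $\cU$, $Y_1$, $Y_2$ is the clean way to resolve it.
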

\par
\begin{proof}
We consider transversality to the stratification defined by the 
submanifolds $W^{(\bga, \bgb_{j_p})}$ (recall the definition after 
(\ref{Eqn5.0b})).  Then, 
$$   {_{m}}j_1^k \rho_i^{-1}(W^{(\bga, \bgb_{j_p})}) \,\, = \,\, 
\pi_0^{-1}(Z^{(\bga)}) \cap \pi_p^{-1}(Z^{(\bgb_{j_p})}) $$
is a transverse intersection by (1) of Lemma \ref{Lem6.10}.  Thus, the 
stratifications restricted to $Y$ are transverse.  
They have the following forms:  
\begin{align} 
\label{Eqn6.13}
\pi_0^{-1}(Z^{(\bga)}) \, &= \, Z^{(\bga)} \times \prod_{p=1}^{m} \big( 
\prod_{i=2}^{\ell_p} U_{j_p}^{(i)} \times V_{j_p} \big),\\
\pi_p^{-1}(Z^{(\bgb_{j_p})}) \, &= \, Z^{\gb_j} \times \prod_{p^{\prime} 
\neq p} \big( U_{j_{p^{\prime}}} \times V_{j_{p^{\prime}}}\big) \times 
V_i.  \notag 
\end{align}
\par
Let $\cU = \prod_{p^{\prime} \neq p} (\prod_{r = 
2}^{\ell_p}U_{j_{p^{\prime}}}^{(r)} \times V_{j_{p^{\prime}}})$ 
and 
$\tilde U_{j_p} = \prod_{i = 2}^{\ell_p} U_{j_p}^{(i)}$.  
By (\ref{Eqn6.13}), both $\pi_0^{-1}(Z^{(\bga)})$ and 
$\pi_p^{-1}(Z^{(\bgb_{j_p})})$ form fibrations with fiber $\cU$.  Thus, we 
may project along $\cU$ and obtain by Lemma \ref{Lem6.13} that the 
strata $Z^{(\bga)} \times \tilde U_{j_p} \times V_{j_p}$ and 
$Z^{(\bgb_{j_p})} \times (\prod_{p^{\prime} \neq 
p}U_{j_{p^{\prime}}}^{(1)}) \times V_i$ are transverse in $\tilde U \times 
\tilde U_{j_p} \times V_{j_p} \times V_i$.   \par
If we let $Y_1 = (\prod_{p^{\prime} \neq p}U_{j_{p^{\prime}}}^{(1)}) \times  
V_i$ and $Y_2 = \tilde U_{j_p} \times V_{j_p}$, then, 
$$  Y_1 \times U_{j_p}^{(1)} \times Y_2 \,\, = \,\, \tilde U \times \tilde 
U_{j_p} \times V_{j_p} \times V_i .$$
Hence, we may apply Lemma \ref{Lem6.9} to conclude that the images  
under the projection onto $U_{j_p}^{(1)}$, namely $\gS^{(\bga)}$ and 
$\gS^{(\bgb_{j_p})}$, are transverse in $U_{j_p}^{(1)}$.  \par
The proofs that the subsets of strata form Whitney stratifications follow 
from Proposition \ref{Prop6.6} together with their transverse 
intersections.
\end{proof}
\par
Applying the diffeomorphism $\Phi$ implies that the strata 
$\Sigma^{(\bga)}$ and $\Sigma^{(\bgb_{j_p})}$ intersect transversely in 
$\cB_i$. This holds for all $p=1, \dots , m$, and we conclude that the 
linking configuration $(\bA_{\bga} : \bA_{\bgb})$ is generic.  This proves  
that there is a residual set of mappings such that the generic linking 
property holds.  This yields property (2) for Theorem \ref{Thm3.5} and 
property v) for Theorem \ref{Thm3.6}, except for self-linking.  
\par
We recall that self-linking can occur in two forms: i) multiple points from 
a single region together with other points from another region occur in a 
linking configuration (partial linking); or ii) only points in a single region 
belong to the configuration (self-linking).  In either case, the 
stratifications in partial multijet space are still given for the 
multi-distance functions by the type (ii) submanifolds which we defined 
in \S \ref{S:sec5}.  It is in these cases where we need the assignment 
functions with possible multiple repetitions.  We can again conclude by 
Theorem \ref{Thm5.0} that for both partial linking and self-linking, there 
is a residual set of embeddings $\Phi$ so that the resulting jet maps for 
the distance function exhibit generic properties, and that transversality 
to strata holds for the individual distance functions.  
\par
Lastly, by the Transversality Theorem \ref{Thm5.0}, we may conclude that 
${_s}j_1^k\gs_i^{-1}(W^{(\bga)})$ is transverse to $(\gS_Q)^{(s)} \times 
\R^{n+1}$.  Then, by Proposition \ref{Prop6.6}, ${_s}j_1^k\gs_i^{-
1}(W^{(\ga)})$ projects diffeomorphically onto its image $\gS^{(\bga)}$.   
Hence, by Lemma \ref{Lem6.13} its transverse intersection with 
$(\gS_Q)^{(s)} \times \R^{n+1}$ maps diffeomorphically to the 
intersections of $\gS^{(\bga)}$ with
$\gS^{(\bga)} \subset X_i^*$.  Thus, $\gS_Q$ is transverse to 
$\gS^{(\bga)}$ in $X_i^*$, which establishes property iv) of Theorem 
\ref{Thm3.6} for a residual set of embeddings of the configuration.  

\subsection*{Consequences of Transversality for Height-Distance 
Functions}   
\par
We follow the same line of reasoning as we did for the multi-distance 
function case.  Now we deduce from the transversality results, the generic 
properties of height-distance functions. \par 
 For $n \leq 6$, we let $\cP_{i\, \tau} \subset \emb(\bgD,\R^{n+1})$ 
consist of all $\Phi$ such that $_{\bl}j_1^k \tau$ is transverse to every 
element of $\cS(i, \bl)$, $i = 0, \dots , m$.  The set $\cP_{i\, \tau}$ is 
a residual subset of $\emb(\bgD,\R^{n+1})$ by Theorem \ref{Thm5.1}.  
Then, the intersection $\cP_{\tau} = \cap_{i} \cP_{i\, \tau}$ is again a 
residual set.  We deduce the consequences of the transversality 
conditions.  
\par
For $\Phi \in \cP_{i\, \tau}$, transversality of $ _{\bl}j_1^k \tau$ to 
$W^{(\bga : \bgb)}$ yields transversality statements for certain 
submanifolds in the product space $X_{\cJ_0}^{(\bl)} \times 
(\R^{n+1})^{(m)} \times S^n$.  We again first consider the 
consequences of transversality to these submanifolds in 
$X_{\cJ_0}^{(\bl)} \times (\R^{n+1})^{(m)} \times S^n$ and then show how 
this implies transversality results for certain projections of the 
submanifolds. \par
First we deduce from the transversality to the submanifolds in 
$\cS(i, \bl)$ the analogue of Proposition \ref{Prop6.3}.  We use the same 
notation, except now $V_i \subset S^n$ is a neighborhood of $v \in S^n$, 
and let
\begin{equation}
\label{Eqn6.10a}
  \tilde Z^{(\bga)} = ( {_m}j_1^k \nu(\cdot , v))^{-1}(W^{(\bga)}) \quad  
\text{and} \quad Z^{(\bgb_{j_p})} = (\, _{\ell_j}j_1^k \gs(\cdot , 
u^{(j_p)}))^{-1}(W^{(\bgb_{j_p})})  
\end{equation}
 for each $p$.  \par

Then, the consequences for the jet extension mappings are given by the 
following.
\begin{Proposition}
\label{Prop6.3a}
For $\Phi \in \cP_{\tau}$, we have the following transversality 
properties for 
$\, _{\ell_p}j_1^k \nu(\cdot, v)$, with $v \in S^n$, and 
$\, _{\bl}j_1^k\tau$: \par
\begin{enumerate}
\item $\, {_m}j_1^k \nu(\cdot,v)$ is transverse to $W^{(\bga)}$; 
\item  each $\, _{\ell_j}j_1^k \gs(\cdot , u^{(j_p)})$ is transverse to 
$W^{(\bgb_{j_p})}$; 
\item the $\pi_p^{-1}(Z^{(\bgb_{j_p})})$, for $p  = 1, \dots , m$, are in 
general position; 
\item their intersection is transverse to $\pi_0^{-1}(\tilde Z^{(\bga)})$; 
and 
\item on $Y$
\begin{equation}
\label{Eqn6.11a}
 (\, _{\bl}j_1^k\tau)^{-1}(W^{(\bga : \bgb)}) \,\, = \,\, \pi_0^{-1}(\tilde 
Z^{(\bga)}) \cap (\cap_{p = 1}^{m}\pi_p^{-1}(Z^{(\bgb_{j_p})})) . 
\end{equation}
\end{enumerate}
\end{Proposition}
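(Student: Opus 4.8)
The plan is to run the proof of Proposition \ref{Prop6.3} essentially verbatim, replacing the multi-distance function $\rho_i$ by the height-distance function $\tau$ and the first-factor distance function $\gs(\cdot, u^{(i)})$ by the height function $\nu(\cdot, v)$. The one structural change — that the parameters of the first family now lie in $S^n$ rather than in $\R^{n+1}$ — plays no role in the transversality arguments, since all of them take place fiberwise over $X_{\cJ_0}^{(\bl)}$ in partial multijet space and over a product of coordinate neighborhoods, and the jet-theoretic bookkeeping is insensitive to which parameter manifold is used.

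Concretely, I would keep the notation of the proof of Proposition \ref{Prop6.3}, now with $V_i \subset S^n$ a neighborhood of $v$, so that $\pi_0 : Y \to \tilde U \times V_i$ and $\pi_p : Y \to Y_p = U_{j_p} \times V_{j_p}$. Exactly as in (\ref{Eqn6.9}), the partial multijet extension map of $\tau$ factors as
\begin{equation*}
h \; = \; \Bigl( h_0, \ \prod_{p=1}^{m} h_p \circ \pi_p \Bigr) \; : \; Y \ \longrightarrow \ E^{(1)} \times E^{(2)}, \qquad E^{(1)} = E^{(2)} = \prod_{p=1}^{m} {_{\ell_p}}J^k(\cirX_{j_p},\R),
\end{equation*}
where now $h_0 = {_m}j_1^k\nu(\cdot,v)$ is the height-function jet extension on $\tilde U \times V_i$ and each $h_p = {_{\ell_p}}j_1^k\gs(\cdot,u^{(j_p)})$ is a distance-function jet extension on $Y_p$. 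Since $\Phi \in \cP_\tau$, Theorem \ref{Thm5.1} applied to $W^{(\bga:\bgb)} \in \cS(0,\bl)$ gives that $h = {_{\bl}}j_1^k\tau$ is transverse to $W^{(\bga:\bgb)}$; in the product form above this says $h$ is transverse to $W^{(\bga)} \times W^{(\bgb_{j_1})} \times \cdots \times W^{(\bgb_{j_m})}$, with $W^{(\bga)} \subset E^{(1)}$ and each $W^{(\bgb_{j_p})} \subset E_p$ inside $E^{(2)}$.

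The remaining steps are those of the proof of Proposition \ref{Prop6.3}, repeated word for word. Lemma \ref{Lem6.10}(1) splits this transversality into transversality of $h_0$ to $W^{(\bga)}$ and of $\prod_p h_p\circ\pi_p$ to the product of the $W^{(\bgb_{j_p})}$; using the fiber description of $W^{(\bga)}$ as in (\ref{Eqn6.9a})--(\ref{Eqn6.9b}), with the diagonal factor $\gD^m\R$ accounting for the common critical value of the height function at the points of multiple tangency, transversality of ${_m}j_1^k\nu(\cdot,v)$ to $W^{(\bga)}$ (whose fiber is (\ref{Eqn6.9a})) is equivalent to transversality of ${_m}j_1^k\nu(\cdot,v)\circ\pi_0$ to $W^{(\bga)} \subset {_m}J^k(\cirX_0,\R)$ (whose fiber is (\ref{Eqn6.9b})), together with the identity ${_m}j_1^k\nu(\cdot,v)^{-1}(W^{(\bga)}) = \pi_0^{-1}(\tilde Z^{(\bga)})$. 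A second application of Lemma \ref{Lem6.10} then yields (1) and (2); that the $\pi_p^{-1}(Z^{(\bgb_{j_p})})$ are in general position yields (3); that their intersection is transverse to $\pi_0^{-1}(\tilde Z^{(\bga)})$ yields (4); and (5) is the preimage identity (\ref{Eqn6.11a}) read off directly from $h = (h_0, \prod_p h_p\circ\pi_p)$.

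I do not expect a genuine new obstacle here. The substantive content is simply that the height function $\nu$ carries the same infrastructure as the distance function $\gs$: a jet-extension map, a Monge-form normal form controlling its singularities at tangency points (as used in the discussion preceding (\ref{Eqn5.2})), and — crucially — a transversality theorem, namely the $\tau$-part of Theorem \ref{Thm5.1}, our extension of Wall's theorem \cite{Wa}. The only point requiring a moment's care, exactly as in Proposition \ref{Prop6.3}, is tracking the product-versus-diagonal structure of the two $\R$-factors in the target so that Lemma \ref{Lem6.10} applies cleanly; once that bookkeeping is in place the argument is purely formal.
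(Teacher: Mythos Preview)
Your proposal is correct and matches the paper's approach exactly: the paper does not even write out a separate proof for Proposition~\ref{Prop6.3a}, instead prefacing it with ``We follow the same line of reasoning as we did for the multi-distance function case'' and leaving the details implicit as the obvious analogue of the proof of Proposition~\ref{Prop6.3}. Your write-up is in fact more explicit than the paper about the substitutions (height function for distance function in the first factor, $V_i \subset S^n$ in place of $V_i \subset \R^{n+1}$) and about why they cause no new difficulty.
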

Then we may apply the same reasoning as in the proof of Mather\rq s 
Theorem \ref{Thm3.1}.  Suppose $S = (S_{j_1}, \dots , S_{j_{m}}) \in \, 
_{\bl}j_1^k\tau^{-1}(W^{(\ga : \bgb)})$, with each $S_{j_p} = \{ x^{(j_p)}_1, 
\dots , x^{(j_p)}_{\ell_p}\}$ and each $x^{(j_p)}_1 \in \cirX_i$.  We let 
$S^{\prime} = \{x^{(j_p)}_1, \dots x^{(j_{m})}_1\}$.  \par
As earlier, (2) implies the distance functions $\gs(x, u^{(j_p)}) : 
\cirX_{j_p} \times \R^{n+1}, S_{j_p} \times \{u^{(j_p)}\} \to \R$, $p = 1, 
\dots , m$, define $\cR^+$-versal unfoldings.  If $u^{(j_p)} \in 
\intr(\gW_{j_p})$, the Blum medial axes for $\gW_i$ and the $\gW_{j_p}$, 
which are the Maxwell sets for the versal unfoldings, exhibit the generic 
local forms given by Mather\rq s Theorem \ref{Thm3.1}.  
\par
Then, if $k$ is large enough, (1) implies that the height function $\nu(x, v) 
: X_{\cJ_0} \times S^n, S^{\prime} \times \{v\} \to \R$ defines an 
$\cR^+$-versal unfolding.  The spherical axis $\cZ$ is the Maxwell set for 
$\nu$, and hence has the same local generic structure.  In particular, it 
consists of $v \in S^n$ such that $\nu(\cdot , v)$ has an absolute maximum 
at $S^{\prime}$.  Thus, $S^{\prime} \subset \cB_{\infty}$, and the 
structure of the set of such $v \in S^n$ exhibits the same generic local 
forms as the Blum medial axes, except with one lower dimension.  \par
Second, to determine the structure of $\cB_{\infty}$ and then 
$M_{\infty}$ we need the analogue of Proposition \ref{Prop6.6}. 
\par
\begin{Proposition}
\label{Prop6.6a}
For $\Phi \in \cP_{i\, \tau}$, the strata $\tilde Z^{(\bga)}$ and 
$Z^{(\bgb_{j_p})}$ project diffeomorphically onto their images in each 
$U_{j_p}^{(i)}$.  Hence, they also  project diffeomorphically onto their 
images in $\tilde U$ and $U_{j_p}$.  Moreover, the subset of strata 
$\gS^{(\bga)}$ with $\ga_{j_p} \geq 3$ or the subset with all $\ga_{j_i} = 
1$ form Whitney stratifications.  
\end{Proposition}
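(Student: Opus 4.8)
The plan is to transcribe the proof of Proposition \ref{Prop6.6}, substituting the height function $\nu(\cdot, v)$ with parameter $v \in S^n$ for the distance function $\gs(\cdot, u^{(i)})$ with parameter $u^{(i)} \in \R^{n+1}$, and leaving the treatment of the distance functions $\gs(\cdot, u^{(j_p)})$ unchanged. The inputs I would draw on are: first, the transversality statements of Proposition \ref{Prop6.3a}, namely that for $\Phi \in \cP_{i\, \tau}$ the jet extension map ${_m}j_1^k \nu(\cdot, v)$ is transverse to $W^{(\bga)}$ and each ${_{\ell_p}}j_1^k \gs(\cdot, u^{(j_p)})$ is transverse to $W^{(\bgb_{j_p})}$; and second, the derivative nondegeneracy lemmas \ref{Lem10.7} and \ref{Lem10.9}, now in their height-function form for $\nu(x, v) = \langle \Phi(x), v \rangle$.

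I would first prove the diffeomorphic-projection assertions. For the strata $Z^{(\bgb_{j_p})}$, which are cut out by the distance functions $\gs(\cdot, u^{(j_p)})$ exactly as in (\ref{Eqn6.10a}), the argument is verbatim that of Proposition \ref{Prop6.6}: apply Lemma \ref{Lem6.6} to ${_{\ell_p}}j_1^k \gs(\cdot, u^{(j_p)})$ and verify its hypothesis by Lemma \ref{Lem10.7}, so that $Z^{(\bgb_{j_p})}$ projects diffeomorphically onto its image in $U_{j_p}^{(i)}$. For $\tilde Z^{(\bga)}$ I would apply Lemma \ref{Lem6.6} to ${_m}j_1^k \nu(\cdot, v) : \tilde U \times V_i \to {_m}J^k(\cirX_i, \R)$ with $V_i \subset S^n$ and $W = W^{(\bga)}$, viewing this map as defined on $U_{j_p}^{(i^{\prime})} \times \big(\prod_{p \neq i^{\prime}} U_{j_p}^{(1)} \times V_i\big)$ with the first factor in the role of $N$; the required condition is precisely the height-function analogue of Lemma \ref{Lem10.7}. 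Equivalently, since Proposition \ref{Prop6.3a} shows $\nu$ is an $\cR^+$-versal unfolding at such points, openness and uniqueness of versal unfoldings make the spherical axis $\cZ$ diffeomorphic to a product along each stratum, whence $\tilde Z^{(\bga)}$ maps diffeomorphically onto its image in each boundary factor. Since projection onto $\tilde U$, resp. onto $U_{j_p}$, followed by the coordinate projection onto a factor $U_{j_p}^{(i)}$ is then a local diffeomorphism onto its image, the same holds for the projections onto $\tilde U$ and $U_{j_p}$, which gives the first two assertions.

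For the Whitney-stratification claims I would again copy the corresponding step of Proposition \ref{Prop6.6}. For the subfamily of strata $\gS^{(\bga)}$ with some $\ga_{j_p} \geq 3$, apply Lemma \ref{Lem6.6} to the strata lying in the Thom--Boardman stratum $\gS_{n, 1}$; by the height-function version of Lemma \ref{Lem10.9}, a generic height function carries $\gS_{n, 1}$ diffeomorphically onto its image in $\cirX_i$, so the images of these strata form a Whitney stratification. For the subfamily with all $\ga_{j_i} = 1$, observe that two points of a common $A_1^k$ stratum associated to the same point of $\cZ$ lie in a common $A_1^2$ stratum (here not restricting to absolute maxima of $\nu$), apply Lemma \ref{Lem6.6} together with the height-function version of Lemma \ref{Lem10.7} to identify this stratum as a diffeomorphic image of a smooth stratum in the multijet space, and transport the Whitney stratification by the higher $A_1^k$ strata through the diffeomorphism. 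Composing with the diffeomorphic projections from the first part carries all of this to $\cirX_i$, and then applying the embedding $\Phi$ carries it to $\cB_i$.

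The step I expect to be the genuine obstacle is the second input, i.e. the height-function analogues of Lemmas \ref{Lem10.7} and \ref{Lem10.9}: the nondegeneracy of the derivative of ${_m}j_1^k \nu(\cdot, v)$ in the directions transverse to the boundary factor one projects onto (the other boundary factors together with $V_i \subset S^n$). The Monge-form computation of \S \ref{S:sec5} must now be carried out with $v$ constrained to the unit sphere — the radial direction in $\R^{n+1}$ only shifts $\nu$ by a constant and so drops out of the jet — and one must check that the resulting derivative still surjects onto the normal space of the relevant $\cR^+$-orbit or of the $\gS_{n, 1}$-stratum; this is exactly the point at which the bound $n + 1 \leq 8$ (one larger than in the distance case, since $\dim S^n = n$) enters. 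Once this linear-algebra estimate is in hand, the remainder is a formal transcription of the proof of Proposition \ref{Prop6.6}.
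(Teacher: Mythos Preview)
Your proposal is correct and follows essentially the same route as the paper: invoke Proposition~\ref{Prop6.6} verbatim for the $Z^{(\bgb_{j_p})}$, and for $\tilde Z^{(\bga)}$ apply Lemma~\ref{Lem6.6} together with the height-function analogue of Lemma~\ref{Lem10.7}, then repeat the Whitney-stratification argument of Proposition~\ref{Prop6.6}. The one point worth noting is that what you flag as ``the genuine obstacle''---the height-function analogues of Lemmas~\ref{Lem10.7} and~\ref{Lem10.9}---is already supplied by the paper: Lemma~\ref{Lem10.8} is precisely the height-function version of Lemma~\ref{Lem10.7}, and Lemma~\ref{Lem10.9} is stated and proved for both $\gs$ and $\nu$ simultaneously, so no additional linear-algebra computation is required beyond citing these.
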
  
\begin{proof}[Proof of Proposition \ref{Prop6.6a}] 
Proposition \ref{Prop6.6} already applies to the $Z^{(\bgb_{j_p})}$.  
For $\tilde Z^{(\bga)}$, we apply Lemma \ref{Lem10.8} and then Lemma 
\ref{Lem6.6} to the jet extension mapping $\, _sj_1^k \nu(\cdot,v)$.  \par
The argument that the subsets of strata form Whitney stratifications 
follow the same arguments used in the proof of Proposition \ref{Prop6.6}. 
\end{proof}
\par  
We let the projection of the stratum $\tilde Z^{(\bga)}$ onto $U_{j_p}$ be 
denoted by $\gS^{(\bga)}_{\infty}$, and that of $Z^{(\bgb_{j_p})}$ by 
$\gS^{(\bgb_{j_p})}$.   Third, we may apply the same proof as for 
Proposition \ref{Prop6.7} to conclude that the two stratifications are 
transverse.  
\begin{Proposition}
\label{Prop6.7a}
For $\Phi \in \cP_{i, \tau}$, the stratifications $\gS^{(\bga)}_{\infty}$ 
and $\gS^{(\bgb_{j_p})}$ for $p = 1, \dots , m$
intersect transversely in $\cirX_0$.  Hence,  the intersections of the 
subset of strata $\gS^{(\bga)}$ with $\ga_{j_p} \geq 3$ or the subset with 
all $\ga_{j_i} = 1$ with one of the subsets 
of strata $\{ \gS^{(\bgb_{j_p})}\}$ with $\gb_{j_p\, 1} \geq 3$  or 
the subset with all $\gb_{j_p\, i} = 1$ form Whitney stratifications.  
\end{Proposition}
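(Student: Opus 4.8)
The plan is to transcribe, line for line, the argument that proved Proposition \ref{Prop6.7}, substituting the height-distance function $\tau$ for the multi-distance function $\rho_i$ and the height jet-extension $\,{_m}j_1^k\nu(\cdot,v)$, $v\in S^n$, for $\,{_m}j_1^k\gs(\cdot,u^{(i)})$. Fix $i$, an assignment $p\mapsto j_p$ with each $j_p\in\cJ_0$, and a partition $\bl=(\ell_1,\dots,\ell_m)$ of $s$; work over the neighborhoods $Y=\prod_{p}U_{j_p}\times V_i\times\prod_p V_{j_p}$ (now $V_i\subset S^n$) with the projections $\pi_0,\pi_1,\dots,\pi_m$ of \S\ref{S:sec6}. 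For $\Phi\in\cP_{i\,\tau}$ and each fixed $p$, Proposition \ref{Prop6.3a} together with part (1) of Lemma \ref{Lem6.10} gives (exactly as the corresponding facts for $\rho_i$ follow from Proposition \ref{Prop6.3}) that $\,{_{\bl}}j_1^k\tau$ is transverse on $Y$ to the mixed submanifold $W^{(\bga,\bgb_{j_p})}$ of the partial multijet space and that
\[
(\,{_{\bl}}j_1^k\tau)^{-1}(W^{(\bga,\bgb_{j_p})})\,=\,\pi_0^{-1}(\tilde Z^{(\bga)})\cap\pi_p^{-1}(Z^{(\bgb_{j_p})}),
\]
a transverse intersection in $X_{\cJ_0}^{(\bl)}\times(\R^{n+1})^{(m)}\times S^n$.

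Next I would peel off the auxiliary factors. As in (\ref{Eqn6.13}), both $\pi_0^{-1}(\tilde Z^{(\bga)})$ and $\pi_p^{-1}(Z^{(\bgb_{j_p})})$ are total spaces of fibrations with common fiber $\cU=\prod_{p^{\prime}\ne p}\big(\prod_{r\ge 2}U^{(r)}_{j_{p^{\prime}}}\times V_{j_{p^{\prime}}}\big)$; projecting along $\cU$ and invoking Lemma \ref{Lem6.13} shows $\tilde Z^{(\bga)}\times\tilde U_{j_p}\times V_{j_p}$ and $Z^{(\bgb_{j_p})}\times(\prod_{p^{\prime}\ne p}U^{(1)}_{j_{p^{\prime}}})\times V_i$ are transverse in $\tilde U\times\tilde U_{j_p}\times V_{j_p}\times V_i$. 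Writing this product as $Y_1\times U^{(1)}_{j_p}\times Y_2$ with $Y_1=(\prod_{p^{\prime}\ne p}U^{(1)}_{j_{p^{\prime}}})\times V_i$ and $Y_2=\tilde U_{j_p}\times V_{j_p}$, Proposition \ref{Prop6.6a} (for $\tilde Z^{(\bga)}$) and Proposition \ref{Prop6.6} (for $Z^{(\bgb_{j_p})}$) say the projections onto $U^{(1)}_{j_p}$ are submersions onto the images $\gS^{(\bga)}_{\infty}$ and $\gS^{(\bgb_{j_p})}$, so Lemma \ref{Lem6.9} yields transversality of these images in $U^{(1)}_{j_p}\subset\cirX_0$. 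Since $p$ is arbitrary and the $\gS^{(\bgb_{j_p})}$ are mutually in general position by (3) of Proposition \ref{Prop6.3a}, the whole collection $\{\gS^{(\bga)}_{\infty},\gS^{(\bgb_{j_1})},\dots,\gS^{(\bgb_{j_m})}\}$ is in general position in $\cirX_0$; applying the diffeomorphism $\Phi$ transports this to $\cB_0$. The Whitney assertions then follow as in Proposition \ref{Prop6.7}: by Proposition \ref{Prop6.6a} the subfamilies of $\{\gS^{(\bga)}_{\infty}\}$ with $\ga_{j_p}\ge 3$ (resp. all $\ga_{j_i}=1$) and the subfamilies of $\{\gS^{(\bgb_{j_p})}\}$ with $\gb_{j_p\,1}\ge 3$ (resp. all $\gb_{j_p\,i}=1$) are already Whitney stratified, and property (3) of the basic properties of Whitney stratified sets in \S\ref{S:sec6} makes their transverse intersections Whitney stratified as well.

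The step I expect to carry the real weight is checking that the hypotheses of Lemma \ref{Lem6.6} hold for the height jet-extension $\,{_m}j_1^k\nu(\cdot,v)$ when one singles out a factor $U^{(i^{\prime})}_{j_p}$ as the ``$N$'', that is, that $d(\,{_m}j_1^k\nu(\cdot,v))(T_x U^{(i^{\prime})}_{j_p})\cap T_yW^{(\bga)}=0$ at the relevant jets — the analogue for $\nu$ of what Lemma \ref{Lem10.7} supplies for $\gs$. This is exactly the content of Lemma \ref{Lem10.8}, and the only subtlety is dimensional: $v$ ranges over $S^n$ rather than $\R^{n+1}$, so the perturbation parameters available to fill out the missing jet directions are one fewer. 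Once Lemma \ref{Lem10.8} is granted, this causes no trouble here, because versality of $\nu$ (from part (1) of Proposition \ref{Prop6.3a}) already forces the Maxwell set of $\nu$ — the spherical axis $\cZ$, hence $\cB_{\infty}$ — to have the expected generic local form of one lower dimension, so the projections of $\tilde Z^{(\bga)}$ are diffeomorphisms onto smooth strata and the pushforward lemmas apply verbatim. Everything else is a mechanical transcription of the proof of Proposition \ref{Prop6.7}.
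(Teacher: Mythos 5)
Your proposal is correct and follows essentially the same route the paper takes: the paper's own proof of Proposition~\ref{Prop6.7a} is a one-sentence deferral to ``an argument analogous to that for Proposition~\ref{Prop6.7}'' together with an appeal to Proposition~\ref{Prop6.6a}, and what you have written out — substituting $\tau$ for $\rho_i$, $\tilde Z^{(\bga)}$ for $Z^{(\bga)}$, Proposition~\ref{Prop6.3a} for Proposition~\ref{Prop6.3}, Lemma~\ref{Lem10.8} for Lemma~\ref{Lem10.7}, and replaying the Lemma~\ref{Lem6.13}/Lemma~\ref{Lem6.9} projection steps — is exactly that analogous argument. Your remark that the real content is Lemma~\ref{Lem10.8} and that the one-lower-dimensional parameter space $S^n$ causes no trouble here is on point and is the only place the transcription is not literally mechanical.
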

\begin{proof}
The proof follows by an argument analogous to that for Proposition 
\ref{Prop6.7}.   The strata $\gS^{(\bga)}_{\infty}$ transversely intersect 
the strata $\gS^{(\bgb_{j_p})}$, which correspond to the 
strata of the Blum medial axis of $\gW_{j_p}$.  That the intersections of 
subsets of the strata form Whitney stratifications follows from 
Proposition \ref{Prop6.6a} and their transverse intersection.  
\end{proof}
\par 
\subsection*{Generic Properties of $\cB_{\infty}$ and $M_{\infty}$} 
\par
We now use the preceding to establish for the residual set $\cP_{\tau}$ 
the properties of $\cB_{\infty}$ and $M_{\infty}$ given by vi) of Theorem 
\ref{Thm3.6}.  We recall that $\cB_{\infty}$ consists of $x \in \cB$ such 
that a height function has an absolute maximum (or minimum for the 
height function for the opposite direction) and is stratified by the 
$\gS_{\infty}^{(\bga)}$, which correspond to the strata 
$\gS_{\cZ}^{(\bga)}$, and the $\gS_{\cB}^{(\bga)}$.  These strata intersect 
transversally and $M_{i\, \infty}$ consists of the strata in $\tilde M_i$ 
corresponding to the intersections of these strata under the 
correspondence of the strata $\gS_{M}^{(\bga)}$ of $M_i$ with the 
$\gS_{\cB}^{(\bga)}$.   Then, the generic structure of $\cB_{\infty}$ and 
$M_{\infty}$ is given by the following proposition.  
\begin{Proposition}
\label{Prop6.8}
For the residual set $\cP_{\tau}$, the global radial flow $\psi : N_{+} | 
\cB_{\infty}$ is a global diffeomorphism with image the complement of 
the set of points lying in the image of the linking flow.  Moreover, the 
interior points of $\cB_{\infty}$ consist of points where a height function 
has a unique nondegenerate maximum; and the boundary of $\cB_{\infty}$ 
consists of points $x \in S$ such that a height function $h: \cB, S \to \R, 
y$ is a multigerm of type $A_{\bga}$, with either $| \bga | > 1$ or $\ga_1 
\geq 3$ and odd.
\end{Proposition}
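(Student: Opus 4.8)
The plan is to assemble Proposition \ref{Prop6.8} from the transversality consequences already established in the section, in the same style used to prove the corresponding statements for the multi-distance functions. The key point is that everything we need about the height function $\nu$ restricted to $\cB_{\infty}$ has essentially been reduced to versality of $\cR^+$-unfoldings and transversality of stratifications; what remains is to translate these local normal-form statements into the global claim about the radial flow and to identify the boundary strata explicitly.

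First I would record the local structure. By Proposition \ref{Prop6.3a}(1) and the argument immediately following it (the same reasoning as in the proof of Mather's Theorem \ref{Thm3.1}), for $\Phi \in \cP_{\tau}$ and $k$ large enough, at each $S^{\prime} \subset \cB$ on which a height function $\nu(\cdot, v)$ has an absolute maximum, the unfolding $\nu : X_{\cJ_0} \times S^n, S^{\prime} \times \{v\} \to \R$ is $\cR^+$-versal of type $\bA_{\bga}$. Since these multigerms realize absolute maxima, the only germs that can occur are $A_k$ with $k$ odd (with negative definite quadratic part), or, when $n+1 = 8$, a generic $\tilde E_7$ point; but $\tilde E_7$ has $\cR^+_e$-codimension $7 \leq n$ only when $n = 7$, i.e. $n + 1 = 8$, which is outside the range $n \leq 6$ of this section, so for $n \leq 6$ only the $A_k$-types appear. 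The interior of $\cB_{\infty}$ is then exactly the locus where $\nu(\cdot, v)$ has a unique nondegenerate ($A_1$) absolute maximum, because there versality forces $\cB$ near the point to be the smooth graph over the transverse hyperplane and $\psi$ is a local diffeomorphism by Proposition \ref{PropII.2.1} (radial curvature condition holds trivially since the evolving shape operator is controlled). The boundary of $\cB_{\infty}$ is the complement, i.e. the union of the strata $\gS^{(\bga)}_{\infty}$ with $|\bga| > 1$ (several maxima at the same height) or with a single degenerate $A_3$ (or higher odd $A_k$) maximum — this is precisely the statement ``$|\bga| > 1$ or $\ga_1 \geq 3$ and odd''.

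Next I would establish the global diffeomorphism claim for $\psi : N_{+} \mid \cB_{\infty}$. On the open set of $\tilde M_{\infty}$ lying over interior points of $\cB_{\infty}$, the radial (height) lines are the normal lines to the supporting hyperplanes, and by Proposition \ref{Prop6.8}'s local input plus Corollary \ref{CorII4.1}(4) (the radial/edge curvature conditions are satisfied on $M_{i\, \infty}$ for all $t > 0$) the flow is stratawise nonsingular and transverse to the radial lines, hence a local diffeomorphism; injectivity follows because two distinct rays from $\cB_{\infty}$ meeting at a common point would give a second point of tangency of the corresponding supporting hyperplane, contradicting the uniqueness of the absolute maximum at interior points — and at boundary points the multigerm structure is exactly accounted for by the $\gS^{(\bga)}_{\infty}$ strata. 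Finally, the image being the complement of the image of the linking flow is a consequence of property L4) in Definition \ref{Defmultlkgstr} together with the fact that a point of $\R^{n+1}$ lies on a radial line from $\cB_{\infty}$ iff its nearest point(s) on $\cB$ realize an absolute maximum of the relevant height function, which is the dichotomy defining $M_{\infty}$ versus the linking region. I would then invoke Propositions \ref{Prop6.6a} and \ref{Prop6.7a} to note that the $\gS^{(\bga)}_{\infty}$ meet the Blum strata $\gS^{(\bgb_{j_p})}_{\cB_{j}}$ transversally, giving the stratification of $M_{i\,\infty}$ as the image of those transverse intersections, which is the remaining content of vi) of Theorem \ref{Thm3.6}.

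The main obstacle I anticipate is the global injectivity of $\psi$ on $N_{+} \mid \cB_{\infty}$ and the precise identification of the image as the exact complement of the linking flow's image: locally everything is handled by versality, but making the global statement rigorous requires a careful argument that the ``unlinked'' locus (points whose nearest boundary point realizes an absolute height maximum) is exactly swept out once by these radial rays, with no overlaps among rays from different sheets of $\tilde M_{\infty}$ and no gaps relative to the linking region. This is where one must combine the Whitney stratification structure on $M_0$ (linking property L3)), the transversality of $\partial \tilde\gW$ to $M_0$ in the bounded case, and a covering/degree-type argument in the spirit of the non-local-intersection hypothesis in Theorem \ref{ThmII.1}(4); the rest is bookkeeping with the already-proven transversality statements.
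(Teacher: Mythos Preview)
Your proposal has two genuine circularities and a flawed injectivity argument that together leave the core of the proposition unproved.

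First, you invoke Corollary \ref{CorII4.1}(4) for the nonsingularity of the radial flow on $M_{i\,\infty}$, but in the paper that corollary is proved \emph{using} Proposition \ref{Prop6.8}; likewise you invoke property L4) from Definition \ref{Defmultlkgstr} to identify the image of $\psi$ as the complement of the linking flow, but L4) is precisely the condition on a skeletal linking structure that you are trying to \emph{verify} holds for the generic Blum structure. Neither can be assumed here. The paper instead proves nonsingularity directly: at a point $x_0 \in \cB_{\infty}$ the height function has an absolute maximum, so in a Monge patch the Hessian of $f$ has nonpositive eigenvalues, i.e.\ all principal curvatures are $\leq 0$, and the radial curvature condition of Proposition \ref{PropII.2.1} is vacuously satisfied for all $t > 0$. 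Your phrase ``the evolving shape operator is controlled'' is not a substitute for this observation.

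Second, your injectivity argument is incorrect. If rays from distinct $x, x' \in \cB_{\infty}$ meet at $u$, they are along the \emph{different} outward normals $\bn(x)$ and $\bn(x')$; there is no single supporting hyperplane with two tangencies, so your contradiction does not arise. The paper gives an elementary geometric argument: assuming $\|u - x\| \geq \|u - x'\|$ and setting $x = 0$, one projects $x'$ orthogonally onto the line $L$ spanned by $\bn(x)$ and uses the triangle inequality to force $x' \cdot \bn(x) > 0$, contradicting the absolute maximum of the height function at $x$.

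Finally, the identification of the image as the exact complement of the linking flow is the content of a separate Lemma (\ref{Lem6.9b}), which the paper proves by a family-of-spheres argument: for $x_0 \notin \cB_{\infty}$ one grows spheres tangent to $\cB$ at $x_0$ until either a focal radius is reached (giving an $A_3$ edge point) or a second tangency appears, in either case producing linking; and similarly for points $y \in \gW_0 \setminus \psi(N_+ | \cB_{\infty})$. You have not supplied any such mechanism, and the ``dichotomy defining $M_{\infty}$ versus the linking region'' you appeal to is exactly what has to be established.
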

\par 
\begin{proof}
First, we prove that $\cB_{\infty}$ lies in the smooth strata of $\cB$, 
which we view as a piecewise smooth manifold.  At any singular point $x$ 
of the boundary of a region $\gW_i$, there is another region $\gW_j$ for 
which  $x$ is also a singular point of its boundary.  Then, $T_x X_{i\, j}$ 
will contain points of $\gW_i$ or $\gW_j$ on each open half-space 
determined by it.  Hence the height function cannot have an absolute 
minimum at $x$.  Thus, $x \notin \cB_{\infty}$.  
\par
Second, by the genericity properties of height functions, if a height 
function has an absolute minimum at a point $x \in \cirX_0$, but it is not 
a nondegenerate minimum, then it is of type $A_k$, $k \geq 3$ and odd.  By 
the normal form for the versal unfolding of such germs, there are 
codimension one strata in the Maxwell set for $A_k$ of type $A_1^2$.  As 
we cross such a stratum tranversely the absolute minimum moves from 
the neighborhood of one $A_1$ point to the other.  Hence, the 
corresponding curve crossing the one $A_1$ stratum in a neighborhood 
will move from $\cB_{\infty}$ to points not in $\cB_{\infty}$.  Thus, 
these strata are in the boundary of $\cB$, and hence so is the $A_k$ point.  
If instead, we have $S \subset \cB$ so a height function $h: \cB, S \to \R, 
y$ is a multigerm of type $A_{\bga}$, with $| \bga | > 1$, then for $x_1 
\in S$ with $\ga_1 >0$, we can again by versality find strata of type 
$A_1^2$ which contain the tuple $(x_1, \dots , x_r)$.  By an analogous 
argument as above, $x_1$ is contained in the boundary of $\cB_{\infty}$.  
\par 
Third, we consider a point $x_0 \in \cB$ where a height function has a 
unique nondegenerate absolute maximum.  We claim that $x_0$ is an 
interior point of $\cB_{\infty}$.  Let $\bn$ be the outward pointing unit 
normal vector field to $\cB$ in a neighborhood of $x$, then the height 
function is given by $h_0(x) = x\cdot \bn(x_0)$.  We claim that there is a 
neighborhood $W$ of $x_0$ in $\cB$ so that for all $x^{\prime} \in W$, the 
height function has a unique nondegenerate absolute maximum at 
$x^{\prime}$.  \par 
To see this, we note that by the $C^{\infty}$-stability of Morse 
singularities, there is a neighborhood $W$ of $x_0$ with compact closure 
such that $h_0 | Cl(W)$ is $C^{\infty}$ stable with only a single singular 
point at $x_0$.  Thus, for $x^{\prime}$ in a sufficiently small 
neighborhood $W^{\prime} \subset W$, for the corresponding height 
function $h_{x^{\prime}}(x) = x\cdot \bn(x^{\prime})$, $h_{x^{\prime}} | 
Cl(W)$ is $C^{\infty}$-equivalent to $h_0$, and thus has a nondegenerate 
maximum at $x^{\prime}$.  We further claim that for $x^{\prime}$ in a 
smaller neighborhood $W^{\prime\prime} \subset W^{\prime}$, 
$h_{x^{\prime}}$ has an absolute maximum at $x^{\prime}$.  If not, then 
for a decreasing sequence of neighborhoods $W_i \subset W^{\prime}$ 
whose intersection is $x_0$, there are $x_i \in W_i$ and $y_i \in \cB$ so 
that $y_i\cdot \bn(x_i) \geq x_i\cdot \bn(x_i)$.  This implies that $y_i 
\notin Cl(W)$.  By compactness we may take a subsequence and assume 
$\lim y_i = y_0 \in \cB \backslash W$.  Then taking limits in the 
inequalities, we obtain $\lim x_i = x_0$ and $\lim \bn(x_i) = \bn(x_0)$ so 
$y_0\cdot \bn(x_0) \geq x\cdot \bn(x_0)$ with $y_0 \neq x_0$, which 
contradicts our assumption about $x_0$.  Thus, $x_0$ is an interior point 
of $\cB_{\infty}$; and by our description of the boundary, the interior of 
$\cB_{\infty}$ consists of such points.  It then follows that 
$\cB_{\infty}$ has a boundary in $\cirX_0$ which has a local form given 
by the $\gS^{(\bga)}_{\infty}$ for the $\cR^+$-versal unfolding of 
multigerms of the height function.  \par
Fourth, we claim that the global radial flow $\psi: N_{+} | \cB_{\infty}$ is 
a diffeomorphism onto its image.  There are two steps: showing that the 
global flow is everywhere nonsingular, and showing that the flow is 
$1-1$.  \par
For nonsingularity, we choose a Monge patch $W$ about $x_0 \in \cB_{i, 
\infty}$, which corresponds to $0$, so that the height function is the 
$x_{n+1}$ coordinate which has an absolute maximum at $x$ and $x_{n+1} 
= f(x_1, \dots x_n)$ locally defines $\cB$ on $W$.  Hence, the Hessian of 
$f$ has nonpositive eigenvalues, which are the principal curvatures of 
$\cB$ at $x_0$.  The global radial flow will be nonsingular out to $\cB_i$, 
and from there, given $t_0 > 1$ we can view it as a \lq\lq new radial 
flow\rq\rq\, defined on $W$ with \lq\lq radial vector field\rq\rq\, $t_0 r 
\bu_i$. If this flow is nonsingular for $t \leq 1$, then the original global 
radial flow is nonsingular for $t \leq t_0+ 1$.  Hence, if this holds for 
arbitrary $t_0$, then it is nonsingular for all $t$.  Lastly, for this vector 
field, the principal radial curvatures are the usual principal curvatures.  
Thus, as all of the principal curvatures are nonpositive, the radial 
curvature condition is vacuously satisfied (see Proposition 
\ref{PropII.2.1}).  \par 
To see that the flow is also $1-1$ on $\cB_{\infty}$, we suppose not. 
Let $x, x^{\prime} \in \cB_{\infty}$ be distinct points such that the 
positive normal half-lines which point out from $\cB$ at these points 
intersect at some point $u$.  Suppose $\| u - x\| \geq \| u - x^{\prime}\|$.  
We choose 
coordinates so $x = 0$ and let $\bv$ be the outward pointing unit vector to 
$\cB$ at $x$ with $L$ the line spanned by $\bv$.  Then, $x^{\prime} \notin 
L$ or our above assumption implies $\bv \cdot x^{\prime} > 0$, a 
contradiction for $x$.  Hence, if $x^{\prime\prime}$ is the orthogonal 
projection of $x^{\prime}$ onto $L$, then by the triangle inequality 
$\|u - x^{\prime\prime}\| < \|u - x^{\prime}\| \leq \| u\|$.  Thus, 
$x^{\prime\prime} = c\bv$ for some $c > 0$, and $x^{\prime\prime}\cdot 
\bv = c > 0$, a contradiction.  Thus, the half-lines do not intersect and the 
global flow on $\cB_{\infty}$ is $1-1$, and hence a global diffeomorphism 
onto its image. \par
There is one final part of property iv) to show, which will follow from the 
next lemma.
\begin{Lemma}
\label{Lem6.9b}
If $x \in \cB \backslash \cB_{\infty}$, then there is a point in $\cB$ 
(which may be $x$ itself through self-linking) to which $x$ is linked. 
Also, any point not in the image of the global radial flow on $ N_{+} | 
\cB_{\infty}$ is in the image of the linking flow. 
\end{Lemma}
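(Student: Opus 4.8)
The plan is to prove Lemma \ref{Lem6.9b} by exploiting the characterization of $\cB_{\infty}$ via the spherical axis together with the behavior of the family of distance-squared functions $\gs$ on $\cB$. The first observation is that the two assertions are essentially dual: the first says every point of $\cB \setminus \cB_{\infty}$ participates in some linking (self-linking included), and the second says the complement of $\psi(N_{+}\vert\cB_{\infty})$ is covered by the linking flow. I would prove them together by analyzing, for a fixed $x \in \cB \setminus \cB_{\infty}$ with outward unit normal $\bn(x)$, the normal half-line $L_x^{+} = \{x + t\bn(x) : t \ge 0\}$ and asking where it first meets the external linking axis $M_0$ or another point's normal.

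First I would recall that by the genericity properties established in Proposition \ref{Prop6.8} and the preceding constructions, $x \in \cB_{\infty}$ exactly when the height function $h_x(y) = y\cdot\bn(x)$ attains its absolute maximum on $\cB$ at $x$ (equivalently, $\bn(x)$ lies in the region of $S^n$ complementary to the spherical axis $\cZ$ where the supporting hyperplane touches only at $x$). So $x \notin \cB_{\infty}$ means $h_x$ does \emph{not} have its absolute maximum at $x$: either there is another point $x' \in \cB$ with $x'\cdot\bn(x) > x\cdot\bn(x)$, or $x'\cdot\bn(x) = x\cdot\bn(x)$ with $x'$ also a critical point (this latter case puts $\bn(x)$ on $\cZ$ and gives $x' \in \cB_{\infty}$, but that's the degenerate boundary case; in the generic interior of $\cB\setminus\cB_\infty$ there is a strictly higher point). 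The plan is then: move along $L_x^{+}$ and track the sphere centered at $x+t\bn(x)$ of radius $t$; as $t$ increases from $0$, this sphere is internally tangent to $\cB$ at $x$, and for small $t$ it contains $\cB$ locally on the correct side (or not, depending on curvature, but $x\notin\barbdyM$-type considerations handle the edge case). Because $h_x$ has a point strictly above $x$, as $t \to \infty$ the sphere fails to contain $\cB$; hence there is a first time $t_0 > 0$ where the distance-squared function $\gs(\cdot, x+t_0\bn(x))$ restricted to $\cB$ attains its minimum value $t_0^2$ at $x$ \emph{and} at some other point $x'$ (or degenerately at $x$ alone with a higher-order contact, which is the self-linking $A_3$-type case). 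At that first contact time, $x + t_0\bn(x) \in M_0$ by the definition of the linking axis, and $x$ is linked to $x'$ via this point of $M_0$ — this is precisely the linking relation $\gl_i(x,1) = \gl_j(x',1)$ when we set $\ell = t_0$.

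For the second assertion, I would argue contrapositively: let $q \in \R^{n+1}$ be a point not in the image of the linking flow $\gl$ and not in $\psi(N_{+}\vert\cB_{\infty})$; I want a contradiction. Since the configuration together with the radial, linking, and $M_\infty$-radial flows is designed to fill $\R^{n+1}$ (the radial flows cover the $\gW_i$, the linking flows cover the linking neighborhoods, and $\psi(N_+\vert\cB_\infty)$ is supposed to cover the rest — this is property L4) combined with Corollary \ref{CorII4.1} and Proposition \ref{Prop6.8}), the key point to nail down is that the \emph{external} region $\gW_0 \setminus M_0$ is partitioned into the open linking-flow images and the open $M_\infty$-radial-flow image. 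Concretely: $q$ lies on a unique normal segment of $\cB$ — namely, the nearest boundary point $x_q \in \cB$ realizes $\gs(\cdot, q)\vert\cB$'s minimum, and the segment from $x_q$ to $q$ is normal to $\cB$ at $x_q$. If $x_q \in \cB_{\infty}$ and the segment has not yet reached $M_0$ (which it can't, since $q$ is not in the linking-flow image), then by Proposition \ref{Prop6.8}'s nonsingularity-and-injectivity of $\psi$ on $N_+\vert\cB_\infty$, and by the fact that $\gs(\cdot,q)$ has its minimum uniquely at $x_q$, we get $q = \psi(\text{some }(x_q, c\bn(x_q)))$, contradiction. If $x_q \notin \cB_{\infty}$, then by the first part $x_q$ is linked, and the linking segment through $x_q$ passes through a point of $M_0$ at parameter $\ell(x_q)$; the whole closed segment from $x_q$ out to $M_0$ lies in $\cN_{i\to j}$ (or $\cN_{i\to i}$), and since $q$ is on the normal ray at $x_q$ with $\|q - x_q\|$ not exceeding the distance to $M_0$ (else $\gs(\cdot,q)$ wouldn't be minimized at $x_q$ — a point past $M_0$ is closer to the linked partner's boundary), we conclude $q$ is in the linking-flow image, contradiction.

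The hard part will be the edge and self-linking cases, and more precisely the claim "$q$ on the normal ray at its nearest point $x_q$, and $\|q-x_q\|$ at most the distance along that ray to $M_0$". Establishing that the nearest-point projection is well-defined and that its normal segment does not cross $M_0$ before reaching $q$ requires the nonsingularity of the combined flows, i.e. the linking curvature and linking edge conditions of Proposition \ref{PropII.3.1b} plus the injectivity statements in Corollary \ref{CorII4.1}; these are available but must be invoked carefully at points $x_q \in \barbdyM_i$ and at shared boundary strata $\cB_{i\,j}$ where $\ell_i = r_i$ and the linking segment degenerates to a point (Remark \ref{Rem2.2}). I would handle those strata separately, noting that there $x_q$ lies on a shared boundary and is trivially "linked" to the adjacent region through the shared boundary point itself, so the first assertion holds by inspection, and for the second assertion such $q$ would have to be interior to one of the $\gW_j$, already covered by the radial flow. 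Once these degenerate strata are dispatched, the generic-strata argument above is a routine application of the nearest-point analysis and the already-established fibration properties.
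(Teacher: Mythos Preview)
Your proposal is correct and follows essentially the same route as the paper: you use the height-function characterization of $\cB_\infty$, grow a family of spheres tangent to $\cB$ at $x$ along the outward normal ray until a first extra tangency (or $A_3$ degeneracy) occurs, and for the second claim you project $q$ to its nearest boundary point and argue that this foot cannot lie in $\cB_\infty$. The paper's write-up is slightly more concrete---it introduces the explicit family $\{S_a\}$ and the focal bound $r_{\min}$ at the outset, and handles the second statement directly rather than contrapositively---but the substance is identical, and your extra care about shared-boundary strata and edge points is not needed for the paper's more bare-hands argument.
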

\par 
This Lemma establishes property vi) of Theorem \ref{Thm3.6} for the 
residual set $\cP_{\tau}$, completing the proof.
\end{proof}
\par 
It remains to prove the Lemma.  For this proof, given a hyperplane $H$ in 
$\R^{n+1}$ with a distinguished point $x_0 \in H$ and a normal vector 
$\bn$ to $H$, we will use the notion of a {\it family of spheres} $\{ S_a : a 
> 0\}$ of radii $a$, lying on one side of $H$, and tangent to $H$ at $x_0$. 
We can by a change of coordinates assume $x_0 = 0$, $H$ is the coordinate 
hyperplane defined by $x_{n+1} = 0$ and $\bn = \be_{n+1}$, the unit vector 
in the positive $x_{n+1}$-direction.  Then, in this model situation, $S_a$ 
is defined by $\sum_{i = 1}^{n+1} x_i^2 - 2 a x_{n+1} = 0$.  We can easily 
check that if $a \neq a^{\prime} > 0$ then $S_a \cap S_{a^{\prime}} = \{ 
0\}$, and given any $x = (x_1, \dots , x_{n+1}) \in \R^{n+1}$ with $x_{n+1} 
> 0$, there is a unique $S_a$ containing $x$ (and we can write $a$ as a 
smooth function of $x$ on the open half-space).  We make use of such 
families of spheres to prove the Lemma. 
\begin{proof}[Proof of Lemma \ref{Lem6.9b}]
For the first statement, let $x_0 \in \cB \backslash \cB_{\infty}$ and 
$\bn$ denote the unit outward pointing normal vector to $\cB$ at $x_0$.  
Let $\gk_i$ denote the  principal curvatures of $\cB$ at $x_0$, and let
$r_{min} = \min\{ \frac{1}{\gk_i} : \gk_i > 0\}$ or $\infty$ if all $\gk_i 
\leq 0$.  \par
Since  $x_0 \notin \cB_{\infty}$, the height function in the direction 
$\bn$ does not have an absolute maximum at $x_0$; thus, there are points 
of $\cB$ in the half-space defined by $T_{x_0}\cB$ with $\bn$ pointing 
into the half-space.  We consider the family of $S_a$ for this data.  
If $0 < r_0 < r_{min}$, then there is a neighborhood $U$ of $x_0$ such that  
for $0 < a \leq r_0$,  $S_a \cap U = \{x_0\}$.  Then, if $S_a \cap (\cB 
\backslash U) = \emptyset$ for $0 < a \leq r_0$, then  $S_a \cap \cB = 
\{x_0\}$  for $0 < a \leq r_0$.  If this holds for all $0 < r_0 < r_{min}$, 
then $x_0$ is an $A_3$-point of the distance function to the focal point 
$y = x_0 + r_{min}\bn$, which is an edge point of the medial axis.  
\par 
Otherwise, there is a smallest $a$ such that $S_a \cap \cB \backslash 
\{x_0\} \neq \emptyset$.  Hence, $S_a$ is tangent to $\cB$ for any such 
intersection point.  Thus, $x_0$ is linked to such an intersection point.  In 
either case, $x_0$ is linked to a point in $\cB$.  \par
For the second statement, let $y \in \gW_0 \backslash \psi(N_{+} | 
\cB_{\infty})$.  Then, by compactness, there is a point $x_0 \in \cB$ of 
minimum distance to $y$.  If there are more than one such point, then $y 
\in M_0$ and is in the image of the linking flow.  If $x_0$ is the unique 
minimum point, then we consider the family of spheres for $x_0$, 
$T_{x_0}\cB$, and $\bn$ the outward pointing unit normal vector to $\cB$ 
at $x_0$.  If $a = \| y - x_0\|$, then by assumption $S_a \cap \cB = \{ 
x_0\}$, thus $\| y - x_0\| \leq r_{min}$.  \par 
As $y \notin \psi(N_{+} | \cB_{\infty})$, the height function in the 
direction of $\bn$ cannot have an absolute maximum at $x_0$, so there 
are points of $\cB$ in the open half-space defined by $T_{x_0}\cB$ and 
$\bn$.  Hence, by the above argument, there is a smallest $a$ with 
$\| y - x_0\|  \leq  a \leq r_{min}$ satisfying one of the following two 
possibilities.  If $a = r_{min}$ is the minimum positive radius of 
curvature of $\cB$ at $x_0$, then $y^{\prime} = x_0 + r_{min}\bn$ is a 
focal point for $r_{min}$ and thus $y = x_0 + \| y - x_0\|\bn$ lies in the 
image of the linking flow. Otherwise, $a < r_{min}$ and there is another 
$x^{\prime} \in S_a \cap \cB$.  Then, $x_0$ and $x^{\prime}$ are linked via 
$x_0 + a\bn$, and again  $y = x_0 + \| y - x_0\|\bn$ lies in the image of the 
linking flow.  In either case we obtain the second statement of the Lemma.
\end{proof}

\subsection*{Proof of Theorem \ref{Thm3.5} for a Residual Set of 
Embeddings: } \hfill
\par
By the preceding results we have now established properties i), ii) , iv), 
and vi) for Theorem \ref{Thm3.6}.  The remaining properties iii) and v) 
concerning the edge corner points will be established in the next section.  
However, we are now able to conclude the proof of Theorem \ref{Thm3.5} 
for the residual set of mappings in $\cP = \cP_{\rho\, \gs} \cap 
\cP_{\tau}$.  We summarize  the consequences we have obtained by 
applying Theorem \ref{Thm5.1} for $n \leq 6$ to the elements of the 
$\cS(i, \bl)$.   In terms of the configuration $\bgW$ consisting of disjoint 
regions with smooth boundaries, these results yield the following.  
\begin{enumerate}
\item Transversality to the submanifolds representing orbits of simple 
multigerms implies that every region $\Omega_i \subset \R^{n+1}$ for 
$i=1,\dots , r$  has a Blum medial axis exhibiting only the generic local 
normal forms given in Theorem \ref{Thm3.1}.  This also holds for the 
linking medial axis in the complement $\gW_0$.
\item By Propositions \ref{Prop6.6} and \ref{Prop6.7} there are 
stratifications of the smooth regions of the region boundaries $\cB_i$ by 
the multigerm types $\gS^{(\bga)}$ and $\gS^{(\bgb_j)}$, which intersect 
transversally; moreover they are Whitney stratified sets for two of the 
three types described in \S \ref{S:sec3}.  Hence, we obtain the generic 
linking structure (including self-linking) on the smooth points of the 
boundaries $\cB_i$.  
\item  In the case of disjoint regions with smooth boundaries, these give 
properties (1) and (2) of Theorem \ref{Thm3.5}. 
\item  By Propositions \ref{Prop6.6a} and \ref{Prop6.7a}, $\cB_{\infty}$ 
has a stratification formed from height function 
multi--germs of the types described in \S \ref{S:sec3}, which intersects 
transversally the strata $\gS^{(\bgb_j)}$ of the multigerm types for the 
distance functions, and forms a stratified set in $\cB_{j}$.  
\item  Moreover, by Proposition \ref{Prop6.8}, we have also established 
the properties of $\cB_{\infty}$ and $M_{\infty}$.  This gives property (3) 
of Theorem \ref{Thm3.5}.  
\end{enumerate}
\par
\begin{Remark}
\label{Rem6.10}
In the bounded case where the configuration lies in a region $\tilde \gW$ 
which is a manifold with boundaries and corners, we require that the 
radial lines from $M$ which meet $\partial \tilde \gW$ do so 
transversely, by which we mean that at singular points of $\partial \tilde 
\gW$, the radial line is transverse to all limiting tangent planes at that 
point.  By scaling $\tilde \gW$ we obtain a parametrized family, which by 
the parametrized transversality theorem will be transverse to $M_0$ for 
almost all parametrized values.  Alternatively this is equivalent to 
scaling the configuration.  Hence, sufficiently small perturbations will 
make it transverse.  For a convex region $\tilde \gW$, all lines from the 
interior will be transverse to the boundary, so we have the second 
condition for a bounded region.  
\end{Remark}
Thus, in the case of a configuration consisting of disjoint regions with 
smooth boundaries, these yield the properties for a generic Blum linking 
structure for a residual set of embeddings of the configuration.  This 
proves Theorem \ref{Thm3.5} for a residual set of embeddings.  It remains 
to show that it holds for an open set of embeddings.  This will be shown in 
the next section. 
\par
We conclude this section by using the transversality of the stratifications 
to prove Corollary \ref{Cor3.5}.  
\begin{proof}[Proof of Corollary \ref{Cor3.5}] \par
 Let $\Phi \in \cP_{\rho}$ be a generic configuration.  By our earlier 
results, we know that $\dim \gS_{M_i}^{(\ga : \bgb)} = \dim 
\gS_{\cB_i}^{(\ga : \bgb)}$.  Thus it is enough to verify the result for 
$\gS_{M_i}^{(\ga : \bgb)}$.  We use the same notation as earlier $(\bga : 
\bgb_1 , \dots , \bgb_m)$.  Again by the preceding results,  
$\gS_{M_i}^{(\ga : \bgb)}$ is the diffeomorphic image of ${_{\bl}}j_1^k 
\rho_{i}^{-1}(W^{(\ga : \bgb)})$ under projection.  Hence, they have the 
same dimension.  \par
Second, Theorem \ref{Thm5.1} implies that the codimension of $W^{(\bga : 
\bgb)}$ in ${_{\bl}}E^{(k)}(X_{\cJ_i},\R^2)$, which we denote by 
$\codim_{{_{\bl}}E^{(k)}}(W^{(\bga:\bgb)})$, equals the codimension of 
${_{\bl}}j_1^k \rho_{i}^{-1}(W^{(\ga : \bgb)})$ in the space $X_{\cJ_i} 
\times (\R^{n+1})^{(m+1)}$.  Thus, combining the first two statements, 
we conclude
\begin{align}
\label{Eqn6.15}
\dim \gS_{M_i}^{(\ga : \bgb)} \,\, &= \,\, \dim {_{\bl}}j_1^k \rho_{i}^{-
1}(W^{(\ga : \bgb)})  \notag  \\
 &= \,\, \left(\sum_{p=1}^{m} n \ell_{p} + (n+1)(m + 1)  \right) 
\, -  \, \codim_{{_{\bl}}E^{(k)}}(W^{(\bga:\bgb)})\, .
\end{align}
Third, by the local form of ${_{\bl}}E^{(k)}(X_{\cJ_i},\R^2)$ and replacing 
$\R^2$ by $\R$ in (\ref{Eqn4.4b}) we have
\begin{equation}
\label{Eqn6.16}
\codim_{{_{\bl}}E^{(k)}}(W^{(\bga : \bgb)}) \,\, = \,\, 
\codim_{{_{m}}J^k(X_{\cJ_i},\R)}(W^{(\bga)}) \, + \, \sum_{p=1}^{m} 
\codim_{{_{\ell_p}}J^k(X_{j_p},\R)}(W^{(\gb_{j_p})})\, .
\end{equation}
Now, by the proof of Mather\rq s Theorem \ref{Thm3.2}, for a generic 
configuration and a multigerm $\bA_{\eta}$ which is an $m$--tuple,
 \begin{equation}
 \label{Eqn6.17}
 \codim_{{_m}J^k(X,\R)}(W^{\eta}) \,\, = \,\, \cR_e^+\text{-}
\codim(\bA_{\eta}) + nm.
\end{equation}
Now substituting (\ref{Eqn6.17}) into (\ref{Eqn6.16}) for each $\eta = 
\bga$ or $\bgb_{j_p}$ and then substituting  (\ref{Eqn6.16}) into 
(\ref{Eqn6.15}) and simplifying, we obtain 
\begin{align}
\label{Eqn6.18}
 \codim \gS_{M_i}^{(\ga : \bgb)} \, &= \, (n+1) - \dim \gS_{M_i}^{(\ga : 
\bgb)}  \notag \\
 \, &= \, \cR_e^+\text{-}\codim(\bA_{\bga}) +\sum_{p=1}^{m} 
\cR_e^+\text{-}\codim(\bA_{\bgb_{j_p}}) - m .
\end{align}
\end{proof}

\section{Concluding Generic Properties of Blum Linking Structures}
\label{S:sec7}
In this section we complete the proofs of Theorems \ref{Thm3.2}, 
\ref{Thm3.5} and \ref{Thm3.6}.  We first establish Theorem \ref{Thm3.2} 
which for a model configuration provides a normal edge-corner form for 
the closure of the Blum medial structure in the neighborhood of a singular 
point of a boundary.  Second, we deduce for a residual set of embeddings of 
a model configuration that the generic linking properties, structures of 
the stratifications, and generic properties of $\cB_{\infty}$ and 
$M_{\infty}$ hold for general multi-region configurations.  Then, we 
establish the openness of the genericity properties.  We do this in two 
steps.  First, we prove it for the easier case of configurations of disjoint 
regions with smooth boundaries.  Then, we explain how to modify the proof 
for general multi-region configurations.  This then completes the proofs 
of these theorems.  In proving openness, we establish an equivalence 
between the $\cR^{+}$-versality of multigerms for the distance or height 
function and the infinitesimal stability of associated mappings, and then 
adapt Mather\rq s theorem \lq\lq infinitesimal stability implies 
stability\rq\rq to obtain the openness (and note that it implies the 
structural stability of the medial axis and linking structures in the 
generic case).  \par
\subsection*{Blum Medial Structure Near Corner Points} 
\par
In place of the entire configuration, we consider a single region $\gW$ 
whose boundary $\cB$ has corners.  We recall from \S \ref{S:sec1} that 
for a $k$-edge-corner point $x \in \cB$, a local model consists of a 
diffeomorphism $\varphi : U \to \R^{n+1}$ from a neighborhood $U$ of $0$, 
with $\varphi(0) = x$, such that the restriction maps an open subset 
$U^{\prime}$ of $C_k = \R^k_{+} \times \R^{n+1-k}$ diffeomorphically to a 
neighborhood of $x$ in $\gW$.  Given such a local model, we have the 
subspaces $H_j$ defined where the coordinate $x_j = 0$ for $j \leq k$.  We 
then obtain hypersurfaces $S_j = \varphi (H_j \cap U)$.  For $S_j$ we let 
$\bn_j$ be the unit normal vector field on $S_j$ pointing into the region, 
and then we define the corresponding eikonal flow on $S_j$ by 
$\psi_j(x^{\prime}, t) = x^{\prime} + t \bn_j$.  Then there is an $\gevar_j > 
0$ such that $\psi_j | (S_j \times [- \gevar_j, \gevar_j])$ is a 
diffeomorphism onto a neighborhood of $x$.  We let $S_{j\, t} = \psi_j | 
(S_j \times \{ t\})$ denote the level hypersurface of the flow at time $t$. 
\par  
We begin with the following lemma.

\begin{Lemma}
\label{Lem7.1}
Let $x \in \cB$ be a $k$-edge-corner point for a region $\gW \subset 
\R^{n+1}$ with boundary $\cB$.  Then, there is a local model $\varphi : U 
\to \R^{n+1}$ for $\gW$ in a neighborhood of $x$ and an $\gevar > 0$ such 
that each eikonal flow is a diffeomorphism containing a common 
neighborhood $W$ of $x$.  Moreover, the level hypersurfaces $\{ S_{j\, 
t_j}\}$ with $j \leq k$ and all $0 \leq t_j \leq \gevar$ are in general 
position on $W$.  
\end{Lemma}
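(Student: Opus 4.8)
\textbf{Proof proposal for Lemma \ref{Lem7.1}.}
The plan is to choose a sufficiently well-adapted local model and then shrink the neighborhood enough that both the diffeomorphism property of each eikonal flow and the general-position property of the level hypersurfaces become automatic. First I would fix any local model $\varphi : U \to \R^{n+1}$ for $\gW$ at the $k$-edge-corner point $x$, with $\varphi(0) = x$, and consider the $k$ boundary hypersurfaces $S_j = \varphi(H_j \cap U)$ through $x$. Since $\varphi$ is a diffeomorphism onto its image, the tangent spaces $T_x S_1, \dots , T_x S_k$ are the images of the coordinate hyperplanes $H_1, \dots , H_k$ and hence are in general position at $x$: their pairwise, triple, \dots, $k$-fold intersections all have the expected codimensions. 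The inward unit normals $\bn_j$ are smooth on $S_j$, so each eikonal flow $\psi_j(x', t) = x' + t\bn_j$ has derivative at $(x,0)$ equal to the identity on $T_x S_j \oplus \R\bn_j = \R^{n+1}$; by the inverse function theorem there is $\gevar_j > 0$ and a neighborhood $W_j$ of $x$ so that $\psi_j | (S_j^{0} \times [-\gevar_j, \gevar_j])$ is a diffeomorphism onto a neighborhood of $x$ containing $W_j$, where $S_j^0 \subset S_j$ is a small neighborhood of $0$ in $S_j$. Taking $\gevar = \min_j \gevar_j$ and $W^{(1)} = \cap_j W_j$ gives the first assertion.

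For the general-position statement, I would argue by a compactness/continuity argument on the normal bundles. For each fixed tuple $(t_1, \dots , t_k)$ with $0 \le t_j \le \gevar$, the level hypersurface $S_{j\, t_j} = \psi_j(S_j^0 \times \{t_j\})$ is a smooth hypersurface passing near $x$; its tangent space at a point $y$ depends smoothly (jointly continuously) on $y$ and on $t_j$. At $t_1 = \dots = t_k = 0$ and at the base point $x$, the tangent spaces $T_x S_{1\,0}, \dots , T_x S_{k\,0}$ are exactly $T_x S_1, \dots , T_x S_k$, which are in general position. General position of a finite collection of linear subspaces is an open condition on the collection of subspaces (equivalently, certain determinantal minors are nonzero), and since the relevant Grassmannian data vary continuously in $(y, t_1, \dots , t_k)$, there is a neighborhood $W \subset W^{(1)}$ of $x$ and an $\gevar' \le \gevar$ such that for all $y \in W$, all $0 \le t_j \le \gevar'$, and all $y$ lying in the intersection of the relevant level hypersurfaces, the tangent spaces $T_y S_{j\, t_j}$ remain in general position. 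Relabel $\gevar'$ as $\gevar$. Because $\psi_j$ is a diffeomorphism, each $S_{j\, t_j}$ is an embedded hypersurface in $W$, so transversality of tangent spaces is exactly general position of the submanifolds, and the empty-intersection case is vacuously general position; this completes the proof.

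The main obstacle I anticipate is not conceptual but bookkeeping: one must be careful that the single neighborhood $W$ and the single $\gevar$ work \emph{uniformly} across all $k$ flows \emph{and} all admissible time-tuples $(t_1,\dots,t_k)$ simultaneously, rather than producing a neighborhood that depends on the tuple. This is handled by treating $(y, t_1, \dots , t_k) \mapsto (T_y S_{1\, t_1}, \dots , T_y S_{k\, t_k})$ as a single continuous map into a product of Grassmannians (defined on the closed set $W^{(1)} \times [0,\gevar]^k$ restricted to the locus where $y$ lies on all the relevant $S_{j\,t_j}$), using that the open general-position condition holds on the compact slice $\{x\} \times \{0\}$, and then invoking a tube-lemma style argument to get a uniform neighborhood. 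One should also note that the genericity of $\cB$ plays no role here — this lemma is purely about the local model of a manifold with corners — so no appeal to earlier transversality theorems is needed; only the inverse function theorem and openness of general position are used.
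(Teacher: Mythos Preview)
Your proof is correct and follows essentially the same strategy as the paper's: obtain each eikonal flow as a local diffeomorphism via the inverse function theorem, then use continuity and openness of the general-position condition to get a uniform neighborhood. The paper streamlines your second step by extending each unit normal $\bn_j$ along its eikonal flow to a smooth vector field $\zeta_j$ on the common neighborhood $W$---so that $\zeta_j(y)$ is automatically the normal to the unique level hypersurface $S_{j,\,t_j(y)}$ through $y$---and then simply checks linear independence of $\zeta_1(y),\dots,\zeta_k(y)$ near $x$, which eliminates your redundant $(y,t_1,\dots,t_k)$ parametrization and the tube-lemma step.
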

\begin{proof}
\par
We illustrate the situation in Figure~\ref{fig.7.1}. 
\par
\begin{figure}[ht] 
\includegraphics[width=8cm]{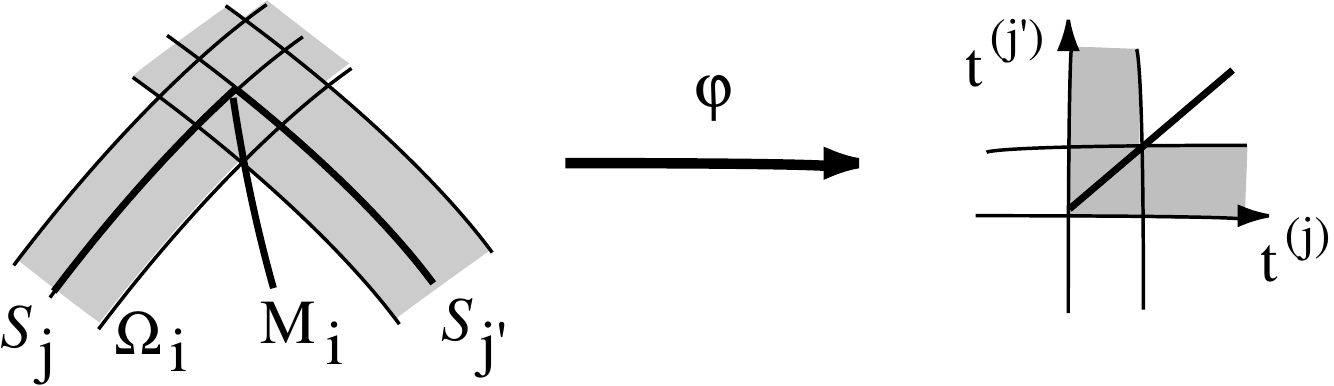}
\caption{\label{fig.7.1} The eikonal flows from the hypersurfaces meeting 
at a corner and the resulting mapping $\varphi$ giving an edge-corner 
normal form.} 
\end{figure} 
\par
By the above remarks, we can find for each $0 < j \leq k$, an $\gevar_j > 
0$ so the eikonal flow $\psi_j | (S_j \times [- \gevar_j, \gevar_j])$ is a 
diffeomorphism onto a neighborhood of $x$.  We let $\gevar = \min \{ 
\gevar_j\}$.  We also let $W = \cap \, \psi_j(S_j \times (- \gevar, 
\gevar))$, which is an open neighborhood of $x$.  Then, for each $j = 1, 
\dots , k$ we can define on $W$ a smooth vector field $\zeta_j$ by 
extending $\bn_j$ along the lines of the eikonal flow in $W$.  \par 
By possibly shrinking $W$ the vector fields $\{ \zeta_j\}$ are linearly 
independent on $W$.  We know that since $\zeta_j(x) = \bn_j$, 
$\{\zeta_1(x^{(1)}), \dots , \zeta_k(x^{(k)})\}$ are linearly independent 
when $x^{(1)} = \dots = x^{(k)} = x$, hence by continuity of the $k$-tuple 
$(\zeta_1, \dots , \zeta_k)$ on the $k$-fold product $W^k = W \times 
\cdots \times W$, there is a neighborhood of $(x, x, \dots , x)$ in $W^k$ on 
which they are linearly independent.  This neighborhood contains 
$W^{\prime \, k}$ for a neighborhood $W^{\prime}$ of $x$.  Then, the $\{ 
\zeta_j\}$ are the normal vectors to the level hypersurfaces $S_{j\, t}$ in 
$W$ and hence on $W^{\prime}$.  Thus, for any $x^{\prime} \in W^{\prime}$ 
with $x^{\prime} \in S_{j\, t^{(j)}}$, for each $j$, we conclude they are in 
general position at $x^{\prime}$.  We let $W^{\prime}$ be our desired 
neighborhood $W$.
\end{proof}
\par
\begin{proof}[Proof of Theorem \ref{Thm3.2}]
We first apply Lemma \ref{Lem7.1}, where we may furthermore choose 
compact neighborhoods $x \in W^{\prime \prime} \subset W^{\prime} 
\subset W$ such that $W^{\prime \prime}$ has diameter $ < a = \frac{1}{2} 
dist(W^{\prime}, \, \gW \backslash W)$, which is positive as $W^{\prime}$ 
is compact.  \par
Then, we consider $x^{\prime} \in W^{\prime \prime}$.  Then, 
$dist(x^{\prime}, S_j) \leq \gevar \leq a$, while $dist(x^{\prime}, \cB 
\backslash W) \geq 2a$.  Thus, a point $x^{\prime} \in W^{\prime \prime}$ 
of the Blum medial axis of type $A_1^{\ell}$ must have radial distance $ 
\leq \gevar$ and has the $A_1$ points in $\cB \cap W^{\prime \prime}$ 
which are points in $S_j$ which map to $x^{\prime}$ under the eikonal 
flows $\psi_j$.  \par
Thus, the Blum medial axis in $W^{\prime \prime}$ is formed from the 
transverse intersections of the eikonal flows.  The inverse of the $j$-th 
eikonal flow on $W^{\prime \prime}$ is smooth and given by the mapping 
$x^{\prime} (= \psi_j(x^{(j)}, t^{(j)})) \mapsto  (x^{(j)}, t^{(j)})$.  Thus, 
$t^{(j)}$, $j = 1, \dots, k$ are smooth functions on $W^{\prime \prime}$.  
These define a smooth mapping $\mu :  W^{\prime \prime} \to \R^k$ by 
$\mu(x^{\prime}) = (t^{(1)}, \dots , t^{(k)})$.  
Then, for each subset $J = \{ j_1 < j_2 \dots < j_{\ell}\}$ with 
$j_{\ell} \leq k$, we let 
$$  R^{\ell}_J \, = \, \{ (t_1, \dots , t_k) \in \R^k : t_{j_r} = \min_{j} \{ 
t_j\}, r = 1, \dots , \ell \} .$$  
By the general position of the $S_{j\, t}$ and $(1)$ of Lemma 
\ref{Lem6.10}, $\mu$ is transverse to all of the $R^{\ell}_J$.  Then, the 
Blum medial axis in $W^{\prime \prime}$ consisting of points of type 
$A_1^{\ell}$ is the disjoint union of the $\mu^{-1}(R^{\ell}_J)$ for subsets 
with $card(J) = \ell$.  \par
Furthermore, $Z_0 = \cap_{j = 1}^{k} S_j$ is a submanifold of dimension 
$n+1-k$, and $T_{x} Z_0 = \cap_{j = 1}^{k} T_x S_j$.  This is the subspace 
orthogonal to the $\bn_j$.  We let $\pi : \R^{n+1} \to T_{x} Z_0$ denote 
orthogonal projection.  Then, $\pi$  projects $Z_0$ submersively onto 
$T_{x} Z_0$ in a neighborhood of $x$, which by shrinking we may assume 
is $W^{\prime \prime}$.  Thus, as $T_{x} Z_0 = \ker(d\mu(x))$, we 
conclude that the smooth map $\tilde \mu  = (\mu, \pi) : W^{\prime 
\prime} \to T_{x} Z_0 \times \R^k \simeq \R^{n+1}$ is a local 
diffeomorphism in a neighborhood of $x$.  It sends $W^{\prime \prime} 
\cap \, \gW$ to a neighborhood of $0$ in $T_{x} Z_0 \times \R_{+}^k 
\simeq \R^{n+1-k} \times \R_{+}^k$, which is the model for a corner 
$C_k$.  Hence, $\tilde \mu^{-1}$ provides the local $k$--corner model as 
in Definition \ref{Def3.3} with $E_k$ mapping to the Blum medial axis in a 
neighborhood of $x$. 
\end{proof}
\par
\subsubsection*{Proof of Theorem \ref{Thm3.6} for a Residual Set of 
Embeddings} 
\hfill 
\par 
We can now complete the proof of Theorem \ref{Thm3.6} for a residual set 
of embeddings.  In the previous section, we have already established the 
properties i), ii) ,iv) and vi) for a residual set of embeddings $\Phi \in 
\cP$.  We have just completed the proof of Theorem \ref{Thm3.2}, which 
establishes the edge-corner normal form for the Blum medial axis at 
corner points, yielding  property iii).  Moreover, Theorem \ref{Thm3.2} 
also shows that there is no linking occurring at corner points, yielding the 
remainder of condition v).  Thus, Theorem \ref{Thm3.6} follows for a 
residual set of embeddings. \par
It remains to establish for both Theorem \ref{Thm3.5} and Theorem 
\ref{Thm3.6} the openness of genericity.  
\subsection*{Openness of the Genericity Conditions} 
\par
We first prove the openness of the genericity conditions for the case of a 
multi-region configuration $\bgW$ consisting of disjoint compact regions 
$\gW_i$ with smooth boundaries and without $\tilde E_7$ points on the 
medial axes (e.g. there will generically be no $\tilde E_7$ points for $n + 
1 \leq 6$).  This configuration is modeled by 
$\Phi : \bgD \to \R^{n+1}$ and contained in $\intr(\tilde \gW)$ for a 
compact region $\tilde \gW$.  As earlier $\gW_i = \Phi(\gD_i)$, 
$\cB_i = \Phi(X_i)$.  We let $\gW = \coprod_i \gW_i$, 
$X = \coprod_i X_i$, and $\cB = \coprod_i \cB_i$.  
We suppose it satisfies the genericity conditions 
with Blum medial axes $M_i$ for $\gW_i$ and $M_0$ for $\gW_0 \cap 
\tilde \gW$.  As $\cB$ is a compact (but not connected) smooth 
hypersurface, we let $\bn$ denote the inward pointing unit normal vector 
field on $\cB$.  Then, there is an $\gevar > 0$ such that the eikonal flow 
$x \mapsto \Phi(x) + t \bn(x)$ defines a diffeomorphism $\Psi: X \times [-
\gevar , \gevar ] \to  \R^{n+1}$ onto a tubular neighborhood $T_{\gevar}$ 
of $\cB$.  For $0 < s < \gevar$ we denote the image $T_s = \Psi(X \times 
[-s, s])$.  We may choose $\gevar$ sufficiently small so that $T_{\gevar} 
\subset V \subset \intr(\tilde \gW)$, for an open subset $V$ containing 
$\gW$.

We will use the following Lemma.  
\begin{Lemma}
\label{Lem7.2}
In the above situation, there is an $\gevar^{\prime}$ with $0 < 2 
\gevar^{\prime} < \gevar$ and an open neighborhood $\cU$ of $\Phi$ in 
$\emb(\bgD, \R^{n+1})$ such that if $\Phi^{\prime} \in \cU$, then: \par
\begin{enumerate}
\item $\cB^{\prime} = \Phi^{\prime}(X) \subset 
\intr(T_{\gevar^{\prime}})$;  
\item the eikonal flow for $\cB^{\prime}$, $\Psi^{\prime}: X \times [-
\gevar , \gevar ] \to  \R^{n+1}$ is a diffeomorphism onto its image, which 
is contained in $V \subset \intr(\tilde \gW)$; and 
\item $T_{2 \gevar^{\prime}} \subset \Psi^{\prime}(X \times [-\gevar , 
\gevar ])$. 
\end{enumerate}
\end{Lemma}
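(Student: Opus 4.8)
The plan is to prove Lemma \ref{Lem7.2} by combining the existence of a fixed compact tubular neighborhood $T_{\gevar}$ for $\cB = \Phi(X)$ with a continuity argument for the eikonal flow in the $C^{\infty}$-topology on embeddings. Since all of the $\gD_i$ are compact, $X = \coprod_i X_i$ is compact, so the eikonal flow $\Psi : X \times [-\gevar,\gevar] \to \R^{n+1}$, $(x,t) \mapsto \Phi(x) + t\,\bn(x)$, is a diffeomorphism onto the compact tubular neighborhood $T_{\gevar}$, with $T_{\gevar} \subset V \subset \intr(\tilde\gW)$. The key observation is that the data defining $\Psi'$ for a nearby embedding $\Phi'$ — namely $\Phi'$ itself and its unit inward normal field $\bn'$ — depend continuously (indeed smoothly) on $\Phi'$ in the $C^{\infty}$-topology, because $\bn'$ is built from the first derivatives of $\Phi'$ (Gram--Schmidt applied to a local frame of $d\Phi'$, together with the orientation picking out the inward side, which is locally constant as $\Phi'$ varies). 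Hence $\Psi' : X \times [-\gevar,\gevar] \to \R^{n+1}$ depends continuously on $\Phi' \in \emb(\bgD,\R^{n+1})$ in the appropriate topology.

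First I would fix the scales. Choose $\gevar' > 0$ with $0 < 2\gevar' < \gevar$ small enough that $T_{2\gevar'} \subset \intr(T_{\gevar})$; this is automatic for any such $\gevar'$ since $T_s$ is increasing in $s$ and $T_{\gevar}$ is open in the relevant sense. Then for (1): since $\Phi(X) = \cB \subset \intr(T_{\gevar'})$ (indeed $\cB$ is the zero-section, so it lies well inside $T_{\gevar'}$), and $\Phi \mapsto \Phi(X)$ is continuous with $X$ compact, there is a $C^0$-neighborhood $\cU_1$ of $\Phi$ such that $\Phi'(X) \subset \intr(T_{\gevar'})$ for all $\Phi' \in \cU_1$. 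For (2): the property ``$\Psi'$ is a diffeomorphism onto its image'' is an open condition on $\Psi'$ in the $C^{1}$-topology when the source $X \times [-\gevar,\gevar]$ is compact — an embedding that is a $C^1$-small perturbation of an embedding is again an embedding (this is the standard openness of embeddings of compact manifolds, e.g. Hirsch). Since $\Psi'$ depends continuously on $\Phi'$, there is a neighborhood $\cU_2$ of $\Phi$ with $\Psi'$ a diffeomorphism onto its image for all $\Phi' \in \cU_2$; shrinking further (using $\overline{T_{\gevar}} \subset V$ and continuity of $\Psi'$) we may also ensure $\Psi'(X \times [-\gevar,\gevar]) \subset V \subset \intr(\tilde\gW)$.

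For (3), which is the only part with real content, I would argue as follows. We want $T_{2\gevar'} \subset \Psi'(X \times [-\gevar,\gevar])$, i.e. every point at $\Psi$-distance $\le 2\gevar'$ from $\cB$ is hit by the eikonal flow of $\cB'$ within time $\gevar$. Consider the compact set $K = \overline{T_{2\gevar'}} \subset \intr(T_{\gevar})$. For the original flow, $\Psi^{-1}(K)$ is a compact subset of $X \times (-\gevar,\gevar)$, hence contained in $X \times [-\gevar'',\gevar'']$ for some $\gevar'' < \gevar$. Now $\Psi'$ converges to $\Psi$ uniformly on $X \times [-\gevar,\gevar]$ as $\Phi' \to \Phi$, and $\Psi'$ is a diffeomorphism onto its (open) image; so $(\Psi')^{-1} \circ \Psi$ is defined and close to the identity on $\Psi^{-1}(\intr T_{\gevar})$. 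In particular, for $\Phi'$ in a small enough neighborhood $\cU_3$, we have $\Psi^{-1}(K) \subset (\Psi')^{-1}(\Psi'(X \times [-\gevar,\gevar]))$ with room to spare, which gives $K \subset \Psi'(X \times [-\gevar,\gevar])$ and hence (3). The main obstacle here is handling the two different parametrizations cleanly — the flow $\Psi$ is defined using $\bn$ and $\Psi'$ using $\bn'$, so one must track that $\Psi' \to \Psi$ not just pointwise on $X$ but uniformly in the time variable as well; this follows because $\bn' \to \bn$ uniformly on the compact $X$ (being a continuous function of the $1$-jet of $\Phi'$, which converges uniformly), so $\Psi'(x,t) = \Phi'(x) + t\,\bn'(x) \to \Phi(x) + t\,\bn(x) = \Psi(x,t)$ uniformly on $X \times [-\gevar,\gevar]$. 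Finally, setting $\cU = \cU_1 \cap \cU_2 \cap \cU_3$ (and shrinking once more if needed so that $\cU \subset \emb(\bgD,\R^{n+1})$, which is open in $C^{\infty}(\bgD,\R^{n+1})$) yields the neighborhood asserted in the Lemma.
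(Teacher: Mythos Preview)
Your treatment of (1) and (2) matches the paper's proof almost exactly: both of you use that $\Phi' \mapsto \bn_{\Phi'}$ (the Gauss map) is continuous, hence $\Phi' \mapsto \Psi'$ is continuous into $C^{\infty}(X \times [-\gevar,\gevar],\R^{n+1})$, and then invoke openness of embeddings of a compact source (the paper cites \cite[Prop.~5.8]{GG}, you cite Hirsch).

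Your argument for (3), however, is circular as written. You assert that $(\Psi')^{-1}\circ \Psi$ is defined on $\Psi^{-1}(\intr T_{\gevar})$; but for this composition to make sense on a set $A$ one needs $\Psi(A) \subset \mathrm{image}(\Psi')$, and with $A = \Psi^{-1}(\intr T_{\gevar})$ this is the assertion $\intr T_{\gevar} \subset \mathrm{image}(\Psi')$, which is strictly stronger than the conclusion (3) you are trying to prove. The next sentence does not rescue this: $(\Psi')^{-1}(\Psi'(X\times[-\gevar,\gevar])) = X\times[-\gevar,\gevar]$ holds trivially since $\Psi'$ is injective, so the displayed containment is vacuous and does not yield $K \subset \Psi'(X\times[-\gevar,\gevar])$.

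The paper closes this gap with a degree argument, and that is the missing idea. One first shrinks $\cU$ so that $\Psi'$ maps each component of $X \times \{\pm\gevar\}$ into the same complementary component of $\overline{T_{2\gevar'}}$ as $\Psi$ does; this is just $C^0$-closeness together with the fact that $\Psi(X\times\{\pm\gevar\}) = \partial T_{\gevar}$ already lies outside $\overline{T_{2\gevar'}}$. Then $\Psi'(X\times[-\gevar,\gevar])$ is a compact codimension-$0$ submanifold of $\R^{n+1}$ whose boundary $\Psi'(X\times\{\pm\gevar\})$ is disjoint from $T_{2\gevar'}$, while $\cB' = \Psi'(X\times\{0\}) \subset T_{\gevar'} \subset T_{2\gevar'}$ lies in its interior by (1). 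A degree (or connectedness) argument now forces each component of $T_{2\gevar'}$ to lie entirely in $\mathrm{image}(\Psi')$, giving (3). Your uniform-convergence observation is exactly what is needed to place the boundary $\Psi'(X\times\{\pm\gevar\})$ outside $T_{2\gevar'}$; you just need to replace the circular $(\Psi')^{-1}\circ\Psi$ step with this topological conclusion.
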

\par
We illustrate the Lemma in Figure~\ref{fig.7.2}.
\par
\begin{figure}[ht] 
\includegraphics[width=10cm]{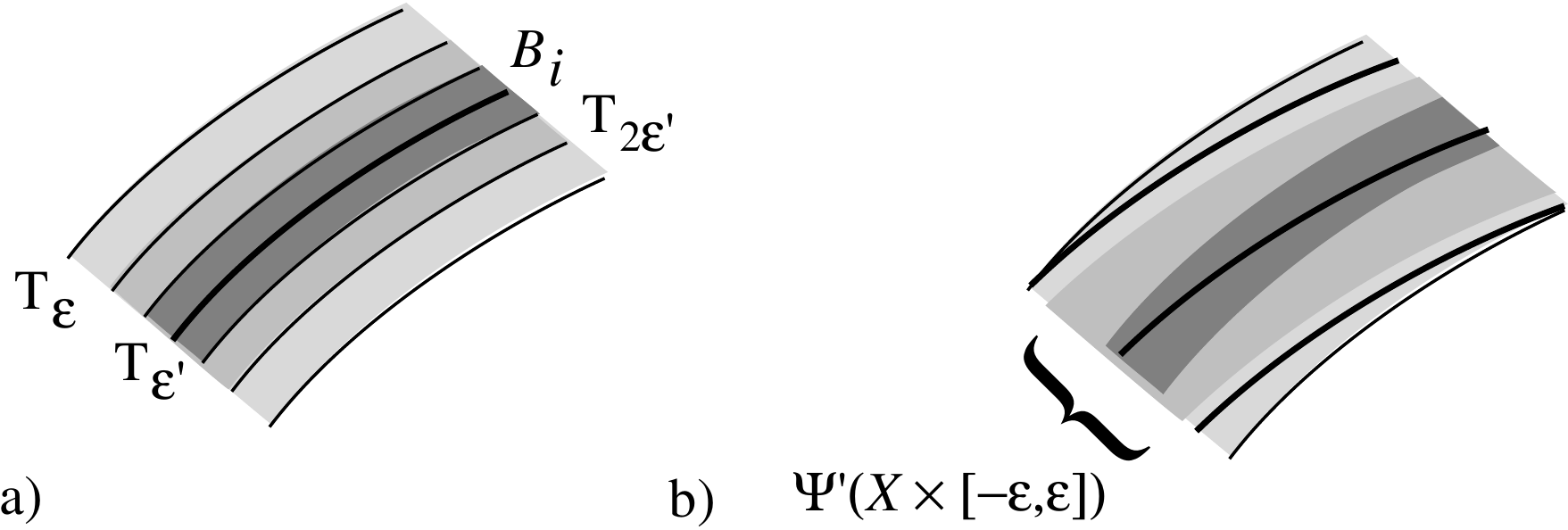}
\caption{\label{fig.7.2} The relation between the tubular neighborhoods of 
$\cB_i$ in a) and their images under the perturbation $\Psi$ in b).} 
\end{figure}
\par
After proving Lemma \ref{Lem7.2}, we will deduce the consequences for 
the openness of the genericity conditions for the case of a multi-region 
configuration consisting of disjoint regions with smooth boundaries.  
\begin{proof}[Proof of Lemma \ref{Lem7.2}]  
\par  
There is a continuous map $\emb(X, \R^{n+1}) \to C^{\infty}(X, S^{n})$ 
sending $\Phi$ to its Gauss map $\bn_{\Phi}$, which is given by the 
inward pointing unit normal vector field.  We may combine the two 
mappings to define a mapping $\Psi^{\prime}(x) = \Phi^{\prime}(x) + t 
\bn_{\Phi^{\prime}}$ on $X \times [-\gevar , \gevar ] \to \R^{n+1}$.  This 
defines a continuous mapping in the Whitney topology 
\begin{align*}
  \tilde G : \emb(\gD, \R^{n+1}) \, &\rightarrow \, C^{\infty}(X \times [-
\gevar , \gevar ], \R^{n+1})  \\
\Phi^{\prime} \, &\mapsto \, \Psi^{\prime} .
\end{align*}

\par
First, as $\intr(T_{\gevar^{\prime}})$ is an open subset of $\R^{n+1}$, the 
set of smooth mappings with images in $\intr(T_{\gevar^{\prime}})$ form 
an open subset $\cU_1$ of $C^{\infty}(X, \R^{n+1})$.  Since the restriction 
map $\emb(\bgD, \R^{n+1}) \to C^{\infty}(X, \R^{n+1})$ is continuous, 
$\cU_1$ is pulled back to an open subset $\cU_1^{\prime} \subset 
\emb(\bgD, \R^{n+1})$.  The configuration embeddings in $\cU_1^{\prime}$ 
then satisfy condition (1).  
\par
Next, for smooth manifolds $N$ and $P$ with $N$ compact, the set of 
$C^{\infty}$ $1-1$ immersions forms an open subset of $C^{\infty}(N, P)$, 
see e.g. \cite[Prop. 5.8]{GG}.  Thus, there is an open subset $\cU$ of 
$C^{\infty}$ $1-1$ immersions in $C^{\infty}(X \times [-\gevar , \gevar ], 
\R^{n+1})$.  As $X \times [-\gevar , \gevar ]$ is compact, such an 
immersion is a diffeomorphism onto its image.  Hence, property (2) holds 
for $\Phi$ in the inverse image of this open set under $\tilde G$. \par
Third, there is an open subset of $C^{\infty}(X \times [-\gevar , \gevar ], 
\R^{n+1})$ containing $\Psi$ which will map $X \times [-\gevar , \gevar ]$ 
into $V$ and each component of $X \times \{-\gevar , \gevar \}$ into 
$(\gW \backslash \intr(T_{2\gevar^{\prime}})) \cap \intr(\tilde \gW)$.  
For a diffeomorphism $\Psi^{\prime}$ contained in a sufficiently small 
open set $\cU^{\prime \prime}$ containing $\Psi$, it maps each $X_i 
\times \{\pm \gevar\}$ to the same complementary components as does 
$\Psi$.  Thus, by a degree argument the image of $\Psi^{\prime}$ must 
contain $T_{2\gevar^{\prime}}$ giving (3).
\end{proof}
We now apply Lemma \ref{Lem7.2} to prove openness.  We note that 
openness cannot be deduced from multi-transversality conditions on all of 
$\cB^{(\ell)}$ (by Theorem \ref{Thm5.1} we only obtain openness on a 
compact subset of $\cB^{(\ell)}$).  Instead we show that we can pass to 
the infinitesimal stability of smooth maps associated to the 
multi-distance and height-distance functions, and apply Mather\rq s 
Theorem  \lq\lq Infinitesimal Stability Implies Stability\rq\rq.  
\par  
\subsection*{Relation with Infinitesimally Stable Mappings } 
We first choose the $\gevar^{\prime}$ satisfying the Lemma and yielding 
an open subset $\cU \subset \emb(\bgD, \R^{n+1})$.  Then, for $i > 0$ we 
let $C_i^{\prime} = \gW_i \backslash \intr(T_{2\gevar^{\prime}})$ and 
$C_i = \gW_i \backslash \intr(T_{\frac{3}{2}\gevar^{\prime}})$.   Next, 
given $\tilde \gW$ we choose compact submanifolds $\tilde \gW_1$ and 
$\tilde \gW^{\prime}$ satisfying
$$\tilde \gW \subset \intr(\tilde \gW_1)  \subset \tilde \gW_1  \subset 
\tilde \intr(\tilde\gW^{\prime}) \subset \tilde\gW^{\prime}\, . $$ 
Then, we replace $\gW_0$ by $\intr(\tilde\gW^{\prime}) \cap \gW_0$ and 
still denote it by $\gW_0$.  Next, we let $C_0^{\prime} = \tilde \gW 
\backslash \intr(T_{2\gevar^{\prime}}))$ and $C_0 = \tilde \gW_1 
\backslash \intr(T_{\frac{3}{2}\gevar^{\prime}}))$.  
Third, for $i > 0$ we let $U_i = \gW_i \backslash 
(T_{\gevar^{\prime}})$ and $U_0 = \intr(\tilde \gW^{\prime})$. 
By Lemma \ref{Lem7.2}, for $\Phi^{\prime} \in \cU$, the corresponding 
Blum medial axes satisfy $M_i^{\prime} \subset C_i$ for $i > 0$ and 
$M_0^{\prime} \cap \tilde \gW  \subset C_0$.  \par 
We then have for all $i \geq 0$, 
$$ C_i^{\prime} \,\, \subset \,\, \intr(C_i)  \,\, \subset \,\, C_i  \,\, 
\subset \,\, U_i  \,\, \subset \,\, \intr(\gW_i)   $$
with $ C_i$ and  $C_i^{\prime}$ compact and $U_i$ open.  
\par 
We will consider various functions related to the distance and height 
functions associated with the embedding $\Phi$.  For $i \geq 0$ we begin 
with
\begin{align}
\label{Eqn7.6a}
  \bar \gs_i: X_i \times \intr(\gW_i)   &\to   \R \times \intr(\gW_i)   \\
(x, u^{(i)}) \quad &\mapsto \quad (\gs(x, u^{(i)}), u^{(i)})\,  , \notag
\end{align}
and 
\begin{align}
\label{Eqn7.7a}
  \bar \nu_i : X_i \times S^{n}  &\to   \R \times S^{n}   \\
(x, v) \quad &\mapsto \quad (\nu(x, v), v)\, .  \notag 
\end{align}
Then, via the following general lemma, we relate the local 
$\cR^{+}$-versality of $\gs_i$ and the local $\cA$-stability of $\bar 
\gs_i$, $i = 0, \dots m$; and similarly for $\nu_i$ and $\bar \nu_i$.
\begin{Lemma}
\label{Lem7.6}
Suppose $\bar F(x, u) : \R^{n+1} \times U, S \times \{u^{(0)}\} \to \R, 0$ is 
an unfolding of $f(x) : \R^{n+1}, S \to \R, 0$, where each germ of $f$ at 
each $x^{(i)} \in S$ is weighted homogeneous.  Then, $\bar F$ is the 
$\cR^{+}$-versal unfolding of $f$ if and only if 
$$F(x, u) = ( \bar{F}(x; u), u): \R^{n+1} \times U, S \times \{u^{(0)}\} \to \R 
\times U, (0, u^{(0)})$$
 is infinitesimally $\cA$-stable.  
\end{Lemma}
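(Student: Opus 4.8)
\textbf{Proof proposal for Lemma \ref{Lem7.6}.}

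The plan is to reduce the stated equivalence to the well-known dictionary between versality and infinitesimal stability, applied in the specific context of an unfolding by graph map. First I would recall the standard fact (in the form stated, e.g., in \cite{D6} or the references Mather gives): for an unfolding $\bar F(x,u)$ of a germ $f(x)$ with $f$ of finite $\cR^+$-codimension, $\bar F$ is $\cR^+$-versal if and only if the associated map germ $F(x,u)=(\bar F(x,u),u)$ is infinitesimally $\cK$-stable, and for a graph-type map of this shape $\cK$-stability and $\cA$-stability coincide. The point of the weighted-homogeneity hypothesis on each branch germ of $f$ at each $x^{(i)}\in S$ is precisely that it forces $f$ itself (not merely a higher jet) to lie in its own Jacobian ideal up to the constants, so that the versality computation can be carried out on the nose with the finite-dimensional vector space spanned by the monomial representatives, and the $\cR^+$ (rather than $\cR$) action — allowing an additive constant $c(u)$ in the target — matches exactly the extra $\partial/\partial y$ direction that distinguishes $\cA$- from $\cR$-type infinitesimal conditions. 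So the first step is to set up the infinitesimal stability criterion for $F$: $F$ is infinitesimally $\cA$-stable at $S\times\{u^{(0)}\}$ iff
\begin{equation*}
\theta(F) \,=\, tF(\theta_{\R^{n+1}\times U}) \,+\, \omega F(\theta_{\R\times U}) \,+\, (\text{multigerm gluing terms}),
\end{equation*}
and then translate each term through the product structure $F=(\bar F, \mathrm{id}_U)$.

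Second, I would perform the translation in coordinates. Writing a general element of $\theta(F)$ as $(\xi(x,u),\eta(u))$ with $\xi$ scalar-valued and $\eta$ the $U$-component, the $\omega F$ term contributes all $(a(\bar F,u), b(u))$ and the $\mathrm{id}_U$ factor pins $\eta=b$, so the $U$-coordinate is automatically surjective and the condition collapses to a scalar condition on $\xi$: every germ $\xi(x,u^{(0)})$ at each $x^{(i)}$ must be expressible as $\sum \zeta_j\, \partial \bar F/\partial x_j + a(\bar F) + \sum_\ell c_\ell\, \partial \bar F/\partial u_\ell|_{u^{(0)}}$. Restricting to $u=u^{(0)}$ and using $\bar F(x,u^{(0)})=f(x)$, the term $a(\bar F)$ becomes $a(f)$, which by weighted homogeneity (hence $f$-finiteness and the Malgrange preparation argument) reduces modulo the Jacobian ideal of $f$ to a constant; thus the condition on $\xi$ becomes exactly: the images of $\partial \bar F/\partial u_\ell$ in the $\cR^+_e$-normal space $\mathfrak m_S/(J(f)+\R\cdot 1)$ span it. That is the definition of $\cR^+$-versality of $\bar F$. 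Running the implications in both directions gives the equivalence.

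Third, I would treat the multigerm bookkeeping carefully: $S=\{x^{(1)},\dots,x^{(k)}\}$ is a finite set, so $\theta(F)$ is a direct sum over the branches, the gluing/fiber-product terms in the $\cA$-stability criterion correspond to allowing the constant $a$ and the parameters $c_\ell$ to be chosen independently on each branch subject to the single diagonal constraint coming from the common target value $0$ — which is precisely the $\cR^+$ multigerm setup (the diagonal $\R$-action on $\R^k$) used throughout \S\ref{S:sec5}. The same argument verbatim, with $S^n$ replacing $\intr(\gW_i)$ as the parameter space and $\nu$ replacing $\gs$, handles $\bar\nu_i$; nothing in the computation used anything about the distance function beyond the finite $\cR^+$-codimension and weighted homogeneity of the branch germs, which hold generically by the transversality results already established (Theorems \ref{Thm5.0}, \ref{Thm5.1}, and Mather's classification \ref{Thm3.1}).

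The main obstacle I anticipate is justifying the step ``$a(f)\equiv \mathrm{const}$ modulo $J(f)$'' cleanly at a \emph{multigerm} rather than a mono-germ, and in a form robust enough to be quoted later for the distance and height functions at the finitely many exceptional $\tilde E_7$ points when $n+1=7$, where the germ is only topologically (not smoothly) versal. For the weighted-homogeneous simple germs ($A_k$, and the $D$, $E$ series) this is elementary since they are quasi-homogeneous and finitely determined, so $f\in J(f)$ up to scale and the preparation theorem applies branch-by-branch; I would simply cite the quasi-homogeneity and invoke the standard finite-determinacy machinery, and then note explicitly that the $\tilde E_7$ case is excluded from the hypothesis of this lemma (it has a modulus $a$, so is not weighted homogeneous in the required sense) and is instead handled separately via Looijenga's theorem and topological stability as flagged in Remark \ref{Rem3.1}. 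Keeping that boundary of applicability clearly marked is the delicate part; the rest is the routine unwinding of definitions sketched above.
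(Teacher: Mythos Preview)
Your approach is essentially the same as the paper's: write out the infinitesimal $\cA$-stability condition for $F=(\bar F,\mathrm{id}_U)$, project off the $U$-component, reduce the $\cC_y$-action using weighted homogeneity, and recognise the resulting condition as $\cR^+$-infinitesimal versality.

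However, you have misplaced the nontrivial step. The reduction ``$a(f)\equiv\text{const}\bmod J(f)$'' is \emph{not} the obstacle, even at a multigerm: weighted homogeneity gives the Euler relation $f_j\in(\partial f_j/\partial x_i)$ at each branch $x^{(j)}$, so $f_j^*(\itm_y)\subset J(f_j)$ and hence $a(f_j)\in\R+J(f_j)$ immediately. No preparation theorem is needed here.

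The step that genuinely requires care is the one you pass over with ``restricting to $u=u^{(0)}$'': the equivalence between the $(x,u)$-level condition
\[
\sum_j \cC_{x^{(j)},u}\Big\{\frac{\partial\bar F_j}{\partial x_i^{(j)}}\Big\}+\cC_{y,u}\Big\{\frac{\partial}{\partial y},\frac{\partial\bar F}{\partial u_\ell}\Big\}=\bigoplus_j\cC_{x^{(j)},u}\Big\{\frac{\partial}{\partial y}\Big\}
\]
and its restriction to $u=u^{(0)}$. Restriction only gives one direction; the converse is where the preparation-type argument is actually needed. The paper invokes the extension of Mather's algebraic lemma to modules over adequate systems of differentiable algebras (\cite[Lemma 7.3]{D6}) applied to the system $(\cC_{y,u},\itm_u\cC_{y,u})\to(\cC_{x^{(j)},u},\itm_u\cC_{x^{(j)},u})$ to make this an equivalence. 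Your mention of ``the Malgrange preparation argument'' belongs here, not at the Euler-relation step. Once this is relocated, your sketch matches the paper's proof.

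Your remarks on the $\tilde E_7$ case are fine and consistent with how the paper handles it separately via Looijenga's result; you are correct that those germs fall outside the hypotheses of this lemma.
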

\begin{proof}[Proof of Lemma \ref{Lem7.6}]
In either direction it is sufficient to consider a finite set $S \subset 
\R^{n+1}$ with $u^{(0)} \in U$.  We let $S = \{x^{(1)}, \dots , x^{(r)}\}$, and 
choose local coordinates $x^{(j)}_i$ about each $x^{(j)}$.  We note that 
points $x^{(j)}$ at which the multigerm is nonsingular can be ignored 
without affecting the conclusion.  We also let $u = 
(u_1, \dots u_q)$ denote the local coordinates for $U$ about $u^{(0)}$, and 
$y$ a coordinate for $\R$.  For these coordinates, we let $\bar F_j$ denote 
the germ of $\bar F$ at $x^{(j)}$.  Next, we let $\cC_{x^{(j)}}$ denote the 
ring of germs of functions at $x^{(j)}$ with maximal ideal $\itm_{x^{(j)}}$, 
and $\cC_{x^{(j)}, u}$ the ring of germs at $(x^{(j)}, u^{(0)})$, with maximal 
ideal $\itm_{x^{(j)}, u}$, etc.  With $\{\varphi_1, \dots , \varphi_k\}$ 
understood, we abbreviate a module $R\{\varphi_1, \dots , \varphi_k\}$ by 
$R\{\varphi_i\}$.  \par
Then the infinitesimal stability of $F(x, u) : \R^{n+1} \times U, S \times 
\{u^{(0)}\} \to \R \times U, (0, u^{(0)})$ is equivalent to 
\begin{equation}
\label{Eqn7.8}
 \sum_{j = 1}^{r} \cC_{x^{(j)}, u}\{\pd{\bar F_j}{x^{(j)}_i}, \pd{ }{u_i} + 
\pd{\bar F_j}{u_i}\}\, + \cC_{y, u} \{\pd{ }{y}, \pd{ }{u_i}\} \,\, = \,\,  
\oplus_{j = 1}^{r} \cC_{x^{(j)}, u}\{\pd{ }{y}, \pd{ }{u_i}\}  
\end{equation}
where $\cC_{y, u} \{\pd{ }{y}, \pd{ }{u_i}\} \, \subset \, \oplus_{j = 1}^{r} 
\cC_{x^{(j)}, u}\{\pd{ }{y}, \pd{ }{u_i}\}$ is included by the diagonal map 
$F^*$ in each summand.  Since $\{\pd{ }{y}, \pd{ }{u_i} + \pd{\bar F_j}{u_i}, 
i = 1, \dots , q\}$ is also a set of free generators for $\cC_{x^{(j)}, 
u}\{\pd{ }{y}, \pd{ }{u_i}\}$, we may project along each $\cC_{x^{(j)}, 
u}\{\pd{ }{u_i}+ \pd{\bar F_j}{u_i}\}$ onto $\cC_{x^{(j)}, u}\{\pd{ }{y}\}$ 
and obtain that (\ref{Eqn7.8}) is equivalent to 
\begin{equation}
\label{Eqn7.9}
 \sum_{j = 1}^{r} \cC_{x^{(j)}, u}\{\pd{\bar F_j}{x^{(j)}_i}\}\, + \, \cC_{y, 
u} \{\pd{ }{y}, \pd{\bar F}{u_1}, \dots , \pd{\bar F}{u_q}\} \,\, = \,\,  
\oplus_{j = 1}^{r} \cC_{x^{(j)}, u}\{\pd{ }{y}\}\, .  
\end{equation}
Then, the system of rings and ideals $(\cC_{y, u}, \itm_u\cC_{y, u}) 
\overset{F_j^*}{\rightarrow} (\cC_{x^{(j)}, u}, \itm_u\cC_{x^{(j)}, u})$ is 
an adequate system of differentiable algebras in the sense of \cite[\S 
6]{D6}, so by an extension of Mather\rq s algebraic Lemma to the case of 
modules over adequate systems of algebras \cite[Lemma 7.3]{D6},  
(\ref{Eqn7.9}) is equivalent to 
\begin{equation}
\label{Eqn7.10}
 \sum_{j = 1}^{r} \cC_{x^{(j)}}\{\pd{f_j}{x^{(j)}_i}\}\, + \, \cC_{y} \{\pd{ 
}{y}, \pd{\bar F}{u_1}_{| u = u^{(0)}}, \dots , \pd{\bar F}{u_q}_{| u = 
u^{(0)}}\} \,\, = \,\,  \oplus_{j = 1}^{r} \cC_{x^{(j)}}\{\pd{ }{y}\}\, .  
\end{equation}
As each $f_j$ is weighted homogeneous, each
$f_j \in \cC_{x^{(j)}}\{\pd{f_j}{x^{(j)}_i}\}$.  Hence 
$$  \itm_{y} \{\pd{ }{y}, \pd{\bar F}{u_1}_{| u = u^{(0)}}, \dots , \pd{\bar 
F}{u_q}_{| u = u^{(0)}}\} \,\, \subset \,\, \sum_{j = 1}^{r} 
\cC_{x^{(j)}}\{\pd{f_j}{x^{(j)}_i}\}\, .  $$
Thus, (\ref{Eqn7.10}) is equivalent to 
\begin{equation}
\label{Eqn7.11}
 \sum_{j = 1}^{r} \cC_{x^{(j)}}\{\pd{f_j}{x^{(j)}_i}\}\, + \, \langle \pd{ }{y}, 
\pd{\bar F}{u_1}_{| u = u^{(0)}}, \dots , \pd{\bar F}{u_q}_{| u = 
u^{(0)}}\rangle  \,\, = \,\,  \oplus_{j = 1}^{r} \cC_{x^{(j)}}\{\pd{ }{y}\}\, .  
\end{equation}
This is exactly the equation for the $\cR^{+}$-infinitesimal versality of 
the multigerm $\bar F(x, u) : \R^{n+1} \times U, S \times \{u^{(0)}\} \to 
\R, 0$, which by the versal unfolding theorem \cite[Thm 9.3]{D6} is 
equivalent to its $\cR^{+}$-versality.  
\par
As each step is an equivalence, we have established the result.
\end{proof}
\par
We apply Lemma \ref{Lem7.6} as follows.  Let $\Phi \in \cP = (\cP_{\gs\, 
\rho} \cap \cP_{\tau})$, the residual set of mappings of the configuration 
with the generic properties on $\tilde \gW^{\prime}$ from 
\S~\ref{S:sec6}.  Then, the associated distance and height functions are 
$\cR^{+}$-versal unfoldings for any finite set $S \subset X_i$.  By the 
Lemma and the results of Mather, this implies that the associated 
mappings $\bar \gs_i$ and $\bar \nu_i$, which are proper, are 
infinitesimally stable, as global mappings.  This is proven, following 
Mather, by using a partition of unity argument for the local infinitesimal 
stability.  Then, we want to apply Mather\rq s theorem \lq\lq 
infinitesimal stability implies stability\rq\rq\,\cite[Thm 3]{M5}.
\begin{Thm}[Mather]
\label{Thm.Mather}
Let $f: N \to P$ be a smooth mapping between manifolds without 
boundaries and let $M \subset N$ be a closed $0$-codimension submanifold 
which may have boundaries and corners.  Then, if $f | M : M \to P$ is proper 
and infinitesimally stable, then there is a neighborhood $\cW$ of $f | M \in 
C^{\infty}(M, P)$ and continuous mappings $H_1 : \cW \to \Diff(N)$ and 
$H_2 : \cW \to \Diff(P)$, with both $H_1(f|M) = Id_N$ and $H_2(f|M) = 
Id_P$, such that 
$$ g \,\, = \,\, H_2(g)\circ f \circ H_1(g)  \qquad \text{ for all } g \in 
\cW\, .  $$
\end{Thm}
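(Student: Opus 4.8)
The plan is to deduce this from Mather's original theorem for mappings of manifolds without boundary (\cite{M5}, Thm.~3), which asserts exactly the conclusion stated here in the case $M = N$. The key point of the present version is the extension to the case where $M$ is a closed codimension-zero submanifold \emph{with boundaries and corners}; the hypothesis that $f|M$ is proper and infinitesimally stable is meant in the sense appropriate to such manifolds (i.e., using the modules of vector fields tangent to each face, which is exactly the setup under which $\bar\gs_i$ and $\bar\nu_i$ were shown to be infinitesimally stable via Lemma \ref{Lem7.6} and the partition-of-unity argument that precedes this theorem). So the real content is that Mather's proof of ``infinitesimal stability $\Rightarrow$ stability'' goes through verbatim when the source is a manifold with corners, provided one interprets all the relevant modules of vector fields as the modules of \emph{stratified} vector fields tangent to the corner structure.

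First I would recall the structure of Mather's proof: it is an application of his homotopy method, where given a smooth path $g_t$ joining $f|M$ to a nearby $g$, one integrates a path of infinitesimal trivializations $(\eta_t, \zeta_t)$ with $\eta_t$ a vector field on $N$ and $\zeta_t$ on $P$, obtained from the infinitesimal stability equation by a Malgrange-type division/preparation argument and the openness of the stability condition along the path. The crucial input is that for $t$ near the starting point the infinitesimal stability equation remains solvable with solutions depending continuously (indeed measurably-then-continuously, via the usual $C^\infty$-topology estimates) on $t$; this is where Mather uses the Malgrange preparation theorem and a global-to-local partition of unity. In our situation $N$ (the relevant source, e.g.\ $X_i \times \intr(\gW_i)$ or $X_i \times S^n$) is a manifold without boundary, while $M = C_i \times (\cdots)$ or the analogous product with a compact bounding region is the codimension-zero piece with corners. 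The vector field $\eta_t$ must be chosen tangent to the faces of $M$; since the infinitesimal stability hypothesis is formulated using exactly these tangent modules, the algebraic step (solving the equation in the local rings) is unchanged, and the geometric step—integrating $\eta_t$—now produces a flow preserving the corner stratification of $M$, hence a path in $\Diff(N)$ that restricts to diffeomorphisms of $M$ onto itself (one extends $\eta_t$ smoothly to all of $N$, which is possible since $M$ has codimension zero, and then truncates using a bump function supported near $M$).

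The main obstacle I expect is the careful handling of the \emph{corners}: one must verify that Mather's use of the Seeley or Bierstone-type extension, and more importantly the continuity of the homotopy of trivializations, survives when vector fields are required to be tangent to all the coordinate hyperplanes of the corner model $C_k = \R^k_+ \times \R^{n+1-k}$. This is exactly the kind of technical adaptation the paper has already flagged (the replacement of the Seeley extension theorem by Bierstone's, mentioned in the introduction), so I would organize the proof as: (1) state the infinitesimal stability equation for $f|M$ in terms of the modules $\Theta_M$ of vector fields tangent to the faces; (2) invoke the module version of Mather's algebraic lemma over adequate systems of differentiable algebras (as in \cite[\S 6,7]{D6}, already used in Lemma \ref{Lem7.6}) to get local continuous solvability along a path; (3) patch the local solutions by a partition of unity to get global $\eta_t \in \Theta_M$, $\zeta_t$ on $P$; (4) integrate to get $H_1(g) \in \Diff(N)$ with $H_1(g)(M) = M$ and $H_2(g) \in \Diff(P)$, with $H_1(f|M) = \mathrm{Id}_N$, $H_2(f|M) = \mathrm{Id}_P$, and $g = H_2(g)\circ f\circ H_1(g)$ on $M$; (5) check continuity of $g \mapsto (H_1(g), H_2(g))$ in the $C^\infty$-topologies. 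Steps (1)–(3) are where all the corner subtleties live; steps (4)–(5) are then formally identical to Mather's.
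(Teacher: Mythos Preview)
The paper does not prove this theorem: it is stated as Mather's result and attributed directly to \cite[Thm 3]{M5}, then immediately applied. There is no argument in the paper to compare your proposal against.

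Your sketch of how one would establish the corner version---by rerunning Mather's homotopy method with vector fields tangent to the face stratification and using Bierstone's extension in place of Seeley's---is a reasonable outline of what a proof would look like, and is consistent with the paper's general philosophy (the introduction does flag the Seeley-to-Bierstone replacement for the corner setting). But the paper itself simply invokes the theorem as a black box from \cite{M5}; in particular, it does not spell out the corner adaptation you describe, nor does it claim to. So your proposal goes beyond what the paper contains rather than diverging from it.
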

\par
For our situation, we suppose that $M$ is compact with a given compact 
$M^{\prime}$ with $M^{\prime} \subset \intr(M)$.  Then, we may shrink 
$\cW$ to a smaller neighhborhood $\cW^{\prime}$ in the 
$C^{\infty}$-topology so that in addition $M^{\prime} \subset H_1(g)(M)$.  
\par
We apply this version of Mather\rq s Theorem for each of the mappings 
$\bar \gs_i$ and $\bar \nu_i$ associated to the given $\Phi$. 
Each of these mappings is a proper infinitesimally stable mapping.   
Thus, for each mapping there are open neighborhoods: $\cV_{1, i}$ of $\bar 
\gs_i | X_i \times C_i$ in $C^{\infty}(X_i \times C_i, \R \times C_i)$, and 
$\cV_{2, i}$ of $\bar \nu_i$ in $C^{\infty}(X_i \times S^n, \R \times S^n)$ 
satisfying the conclusions of Mather\rq s Theorem and the additional 
condition that $M^{\prime} \subset H_1(g)(M)$ for each case.  However by 
Lemma \ref{Lem7.6}, off of the appropriate compact submanifolds 
$M^{\prime}$ which denote either $X_i \times C_i^{\prime}$ or resp. $X_i 
\times S^n$, the corresponding mappings have at most stable $A_1$ 
singularities.  Also, by Mather\rq s classification theorem for stable 
multigerms, the $\cA$-equivalence type of the stable multigerms, 
which are formed from the simple $A_k$-germs, are determined by the 
$\cR^{+}$-equivalence class $\gS^{(\bga)}$.  For any of 
the mappings $\bar \gs_i$ or $\bar \nu_i$, we denote the set of points of 
singularity type $\bA_{\bga}$ by $\gS^{(\bga)}(\bar \gs_i)$, or 
$\gS^{(\bga)}(\bar \nu_i)$.  \par
The restriction of the distance mapping to $X_i \times C_i$ defines a 
continuous mapping in the Whitney topology $\gs \mapsto \bar \gs_i$, 
$$C^{\infty}(X_i \times \R^{n+1}, \R) \longrightarrow C^{\infty}(X_i 
\times C_i, \R \times C_i)\, . $$  
Hence, there is an open set $\cU^{\prime} \subset \emb(\bgD, \R^{n+1})$ 
containing $\Phi$ which maps into each $\cV_{i, 1}$ and $\cV_{i, 2}$.  
Thus, by Lemma \ref{Lem7.6}, the associated families of distance and 
height maps $\gs_i^{\prime}$ or $\nu_i^{\prime}$ corresponding to 
$\Phi^{\prime} \in \cU^{\prime}$ are $\cR^{+}$-versal unfoldings for any 
finite set $S \subset X_i$.  Thus, we obtain openness of the genericity 
properties of individual stratifications $\gS^{(\bgb_{i})}$, 
$\gS_{\infty}^{(\bga)}$, and $\gS^{(\bga)}$ in $X_i$.  \par 
We next consider the transverse intersection of the strata $\gS^{(\bga)}$ 
for $\gW_0$ and the $\gS^{(\bgb_{i})}$ for the $\gW_i$.  We form the 
associated mappings 
\begin{align}
\label{Eqn7.6c}
  \hat \gs_{i, 0}: X_i \times C_i \times C_0  &\to   \R \times C_i \times 
C_0  \\
(x, u^{(i)}, u^{(0)})) \quad &\mapsto \quad (\gs_i(x, u^{(i)}), u^{(i)}, u^{(0)})\, 
, \notag
\end{align}
\begin{align}
\label{Eqn7.6d}
  \hat \gs_{0, 0}: X_i \times C_i \times C_0   &\to   \R \times C_i \times 
C_0  \\
(x, u^{(i)}, u^{(0)}) \quad &\mapsto \quad (\gs_0(x, u^{(0)}), u^{(i)}, u^{(0)}) 
\, .  \notag 
\end{align}
These are products of $\bar \gs_i$ with identity mappings, so  
$\gS^{(\bgb_{j_p})}(\hat \gs_{i, 0}) = \gS^{(\bgb_{j_p})}(\bar \gs_i) 
\times C_0$ for $i > 0$ or $\gS^{(\bga)}(\hat \gs_{0, 0}) = 
\gS^{(\bga)}(\bar \gs_0) \times C_i$.  These stratifications are the 
images under projection of the stratifications $Z^{(\bgb_{j_p})}$, resp.  
$Z^{(\bga)}$ in Proposition \ref{Prop6.6}.  As they are transverse and 
project diffeomorphically to the strata in the compact manifold (with 
boundaries and corners) $X_i \times C_i \times C_0$, the strata 
$\gS^{(\bgb_{j_p})}(\hat \gs_{i, 0})$ and $\gS^{(\bga)}(\hat \gs_{0, 0})$ 
intersect transversely.  They form a closed Whitney stratification of $X_i 
\times C_i \times C_0$.  
\par
Thus, for $\Phi^{\prime} \in \cU^{\prime}$ with corresponding $\bar 
\gs_i^{\prime}$ and $\bar \nu_i^{\prime}$, we may apply Mather\rq s 
Theorem and obtain for each $i$ the continuous families of 
diffeomorphisms $H^{(i)}_j(\bar \gs_i^{\prime})$, $j = 1, 2$.
\par  
By the  properties of the $M^{\prime}$ for $i > 0$, 
$$H^{(i)}_1(\bar \gs_i^{\prime})(\gS^{(\bgb_{j_p})}(\bar \gs_i)) \cap (X_i 
\times C_i^{\prime}) \,\, = \,\,  \gS^{(\bgb_{j_p})}(\bar \gs_i^{\prime}) 
\cap (X_i \times C_i^{\prime})  \,\, = \,\, \gS^{(\bgb_{j_p})}(\bar 
\gs_i^{\prime}) \, ,$$ 
and similarly 
$$H^{(0)}_1(\bar \gs_0^{\prime})(\gS^{(\bga)}(\bar \gs_0)) \cap (X_i 
\times C_0^{\prime}) \,\, = \,\,  \gS^{(\bga)}(\bar \gs_0^{\prime}) \cap 
(X_i \times C_0^{\prime})  \,\, = \,\, \gS^{(\bga)}(\bar \gs_i^{\prime})\, . 
$$
Thus, from the above, 
$\gS^{(\bgb_{j_p})}(\hat \gs_{i, 0}^{\prime}) \cap \gS^{(\bga)}(\hat 
\gs_{0, 0}^{\prime})$ is the intersection in $X_i \times C_i \times C_0$ 
of the pull-back of the closed Whitney stratification with strata 
$(\gS^{(\bga)}(\bar \gs_i) \times C_0) \times (\gS^{(\bga)}(\bar \gs_0) 
\times C_i)$,by the map 
\begin{equation}
\label{Eqn7.12}
  ((H^{(i)}_1 \times Id_{C_0})^{-1}, (H^{(0)}_1 \times Id_{C_i})^{-1}) : X_i 
\times C_i \times C_0 \longrightarrow (X_i \times C_i \times C_0)^2\, .  
\end{equation}
\par
By the openness of transversality to closed Whitney stratified sets on 
compact sets, and the continuous dependence of the mapping  
(\ref{Eqn7.12}) on $(\bar \gs_i^{\prime}, \bar \gs_0^{\prime})$, and hence 
on $\Phi^{\prime}$, there is a smaller open neighborhood 
$\cU^{\prime\prime}$ of $\Phi$, such that for $\Phi^{\prime} \in 
\cU^{\prime\prime}$, (\ref{Eqn7.12}) is transverse to the product 
stratification.  Hence, $\gS^{(\bgb_{j_p})}(\bar \gs_{i, 0}))$ and 
$\gS^{(\bga)}(\bar \gs_{0, 0}^{\prime})$ intersect transversely, and so do 
their projections in $X_i$ by Lemma \ref{Lem6.9}.  This establishes the 
openness of the condition that the strata $\gS^{(\bga)}$ and 
$\gS^{(\bgb_{j_p})}$ intersect transversely in each $X_i$. \par
Next, we apply an analogous argument, using  
\begin{align}
\label{Eqn7.6e}
  \check \gs_i: X_i \times C_i \times S^{n} &\to   \R \times C_i \times 
S^{n}  \\
(x, u^{(i)}, v)) \quad &\mapsto \quad (\gs_i(x, u^{(i)}), u^{(i)}, v)\,  \notag 
\end{align}
and
\begin{align}
\label{Eqn7.7b}
  \check \nu_i : X_i \times C_i \times S^{n}  &\to   \R \times C_i \times 
S^{n}   \\
(x, u^{(i)}, v) \quad &\mapsto \quad (\nu(x, v),u^{(i)},  v)\, . \notag 
\end{align}
\vspace{2ex}
We then obtain the openness of the genericity condition of transversality 
of the strata $\gS^{(\bgb_{j_p})}$ and $\gS_{\infty}^{(\bga)}$.  \par
Lastly, we apply the argument a third time to the maps 
\begin{align}
\label{Eqn7.6b}
  \hat{\hat{\gs}}_{i, 0}: X_i \times \prod_{i = 0}^{m} C_i   &\to   \R \times 
\prod_{i = 0}^{m} C_i   \\
(x, (u^{(0)}, \dots , u^{(m)})) \quad &\mapsto \quad (\gs_i(x, u^{(i)}), 
(u^{(0)}, \dots , u^{(m)}))\,  \notag
\end{align}
to obtain the openness of the genericity condition that the images of the 
interseections of the strata $\gS^{(\bga)}$ and $\gS^{(\bgb_{j_p})}$ 
intersect in general position in $\gS_{M_0}^{(\bga)}$.
\par  
This completes the proof of openness of the genericity conditions.  \par

\subsection*{Openness of Generic Properties for General Configurations}
\par
For the general case, the argument is slightly complicated by the 
edge-corner points on the common boundaries of regions and the presense 
of $\tilde E_7$ points when $n + 1 = 7$.  We first allow common 
boundaries but still exclude $\tilde E_7$ points.  The total space 
of $\bgD$ is a manifold with boundaries and corners, although the corners 
are concave rather than convex.  We begin with an extension lemma based 
on an extension result of Bierstone \cite{Bi} .
\begin{Lemma}
\label{Lem7.4}
If $\tilde \bgD$ is an open neighborhood of $\bgD$ in $\R^{n+1}$, then 
there is a continuous extension in the Whitney topology
\begin{equation}
\label{Eqn7.5}
\eta : \emb(\bgD, \R^{n+1}) \longrightarrow C^{\infty}(\tilde \bgD, 
\R^{n+1}) \, .
\end{equation}
\end{Lemma}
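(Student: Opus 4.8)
The plan is to invoke Bierstone's extension theorem in a form that produces a continuous linear (or at least continuous) section of the restriction map $C^{\infty}(\tilde\bgD,\R^{n+1}) \to C^{\infty}(\bgD,\R^{n+1})$, and then to observe that such a section automatically restricts to a continuous map on the open subset $\emb(\bgD,\R^{n+1})$. First I would set the stage: since $\bgD$ is a compact manifold with boundaries and corners (with, as noted in the excerpt, concave rather than convex corners — the corners here point inward), $\bgD$ is a closed subset of $\R^{n+1}$ with a particularly mild local geometry. The relevant property is that $\bgD$ is a \emph{Whitney regular} (in fact subanalytic, indeed semialgebraic in local models) closed subset of $\R^{n+1}$, so that Whitney jets on $\bgD$ can be extended. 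The key point to verify is that each component function, i.e. each coordinate of an embedding $\Phi : \bgD \to \R^{n+1}$, is a smooth function in the sense appropriate to manifolds with corners, and hence defines a Whitney field on the closed set $\bgD \subset \R^{n+1}$.

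The main step is the application of Bierstone's theorem \cite{Bi}, which provides, for a closed subanalytic subset $X$ of $\R^N$ (here $N = n+1$ and $X = \bgD$, working componentwise so the target $\R^{n+1}$ is harmless), a continuous \emph{linear} extension operator $E : \mathcal{E}(X) \to C^{\infty}(\R^N)$ splitting the restriction map, where $\mathcal{E}(X)$ denotes the space of Whitney fields (equivalently, since $\bgD$ is a manifold with corners, the space $C^{\infty}(\bgD)$ with its natural Fréchet topology). I would apply this to each of the $n+1$ coordinate functions of $\Phi$ and assemble the results into a single map
\begin{equation*}
\tilde\eta : C^{\infty}(\bgD,\R^{n+1}) \longrightarrow C^{\infty}(\R^{n+1},\R^{n+1}),
\end{equation*}
then compose with restriction to the given open neighborhood $\tilde\bgD$ to obtain $\eta$ as in (\ref{Eqn7.5}). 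Continuity in the Whitney topology is inherited directly from the continuity of Bierstone's operator; and since $\bgD$ is compact, the Whitney and regular $C^{\infty}$ topologies on $C^{\infty}(\bgD,\R^{n+1})$ coincide (as already remarked in the excerpt before Theorem \ref{Thm5.0}), so there is no ambiguity about which topology is meant. Finally, $\emb(\bgD,\R^{n+1})$ is an open subset of $C^{\infty}(\bgD,\R^{n+1})$ (the $1$-$1$ immersion condition is open for a compact source, as used later via \cite[Prop.\ 5.8]{GG}), so the restriction of $\eta$ to embeddings is again a continuous map, which is all that is claimed.

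The main obstacle — and the reason a lemma is needed at all rather than a one-line appeal to the Seeley/Whitney extension theorem used by Mather — is that $\bgD$ is not a manifold with boundary but a manifold with corners, and moreover the corners are \emph{concave} (inward-pointing), so $\bgD$ is not locally convex and the classical Seeley half-space extension does not apply directly near the corner strata. The care required is precisely to confirm that the local models $C_k = \R^k_+ \times \R^{n+1-k}$ for the corner points, together with the gluing of the regions along shared boundary faces $X_{ij}$, yield a closed set to which Bierstone's subanalytic extension theorem applies with a \emph{continuous} extension operator; this is the content of \cite{Bi} and is the substitute for Seeley's theorem announced in the introduction. I would therefore spend the bulk of the argument checking that the local corner models are subanalytic (indeed semialgebraic) and that the hypotheses of Bierstone's theorem are met in a neighborhood of each stratum of $\bgD$, the rest being a formal assembly of componentwise extensions and a remark on topologies.
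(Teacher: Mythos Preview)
Your overall architecture is right, but there is a genuine gap in the main step. You assert that $\bgD$ itself is a subanalytic closed subset of $\R^{n+1}$ and then invoke Bierstone's theorem globally. That assertion is not justified: the model configuration $\bgD$ is only assumed to be a compact $C^{\infty}$ manifold with boundaries and (concave) corners. Its top-dimensional boundary faces are merely smooth hypersurfaces, and a generic $C^{\infty}$ hypersurface in $\R^{n+1}$ is not subanalytic. The phrase ``semialgebraic in local models'' is correct for the chart images $C_k$, $P_k$, $Q_k$, but subanalyticity is not preserved under $C^{\infty}$ diffeomorphisms, so this tells you nothing about $\bgD$ as a subset of $\R^{n+1}$. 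Consequently Bierstone's theorem, as stated for subanalytic sets, does not apply to $\bgD$ in one stroke.

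The paper's proof fixes exactly this by reverting to Mather's scheme of \emph{local} continuous extension operators patched together by a partition of unity. Near a smooth boundary point the local chart identifies $\bgD$ with a half-space, and Seeley's extension gives the required continuous local operator. Near a concave corner point of type $P_k$ or $Q_k$, one pulls back via the local-model diffeomorphism to the semi-algebraic model region (which \emph{is} a closed semi-analytic set equal to the closure of its interior), applies Bierstone's extension theorem there to obtain a continuous local extension operator, and pushes forward. Composition with the chart and its inverse preserves continuity in the $C^{\infty}$ topology, so one obtains a continuous local extension near each corner point as well. A partition of unity subordinate to these charts then assembles the local extensions into the global continuous map $\eta$. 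Your final paragraph gestures toward this localization, but as written your ``main step'' applies Bierstone to all of $\bgD$ at once, and that is the part that does not go through.
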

\begin{proof}
In the case of convex corners, Mather proved an extension result giving a 
continuous extension of smooth functions on an $n$-manifold $M \subset 
\R^n$ with boundaries and corners to an open neighborhood $U$ of $M$ in 
$\R^n$ (see \cite{M4} and the proposition in \cite[\S 5]{M5}).  This method 
uses continuous 
local extensions for all $C^{\infty}$ functions in neighborhoods of the 
corner points, based on the extension method of Seeley \cite{Se}, together 
with a partition of unity.  This method also works in our case provided we 
have a continuous local extension on a neighborhood for the concave corner 
points.  In this case, whether of type $P_k$ or $Q_k$ the complement in 
the edge-corner model is the interior of an edge-corner point.  The region 
itself is locally a closed semi-analytic set which is the closure of its 
interior.  This is exactly the situation where the extension result of 
Bierstone \cite{Bi} applies to give a continuous extension from 
$C^{\infty}$ functions on the region in a neighborhood of the edge-corner 
point to an open neighborhood of the point in $\R^{n+1}$.  With this local 
extension, we can apply the same argument used by Mather to give the 
continuous extension (\ref{Eqn7.5}), where $\tilde \bgD$ denotes an open 
neighborhood of $\bgD$.  
\end{proof}
Then, in $\tilde \bgD$ we can extend each nonempty $Cl(X_{i\, j})$ to an 
open hypersurface $\tilde X_{i\, j}$ and then construct a compact 
submanifold with boundaries and corners $S_{i\, j}$ so that $Cl(X_{i\, j}) 
\subset \intr(S_{i\, j}) \subset \tilde X_{i\, j}$, where $\intr(\cdot)$ is 
taken in the manifold $\tilde X_{i\, j}$.  \par
We generally denote $\eta(\Phi)$ by $\tilde \Phi$.  We choose $\tilde 
\bgD$ small enough so that $\tilde \Phi$ is still an embedding.  Then, we 
first restrict to the open subset $\cU^{\prime}$ which is the pull-back of 
the open subset of $C^{\infty}(\tilde \bgD, \R^{n+1})$ consisting of those 
$\tilde \Phi^{\prime}$ which restricted to each $S_{i\, j}$ are 
embeddings.  \par
We furthermore let $S_i =  \coprod_{j} S_{i\, j}$ and $S = \coprod_i S_i$, 
which is a disjoint union of manifolds with boundaries and corners.  
Then, $\tilde \Phi$ still defines a smooth map denoted $\tilde \Phi : S \to 
\R^{n+1}$, which is an embedding.  Then, we can proceed as earlier to 
define the eikonal flow using the inward pointing unit normal vector 
fields $\bn_i$ for each $\gW_i$.  Again by compactness of each $S_{i\, 
j}$, there is an $\gevar > 0$ so that each $\psi_{i\, j} : S_{i\, j} \times [-
\gevar, \gevar] \to \R^{n+1}$ is a diffeomorphism onto its image $T_{i\, j, 
\gevar}$ for all $i, j$.  Because the singular strata of the boundaries $X_i$ 
form compact sets, we can use the argument in the proof of Theorem 
\ref{Thm3.2} to find a smaller $\gevar > 0$ so that in the union 
$\cup_{i, j} T_{i\, j, \gevar}$, the Blum medial axis consists of the 
edge-corner normal forms for the singular points of $X$.  \par 
Then, we replace the tubular neighborhoods $T_{s}$ by the unions $\tilde 
T_{i, s} = \cup_j T_{i\, j, s}$ and obtain the analogue of Lemma 
\ref{Lem7.2}.
\begin{Lemma}
\label{Lem7.2a}
In the above situation, there is an $\gevar^{\prime}$ with $0 < 2 
\gevar^{\prime} < \gevar$ and an open neighborhood $\cU$ of $\Phi$ in 
$\emb(\bgD, \R^{n+1})$ such that if $\Phi^{\prime} \in \cU$, then: \par
\begin{enumerate}
\item $\cB_i^{\prime} = \Phi^{\prime}(X) \subset 
\intr(\tilde T_{i, \gevar^{\prime}})$;  
\item each eikonal flow for $\cB_i^{\prime}$, $\Psi_{i\, j}^{\prime}: 
S_{i\, j} \times [-\gevar , \gevar ] \to  \R^{n+1}$ is a diffeomorphism onto 
its image; and 
\item $\tilde T_{i, 2 \gevar^{\prime}} \cap \gW_i \subset 
\Psi_i^{\prime}(S_i \times [-\gevar , \gevar ]).$
\end{enumerate}
\end{Lemma}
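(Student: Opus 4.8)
The plan is to follow the structure of the proof of Lemma \ref{Lem7.2}, but now working with the extended embedding $\tilde\Phi = \eta(\Phi)$ on the neighborhood $\tilde\bgD$ provided by Lemma \ref{Lem7.4}, and with the hypersurfaces $S_{i\,j}$ and their eikonal flows $\psi_{i\,j}$ in place of the single smooth hypersurface $\cB$ and its flow. The only structural difference from the smooth-boundary case is that each boundary $X_i$ is now a union of pieces $X_{i\,j}$ meeting along concave edge-corner strata, so the tubular neighborhood of $\cB_i$ must be assembled from the separate tubes $T_{i\,j,\gevar} = \psi_{i\,j}(S_{i\,j}\times[-\gevar,\gevar])$ as $\tilde T_{i,s} = \cup_j T_{i\,j,s}$, and the role of ``the Gauss map of $\cB$'' is played by the collection of inward unit normals $\bn_i$ on each smooth piece.

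First I would set up the three continuous maps, exactly paralleling the proof of Lemma \ref{Lem7.2}. For (1): $\intr(\tilde T_{i,\gevar^{\prime}})$ is open in $\R^{n+1}$, so $\{g : g(S_{i\,j})\subset \intr(\tilde T_{i,\gevar^{\prime}})\text{ for all }i,j\}$ is open in $C^\infty(S,\R^{n+1})$; composing with the continuous extension $\eta$ of Lemma \ref{Lem7.4} and then restriction to $S$ pulls this back to an open subset $\cU_1^{\prime}\subset\emb(\bgD,\R^{n+1})$ containing $\Phi$. For (2): for each $(i,j)$, the map $g\mapsto \Psi_{i\,j}^{\prime}(x,t) = g(x) + t\,\bn_{g,i\,j}(x)$ on $S_{i\,j}\times[-\gevar,\gevar]$ is continuous in the Whitney topology (the Gauss map $g\mapsto \bn_{g,i\,j}$ is continuous), and since $S_{i\,j}\times[-\gevar,\gevar]$ is compact, the set of $1$--$1$ immersions is open in $C^\infty(S_{i\,j}\times[-\gevar,\gevar],\R^{n+1})$ (as in \cite[Prop. 5.8]{GG}), and such an immersion is a diffeomorphism onto its image; intersecting the finitely many resulting open sets and pulling back gives an open $\cU_2^{\prime}$. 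For (3): there is an open set in $C^\infty(S_i\times[-\gevar,\gevar],\R^{n+1})$ around the unperturbed flow that carries $S_i\times\{\pm\gevar\}$ into the appropriate complementary components of $\gW_i\backslash\intr(\tilde T_{i,2\gevar^{\prime}})$, and a degree argument then forces the image of $\Psi_i^{\prime}$ to contain $\tilde T_{i,2\gevar^{\prime}}\cap\gW_i$; again pull back to an open $\cU_3^{\prime}$. Take $\cU = \cU_1^{\prime}\cap\cU_2^{\prime}\cap\cU_3^{\prime}$ and choose $\gevar^{\prime}$ with $0 < 2\gevar^{\prime} < \gevar$ and $\Phi(X_i)\subset\intr(\tilde T_{i,\gevar^{\prime}})$, shrinking $\cU$ if necessary so that (1) persists for the chosen $\gevar^{\prime}$.

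The point requiring the most care — and the main obstacle — is the interaction of the separate tubes $T_{i\,j,\gevar}$ near the edge-corner strata of $X_i$, where the naive tube $\psi_{i\,j}(S_{i\,j}\times[-\gevar,\gevar])$ overlaps its neighbor $\psi_{i\,j'}(S_{i\,j'}\times[-\gevar,\gevar])$ and the concatenation need not be a manifold collar in the usual sense. Here I would invoke the already-established fact (from the proof of Theorem \ref{Thm3.2} via Lemma \ref{Lem7.1}) that on a sufficiently small common neighborhood of each singular point the eikonal flows from the faces are in general position and their combined image realizes the edge-corner normal form; this lets one choose the uniform $\gevar > 0$ so that $\cup_{i,j} T_{i\,j,\gevar}$ covers a neighborhood of $\sing(X)$ in which the Blum medial axis is exactly the edge-corner normal form, and hence the degree argument for (3) goes through piece by piece on each $S_{i\,j}$ and then glues. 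The compactness of the singular strata of the $X_i$ is what makes the choice of $\gevar$ uniform. Once this local-to-global gluing near the corners is in hand, the remaining verifications of (1)--(3) are identical in form to those in Lemma \ref{Lem7.2}, and I would state them as such rather than repeating the details.
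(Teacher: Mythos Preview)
Your proposal is correct and follows exactly the approach the paper intends: the paper does not give a separate proof of Lemma~\ref{Lem7.2a} but simply introduces it as ``the analogue of Lemma~\ref{Lem7.2}'' obtained by replacing the tubular neighborhoods $T_s$ with the unions $\tilde T_{i,s} = \cup_j T_{i,j,s}$. Your plan fills in precisely the adaptations the paper leaves implicit --- using the extension $\eta$ from Lemma~\ref{Lem7.4}, working piece-by-piece with the $S_{i,j}$, and handling the edge-corner overlap via the general-position result of Lemma~\ref{Lem7.1} --- and is in fact more detailed than what the paper provides.
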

\par
We again enlarge $\tilde \gW$ via $\tilde \gW_1$ to $\tilde 
\gW^{\prime}$ as earlier and replace $\gW_0$ by $\tilde \gW^{\prime} 
\cap \gW_0$.  With the $\gevar^{\prime}$ from Lemma \ref{Lem7.2a}, we 
now let $C_i^{\prime} = \gW_i \backslash \intr(T_{i, 2\gevar^{\prime}})$ 
and $C_0^{\prime} = \tilde \gW_1 \cap (\gW_0 \backslash \intr(T_{0, 
2\gevar^{\prime}}))$.  Analogously, we let $C_i = \gW_i \backslash 
\intr(T_{i, \frac{3}{2}\gevar^{\prime}})$ and $C_0 = \tilde \gW^{\prime} 
\cap (\gW_0 \backslash \intr(T_{0, \frac{3}{2}\gevar^{\prime}}))$.  Third, 
we let $U_i$ denote $\intr(\gW_i) \backslash T_{i, \gevar}$.
\par
Then, we proceed as earlier.  We have the analogous collection of mappings 
(\ref{Eqn7.6a}) - (\ref{Eqn7.7a}), (\ref{Eqn7.6c}) - (\ref{Eqn7.6d}), 
(\ref{Eqn7.6e}) - (\ref{Eqn7.7b}), and (\ref{Eqn7.6b}), for the 
corresponding families of distance and height based mappings, where we 
replace each $X_i$ by $X_{\cJ_i}$.  First, using Lemma \ref{Lem7.2a},  we 
deduce that the stratifications are contained in the model $M^{\prime}$ 
for each case.  Then, we apply Lemma \ref{Lem7.6} to conclude that these 
mappings are infinitesimally stable.  We then apply the same adapted 
version of Mather\rq s Theorem for the corresponding mappings to obtain 
an open neighborhood of $\Phi^{\prime} \in \cU^{\prime}$, to which we can 
repeat the arguments to conclude that the stratifications are closed 
Whitney stratifications which intersect transversely where appropriate.  
As earlier, we may conclude that they map diffeomorphically onto the 
boundary and intersect transversely.  This completes the proof of 
openness of the genericity conditions for the general multi-configuration 
in the absence of $\tilde E_7$ points. \par
\subsubsection*{Openness of Generic Properties Allowing $\tilde E_7$ 
Points}
\par
Lastly, we include the possibility of $\tilde E_7$ points when $n + 1 = 7$.  
By transversality to the $\tilde E_7$-stratum, these points will occur at 
isolated points. Consider points $x_0 \in X_i$ and $u_0 \in M_i$, such that 
$\gs(\cdot, u_0)$ has a minimum $y_0$ at $x_0$ of type $\tilde E_7$.  By 
transversality, we may suppose $j_1^4\gs(x, u)$ is transverse at $(x_0, 
u_0)$ to the Whitney stratified set defined by the closure of the $\tilde 
E_7$-stratum.  Then, there is an open neighborhood $\cU^{\prime \prime}$ 
of embeddings and open neighborhoods $u_0 \in U$ and $x_0 \in V$ so that 
for $\Phi^{\prime} \in \cU^{\prime \prime}$, the associated distance 
function $\gs^{\prime}$ saisfies:  $j_1^4\gs^{\prime}(x, u)$ transversely 
meets $\tilde E_7$-stratum at a unique point $(x^{\prime}, u^{\prime}) \in 
V \times U$ and the associated $\gs^{\prime}(x, u^{\prime})$ has a 
minimum at $x^{\prime}$.  
Furthermore, by a result of Looijenga \cite{L2}, the transversality to the 
$\tilde E_7$ stratum implies that the resulting unfolding at such a point 
as in  (\ref{Eqn7.6a}) given by 
$$\bar \gs_i: X_i \times \intr(\gW_i)  \to   \R \times \intr(\gW_i)$$
will define a topologically stable mapping near $(x_0, u_0)$.  Looijenga 
\cite{L2} shows that the germ is topologically equivalent to the unfolding 
which is infinitesimally versal except for the modulus term; and the proof 
of topological stability is given in \cite[Thm 4]{D8}.  By this we mean: 
that there is a neighborhood of embeddings, still denoted by $\cU^{\prime 
\prime}$, compact neighborhoods $u_0 \in D_1 \subset \intr(  D_2) 
\subset  D_2  \subset U$,  $x_0 \in V_1 \subset \intr(V_2)  \subset V_2 
\subset V$ and $y_0 \in W_1 \subset \intr(W_2)  \subset W_2  \subset 
\R$ such that if $\Phi^{\prime} \in \cU^{\prime \prime}$, with associated 
mapping $\bar \gs_i^{\prime}$ then: 
\begin{itemize} 
\item[i)] there are homeomorphisms onto their images $\varphi = 
(\varphi_1, \varphi_2) : V_2 \times D_2 \to \cirX_i \times U$, and $\psi : 
W_2 \times D_2 \to \R \times U$; 
\item[ii)] such that $\bar \gs_i^{\prime} = \psi \circ \bar \gs_i \circ 
\varphi^{-1}$ on $\varphi(V_2 \times D_2)$;
\item[iii)]  $\varphi(D_2 \times V_2) \supset D_1 \times V_1$, 
$\psi(W_2 \times V_2) \supset W_1 \times V_1$; and 
\item[iv)] $\varphi$ is a smooth diffeomorphism on the complement of 
$(x_0, u_0)$ and $\psi$ is a smooth diffeomorphism on the complement of 
$(y_0, u_0)$. 
\end{itemize} 
\par
We can repeat this for each $\tilde E_7$ point for $\gs_i$, and ensure that 
the above neighborhoods are distinct.  If they are labeled by an index $j$, 
then we can apply the earlier argument after we remove the 
$\intr(D^{(j)}_1)$ for all $j$. Then, off the union of the $D^{(j)}_1$ the 
generic properites persist in an open neighborhood of $\Phi$.  Also, on 
each $D^{(j)}_1$, the medial axis for $\Phi^{\prime}$ will be the image of 
that of $\Phi$ via the homeomorphism $\varphi$ and will have generic 
properties off $\varphi_2(x_0, u_0)$, which must also be an $\tilde E_7$ 
point, and hence equals the unique point $(x^{\prime}, u^{\prime})$.  Thus, 
the medial axis has generic properties off the $\tilde E_7$ points; and at 
these points its structure is topologically constant.  Because the $\tilde 
E_7$ points are isolated, the codimension conditions and transversality 
only allow the linking as earlier described, so in this sense the generic 
properties for $\tilde E_7$ points hold for an open set of embeddings.  
\par

\section{Reductions of the Proofs of the Transversality Theorems}
\label{S:sec9}
\par
We begin the proofs of Theorems \ref{Thm5.0} and \ref{Thm5.1} by first 
outlining the three main steps of the proofs: \flushpar 
\begin{enumerate}
\item reduce Theorem \ref{Thm5.0} to a relative transversality theorem 
and Theorem \ref{Thm5.1} to a \lq\lq hybrid transversality theorem\rq\rq 
which combines the relative and absolute transversality theorems in 
\cite{D5};  
\item introduce the families of perturbations needed to prove the 
transversality for the space of mappings and compute the necessary 
infinitesimal deformations; and  
\item  verify the transversality conditions for the families of 
perturbations.  
\end{enumerate}
We explain each of these steps in more detail.  \par 
\vspace{2ex}

The \lq\lq hybrid transversality theorem\rq\rq is an extension of Thom\rq 
s transversality theorem which applies to a continuous mapping $\Psi: \cH 
\to C^{\infty}(M,N)$, where $\cH$ is a Baire space, and $M$ and $N$ are 
smooth manifolds (where we allow $M$ to have boundaries and corners).  
We assume there is given a subbundle $\cH^{k}(M, N)$ of the jet space 
$J^{k}(M, N)$ which consists of $k$-jets $j^k(\Psi(h))(x)$ for all $h \in 
\cH$ and all $x \in M$.  Then, for any $h \in \cH$ the associated jet 
mapping  $j^k(\Psi(h))(M) \subset \cH^{k}(M, N)$.  We consider either 
closed Whitney stratified sets  $W \subset \cH^{k}(M, N)$ or submanifolds 
$W$ whose closures form Whitney stratified sets with $W$ a stratum.  We 
refer to the latter $W$ as being \lq\lq relatively Whitney 
stratifiable\rq\rq.  Then, by \cite[Thm 1.3]{D5} and \cite[Thm 1.5]{D5}, 
provided (in an appropriate sense) $\Psi$ is \lq\lq transverse to 
$W$\rq\rq\, relative to $\cH^{k}(M, N)$, then on any compact subset $C 
\subset M$  there is an open dense subset of $h \in \cH$ for which 
$j^k(\Psi(h))$ is transverse on $C$ to the closed Whitney stratified set 
determined by $W$.  We may alternatively consider the situation where for 
a closed Whitney stratified set $Y \subset M\backslash E$, with $E 
\subset M$ a closed subset, $\cH^{k}(M\backslash E, N)$ is the restriction 
of the bundle to $M\backslash E$.  If $\Psi$ is \lq\lq transverse on $Y$ to 
$W$\rq\rq\, relative to $\cH^{k}(M\backslash E, N)$, then, there is a 
corresponding transversality result that on any compact subset $C \subset 
M\backslash E$, there is an open dense subset of $h \in \cH$ for which 
$j^k(\Psi(h))$ is transverse on $Y \cap C$ to $W$.  \par
We apply either of these results to the geometric mappings that associate 
to an embedding $\Phi : \bgD \to \R^{n+1}$ either the multi-distance 
functions $\rho_i$ or the height-distance function $\tau$.  In each case 
we must verify that \lq\lq the mapping $\Psi$ is transverse to the 
appropriate closed Whitney stratified sets\rq\rq (we recall the definition 
below).  The transversality conditions in Theorem \ref{Thm5.1} are shown 
to be equivalent to conditions in the setting of the hybrid transversality 
theorem for the appropriate Whitney stratified sets.  \par
It then remains to show that the corresponding  mappings $\Psi$ satisfy 
the transversality conditions.  For this we consider specific families of 
perturbations of $\Phi$ within $\emb (\bgD, \R^{n+1})$ and verify the 
appropriate local transversality conditions for the families.  

\subsection*{Hybrid Transversality Theorem}

We give a modified form of \cite[Thms 1.3 and 1.5]{D5} for a continuous 
mapping $\Psi: \cH \to C^{\infty}(M,N)$ which is transverse off a closed 
subset $E \subset M$  to $W \subset \cH^k(M,N)$, which is either a closed 
Whitney stratified set or its closure $Cl(W)$ is a Whitney stratified set 
with $W$ a stratum.  While in \cite{D1} $M$ was a smooth manifold, here 
we allow $M \subset \R^{\ell}$ to be a closed stratified set such that 
$sing(M) 
\subset E$, a closed subset of $M$.  We also let $Y \subset M\backslash E$ 
be a closed Whitney stratified set.  Then, the smoothness of a function $f : 
M \backslash E \to N$ is still defined.  

\begin{Definition}
\label{Def9.1}
The map $\Psi$ is said to be {\em transverse} or {\em completely 
transverse} to $W \subset  \cH^k(M,N)$ off $E$ if, given an open subset 
$\cU \subset \cH$, $x \in M$ and $y \in N$, there are open sets $x \in U 
\subset M$, and $y \in V \subset N$ such that for every map $h \in \cU$, 
there exists a finite-dimensional smooth manifold $ \cT \subset \cU$ 
with $h \in \cT$ such that for the family
\begin{align*}
\gG: M \times \cT &\rightarrow N,\\
(x,f) &\mapsto \Psi(f)(x)
\end{align*}
 the  $k$-jet extension
\begin{align*}
j^k\gG: (U\backslash E) \times \cT &\rightarrow \cH^k(U\backslash E,N), 
\\  
(x, f) &\mapsto  j^k(\Psi(f))(x) 
\end{align*}
is transverse to $W$, respectively transverse to $Cl(W)$, at all points 
$\{(x ,h)\}$ with $x \in U \cap \Psi(h)^{-1}(V)$. \par
If instead $j^k\gG | (Y \times \cT)$ is transverse (on each stratum $Y_i 
\times \cT$) to $W$, respectively transverse to $Cl(W)$, at all points 
$\{(x ,h)\}$ with $x \in Y \cap U \cap \Psi(h)^{-1}(V)$, then we say $\Psi$ 
restricted to $Y$ is {\em transverse} or {\em completely transverse} to 
$W$.  
\end{Definition}
Then, for a continuous $\Psi : \cH \to C^{\infty}(M,N)$ such that for each 
$h \in \cH$, $j^k(\Psi(h) | M \backslash E)$ maps into the subbundle 
$\cH^k(M \backslash E, N) = \cH^{k}(M, N) \cap J^k(M \backslash E, N)$, the 
hybrid transversality theorem takes the following form.
\begin{Thm}[Hybrid Transversality Theorem] 
\label{Thm9.1}
Let $E \subset M$ be closed with $Y \subset M \backslash E$ a closed 
Whitney stratified set.  Suppose $W \subset \cH^{k}(M \backslash E, N)$ is 
a closed Whitney stratified subset.  If both $\Psi$ and $\Psi$ restricted 
to $Y$  are transverse to $W$, then for a subset $X \subset M \backslash 
E$:
\begin{itemize}
\item[i)]
$$\cW \, = \, \{h \in \cH: j^k \Psi(h) \text{ and } j^k \Psi(h) | Y \text{ are 
transverse on $X$ to } W \text{ in } \cH^{k}(M \backslash E, N)\} $$
 is open and dense if $X$ is compact and in general is a residual set in the 
regular 
$C^{\infty}$--topology;
\item[ii)]  in particular, if $W$ is relatively Whitney stratifiable and $X$ 
is compact then 
\begin{align*}
\tilde \cW \, &= \, \{h \in \cH: j^k \Psi(h) \text{ and } j^k \Psi(h) | Y \text{ 
are completely transverse on $X$ to }  \\ 
& \hspace{.5in} W \text{ in } \cH^{k}(M \backslash E, N)\} 
\end{align*}
is open and dense in the regular $C^{\infty}$--topology.  
\end{itemize}
\end{Thm}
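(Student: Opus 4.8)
**The plan is to deduce Theorem \ref{Thm9.1} from the relative and absolute transversality theorems of \cite[Thms 1.3, 1.5]{D5} by a two-stage patching argument**, treating the full jet map and its restriction to $Y$ simultaneously. First I would set up the ambient data precisely: $M \subset \R^{\ell}$ is a closed stratified set with $\sing(M) \subset E$, so that on $M \backslash E$ the notion of smoothness of $\Psi(h)$ and of its $k$-jet extension into the subbundle $\cH^{k}(M \backslash E, N) = \cH^k(M,N) \cap J^k(M \backslash E, N)$ makes sense; and $Y \subset M \backslash E$ is a closed Whitney stratified set with strata $\{Y_i\}$. The key observation is that ``$j^k\Psi(h)$ and $j^k\Psi(h)|Y$ are transverse on $X$ to $W$'' is really a conjunction of countably many transversality conditions — transversality of $j^k\Psi(h)$ to each stratum $W_\alpha$ of $W$ at points of $X$, together with transversality of $j^k\Psi(h)|Y_i$ to each $W_\alpha$ at points of $X \cap Y_i$ — and each of these is of the type handled by the single-stratum relative transversality theorem in \cite{D5} once we know the corresponding local family $j^k\gG$ is transverse there. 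The hypothesis that $\Psi$ and $\Psi|Y$ are both transverse to $W$ (in the sense of Definition \ref{Def9.1}) is precisely what supplies these local transverse families $j^k\gG : (U\backslash E)\times \cT \to \cH^k(U\backslash E, N)$, resp. their restrictions to $(Y \cap U) \times \cT$, at every point.

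The order of the steps I would carry out: (i) Reduce to a single stratum $W_\alpha$ of the Whitney stratification of $\ol{W}$ (for (ii) we also include all strata of $\ol{W}\backslash W$, handling complete transversality), and to a single stratum $Y_i$ of $Y$ (with $Y_0 = M\backslash E$ itself recovering the unrestricted statement). (ii) Apply the relative parametric transversality theorem \cite[Thm 1.3]{D5} to the pair $(M\backslash E$, $\cH^k(M\backslash E, N))$ and to $(Y_i, \cH^k(M\backslash E, N)|Y_i)$: the hypothesis of Definition \ref{Def9.1} gives, near each $(x,h)$, a finite-dimensional $\cT \ni h$ with $j^k\gG$ transverse to $W_\alpha$, which is exactly the openness-density input; this yields for each such pair an open dense set $\cW_{\alpha}$, resp. $\cW_{\alpha, i}$, in the regular $C^\infty$-topology on compact $X$, resp. $X\cap Y_i$, and a residual set in general. (iii) Intersect: since $\ol{W}$ has locally finitely many strata and $Y$ has locally finitely many strata, and since $X$ (when compact) can be covered by finitely many of the local neighborhoods $U$ produced in the definition, the relevant intersection $\cW = \bigcap_\alpha \cW_\alpha \cap \bigcap_{\alpha, i} \cW_{\alpha,i}$ is a finite intersection of open dense sets on $X$ compact, hence open dense; and a countable intersection of open dense sets in general, hence residual — giving (i). (iv) For (ii), use that $W$ relatively Whitney stratifiable means $\ol{W}$ is Whitney stratified with $W$ an open stratum, so ``completely transverse to $W$'' — i.e. transverse to $W$ and missing $\ol{W}\backslash W$ on $X$ — is again a finite (on $X$ compact) conjunction of conditions, each open dense by the same application of \cite[Thm 1.5]{D5}; the intersection is open dense, giving $\tilde\cW$.

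Two points need care, and the second is where I expect the main obstacle. The easier point: one must check that transversality of $j^k\Psi(h)$ to a stratum $W_\alpha$ at points of $X$ is itself an \emph{open} condition in $h$ when $X$ is compact — this is the standard openness-of-transversality-to-a-closed-set argument, using that $W_\alpha$ sits inside the closed set $\ol{W}$ and that $j^k$ depends continuously on $h$ in the Whitney (= regular, since $\bgD$ compact) topology; here one invokes the fact that $M\backslash E$ need not be compact but $X$ is, and one works on a neighborhood of $X$. The harder point — \textbf{the main obstacle} — is making the simultaneous handling of the unrestricted jet map and its restriction to the stratified set $Y$ genuinely rigorous: the families $\cT$ furnished by Definition \ref{Def9.1} for ``$\Psi$ transverse to $W$'' and for ``$\Psi|Y$ transverse to $W$'' are a priori chosen independently at each point, and one must arrange (shrinking $\cU$, $U$, $V$ and enlarging the parameter manifolds by taking products $\cT \times \cT'$ along a common $h$) a single local family that is simultaneously transverse to $W_\alpha$ as a map from $(U\backslash E)\times(\cT\times\cT')$ and from $(Y_i \cap U)\times(\cT\times\cT')$. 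The Whitney conditions on $Y$ enter precisely to guarantee that transversality of the ambient family along $Y_i$ propagates correctly near $\ol{Y_i}$ and that the stratumwise conclusions assemble to the stated transversality on all of $Y$; once this local compatibility is secured, steps (ii)--(iv) are the routine Baire-category and finite-cover bookkeeping already packaged in \cite{D5}.
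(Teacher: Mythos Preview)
Your proposal is correct and matches the paper's approach: the paper's own ``proof'' is a single sentence stating that the argument follows the proof of the relative transversality theorem \cite[Thm 1.3]{D5}, modified to require only that $\Psi$ be transverse to $W$ off $E$ in $\cH^{k}(M \backslash E, N)$ as in the absolute case \cite[Thm 1.5]{D5}, and noting that the non-smoothness of $Y$ does not affect the proof. Your more detailed sketch---reducing to individual strata $W_\alpha$ and $Y_i$, applying \cite{D5} to each, and intersecting the resulting open-dense sets---is exactly how one would unpack that sentence; your ``main obstacle'' (simultaneously handling $\Psi$ and $\Psi|Y$) is in fact already dissolved by your own step (iii), since the two transversality conditions are separate open-dense conditions whose intersection remains open dense, so no product family $\cT \times \cT'$ is actually needed.
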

\par
\begin{Remark}
\label{Rem9.1}
The hybrid transversality theorem combines versions of a \lq\lq relative  
transversality theorem\rq\rq, where $j^k \Psi(\cH) = \cH^{k}(M, N)$ but 
for a Whitney stratified set $X \subset M \backslash E$, which requires 
transversality of ${_s}j_1^k(\Psi(h)) | X$ to $W$, and an \lq\lq absolute 
transversality theorem\rq\rq, where $\cH^{k}(M, N) = J^{k}(M, N)$, but only 
requires transversality for ${_s}j_1^k(\Psi(h))$, possibly on a compact 
subset (see \cite{D5}).  These are formulated so that $\Psi$ can be an 
operation involving differential or integral operators (as well as 
geometrically defined mappings) to obtain genericity results for solutions 
to PDE\rq s.  
\end{Remark}
\par
The proof of this version of the transversality theorem follows the proof 
of the relative transversality theorem \cite[Thm 1.3]{D5} modifying it to 
only require that $\Psi$ be transverse to $W$ off $E$ in $\cH^{k}(M 
\backslash E, N)$ as in the absolute transversality theorem \cite[Thm 
1.5]{D5}.  The failure of $Y$ to be smooth on $M$ does not affect the proof.  
\par 
If we compose $\Psi$ with the continuous mapping $C^{\infty}(M, N) \to 
C^{\infty}(M^s, N^s)$ defined by $f \mapsto f \times \cdots \times f$  and 
let $E = \gD^{(s)} M$ we obtain a multi-transversality version of the 
theorem (extending Corollaries 1.9 and 1.11 given in \cite{D5}).  Here 
recall from \S \ref{S:sec4} the standard notation that $X^s =  X \times X 
\times \cdots \times X$ with $s$ factors while $\gD^{(s)} X$  denotes the 
generalized diagonal of $X^s$.  However, to deduce Theorem \ref{Thm5.1} 
we need a version valid for families of mappings.  We apply Theorem 
\ref{Thm9.1} to deduce a multijet version of the hybrid transversality 
theorem for families.  \par 
Given a smooth mapping $f \in C^{\infty}(M \times Z, N)$, we let 
${_s}j_1^k(f)(x_1, \dots , x_s, z) = {_s}j^k(f(\cdot , z)(x_1, \dots , x_s)$, 
with $f(\cdot , z)$ denoting for the fixed value $z \in Z$ the function on 
$M$.  We consider a continuous mapping $\Psi : \cH \to C^{\infty}(M \times 
Z, N)$, and suppose $j_1^k(\Psi(h)) \in \cH^k(M, N)$.  We may define 
${_s}\cH^k(M, N)$ as the restriction of $(\cH^k(M, N))^s$ to $M^{(s)} \times 
N^s$, yielding 
${_s}\cH^k(M, N) \subset {_s}J^k(M, N)$.  This implies that for $h \in \cH$ 
the map ${_s}j_1^k(\Psi(h)) : M^{(s)} \times Z \to {_s}\cH^k(M, N)$.  We may 
extend Definition \ref{Def9.1} to the case of $\Psi : \cH \to C^{\infty}(M 
\times Z, N)$.  We let $E \subset M^{(s)}$ be a closed subset such that  
$M^{(s)} \backslash E$ is smooth, and let $Y \subset M^{(s)} \backslash E$ 
be a closed Whitney stratified subset.  
\begin{Definition}
\label{Def9.2}
The map $\Psi : \cH \to C^{\infty}(M \times Z, N)$ is said to be {\em 
transverse} or {\em completely transverse} to $W \subset {_s}\cH^k(M ,N)$ 
off $E$ if, given an open subset $\cU \subset \cH$, $x \in M^{(s)} 
\backslash E$, $z \in Z$, and $y \in N$, there are open sets $x \in U 
\subset M^{(s)} \backslash E$,  
$z \in U^{\prime} \subset Z$, and $y \in V \subset N$ such that for every 
map $h \in \cU$, there exists a finite-dimensional smooth manifold $ \cT 
\subset \cU$ with $h \in \cT$ such that 
for the family
\begin{align*}
\gG: M \times Z \times \cT &\rightarrow N,\\
(x, z, h) &\mapsto \Psi(h)(x, z)
\end{align*}
 the $k$-jet extension
\begin{align*}
{_s}j_1^k(\gG): (U \times U^{\prime}) \times \cT &\rightarrow 
{_s}\cH^k(M,N), \\ 
((x_1, \dots , x_s), z, h) &\mapsto {_s}j_1^k(\Psi(h))((x_1, z) \dots , (x_s, 
z)) 
\end{align*}
is transverse to $W$, respectively transverse to $Cl(W)$, relative to 
${_s}\cH^k(M \backslash E, N)$, at all points $\{((x_1, \dots , x_s), z ,h)\}$ 
with $((x_1, \dots , x_s), z) \in (U \times U^{\prime}) \cap 
\Psi(h)^{-1}(V)$.  \par 
If instead ${_s}j_1^k(\gG) | (Y \times U^{\prime}) \times \cT$ is 
transverse (on each stratum $Y_i \times \cT$) to $W$, respectively 
transverse to $Cl(W)$, at all points $\{(x ,h)\}$ with $x \in ((Y \cap U) 
\times U^{\prime}) \cap \Psi(h)^{-1}(V)$, then we say $\Psi$ restricted to 
$Y$ is {\em transverse} or {\em completely transverse} to $W$. 
\end{Definition}
Then there is the following multijet version of the Hybrid Transversality 
Theorem.  
\begin{Thm}[Hybrid Multi-Transversality Theorem]  
\label{Thm9.2}
 Let $W \subset {_s}\cH^{(k)}(M, N)$ and $Y \subset M^{(s)}\backslash E$ be 
closed Whitney stratified sets and let $X \subset M^{(s)}\backslash E$.  
Suppose that both $\Psi$ and $\Psi$ restricted to $Y$ are transverse to 
$W$ off $E$. Then, the set
\begin{align*}
  \cW &= \{h \in \cH: {_s}j_1^k \Psi(h) \text{ and } {_s}j_1^k \Psi(h)| (Y 
\times Z) \text{ are transverse on $X$ to }  \\  
& \hspace{3.1in} W \text{ in }{_s}\cH^{(k)}(M, N)\}  
\end{align*}
is a residual set, and if $X$ is compact, $\cW$ is open and dense in the 
regular $C^{\infty}$--topology.  \par 
If $W$ is relatively Whitney stratifiable and $X$ is compact, then 
\begin{align*}
\tilde \cW \, &= \, \{h \in \cH: {_s}j^k \Psi(h) \text{ and } {_s}j_1^k 
\Psi(h)| (Y \times Z) \text{ are completely transverse on $X$ to }   \\ 
& \hspace{3.5in} W \text{ in } {_s}\cH^{k}(M, N)\} 
\end{align*}
is open and dense in the regular $C^{\infty}$--topology.  
\end{Thm}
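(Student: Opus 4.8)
The plan is to derive Theorem~\ref{Thm9.2} (the Hybrid Multi-Transversality Theorem) from the single-jet Hybrid Transversality Theorem~\ref{Thm9.1} by the standard device of passing to the $s$-fold product. First I would compose the given continuous mapping $\Psi : \cH \to C^{\infty}(M \times Z, N)$ with the continuous diagonal map
\begin{align*}
C^{\infty}(M \times Z, N) &\longrightarrow C^{\infty}(M^s \times Z, N^s), \\
f &\longmapsto \bigl((x_1,\dots,x_s,z) \mapsto (f(x_1,z),\dots,f(x_s,z))\bigr),
\end{align*}
obtaining a continuous $\Psi^{(s)} : \cH \to C^{\infty}(M^s \times Z, N^s)$. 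The point is that the $k$-jet extension ${_s}j_1^k(\Psi(h))$ used in the statement is literally the restriction of $j_1^k(\Psi^{(s)}(h))$ to $M^{(s)} \times Z$, so the multijet subbundle ${_s}\cH^k(M,N) \subset {_s}J^k(M,N)$ is exactly the subbundle of $J^k(M^s, N^s)$ over which $\Psi^{(s)}$ works, after we delete the generalized diagonal. This is precisely the reduction indicated in the paragraph preceding the theorem (``If we compose $\Psi$ with the continuous mapping\dots and let $E = \gD^{(s)} M$ we obtain a multi-transversality version'').

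Next I would set $M' = M^s$, $Z' = Z$, $N' = N^s$, and take as the excluded closed set $E' = \gD^{(s)} M \cup (E \times Z\text{-fibered lift})$; more precisely, since the statement already hands us a closed $E \subset M^{(s)}$ with $M^{(s)} \setminus E$ smooth, I would take the closed set to be $\gD^{(s)} M \cup \overline{E}$ inside $M^s$, so that $M^s \setminus (\gD^{(s)}M \cup \overline{E}) = M^{(s)} \setminus E$ is a smooth manifold (possibly with boundary and corners, which Theorem~\ref{Thm9.1} explicitly allows). The Whitney stratified set $Y \subset M^{(s)}\setminus E$ and the subset $X \subset M^{(s)}\setminus E$ carry over verbatim, and $W \subset {_s}\cH^{(k)}(M,N)$ is already a closed Whitney stratified subset of the relevant bundle. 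I would then check that Definition~\ref{Def9.2} for $\Psi$ translates, term for term, into Definition~\ref{Def9.1} for $\Psi^{(s)}$ relative to $\cH^k(M' \setminus E', N')$: the finite-dimensional test families $\cT \subset \cU$, the local opens $U, U', V$, and the transversality of ${_s}j_1^k(\gG)$ (resp.\ its restriction to $Y \times \cT$) are exactly the hypotheses ``$\Psi^{(s)}$ and $\Psi^{(s)}|Y$ are transverse to $W$ off $E'$'' in the sense of Definition~\ref{Def9.1}. Here I note that the parameter space $Z$ simply rides along: absorbing $Z$ into the source manifold (working with $M^s \times Z$) or keeping it separate makes no difference to the transversality bookkeeping, since jets are taken only in the $M$-directions and $Z$ is treated as extra parameters, exactly as in the passage from Theorem~\ref{Thm9.1} to the family version; alternatively one treats $Z$ as part of the ambient and restricts attention to the appropriate fiberwise jet bundle. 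Once the hypotheses are matched, Theorem~\ref{Thm9.1}(i) gives that
$$\cW = \{h \in \cH : j^k\Psi^{(s)}(h) \text{ and } j^k\Psi^{(s)}(h)|(Y\times Z) \text{ are transverse on } X \text{ to } W\}$$
is residual in the regular $C^{\infty}$-topology, and open-dense when $X$ is compact; and Theorem~\ref{Thm9.1}(ii) gives the complete-transversality ($\tilde\cW$) statement when $W$ is relatively Whitney stratifiable and $X$ compact. Unwinding the identification $j^k\Psi^{(s)}(h)|(M^{(s)}\times Z) = {_s}j_1^k\Psi(h)$ yields exactly the assertion of Theorem~\ref{Thm9.2}.

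The routine verifications --- that the diagonal composition is continuous in the Whitney topology, that $\cH^k(M^s \setminus E', N^s)$ restricted over $M^{(s)} \setminus E$ equals ${_s}\cH^k(M,N)$, and that ``smooth off $E'$'' is preserved --- are all of the same flavor as the corollaries (Cor.~1.9 and 1.11) cited from \cite{D5}, so I would state them and refer to that pattern rather than belabor them. The main obstacle I anticipate is purely notational/bookkeeping: one must be careful that the excluded set $E'$ in $M^s$ is genuinely closed and that its complement is a smooth manifold-with-corners so that Theorem~\ref{Thm9.1} applies as stated --- the issue being that $E$ was given abstractly as a closed subset of $M^{(s)}$, and one needs $\gD^{(s)}M \cup \overline{E}$ (closure taken in $M^s$) to behave well; but since $\gD^{(s)}M$ is closed in $M^s$ and disjoint-in-the-limit considerations only require $M^{(s)} \setminus E$ to be the smooth locus, this is handled by taking the closure in $M^s$ and observing the complement is unchanged. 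A secondary point to treat with care is the interaction of the parameter $Z$ with the jet extension when $M$ has boundary and corners: one must confirm that ${_s}j_1^k(\gG)$ is transverse to $W$ \emph{as a map from $(U \times U') \times \cT$}, i.e.\ that the extra $Z$-directions and $\cT$-directions jointly suffice --- but this is exactly what Definition~\ref{Def9.2} posits as hypothesis, so there is nothing to prove beyond translating notation. I expect the whole argument to be short once the dictionary between Definitions~\ref{Def9.1} and \ref{Def9.2} is laid out explicitly.
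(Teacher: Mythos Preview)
Your proposal is correct and follows essentially the same approach as the paper: compose $\Psi$ with the continuous diagonal map $C^{\infty}(M \times Z, N) \to C^{\infty}(M^s \times Z, N^s)$, replace $Y$ by $Y \times Z$ and $E$ by $E \cup (\gD^{(s)} M \times Z)$, and then apply Theorem~\ref{Thm9.1} directly. The paper's proof is a single sentence making exactly these substitutions, so your more detailed bookkeeping (matching Definitions~\ref{Def9.1} and \ref{Def9.2}, handling the closure of $E$ in $M^s$, and tracking the role of $Z$) simply spells out what the paper leaves implicit.
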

\par
The proof follows from Theorem \ref{Thm9.1} by composing $\Psi$ with 
the continuous mapping $C^{\infty}(M \times Z, N) \to C^{\infty}(M^s 
\times Z, N^s)$ defined by $f \mapsto \tilde f$  where $\tilde f(x_1, \dots 
, x_s, z) = (f(x_1, z), f(x_2, z), \dots , f(x_s, z))$ and letting $Y \times Z$ 
be used for $Y$ and  $E \cup (\gD^{(s)} M \times Z)$ be used for $E$ in 
Theorem \ref{Thm9.1}.   This yields Theorem \ref{Thm9.2}.  \par
\subsubsection*{Deducing the Transversality Theorems from the Hybrid 
Transversality Theorems} 
\par  
We will deduce Theorems \ref{Thm5.0} and \ref{Thm5.1} from the 
preceding hybrid transversality theorems.  To do so we must show that 
Theorems \ref{Thm5.0} and \ref{Thm5.1} can be placed in the frameworks 
of these transversality theorems.  
\par  
We first consider Theorem \ref{Thm5.0} and apply Theorem \ref{Thm9.2}.  
If we decompose $X_i^*$ into connected components $X_{i\, \ell}^*$, 
where $\ell$ is just an index for the components. Then the closure of each 
$X_{i\, \ell}^*$ is a compact manifold with boundaries and corners.  
We form the disjoint union $\bar X_i^* = \coprod Cl(X_{i\, j}^*)$, over the 
connected components of $X_i^*$.  Then, we let 
$M = (\bar X_i^*)^{(s)} \times \R^{n+1}$, $N = (\R)^s$, and 
$Y = \gS_Q \times \R^{n+1}$, and 
$E = \partial (\bar X_i^*)^{(s)} \times \R^{n+1}$.  Also, we let 
$\cH^k(M, N) = {_s}J^k(\bar X_i^*, \R)$.  We define 
$$  \Psi_{\gs_i} : \emb(\bgD, \R^{n+1})  \rightarrow C^{\infty}((\bar 
X_i^*)^{(s)} \times \R^{n+1}, (\R)^s) $$ 
such that $\Psi_{\gs_i}(\Phi)$ is defined by $((x_1, \dots , x_s), u) 
\mapsto (\gs_i(x_1, u), \dots \gs_i(x_s, u))$ with $\gs_i$ the distance 
function associated to $\Phi$ for $X_i$.  
Then, Theorem \ref{Thm9.2} implies the conclusion of Theorem 
\ref{Thm5.0}, once we have shown that $\Psi_{\gs_i}$ is continuous (see 
Lemma \ref{Lem9.3}) and satisfies the perturbation transversality 
condition in Definition \ref{Def9.1}.  This will be verifed in what follows.  
\par
Second, we shall also use Theorem \ref{Thm9.2} to prove Theorem 
\ref{Thm5.1}.  To do so we must define the setting which applies to the 
multi-distance and height-distance functions in each case.  Given $m > 0$ 
with an assignment $p \mapsto j_p$ and a partition $\bl = (\ell_1, \dots , 
\ell_m)$, we embed $X_{\cJ_i}^{(\bl)}$ in a closed compact manifold with 
boundaries and corners.  First, for any $j$ we let $\bar X_j = 
\coprod_{j^{\prime} \in \cJ_j} Cl(X_{j\, j^{\prime}})$, which is a 
manifold with boundaries and corners such that $\cirX_j \subset \bar 
X_j$ is the smooth interior submanifold.  Then, we let $\bar X_{\cJ_i} = 
\coprod_{j \in \cJ_i} \bar X_j$.  Then $\bar X_{\cJ_i}$ is a manifold with 
boundary and corners with interior $X_{\cJ_i}$. \par
We next extend $X_{\cJ_i}^{(\bl)}$.  For the assignment $p \mapsto j_p$, 
and partition $\bl$, we define 
\begin{equation}
\label{Eqn9.30}
\bar X_{\cJ_i}^{(\bl)} \,\, = \,\,  \{ (x^{(j_1)}, \dots , x^{(j_{m})}) \in \bar 
X_{j_1}^{(\ell_1)}\times \cdots \times \bar X_{j_{m}}^{(\ell_{m})} :  
x_1^{(j_p)} \in X_{i\, j_p} \text{ for all $p$} \} 
\end{equation}

Then, we let $M = \bar X_{\cJ_i}^{(\bl)}$, which is a manifold with 
boundaries and corners which contains as a dense open smooth 
submanifold $X_{\cJ_i}^{(\bl)}$.  We may represent $X_{\cJ_i}^{(\bl)} = 
\bar X_{\cJ_i}^{(\bl)} \backslash E$ where $E = \partial M \cup E^{(\bl)}$
with 
\begin{align}
\label{Eqn9.30a}
E^{(\bl)} \,\, &= \,\,  \{ (x^{(j_1)}, \dots , x^{(j_{m})}) \in \bar 
X_{\cJ_i}^{(\bl)} : \text{ there are } p \neq p^{\prime}, q, 
q^{\prime} \text{ so that}  \\
& \qquad \qquad \qquad j_p = j_{p^{\prime}}, \text{and } 
x_q^{(j_p)} = x_{q^{\prime}}^{(j_{p^{\prime}})} \}.  \notag 
\end{align}
\par
Here, $E$ is a closed (stratified) set.  \par 
First, for the $i$--th multi-distance function we let $Z = 
(\R^{n+1})^{(m+1)}$, $N = \R^2$, and define 
$$\Psi_{\rho_i} : \emb(\bgD, \R^{n+1})  \rightarrow C^{\infty}(\bar 
X_{\cJ_i} \times (\R^{n+1})^{(m+1)}, \R^2) $$ 
such that for $j \in \cJ_i$
$\Psi_{\rho_i}(\Phi) | \bar X_{j_p} \times (\R^{n+1})^{(m+1)}$ applied to 
$(x, (u^{(j_1)}, \dots , u^{(j_m)}), u^{(i)})$ equals $(\gs(x, u^{(i)}), \gs(x, 
u^{(j_p)}))$.  This is the smooth extension of the multi-distance function 
$\rho_i$ associated to $\Phi$.
\par
Likewise for the height-distance function we let $Z = (\R^{n+1})^{(m)} 
\times S^n$, $N = \R^2$, and define 
$$\Psi_{\tau} : \emb(\bgD, \R^{n+1})  \rightarrow C^{\infty}(\bar 
X_{\cJ_0} \times (\R^{n+1})^{(m)} \times S^n, \R^2) $$ 
such that $\Psi_{\tau}(\Phi) | X_{\cJ_0} \times (\R^{n+1})^{(m)} \times 
S^n) = \tau$ and $\Psi_{\tau}(\Phi)$ smoothly extends $\tau$ to the 
closure $\bar X_{\cJ_0}$.  
\begin{Lemma}
\label{Lem9.3} 
All of the $\Psi_{\gs_i}$, $\Psi_{\rho_i}$, and $\Psi_{\tau}$ are 
continuous in the Whitney topology (i.e. regular $C^{\infty}$--topology as 
source spaces are compact).  
\end{Lemma}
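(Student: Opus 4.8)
\textbf{Proof proposal for Lemma \ref{Lem9.3}.}

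The plan is to verify continuity of each of the three maps by reducing to a single basic fact: the assignment $\Phi \mapsto \gs$ sending an embedding to its distance-squared function (and $\Phi \mapsto \nu$ sending an embedding to its height function) is continuous for the Whitney $C^\infty$-topology, and then observing that all three operators $\Psi_{\gs_i}$, $\Psi_{\rho_i}$, $\Psi_{\tau}$ are obtained from these by composition with tuple-formation, restriction to submanifolds, and smooth extension across corners---each of which is a continuous operation. Since the source $\bgD$ is compact, all the spaces in sight carry the Whitney topology, which coincides with the compact-open $C^\infty$-topology, so it suffices to check that preimages of subbasic neighborhoods (bounded $C^r$-neighborhoods on compact sets, for each $r$) are open.

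First I would treat the distance function. For $\Phi, \Phi' \in \emb(\bgD,\R^{n+1})$ and $(x,u) \in X \times \R^{n+1}$, we have $\gs(x,u) - \gs'(x,u) = \langle \Phi(x)-\Phi'(x),\, \Phi(x)+\Phi'(x) - 2u\rangle$, a polynomial expression in $\Phi(x)$, $\Phi'(x)$ and their first-order difference; differentiating in $x$ and $u$ a finite number of times shows that on any compact subset $K \subset X \times \R^{n+1}$ the $C^r$-norm of $\gs - \gs'$ is bounded by a polynomial (with coefficients depending on $K$) in the $C^r$-norm of $\Phi - \Phi'$ and the $C^r$-norms of $\Phi$ and $\Phi'$ themselves. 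This gives continuity of $\Phi \mapsto \gs$ into $C^\infty(X \times \R^{n+1},\R)$. The same argument, replacing one slot by the dot product against $v \in S^n$, gives continuity of $\Phi \mapsto \nu$ into $C^\infty(X \times S^n,\R)$; the factor $v$ ranges over the compact $S^n$ so no new difficulty arises.

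Next I would assemble the three operators. For $\Psi_{\gs_i}$: it is the composition of $\Phi \mapsto \gs_i$ (restriction of $\gs$ to $X_i^* \times \R^{n+1}$, continuous since restriction to a submanifold is continuous), followed by the $s$-fold tuple map $f \mapsto ((x_1,\dots,x_s,u)\mapsto (f(x_1,u),\dots,f(x_s,u)))$ into $C^\infty((\bar X_i^*)^{(s)}\times\R^{n+1},\R^s)$, which is continuous because it is built from the diagonal-type map and coordinate projections; one also needs that $\gs_i$ extends smoothly and continuously (in $\Phi$) across the corners of $\bar X_i^*$, but $\gs$ is globally defined and smooth on all of $X \times \R^{n+1}$, so the extension is automatic. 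For $\Psi_{\rho_i}$ and $\Psi_{\tau}$: these pair together the ``outer'' function ($\gs(\cdot,u^{(i)})$ or $\nu(\cdot,v)$) with the ``inner'' functions $\gs(\cdot,u^{(j_p)})$ piece-by-piece over the components $\bar X_{j_p}$ of $\bar X_{\cJ_i}$, i.e. $\Psi_{\rho_i}(\Phi)$ is determined by its restrictions to the finitely many clopen pieces $\bar X_{j_p}\times(\R^{n+1})^{(m+1)}$, on each of which it is $(\gs(\cdot,u^{(i)}),\gs(\cdot,u^{(j_p)}))$. Continuity on each piece follows from continuity of $\Phi\mapsto\gs$ together with the fact that precomposition with the fixed smooth projections $(x,u)\mapsto(x,u^{(i)})$, $(x,u)\mapsto(x,u^{(j_p)})$ is continuous, and pairing into $\R^2$ is continuous; continuity on the disjoint union follows since the pieces are clopen. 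Again the values $u^{(i)}, u^{(j_p)}, v$ range over manifolds and $\gs, \nu$ are globally smooth, so the smooth extension to $\bar X_{\cJ_i}$ (resp. $\bar X_{\cJ_0}$) is inherited from the global definition with no extra estimate needed.

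The only point requiring a little care---and the nearest thing to an obstacle---is the passage across the concave corners of the $\bar X_{\cJ_i}$: one must know that ``restrict-then-extend'' is well-defined and continuous there. But here it is harmless, because we never actually extend a function that was only defined on the interior; $\gs$ and $\nu$ are defined and $C^\infty$ on the whole ambient product, so $\Psi_{\rho_i}(\Phi)$ and $\Psi_{\tau}(\Phi)$ are simply the restrictions of globally smooth functions to $\bar X_{\cJ_i}\times Z$ (resp. $\bar X_{\cJ_0}\times Z$), and restriction to a closed submanifold-with-corners of an ambient manifold is continuous for the Whitney topology. Collecting these observations gives the continuity of all three operators, which is the content of the lemma. $\Box$
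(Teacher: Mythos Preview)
Your proof is correct and follows essentially the same strategy as the paper: factor each operator as a composition of elementary continuous operations (restriction to boundary pieces, pairing with parameter variables, assembly over the finitely many clopen components $\bar X_{j_p}$), and observe that the smooth extension across corners is a non-issue because $\gs$ and $\nu$ are globally defined. The only difference is cosmetic: where you establish continuity of $\Phi\mapsto\gs$ by a direct $C^r$-norm estimate using the bilinear identity $\gs-\gs'=\langle\Phi-\Phi',\Phi+\Phi'-2u\rangle$, the paper instead factors this step as restriction $\Phi\mapsto\Phi|_{\bar X_{j_p}}$, followed by product-with-identity into $C^\infty(\bar X_{j_p}\times(\R^{n+1})^{(m+1)},\R^{n+1}\times(\R^{n+1})^{(m+1)})$, followed by post-composition with the fixed smooth map $(\gs\circ\pi_{m+1},\gs\circ\pi_{j_p})$, citing standard continuity results from Golubitsky--Guillemin for each step.
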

\begin{proof}[Proof of Lemma \ref{Lem9.3}] 
\par
We give the proof for $\Psi_{\rho_i}$ and that for $\Psi_{\gs_i}$  and 
$\Psi_{\tau}$ is similar.  \par  
As each restriction $\Phi \mapsto \Phi | \bar X_{j_p k}$ defines a 
continuous map $\emb(\bgD, \R^{n+1}) \to C^{\infty}(\bar X_{j_p k}, 
\R^{n+1})$, the disjoint union over $k$ of the maps gives a continuous map 
\begin{equation}
\label{Eqn9.4a}
\emb(\bgD, \R^{n+1}) \rightarrow C^{\infty}(\bar X_{j_p}, \R^{n+1})\, .
\end{equation} 
Second, we may form the product with the identity map on 
$(\R^{n+1})^{(m+1)}$ to yield a continuous map in the Whitney topology 
\begin{equation}
\label{Eqn9.4b}
  C^{\infty}(\bar X_{j_p}, \R^{n+1}) \rightarrow C^{\infty}(\bar X_{j_p} 
\times (\R^{n+1})^{(m+1)}, \R^{n+1} \times (\R^{n+1})^{(m+1)}) .
\end{equation}  
This follows by an argument similar to that in Proposition 3.10 
\cite[Chap. 2]{GG}.  Third, we may compose maps in $C^{\infty}(\bar 
X_{j_p} \times (\R^{n+1})^{(m+1)}, \R^{n+1} \times (\R^{n+1})^{(m+1)})$ 
with the smooth multi-distance map $\gs_j :\R^{n+1} \times 
(\R^{n+1})^{(m+1)} \to \R^2$ given by $\gs_j = (\gs \circ \pi_{m+1}, \gs 
\circ \pi_j)$.  Here $\pi_k$ denotes projection onto the $k$--th factor of 
$(\R^{n+1})^{(m+1)}$.  As $\gs_j$ is smooth, the composition with 
$\gs_j$ defines for $j = j_p$ a continuous map 
\begin{equation}
\label{Eqn9.4c}
 C^{\infty}(\bar X_{j_p} \times (\R^{n+1})^{(m+1)}, \R^{n+1} \times 
(\R^{n+1})^{(m+1)}) \rightarrow  C^{\infty}(\bar X_{j_p} \times 
(\R^{n+1})^{(m+1)}, \R^2) 
\end{equation} 
(see e.g. Proposition 3.9 of \cite[Chap. 2]{GG}). 
\par  
Then, the composition of (\ref{Eqn9.4a}), (\ref{Eqn9.4b}), and 
(\ref{Eqn9.4c}) yields a continuous map 
\begin{equation}
\label{Eqn9.5}
\emb(\bgD, \R^{n+1}) \rightarrow C^{\infty}(\bar X_{j_p} \times 
(\R^{n+1})^{(m+1)}, \R^2) . 
\end{equation}
Taking the disjoint unions of these maps over the disjoint $\bar X_{j_p} 
\times (\R^{n+1})^{(m+1)})$ for $p = 1, \dots m$ defines a continuous 
map  
\begin{equation}
\label{Eqn9.6}
\emb(\bgD, \R^{n+1}) \rightarrow C^{\infty}(\bar X_{\cJ_i} \times 
(\R^{n+1})^{(m+1)}, \R^2) . 
\end{equation}
This is $\Psi_{\rho_i}$. 
\par
\end{proof}
\par 
Then, for $\Phi \in \emb(\bgD, \R^{n+1})$, $\Psi_{\rho_i}(\Phi)$ extends 
$\rho_i$ associated to $\Phi$ and its multijet restricted to $\bar 
X_{\cJ_i}^{(\bl)} \backslash E = X_{\cJ_i}^{(\bl)}$ is contained in 
${_s}\cH^k(M\backslash E, N) = \, {_{\bl}}E^{(k)}(X_{\cJ_i},\R^2)$.  
Then, to apply Theorem \ref{Thm9.2} it remains to verify the local 
transversality condition in Definition \ref{Def9.2}.  \par
For the height-distance function, the argument will be similar except we 
let $Z = (\R^{n+1})^{(m)} \times S_n$, and ${_s}\cH^k(M \backslash E, N) = 
\, {_{\bl}}E^{(k)}(X_{\cJ_0},\R^2)$ over $X_{\cJ_0}^{(\bl)}$.  \par

\subsubsection*{Perturbation Transversality Conditions for the 
Multi-Distance and Height-Distance Functions} \hfill  
\par
We will first consider the case of the multi-distance functions and 
indicate the modifications that must be applied for the height-distance 
function.  
We consider $m > 0$ with assignmment $p \mapsto j_p$ and partition $\bl 
= (\ell_1, \dots , \ell_m)$.  Suppose 
$W \subset \, _{\bl}E^{(k)}(X_{\cJ_i},\R^2)$ is a distinguished submanifold 
(which is invariant under the action of ${_{\bl }}\cR^{+}$). Let $\Phi \in 
\emb(\bgD, \R^{n+1})$, and let $(x, u) \in X_{\cJ_i} \times 
(\R^{n+1})^{(m+1)})$ with $x = (x^{(j_1)}, \dots , x^{(j_{m})})$, $x^{(j_p)} 
=(x^{(j_p)}_1, \dots ,  x^{(j_p)}_{\ell_{j_p}})$ and $u = 
(u^{(j_1)},\dots,u^{(j_{m})}, u^{(i)})$.  Suppose that the associated 
multi-distance map $\rho_i$ satisfies $\, _{\bl}j_1^k(\rho_i)(x, u) \in W$.  
In the notation of Definition \ref{Def9.2}, we let $N = \R^2$ and $V=\R^2$.  
Then, we must give a finite dimensional submanifold $\cT \subset 
\emb(\bgD, \R^{n+1})$, with $\Phi \in \cT$, yielding
 the finite-dimensional family 
\begin{align}
\label{Eqn9.7}
\gG: X_{\cJ_i} \times (\R^{n+1})^{(m+1)} \times \cT &\rightarrow \R^{2}  
\\
( x, u ,\Phi^{\prime}) &\mapsto \Psi_{\rho_i}(\Phi^{\prime})(x, u) = 
\rho_i^{\prime}(x, u)  \notag
\end{align}
where $\rho_i^{\prime}$ is the multi-distance function associated to 
$\Phi^{\prime} \in \cT$.  
It will have the property that the corresponding partial multi $k$-jet 
extension
\begin{align}
\label{Eqn9.8}
{_{\bl }}j_1^k\gG: X_{\cJ_i}^{(\bl)} \times (\R^{n+1})^{(m+1)} \times \cT 
&\rightarrow {_s}J^k(X_{\cJ_i},\R^{2}),\\
\;(x , u ,\Phi^{\prime}) &\mapsto {_{\bl }}j_1^k \rho_i^{\prime}(x , u) 
\notag
\end{align}
is transverse on $U^{(\bl )} \times (\R^{n+1})^{(m+1)} \times \cT$ to $W$ 
relative to ${_{\bl }}E^{(k)}(X_{\cJ_i},\R^2)$.  \par
We proceed in an analogous fashion for the height function constructing 
the submanifold $\cT$ and corresponding family
\begin{align}
\label{Eqn9.9}
\gG: X_{\cJ_0} \times (\R^{n+1})^{(m)}  \times S^n \times \cT 
&\rightarrow \R^{2}  \\
( x, u ,\Phi^{\prime}) &\mapsto \Psi_{\tau}(\Phi^{\prime})(x, u) = 
\tau^{\prime}(x, u)  \notag
\end{align}
where $\tau^{\prime}$ is the multi-distance function associated to 
$\Phi^{\prime}$.  
\par
In the next section we will construct the submanifolds $\cT$ of 
$\emb(\bgD, \R^{n+1})$ using families of perturbations, and verify the 
required conditions. 

\section{Families of Perturbations and their Infinitesimal Properties} 
\label{S:sec10} 
\par
We begin by giving a general scheme for constructing the submanifolds 
$\cT$ for families of perturbations, and then specialize to those 
perturbations arising from polynomial mappings. 
\subsection*{Construction of the Families of Perturbations} 
\par
To find an appropriate finite-dimensional manifold $\Phi \in \cT \subset 
\emb(\bgD, \R^{n+1})$ which will define a finite family of perturbations, 
we modify the method used by Looijenga \cite{L} and Wall \cite{Wa} for 
constructing a family of perturbations in the case of a single distance 
function.  Then, extra care must now be taken in the multi-function case 
to verify the transversality conditions which will not result from 
submersions.  We shall carry out the local derivative computations for 
specific families of perturbations using local Monge patches and the 
algebraic representation of fibers in jet space.   \par  
As $\bgD$ is compact, the composition with a smooth embedding $\Phi : 
\bgD \to \R^{n+1}$ defines a continuous map in the Whitney topology (see 
e.g. \cite[Prop. 3.9, Chap. 2]{GG}) 
$$ \Phi^* : C^{\infty}(\R^{n+1}, \R^{n+1}) \longrightarrow  C^{\infty}(\bgD, 
\R^{n+1})\, . $$ 
If $T$ is a finite dimensional vector space of smooth mappings in 
$C^{\infty}(\R^{n+1}, \R^{n+1})$, then $\Phi^*$ restricts to give a 
continuous linear map $\Phi^*| T : T \to C^{\infty}(\bgD, \R^{n+1})$.  If 
$\Phi^*| T$ is not injective, we can always restrict to a complement to 
$\ker(\Phi^*| T)$.  Hence, we suppose $\Phi^*| T$ is injective.  For 
example, as $\Phi(\bgD) = \bgW$ has non-empty interior, if $T$ is a vector 
space of polynomial mappings, then $\Phi^*| T$ is injective.  This gives a 
submanifold $T^{\prime} = \Phi^*(T) \subset C^{\infty}(\bgD, \R^{n+1})$, as 
is the translate $T^{\prime\prime} = \Phi + T^{\prime}$.  As $\Phi \in 
\emb(\bgD, \R^{n+1})$, which is open in $C^{\infty}(\bgD, \R^{n+1})$, there 
is an open subset $\cT \subset T^{\prime\prime}$ containing $\Phi$ and 
lying in $\emb(\bgD, \R^{n+1})$.  \par 
We will use this method to construct the submanifolds $\cT \subset 
\emb(\bgD, \R^{n+1})$ which will satisfy the transversality condition for 
$\rho_i$ and the distinguished submanifolds 
$W \subset { _{\bl }}E^{(k)}(X_{\cJ_i},\R^2)$.  \par
\vspace{1ex}
\subsubsection*{Perturbation Family from Polynomial Mappings}
\par
We recall that for a point in $X_{\cJ_i}^{\bl }$ the $x^{(j_p)}$ for each $p$ 
satisfy $x^{(j_p)}_{1} \in X_{i\, j_p}$.  Then, for each $p =1, \dots , m$, we 
choose disjoint open neighborhoods 
$U_{j_p}^{(q)} \subset \R^{n+1}$ of the $x^{(j_p)}_{q}$ and let $U_{j_p} = 
\prod_{q=1}^{\ell_p}U_{j_p}^{(q)}$.  Likewise, we choose disjoint open 
neighborhoods $V_{j_p} \subset \R^{n+1}$ of $u^{(j_p)}$ and $V_i$ of 
$u^{(i)}$.  We note that if there are $j_p, j_{p^{\prime}} \in \cT_i$ such 
that 
$X_{j_p\, j_{p^{\prime}}} \neq \emptyset$, then it is possible that 
$x^{(j_p)}_{q} = x^{(j_{p^{\prime}})}_{q^{\prime}}$ for some $q$ and 
$q^{\prime}$.  
In this case we choose the neighborhoods 
$U_{j_p}^{(q)} = U_{j_{p^{\prime}}}^{(q^{\prime})} \subset X_{j_p\, 
j_{p^{\prime}}}$.  However, there will be different $u^{(j_p)}$, 
$u^{(j_{p^{\prime}})}$ and disjoint neighborhoods $V_{j_p}$ 
and $V_{j_{p^{\prime}}}$.  
\par
In each $U_{j_p}^{(q)}$ we choose a $C^{\infty}$ bump function 
$\chi_{j_p\, q}$ with support in $U_{j_p}^{(q)}$ and $\equiv 1$ in a 
neighborhood of $x^{(j_p)}_{q}$.  
For $k > 0$ let $\cP^{(k)}(\R^{n+1})$ denote the vector space of polynomial 
mappings $g : \R^{n+1} \to \R^{n+1}$ of degree $\leq k$, of the form 
$g(y) =  (g_1(y), \dots , g_{n+1}(y))$ with $y = (y_1,\dots, y_{n+1})$. 
Then we define vector spaces 
$$T^{(k)}_{j_p\, q} \, = \, \{\chi_{j_p\, q}\cdot g : g \in 
\cP^{(k)}(\R^{n+1})\}  \quad \text{ and } \quad  T \, = \, 
\times_{x^{(j_p)}_q} T^{(k)}_{j_p\, q}\, .  $$
where the product for $T$ is over distinct points $x^{(j_p)}_q$.  So if 
$x^{(j_p)}_q = x^{(j_{p^{\prime}})}_{q^{\prime}}$, then there is a single 
perturbation space $T^{(k)}_{j_p\, q} = T^{(k)}_{j_{p^{\prime}}\, 
q^{\prime}}$ which will provide perturbations for each of them in the 
single neighborhood $U_{j_p}^{(q)} = U_{j_{p^{\prime}}}^{(q^{\prime})}$.  
We shall see in \S \ref{S:sec11} that nonetheless in 
this case the appropriate perturbation transversality conditions are still 
satisfied using the perturbation family we are about to define.  
\par
As we have described, $\cT$ will be an open subset of $T^{\prime\prime}$ 
containing $\Phi$, giving the corresponding families (\ref{Eqn9.7}) or 
(\ref{Eqn9.9}).  Our desired $U$ will be obtained by shrinking, if necessary, 
$U = \prod_{p=1}^{m}U_{j_p}$.  
Hence, to determine the effect of elements of $T$, we observe that 
elements of $T^{(k)}_{j_p\, q}$ only affect the open set $U_{j_p}^{(q)}$ 
containing $x^{(j_p)}_{q}$.  Thus these perturbations act independently and 
we can consider their effects individually.  \par
For a single $x^{(j_p)}_{q}$, we simplify notation in the derivative 
calculations that follow and denote it by $x^{(0)}$ 
and let $y^{(0)} = \Phi(x^{(0)})  \in \cB$.  We locally represent a 
neighborhood of $\cB$ about $y^{(0)}$ using a Monge patch 
\begin{equation}
\label{Eqn10.1}
  (y_1, \dots , y_{n+1}) \,\, = \,\, (x_1,  \dots , x_n, f(x_1,  \dots , x_n))  
\end{equation}
so that $y = \Phi(x)$ locally is given by (\ref{Eqn10.1}), with $y = (y_1, 
\dots , y_{n+1})$ denoting coordinates in $\R^{n + 1}$ and $x = (x_1,  \dots 
, x_n)$ denoting local coordinates for $X$ near $x^{(0)}$.  We may also 
suppose in the case of $x^{(0)} = x^{(j_p)}_1$
that the region $\gW_i$ is below the Monge patch graph and $\gW_{j_p}$ 
above the graph.  We may obtain such a form by an orthogonal 
transformation and a translation; hence, such transformations still send 
$\cP^{(k)}(\R^{n+1})$ to itself.  
We use the following notation to describe the generators for 
$\cP^{(k)}(\R^{n+1})$.   
 Let $\bga =(\ga_1,\dots,\ga_{n+1})$ be a multi-index, $y^{\bga} = 
y^{\ga_1} \cdots y^{\ga_{n+1}}$, and use the standard notation $ |\bga | 
=\sum_{j=1}^{n+1} \ga_j$.  For the standard basis $\{e_i\}$ for $\R^{n+1}$  
we let $w_{\bga, \ell}(y) = y^{\bga} e_{\ell}$ (so the $\ell$-th coordinate 
is $y^{\bga}$, and the others are $0$). Then a basis for 
$\cP^{(k)}(\R^{n+1})$ is given by $\{ w_{\bga, \ell} : 0 \leq | \bga | \leq k 
\text{ and } 0 \leq \ell  \leq n+1\}$.  Thus, any $g(y)  \in 
\cP^{(k)}(\R^{n+1})$ may be written as $g(y) = \sum_{\ell 
=1}^{n+1}\sum_{|\bga | \leq k} \, t_{\bga, \ell}\,w_{\bga,\ell}$.  We will 
use $\bt = (t_{{\bga, \ell}})$ as coordinates for $\cP^{(k)}(\R^{n+1})$.  
\par
By restricting to a sufficiently small neighborhood of $x^{(0)}$ we may 
assume  $\chi \equiv 1$, so sufficiently near $x^{(0)}$, the elements of 
$T^{\prime}$ have the form  
\begin{equation}
\label{Eqn10.2}
\tilde \Phi \,\, = \,\, \Phi + \sum_{\ell =1}^{n+1}\sum_{|\bga | \leq k} 
t_{\bga, \ell}\,w_{\bga,\ell}\circ \Phi \, .
\end{equation}
\par

\subsection*{Computation of Derivatives for Families of Perturbations}  
\par
In this section, we use (\ref{Eqn10.2}) to define the perturbed functions 
associated to $\tilde \Phi$ from the basic distance and height functions 
$\gs^{\prime}(y, u) = \| y - u\|^2$ and $\nu^{\prime}(y, v) = y\cdot v$ on 
$\R^{n + 1}$.  The {\em perturbed distance function} is defined by $\tilde 
\gs = \gs^{\prime} \circ (\tilde \Phi \times id)$ so that $\tilde \gs(x, u, 
\bt) = \| \tilde \Phi(x, \bt) - u\|^2$.  For a tuple $(u^{(j_1)}, \dots , 
u^{(j_m)}, u^{(i)})$, we let $\tilde \gs_{j_p} (x, \bt) = \tilde \gs (x, 
u^{(j_p)}, \bt) = \| \tilde \Phi(x, \bt) - u^{(j_p)}\|^2$ for each $p$, and also 
for $i$ in place of $j_p$.  Second, the {\em perturbed height function} 
$\tilde \nu = \nu^{\prime} \circ (\tilde \Phi \times id)$ so $\tilde \nu (x, 
v, \bt) = \tilde \Phi(x, \bt)\cdot v$.  Then, from these we define the {\em 
perturbed 
multi-distance function} $\tilde \rho_i$ by replacing in (\ref{Eqn4.2}) 
$\gs (x, u^{(j_p)})$ by $\tilde \gs (x, u^{(j_p)}, \bt)$ for all $p$, and 
similarly for $i$ in place of $j_p$.  Likewise, the {\em perturbed 
height-distance function}  $\tilde \tau$ is defined from (\ref{Eqn4.3}) by 
replacing $\nu (x, v)$ by $\tilde \nu (x, v, \bt)$, and $\gs (x, u^{(j_p)})$ by 
$\tilde \gs (x, u^{(j_p)}, \bt)$ for all $p$.  \par
For these we determine the resulting infinitesimal deformations resulting 
from the perturbation $\tilde \Phi$.  We progressively proceed from germs 
and multigerms of the distance and height functions to the 
multi-distance and height-distance functions.  For the derivatives with 
respect to $u$ we 
use orthonormal coordinates $(u_1, \dots, u_{n+1})$ in a neighborhood of 
$u^{(0)}$.  For $v^{(0)} \in S^n$, we choose for a neighborhood of $v^{(0)}$ 
an orthonormal basis $\{v_j\}$ for $T_{v^{(0)}} S^n$ (so orthogonal to 
$v^{(0)}$), with coordinates $\bs = (s_1, \dots , s_n)$ defined by $(s_1, 
\dots , s_n) \mapsto \sum_{j = 1}^{n} s_i v_i + s_{n+1} v^{(0)}$, where 
$s_{n+1} = \sqrt{1 - \sum_{j = 1}^{n} s_i^2}$.   \par 
Then, we compute
\begin{align}
\label{Eqn10.3}
 \pd{\gs^{\prime}}{y_i} =  2(y - u)\cdot e_i \qquad &\text{ and } \qquad 
\pd{\nu^{\prime}}{y_i} =  e_i \cdot v^{(0)} ;  \notag \\
\pd{\gs^{\prime}}{u_i} = -2(y - u)\cdot e_i \qquad &\text{ and } \qquad 
\pd{\nu^{\prime}}{s_i} =  y \cdot (v_i - \frac{s_i}{s_{n+1}} v^{(0)}) \, ,
\end{align}
where here and below \lq\lq $\cdot$\rq\rq denotes dot product on 
$\R^{n + 1}$. \par

Then, we use (\ref{Eqn10.3}) and the chain rule to compute the derivatives 
of the perturbations of the distance function $\tilde \gs(\cdot , u^{(0)})$ 
at $(x^{(0)}, u^{(0)})$ and the height function $\tilde \nu(\cdot , v^{(0)})$ at 
$(x^{(0)}, v^{(0)})$.  To compute the derivatives at $x = 0$, $\bs = 0$, $\bt = 
0$, and $u = u^{(0)}$, we let $z = (x, \bt, \bs, u)$ and $z_0 = (0, 0, 0, u_0)$ 
and obtain  \flushpar
\vspace{1ex}
\begin{itemize}
\item[i)] {\it derivatives of the perturbed distance function :}
\begin{align}
\label{Eqn10.4a}
\pd{\tilde \gs}{x_i}_{| z = z_0}  \,\, &= \,\,  
2 (\Phi - u^{(0)})\cdot \pd{\Phi}{x_i} \, ; \notag  \\
 \pd{\tilde \gs}{u_j}_{|  z = z_0} \,\, &= \,\,  
-2 (\Phi - u^{(0)})\cdot e_j  \, ; \notag  \\
 \pd{\tilde \gs}{t_{{\bga, \ell}}}_{| z = z_0} \,\, &= \,\,  
2 (\Phi - u^{(0)})\cdot (y^{\bga}\circ \Phi) e_{\ell}   \, ;  \notag  \\
 \pd{\tilde \gs}{s_j}_{|  z = z_0} \,\, &= \,\,  0 \, ;
\end{align}
\flushpar
and
\vspace{1ex}
\item[ii)] {\it derivatives of the perturbed height function :}
\begin{align}
\label{Eqn10.4b}
\pd{\tilde \nu}{x_i}_{|  z = z_0} \,\, &= \,\,  
\pd{\Phi}{x_i}\cdot v^{(0)}   \, ; \notag  \\
\pd{\tilde \nu}{u_i}_{|  z = z_0} \,\, &= \,\,  0  \, ; \notag  \\
\pd{\tilde \nu}{t_{{\bga, \ell}}}_{|  z = z_0} \,\, &= \,\,
(y^{\bga}\circ \Phi) e_{\ell} \cdot v^{(0)}  \, ;  \notag  \\
\pd{\tilde \nu}{s_j}_{|  z = z_0} \,\, &= \,\,  \Phi \cdot v_j \, .
\end{align}
\end{itemize}
\par
To evaluate these derivatives, we distinguish whether or not the distance 
function $\tilde \gs(\cdot , u^{(0)})$ and the height function $\tilde 
\nu(\cdot , v^{(0)})$ have critical points at $x^{(0)}$.  
For the distance function, the condition for a critical point is that 
$u^{(0)}$ is lying in the normal line to the surface, which for the Monge 
patch is the $y_{n+1}$-axis so $u^{(0)}_j = 0$ for $j \leq n$.  Then, the 
distance function will be of the form (\ref{Eqn5.3}).  Thus, it will be an 
$A_1$ point unless $u^{(0)}_{n+1} = \frac{1}{\gk_j}$ for some $j \leq n$ 
and then it is an $A_k$ point with $k \geq 2$.  We use the same notation 
as in (\ref{Eqn5.3}), so for a local maximum we must have $u^{(0)}_{n+1} = 
\frac{1}{\gk_1}$.  
\par  
For the height function, the condition is that $v^{(0)}$ is normal to the 
surface so that $v^{(0)}_j = 0$ for $j \leq n$, and moreover that $x^{(0)}$ 
is a maximum for the height function for $v^{(0)}$ requires that $v^{(0)}$ 
is in the positive $y_{n+1}$-direction.  
\par
\subsubsection*{Evaluating Derivatives of Jet Extension Mappings}
\par 
Next, we use (\ref{Eqn10.4a}) and  (\ref{Eqn10.4b}) to evaluate the 
derivatives of the jet mappings ${_s}j_1^k(\tilde \gs)$ and 
${_s}j_1^k(\tilde \nu)$.  To do so we use the local representation
of the partial multijet spaces as products in (\ref{Eqn4.4b}) of Definition 
\ref{Def4.6}. 
\begin{equation}
\label{Eqn10.5a}
 {_{\bl}}E^{(k)}(X_{\cJ_i},\R^2)  \,\, = \,\, \prod_{p=1}^{m} \, 
{_{\ell_p}}J^k(\cirX_{j_p},\R^2)\, | \, (X_{\cJ_i})^{(\bl)} \, .
\end{equation}
Furthermore, the multijet space ${_{\ell_p}}J^k(\cirX_{j_p},\R^2)$ is 
itself locally a product of jet spaces $J^k(X_{i\, j_p},\R^2)$ and the jet 
mapping acts as a product when we fix a value in $\R^{n+1}_i$ or $S^{n}$:  
\begin{align}
\label{Eqn10.5b}
 j_1^k(\tilde \gs) : U_{j_p}^{(q)} \times V_{j_p} \times T_{j_p}^{(q)} 
&\longrightarrow J^k(U_{j_p}^{(q)}, \R) \,\, \simeq \,\, U_{j_p}^{(q)} 
\times \R \times J^k(n, 1)\, , \notag   \\
j_1^k(\tilde \nu) : U_{j_p}^{(q)} \times S^n \times T_{j_p}^{(q)} 
&\longrightarrow J^k(U_{j_p}^{(q)}, \R) \,\, \simeq \,\, U_{j_p}^{(q)} 
\times \R \times J^k(n, 1) \, .
\end{align}
As usual $J^k(n, 1)$ denotes the $k$-jets of germs $\R^n, 0 \to \R, 0$, and 
$J^k(n, 1) \times \R \simeq \cC_x\backslash \itm^{k+1}$, where $\cC_x$ 
denotes the ring of germs on $(\R^{n}, 0)$ with maximal ideal $\itm$.  A 
$\cR^+$-invariant submanifold $W$ of (multi-)jet space is a bundle over 
the base with fiber $W_0$.  For a local representation of the jet bundle  as 
a product, let $\pi_f$ denote projection of the jet bundle onto a fiber.  
Then, transversality of a multijet map ${_s}j_1^k(\psi)$ to $W$ is 
equivalent to transversality of $\pi_f \circ {_s}j_1^k(\psi)$ to the fiber 
$W_0$.  We refer to $\pi_f \circ {_s}j_1^k(\psi)$ as the {\it fiber jet 
extension map}, which we denote by ${_s}j_f^k(\psi)$.  \par 
If $w$ denotes any of the local coordinates and $\psi$ denotes either the 
distance or height function, then as partial derivatives commute, the fiber 
jet extension maps can be computed by
\begin{equation}
\label{Eqn10.6}
  dj_f^k(\psi)_{| z = z_0}(\pd{ }{w}) \,\, = \,\, \pd{\psi}{w}_{|  z = z_0} 
\quad \mod \itm^{k+1}  .
\end{equation}
This reduces the verification of the transversality conditions to $W$ in 
Definitions \ref{Def9.1} and \ref{Def9.2} to verifying the transversality of 
the appropriate fiber jet extension map to the fiber $W_0$.  \par
We first use (\ref{Eqn10.4a}) and (\ref{Eqn10.4b}) in (\ref{Eqn10.6}) to 
compute the derivatives for the Monge patch in the case that $\tilde 
\gs(\cdot, u^{(0)})$ and $\tilde \nu(\cdot, v^{(0)})$ have critical points at 
$0$.  As $\Phi$ is of the form (\ref{Eqn5.2}), $\pd{\Phi}{x_i}  = e_i + 
\pd{f}{x_i}e_{n+1}$.  Evaluating (\ref{Eqn10.4a}) and (\ref{Eqn10.4b}), 
using the conditions on $u^{(0)}$ and $v^{(0)}$ for critical points, we 
obtain the following:  
\flushpar
\vspace{1ex}
\begin{itemize}
\item[i)]
{\it derivatives of the distance function at a critical point:}
\begin{align}
\label{Eqn10.6a}
\pd{\tilde \gs}{x_i}_{| z = z_0}  \,\, &= \,\,  
2 (x_i + (f - u^{(0)}_{n+1})\pd{f}{x_i}) \, ; \notag  \\
 \pd{\tilde \gs}{u_j}_{| z = z_0} \,\, &= \,\, 
\left\{  \begin{array}{lr}   
 -2 x_j &  \displaystyle 0 \leq j \leq n \\ 
-2(f - u^{(0)}_{n+1})  & \displaystyle j = n+1      
\end{array}  \right. \, ;  \notag  \\
 \pd{\tilde \gs}{t_{{\bga, \ell}}}_{| z = z_0} \,\, &= \,\,  
\left\{  \begin{array}{lr}       
 2 x_j(y^{\bga}\circ \Phi)  &  \displaystyle 0 \leq \ell \leq n \\ 
2(f - u^{(0)}_{n+1})(y^{\bga}\circ \Phi)  &  \displaystyle \ell = n+1      
\end{array}  \right. \, ;  \notag  \\
 \pd{\tilde \gs}{v_j}_{| z = z_0} \,\, &= \,\,  0 \, ;
\end{align}
 \flushpar
and
\vspace{1ex}
\item[ii)]{\it derivatives of the height function at a critical point:}
\begin{align}
\label{Eqn10.6b}
\pd{\tilde \nu}{x_i}_{| z = z_0} \,\, &= \,\,  
\pd{f}{x_i} \, ;  \notag  \\
\pd{\tilde \nu}{u_i}_{| z = z_0} \,\, &= \,\,  0  \, ;  \notag  \\
\pd{\tilde \nu}{t_{{\bga, \ell}}}_{| z = z_0} \,\, &= \,\,
\left\{  \begin{array}{lr}       
 0 &  \displaystyle 0 \leq \ell \leq n \\ 
(y^{\bga}\circ \Phi)  & \displaystyle \ell  = n+1      
\end{array}  \right.  \, ;   \notag  \\
\pd{\tilde \nu}{v_j}_{| z = z_0} \,\, &= \,\,  x_j \, .
\end{align}
\end{itemize}
\par

\subsection*{Multijet Properties Implying Stratification Properties on 
the Boundaries} 
\par
Before completing the proofs of the transversality theorems, we first 
take a brief detour to use these calculations to give the proofs of several 
Lemmas referred to in \S \ref{S:sec6}.  These Lemmas were used there to 
prove certain stratification properties of the strata in the boundaries of 
the regions corresponding to the Blum types.  \par 
Specifically we consider the infinitesimal deformations of the 
multigerms of either multi-distance or height-distance functions arising 
from the variation of either $u$ or $v$ and all but one of the multigerm 
points on the boundary.  In these cases, $\bt = 0$ so we are considering 
the derivative calculations as they apply to $\gs$ and $\nu$.   \par 
We then prove that on the inverse images of certain classes of 
submanifolds $W^{(\bga)}$ under the (multi)jet maps, these functions are 
not singular.  
\begin{Lemma}
\label{Lem10.7}
Suppose the distance function $\gs$ defines the $\cR^+$--versal 
unfolding at $x = \{x^{(1)}, \dots, x^{(s)}\}$ and $u = u^{(0)}$ of the 
multigerm $g(x) = \gs(x, u^{(0)})$ of type $A_{\bga}$.  If either $s > 
1$, or $s = 1$ and $\bga = (k)$ with $k \geq 2$, then 
$$  d{_s}j_1^k(\gs)(x, u^{(0)}) (\oplus_{j = 2}^{s} T_{x^{(j)}} X \oplus 
T_{u^{(0)}} R^{n+1}) \, \cap \, T_{{_s}j_1^k(g)(x)} W^{(\bga)} \,\, = \,\, 0 \, . 
$$
\end{Lemma}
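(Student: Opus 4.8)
The plan is to interpret this lemma as a transversality/immersion statement about the multijet extension map restricted to the subspace of variables that do NOT include the first multigerm point $x^{(1)}$.  First I would set up the local picture: choose local Monge coordinates at each $x^{(j)}$ as in \S\ref{S:sec10}, so that $\gs(\cdot,u^{(0)})$ has the normal form (\ref{Eqn5.3}) at each point, and the versality hypothesis says precisely that the full unfolding $\gs(x,u)$ with all of $x$ and $u$ varying is $\cR^+$-versal, i.e.\ the derivatives of the fiber jet extension map (computed via (\ref{Eqn10.6})) span the normal space to $W^{(\bga)}$ together with the tangent directions of the diagonal $\gD^s\R$.  The point of the lemma is that versality already holds if we are allowed to move the centre $u$ and the $s-1$ points $x^{(2)},\dots,x^{(s)}$ — the extra freedom of moving $x^{(1)}$ is redundant — and therefore the image of the complementary tangent directions meets $T W^{(\bga)}$ only at $0$.

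The key steps, in order, would be: (1) Recall from the proof of Mather's theorem (as cited after Proposition \ref{Prop6.3}, and the more general statement in \cite[\S5]{D5}) the explicit algebraic criterion for $\cR^+$-versality of the multigerm $g$: in the notation of the proof of Lemma \ref{Lem7.6}, $\sum_{j}\cC_{x^{(j)}}\{\partial f_j/\partial x^{(j)}_i\} + \langle 1, \partial\bar F/\partial u_1|_0,\dots\rangle = \oplus_j \cC_{x^{(j)}}$, truncated mod $\itm^{k+1}$, where the unfolding parameters $\partial\bar F/\partial u_\ell|_0$ are exactly the infinitesimal deformations produced by varying $u$ computed in (\ref{Eqn10.6a}) (the $\partial\tilde\gs/\partial u_j$ lines with $\bt=0$).  (2) Observe that moving $x^{(j)}$ for $j\geq 2$ produces, in the $j$-th summand, the germs $\partial f_j/\partial x^{(j)}_i$, which are already in the versality sum; crucially for $j\geq 2$ this contributes nothing new beyond what is needed.  (3) The remaining redundancy is the first factor: when $s>1$, the variation of $u$ alone already surjects onto the first summand $\cC_{x^{(1)}}/\itm^{k+1}$ modulo the image of $\oplus_{j\geq2}$ plus the diagonal, because the $A_{\ga_1}$ germ at $x^{(1)}$ is finitely determined and its miniversal unfolding monomials $1,x_1,\dots,x_1^{\ga_1-1}$ all appear among the $y^{\bga}$-independent parts of the $u$-derivatives (this is exactly Mather's point that each simple germ is unfolded versally just by the $u$-coordinates).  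When $s=1$ and $\bga=(k)$ with $k\geq2$ the same holds: the single $A_k$ germ, $k\geq 2$, is versally unfolded by the centre $u^{(0)}\in\R^{n+1}$ alone, since its $\cR^+_e$-codimension is $k-1\leq n$ for $k\leq n+1$ and the needed unfolding monomials are produced in (\ref{Eqn10.6a}).  (4) Translating back: $d{_s}j_1^k(\gs)(x,u^{(0)})$ restricted to $\oplus_{j\geq2}T_{x^{(j)}}X \oplus T_{u^{(0)}}\R^{n+1}$ already hits a complement to $T W^{(\bga)}$ in the fiber direction (actually surjects onto the whole fiber modulo $T W^{(\bga)}$); hence any vector of this image lying in $T W^{(\bga)}$ must be $0$.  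Here one must be a little careful about the diagonal $\gD^s\R$ factor inside $W^{(\bga)}_0$ — I would handle it by noting $\partial\tilde\gs/\partial u_{n+1}$ contributes the diagonal direction, so the target splitting is compatible.

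The main obstacle I expect is step (3)/(4): making precise the statement that \emph{varying only $u$} (and $x^{(2)},\dots,x^{(s)}$) already achieves versality, equivalently that the derivative map in those restricted directions is \emph{surjective} onto the normal space of $W^{(\bga)}$, so that the intersection with $TW^{(\bga)}$ is forced to be zero.  The subtlety is that the $u$-derivatives in (\ref{Eqn10.6a}) are not independent of the $x$-derivatives — one uses the relation $\partial\tilde\gs/\partial x_i = -\partial\tilde\gs/\partial u_i$ for $i\leq n$ (up to the curvature factor) visible in (\ref{Eqn10.6a}) — so I would have to argue that after quotienting by the $\cR$-action (the $\partial f_j/\partial x^{(j)}_i$ terms) the $u$-directions still span what is left, invoking the finite determinacy of the $A_k$ germs and the hypothesis $k\leq n+1$ (so that the focal structure gives enough parameters).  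This is essentially the content of the openness-of-versality argument and Mather's codimension computation (\ref{Eqn6.17}); I would cite \cite[\S5]{D5} and \cite[Thm 9.3]{D6} to avoid reproving it, and spend the actual work on the bookkeeping that isolates the first factor and the diagonal.
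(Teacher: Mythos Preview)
Your proposal has a fundamental logical gap in step (4), and the premise of step (3) is actually false in general. You argue that the restricted derivative ``surjects onto the whole fiber modulo $T W^{(\bga)}$; hence any vector of this image lying in $T W^{(\bga)}$ must be $0$.'' But surjectivity of a linear map onto a quotient never implies trivial intersection with the subspace you quotient by; that would additionally require a dimension match. And here the dimensions do \emph{not} match: $\dim\bigl(\oplus_{j\geq 2}T_{x^{(j)}}X \oplus T_{u^{(0)}}\R^{n+1}\bigr) = n(s-1)+(n+1) = ns+1$, whereas by (\ref{Eqn6.17}) the codimension of $W^{(\bga)}$ is $|\bga|-1+ns$. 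Since the lemma's hypotheses force $|\bga|\geq 2$, you always have $\mathrm{codim}\,W^{(\bga)} \geq ns+1$, with strict inequality whenever $|\bga|>2$. So for $|\bga|>2$ (e.g.\ a single $A_3$ point, or $A_1^3$) the restricted derivative \emph{cannot} surject onto the normal space, and your claimed redundancy argument collapses. The lemma is an \emph{injectivity} statement (the composition $V \to T({}_sJ^k)/TW^{(\bga)}$ is injective), not a transversality/surjectivity statement, and versality alone does not give it.

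A secondary issue: in step (2) you say the $\partial f_j/\partial x^{(j)}_i$ are ``already in the versality sum'' and contribute nothing new. In fact the paper's proof uses the opposite fact: these partials form a regular sequence, so $\{\partial/\partial x^{(j)}_i + \partial g_j/\partial x^{(j)}_i\}$ are linearly \emph{independent} modulo $T W^{(\ga_j)}$ in each factor $j\geq 2$. This is how one accounts for the $n(s-1)$ dimensions from the $x^{(j)}$-directions. What remains is to show that the $(n+1)$ $u$-derivatives, together with the diagonal $(1,\dots,1)$, are linearly independent in $\cC_{x^{(1)}}/T\cR^+ g_1 \oplus \bigoplus_{j\geq 2}\cC_{x^{(j)}}/\itm_{x^{(j)}}$. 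The paper does this by a direct computation in Monge coordinates, which splits into cases: when $\ga_1\geq 2$ one uses that $f$ has a nonzero $x_1^2$ term (the focal condition $u^{(0)}_{n+1}=1/\kappa_1$) to see the span $\langle x_1,\dots,x_n, f-u^{(0)}_{n+1}\rangle$ meets $\langle 1\rangle + \itm_x(x_2,\dots,x_n)+\itm_x^3 \supset T\cR^+ g_1$ trivially; when $\ga_1=1$ one must compare the first two factors, and this requires choosing adapted orthonormal frames at $x^{(1)}$ and $x^{(2)}$ related by a rotation angle $\theta$ in the plane spanned by $u^{(0)}-x^{(1)}$ and $u^{(0)}-x^{(2)}$, then checking a $3\times 3$ independence by hand. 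There is no soft route via versality or cited determinacy theorems; the linear-independence computation is the content of the lemma.
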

\par
There is an analogous result for the height function.  
\begin{Lemma}
\label{Lem10.8}
Suppose the height function $\nu$ defines the $\cR^+$--versal 
unfolding at $x = \{x^{(1)}, \dots, x^{(s)}\}$ and $v = v^{(0)}$ of the 
multigerm $h(x) =\nu(x, v^{(0)})$ of type $A_{\bga}$.  If either $s > 
1$, or $s = 1$ and $\bga = (k)$ with $k \geq 2$, then 
$$  d{_s}j_1^k(\nu)(x, v^{(0)}) (\oplus_{j = 2}^{s} T_{x^{(j)}} X \oplus 
T_{v^{(0)}} S^n) \, \cap \, T_{{_s}j_1^k(h)(x)} W^{(\bga)} \,\, = \,\, 0 \, . $$
\end{Lemma}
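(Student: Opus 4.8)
The plan is to mimic the proof of Lemma~\ref{Lem10.7} for the distance function, replacing the derivative formulas (\ref{Eqn10.4a}) and (\ref{Eqn10.6a}) by their height-function counterparts (\ref{Eqn10.4b}) and (\ref{Eqn10.6b}). First I would reduce the statement to the fibers of the jet bundle: by the local product decompositions (\ref{Eqn10.5a})--(\ref{Eqn10.5b}) and the identity (\ref{Eqn10.6}), the tangent space $T_{{_s}j_1^k(h)(x)}W^{(\bga)}$ splits, over each marked point $x^{(j)}$, into the horizontal summand $T_{x^{(j)}}X$, the fiber orbit tangent $T_{\sigma_j}W^{(\ga_j)}_0$, and the single common value direction carried by the $\gD^s\R$-factor in (\ref{Eqn5.0}). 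Since the $A_{\ga_j}$ germs (and, when $n+1=7$, the $\tilde E_7$ germs) are finitely determined, for $k$ past the determinacy degree $W^{(\ga_j)}_0$ is the full $\cR$-orbit stratum and $T_{\sigma_j}W^{(\ga_j)}_0 = \itm\,\mathrm{Jac}(h_j)\pmod{\itm^{k+1}}$; and, writing $h_j$ in its Monge normal form, $\mathrm{Jac}(h_j)$ is minimally generated by the partials $\partial h_j/\partial x_i$ while $\itm\,\mathrm{Jac}(h_j)\subseteq\itm^2$ contains no nonzero linear form (both being $\cR$-invariant properties, hence independent of the coordinates chosen).

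Next I would read off the image of $d{_s}j_1^k(\nu)$ on the given subspace from (\ref{Eqn10.6b}). Moving a base point $x^{(j)}$ with $j\geq 2$ contributes, over $x^{(j)}$, a horizontal vector plus the vertical jet $j^k(\partial h_j/\partial x_i)\in\mathrm{Jac}(h_j)$ which, being a minimal generator, does \emph{not} lie in $\itm\,\mathrm{Jac}(h_j)$; it contributes nothing over the other marked points. Varying $v\in T_{v^{(0)}}S^n$ is purely vertical, and over every $x^{(j)}$ it is a linear form in the local coordinates with vanishing value component (taking the orthonormal basis $\{v_l\}$ of $(v^{(0)})^{\perp}$, which is the common tangent plane $T_{y^{(j)}}\cB$ at all the marked points, one has $\partial\nu/\partial v_l=\langle\Phi,v_l\rangle$, linear in the $x_l$). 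Given $w$ in the asserted intersection, write $w=d{_s}j_1^k(\nu)\big(\sum_{j\geq 2}\sum_i a_{ji}\,\partial/\partial x^{(j)}_i+\sum_l b_l\,\partial/\partial v_l\big)$. Its component over $x^{(1)}$ receives nothing from the base-point directions, so it is the linear form $\sum_l b_l[x_l]$; membership in $T_{\sigma_1}W^{(\ga_1)}_0$ plus the value line forces $\sum_l b_l[x_l]\in\itm\,\mathrm{Jac}(h_1)$, hence $\sum_l b_l[x_l]=0$ and all $b_l=0$. Then over each $x^{(j)}$, $j\geq 2$, $w$ has vertical part $\sum_i a_{ji}\,j^k(\partial h_j/\partial x_i)$, which must lie in $\itm\,\mathrm{Jac}(h_j)$ (the value parts vanishing), so minimality of $\{\partial h_j/\partial x_i\}$ forces all $a_{ji}=0$; hence $w=0$. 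Versality enters only to guarantee each $x^{(j)}$ is a critical point of $\nu(\cdot,v^{(0)})$ of the stated type with the normal form (\ref{Eqn5.2}) available, so that the formulas of \S\ref{S:sec10} apply; the conclusion then feeds into Lemma~\ref{Lem6.6} through Proposition~\ref{Prop6.6a}.

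The hard part will be the bookkeeping forced by two features absent from single-function arguments: the common value direction $\gD^s\R$, and the fact that the sphere parameter $v$ perturbs all $s$ germs of the multigerm at once. One has to check carefully that over the singled-out point $x^{(1)}$ the $v$-contribution is a pure linear form with no value component, since this is exactly what makes the $x^{(1)}$-fiber step collapse; for the distance function the normal-direction parameter $u_{n+1}$ does contribute the value line, which is why the hypothesis ``$s>1$, or $s=1$ with $\bga=(k)$, $k\geq 2$'' must be present — it ensures either that there is a second point to play against or that $\itm\,\mathrm{Jac}(h_1)$ carries no linear form. A secondary point is the $\tilde E_7$ case when $n+1=7$: there $h_j$ carries a modulus, but $W^{(\bga)}$ is replaced by the $\tilde E_7$-stratum (or its product with $A_1$-orbits), whose fiber tangent is again generated modulo $\itm^2$ by the partials and contains no linear form, so the same argument goes through once $k\geq 4$.
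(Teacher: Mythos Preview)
Your argument is correct and follows the same underlying strategy as the paper --- pass to the fiber of the multijet bundle, identify $T W^{(\ga_j)}_0$ with $\itm\,\mathrm{Jac}(h_j)\subset\itm^2$, and use the explicit formulas (\ref{Eqn10.4b})--(\ref{Eqn10.6b}) for the height--function derivatives --- but your organization is tighter than the paper's.  The paper mirrors the case split of Lemma~\ref{Lem10.7}: it treats $s=1$ directly, then for $s>1$ separates $\ga_1\ge 2$ (reducing to the $s=1$ computation at $x^{(1)}$) from $\ga_1=1$ (where it passes to the first two factors and works modulo $\itm_{x^{(1)}}^2\oplus\itm_{x^{(2)}}$, as in (\ref{Eqn10.14a})).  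You instead exploit the feature that distinguishes the height case from the distance case: at $x^{(1)}$ the $v$--contribution is a pure linear form with no constant term (there are only $n$ sphere parameters, and $\partial\nu/\partial s_j=x_j$ in the Monge frame centered there), so the single observation $\itm\,\mathrm{Jac}(h_1)\subset\itm^2$ kills all $b_l$ regardless of whether $\ga_1=1$ or $\ga_1\ge 2$.  The remaining step, that the partials $\partial h_j/\partial x_i$ are minimal generators of $\mathrm{Jac}(h_j)$ (they form a regular sequence), is exactly what the paper uses in the corresponding step of Lemma~\ref{Lem10.7}.

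One small imprecision: your parenthetical claim that $\partial\nu/\partial v_l=\langle\Phi,v_l\rangle$ is ``linear in the $x_l$'' with vanishing value component holds only at the point where you have centered the ambient origin.  At $x^{(j)}$ with $j\ge 2$ the same expression is $y^{(j)}\!\cdot v_l + x_l$, affine rather than linear.  This does not affect your proof, since you only invoke the vanishing value component at $x^{(1)}$; once $b_l=0$ the $v$--directions contribute nothing over the other points.
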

Note that these lemmas are equally valid replacing the first factor by any 
other.  \par
A third property involves the Thom-Boardman strata.
\begin{Lemma}
\label{Lem10.9}
Suppose the distance function $\gs$, resp. the height function 
$\nu$, defines the $\cR^+$--versal unfolding at $x = \{x^{(1)}, \dots, 
x^{(s)}\}$ for $u = u^{(0)}$ of the germ $g(x) =\nu(x, u^{(0)})$, resp. 
for $v = v^{(0)}$ of the germ $h(x) =\nu(x, v^{(0)})$.  If these germs 
are of Thom-Boardman type $\gS_{n, 1}$ then 
\begin{align}
\label{Eqn10.12}
dj_1^k(\gs)(x, u^{(0)}) (T_{u^{(0)}} R^{n+1}) \, \cap \, T_{j_1^k(g)(x)} 
\gS_{n, 1} \,\,&= \,\, 0 \, , \notag  \\
dj_1^k(\nu)(x, v^{(0)}) (T_{v^{(0)}} S^n) \, \cap \, T_{j_1^k(h)(x)} 
\gS_{n, 1} \,\, &= \,\, 0 \, .
\end{align}
\end{Lemma}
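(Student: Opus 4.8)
The plan is to reduce both identities in (\ref{Eqn10.12}) to a short linear-algebra computation in the fibre of the jet bundle, built on the Monge-patch derivative formulas (\ref{Eqn10.6a}) and (\ref{Eqn10.6b}) that have already been obtained. It suffices to treat a single germ; write $x^{(0)}$ for the point, $\sigma = j_1^k(g)(x^{(0)})$, and let $\cC_{x^{(0)}}$ be the ring of germs at $x^{(0)}$ with maximal ideal $\itm$, so the fibre of $J^k(\cirX_i,\R)\to\cirX_i$ over $x^{(0)}$ is $\cC_{x^{(0)}}/\itm^{k+1}$.

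First I would isolate the vertical directions. Since only $u$ (resp.\ $v$) is varied, $dj_1^k(\gs)(x^{(0)},u^{(0)})(T_{u^{(0)}}\R^{n+1})$ (resp.\ $dj_1^k(\nu)(x^{(0)},v^{(0)})(T_{v^{(0)}}S^n)$) lies entirely in this fibre, spanned over $\R$ by the classes of $\partial\gs/\partial u_j|_{u^{(0)}}$, $j=1,\dots,n+1$ (resp.\ of $\partial\nu/\partial v_j|_{v^{(0)}}$, $j=1,\dots,n$). Because $\gS_{n,1}$ is cut out by a fibrewise condition on jets it is a subbundle of $J^k(\cirX_i,\R)$, so $T_\sigma\gS_{n,1}$ splits as a horizontal lift of $T_{x^{(0)}}\cirX_i$ together with the vertical space $T_\sigma(\gS_{n,1}\cap\text{fibre})$; as the images above are purely vertical, only the intersection with $T_\sigma(\gS_{n,1}\cap\text{fibre})$ matters. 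By Remark \ref{Rem5.1} the germ $g$ (resp.\ $h$) is of type $A_k$ with $k\ge 2$, so near $\sigma$ the set $\gS_{n,1}\cap\text{fibre}$ is the transverse intersection of $\{d\tau(x^{(0)})=0\}$ with $\{\det\mathrm{Hess}\,\tau(x^{(0)})=0\}$; since $\mathrm{Hess}\,g(x^{(0)})$ has one-dimensional kernel, which I take to be $\langle e_1\rangle$ (so the coordinates $x_1,\dots,x_n$ at $x^{(0)}$ are Hessian-adapted), the formula $d(\det)_A(B)=\mathrm{tr}(\mathrm{adj}(A)B)$ with $\mathrm{adj}(\mathrm{Hess}\,g(x^{(0)}))$ of rank one along $e_1$ shows that $T_\sigma(\gS_{n,1}\cap\text{fibre})$ is the set of classes with no linear term and no $x_1^2$ term, i.e.\ it equals $\R\cdot 1 + \itm\,J(g)$ with $J(g)=(\partial g/\partial x_1,\dots,\partial g/\partial x_n)$ (the tangent to the $\cR^{+}$-orbit of $g$), which has codimension $n+1$ in the fibre, with $[x_1],\dots,[x_n],[x_1^2]$ a basis of the quotient.

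For the distance function I would substitute (\ref{Eqn10.6a}) at $\bt=0$: in the Monge patch $\Phi(x)=(x_1,\dots,x_n,f(x))$, $f=\sum_i\frac12\gk_i x_i^2+\sum_{|\bga|\ge 3}a_{\bga}x^{\bga}$, with $u^{(0)}=(0,\dots,0,1/\gk_1)$ on the normal line (type $A_2$ forces $\gk_1\ne 0$ and $\gk_1\ne\gk_i$ for $i\ge 2$, so $e_1$ spans $\ker\mathrm{Hess}\,g$), one gets $[\partial\gs/\partial u_j|_{u^{(0)}}]=[-2x_j]$ for $j\le n$ and $[\partial\gs/\partial u_{n+1}|_{u^{(0)}}]=[2/\gk_1]-\sum_i\gk_i[x_i^2]+(\text{class in }\itm^3)$. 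Projecting these $n+1$ classes to $\cC_{x^{(0)}}/(\R\cdot1+\itm J(g))$ sends the first $n$ to $[x_1],\dots,[x_n]$ and the last to $-\gk_1[x_1^2]$ (the constant, the $x_i^2$ with $i\ge 2$, and the higher terms all vanish, and $\gk_1\ne 0$); hence the projection restricts to an isomorphism on $dj_1^k(\gs)(x^{(0)},u^{(0)})(T_{u^{(0)}}\R^{n+1})$, so its kernel — exactly the intersection of that image with $\R\cdot1+\itm J(g)=T_\sigma(\gS_{n,1}\cap\text{fibre})$ — is zero, giving the first line of (\ref{Eqn10.12}). (This also exhibits the special feature that the distance-squared unfolding is versal in the strong sense that the $\partial/\partial u_j$ span a complement to the full orbit tangent, not merely to its closure-normal; the hypothesised $\cR^{+}$-versality enters only to guarantee the normal form $A_k$.) The height function is quicker: by (\ref{Eqn10.6b}) at $\bt=0$, with $v^{(0)}$ along $e_{n+1}$ and $v_j=e_j$, one has $[\partial\nu/\partial v_j|_{v^{(0)}}]=[x_j]$ for $j\le n$, so the image is the $n$-dimensional space of linear forms with no constant term, while $\gS_{n,1}\subseteq\gS_n=\{\text{jets with vanishing linear part}\}$ forces $T_\sigma(\gS_{n,1}\cap\text{fibre})\subseteq\R\cdot1+\itm^2/\itm^{k+1}$, which contains no nonzero linear form; hence the intersection is zero, giving the second line of (\ref{Eqn10.12}).

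The hard part is not any one computation but the jet-space bookkeeping of the second paragraph: one must verify that near the $A_k$ locus $\gS_{n,1}$ genuinely is the clean transverse intersection of $\{d\tau=0\}$ and $\{\det\mathrm{Hess}\,\tau=0\}$, so that its tangent is the intersection of the kernels of the two differentials — the only point where the corank-one hypothesis is used. Once that is in place each case reduces to a one-line check against the already-computed derivatives (\ref{Eqn10.6a}) and (\ref{Eqn10.6b}); and on the nowhere-dense sublocus of $\gS_{n,1}$ where $g$ has corank $\ge 2$ the statement is not needed for the applications of the lemma in \S \ref{S:sec6}.
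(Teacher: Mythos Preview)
Your approach is essentially the paper's: compute the fibre tangent space to $\gS_{n,1}$ and show the parameter derivatives from (\ref{Eqn10.6a}), (\ref{Eqn10.6b}) span a complement. The paper differs mainly in citing Mather's formula \cite[\S 5]{M6} for $T_{j^2g}\gS_{n,1}$ and working throughout in $J^2$ (i.e.\ modulo $\itm_x^3$), rather than linearising $\{d\tau=0,\ \det\mathrm{Hess}\,\tau=0\}$ directly; your treatment of the height function via $T\gS_{n,1}\subset T\gS_n$ is in fact cleaner than the paper's ``analogous'' remark.

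There is, however, one genuine slip. Your direct description of $T_\sigma(\gS_{n,1}\cap\text{fibre})$ as the classes with no linear term and no $x_1^2$ term is correct (codimension $n+1$), but the ``i.e.'' identifying this with $\R\cdot 1+\itm\,J(g)$ --- the $\cR^+$-orbit tangent --- holds only for $A_2$. For $A_m$ with $m\ge 3$ one has $J(g)=(x_1^m,x_2,\dots,x_n)$ in normal form, so $x_1^3,\dots,x_1^m\notin\itm\,J(g)$ and $\R\cdot 1+\itm\,J(g)$ is a \emph{proper} subspace of $T_\sigma\gS_{n,1}$, of codimension $n+m-1$ rather than $n+1$. (This is as it should be: the $\cR^+$-orbit of an $A_m$ germ sits inside $\gS_{n,1}$ with codimension $m-2$.) Consequently your parenthetical ``higher terms all vanish'' is false for $m\ge 3$, and injectivity of the projection to $\cC_x/(\R\cdot 1+\itm\,J(g))$ does not by itself give injectivity modulo the larger true tangent space.

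The repair is immediate and uses only what you already have: drop the ``i.e.'' and project instead to $\cC_x$ modulo your correct description $\{$no linear part, no $x_1^2$-coefficient$\}$, which has basis $[x_1],\dots,[x_n],[x_1^2]$. Then the images of $\partial\gs/\partial u_j|_{u^{(0)}}$ are $-2[x_1],\dots,-2[x_n]$ and $-\gk_1[x_1^2]$, manifestly independent. Your final caveat about corank $\ge 2$ is unnecessary: the hypothesis ``of Thom--Boardman type $\gS_{n,1}$'' already means corank exactly one.
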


\begin{proof}[Proof of Lemma \ref{Lem10.7}] \par
We first consider the case when $s = 1$ and use the Monge patch given by  
(\ref{Eqn10.1}).  By (\ref{Eqn10.6a}), 
\begin{equation}
\label{Eqn10.8a} 
dj_1^k(\gs)(0, u^{(0)}) (T_{u^{(0)}} R^{n+1})  \,\, = \,\, \langle -2x_1, 
\dots -2x_n, -2(f - u^{(0)}_{n+1}) \rangle  \, . 
\end{equation}
For this Monge patch, the distance function $g(x) =\gs(x, u^{(0)})$ 
has the form (\ref{Eqn5.3}).  
\begin{align}
\label{Eqn10.9a}
g(x)\,\, &= \,\, \sum_{i=1}^n (1-u^{(0)}_{n+1} \gk_i ) x_i^2 -2u_{0, 
n+1}\sum_{|\bga| \geq 3} a_{\bga} x^{\bga}+(u^{(0)}_{n+1})^2 \\
&\qquad \qquad + \, (\sum_{i=1}^n \frac{1}{2} \gk_i x_i^2 \,  + \, 
\sum_{|\bga| \geq 3} a_{\bga} x^{\bga} )^2. \notag
\end{align}
\par
Hence, as $g$ is an $A_k$ with $k \geq 2$, $u^{(0)}_{n+1} = 
\frac{1}{\gk_i}$ for some $i$.  For simplicity we assume $i = 1$; then 
\begin{equation}
\label{Eqn10.10a} 
T \cR^+ \cdot g \,\, \subset \,\,  \langle 1 \rangle  \, + \, \itm_x (x_2, 
\dots , x_n) + \itm_x^3  \, .
\end{equation}
Then, since $\gk_1 \neq 0$, $f$ has a nonzero term $x_1^2$, and so the 
RHS of (\ref{Eqn10.8a}) is a complement of dimension $n+1$ to the 
subspace of $\cC_x$ given by the RHS of (\ref{Eqn10.10a}).  Hence, its 
intersection with the subspace $T \cR^+ \cdot g$ is $0$.  
\par
Next, suppose that $s > 1$.  We let $x_0 = (x^{(1)}_0, \dots , x^{(s)}_0) 
\subset \cirX^{(s)}$ and $g(x) =\gs(x, u^{(0)})$.  Let $g_j$ denote 
the germ of $g$ at $x^{(j)}_0$ expressed in terms of a local Monge patch in 
a neighborhood $U_j$ about $x^{(j)}_0$ with local coordinates $(x^{(j)}_1, 
\dots , x^{(j)}_n)$.  We suppose that the multigerm of $g$ at $x_0$ is of 
type $\bga = (\ga_1, \dots \ga_s)$.  To simplify notation, we let $z_j = 
j^k(g_j)(x^{(j)})$, $z = (z_1, \dots , z_s)$, $z^{\prime} = (z_2, \dots , 
z_s)$, $x^{\prime}_0 = (x^{(2)}_0, \dots , x^{(s)}_0)$ and $\bga^{\prime} = 
(\ga_2, \dots \ga_s)$.  We note that $z = {_s}j_1^k(\gs)(x_0, 
u^{(0)})$.  \par
To prove the Lemma in this case it is sufficient to show that for a set of 
generators $\{w_i\}$ of $\oplus_{j = 2}^{s} T_{x^{(j)}} X \oplus T_{u^{(0)}} 
R^{n+1}$, that $\{d{_s}j_1^k(\gs)(x_0, u^{(0)}) (w_i)\}$ are 
linearly independent in $T_{z}\, {_s}J^k(\cirX, \R)/ T_{z} W^{(\bga)}$.  
We use the set of generators 
$$\{ \pd{ }{x^{(j)}_i}, j = 2, \dots , s, i = 1, \dots , n; \pd{ }{u_i}, i = 1, 
\dots , n+1\}  .$$ \par

We first note that for any $g_j$ of type $A_{\ga_j}$, that 
$\{\pd{g_j}{x^{(j)}_1}, \dots , \pd{g_j}{x^{(j)}_n}\}$ form a regular 
sequence in $\cC_{x^{(j)}}$; and hence are linearly independent $\mod 
\itm_{x^{(j)}} (\pd{g_j}{x^{(j)}_1}, \dots , \pd{g_j}{x^{(j)}_n})$.  
Thus, in $J^k(U_j, \R) \simeq U_j \times \R \times 
(\itm_{x^{(j)}}/\itm_{x^{(j)}}^{k+1})$, 
$$ dj^k(g_j)(T_{x^{(j)}_0} X) \quad \text{is spanned by } 
\{\pd{ }{x^{(j)}_i} + \pd{g_j}{x^{(j)}_i}, i = 1, \dots , n\} ,$$
 which are linearly independent in $T_{z_j}J^k(U_j, \R)/ T_{z_j} 
W^{(\ga_j)}$.  
Thus, 
$\{ \pd{g_j}{x^{(j)}_i}, j = 2, \dots , s,\, i = 1, \dots , n\}$ are linearly 
independent in $T_{z^{\prime}}\, {_{(s-1)}}J^k(\cirX, \R)/ T_{z^{\prime}} 
W^{(\bga^{\prime})}$.  
Hence, 
$$\{ \pd{g_j}{x^{(j)}_i}, j = 2, \dots , s,\, i = 1, \dots , n;\, \pd{(g_1, \dots 
, g_s)}{u_i}, i = 1, \dots , n+1\}$$
 are linearly dependent in $T_{z}\, {_s}J^k(\cirX, \R)/ T_{z} W^{(\bga)}$ if 
and only if 
$\{\pd{(g_1, \dots , g_s)}{u_i}, i = 1, \dots , n+1\}$ together with $(1, 
\dots , 1)$ are linearly dependent in 
$$
 \cC_{x^{(1)}}/ (T\cR^+ g_1) \oplus_{j = 2}^{s} 
\cC_{x^{(j)}}/(\pd{g_j}{x^{(j)}_1}, \dots , \pd{g_j}{x^{(j)}_n}) \, . $$
\par
As $(\pd{g_j}{x^{(j)}_1}, \dots , \pd{g_j}{x^{(j)}_n}) \subset 
\itm_{x^{(j)}}$,
it is sufficient to show that $\{\pd{(g_1, \dots , g_s)}{u_i}, i = 1, \dots , 
n+1\}$ together with $(1, \dots , 1)$ are linearly independent in 
\begin{equation}
\label{Eqn10.11b}
 L \,\, = \,\,  \cC_{x^{(1)}}/ (T\cR^+ g_1) \oplus_{j = 2}^{s} 
\cC_{x^{(j)}}/\itm_{x^{(j)}} . \end{equation} 
Now
$$ \pd{g_j}{u_i} \,\, = \,\, 
\left\{  \begin{array}{lr}
-2x^{(j)}_i &  \displaystyle i \leq n, \\
-2(g_j - u^{(j)}_{0\, n+1})  & \displaystyle i = n+1      
\end{array}  \right. 
$$
where $u^{(j)}_0$ is the point $u^{(0)}$ in the coordinates $x^{(j)}$.  
Then, 
$$  \pd{(g_1, \dots , g_s)}{u_i} \, \equiv \,  (-2x_i, 0, \dots , 0)  \quad  
\mod L  \qquad \text{ for } i = 1, \dots , n$$
and 
$$  \pd{(g_1, \dots , g_s)}{u_{n+1}} \, \equiv \,  (-2(g_j - u^{(0)}_{n+1}), 2 
u^{(2)}_{0\, n+1}, \dots , 2 u^{(s)}_{0\, n+1})  \quad  \mod L \, . $$
\par  
If $\ga_1 \geq 2$, then we can apply the same argument for the case $s = 
1$ to conclude that these together with $(1, \dots , 1)$ are linearly 
independent in $L$.  \par
If instead $\ga_1 = 1$, we examine the relations for the first two 
coordinates in $L$.  Again there are two cases based on whether 
$u^{(0)} - x^{(1)}$ and $u^{(0)} - x^{(2)}$ are or are not collinear.
First, if $u^{(0)} - x^{(1)}$ and $u^{(0)} - x^{(2)}$ are not collinear, then 
they span a plane $P$ which intersects each tangent plane in a line.  We 
choose an orthonormal basis $\{e_i, i = 1, \dots , n-1\}$ for the orthogonal 
complement to the line.  For the tangent plane at $x^{(1)}$ we let $e_n$ be 
a unit vector in the line, and $e_{n+1}$ be along $u^{(0)} - x^{(1)}$.  For the 
second tangent plane we let $e_i^{\prime} = e_i, i = 1, \dots , n-1$, 
$e_n^{\prime}$ be along the line of intersection of $P$ with the tangent 
plane and $e_{n+1}^{\prime}$ be along $u^{(0)} - x^{(1)}$ so that $\{ 
e_{n}^{\prime}, e_{n+1}^{\prime}\}$ have the same orientation in $P$ as 
does $\{ e_{n}, e_{n+1}\}$.  Hence, there is a rotation by an angle $\theta$ 
sending 
$\{ e_{n}^{\prime}, e_{n+1}^{\prime}\}$ to $\{ e_{n}, e_{n+1}\}$.  By 
(\ref{Eqn10.8a}) applied to each germ $g_j$, and $i = 1, \dots , n-1$,
\begin{equation}
\label{Eqn10.11a}
d{_2}j_1^k(\gs)((x^{(1)}, x^{(2)}), u^{(0)})(e_i) \, \equiv \,\,  (-
2x^{(1)}_i, 0) \quad \mod \itm_{x^{(1)}}^2 \oplus \itm_{x^{(2)}} \, . 
\end{equation}
\par
It suffices to show that $d{_2}j_1^k(\gs)((x^{(1)}, x^{(2)}), 
u^{(0)})(e_i)$ for $i = 1, \dots , n+1$ and $(1, 1)$ are linearly independent 
in $\cC_{x^{(1)}}/ \itm_{x^{(1)}}^2 \oplus \cC_{x^{(2)}}/ \itm_{x^{(2)}}$.  
Already (\ref{Eqn10.11a}) shows this is true for $i < n$.  
For $\{ e_{n}, e_{n+1}\}$, we use (\ref{Eqn10.8a}) for $g_0$, and 
(\ref{Eqn10.8a}), but for $g_1$, together with 
\begin{align}
\label{Eqn10.12a}
 e_{n} \,\, &= \,\, \cos (\theta) e_{n}^{\prime} \, + \, \sin (\theta)  
e_{n+1}^{\prime} \, , \notag  \\
e_{n+1} \,\, &= \,\, -\sin (\theta) e_{n}^{\prime} \, + \, \cos (\theta)  
e_{n+1}^{\prime} \, .
\end{align} 
We obtain $\mod \itm_{x^{(1)}}^2 \oplus \itm_{x^{(2)}}$
\begin{align}
\label{Eqn10.13a} 
d{_2}j_1^k(\gs)((x^{(1)}, x^{(2)}), u^{(0)})(e_n) &\equiv  -2( x_n^{(1)} 
, - \sin (\theta) u^{(1)}_{n+1} ) , \notag \\ 
d{_2}j_1^k(\gs)((x^{(1)}, x^{(2)}), u^{(0)})(e_{n+1}) &\equiv  -2( - 
u^{(0)}_{n+1}, - \cos (\theta) u^{(1)}_{n+1} ) \, .
\end{align}
In the subspace spanned by $\{ (x_n^{(1)}, 0), (1, 0), (0, 1)\}$, as 
$\cos(\theta) \neq 1$, the two terms in (\ref{Eqn10.13a}) together with 
the term $(1, 1)$ are checked to be linearly independent.  These together 
with those in (\ref{Eqn10.11a}) are linearly independent in $\cC_{x^{(1)}}/ 
\itm_{x^{(1)}}^2 \oplus \cC_{x^{(2)}}/ \itm_{x^{(2)}}$.  This gives the 
result in this case.  The case when $u^{(0)} - x^{(1)}$ and $u^{(0)} - x^{(2)}$ 
are collinear is easier as we may let $e_{n}^{\prime} = e_n$ and 
$e_{n+1}^{\prime} = - e_{n+1}$ so that (\ref{Eqn10.11a}) holds for $i \leq 
n$ and (\ref{Eqn10.13a}) becomes 
$$  d{_2}j_1^k(\gs)((x^{(1)}, x^{(2)}), u^{(0)})(e_{n+1}) \equiv  -2( - 
u^{(0)}_{n+1}, u^{(1)}_{n+1}) . $$
As $u^{(0)}_{n+1}$ and $u^{(1)}_{n+1}$ have the same sign, this term and 
$(1, 1)$ are linearly independent so again the result is true.  
\par
\end{proof}
\begin{proof}[Proof of Lemma \ref{Lem10.8}] \par
The proof here is similar but much easier.  Again for $s = 1$, we use the 
Monge patch in (\ref{Eqn10.1}) so $v^{(0)} = e_{n+1}$, and $T_{v^{(0)}} S^n$ 
is spanned by the orthonormal basis $\{e_i, i = 1, \dots , n\}$.  By 
(\ref{Eqn10.6b}),
$$  \pd{\nu}{v_j}_{| (x, \bt) = (0, 0)} \,\, = \,\,  x_j \quad \text{ for 
} j = 1, \dots , n  \, .$$
Hence, its intersection with the subspace $T \cR^+ \cdot h$ is $0$.
If $s > 1$, the case $\ga_1 \geq 2$ reduces to an argument analogous to 
the case $s = 1$.  If $\ga_1 = 1$ we again consider the first two factors.  
Now we may use the same orthogonal basis for the multigerm $h$ at  
$\{x^{(1)}, x^{(2)}\}$ (using the same notation and Monge patches as in the 
preceding proof).  We obtain in place of (\ref{Eqn10.11a}) for $i = 1, \dots 
n$, 
\begin{equation}
\label{Eqn10.14a}
d{_2}j_1^k{\nu}((x^{(1)}, x^{(2)}), v^{(0)})(e_i) \,\, \equiv \,\,  
(x^{(1)}_n, 0) \quad \mod \itm_{x^{(1)}}^2 \oplus \itm_{x^{(2)}} \, . 
\end{equation}
Hence, its intersection with the subspace $T {_2}\cR^+ \cdot h = \langle 
(1, 1) \rangle \, + \, \itm_{x^{(1)}}^2 \oplus \itm_{x^{(2)}}$ is $0$.
\end{proof}
\begin{proof}[Proof of Lemma \ref{Lem10.9}] \par
The proof is similar to the first part of each of the two preceding 
Lemmas.  We give the argument for the distance function with the height 
function being analogous.  We use the Monge patch given by  (\ref{Eqn10.1}) 
with the same notation as in the previous Lemmas.  This time we use 
Mather\rq s formula (\cite[\S 5] {M6}) for the tangent space to $\gS_{n, 
1}$ for the distance function $g(x) =\gs(x, u^{(0)})$ of the form 
(\ref{Eqn10.9a}), where $u^{(0)}_{n+1} = \frac{1}{\gk_1}$.  In the fiber 
$J^2(n, 1) \times \R$,
$$ T_{j^2g}\gS_{n, 1} \oplus \R \,\, \equiv \,\, \langle 1 \rangle + \itm_x 
(\pd{g}{x_1}, \dots , \pd{g}{x_n}) + \gb(g) \quad \mod \itm_x^3 $$
where the operator $\gb$ applied to the ideal $(g)$ satisfies $$\gb(g)\,\,  
\equiv \,\, (g) + (\pd{g}{x_1}, \dots , \pd{g}{x_n})^2 \quad \mod \itm_x^3 
\, .$$  Hence, 
\begin{equation}
\label{Eqn10.15a}
 T_{j^2g}\gS_{n, 1}  \,\, \equiv \,\, \itm_x (\pd{g}{x_1}, \dots , 
\pd{g}{x_n}) + (g) \quad \mod \itm_x^3\, .
\end{equation}
Now, we follow the proof of the first part of Lemma \ref{Lem10.7} and 
apply (\ref{Eqn10.8a}) using that $f$ has a nonzero term $x_1^2$, and so 
the RHS of (\ref{Eqn10.8a}) is a complement of dimension $n+1$ to the 
subspace of $\cC_x$ given by the RHS of (\ref{Eqn10.15a}).  Hence, its 
intersection with the subspace $T_{j^2g}\gS_{n, 1}$ is $0$.  
\end{proof}

\section{Completing the Proofs of the Transversality Theorems} 
\label{S:sec11}
\par
We are ready to return to the derivative calculations of the previous 
section to complete the proofs of Theorems \ref{Thm5.0} and 
\ref{Thm5.1}.  
\subsection*{ Perturbation Transversality Conditions via Fiber Jet 
Extension Maps} 
\par
We consider a given $i$, an $m > 0$ with an assignment $p \mapsto j_p$, a 
partition $\bl = (\ell_1, \dots , \ell_p)$, and a distinguished submanifold 
$W \subset {_\bl}E^k(X_{\cJ_i}, \R^2)$.  \par
In order to verify the conditions in Definitions \ref{Def9.1} 
and \ref{Def9.2}, we consider the family of perturbations $\tilde \Phi$ 
constructed in the last section (\ref{Eqn10.2}) for a point $x = (x^{(j_1)}, 
\dots , x^{(j_{m})}) \in X_{\cJ_i}$.  We have seen that because the 
subspaces of perturbations act locally on one factor, it is sufficient to 
consider for each $x^{(j_p)} = (x^{(j_p)}_1, \dots , x^{(j_p)}_{\ell_p})$, the 
local representations in (\ref{Eqn10.5a}) and (\ref{Eqn10.5b}) in the 
neighborhoods of a point $x^{(j_p)}_q$.  \par
We establish transversality progressively by beginning first with 
$\cR^+$-invariant submanifolds of jet space, then second for 
${_s}\cR^+$-invariant submanifolds of multijet space; and then we use 
(\ref{Eqn10.6}), to establish transversality for the multi-distance and 
height-distance functions to distinguished submanifolds of partial 
multijet space.  \par

\subsubsection*{ Fiber Jet Extension Maps to Jet Spaces} \hfill 
\par
For a $\cR^+$-invariant submanifold $W$ in jet 
space, it is sufficient to verify transversality of the fiber jet extension 
map to the fiber $W_0$ in the fiber $J^k(n, 1) \times \R \simeq 
\cC_x/\itm_x^{k+1}$.  
\begin{Proposition}
\label{Prop11.1}
The fiber jet extension maps for the distance and height functions are 
submersions in all cases, except for the distance function when $u^{(0)} = 
x^{(0)}$.  In that case, it is transverse to the $\cR^+$-orbit for $A_1$-type 
singularities.  
\end{Proposition}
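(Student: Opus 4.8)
The plan is to reduce the proposition to a statement about a single fiber jet extension map — transversality of the map to a submanifold $W$ of (multi)jet space is equivalent to transversality of the composite fiber jet extension map $j_f^k(\psi)$ to the fiber $W_0$ — and then carry out the verification on the explicit derivative formulas computed in the previous section. I would begin by fixing a point $x^{(0)} = x^{(j_p)}_q$ on the boundary, with $y^{(0)} = \Phi(x^{(0)})$, and choose the Monge patch $(x_1, \dots, x_n, f(x_1, \dots, x_n))$ as in (\ref{Eqn10.1}) so that $\Phi$ has the normal form (\ref{Eqn5.2}). By (\ref{Eqn10.6}), the derivative of the fiber jet extension map in the direction of the perturbation coordinate $t_{{\bga, \ell}}$ is, modulo $\itm_x^{k+1}$, the partial derivative $\pd{\psi}{t_{{\bga, \ell}}}$, whose values are recorded in (\ref{Eqn10.4a})--(\ref{Eqn10.4b}) (general case) and (\ref{Eqn10.6a})--(\ref{Eqn10.6b}) (critical point case).

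The first case to handle is when the function $\psi$ (distance or height) has no critical point at $x^{(0)}$; then $W_0$ is the $\cR^+$-orbit of a nonsingular germ. Here I would observe that the span of the perturbation derivatives already surjects onto the normal space: for the distance function, $\pd{\tilde \gs}{t_{{\bga, \ell}}} = 2(\Phi - u^{(0)})\cdot(y^{\bga}\circ\Phi)e_{\ell}$, and taking $\bga = 0$ and varying $\ell$ over $1, \dots, n+1$ gives the $n+1$ constant vectors $2(\Phi - u^{(0)})\cdot e_{\ell}$, which span $\cC_x/\itm_x$ (since $\Phi - u^{(0)} \neq 0$ when $u^{(0)} \neq x^{(0)}$), and higher-order $w_{\bga,\ell}$ then fill in the higher jets. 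A completely parallel count works for the height function using (\ref{Eqn10.4b}): the directions $t_{{\bga, n+1}}$ give $(y^{\bga}\circ\Phi)e_{n+1}\cdot v^{(0)} = y^{\bga}\circ\Phi$, and these span $\cC_x/\itm_x^{k+1}$ outright. So in these cases the fiber jet extension map is a submersion.

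The second case is when $\psi$ has a critical point at $x^{(0)}$. For the distance function this forces $u^{(0)}$ on the normal line; if moreover $u^{(0)}_{n+1} \neq \frac{1}{\gk_i}$ for all $i$ the germ is $A_1$, and if $u^{(0)}_{n+1} = \frac{1}{\gk_i}$ for some $i$ it is $A_k$ with $k \geq 2$. For the height function the critical point is automatic (since $X$ is a hypersurface and $v^{(0)}$ is normal), and generically the germ is a Morse ($A_1$) germ. Using the formulas (\ref{Eqn10.6a})--(\ref{Eqn10.6b}), I would check: for the height function the $t_{{\bga,n+1}}$ directions give $(y^{\bga}\circ\Phi)$, spanning all of $\cC_x/\itm_x^{k+1}$, so the map is a submersion onto the $A_1$-orbit (indeed onto the whole fiber); for the distance function with $A_1$ germ, the $t_{{\bga,\ell}}$ directions with $\ell \leq n$ give $2x_j(y^{\bga}\circ\Phi)$ and $\ell = n+1$ gives $2(f - u^{(0)}_{n+1})(y^{\bga}\circ\Phi)$, and one checks these span a complement to $T\cR^+\cdot g$ inside the $A_1$-orbit's normal space. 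The remaining — and most delicate — subcase is the distance function with $u^{(0)} = x^{(0)}$: then $\Phi - u^{(0)}$ vanishes at $x^{(0)}$, so the constant-order term disappears from $\pd{\tilde\gs}{t_{{\bga,\ell}}}$, and the claim is only that the map is transverse to the $A_1$-orbit (not a submersion onto the fiber). This is the main obstacle: I expect it requires carefully expanding $\Phi - u^{(0)}$ to first order using the Monge patch — near $u^{(0)} = x^{(0)} = 0$ one has $(\Phi - u^{(0)})\cdot e_{\ell} = x_{\ell} + O(|x|^2)$ for $\ell \leq n$ and $O(|x|^2)$ for $\ell = n+1$, so $\pd{\tilde\gs}{t_{{\bga,\ell}}} = 2x_{\ell}(y^{\bga}\circ\Phi) + \text{h.o.t.}$ — and then showing that modulo $T\cR^+\cdot(g)$, where $g = \gs(\cdot, u^{(0)}) = \sum x_i^2 + \text{h.o.t.}$ is itself an $A_1$ germ with $T\cR^+\cdot g \supset \itm_x^2$, these span the one-dimensional normal space $\langle 1 \rangle = \cC_x/T\cR^+\cdot g$; the $\bga = (1,0,\dots,0)$, $\ell = 1$ choice gives $2x_1^2 \equiv \text{nonzero} \bmod \itm_x^3$, hence a nonzero class. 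Once all subcases are assembled, combining them via the product structure (\ref{Eqn10.5a})--(\ref{Eqn10.5b}) and the locality of the perturbation spaces $T^{(k)}_{j_p\, q}$ yields the proposition.
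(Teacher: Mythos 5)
Your overall approach mirrors the paper's: you reduce to the fiber jet extension map via (\ref{Eqn10.6}), work in the Monge patch, and do a case analysis using the derivative formulas. The non-critical case, the critical height-function case, and the distance-function case with $u^{(0)}\neq x^{(0)}$ are all handled along the paper's lines (though you make them slightly heavier than necessary — in the latter case the map is already a submersion since $f-u^{(0)}_{n+1}$ is a unit, so there is no need to reduce modulo $T\cR^+\cdot g$).

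However, there is a genuine error in the one subcase that you correctly flag as the delicate one, namely $u^{(0)}=x^{(0)}$. You claim that $\cC_x/T\cR^+\cdot g = \langle 1 \rangle$ is one-dimensional, and then try to hit it with $2x_1^2$. Both steps are wrong. For a Morse germ $g=\sum x_i^2+\cdots$, one has $T\cR^+\cdot g=\langle 1\rangle+\itm_x\cdot J(g)=\langle 1\rangle+\itm_x^2$, which is exactly what the paper records as $TW_0$; the constant direction $\langle 1\rangle$ is therefore \emph{in} the tangent space to the orbit, not normal to it, and the normal space is the $n$-dimensional space of linear terms $\itm_x/\itm_x^2$. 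Consequently $2x_1^2\in\itm_x^2\subset T\cR^+\cdot g$ represents the \emph{zero} class and contributes nothing to transversality. The correct calculation — and the one the paper makes — is to take $\bga=0$ and $\ell=1,\dots,n$, giving the perturbation derivatives $2x_\ell$, which span the linear part $\itm_x/\itm_x^2$, and more generally to vary $\bga$ to show the $\{2x_\ell x^{\bga}\}_{\ell\le n}$ span $\itm_x/\itm_x^{k+1}$; combined with $TW_0=\langle 1\rangle+\itm_x^2$ this fills $\cC_x/\itm_x^{k+1}$, establishing transversality (but not a submersion). In short: you have the right case split and the right formulas, but in the key subcase you inverted which part of $\cC_x$ lies in the orbit tangent space, and then picked a test element that lies in the tangent space rather than a complement.
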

\begin{proof}
\par
First we use  (\ref{Eqn10.4a}) and (\ref{Eqn10.4b}) together with 
(\ref{Eqn10.6}) to compute the image of the derivative of the fiber jet 
extension maps for the basic distance and height functions.  
For $\bga$ of the form $\bga = (\ga_1, \dots \ga_n, 0)$, $y^{\bga}\circ 
\Phi = x^{\bga}$.  By (\ref{Eqn10.6a}),
\begin{equation}
\label{Eqn11.1a}
\pd{\tilde \gs_i}{t_{{\bga, \ell}}}_{| z = z_0} \,\, = \,\, 
\left\{  \begin{array}{lr}
2(x_{\ell}- u^{(0)}_{\ell})x^{\bga} &  \displaystyle \ell \leq n, \\
2(f - u^{(0)}_{n+1})x^{\bga}  & \displaystyle \ell = n+1      
\end{array}  \right. \, . 
\end{equation}
We consider two cases depending on whether $u^{(0)} =$ or $\neq x^{(0)}$. 
\par
 In the case $u^{(0)} \neq x^{(0)}$, there is some $u^{(0)}_{\ell} \neq 0$.  If 
$\ell \leq n$, then $x_{\ell}- u^{(0)}_{\ell}$ is a unit in $\cC_x$, while if  
$\ell = n+1$, then $f - u^{(0)}_{n+1}$ is a unit in $\cC_x$.  Hence, in either 
case for appropriate $\ell$, the set of elements $\{\pd{\tilde 
\gs_i}{t_{{\bga, \ell}}}_{| z = z_0}\}$, as we vary over $\bga$ with 
$\ga_{n+1} = 0$ and $| \bga | \leq k$, generate $\cC_x/\itm_x^{k+1}$ as a 
vector space.  Thus, the fiber jet extension map $j_f(\tilde \gs_i)$ is a 
submersion in a neighborhood of $(x^{(0)}, u^{(0)})$.  \par  
If instead $u^{(0)} = x^{(0)}$, then $u^{(0)}_j = 0$ for all $j$.  The set of 
elements $\{\pd{\tilde \gs_i}{t_{{\bga, \ell}}}_{| z = z_0} = 
2x_{\ell}\,x^{\bga}\}$ for $\ell \leq n$ generate as a vector space 
$\itm_x/\itm_x^{k+1}$.  Now, the distance function has an $A_1$ 
singularity at $x^{(0)}$, so $W_0 = W^{(1)} \times \R$.  As $T W_0 = 
\langle 1\rangle + \itm_x^2$, $j_f(\tilde \gs_i)$ is transverse to $W_0$ 
in a neighborhood of $(x^{(0)}, u^{(0)})$.  
Thus, in each case the condition in Definition \ref{Def9.1} is satisfied in a 
neighborhood of $z$.  \par
For the height function we perform a similar analysis using  
\begin{equation}
\label{Eqn11.1b}
\pd{\tilde \nu}{t_{{\bga, \ell}}}_{| z = z_0} \,\, = \,\, v^{(0)}_{\ell}x^{\bga} 
\, .
\end{equation}
If $x^{(0)}$ is a critical point for the height function $\tilde \nu$ for 
$v^{(0)}$, then $v^{(0)}_{n+1} = \pm 1$.  Then, by (\ref{Eqn11.1b}) the set 
of elements $\{\pd{\tilde \nu}{t_{{\bga, n+1}}}_{| z = z_0} = \pm 
x^{\bga}\}$, as we vary over $\bga$ with $\ga_{n+1} = 0$ and $| \bga | \leq 
k$, again generate as a vector space $\cC_x/\itm_x^{k+1}$.  If instead 
$x^{(0)}$ is not a critical point, then there is a $j \leq n$ such that 
$v^{(0)}_j \neq 0$.  Now, we use instead $\{\pd{\tilde \nu}{t_{{\bga, j}}}_{| 
z = z_0} = v^{(0)}_j\, x^{\bga}\}$ and obtain $\cC_x/\itm_x^{k+1}$.  In 
either case, the fiber jet extension map $j_f(\tilde \nu)$ is a submersion 
in a neighborhood of $(x^{(0)}, v^{(0)})$.  \par
Thus, the fiber jet extension maps are locally submersions in all cases, 
except for the distance map when $u^{(0)} = x^{(0)}$; and then it is 
transverse to the $\cR^+$-orbit for $A_1$-type singularities.  
\end{proof}
\par
We will refer to the  transversality condition in Definition \ref{Def9.2} as 
the {\it perturbation transversality condition}.  We see that it is satisfied 
for both the jet maps for distance and height functions.  
\subsubsection*{ Fiber Jet Extension Maps to Multijet Spaces} \hfill 
\par
Second, we extend the preceding results to fiber multijet maps.  
\begin{Proposition}
\label{Prop11.2}
The perturbation transversality condition is satisfied for multijet maps 
for both distance and height functions.
\end{Proposition}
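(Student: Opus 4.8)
The plan is to deduce Proposition \ref{Prop11.2} from Proposition \ref{Prop11.1} by exploiting the product structure of multijet space together with the fact that the perturbation family of \S\ref{S:sec10} is assembled from subspaces supported in pairwise disjoint neighborhoods of the points of the multi-germ. First I would record the relevant decompositions: over a tuple $(x^{(1)},\dots,x^{(s)})$ of distinct points, ${_s}J^k(X,\R)$ is locally the product of the single-jet bundles at the $s$ points, and a ${_s}\cR^+$-invariant submanifold $W$ is a bundle whose fiber over the tuple has the form $\left(\prod_{q=1}^{s} W^{(q)}_0\right) \times \gD^{s}\R$ inside $\left(J^k(n,1)\right)^s \times \R^s$, where each $W^{(q)}_0$ is an $\cR$-invariant submanifold of $J^k(n,1)$ that is either an $\cR$-orbit, a stratum of one of the Whitney stratified sets of \S\ref{S:sec5}, or the whole fiber, and the factor $\gD^{s}\R$ records the coincidence of the $s$ critical values (cf. (\ref{Eqn5.0})). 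Since a perturbation in one of the blocks $T^{(k)}_{j_p\, q}$ alters only the $k$-jet at the point in whose neighborhood it is supported, the derivative of the fiber multijet extension map in the perturbation directions is block diagonal over the $s$ points; hence, after fixing the base tuple and the center (respectively direction) variable, this map is, in the perturbation directions, the product over $q$ of the single-point fiber jet extension maps of Proposition \ref{Prop11.1}.

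Second I would verify transversality block by block at a point of $({_s}j_1^k\tilde\gs\,)^{-1}(W)$, respectively $({_s}j_1^k\tilde\nu\,)^{-1}(W)$. If $W^{(q)}_0$ prescribes a singularity type other than $A_1$, then the germ at $x^{(q)}$ is genuinely singular of that type, so the corresponding center cannot coincide with $x^{(q)}$ (the Hessian of $\gs(\cdot,u^{(0)})$ at $u^{(0)}=x^{(0)}$ is $2I$, hence nondegenerate), and Proposition \ref{Prop11.1} gives that this block is a submersion onto $\cC_{x^{(q)}}/\itm_{x^{(q)}}^{k+1}$. If $W^{(q)}_0$ is the $A_1$-orbit, Proposition \ref{Prop11.1} gives transversality of the block to it whether or not the center equals $x^{(q)}$; and if $W^{(q)}_0$ is the whole fiber there is nothing to check. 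By Lemma \ref{Lem6.10}(2) and its stratified analogue these assemble to transversality of the product fiber multijet extension map to $\prod_{q} W^{(q)}_0$.

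Third I would dispose of the diagonal constraint $\gD^{s}\R$. In each block the fiber jet extension map surjects onto the constant term of its fiber: onto all of $\cC_{x^{(q)}}/\itm_{x^{(q)}}^{k+1}$, hence onto $\langle 1\rangle$, in the submersive case, and onto $\langle 1\rangle + \itm_{x^{(q)}}^2 \supset \langle 1\rangle$ in the exceptional $A_1$-case of Proposition \ref{Prop11.1} (the degree-zero perturbations contribute the nonzero constants $2(\Phi-u^{(0)})\cdot e_{\ell}$, compare (\ref{Eqn10.4a}) and (\ref{Eqn10.6a})). Therefore the constant parts of all $s$ blocks are surjected onto independently, the imposition of the constraint cutting out $\gD^{s}\R$ adds no condition the map fails to meet, and the product map is transverse to all of $W_0$ --- which is exactly the perturbation transversality condition of Definition \ref{Def9.2}. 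The argument for ${_s}j_1^k\tilde\nu$ is identical and in fact simpler, since by Proposition \ref{Prop11.1} the height-function fiber jet extension map is always a submersion.

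The step I expect to be the main obstacle, and the one requiring the most care, is fitting the exceptional case (center equal to a boundary point) into the product argument: one must be certain that at such a point $W$ genuinely restricts the offending block to the $A_1$-orbit --- so that transversality to the $A_1$-orbit, not submersivity, is precisely what is needed --- and that the value coincidence remains attainable there; and one must also check that when the same perturbation space $T^{(k)}_{j_p\, q}$ is forced to serve two nominally distinct points lying in a shared boundary stratum $X_{j_p\, j_{p^{\prime}}}$, the jets it induces in the corresponding copies of $\R^2$ are still independent --- a point that recurs when the multi-functions $\rho_i$ and $\tau$ are treated in the next subsection.
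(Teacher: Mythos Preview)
Your product decomposition and reduction to Proposition~\ref{Prop11.1} match the paper's approach, but there is a genuine error in your third step and you have missed the key simplification the paper uses.

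The paper observes that for $s>1$ the exceptional case never occurs at a point of $({_s}j_1^k\tilde\gs)^{-1}(W)$: since $W$ carries the constraint $\gD^{s}\R$ on the values, $u^{(0)}=\Phi(x^{(q)})$ for one $q$ forces $\gs(x^{(q')},u^{(0)})=0$ for every $q'$, hence $\Phi(x^{(q')})=u^{(0)}$ for all $q'$, contradicting distinctness. Thus every block is in the submersive regime of Proposition~\ref{Prop11.1}, the fiber multijet map is a submersion onto $(J^k(n,1)\times\R)^s$, and transversality is automatic. You never make this observation, and instead attempt to carry the exceptional block through the product argument.

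That attempt contains an explicit mistake. You assert that in the exceptional $A_1$-case the fiber jet extension map surjects onto $\langle 1\rangle + \itm_{x^{(q)}}^{2}$; but $\langle 1\rangle + \itm_{x}^{2}$ is the tangent space $TW_0$ of the $A_1$-orbit (including the $\R$-factor), not the image of the perturbation derivative. By the calculation in the proof of Proposition~\ref{Prop11.1}, when $u^{(0)}=x^{(0)}$ the derivatives $\partial\tilde\gs/\partial t_{\bga,\ell}$ span only $\itm_{x}/\itm_{x}^{k+1}$: the degree-zero perturbations give $2(\Phi-u^{(0)})\cdot e_{\ell}=2x_{\ell}$ (or $2f$ for $\ell=n+1$), which vanish at $x=0$ and contribute no constant term. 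So the perturbations do \emph{not} hit the $\R$-factor in that block, and your claim that ``the constant parts of all $s$ blocks are surjected onto independently'' is false as stated. The remedy is exactly the paper's observation above; once you know the exceptional case is excluded for $s>1$, the diagonal constraint needs no separate treatment.
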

\begin{proof}
Let $W = \prod_{j = 1}^{q} W_j \times \gD^q \R$.  First we consider the 
multigerm of the distance function $\tilde \gs(\cdot, u^{(0)})$ at $x = 
(x^{(1)}, \dots , x^{(q)}) \in X^{(q)}$.  First, suppose $u^{(0)} = x^{(j)}$ for 
some $j$.  Since $\gs(x^{(\ell)}, u^{(0)}) = \gs(x^{(j)}, u^{(0)}) = 0$ for all 
other $\ell$, and the $x^{(\ell)}$ are distinct, we can only have $q = 1$.  
Thus, we are back to the case of a single germ.  \par
Otherwise, $u^{(0)} \neq x^{(j)}$ for any $j$, and we construct a family of 
perturbations as already described.  We restrict to a product neighborhood 
$U = \prod_{j=1}^{q} U_j$ and a neighborhood $V$ of $u^{(0)}$.  We denote 
by $T_j$ the space of localized polynomial perturbations on $U_j$ and let 
$T = \times_{j = 1}^{q} T_j$ with the resulting perturbation of $\Phi$.  
First, if $u^{(0)} \neq x^{(j)}$ for any $j$, then $dj_f(\tilde \gs(\cdot, 
u^{(0)}))(x)_{| T_j}$ is a local submersion onto $J^k(n, 1) \times \R$ for 
each $j$.  Hence, the fiber multijet map $d{_q}j_f(\tilde \gs(\cdot, 
u^{(0)}))(x)_{| T}$ is a local submersion in a neighborhood of $(x, 
u^{(0)}, 0)$.  This argument applies to all points in the neighborhood.  \par
The argument for the height-distance function is similar as the fiber jet 
map on each perturbation space $T_j$ will again be a local submersion by 
the case of single germs; thus so is the fiber multijet map.  
Thus, the perturbation transversality condition is satisfied in either 
multijet case.  
\end{proof}
\begin{Remark}
Because of the surjectivity of the perturbation map when we replace 
$X_i$ by $X_i^*$, the perturbation transversality condition remains valid 
for any submanifold which is invariant under ${_q}\cR^+$.  Then, the 
general transversality theorem for multijets implies Theorem 
\ref{Thm5.0} for multijets ${_s}j_1^k( \gs_i)$ (and jets for $s = 1$).  This 
yields Mather\rq s theorem. 
\end{Remark}
\par
Next, we shall use these, along with the derivative computations for the 
multi-function cases, to verify the perturbation transversality condition 
for the partial multijet mappings. \par

\subsubsection*{ Fiber Jet Extension Maps to Partial Multijet Spaces} 
\hfill 
\par
We now consider the multi-distance and height-distance functions.  Given 
$i$, and $m > 0$ with assignment $p \mapsto j_p$,and partition $\bl = 
(\ell_1, \dots , \ell_m)$, we let 
$x = (x^{(j_1)}, \dots , x^{(j_{m})}) \in X_{\cJ_i}^{(\bl)}$ with $x^{(j_p)} = 
(x^{(j_p)}_1, \dots , x^{(j_p)}_{\ell_p})$ and where by our earlier 
definition of $ X_{\cJ_i}^{(\bl)}$, $x^{(j_p)}_1 \in X_{i \, j_p}$ for all $p$.  
For the multi-distance function we 
also consider $u = (u^{(j_1)}, \dots , u^{(j_{m})}, u^{(i)}) \in (\R^{n+1})^{(m 
+1)}$.  We use the local representation of $ X_{\cJ_i}^{(\bl)}$ and 
${_\bl}E^{(k)}(X_{\cJ_i}, \R^2)$ given in (\ref{Eqn10.5a}) and in \S 
\ref{S:sec6}.  We suppose ${_\bl}j^k(\tilde \gs_i)(x, u, \bt) \in W 
\subset {_\bl}E^{(k)}(X_{\cJ_i}, \R^2)$.  Here $W$ is a distinguished 
submanifold and has the following form by (\ref{Eqn5.0b}) contained in 
(\ref{Eqn11.0a}), where we restrict the products on the RHS to 
$X_{\cJ_i}^{(\bl)}$:
\begin{align}
\label{Eqn11.0a}
{_{\bl}}E^{(k)}(X_{\cJ_i},\R^2)  \,\, &= \,\, \prod_{p=1}^{m} \,
{_{\ell_p}}J^k(\cirX_{j_p},\R^2)    \\
&= \,\, \prod_{p=1}^{m} \left( \prod_{q=1}^{\ell_p}
(J^k(\cirX_{j_p},\R) \times J^k(\cirX_{j_p},\R)) \right) \, .   \notag 
\end{align}
Then, for fixed $u$ and $\bt$, the partial multijet of the multi-distance 
function is the product of jet maps $\prod_{p=1}^{m} 
\prod_{q=1}^{\ell_p} j_1^k(\tilde \gs_i, \tilde\gs_{j_p})$, with each 
$j_1^k(\tilde \gs_i, \tilde\gs_{j_p}) : X_{i\, j_p} \to J^k(X_{i\, j_p}, 
\R^2)$.  \par
Because the perturbations act independently in a neighborhood of each 
$x^{(j_p)}_q$ it is enough to consider two cases.  One is for $x^{(j_p)} = 
(x^{(j_p)}_1, \dots , x^{(j_1)}_{\ell_p})$ with $x^{(j_p)}_1 \in X_{i\, j_p}$.  
We then will determine the derivatives of the perturbations of each factor 
for each local multi-distance map $(\tilde \gs_i, \tilde\gs_{j_p})$.  The 
second case is when there are $j_p, j_{p^{\prime}} \in \cJ_i$ with $j_p 
\neq j_{p^{\prime}}$, and $q, q^{\prime}$, such that 
$x^{(j_p)}_q = x^{(j_{p^{\prime}})}_{q^{\prime}}$ as points in 
$X_{j_p \, j_{p^{\prime}}}$.  Then, we have an analogous situation except 
we have to simultaneously consider the perturbations applied at the 
common point for both distance functions $\tilde \gs_{j_p}$ and  
$\tilde\gs_{j_{p^{\prime}}}$.  We consider case 1, and then indicate the 
modifications for case 2. \par   
Unlike the two preceding special cases for jets and multijets, we must 
now examine the contributions to the fiber partial multijet map of the 
perturbed multi-distance map.  \par 
To simplify notation (and agree with earlier notation involving derivatives 
of distance and height functions), we consider $p$ and denote 
$x^{(j_p)}_1$ as $x^{(0)}$, 
$u^{(i)}$ as $u^{(0)}$, and 
$u^{(j_p)}$ as $u^{(1)}$.  
\par
\subsection*{Derivatives of Multi-Distance and Height-Distance 
Functions} \hfill 
\par
We then first obtain from (\ref{Eqn10.4a}) the form of the derivatives of 
the multi-distance functions at $(x^{(0)}, u^{(0)}, u^{(1)})$ with $x^{(0)} \in 
X_{i, j}$ and $u^{(0)} \neq u^{(1)}$.  Locally we are reduced to the 
multi-distance function $\tilde \rho_i = (\tilde \gs_i,\tilde \gs_j)$, 
with $\tilde \gs_i = \tilde \gs(\cdot , u^{(0)})$, and  $\tilde \gs_j = \tilde 
\gs(\cdot , u^{(1)})$.  The derivative computations are with respect to two 
sets of orthonormal coordinates $(u_j)$ and $(u_j^{\prime})$ about the 
points $u^{(0)}$ and $u^{(1)}$, where $u^{(0)}$ and $u^{(1)}$ vary 
independently.  Here $u^{(0)} = (u^{(0)}_1, \dots , u^{(0)}_{n+1})$ and 
$u^{(1)} = (u^{(1)}_1, \dots , u^{(1)}_{n+1})$ at a point $(x^{(0)}, u^{(0)}, 
u^{(1)})$.  We compute the derivatives with respect to coordinates used 
earlier, $z = (x, s, \bt, u, u^{\prime})$ and $z_0 = (0, 0, 0, u^{(0)}, u^{(1)})$, 
where $u^{\prime}$ denotes orthogonal coordinates about $x^{(0)}$ for 
$u^{(1)}$.  \flushpar
{\it Derivatives of the multi-distance function:}
\begin{align}
\label{Eqn11.1}
 \pd{\tilde \rho_i}{x_i}_{| z = z_0} \,\, &= \,\,  \left(2 (\Phi - 
u^{(0)})\cdot\pd{\Phi}{x_i}, 2 (\Phi - u^{(1)})\cdot\pd{\Phi}{x_i}\right)  \, 
;  \notag \\  
\pd{\tilde \rho_i}{u_j}_{| z = z_0} \,\, &= \,\,  \left(-2 (\Phi - 
u^{(0)})\cdot e_j, 0 \right) \, ; \notag \\
\pd{\tilde \rho_i}{u_j^{\prime}}_{| z = z_0} \,\, &= \,\,  \left(0, -2 (\Phi - 
u^{(1)})\cdot e_j \right) \, ; \notag \\
\pd{\tilde \rho_i}{t_{{\bga, \ell}}}_{| z = z_0} \,\, &= \,\,  \left(2 (\Phi - 
u^{(0)})\cdot (y^{\bga}\circ \Phi) e_{\ell}, \, 2 (\Phi - u^{(1)})\cdot 
(y^{\bga}\circ \Phi) e_{\ell}\right) \, , 
\end{align}
where as earlier \lq\lq $\cdot$\rq\rq denotes the dot product in $\R^{n + 
1}$. 
\par
Second, we use (\ref{Eqn10.4a}) and (\ref{Eqn10.4b}) to obtain the 
derivatives of the height-distance function at a point $(x^{(0)}, u^{(0)}, 
v^{(0)})$, with $x^{(0)}$ again denoting $x^{(j_p)}_1 \in \cirX_i$, $u^{(0)} 
\in \R^{n+1}$ and $ v^{(0)} \in S^n$.  Locally we are reduced to the 
height-distance function $\tilde \tau = (\tilde \nu, \tilde \gs_i)$, with  
$\tilde \nu = \tilde \nu(\cdot , v^{(0)})$ and  $\tilde \gs_i = \tilde 
\gs(\cdot , u^{(0)})$.  The derivative computations are with respect to the 
same coordinates used earlier about $x^{(0)}$, $u^{(0)}$, and $v^{(0)}$, with 
$z = (x, s, \bt, u)$ and $z_0 = (0, 0, 0, u^{(0)})$.  \flushpar
{\it Derivatives of the height-distance function:}
\begin{align}
\label{Eqn11.2}
\pd{\tilde \tau}{x_i}_{| z = z_0} \,\, &= \,\,  \left(\pd{\Phi}{x_i}\cdot 
v^{(0)}, 2 (\Phi - u^{(0)})\cdot\pd{\Phi}{x_i}\right) \, ; \notag  \\
\pd{\tilde \tau}{u_j}_{| z = z_0} \,\, &= \,\,  \left(0, 2 (\Phi - 
u^{(0)})\cdot e_j \right) \, ;  \notag  \\ 
\pd{\tilde \tau}{t_{\bga, \ell}}_{| z = z_0} \,\, &= \,\,  \left((y^{\bga}\circ 
\Phi) e_{\ell} \cdot v^{(0)}, 2 (\Phi - u^{(0)})\cdot (y^{\bga}\circ \Phi) 
e_{\ell} \right)  \, ;  \notag  \\
\pd{\tilde \tau}{s_j}_{| z = z_0} \,\, &= \,\,  (\Phi \cdot v_j, 0) \, .
\end{align}
\par
Then we compute the contribution of these terms to the derivative of the 
fiber partial multijet map.  \par
For $\bga$ of the form $\bga = (\ga_1, \dots \ga_n, 0)$, $y^{\bga}\circ 
\Phi = x^{\bga}$.  Then, by (\ref{Eqn10.6a}) and (\ref{Eqn11.1})
$$ \pd{\tilde \rho_i}{t_{{\bga, \ell}}}_{| z = z_0} \,\, = \,\, 
\left\{  \begin{array}{lr}
2((x_{\ell}- u^{(0)}_{\ell})x^{\bga}, (x_{\ell}- u^{(1)}_{\ell})x^{\bga}) &  
\displaystyle \ell \leq n \, , \\
2((f - u^{(0)}_{n+1})x^{\bga}, (f - u^{(1)}_{n+1})x^{\bga})  & \displaystyle 
\ell = n+1  \, .     
\end{array}  \right. $$
Thus, the set of elements $\{\pd{\tilde \rho_i}{t_{{\bga, \ell}}}_{| z = 
z_0}\}$ as we vary over $\bga$ with $\ga_{n+1} = 0$ and $| \bga | \leq k$ 
generate as a vector space the submodule 
\begin{align}
\label{Eqn11.3}
K_1 \,\, &= \,\,  \cC_x\cdot \{(x_{\ell} - u^{(0)}_{\ell}, x_{\ell} - 
u^{(1)}_{\ell}), \ell = 1, \dots , n, (f - u^{(0)}_{n+1}, f -u^{(1)}_{n+1})\}  
\notag   \\  
& \hspace{2in} \quad \mod (\itm_x^{k+1} \oplus \itm_x^{k+1}) \, . 
\end{align}
Also, the vector spaces spanned by 
$$\{\pd{\tilde \rho_i}{u_{\ell}}_{| z = z_0}:  \ell = 1, \dots , n+1\},\,\,  
\text{ resp., } \,\,  \{\pd{\tilde \rho_i}{u_{\ell}^{\prime}}_{| z = z_0}:  \ell 
= 1, \dots , n+1\} $$ 
are resp. 
\begin{align}
\label{Eqn11.4}
K_2 \,\, &= \,\, \langle (x_{\ell}- u^{(0)}_{\ell}), 0), \ell = 1, \dots n, (f - 
u^{(0)}_{n+1}, 0) \rangle \, ,\notag \\
K_3 \,\, &= \,\, \langle (0, x_{\ell}- u^{(1)}_{\ell})), \ell = 1, \dots n, (0, f 
- u^{(1)}_{n+1}) \rangle \, .
\end{align}
Third, by (\ref{Eqn11.1}) we have for i =  1, \dots , n,  
$$  \pd{\tilde \rho_i}{x_i}_{| z = z_0} \,\, = \,\, 
2((x_{i}- u^{(0)}_{i} + (f - u^{(0)}_{n+1})\pd{f}{x_{i}}), (x_{i}- u^{(1)}_{i}) + 
(f - u^{(1)}_{n+1})\pd{f}{x_{i}})) .$$
Thus, $\pd{\tilde \rho_i}{x_i}_{| z = z_0} \in K_1 + K_2 + K_3$. \par 
For the height-distance function, we have a corresponding set of 
conclusions using (\ref{Eqn11.2}).  
The set of elements $\{\pd{\tilde \tau}{t_{{\bga, \ell}}}_{| z = z_0}\}$ as 
we vary over $\bga$ with $\ga_{n+1} = 0$ and $| \bga | \leq k$ generate as 
a vector space the submodule 
\begin{align}
\label{Eqn11.3a}
L_1 \,\, &= \,\,  \cC_x\cdot \{(v^{(0)}_{\ell}, x_{\ell} - u^{(0)}_{\ell}), \ell 
= 1, \dots , n, (v^{(0)}_{n+1}, f - u^{(0)}_{n+1})\}  \\  
& \hspace{2in} \quad \mod (\itm_x^{k+1} \oplus \itm_x^{k+1}) \, .  \notag  
\end{align}
Also, the vector spaces spanned by 
$$\{\pd{\tilde \tau}{u_{\ell}}_{| z = z_0}:  \ell = 1, \dots , n+1\},\,\,  
\text{ resp., } \,\,  \{\pd{\tilde \tau}{v_{\ell}}_{| z = z_0}:  \ell = 1, \dots , 
n\} $$ 
are resp. 
\begin{align}
\label{Eqn11.4a}
L_2 \,\, &= \,\, \langle (0, x_{\ell}- u^{(0)}_{\ell})), \ell = 1, \dots n, (0, f 
- u^{(0)}_{n+1}) \rangle  \, ,\\
L_3 \,\, &= \,\, \langle (x_{\ell}, 0), \ell = 1, \dots n \rangle \, . \notag 
\end{align}
We will not have need of the $\pd{\tilde \tau}{x_i}_{| z = z_0}$.

\subsection*{Verifying the Perturbation Transversality Conditions} 
\par
With the given $i$, the $m > 0$ with assignment $p \mapsto j_p$, and the 
partition $\bl = (\ell_1, \dots , \ell_p)$ already chosen, we consider a 
compact subset $Z \subset X_{\cJ_i}^{(\bl)} \times (\R^{n +1})^{(m+1)}$.  
We want to establish the perturbation multi-transversality condition for 
a distinguished submanifold $W \subset {_\bl}E^{(k)}(X_{\cJ_i}, \R^2)$ and 
the partial multijet map 
$$ {_\bl}j_1^k(\rho_i) : X_{\cJ_i}^{(\bl)} \times (\R^{n +1})^{(m+1)} \to 
{_\bl}E^{(k)}(X_{\cJ_i}, \R^2) \,   $$
restricted to $Z$.  We may cover $Z$ by a finite number of compact sets of 
the form $(\prod_{p = 1}^{m} Z_{j_p}) \times (\prod_{j = 1}^{m +1} 
Z^{\prime}_{j})$ with $Z_{j_p} =  (\prod_{q = 1}^{\ell_p} Z^{\prime}_{q}) 
\subset X_{j_p}^{(\ell_p)}$ for each $p$ and $\prod_{j = 1}^{m +1} 
Z^{\prime\prime}_{j} \subset (\R^{n +1})^{(m+1)}$.  Thus, we may assume 
$Z$ has this form.
We consider $(x, u) \in X_{\cJ_i}^{(\bl)}\times (\R^{n +1})^{(m+1)}$ with as 
before $x = (x^{(j_1)}, \dots , x^{(j_m)})$, $x^{(j_p)}  = (x^{(j_p)}_1, \dots , 
x^{(j_p)}_{\ell_p})$, $u = (u^{(j_1)}, \dots, u^{(j_m)}, u^{(i)})$.   \par 
We suppose $(x, u) \in Z$, so each $x^{(j_p)} \in Z_{j_p}$ and each 
$u^{(j_p)} \in Z^{\prime\prime}_{j_p}$, and in additiion 
${_\bl}j_1^k(\rho_i)(x, u) \in W$.  By Theorem \ref{Thm5.0} for jets and 
multijets, we have that there is an open dense subset $\cU \subset 
\emb(\bgD, \R^{n + 1})$ such that for $\Phi \in \cU$, the conclusions of 
Mather\rq s Theorem hold for multigerms: $\gs_{j_p}$ at $S_{j_p} \subset 
X_{j_p}$ with $S_{j_p} = \{x^{(j_p)}_1, \dots , x^{(j_p)}_{\ell_p}\} \subset 
Z_{j_p}$ and $u^{(j_p)} \in Z^{\prime}_{j_p}$ for each $p$, and $\gs_{i}$ at 
$S_{i} \subset X_{i}$ with $S_{i} = \{x^{(j_1)}_1, \dots , x^{(j_m)}_1\} 
\subset (\prod_{p = 1}^{m} Z^{\prime}_{1})$ and $u^{(i)} \in 
Z^{\prime\prime}_{i}$.  \par 
We will verify that the perturbation transversality conditions to the 
distinguished submanifolds hold on $Z$ for the partial multijets 
corresponding to $\Phi \in \cU$.  It will then follow by the Hybrid 
Multi-Transversality Theorem \ref{Thm9.2}, that there is an open dense 
subset $\cU^{\prime} \subset \cU$, such that for $\Phi \in \cU^{\prime}$,  
the partial multijets ${_\bl}j_1^k(\rho_i)$ are transverse to 
distinguished submanifolds on $Z$.  Then, $\cU^{\prime}$ is then also open 
and dense in $\emb(\bgD, \R^{n + 1})$.  As this is true for each of the 
finite number of $i$, $m$, assignments $p \mapsto j_p$, partitions $\bl$, 
and distinguished submanifolds $W$, the intersection of these is still 
open and dense in $\emb(\bgD, \R^{n + 1})$.  Thus, 
($a_1$) of the Transversality Theorem \ref{Thm5.1} holds, and as 
explained earlier it follows that (b) holds for multi-distance functions.  
There will be an analogous argument for the height-distance functions, 
completing the proof of Theorem  \ref{Thm5.1}. 
\par
Then, we consider $(x, u) \in Z$ with 
${_\bl}j_1^k(\rho_i)(x, u) \in W \subset {_{\bl}}E^{(k)}(X_{\cJ_i},\R^2)$.  
Unlike the cases of simple germs and multigerms, we will not always 
have a submersion onto the jet spaces.  We first use (\ref{Eqn11.3}) and 
(\ref{Eqn11.4}) to compute the image of the subspaces of infinitesimal 
perturbations and prove transversality to $W$.  \par 
As $W$ is a distinguished submanifold, we may represent $W$ in the 
following form
$$  W = \left( \prod_{p=1}^{m} \big((\prod_{q=1}^{\ell_p} W^{(j_p)}_q) 
\times \gD^{\ell_p}\R \big) \right) \times \gD^{m}\R\, ,  $$
Also, each $W^{(j_p)}_q$ has the form
$$ W^{(j_p)}_q  \,\, = \,\, 
\left\{  \begin{array}{lr}
W^{(j_p)}_{1\, 0} \times W^{(j_p) \prime}_{1\, 0} &  \displaystyle  
x^{(j_p)}_q \in X_{i\, j_p} \text{ a singular point of } \gs_i, \\
J^k(n, 1)  \times W^{(j_p) \prime}_{q\, 0}  & \displaystyle 
\text{otherwise}      
\end{array}  \right. \, , $$
where the $W^{(j_p)}_{q\, 0}$ and $W^{(j_p) \prime}_{q\, 0}$ are the 
$\cR$-invariant submanifolds of jets of singular germs, introduced in \S 
\ref{S:sec5}.  \par
Let $S_i = \{x^{(j_1)}_1, \dots , x^{(j_m)}_1\}$ and $S_{j_p} = 
\{x^{(j_p)}_1, \dots , x^{(j_p)}_{\ell_p}\}$ for all $p$.  Then, by assumption 
$(\gs_i(x^{(j_1)}_1), \dots , \gs_i(x^{(j_m)}_1)) \in \gD^{m}\R$, and 
$(\gs_{j_p}(x^{(j_p)}_1), \dots , \gs_{j_p}(x^{(j_p)}_{\ell_p})) \in 
\gD^{\ell_p}\R$ for all $p$; thus there are common values 
$\gs_i(x^{j_p}_1) = y^{(i)}$ for all $p$, and $\gs_{j_p}(x^{(j_p)}_q) = 
y^{(j_p)}$ for all $q$.   Since $\Phi$ belongs to the open dense set $\cU$, 
the multigerms $\gs_i : X_i \times \R^{n + 1}, S_i \times \{u^{(i)}\} \to 
\R, y^{(i)}$ and $\gs_{j_p} : X_{j_p} \times \R^{n + 1}, S_{j_p} \times 
\{u^{(j_p)}\} \to \R, y^{(j_p)}$ are $\cR^+$-versal unfoldings for all cases 
except $n + 1 = 7$ when one of the points is an $\tilde E_7$ point.  In that 
case, that is the only point in the $S_i$, resp. $S_{j_p}$, and we consider 
that case  separately later.  
\par
Then for the perturbation, we are considering the partial multijet of the 
multi-distance function 
$\tilde \rho_i$ about the points $(x^{(j_1)}_1, \dots , x^{(j_m)}_1)$, 
where $\tilde \rho_i = (\tilde \gs_i, \tilde \gs_{j_p})$ about 
$x^{(j_1)}_p$.  
 Here we view $\tilde \gs_{j_p}$ as a multigerm about $(x^{(j_p)}_1, 
\dots , x^{(j_p)}_{\ell_p})$ with each $x^{(j_p)}_1 \in X_{i\, j_p}$.  \par 
There are two cases involving $x^{(0)} (= x^{(j_p)}_1)$, $u^{(0)}$, and 
$u^{(1)}$.  \flushpar
\begin{itemize}
\item[(a)] $u^{(0)} = x^{(0)}$, (and then $u^{(1)} \neq x^{(0)}$);
\item[(b)]  $u^{(0)} \neq x^{(0)}$ . 
\end{itemize}
\par
We consider each of these cases. \par
\vspace{1ex}
\flushpar
{\it Case $(a)$: } Suppose $u^{(0)} = x^{(0)}$.  As $u^{(0)} \neq u^{(1)}$, 
$x^{(0)} \neq u^{(1)}$.
Then, $\gs(x^{(0)}, u^{(0)}) = 0$ so as in an earlier case, there are no other 
$x^{(j_p)}$ so the multigerm is just a germ at $x^{(j_1)}$, with $m = 
\ell_1$.  Second, as $u^{(1)} \neq x^{(0)}$, then also $u^{(1)} \neq 
x^{(j_1)}_{q}$, for any $q$.  Hence, 
by the preceding case for multigerms, each factor of $dj_1^k(\tilde \gs( 
\cdot, u^{(1)})) _{| T_{j_1, q}}$ at $(x^{(j_1)}_q, u^{(1)}, 0)$ is a submersion 
onto $J^k(n, 1) \times \R$.  Hence, $j_f(\tilde \gs_i, \tilde \gs_{j_p})_{| 
T_{j_1, q}}$ is transverse to the factor $(J^k(n, 1) \times \R) \times W$ 
in $J^k(n, 2) \times \R^2$.  \par
For the point $x^{(0)}$, $u^{(0)}_j = 0$ for all $j$, while as $u^{(1)} \neq 
x^{(0)}$, $u^{(1)}_j \neq 0$ for some $j$.  Thus, if $j \leq n$ then $x_j - 
u^{(1)}_j$ is a unit, or if $j = n+1$, then $f -u^{(1)}_{n+1}$ is a unit.  As 
$\tilde \gs(\cdot, u^{(0)})$ has an $A_1$ singularity at $x^{(0)}$, then in 
the first factor
$$ K_2 + \langle 1 \rangle + \itm_x^2  \,\, = \,\,  \cC_x   \, . $$
Then, we apply (\ref{Eqn11.3}) and obtain 
\begin{equation}
\label{Eqn11.5}
 K_1  \,\, \equiv \,\, \langle (x_{\ell} , x_{\ell} - u^{(1)}_{\ell}), \ell = 1, 
\dots , n, (0, -u^{(1)}_{n+1})  \rangle \quad \mod (\itm_x^{k+1} \oplus 
\itm_x^{k+1})  \, .
\end{equation}
Hence,  
$$ K_1 + (\cC_x \oplus 0) \,\, \equiv \,\,  \cC_x^2 \quad \mod 
(\itm_x^{k+1} \oplus \itm_x^{k+1}) .$$
Thus, $j_f(\tilde \gs_i, \tilde \gs_{j_1})_{| T_{j_1\, 1} \times 
\R^{n+1}_i}$ is transverse in a neighborhood of $x^{(0)}$ to $W^{(j_p)}_{1\, 
0} \times W^{(j_p) \prime}_{1\, 0}$.  This applies for each stratum in 
$\bar W$.  Thus, by openness of transversality to closed Whitney 
stratified sets, $j_f(\tilde \gs_i, \tilde \gs_{j_1})_{| T \times 
\R^{n+1}_i}$ is transverse to $W$ in a neighborhood of $z_0$ for a 
neighborhood of $0 \in T$.  Thus, the perturbation transversality condition 
is satisfied. \par
Note that if we interchange the roles of $u^{(0)}$ and $u^{(1)}$, then we 
obtain the same conclusion for transversality of $j_f(\tilde \rho_i)$ at 
$x^{(j_1)}_1$ (by using $K_3$ in place of $K_2$). \par
\vspace{1ex}
\flushpar
{\it Case $(b)$:}  $x^{(0)} \neq u^{(0)}$.  \par
First, we consider when also $x^{(0)} \neq u^{(1)}$.  As the partial multijet 
map has image at $(x, u)$ in a distinguished submanifold $W$, both $\tilde 
\gs( \cdot, u^{(0)})$ and $\tilde \gs( \cdot, u^{(1)})$ have critical points at 
$x^{(0)}$.   Then, both $u^{(i)}$ lie on the normal line to the boundary at 
$x^{(0)}$.  \par  
We separately consider the multigerms of $\gs_i$ and $\gs_{j_p}$ for 
$x^{(0)} = x^{(j_p)}$.  
First, for $\gs_{j_p}$, we note that because the partial multijet has 
image of $(x, u)$ in a distinguished submanifold,  any $x^{(j_p)}_{q} \in 
X_{i\, j_p}$ is a critical point for $\gs_{j_p}$ (and satisfies 
$\gs_{j_p}(x^{(j_p)}_{q}) = \gs_{j_p}(x^{(j_p)}_{1})$).  Also,
$x^{(j_1)}_{q}$ is also be a critical point of $\gs_i$.  Thus, both $u^{(0)}$ 
and $u^{(1)} = u^{(j_p)}$ would lie in the normal line to $X_i$ at 
$x^{(j_p)}_{q}$.  However, they already lie in the 
normal line to $x^{(0)} = x^{(j_p)}_{1}$.  Hence, these normal lines are the 
same line.  Then, there is a unique point on this line that satisfies $\gs(x, 
u^{(0)}) = y^{(i)} = \gs(x^{(0)}, u^{(0)})$ and $\gs(x, u^{(1)}) = y^{(p)} = 
\gs(x^{(0)}, u^{(1)})$.  Thus, $x^{(j_p)}_{q} = x^{(j_p)}_{1}$, a contradiction. 
Hence, each $x^{(j_p)}_{q} \notin X_{i\, j_p}$ for $1 < q \leq \ell_p$.
As $\gs(x^{(0)}, u^{(0)}), \gs(x^{(0)}, u^{(1)}) \neq 0$, then $\gs(x^{(j_p)}_q, 
u^{(1)}) \neq 0$ for $q > 1$.  This is true for each $p$. 
 \par
Using the Monge patch representation at $x^{(0)}$, by the normality 
condition, both $u^{(0)}_j, u^{(1)}_j = 0$ for $j \leq n$ and $u^{(0)}_{n+1}, 
u^{(1)}_{n+1} \neq 0$.  Thus, we obtain from (\ref{Eqn11.3})
\begin{equation}
\label{Eqn11.5b}
K_1 \,\, = \,\, \itm_x\cdot \{(1, 1)\} \,\, + \,\, \cC_x\cdot \{(f - 
u^{(0)}_{n+1}, f -u^{(1)}_{n+1})\} \quad \mod (\itm_x^{k+1} \oplus 
\itm_x^{k+1}) \, .
\end{equation} 
Since $u^{(0)}_{n+1} \neq u^{(1)}_{n+1}$, then $(1, 1)$ and $(- u^{(0)}_{n+1}, 
-u^{(1)}_{n+1})$ are independent in $\R^2$; hence, by Nakayama\rq s 
Lemma,  $$  \cC_x^2  \,\, = \,\, \cC_x\{(1, 1), (f - u^{(0)}_{n+1}, f -
u^{(1)}_{n+1})\} \, .$$
Thus, by (\ref{Eqn11.5b}) $\cC_x^2$ is spanned by $K_1$ and a single 
constant term $(1, 0)$.  In addition, we see that $\{\pd{\tilde 
\rho_i}{u_{\ell}}_{| z = z_0}:  \ell = 1, \dots , n+1\}$, which
by (\ref{Eqn11.4}) spans $K_2$, includes the constant term $(-u_{n+1}, 
0)\, \mod (\itm_x \oplus \itm_x)$ (for the Monge patch).  This is true for 
each $j_p$.  Moreover, as $\gs_i : X_i \times \R^{n + 1}, S_i \times 
\{u^{(i)}\} \to \R, y^{(i)}$ is an $\cR^+$-versal unfolding, and each 
$x^{(j_p)}_1$ is a singular point of $\gs_i$,  it follows that $\{\pd{\tilde 
\gs_i}{u_{\ell}}_{| z = z_0}:  \ell = 1, \dots , n+1\}$ together with $(1, 
\dots, 1)$ must span $\oplus_{p = 1}^{m} \cC_x/\itm_x$, the constant 
terms of the summands for each $x^{(j_p)}_1$.  \par 
Thus, the factor maps $j_f(\tilde \gs_i, \tilde \gs_{j_p})_{| T_{j_p, 1}}$, 
$p = 1, \dots , m$ together with the infinitesimal variations of the 
$u^{(i)}$ and the term $(1, \dots , 1)$ (spanning $T\bgD^m\R$) span the 
tangent spaces of the fibers $(J^k(n, 1) \times \R)^2$.  By the remark at 
the end of case a), this continues to apply for any $p$ for which $u^{(j_p)} 
= x^{(j_p)}_1$. \par
Thus, 
$j_f(\tilde \gs_i, \tilde \gs_{j_1})_{| (\times_{p} T_{j_p, 1}) \times 
\R^{n+1}_{j_p}}$ is transverse to $(\times_{p} W^{(j_p)}_1) \times 
\bgD^m\R$.  
\par
We can apply a similar argument for a fixed $p$ and $\tilde \rho_i = 
(\tilde \gs_i, \tilde \gs_{j_p})$ on $S_{j_p} = \{ x^{(j_p)}_1, \dots , 
x^{(j_p)}_{\ell_p}\}$.  First, by the preceding argument, at $x^{(j_p)}_1$ 
the local perturbations from $T_{j_p, 1}$ together with $\{\pd{\tilde 
\rho_i}{u_{\ell}}_{| z = z_0}:  \ell = 1, \dots , n+1\}$ and $(1, \dots , 1)$ 
span $(J^k(n, 2) \times \R)^2$.  Next for $x^{(j_p)}_q$ with $1 < q \leq 
\ell_p$, $W^{(j_p)}_q = J^k(n, 1) \times W^{(j_p) \prime}_{0 \, q}$.  If 
$x^{(j_p)}_q \neq x^{(j_{p^{\prime}})}_{q^{\prime}}$ for $p^{\prime} \neq p$ 
and any $q^{\prime}$, then by the argument for multitransversality 
$dj_f(\tilde \gs_{j_p})(x^{(j_p)}_q)_{| ( T_{j_p, q})}$ is surjective onto 
$J^k(n, 2) \times \R$.  For each $x^{(j_p)}_q$ with $x^{(j_p)}_q = 
x^{(j_{p^{\prime}})}_{q^{\prime}}$ for some unique $p^{\prime} \neq p$ and 
$q^{\prime}$, then 
\begin{equation}
\label{Eqn11.6}
W^{(j_p)}_q \times W^{(j_{p^{\prime}})}_{q^{\prime}} \,\, = \,\,  (J^k(n, 1) 
\times W^{(j_p)\, \prime}_{0 \, q}) \times (J^k(n, 1) \times 
W^{(j_{p^{\prime}})\, \prime}_{0 \, q^{\prime}}) \, .
\end{equation} 
By the argument given for $(\tilde \gs_i, \tilde \gs_{j_p})$ applied to 
$(\tilde \gs_{j_p}, \tilde \gs_{j_p^{\prime}})$, we see that the 
corresponding $K_1$ for $T_{j_p, q}$ will span a codimension one 
subspace of $(J^k(n, 1) \times \R) \times (J^k(n, 1)\times \R)$ 
corresponding to the second and fourth factors, with $(0, 1, 0, 0)$ 
spanning the complement.  Then, again as $\gs_{j_p} : X_{j_p} \times \R^{n 
+ 1}, S_{j_p} \times \{u^{(j_p)}\} \to \R, y^{(j_p)}$ is an $\cR^+$-versal 
unfolding, the infinitesimal variations from $u^{(j_p)}$, giving 
$\{\pd{\tilde \gs_{j_p}}{u^{(j_p)}_{\ell}}_{| z = z_0}:  \ell = 1, \dots , 
n+1\}$, together with $(1, 1, \dots , 1)$ spans the sum $\oplus_{q = 
1}^{\ell_p} \cC_x/\itm_x$, the constant terms of the summands for each 
$x^{(j_p)}_q$.  
Thus, $j_f(\tilde \gs_i, \tilde \gs_{j_p})_{| 
\times_{q = 1}^{\ell_p} T_{j_p, q} \times \R^{n+1}_{j_p}}$ is transverse to 
$(\times_{q = 1}^{\ell_p} W^{(j_p)}_q \times \bgD^{\ell_p}\R) \times 
(\times_{x^{(j_{p^{\prime}})}_{q^{\prime}}} 
W^{(j_{p^{\prime}})}_{q^{\prime}})$.
where the last product is over those $x^{(j_{p^{\prime}})}_{q^{\prime}} = 
x^{(j_p)}_q$ for some $q > 1$.
Hence, taken together, the surjectivity for each $j_p$ implies that 
${_\bl}j^k_f( \tilde\rho_i)$ is transverse to $W$ at $(x, u)$, and hence 
also on a neighborhood of $(x, u)$ by the argument above.  Thus, the 
perturbation transversality condition for the partial multijet map for 
$\tilde \rho_i$ is satisfied when there are no $\tilde E_7$ points.  \par
Lastly, if say $x^{(j_1)}_1$ is an $\tilde E_7$ point for $\gs_i$, then by 
the multitransversality condition being satisfied for $\gs_i$, the 
codimension of the $\tilde E_7$ stratum implies there are no other 
singular points in $S_i$, so $m = 1$.  Then, at $x^{(j_1)}_1$ we first use 
the above arguments for $\gs_{i}$ at $x^{(j_1)}_1$ and then for 
$\gs_{j_1}$ at $S_{j_1}$ to verify the perturbation transversality 
condition.  If instead there is a $j_p$ such that $\gs_{j_p}$ has an $\tilde 
E_7$ point at $x^{(j_p)}_1$, then again by codimension conditions for the 
multigerm $\gs_{j_p}$ at $S_{j_p}$, there is only one point $x^{(j_p)}_1$, 
and just the first step of the above argument guarantees transversality to 
$W^{(j_p)}_1$, so the perturbation transversality condition is satisfied. 
\par
\begin{Remark}
\label{Rem11.1}
This last case implies by the multitransversality results for partial 
multijets that the only linking that occurs involving $\tilde E_7$ points 
either is self-linking of the form $(\tilde E_7: A^2_1)$ or simple linking 
of the form $(A^2_1:\tilde E_7, A_1^2)$.
\end{Remark}
\subsubsection*{Perturbation Transversality Conditions for 
Height-Distance Functions} \hfill
\par
We apply similar reasoning to the height-distance function.  However, we 
note that for $n + 1 = 7$, by dimension reasons, there will generically not 
be $\tilde E_7$ points for the height function $\nu$, but there can be for 
the distance functions $\gs_{j_p}$.  We consider the case where there are 
no $\tilde E_7$ points for either when $n + 1 \leq 7$, and the special 
argument for such points for $n + 1 = 7$ follows as above.  \par
Now $X_i$ is replaced by $X_0$, but we still consider an $m > 0$ with 
assignment $p \mapsto j_p \in \cJ_0$ and partition $\bl = (\ell_1, \dots , 
\ell_m)$.  We use the same notation for $(x, u)$ as above with $x = 
(x^{(j_1)}, \dots , x^{(j_m)})$, $u = (u^{(j_1)}, \dots , u^{(j_m)})$.  In 
addition we consider $v \in S^n$ with $(x, u, v) \in Z$ for a compact 
$Z \subset X_{\cJ_0} \times (\R^{n+1})^{m} \times S^n$.  As above we may 
reduce to the case where $Z$ is a product of compact subspaces of each 
factor. \par 
By the multitransversality theorem applied to both $\nu$ and $\gs_j$,  
there is an open dense subset $\cU \subset \emb(\bgD, \R^{n+1})$ such 
that for $\Phi \in \cU$ and $(x, u, v) \in Z$ : i) $\nu(\cdot, v)$ defines a 
multigerm with critical points at $S_0 = \{x^{(j_1)}_1, \dots , 
x^{(j_m)}_1\}$ where $x^{(j_p)}_1 \in  X_{0\, j_p}$ for each $p$; and $\nu : 
\cirX_0 \times\, S^n , S_0 \times \{ v\} \to \R, t_0$ is an 
$\cR^+$-versal unfolding; ii) likewise, each $\gs_{j_p}(\cdot, u^{(j_p)})$ 
defines a multigerm of critical points at $S_{j_p} = \{x^{(j_p)}_1, \dots , 
x^{(j_p)}_{\ell_p}\} \subset \cirX_{j_p}$ and $\gs_{j_p} : \cirX_0 \times 
\R^{n + 1}, S_{j_p} \times \{u^{(j_p)}\} \to \R, y^{(p)}$ is an $\cR^+$-versal 
unfolding.  \par 
First, for a given $p$, let $x^{(j_p)}_{1}$ be denoted by $x^{(0)}$ and 
$u^{(j_p)}$ by $u^{(0)}$, and the specific $v$ by $v^{(0)}$.  As the distance 
function $\gs(\cdot , u^{(0)})$ and height function 
$\nu(\cdot , v^{(0)})$ have critical points at $x^{(0)}$, it follows 
that $u^{(0)}$ is lying in the normal line to the surface, which for the 
Monge patch is the $y_{n+1}$-axis, so $u^{(0)}_j = 0$ for $j \leq n$.  
For the height function, the condition is that 
$v^{(0)}$ is normal to the surface so that $v^{(0)}_j = 0$ for $j \leq n$.  
\par  
We determine the derivative of the fiber jet map for 
the multigerm of $\tilde \tau(\cdot, u^{(0)}, v^{(0)})$ at $\{x^{(j_1)}_1, 
\dots , x^{(j_m)}_1\}$.  For the Monge patch at $x^{(0)}$, $v^{(0)}$ has 
coordinates $v^{(0)}_j = 0$, for $j \leq n$ and 
$v^{(0)}_{n+1} = 1$.  We apply (\ref{Eqn11.3a}) for $x^{(0)}$ and obtain for 
$dj_f^k(\tilde \tau)(x)_{| T^{(k)}_{0\, j_1}}$,  
$$ L_1 \,\, \equiv \,\, \cC_x \{(v^{(0)}_{n+1}, - u^{(0)}_{n+1})\} \oplus 
\itm_x \{(0, 1)\} \quad  
\mod (\itm_x^{k+1} \oplus \itm_x^{k+1}) \, . $$
This is true for each $p$.  As $v^{(0)}_{n+1} = 1$, the complement is 
spanned by the constant term $(0, 1)$, which by (\ref{Eqn11.4a}) is 
obtained from $L_2$ provided $u^{(0)} \neq x^{(0)}$ so $u^{(0)}_{n+1} \neq 
0$.  Since the terms in $L_2$  are generated by the infinitesimal 
variations $\{\pd{\tilde \tau}{u^{(j_p)}_{\ell}}_{| z = z_0}:  \ell = 1, \dots , 
n+1\}$, we also have to determine their contribution to the perturbations 
at the $x^{(j_p)}_{q}$ for all $q$.  
\par 
Again there are two cases depending on whether $u^{(0)} = $ or 
$ \neq x^{(0)}$ for each $j_p$.  \par 
If $u^{(0)} = x^{(0)}$, then we argue as in Proposition 
\ref{Prop11.1} so there is only one $x^{(j_p)}_1 = x^{(0)}$ in $S_{j_p}$.  
Also, $\gs_{j_p} = \gs(\cdot, u^{(0)})$ has an $A_1$ point at $x^{(0)}$; and 
$j_f(\tilde \gs_{j_p})$ is transverse in a neighborhood of $(x^{(0)}, 
u^{(0)})$ to $W_1$, the $\cR^+$-orbit for $A_1$-type singularities.  This 
gives the term $(0, 1)$ to $W_{j_1}$ and establishes the perturbation 
transversality condition for $j_f(\tilde \tau)$ at $x^{(j_p)}_1$.  
\par
In the case $u^{(0)} \neq x^{(0)}$, then $u^{(0)}$ is in the normal line to the 
boundary at $ x^{(0)}$.  By an analogous argument to that given earlier, 
$x^{(j_p)}_q \notin X_{0\, j_p}$ for $q > 1$.  Then, we compute 
$dj_f(\tilde \nu, \tilde \gs_{j_p})_{| \times_{q} T_{j_p\, q}}$ for 
$S_{j_p} =  \{ x^{(j_p)}_1, \dots , x^{(j_p)}_{\ell_p}\}$ with 
$\gs_{j_p}(x^{(j_p)}_q) = \gs_{j_p}(x^{(j_p)}_1)$ for all $q$.  We consider 
the other points $x^{(j_p)}_q$, with $q > 1$.  \par 
If $x^{(j_p)}_q \neq x^{(j_{p^{\prime}})}_{q^{\prime}}$ for any $p^{\prime} 
\neq p$, then by Proposition \ref{Prop11.1} for  $\tilde \gs_{j_p}$, the 
fiber jet map ${_{\ell_q}}j_f^k(\tilde \gs_{j_p})(x)_{| T_{j_p\, q}}$ is 
locally a submersion in a neighborhood of $(x^{(0)}, u^{(0)}, v^{(0)}, 0)$ onto 
$J^k(n, 1) \times \R$, providing the constant term $(0, 1)$.  Thus, 
$j_f^k(\tilde \nu(\cdot, v^{(0)}), \tilde \gs_{j_p})(\cdot, u^{(0)})_{| 
T_{j_p\, q}}$ is transverse at $x^{(0)}$ to $(J^k(n, 1) \times \R) \times 
W_{j_p}$.  As $W^{(j_1)}_1 = W^{(j_1)}_{1\, 0} \times W^{(j_1) 
\prime}_{1\, 0}$ is $\cR^+$-invariant, $(1, 1) \in  TW^{(j_1)}_1$.  
\par  
In the case there are $x^{(j_p)}_q = x^{(j_{p^{\prime}})}_{q^{\prime}}$ for 
some $p^{\prime} \neq p$, we repeat the argument given above.  Since  
$W^{(j_p)}_q \times W^{(j_{p^{\prime}})}_{q^{\prime}}$ for $\tau$ at 
$x^{(j_p)}_q$ has the same form given in (\ref{Eqn11.6}), we reduce to the 
same argument for $dj_f(\tilde \gs_i, \tilde \gs_{j_p})$ using the 
$\cR^+$-versality of $\gs_{j_p}$ at $S_{j_p}$.  
Then, $j_f(\tilde \tau, \tilde \gs_{j_p})_{| 
\times_{q = 1}^{\ell_p} T_{j_p, q} \times \R^{n+1}_{j_p}}$ is transverse to 
$(\times_{q = 1}^{\ell_p} W^{(j_p)}_q \times \bgD^{\ell_p}\R) \times 
(\times_{x^{(j_{p^{\prime}})}_{q^{\prime}}} 
W^{(j_{p^{\prime}})}_{q^{\prime}})$,
where the last product is over those $x^{(j_{p^{\prime}})}_{q^{\prime}} = 
x^{(j_p)}_q$ for some $q > 1$.
\par
Hence, by taking the product of the factor maps, we deduce that 
${_\bl}j_f^k(\tilde \tau)(x)_{| T}$ is transverse to $W$.  Again this holds 
for the strata in $\bar W$ so it remains true in a neighborhood of $z_0$ 
for $\bt$ in a neighborhood of $0 \in T$.  We conclude that the 
perturbation transversality 
condition is satisfied.  \par

\subsection*{Concluding the Proofs} 
\par
It remains to deduce Theorems \ref{Thm5.0} and 
\ref{Thm5.1}.   By Lemma \ref{Lem9.3}, $\Psi_{\gs}$, 
$\Psi_{\rho_i}$, and $\Psi_{\tau}$ are continuous, and perturbation 
transversality conditions are satisfied for each of them for compact 
subsets $Z$ and distinguished submanifolds $W$ in the cases : $Z \subset 
X_{\cJ_i} \times \R^{n + 1}$ and $W \subset {_s}J^k(X^*_i, \R)$; $Z \subset 
X_{\cJ_i}^{(\bl)} \times (\R^{n + 1})^{(m+1)}$ and $W \subset 
{_\bl}E^{(k)}(X_{\cJ_i}, \R^2)$; and $Z \subset X_{\cJ_0}^{(\bl)} \times 
(\R^{n + 1})^{(m)} \times S^n$ and $W \subset {_\bl}E^{(k)}(X_{\cJ_0}, 
\R^2)$.  Then, we directly apply the Hybrid Multi-Transversality Theorem 
\ref{Thm9.2} for $Y = \emptyset$ to conclude that the subsets 
$\cW$ and $\tilde \cW$ in Theorems \ref{Thm5.0} and \ref{Thm5.1} are 
open and dense.  We have already explained how to obtain from these 
results the remaining results in the theorems with the exception of 
establishing the transversality of ${_s}j_1^k(\Phi(h)) | \gS_{Q}$ in 
Theorem \ref{Thm5.0}.
\par
To complete the proof for this claim, we enlarge 
the family of perturbations.  In addition to the space $T$ of localized 
polynomial perturbations, we also introduce a finite dimensional manifold 
of diffeomorphisms of $X_i$ to account for the stratified set $\gS_Q$ in 
Theorem \ref{Thm5.0}.  We also choose an open neighborhood $\gS_Q 
\subset U \subset X_i \backslash sing(X_i)$.  Then, as $\gS_Q$ is 
compact, we can use the isotopy theorem to find a finite dimensional 
manifold $T_0$ and a finite parameterized family of diffeomorphisms of 
$X_i$, $\{ \phi(\cdot, t) : X_i \times T_0 \to X_i\}$, such that $\phi(\cdot, 
t_0) = id$, each $\phi(\cdot, t) \equiv id$ for $x \notin U$, and for each 
$x_0 \in \gS_Q$, $\{ \pd{\phi(x_0, t)}{v} : v \in T_{t_0}T_0\} = 
T_{x_0}X_i$.   Then together with $T$ we form $T^{\prime \prime} = T 
\times T_0$, and map $(\bt, t_1) \mapsto \tilde \Phi \circ (\phi(\cdot, 
t_1) \times id)$.  Then, if ${_s}j_1^k(\tilde \gs(\Phi^{\prime}, u_0)$ is 
transverse to $W \subset {_s}J^k(X_i, \R)$ for $\Phi^{\prime} \in 
T^{\prime}$, then by the parametrized transversality theorem, for almost 
all $t_1 \in T_0$, ${_s}j_1^k(\tilde \gs(\Phi^{\prime}\circ \phi(\cdot, 
t_1), u_0)$ is transverse to $W$.  Thus, we have established the 
perturbation transversality conditions for both $\Psi$ and $\Psi$ 
restricted to $\gS_{Q} \times \R^{n+1}$.  Then, Theorem \ref{Thm5.0} 
follows from Theorem \ref{Thm9.2}. \par 
\newpage
\section{ \bf Appendix:  List of Frequently Used Notation} 
\label{notationTable} 
\par 
Here we give a list of frequently used notation and the section in which it 
is introduced.
\par

\begin{longtable}{lp{9cm}lp{9cm}}
   Symbol &   Meaning     \\  
\hline\\

\endhead
\multicolumn{2}{c}{\textbf{\S 2}}\\\\

$\Omega_i$ & compact connected region in $\R^{n+1}$ that is a smooth 
manifold with 
boundaries and corners\\
$\cB_i$ & boundary of $\Omega_i$\\
$\R^k_+$ & $\{(x_1,\ldots,x_k) \in \R^{k}:x_i \geq 0\}$ \\
$C_k$ & $\R^k_+ \times \R^{n+1-k}$ \\
$L_k$& $\{y \in \R^{k+1}:\sum_{i=1}^{k+1}y_i=0\}$ \\
$Y_k$ & $\{y \in L_k:\text{for some } i \neq j, y_i=y_j \leq y_l, l \neq 
i,j\}$\\
$P_k$ & $Y_k \times \R^{n+1-k}$\\
$Z_{n+1}$& hyperplane $x_{n+1}=0$ \\
$H_{n+1}$& half-space $x_{n+1}\geq 0$ \\
$Q_k$ & $Z_{n+1} \cup (H_{n+1}\cap P_k)$  \\
$\Sigma_{Q_i}$& compact Whitney stratified set in $\Omega_i$ 
consisting of 
smooth $Q_k$ points\\
$\Sigma_Q$ & $\cup_i \Sigma_{Q_i}$\\
$\boldsymbol{\Omega}$& multi-region configuration consisting of regions 
$\{\Omega_i\}_{i=1}^m$ \\
$\Omega_0$ & closure of the complement $\R^{n+1}\setminus 
\cup_{i=1}^m \Omega_i$\\
$\boldsymbol \Delta$& model configuration for a multi-region 
configuration \\
$\emb(\boldsymbol \Delta,\R^{n+1})$& space of smooth embeddings giving 
configurations of type $\boldsymbol \Delta$ \\
$\Phi$ & smooth embedding $\boldsymbol \Delta \rightarrow \R^{n+1}$ \\
$\Delta_i$& model for $\Omega_i$\\
$X_i$& boundary of $\Delta_i$ \\
$X$ & $\cup_i X_i$\\
int & interior\\
Cl & closure\\
$\tilde{\Omega}$ & bounding region containing the configuration $\bgW$ 
\\\\

\multicolumn{2}{c}{\textbf{\S 3}}\\\\

$M_i$ & skeletal set that is Whitney stratified \\
$U_i$ & multi-valued radial vector field\\
$\bu_i$ & unit radial vector field\\
$(M_i,U_i)$& skeletal structure \\
$M_{i\,sing}$& singular strata of $M_i$ \\
$M_{i\,reg}$& smooth strata of $M_i$\\
$\bdyM_i$ & singular strata where $M_i$ locally a manifold with 
boundary\\
$\barbdyM_i$ & closure of $\bdyM_i$\\
$\tilde{M_i}$ & double of $M_i$ \\
$N$ & normal line bundle on a skeletal set\\
$N_+$ & half-line bundle, $\{c\cdot U_i:c \geq 0\}$\\
$r_i$ & $||U_i||$, radial function \\
$\ell_i$ & linking function\\
$L_i$ & linking vector field\\
$\{(M_i,U_i,\ell_i)\}$ & skeletal (or medial) linking structure \\
$\cS_i$ & labeled refinement of stratification of $\tilde{M}_i$ \\
$S_{ik}$ & stratum of $\cS_i$ \\
$M_0$ & linking axis in the complement\\
$W_{ij}$ & strata of linking axis \\
$M$ & union of all $M_i$ \\
$\tilde M$ & union of all $\tilde M_i$ \\
$\cB$ & union of all $\cB_i$\\
$\pi_i$ & canonical projection $\tilde{M_i}\rightarrow M_i$\\
$M_{i\infty}$ & unlinked portion of $\tilde{M}_i$ \\
$M_{\infty}$ & union of all $M_{i \infty}$ \\
$\cB_{i \infty}$ & region on $\cB_i$ corresponding to $M_{i \infty}$\\
$\cB_{\infty}$ & union of regions on $\cB$ corresponding to $M_{\infty}$ 
\\
$\Omega_{i \infty}$ & region of $\Omega_i$ corresponding to $M_{i 
\infty}$\\
$\Omega_{\infty}$ & $\cup_i \Omega_{i \infty}$\\
$\lambda_i$ & linking flow from $M_i$ \\
$\lambda$ & total linking flow on $M$ \\
$\lambda_t$ & linking flow for fixed $t$, $\lambda(\cdot,t)$ \\
$\cB_{i t}$ & level set of the linking flow at time $t$\\\\

\multicolumn{2}{c}{\textbf{\S 4}}\\\\

$\gs$ & distance-squared function on $\R^{n+1} \times \R^{n+1}$ \\
$\rho$ & $\sigma|(\cB \times \text{int}(\Omega))$ \\
$A_k$ & type of singularity (single germ) \\
$\bA_{\ga}$ & multigerm singularity type \\
$(\bA_{\ga}:\bA_{\gb_1}, \cdots , \bA_{\gb_k})$ & linking configuration 
\\
$(\bA_{\ga}:\bA_{\bgb})$ & linking configuration where $\bA_{\bgb} = 
(\bA_{\gb_1}, \cdots , \bA_{\gb_k})$\\
$\Sigma^{(\underline{\alpha})}_{M_i}$ & stratum consisting of 
$A_{\alpha}$-type points in $M_i$\\
$\Sigma^{(\underline{\alpha})}_{\cB_i}$ &  stratum in $\cB_i$ 
corresponding to $\Sigma^{(\underline{\alpha})}_{M_i}$  \\
$S^n$ & unit sphere in $\R^{n+1}$ \\
$\nu$ & height function on $\R^{n+1}\times S^n$ \\
$\tau$ & $\nu|(\cB \times S^n)$ \\
$\cZ$ & spherical axis \\
$\Sigma^{(\underline{\alpha})}_{\cZ}$ & stratum consisting of 
$A_{\alpha}$-type points in $\cZ$ \\
$h$ & height function associated to $\cZ$\\
$V$ & multi-valued vector field on $\cZ$\\
$(\cZ,h,V)$ & spherical structure \\
$\Sigma^{(\alpha:\beta)}_{M_0}$ & set of points in $\Omega_0$ exhibiting 
generic Blum linking properties \\
$\Sigma^{(\alpha:\beta)}_{M_i}$ & corresponding set in $M_i$ \\
$\Sigma^{(\alpha:\beta)}_{\cB_{j}}$ & corresponding set in $\cB_{j}$ \\\\
\multicolumn{2}{c}{\textbf{\S 7}}\\\\

$S_{rad}$& radial shape operator \\
$S_{\boldsymbol v}$ & matrix representation of $S_{rad}$\\
$S_E$ & edge shape operator \\
$S_{E\boldsymbol v}$ & matrix representation of $S_{E}$\\
$I_{n-1,1}$ & $n\times n$ diagonal matrix with 1's in first $n-1$ diagonal 
positions, 0 in last\\
$T_{x_0}M_i$ & tangent space to $M_i$ at $x_0 \notin \bdyM_i$ \\
$T_{x_0}\bdyM_i $ & tangent space to $M_i$ at $x_0 \in \barbdyM_i$ \\
$\eta_U$ & compatibility $1$-form \\
$\kappa_{ri}$ & principal radial curvature on $M_i$ \\
$\kappa_{Ei}$ & principal edge curvature on $M_i$ \\
$\psi$ & radial flow \\
$\psi_t$ & radial flow at time $t$\\
$U_0$ & radial vector field on $M_0$ \\
$\bu_0$ & unit radial vector field on $M_0$ \\\\

\multicolumn{2}{c}{\textbf{\S 9}}\\\\

$M_{i\rightarrow j}$ & strata of $\tilde{M}_i$ linked to $\tilde{M}_j$ \\
$\Omega_{i \rightarrow j}$ & region of $\Omega_i$ linked to 
$\Omega_j$\\
$\cN_{i \rightarrow j}$ & linking neighborhood of $\Omega_i$ linked to 
$\Omega_j$ \\
$\cN_i$ & total linking neighborhood of $\Omega_i$\\
$\cN_{i \infty}$ & region in $\Omega_0$ corresponding to $M_{i \infty}$\\ 
$\cB_{i \rightarrow j}$ & boundary region of $\cB_i$ linked to $\cB_j$ \\
$\cR_{i \rightarrow j}$ & total region for $\Omega_i$ linked to 
$\Omega_j$ \\
$\cB_{i0}$ & portion of $\cB_i$ not shared with other regions\\\\

\multicolumn{2}{c}{\textbf{\S 10}}\\\\

$\pi$ & canonical projection $\tilde{M}\rightarrow M$\\
$g$ & multi-valued function on $M$ \\
$\tilde{g}$ & $g \circ \pi$\\
$dM_i$ & skeletal (or medial) Borel measure on $\tilde{M}_i$ \\
$\psi_{i1}$ & radial map $\tilde{M}_i\rightarrow \cB_i$\\
$R_i$ & Borel measurable region of $\cB_i$ \\
$\tilde{R}_i$ & Borel measurable region of $\tilde{M}_i$ \\
$\cB_{sing}$ & singular points of $\cB$\\\\

\multicolumn{2}{c}{\textbf{\S 11}}\\\\

$\ell_i'$ & truncated linking function on $M_i$\\
$L_i'$ & truncated linking vector field on $M_i$ \\
$c_{i\rightarrow j}$ & closeness measure from $\gW_i$ linked to 
$\gW_j$\\
$c_{ij}$ & multiplicative closeness measure between $\gW_i$ and 
$\gW_j$\\
$c_{ij}^a$ & additive closeness measure between $\gW_i$ and $\gW_j$\\
$s_i$ & positional significance measure\\
$\Gamma$ &  graph \\
$\Gamma_b$ & subgraph \\
$\Lambda$ & tiered linking graph\\\\

\multicolumn{2}{c}{\textbf{\S 12}}\\\\

$X_{ij}$& union of smooth strata of $X_i \cap X_j$\\
$X_{i0}$& $X\setminus \cup_{j>0}\text{Cl}(X_{ij})$\\
$\cJ_i$& index set, $\{j \neq i:X_{ij}\neq \emptyset\}$ \\
$\cJ_0$& index set,  $\{j > 0:X_{0j}\neq \emptyset\}$  \\
$q_i$& cardinality of $\cJ_i$ \\
$\cirX_i$ & set of points in smooth strata of $X_i$\\
$p \mapsto j_p$ & assignment function for $1 \leq p \leq m$ and $j_p \in 
\cJ_i$ \\
$X_{\cJ_i}$& union of $\cirX_j$ sharing strata with $X_i$\\
\quad & or for assignment function, the disjoint union of $X_{j_p}$ \\
$X_i^*$ & set of smooth points of $X_i$ obtained by removing $P_k$ and 
singular $Q_k$ points\\
$Y^r$ & $Y \times \ldots \times Y$ ($r$ times) \\
$\Delta^rY$ & diagonal in $Y^r$\\
$\Delta^{(r)}Y$ & generalized diagonal in $Y^r$\\
$Y^{(r)}$ & $Y^r \setminus \Delta^{(r)}Y$ \\
$\R^{n+1}_j$ & copy of $\R^{n+1}$ indexed by $j$\\
$(\R^{n+1})^{(q)}$ & complement of $\Delta^{(q)}\R^{n+1}$ \\
$\gs$ & distance-squared function on $X \times \R^{n+1}$ \\
$\gs_i$ & distance-squared function on $X^*_i \times \R^{n+1}$ \\
$\rho_i$ & multi-distance function on $X_{\cJ_i} \times 
(\R^{n+1})^{(m+1)}$ \\
$\tau$ & height-distance function on $X_{\cJ_0} \times 
(\R^{n+1})^{(m+1)}\times S^n$\\
${_s}J^k(X,\R^2)$ & $k-$multijet space \\
${_{\bl}}E^{(k)}(X_{\cJ_i},\R^2)$ & partial $\ell$-multi $k-$jet subspace 
\\
${_{\ell}}j^kf$ & partial multijet map of parametrized family\\
${_{\ell}}j_1^kf$ & ${_{\ell}}j^kf$ for fixed parameter values\\\\

\multicolumn{2}{c}{\textbf{\S 13}}\\\\

$\overline{\pitchfork}$ & transverse to submanifold or Whitney stratified 
set\\
${_{\ell}}S$ & submanifolds/stratified sets in jet spaces defining generic 
properties of Blum linking structures\\
$W^{(\alpha)}$ & $\cR^+-$orbit of multigerms of type $\boldsymbol 
A_{\alpha}$\\
$W_0^{(\alpha)}$ & fiber of orbit\\
$W^{(\alpha:\beta)}$ & submanifold of ${_{\bl}}E^{(k)}(X_{\cJ_i},\R^2)$ 
corresponding to linking configuration $(\boldsymbol 
A_{\alpha}:\boldsymbol A_{\beta})$\\\\

\multicolumn{2}{c}{\textbf{\S 14}}\\\\

$S(i,\ell)$& closed stratified sets in ${_{\ell}}S$  \\
$\cP_{i \sigma}$& set of all embeddings such that ${_s}j_1^k\sigma_i$ is 
transverse to every element of $S(i,\ell)$  \\
$\cP_{\sigma}$ & $\cap_i \cP_{i \sigma}$\\
$\cP_{i \rho}$ & set of all embeddings such that ${_s}j_1^k\rho_i$ is 
transverse to every element of $S(i,\ell)$  \\
$\cP_{\rho}$ & $\cap_i \cP_{i \rho}$ \\
$\cP_{\rho\sigma}$ & $\cP_{\rho}\cap \cP_{\sigma}$ \\
$\cP_{i \tau}$ & set of all embeddings such that ${_s}j_1^k\tau$ is 
transverse to every element of $S(i,\ell)$  \\
$\cP_{\tau}$ & $\cap_i \cP_{i \tau}$ \\
$\cP$ & $\cP_{\rho \sigma} \cap \cP_{\tau}$\\\\

\multicolumn{2}{c}{\textbf{\S 15}}\\\\

$\cC_{x^{(j)}}$ & ring of germs of functions at $x^{(j)}$ with maximal 
ideal $m_{x^{(j)}}$\\
$\cC_{x^{(j)},u}$ & ring of germs of functions at $(x^{(j)},u^{(0)})$ with 
maximal ideal $m_{x^{(j)},u}$\\

\multicolumn{2}{c}{\textbf{\S 17}}\\\\
$\tilde \gs$ & perturbed distance-squared function for $\gs$  \\
$\tilde \gs_i$ & perturbed distance-squared function for $\gs_i$ \\
$\tilde \rho_i$ & perturbed multi-distance function for $\rho_i$ \\
$\tilde \nu$ & perturbed distance-squared function for $\nu$  \\
$\tilde \tau$ & perturbed height-distance function for $\tau$\\

\end{longtable}

\end{document}